\numberwithin{equation}{subsection}
 \newtheorem{ithm}[subsection]{Theorem}
 \newtheorem{thm}[equation]{Theorem}
 \newtheorem{prop}[equation]{Proposition}
 \newtheorem{lemma}[equation]{Lemma}
 \newtheorem{cor}[equation]{Corollary}
 \theoremstyle{definition}
 \newtheorem{definition}[equation]{Definition}
 \newtheorem{ex}[equation]{Example}
 \theoremstyle{remark}
 \newtheorem{remark}[equation]{Remark}
 \newtheorem{para}[equation]{}
 \def\Spec{{\rm Spec\,}}
 \def\Spf{{\rm Spf}}
 \def\End{{\rm End}}
\def\an{{\rm an}}
 \def\ad{{\rm ad}}
\def\der{{\rm der}}
 \def\det{{\rm det}}
 \def\ab{{\rm ab}}
 \def\dom{{\rm dom}}
\def\an{{\rm an}}
\def\SL{{\rm SL}}
\def\GL{{\rm GL}}
\def\PGL{{\rm PGL}}
\def\GSp{{\rm GSp}}
\def \iso {\overset \sim \longrightarrow}
\def\O{{\mathcal{O}}}
\def\e{{\epsilon}}
\def\ep{{\psi}}
\def\R{{\mathscr R}}
\def\cR{{\mathcal R}}
\def\RR{{\mathbb R}}
\def\Lie{{\rm Lie}}
\def\Z{\mathbb{Z}}
\def\Q{\mathbb{Q}}
\def\DD{\mathbb{D}}
\def\GG{\mathbb{G}}
\def\Res{{\rm Res}}
\def\ilim{\underleftarrow{\text {\rm lim}}}
\def\lps{[\![}
\def\rps{]\!]}
\def\th{{\rm th}}
\def\Gal{{\rm Gal}}
\def\Ind{{\rm Ind}}
\def\W{{Q}}%former W, I changed this because W is used for Witt vectors and/or for Weyl groups
\def\Weyl{{W}}%macro used for Weyl groups, so that we can decide at the end how to call them.
\newcommand{\simto}{\stackrel{\sim}{\to}}
\begin{document}
\title{Connected components of affine Deligne-Lusztig varieties in mixed characteristic}
\author{Miaofen Chen}
\email{mfchen@math.ecnu.edu.cn}
\address{Department of Mathematics\\
Shanghai Key Laboratory of PMMP\\
East China Normal University\\
No. 500, Dong Chuan Road\\
Shanghai, 200241, P.R.China.}
\author{Mark Kisin}
\email{kisin@math.harvard.edu}
\address{Department of Mathematics\\
Harvard University\\
1 Oxford St\\
Cambridge, MA 02138, USA.
}
\author{Eva Viehmann}
\email{viehmann@ma.tum.de}
\address{Fakult\"at f\"ur Mathematik der
Technischen Universit\"at M\"unchen -- M11,
Boltzmannstr. 3,
85748 Garching, Germany.}
\thanks{The second author was partially
supported by NSF grant DMS-1001139. The first and third authors
were partially supported by the SFB/TR45 ``Periods, Moduli Spaces and
Arithmetic of Algebraic Varieties'' of the DFG and by ERC starting
grant 277889 ``Moduli spaces of local $G$-shtukas''. The first author was also partially supported by NSFC grant No. 11301185 and SRFDP grant No. 20130076120002. The third author
was also partially supported by a Heisenberg fellowship of the DFG.}
\subjclass[2000]{20G25, 14G35}
\keywords{affine Deligne-Lusztig variety, affine Grassmannian, Rapoport-Zink space}
\date{}
\begin{abstract}{We determine the set of connected components of minuscule affine Deligne-Lusztig varieties for hyperspecial maximal compact subgroups of unramified connected reductive groups. Partial results are also obtained for non-minuscule closed affine Deligne-Lusztig varieties. We consider both the function field case and its analog in mixed characteristic. In particular, we determine the set of connected components of unramified Rapoport-Zink spaces. }
\end{abstract}
\maketitle
\tableofcontents

\section{Introduction}\label{secdlvcontext}

Let $k$ be a finite field with $q=p^r$ elements and let $\overline{k}$ be an algebraic closure of $k$. Let $F=k((t))$ or $F=W(k)[1/p]$. Let accordingly $L=\overline{k}((t))$ or $L=W(\overline k)[1/p]$. Let $\mathcal{O}_F$ and $\mathcal{O}_L$ be the valuation rings of $F$ and $L.$
We denote by $\e$ the uniformizer $t$ or $p$. We write $\sigma:x\mapsto x^q$  for the Frobenius of $\overline{k}$ over $k$ and also the induced Frobenius of $L$ over $F$.

Let $G$ be a connected reductive group over $\mathcal{O}_F$. We denote by $G_F$ the generic fibre of $G,$ and write $K = G(\O_L).$ Since $k$ is finite $G$ is automatically quasi-split. Let $B \subset G$ be a Borel subgroup and $T \subset B$ the centralizer of a maximal split torus in $B.$
We denote by $X_*(T)$ the set of cocharacters of $T,$ defined over $\O_L.$

For $b\in G(L)$ and a dominant cocharacter $\mu\in X_*(T)$ the affine Deligne-Lusztig variety $X^G_{\mu}(b)=X_{\mu}(b)$ (which is in fact in general just a set of points) is defined as
\begin{equation*}
X_{\mu}(b) =\{g\in G(L)/K\mid g^{-1}b\sigma(g)\in K\e^{\mu}K\}.
\end{equation*}
Left multiplication by $g\in G(L)$ induces a bijection $X_{\mu}(b) \iso X_{\mu}(gb\sigma(g)^{-1})$. Thus the isomorphism class of the affine Deligne-Lusztig variety only depends on the $\sigma$-conjugacy class $[b]$ of $b,$ and not on $b.$

When $F$ has mixed characteristic, and $\mu$ is minuscule the sets $X_{\mu}(b)$ are closely related to the $\overline k$-points on Shimura varieties which lie in a fixed isogeny class, and in special cases to
$\overline k$-valued points of a moduli space of $p$-divisible groups as defined by Rapoport and Zink \cite{RZ}.

 If $F$ is a function field, then $X_{\mu}(b)$ is the set of $\overline k$-valued points of a locally closed, locally of finite type subscheme of the affine Grassmannian $LG/K$ where $LG$ denotes the loop group of $G$ (compare \cite{Rapoport1}, \cite{GHKR}).
 If $F$ has mixed characteristic, there is, in general, no known scheme structure on the affine Deligne-Lusztig varieties
\footnote{In fact in this case $X_{\mu}(b)$ is defined as a functor, not on $\bar k$-algebras, but rather on certain $p$-adically complete
$W(\bar k)$-algebras equipped with a lift of Frobenius. For this reason, what we have denoted $X_{\mu}(b)$ in the introduction is denoted
$X_{\mu}(b)(W(\bar k))$ in the body of the paper.}.
Nevertheless, they admit some kind of geometric structure,
and in particular a meaningful notion of a set of connected components $\pi_0(X_{\mu}(b))$ which is compatible with the corresponding notion for
Rapoport-Zink spaces.

The aim of this paper is to compute the set of connected components of $X_{\mu}(b)$ for any $b$ when $\mu$ is minuscule. To state our main results, we begin by recalling when
$X_{\mu}(b) \neq \emptyset.$ This condition is a relation between $\mu$ and the $\sigma$-conjugacy class of $b$. Let $B(G)$ denote the set of $\sigma$-conjugacy classes of all elements of $G(L)$. They are described by two invariants. Write $\pi_1(G)$ for the quotient of $X_*(T)$ by the coroot lattice of $G$.
Recall that there is the Kottwitz homomorphism (compare \cite{RR}) $w_G: G(L) \rightarrow \pi_1(G)$ which for $\mu \in X_*(T)$ sends an element
$g \in K \epsilon^{\mu}K \subset G(L)$ to the class of $\mu.$ We denote by
$\kappa_G$ the composite of $w_G$ with the projection $\pi_1(G) \rightarrow \pi_1(G)_{\Gamma},$
where $\Gamma = \Gal(\bar k/k)$ acts in the natural way on $L$ and hence on $\pi_1(G).$
Let $\nu_{\dom} \in X_*(T)_{\Q}$ be the dominant cocharacter conjugate to the Newton cocharacter of $b$. Then $\nu_{\dom}$ is $\Gamma$-invariant and together with $\kappa_G(b)$ determines the $\sigma$-conjugacy class.

Let $\bar \mu \in X_*(T)_{\Q}$ denote the average of the $\Gamma$-conjugates of
$\mu.$ Then the set $X_{\mu}(b)$ is non-empty if and only if $\kappa_G(b)= [\mu]$ in $\pi_1(G)_{\Gamma},$
and $\bar \mu-\nu_{\dom}$ is a linear combination of positive coroots with non-negative rational
coefficients - see \cite{KottRapo}, \cite{Wint}, \cite{GHKR}, Prop. 5.6.1, and \cite{gashi}.
We denote by $B(G,\mu)$ the set of $\sigma$-conjugacy classes $[b] \in B(G)$ satisfying these conditions,
and we assume from now on that $[b] \in B(G,\mu).$

Then $w_G(b)-\mu = (1-\sigma)(c_{b,\mu})$ for an element $c_{b,\mu}\in\pi_1(G)$ whose
$\pi_1(G)^{\Gamma}$-coset is uniquely determined by this condition.
The following is one of our main results.

\begin{ithm}\label{thmzshk}
Assume that $G^{\ad}$ is simple and that $\mu$ is minuscule, and
suppose that $(\mu,b)$ is Hodge-Newton indecomposable in $G.$ Then $w_G$
induces a bijection $$\pi_0(X_{\mu}(b))\cong c_{b,\mu}\pi_1(G)^{\Gamma}$$
unless $[b] = [\e^{\mu}]$ with $\mu$ central, in which case
$$X_{\mu}(b) \cong G(F)/G(\mathcal{O}_F)$$ is discrete.
\end{ithm}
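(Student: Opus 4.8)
The plan is to reduce the computation of $\pi_0(X_\mu(b))$ to a surjectivity statement for the natural map
$$\pi_0(X_\mu(b)) \longrightarrow c_{b,\mu}\pi_1(G)^\Gamma$$
induced by $w_G$, since the image of $w_G$ on the (nonempty) set $X_\mu(b)$ automatically lands in the coset $c_{b,\mu}\pi_1(G)^\Gamma$ by the defining relation $w_G(b)-\mu=(1-\sigma)(c_{b,\mu})$, and the target is acted on simply transitively by $\pi_1(G)^\Gamma$. One should first treat the exceptional case: if $[b]=[\e^\mu]$ with $\mu$ central, then $\e^\mu$ is central and $g^{-1}b\sigma(g)\in K\e^\mu K$ forces $g^{-1}\sigma(g)\in K$, so $X_\mu(b)$ is the set of $g\in G(L)/K$ with $g^{-1}\sigma(g)\in K$; by Lang's theorem and descent this is $G(F)/G(\O_F)$, which is discrete. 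So assume we are not in this case.

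Next I would reduce to the adjoint/simply connected situation and exploit the Hodge–Newton indecomposability hypothesis. Since $G^{\ad}$ is simple, and using the product decomposition of affine Deligne-Lusztig varieties under central isogenies and the compatibility of $\pi_0$ and $w_G$ with such isogenies, one reduces the \emph{connectedness} heart of the argument to the case where $G$ is simply connected (or semisimple simply connected with simple adjoint group), where one wants to show $X_\mu(b)$ is connected, i.e. that $\pi_0$ is a single point — equivalently that the $w_G$-fibers are connected. The key geometric input is a connectedness result for the closed affine Deligne-Lusztig variety (the union over $\mu'\le\mu$), proved in the function field case via the theory of the affine Grassmannian and Mirković-Vilonen / MV-cycle type arguments, and transported to mixed characteristic via the functorial formalism referenced in the footnote and the known comparison of $\pi_0$. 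The role of Hodge–Newton indecomposability is exactly to rule out the obstruction to connectedness coming from a proper Levi: when $(\mu,b)$ is Hodge–Newton \emph{decomposable}, $X_\mu(b)$ breaks up according to a Levi subgroup and the statement fails; indecomposability lets one run an induction on the semisimple rank, the base case being rank one (where $G^{\ad}=\PGL_2$ and everything is explicit, either discrete or a single connected curve/affine line analog).

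Concretely the induction step goes: pick a maximal proper parabolic $P=MN$; using the retraction from the affine Grassmannian of $G$ to that of $M$ along $N$-orbits (resp. its mixed-characteristic analog), and the fact that Hodge–Newton indecomposability forbids $[b]$ from being $\sigma$-conjugate into $M(L)$ with the compatible Newton point, one shows any two points of a $w_G$-fiber can be joined by a chain of "curves" lying in $X_\mu(b)$ — each such curve obtained from a one-dimensional family inside a single $K$-orbit, using minusculity of $\mu$ so that $K\e^\mu K/K$ is a single partial flag variety with no internal stratification to worry about. Summing up: establish nonemptiness of fibers (surjectivity of $w_G$), then connectedness of each fiber.

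The main obstacle I expect is the connectedness of the individual $w_G$-fibers — equivalently, producing enough explicit "curves" (one-parameter families) inside $X_\mu(b)$ to link arbitrary points. In the function field case this is a statement about the affine Grassmannian that can be attacked with Mirković–Vilonen theory, but one must be careful that $b$ is arbitrary (not basic), which means $X_\mu(b)$ is typically infinite-dimensional-looking and lacks a projective compactification; the usual trick is to pass to the closed ADLV and use an induction on $\mu$ together with the action of the $\sigma$-centralizer $J_b(F)$, which acts on $\pi_0$ and whose orbits one controls via the Hodge–Newton reduction. Transporting all of this to the mixed-characteristic setting, where there is no scheme structure, requires the ad hoc notion of $\pi_0$ from the body of the paper and a careful check that the curve-connectedness arguments still make sense there; this bookkeeping, rather than any single hard theorem, is the real work.
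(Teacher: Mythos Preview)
Your treatment of the exceptional case $[b]=[\e^\mu]$ with $\mu$ central is correct and matches the paper. Beyond that, however, your proposal has a genuine structural gap and a misconception about the role of Hodge--Newton indecomposability.

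The paper does \emph{not} proceed by induction on semisimple rank, nor does it invoke Mirkovi\'c--Vilonen theory. The key step you are missing is the \emph{superbasic reduction}: one chooses a standard Levi $M\subset G$ such that $b$ (after $\sigma$-conjugation) lies in $M(L)$ and is \emph{superbasic} there, meaning no $\sigma$-conjugate of $b$ lies in a proper Levi of $M$. This forces $M^{\ad}\cong\prod_i\Res_{E_i/F}\PGL_{h_i}$, and in this very concrete setting one proves $\pi_0(X_{\preceq\mu}^M(b))\cong c_{b,\mu}\pi_1(M)^\Gamma$ directly by an explicit $\GL_h$ computation (the argument of \cite{conncomp}, Lemma~2 and Proposition~1, adapted to mixed characteristic). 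One then shows (Proposition~\ref{(1.5.5)}) that every connected component of $X_\mu^G(b)$ meets $J_b(F)\cdot X_{\mu'}^M(b)$ for some $M$-dominant $\mu'$ with $\mu'_{\dom}\preceq\mu$. Your statement that ``Hodge--Newton indecomposability forbids $[b]$ from being $\sigma$-conjugate into $M(L)$'' is precisely backwards: the element $b$ \emph{is} placed in a proper Levi; HN-indecomposability (in fact HN-irreducibility, the slightly stronger condition) is used only later, in a purely combinatorial way, to guarantee that the coroot lattice of $G$ is generated by certain Galois orbits of coroots (Proposition~\ref{pm4}).

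The remaining work, and the real technical heart, is to show that the images in $\pi_0(X_\mu^G(b))$ of the various $X_{\mu_x}^M(b)$ (for $x$ ranging over a finite index set $\bar I_{\mu,b}^{M,G}$) all coincide, and that the resulting map $\pi_1(M)^\Gamma\to\pi_0(X_\mu^G(b))$ factors through $\pi_1(G)^\Gamma$. This is done by four interlocking propositions (\ref{pm1}--\ref{pm3}) involving a delicate case-by-case analysis of root systems with $\Gamma$-action and explicit one-parameter families built from root subgroups $U_\alpha(p^{-1}y)$. Your sentence ``each such curve obtained from a one-dimensional family inside a single $K$-orbit'' gestures at this but gives no mechanism; the actual constructions are far more intricate, depending on whether a Galois orbit $\Omega$ of roots has one, two, or three elements per component of the Dynkin diagram of an auxiliary subgroup $G_\Omega$, and in some cases require passing to a curve of higher genus rather than $\mathbb{A}^1$. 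None of this is visible from an MV-cycle or rank-induction perspective.
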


Here $G^{\ad}$ denotes the adjoint group of $G$. The definition of Hodge-Newton indecomposablility will be recalled below in \S \ref{secpf1}.
In fact, without assuming that $G^{\ad}$ is simple, we show that $w_G$ induces an isomorphism as in the first case of the theorem provided
$(\mu,b)$ is {\em Hodge-Newton irreducible}, a condition slightly stronger than Hodge-Newton indecomposability, which is also
recalled in \S 2.5. When $G^{\ad}$ is simple, a Hodge-Newton indecomposable pair $(\mu, b)$ is Hodge-Newton irreducible
unless $[b] = [\e^{\mu}]$ with $\mu$ central.
At the end of \S \ref{secpf1} we also give the easy direct calculation showing the last assertion of the theorem.

The theorem describes $\pi_0(X_{\mu}(b))$ (for $\mu$ minuscule) when $G^{\ad}$ is simple and
$(\mu,b)$ Hodge-Newton is indecomposable in $G.$ The general case without these assumptions (but with $\mu$ still being minuscule) can be reduced to this one. Indeed, for any element $b \in G(L)$ there exists a $b' \in G(L)$ that is
$\sigma$-conjugate to $b,$ and a standard Levi subgroup $M \subset G$ such that $b' \in M(L)$
and $(\mu,b')$ is Hodge-Newton indecomposable in $M,$ and such that the natural map
$X^M_{\mu}(b') \rightarrow X^G_{\mu}(b')$ is a bijection.

To reduce to the case when $G$ is adjoint and simple, we again denote by $b$ and $\mu$ the images of $b$ and $\mu$ in $G^{\ad}.$
Then the sets of connected components of $X^G_{\mu}(b)$ and $X^{G^{\ad}}_{\mu}(b)$ are closely related.
More precisely, we prove in \S \ref{secred}
that the diagram
$$\begin{CD}
  \pi_0(X_{\mu}^G(b))  @>>> \pi_0(X_{\mu}^{G^{\ad}}(b)) \\
^{w_G}@VVV^{w_{G^{\ad}}} @VVV      \\
c_{b,\mu}\pi_1(G)^{\Gamma}   @>>> c_{b,\mu}\pi_1(G^{\ad})^{\Gamma}
\end{CD}$$
is Cartesian. Furthermore, affine Deligne-Lusztig varieties for products of groups are products of the affine Deligne-Lusztig varieties for the individual factors. This reduces the description of $\pi_0(X_{\mu}(b))$ from the general case to the case where $G$ itself is simple.

% \bigskip
% There is a conjectural generalization of Theorem \ref{thmzshk} when $\mu$ is no longer assumed minuscule.

In the course of the proof we obtain the following theorem (which is also a consequence of Theorem \ref{thmzshk}).
It is less precise but has the advantage that it does not require any additional assumptions. Define an $F$-group $J_b$ by setting
$$ J_b(R) : =  \{g \in G(R\otimes_FL): \sigma(g) = b^{-1}gb \}.$$ for $R$ an $F$-algebra.
There is an inclusion $J_b \subset G,$ defined over $L,$
which is given on $R$-points ($R$ an $L$-algebra) by the natural map $G(R\otimes_FL) \rightarrow G(R).$
\begin{ithm}\label{prop1} If $\mu$ is minuscule then
$J_b(F)$ acts transitively on $\pi_0(X_{\preceq\mu}(b))$.
\end{ithm}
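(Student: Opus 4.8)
The plan is to prove Theorem \ref{prop1} by reducing it to Theorem \ref{thmzshk}, so the real content is in getting the reduction machinery to interact well with the group $J_b(F)$ and with passing to the closed variety $X_{\preceq\mu}(b)$. First I would reduce to the case where $G^{\ad}$ is simple. Since affine Deligne-Lusztig varieties for a product decompose as products of the factors, and since the map $J_b \subset G$ and the Kottwitz map are compatible with products, a transitive action on each factor gives a transitive action on the product; so one is reduced to $G$ simple. Next I would pass to the adjoint group: using the Cartesian square relating $\pi_0(X^G_\mu(b))$ and $\pi_0(X^{G^{\ad}}_\mu(b))$ proved in \S\ref{secred}, together with the fact that $J_b \to J_b^{\ad}$ induces a map on $F$-points compatible with the two $w$-maps, transitivity of $J_b^{\ad}(F)$ on $\pi_0(X^{G^{\ad}}_{\preceq\mu}(b))$ plus surjectivity of $J_b(F) \to c_{b,\mu}\pi_1(G)^\Gamma$ onto the fibers should yield transitivity over $G$. (The needed surjectivity statement is essentially the claim that $w_G: J_b(F) \to \pi_1(G)^\Gamma_{[b]}$ hits the relevant coset, which I would extract from the known structure of $J_b$ and the Kottwitz homomorphism.)

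Having reduced to $G^{\ad}$ simple, I would invoke the Hodge-Newton decomposition: there is $b'$ $\sigma$-conjugate to $b$ and a standard Levi $M \subset G$ with $b' \in M(L)$, $(\mu,b')$ Hodge-Newton indecomposable in $M$, and $X^M_\mu(b') \simeq X^G_\mu(b')$. I would need the analog of this statement for $X_{\preceq\mu}$ rather than $X_\mu$, i.e.\ that $X^M_{\preceq\mu}(b') \to X^G_{\preceq\mu}(b')$ is still a bijection — this follows by applying the $X_\mu$-version to each $\mu' \preceq \mu$ in the dominant cone and observing the decomposition of $X_{\preceq\mu}$ into the $X_{\mu'}$. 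Since $J_{b'} \subset J_b$ are inner forms and the identification $X^M_{\preceq\mu}(b') = X^G_{\preceq\mu}(b')$ is $J_{b'}(F) = J^M_{b'}(F)$-equivariant, transitivity of $J^M_{b'}(F)$ on $\pi_0(X^M_{\preceq\mu}(b'))$ implies transitivity of $J_b(F)$ on $\pi_0(X^G_{\preceq\mu}(b'))$, hence on $\pi_0(X^G_{\preceq\mu}(b))$ after conjugating back. So it remains to treat the Hodge-Newton indecomposable case in a simple adjoint group.

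In that case Theorem \ref{thmzshk} gives $\pi_0(X_\mu(b)) \cong c_{b,\mu}\pi_1(G)^\Gamma$ via $w_G$ (unless $[b]=[\e^\mu]$ with $\mu$ central, where $X_\mu(b) = G(F)/G(\O_F)$ is discrete and the statement is immediate since $J_b = G$ acts transitively on $G(F)/G(\O_F)$ through $\pi_1(G)^\Gamma$ — actually one checks directly that $G(F)$ acts transitively on that discrete set up to the $\pi_0$, which is the point). For the generic case, $\pi_0(X_\mu(b))$ is a torsor under $\pi_1(G)^\Gamma$, and I must show $J_b(F)$ surjects onto $\pi_1(G)^\Gamma$ via $w_G$; but $w_G(J_b(F)) = \pi_1(G)^\Gamma$ is a standard fact about the Kottwitz homomorphism for the inner form $J_b$ (its image is all of $\pi_1(J_b)^\Gamma = \pi_1(G)^\Gamma$ since $J_b$ has connected center issues only up to the relevant identification). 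Finally, because $X_\mu(b)$ is open and closed in $X_{\preceq\mu}(b)$ — or more precisely the various $X_{\mu'}(b)$ for $\mu' \preceq \mu$ cover $X_{\preceq\mu}(b)$ and each $\pi_0(X_{\mu'}(b))$ carries a transitive $J_b(F)$-action while $J_b(F)$ acts on the whole $\pi_0(X_{\preceq\mu}(b))$ — I would need to know the connected components do not get glued in a way that breaks transitivity, i.e.\ that $\pi_0(X_{\preceq\mu}(b))$ is still a single $J_b(F)$-orbit. This is where the main obstacle lies: controlling $\pi_0(X_{\preceq\mu}(b))$ when $\mu$ is not minuscule requires knowing how the strata $X_{\mu'}(b)$ connect up. For $\mu$ minuscule this is vacuous since $X_{\preceq\mu}(b) = X_\mu(b)$, so in fact \emph{for the theorem as stated} this difficulty disappears and the argument above completes the proof; I would remark that the genuinely non-minuscule case is exactly the content of the partial results advertised in the abstract.
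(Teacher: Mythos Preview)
Your overall strategy—deduce Theorem~\ref{prop1} from Theorem~\ref{thmzshk}—is legitimate (the introduction even notes that \ref{prop1} is a consequence of \ref{thmzshk}), but it is \emph{not} the route the paper takes, and your execution has real gaps. The paper runs in the opposite logical direction: it first proves a sharpened simple-adjoint version, Theorem~\ref{prop1'}, \emph{independently} of Theorem~\ref{thmzshk}, via the reduction to superbasic $b$ (Proposition~\ref{(1.5.5)}) together with Propositions~\ref{pm1} and~\ref{pm2}. Then~\ref{prop1'} is used as an ingredient in the proof of~\ref{thmzshk}, and only afterward is the general case of~\ref{prop1} obtained from Corollary~\ref{thmzshkIII}. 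So in the paper~\ref{prop1'} is logically prior to~\ref{thmzshk}, not a consequence of it.

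Your argument has two concrete gaps. First, the order of reductions is wrong: after reducing to $G^{\ad}$ simple and then applying Hodge--Newton decomposition you land in a proper Levi $M$ where $(\mu,b')$ is HN-indecomposable, but $M^{\ad}$ need not be simple, so your sentence ``it remains to treat the Hodge--Newton indecomposable case in a simple adjoint group'' does not follow. One must do the HN-reduction first, then pass to adjoint and decompose into simple factors, and then argue (as the paper does via Theorem~\ref{thmzshkII} and the $G_1\times G_2$ splitting in its proof of~\ref{prop1}) that the factors are either HN-irreducible or have central $\mu$. Second, the assertion that $w_G(J_b(F)) = \pi_1(G)^\Gamma$ is ``a standard fact about the Kottwitz homomorphism'' is not justified: the Kottwitz map surjects onto coinvariants $\pi_1(J_b)_\Gamma$, not invariants, and e.g.\ $Z_G(F)\to\pi_1(G)^\Gamma$ already fails to be surjective for $G=\GL_n$. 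The surjectivity you need is true, but it rests on the superbasic case (Proposition~\ref{prop:generalsuperbasictrans}) combined with $\pi_1(M)^\Gamma\twoheadrightarrow\pi_1(G)^\Gamma$ from Corollary~\ref{cor:invariants}; you should invoke these rather than wave at Kottwitz.
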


In fact we will show in Theorem \ref{prop1'} that already the action of a certain subgroup of $J_b(F)$ is transitive.

Our description of the connected components is used in an essential way in the work of one of us [Ki] on the Langlands-Rapoport conjecture for mod $p$ points on Shimura varieties. Our results also allow us to get a description of the set of connected components of (simple) unramified Rapoport-Zink spaces of PEL type.

More precisely, suppose $(G, b, \mu)$ is a (simple) unramified Rapoport-Zink datum of EL type or unitary/symplectic PEL type (for the precise definition, see Section \ref{sec_app_RZ}). To this kind of datum we can associate a Rapoport-Zink space $\breve{\mathcal{M}}=\breve{\mathcal{M}}(G, b, \mu)$  which is a formal scheme locally formally of finite type over $\Spf\mathcal{O}_L$ (cf. \cite{RZ}). By the Dieudonn\'e-Manin classification of isocrystals over $\bar{\mathbb{F}}_p$, there exists a natural bijection $\theta: \breve{\mathcal{M}}(G, b, \mu)(\bar{\mathbb{F}}_p)\simeq X^G_\mu(b)$. Let $\breve{\mathcal{M}}^{\an}$ be the generic fiber of $\breve{\mathcal{M}}$ as a Berkovich analytic space. There exists a tower of finite \'etale covers $(\breve{\mathcal{M}}_{\tilde K})_{\tilde K\subset G(\mathbb{Z}_p)}$ on $\breve{\mathcal{M}}^{\an}$ parametrizing the $\tilde K$-level structures on the Tate-module of the universal $p$-divisible group,
where $\tilde K$ runs through the open subgroups in $G(\mathbb{Z}_p)$. Let $\mathbb{C}_p$ be the completion of an algebraic closure of $\Q_p,$
write $\pi_0(\breve{\mathcal{M}}_{\tilde K}\hat{\otimes}\mathbb{C}_p)$ for the set of geometrically connected components of $\breve{\mathcal{M}}_{\tilde K}.$
% and
% $$\pi_0(\breve{\mathcal{M}}_{\infty}\hat{\otimes}\mathbb{C}_p) : = \ilim_{\tilde K}\pi_0(\breve{\mathcal{M}}_{\tilde K}\hat{\otimes}\mathbb{C}_p).$$
The group $J_b(\Q_p)\times G(\Q_p) \times \Gal(\bar L/L)$ acts naturally on $\pi_0(\breve{\mathcal{M}}_{\tilde K}\hat{\otimes}\mathbb{C}_p),$ where $\bar L$
is the intgeral closure of $L$ in $\mathbb{C}_p.$ Moreover, there is a natural map
$$\delta = (\delta_{J_b},\delta_G,\chi_{\delta_G,\mu}): J_b(\Q_p)\times G(\Q_p) \times \Gal(\bar L/L) \rightarrow G^{\ab}(\Q_p),$$
where the maps $\delta_{J_b}$ and $\delta_G$ are the natural ones, and $\chi_{\delta_G,\mu}$ is given by the Artin reciprocity map
and the reflex norm of $\mu.$
Then our main result implies the following theorem (see \ref{geo_conn_comp} below, cf. \cite{chen} Theorem 6.3.1).

\begin{ithm} If $(b, \mu)$ is Hodge-Newton irreducible, then the action of $J_b(\Q_p)\times G(\Q_p) \times \Gal(\bar L/L)$
on $\pi_0(\breve{\mathcal{M}}_{\tilde K}\hat{\otimes}\mathbb{C}_p)$ factors through $\delta,$ and makes $\pi_0(\breve{\mathcal{M}}_{\tilde K}\hat{\otimes}\mathbb{C}_p)$
into a $G^{\ab}(\Q_p)/\delta(\tilde K)$-torsor. In particular, there
exist bijections
$$ \pi_0(\breve{\mathcal{M}}_{\tilde K}\hat{\otimes}\mathbb{C}_p) \iso G^{\ab}(\mathbb{Q}_p)/\delta(\tilde K)$$
which are compatible when $\tilde K$ varies.
\end{ithm}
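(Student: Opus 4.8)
The plan is to deduce this statement from Theorem~\ref{thmzshk} (equivalently, from the results on $\pi_0(X_\mu(b))$) by pushing the group-theoretic description of connected components through the $p$-adic uniformization / level-structure formalism for Rapoport-Zink spaces. First I would recall that $\theta\colon \breve{\mathcal M}(G,b,\mu)(\bar{\mathbb F}_p)\simeq X^G_\mu(b)$ identifies the special fibre points with the affine Deligne-Lusztig set, and that by general facts about formal schemes locally formally of finite type, reduction induces a bijection $\pi_0(\breve{\mathcal M}^{\an}\hat\otimes\mathbb C_p)\simeq \pi_0(\breve{\mathcal M}\hat\otimes\bar{\mathbb F}_p)=\pi_0(X_\mu(b))$ at level $\tilde K = G(\mathbb Z_p)$ (each connected component of the formal scheme has connected rigid-analytic generic fibre, since it is formally of finite type). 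Combining this with Theorem~\ref{thmzshk} (applied to $G$, having first reduced to $G^{\ad}$ simple via the Cartesian diagram of \S\ref{secred} and compatibility with products) gives a $w_G$-equivariant identification of $\pi_0$ at bottom level with a torsor under $\pi_1(G)^\Gamma$, or equivalently, after identifying $\pi_1(G)_{\mathbb Q_p}^{\Gamma}$-type data with $G^{\ab}(\mathbb Q_p)$ up to the relevant compact subgroup, with a coset space for $G^{\ab}(\mathbb Q_p)$.

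Next I would handle the tower. For a general open $\tilde K\subset G(\mathbb Z_p)$ the covering $\breve{\mathcal M}_{\tilde K}\to\breve{\mathcal M}_{G(\mathbb Z_p)}=\breve{\mathcal M}^{\an}$ is finite étale with structure group $G(\mathbb Z_p)/\tilde K$ (at least after passing to the full tower $\breve{\mathcal M}_\infty$, which carries a $G(\mathbb Z_p)$-action), so $\pi_0(\breve{\mathcal M}_{\tilde K}\hat\otimes\mathbb C_p)$ is a quotient of $\pi_0(\breve{\mathcal M}_\infty\hat\otimes\mathbb C_p)$ by $\tilde K$. Thus the whole problem reduces to computing $\pi_0$ of the full tower with its $J_b(\mathbb Q_p)\times G(\mathbb Q_p)\times\Gal(\bar L/L)$-action. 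The key input here is that the three actions are each compatible with the corresponding actions on $G^{\ab}(\mathbb Q_p)$: the $G(\mathbb Q_p)$-action via $\delta_G$ is built into the Hecke action on level structures; the $J_b(\mathbb Q_p)$-action via $\delta_{J_b}$ comes from Theorem~\ref{prop1}/\ref{prop1'} (which gives transitivity of $J_b(\mathbb Q_p)$ on $\pi_0(X_\mu(b))$, already at bottom level) together with the compatibility of $w_G$ with the $J_b$-action; and the Galois action via $\chi_{\delta_G,\mu}$ — given by the reflex norm and Artin reciprocity — is the content of the theory of the determinant / $G^{\ab}$-component of the tower (this is where the reflex field of $\mu$ and the norm map enter, via the fact that the crystalline period morphism's image on $\pi_0$ is governed by $\mu$). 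One then checks that the combined map $\delta$ is surjective onto $G^{\ab}(\mathbb Q_p)$ and that the stabilizer of a component is exactly $\{(j,g,\gamma):\delta_{J_b}(j)\delta_G(g)\chi_{\delta_G,\mu}(\gamma)\in\delta(\tilde K)\}$, which gives the torsor statement.

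The main obstacle will be the Galois-action computation — identifying $\chi_{\delta_G,\mu}$ precisely via the reflex norm and verifying that $\Gal(\bar L/L)$ acts on $\pi_0$ of the tower through it. At bottom level the Galois action on geometrically connected components of $\breve{\mathcal M}\hat\otimes\mathbb C_p$ reflects the field of definition of the components, which for the $G^{\ab}$-part is exactly prescribed by Shimura's reciprocity law for the reflex norm of $\mu$; extending this up the tower and matching it with the Artin reciprocity normalization requires care with the various $\pm$-sign/normalization conventions for the reciprocity map. Everything else — the $J_b$- and $G$-equivariance, the reduction to $G^{\ad}$ simple, the passage from special fibre to generic fibre $\pi_0$, and the torsor bookkeeping — is formal given Theorems~\ref{thmzshk} and~\ref{prop1}, or is essentially contained in \cite{chen}, Theorem~6.3.1, to which I would refer for the parts that are not affected by our sharper control of $\pi_0(X_\mu(b))$.
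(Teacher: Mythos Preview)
Your overall strategy matches the paper's: the theorem is deduced from \cite{chen}, Theorem~6.3.1, once one knows that the paper's main result (Theorem~\ref{thmzshk}) confirms Conjecture~6.1.1 of \cite{chen}. The paper does not re-argue the tower, the Hecke action, or the Galois action at all; it simply proves Theorem~\ref{thm_conn_comp_RZ} (that $\pi_0(X^G_\mu(b))\to\pi_0(\breve{\mathcal M})$ is a bijection and $\varkappa_{\breve{\mathcal M}}$ is injective on $\pi_0$) and then invokes \cite{chen} wholesale. So your final sentence is exactly right, and most of what precedes it is unnecessary for the present paper.

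There is, however, one genuine gap in your sketch. You write ``$\pi_0(\breve{\mathcal M}\hat\otimes\bar{\mathbb F}_p)=\pi_0(X_\mu(b))$'' as if this is automatic from the bijection $\theta$ on $\bar{\mathbb F}_p$-points. It is not. In mixed characteristic the paper's $\pi_0(X_\mu(b))$ is defined ad hoc (\S\ref{conn_comp_ADLV}) via families over smooth $\bar k$-algebras equipped with frames, not scheme-theoretically; a priori it could be strictly finer or coarser than the scheme-theoretic $\pi_0$ of $\breve{\mathcal M}^{\mathrm{red}}$. The paper treats this comparison as a real theorem: Proposition~\ref{prop_conn_comp_surj} shows that a connecting family in $X_\mu(b)(\R)$ actually produces a Dieudonn\'e crystal over $\R_n$ (for $n\gg0$), hence an $R_n$-point of $\breve{\mathcal M}$, giving a well-defined surjection $\pi_0(X_\mu(b))\to\pi_0(\breve{\mathcal M})$; injectivity is then extracted from Theorem~\ref{thmzshk} via the commutative square (\ref{eqn_kappa_compatible}) and the injectivity of $X_*(G^{\ab})^\Gamma\to\mathrm{Hom}(X^*(G^{\ab})^\Gamma,\mathbb Z)$. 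Your ``general facts about formal schemes locally formally of finite type'' only handle the passage from special fibre to analytic generic fibre, not the comparison of the two notions of $\pi_0$ on the special fibre side.
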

%\bigskip

For dominant elements $\mu,\mu'\in X_*(T)$ we say that $\mu'\preceq \mu$ if $\mu-\mu'$ is a non-negative {\em integral} linear combination of positive coroots. The closed affine Deligne-Lusztig variety is defined as
\begin{equation*}
X_{\preceq \mu}(b)=\bigcup_{\mu'\preceq \mu}X_{\mu'}(b).
\end{equation*}

If $\mu$ is minuscule, $X_{\mu}(b) \cong X_{\preceq\mu}(b).$ We conjecture
that Theorem \ref{thmzshk} remains true without the assumption that $\mu$ is minuscule
if we replace $X_{\mu}(b)$ by $X_{\preceq\mu}(b)$ in the statement. For split groups this is proved
in \cite{conncomp} in the function field case. For split groups in mixed characteristic it can be deduced by combining the arguments in \cite{conncomp} with the theory of connected components of affine Deligne-Lusztig varieties in mixed characteristic developed in the present paper.

The proofs of the theorems are organized as follows: In Section 2 we collect some
foundational results including the behavior of the Cartan decomposition in a family, the definition of the affine Grassmannian and affine Deligne-Lusztig varieties in mixed characteristic. We also make the reductions discussed above, first to the case where $(\mu,b)$ is Hodge-Newton indecomposable, and then to the case when $G$ is adjoint and simple.
In Section \ref{secsb} we prove Theorem \ref{thmzshk} for the case that $b$ is superbasic, i.e.~under the assumption that $b$ is not $\sigma$-conjugate to an element of any proper Levi subgroup of $G$. In Proposition \ref{(1.5.5)} we show that each connected component contains an element of $J_b(F)X_{\preceq\mu'}^M(b)$ where $M$ is a standard Levi subgroup such that $b$ is superbasic in $M$ and $\mu'$ is an $M$-dominant cocharacter with $\mu'_{\dom}\preceq\mu$. Until this point we do not assume that $\mu$ is minuscule. Finally in Section \ref{seccp} we assume that $\mu$ is minuscule and we connect suitable representatives of the connected components of all $X_{\preceq\mu'}^M(b)$ by one-dimensional subvarieties in $X_{\mu}^G(b).$ Here the reader may wish to first consider the case when $G$ is a split
group, as this substantially simplifies the arguments.

Apart from this introduction we only consider the arithmetic case. Proofs for the function field case are completely analogous, but simpler.

\noindent{\it Acknowledgement.} The authors would like thank Robert Kottwitz, Dennis Gaitsgory and Jilong Tong for useful discussions, and Xuhua He and Rong Zhou for useful comments on a previous version of the manuscript. We thank the referee for helpful comments.

\section{Affine Deligne-Lusztig varieties in mixed characteristic}\label{1}

\subsection{The Cartan decomposition in families}

\begin{para}\label{Notation}
Let $F = W(k)[1/p]$ with $k$ a finite field with $q = p^r$ elements.
Fix an algebraic closure $\bar k$ of $k,$ and let $L = W(\bar k)[1/p].$
Write $\Gamma = \Gal(\bar k/k).$ Then $\Gamma$ has a canonical topological generator
$\sigma$ given by $x \mapsto x^q,$ and acts in the natural way on $L.$
Let $G,B,T$ be as above, and write $\Weyl = \Weyl_G$ for the Weyl group of $T$ in $G.$

We have the Cartan decomposition \cite{Bruhat-Tits} 4.4.3
$$ G(L) = \coprod_{\mu} G(\O_L)p^\mu G(\O_L) $$
where $\mu$ runs over the dominant elements of $X_*(T).$
In particular, $\mu \mapsto p^{\mu}$ induces a bijection
\begin{equation}\label{cartan}
X_*(T)/{\Weyl} \iso G(\O_L)\backslash G(L)/G(\O_L).
\end{equation}

%For $b \in G(L)$ denote by $\mu_{G,b} \in X_*(T)$ (or simply $\mu_b$ if the group $G$ is clear,)
%the dominant element such that $\mu_{G,b}$ maps to the image of $b$ under (\ref{cartan}).

We write $\mu_{G-\dom}$ (or $\mu_{\dom}$ if the group is clear) for the dominant element in the orbit of $\mu\in X_*(T)$ under $\Weyl.$
For $\mu_1,\mu_2 \in X_*(T),$ we write $\mu_1\preceq \mu_2$ if $\mu_2 - \mu_1$ is a linear combination of positive coroots with {\em integral}, non-negative coefficients.
For $\nu_1,\nu_2 \in X_*(T)_{\RR}$ we write $\nu_1 \leq \nu_2$ if $\nu_2 - \nu_1$ is a linear combination of positive coroots with {\em real}, non-negative coefficients.
\end{para}

\begin{para}\label{1.1.2} Let $R$ be a $\bar k$-algebra. A {\it frame} for $R$ is a $p$-torsion free, $p$-adically complete and separated
$\O_L$-algebra $\R$ equipped with an isomorphism $\R/p\R \iso R,$ and a lift (again denoted $\sigma$)
of the $q$-Frobenius $\sigma$ on $R$ to $\R.$
When $q=p$ this is a special case of Zink's definition \cite{Zi3}, Definition 1.
If $\theta:R \rightarrow R'$ is a map of $\bar k$-algebras, then a frame for $\theta$
is a morphism of $\O_L$-algebras $\tilde \theta: \R \rightarrow \R'$ from a frame of $R$
to a frame of $R',$ which lifts $\theta,$ and is compatible with $\sigma.$

Let $\kappa$ be a perfect field. Any map $s: R \rightarrow \kappa$ admits a unique
$\sigma$-equivariant map $\R \rightarrow W(\kappa),$ which we will often again denote by $s.$
\end{para}

\begin{lemma} \label{1.1.5} Let $\R$ be a frame for $R.$ Then any \'etale morphism
$R \rightarrow R'$ admits a canonical frame $\R \rightarrow \R'.$
\end{lemma}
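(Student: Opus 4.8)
The plan is to construct $\R'$ by a formal deformation-theoretic argument, lifting the \'etale $R$-algebra $R'$ step by step along the $p$-adic tower, and then to lift $\sigma$ using the smoothness of \'etale maps. First I would recall that since $\R$ is $p$-adically complete and separated and $p$-torsion free, one has $\R = \ilim \R/p^n\R$ with $\R/p\R = R$. By the standard lifting property of \'etale morphisms (\cite{SGA1} or \cite{stacks}), the \'etale $R = \R/p\R$-algebra $R'$ deforms uniquely (up to unique isomorphism) to an \'etale $\R/p^n\R$-algebra $\R'_n$ for each $n$, compatibly in $n$; here the obstruction and automorphism groups all vanish because \'etale morphisms are formally \'etale, so the deformations are rigid. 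Setting $\R' := \ilim_n \R'_n$ gives a $p$-adically complete, separated $\O_L$-algebra, flat over $\R$ (hence $p$-torsion free since $\R$ is), with $\R'/p\R' = R'$ and an \'etale structure map $\R \to \R'$.

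Next I would produce the Frobenius lift. The composite $\R \xrightarrow{\sigma} \R \to \R'$ is a ring map lifting $R \xrightarrow{\sigma} R \to R'$; on the other hand $\R \to \R'$ itself reduces mod $p$ to $R \to R'$, and the Frobenius $\sigma_{R'}$ on $R'$ (the $q$-power map, which exists as $R'$ is an $\bar k = \bar{\mathbb F}_q$-algebra) sits in a commutative square with $\sigma_R$ over $R \to R'$. So I want a map $\sigma_{\R'}\colon \R' \to \R'$ making the square with $\sigma_\R$ commute and reducing to $\sigma_{R'}$ mod $p$. Since $\R \to \R'$ is \'etale, hence formally \'etale, and $\R'$ is $p$-adically complete, the square
\[
\begin{CD}
\R @>{\sigma_\R}>> \R @>>> \R' \\
@VVV @. @VVV \\
\R' @>>> \R'/p\R' @= R'
\end{CD}
\]
together with the given lift $\sigma_{R'}$ of the bottom composite admits a unique filler $\R' \to \R'$ by the infinitesimal lifting criterion applied over each $\R'/p^n$; uniqueness across $n$ lets us pass to the limit. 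This filler is the desired $\sigma$ on $\R'$, and its uniqueness makes the whole construction canonical and functorial in $R \to R'$.

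The main obstacle is not really the existence of the lift — that is formal — but rather checking the bookkeeping that makes $\R'$ genuinely a frame in the sense of \ref{1.1.2}: one must verify $p$-torsion freeness (which follows from flatness of $\R \to \R'$ over the $p$-torsion-free ring $\R$), $p$-adic completeness and separatedness of the inverse limit (standard, since each $\R'_n$ is $p^n$-torsion as a quotient and the transition maps are surjective with kernels killed by $p$), and that the two Frobenius lifts — the one we built on $\R'$ and the one on $\R$ — are genuinely compatible, i.e. the square commutes on the nose, not just mod $p$; this is exactly the uniqueness in the lifting criterion. I would also remark that when $q = p$ this recovers Zink's functoriality of frames under \'etale base change, so the statement is consistent with \cite{Zi3}.
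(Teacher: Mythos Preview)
Your proof is correct and follows essentially the same approach as the paper: both construct $\R' = \ilim \R'_n$ by invariance of the \'etale site under nilpotent thickenings, and then lift $\sigma$ using uniqueness of \'etale deformations. The only cosmetic difference is that the paper phrases the Frobenius lift via the relative Frobenius isomorphism $R'\otimes_{R,\sigma}R \iso R'$ (which it then lifts to $\R'_n\otimes_{\R,\sigma}\R \iso \R'_n$ and precomposes with $a\mapsto a\otimes 1$), whereas you invoke the infinitesimal lifting criterion directly; these are equivalent formulations of the same uniqueness statement.
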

\begin{proof} Since the \'etale site is invariant under nilpotent thickenings, $R'$ lifts canonically
to an \'etale $\R/p^n\R$ algebra $\R'_n,$ and we set $\R' = \ilim \R'_n.$

Similarly, the canonical isomorphism $R'\otimes_{R,\sigma}R \underset{\sigma\otimes 1}{\iso} R'$ lifts to a unique isomorphism
$ \R'_n\otimes_{\R,\sigma}\R \iso \R'_n,$ and the composite
$$ \R'_n \overset{a\mapsto a\otimes 1}\longrightarrow \R'_n\otimes_{\R,\sigma}\R \iso \R'_n$$
lifts $\sigma$ on $\R'_n.$ Passing to the limit with $n$ we get a lift of $\sigma$ on $\R'.$
\end{proof}

\begin{para}
Fix a frame $\R$ for $R,$ and let $g \in G(\R_L).$
For a dominant $\mu \in X_*(T)$ let
$$ S_{\mu}(g) = \{s \in \Spec R: s(g) \in G(W(\bar \kappa(s)))p^{\mu} G(W(\bar \kappa(s)))\} $$
where $\bar\kappa(s)$ denotes an algebraic closure of $\kappa(s),$ and set
$$ S_{\preceq \mu}(g) = \cup_{\mu' \preceq \mu} S_{\mu'}(g),$$
where $\mu'$ runs over dominant cocharacters $\preceq \mu.$
\end{para}

\begin{lemma}\label{1.1.3} Let $R$ be a Noetherian, formally smooth $\bar k$-algebra,
$\R$ a frame for $R,$ and $g \in G(\R_L).$
\begin{enumerate}
\item
The subset $S_{\preceq \mu}(g) \subset S = \Spec R$ is Zariski closed.
\item The subset $S_{\mu}(g)$ is locally closed and is closed if $\mu$ is minuscule.
\item The function $s \mapsto [\mu_{s(g)}] \in \pi_1(G)$ is locally constant on $s \in \Spec R.$
\end{enumerate}
\end{lemma}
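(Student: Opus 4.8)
The plan is to translate each clause of the lemma into a statement about $p$-adic valuations of explicit elements of the frame $\R,$ where there is a clean criterion in terms of Witt components, and to reduce the group theory to pairings with highest weights of representations of $G.$

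The first thing I would record is a ``valuation semicontinuity'' fact. Since $\R$ is $p$-torsion free and carries a lift $\sigma$ of Frobenius, the pair $(\R,\sigma)$ is a $\delta$-ring, so there is a canonical ring homomorphism $\R\to W(R)$ lifting $\id_R$ and intertwining $\sigma$ with the Witt vector Frobenius; composing with $W(R)\to W(\bar\kappa(s))$ recovers, for $s\in\Spec R,$ the canonical $\sigma$-equivariant lift $s\colon\R\to W(\bar\kappa(s))$ of \ref{1.1.2}. Writing the image of $x\in\R$ in $W(R)$ in Witt components $\gamma_0(x),\gamma_1(x),\dots\in R$ and using that, $\bar\kappa(s)$ being perfect, $p^jW(\bar\kappa(s))$ is exactly the set of Witt vectors whose first $j$ components vanish, one obtains
$$\{\,s\in\Spec R:\ s(x)\in p^jW(\bar\kappa(s))\,\}=V(\gamma_0(x),\dots,\gamma_{j-1}(x)),$$
a Zariski closed subset (for $x\in\R_L$ one first clears denominators). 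Applied to a unit $x\in\R_L^{\times},$ writing $x=p^{-a}y$ and $x^{-1}=p^{-b}z$ with $y,z\in\R$ and $yz=p^{a+b},$ one has $0\le v_p(s(y))\le a+b$ for all $s,$ and each set $\{v_p(s(y))\ge j\}$ is closed and, being the complement of $\{v_p(s(z))\ge a+b+1-j\},$ also open; hence $s\mapsto v_p(s(x))$ is locally constant on $\Spec R.$

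For the main body I would fix, for each simple root, a representation $W_i$ of $G$ on a free $\O_F$-module whose highest weight $\lambda_i$ is a positive integral multiple of the corresponding fundamental weight $\omega_i.$ Writing $s(g)=k_1p^{\mu_{s(g)}}k_2$ with $k_\nu\in G(W(\bar\kappa(s)))$ and using that $G(W(\bar\kappa(s)))$ preserves the lattice $W_i\otimes W(\bar\kappa(s))$ together with its weight decomposition, one checks that the smallest elementary divisor of $\rho_{W_i}(s(g))^{-1}$ equals $-\langle\lambda_i,\mu_{s(g)}\rangle;$ since that elementary divisor is the minimum of the $v_p$ of the entries of $\rho_{W_i}(s(g))^{-1}$ in a fixed basis, the displayed formula above shows $\{s:\langle\lambda_i,\mu_{s(g)}\rangle\le c\}$ is Zariski closed for every $c.$ Because $\preceq$ between dominant cocharacters with equal image in $\pi_1(G)$ is detected by the finitely many inequalities $\langle\lambda_i,\cdot\rangle\le\langle\lambda_i,\mu\rangle,$ part (3) (proved below) yields part (1) at once, via
$$S_{\preceq\mu}(g)=\{\,s:[\mu_{s(g)}]=[\mu]\text{ in }\pi_1(G)\,\}\ \cap\ \bigcap_i\{\,s:\langle\lambda_i,\mu_{s(g)}\rangle\le\langle\lambda_i,\mu\rangle\,\},$$
the first set being open and the rest closed. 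For part (2): there are only finitely many dominant $\mu'\precneqq\mu$ (they satisfy $0\le\langle\omega_i,\mu'\rangle\le\langle\omega_i,\mu\rangle$), so $S_\mu(g)=S_{\preceq\mu}(g)\setminus\bigcup_{\mu'\precneqq\mu}S_{\preceq\mu'}(g)$ is locally closed; and when $\mu$ is minuscule $\mu$ is the only dominant cocharacter $\preceq\mu,$ so $S_\mu(g)=S_{\preceq\mu}(g)$ is closed.

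It remains to prove part (3), which I regard as the heart of the matter. The quotient of $\pi_1(G)$ by its torsion subgroup is $\pi_1(G/G^{\der})=X_*(G/G^{\der}),$ and the composite of $s\mapsto w_G(s(g))$ with this quotient is $s\mapsto w_{G/G^{\der}}$ of the image of $s(g);$ since $G/G^{\der}$ is a torus split over $\O_L,$ this is a product of maps $s\mapsto v_p(s(u))$ with $u\in\R_L^{\times},$ hence locally constant by the computation for units above. To recover the torsion part I would pass to a $z$-extension $1\to Z\to\tilde G\to G\to1$ with $\tilde G^{\der}$ simply connected and $Z$ an induced torus, so that $\pi_1(\tilde G)$ is torsion free and maps onto $\pi_1(G);$ since $Z$ is induced, the obstruction to lifting $g$ to $\tilde G(\R_L)$ lies in $H^1_{\et}(\Spec\R_L,Z)=\prod{\rm Pic}(\R_L),$ which becomes trivial after restricting $\Spec R$ to a suitable basic open equipped with its canonical frame (cf.\ Lemma \ref{1.1.5}), because $\R$ is a formally smooth, hence locally regular, $\O_L$-algebra. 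Then $w_G(s(g))$ is the image of $w_{\tilde G}(s(\tilde g)),$ which is locally constant by the torsion-free case already handled. I expect the genuine difficulty to be exactly this last step --- carrying out the passage to a $z$-extension \emph{in the family}, i.e.\ lifting $g$ Zariski-locally on $\Spec R$ and controlling the Picard groups of the frame; granting that, the rest of the lemma is the formal translation into valuations of functions sketched above.
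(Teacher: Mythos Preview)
Your argument is essentially correct and follows a route that is close in spirit to the paper's, but with genuinely different technical choices at the two key steps.

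For the closedness of the relevant loci, the paper reduces to $\GL(\W)$ via an arbitrary faithful representation and then cites Katz's theorem on the semicontinuity of Hodge polygons of $F$-isocrystals, whereas you work directly with a family of highest-weight representations and read off $\langle\lambda_i,\mu_{s(g)}\rangle$ as (minus) the smallest elementary divisor of $\rho_{W_i}(s(g))^{-1}$, then turn this into a closed condition via Witt components. Your approach is more self-contained (it avoids the external reference) and gives exactly the pairings needed to detect $\preceq$, at the cost of having to produce the representations $W_i$ over $\O_L$. For part (3), both arguments pass to a covering group with torsion-free $\pi_1$ and kill the lifting obstruction by trivialising line bundles Zariski-locally on the regular scheme $\Spec\R$; the paper uses $\tilde G\times T$ with kernel a maximal torus of $\tilde G$, while you use a $z$-extension with induced kernel --- these are interchangeable. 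One cosmetic point: in your displayed formula for $S_{\preceq\mu}(g)$ you call the level set of $[\mu_{s(g)}]$ ``open''; for the intersection to be closed you need it to be \emph{clopen}, which is of course what (3) gives.

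There is one genuine technical slip. You assert that $(\R,\sigma)$ is a $\delta$-ring and hence admits a canonical map $\R\to W(R)$. A $\delta$-ring structure is a lift of the \emph{absolute} $p$-Frobenius, but in the paper's setup $\sigma$ lifts the $q$-Frobenius $x\mapsto x^q$ with $q=p^r$, so for $r>1$ the pair $(\R,\sigma)$ is not a $\delta$-ring and the map $\R\to W(R)$ does not come for free. This is easily repaired: since $R$ is formally smooth over $\bar k$, Zariski-locally there is an \'etale map $\bar k[t_1,\dots,t_n]\to R$, and the frame $\O_L\langle t_1,\dots,t_n\rangle$ carries an obvious lift of the $p$-Frobenius (Witt Frobenius on $\O_L$, $t_i\mapsto t_i^p$), which by the argument of Lemma~\ref{1.1.5} extends uniquely to the \'etale frame $\R$. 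With this auxiliary $p$-Frobenius lift in hand, your Witt-component description of $\{s:v_p(s(x))\ge j\}$ goes through unchanged (the canonical map $\R\to W(\bar\kappa(s))$ is unique as a ring map lifting $R\to\bar\kappa(s)$, independently of any Frobenius, since $W_n(\kappa)$ is formally \'etale over $\Z/p^n$ for $\kappa$ perfect; so it automatically factors through your $\R\to W(R)$). Alternatively, one may simply note that when $q=p$ your argument is literally correct, and the general case reduces to this by the same local-coordinate trick.
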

\begin{proof}  We begin by checking that $S_{\leq \mu}(g) = \{ s: \mu_{s(g)} \leq  \mu\}$ is closed in $S.$
By \cite{RR}, 2.2(iv) we have $\mu_{G,s(g)} \leq \mu$ if and only if for every representation
$\rho: G_L \rightarrow \GL(V)$ on an $L$-vector space $V,$ we have $\rho\circ\mu_{G,s(g)} \leq \rho\circ \mu.$

Choosing a suitable $\O_L$-lattice $\W \subset V,$ we may assume
that $\rho$ is induced by a map $G \rightarrow \GL(\W)$ over $\O_L$ (cf.~the proof of \cite{Ki4}, 2.3.1). Let $T' \subset \GL(\W)$ be a maximal $L$-split torus containing the image of $T.$
Then $\rho\circ\mu_{G,s(g)} = \mu_{GL,s(\rho(g))}$
in $X_*(T')/\Weyl_{\GL}$ where $\Weyl_{\GL}$ is the Weyl group of $T'$ in $\GL(\W).$
By \cite{Ka}, Cor.~2.3.2 the set of points at which the Hodge polygon of a $\sigma$-isocrystal lies on or above a given polygon and has the same endpoints, is closed in $S.$ Hence $S_{\leq \rho\circ\mu}(\rho(g)) \subset S$ is closed, and hence
$S_{\leq \mu}(g) \subset S$ is closed.

It follows in particular, that the function $s \mapsto [\mu_{G,s(g)}] \in \pi_1(G)\otimes_{\mathbb Z}\Q$ is locally constant
on $S,$ which proves (3) when $\pi_1(G)$ has no torsion. To prove (3) in general,
let $\tilde G$ be the universal cover of $G^{\der}$ and let $G' = \tilde G \times T.$
The kernel of the natural map $G' \rightarrow G$ is a maximal torus $T' \subset \tilde G.$
The obstruction to lifting $g$ to a point of $G'_L(\R_L)$ lies in $H^1(\Spec \R_L, T').$
Since $T'$ is a split torus this obstruction corresponds to a finite collection of line bundles over
$\Spec \R_L.$ Since $\R$ is regular any line bundle on $\Spec \R_L$ extends to a line bundle on $\Spec \R.$
Hence after replacing $S$ by a Zariski covering by affines, and $\R$ by the corresponding frame (see Lemma \ref{1.1.5}), we may assume
that $g$ lifts to a point  $g'\in G'_L(\R_L).$  By what we already saw, the function
$s \mapsto [\mu_{G,s(g')}] \in \pi_1(G')$ is locally constant, so $s \mapsto [\mu_{G,s(g)}] \in \pi_1(G)$ is locally constant.

To prove (1) and (2) we may assume that $S$ is connected. Then $[\mu_{s(g)}] \in \pi_1(G)$ does not
depend on $s,$ and $S_{\preceq\mu}(g)$ is empty unless $[\mu]$ is equal to this constant class.
If this condition holds, then $\mu_{s(g)} \preceq \mu$ if and only if
$\mu_{s(g)} \leq \mu.$ Thus, $S_{\preceq \mu}(g) = \{ s: \mu_{s(g)} \leq  \mu\},$ which we saw is closed.
This proves (1) and that $S_{\mu}(g)$ is locally closed. If $\mu$ is minuscule and $\mu' \preceq \mu$ is dominant with $[\mu] = [\mu']$ in $\pi_1(G),$ then  $\mu' = \mu,$ so (2) follows.
\end{proof}

\begin{para}\label{1.1.4} Suppose that $g \in G(\R_L)$ and $S_{\mu}(g) = S=\Spec R.$ Then a natural question is
whether $G(\R)p^\mu G(\R) \subset G(\R_L)$ contains $g.$ We will show that this is so \'etale
locally on $R,$ when $R$ is a reduced, Noetherian $\bar k$-algebra.
This will be used in \S 2.5 below. To do this we need some preparation.

By an {\it \'etale covering}, we mean a faithfully flat, \'etale morphism $R \rightarrow R'.$
We begin with the following simple lemma which allows us to work with frames \'etale locally on $R,$ and will allow us to often replace $R$ by an \'etale covering in arguments.
\end{para}

\begin{lemma} \label{1.1.6} Let $R$ be a reduced $\bar k$-algebra, and $\R$ a frame for $R.$
Suppose that $g \in G(\R_L)$ and $S_{\mu}(g) = S.$ If $\kappa \supset \bar k$
is a perfect field of characteristic $p,$ and $L'/W(\kappa)[1/p]$ a finite extension with ring of integers $\O_{L'},$
then for any map of $\O_L$-algebras $\xi: \R \rightarrow \O_{L'},$ we have
$$ \xi(g) \in G(\O_{L'})p^{\mu}G(\O_{L'}).$$
\end{lemma}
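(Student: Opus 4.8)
The plan is to reduce the statement to the case $G=\GL_n$, where it becomes a statement about a finitely generated module over the frame $\R$, and to settle the latter by a Fitting-ideal computation. The one point that is not bookkeeping is a divisibility lemma which uses the reducedness of $R$ to get around the nilpotents in $\R/p^N\R$.

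\emph{Reduction to $\GL_n$.} By the Cartan decomposition over $L'$ (\cite{Bruhat-Tits} 4.4.3, which applies to $G$ over the complete discrete valuation ring $\O_{L'}$), write $\xi(g)\in G(\O_{L'})\pi^{\nu}G(\O_{L'})$ for a unique dominant $\nu\in X_*(T)$, where $\pi$ is a uniformizer of $\O_{L'}$; since $p\O_{L'}=\pi^{e}\O_{L'}$ with $e=v_{\O_{L'}}(p)$, it suffices to prove $\nu=e\mu$. For a representation $\rho\colon G\to\GL(\W_\rho)$ defined over $\O_L$, the hypothesis $S_\mu(g)=S$ implies the analogous hypothesis for $\rho(g)\in\GL(\W_\rho)(\R_L)$ (with the dominant conjugate of $\rho\circ\mu$), since $s(\rho(g))=\rho(s(g))$; granting the lemma for $\GL(\W_\rho)$ we get $\rho(\xi(g))=\xi(\rho(g))$ in the $\GL(\W_\rho)(\O_{L'})$-double coset of $\rho(p^{\mu})$, and comparing this with the double coset of $\rho(\pi^{\nu})$ through the Cartan decomposition for $\GL(\W_\rho)$ over $L'$ shows that $\rho\circ\nu$ and $\rho\circ(e\mu)$ have the same dominant conjugate. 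Running this over all $\rho$ (taken over $\O_L$ as in the proof of Lemma \ref{1.1.3}) and applying \cite{RR}, 2.2(iv) as there gives $\nu=e\mu$. So from now on $G=\GL_n$; multiplying $g$ by a power of $p$ — harmless, as it only translates $\mu$ by a multiple of the central cocharacter — we may also assume $g\in\Mat_n(\R)\cap\GL_n(\R_L)$, so that $\mu_1\ge\cdots\ge\mu_n\ge 0$.

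\emph{The module and its Fitting ideals.} Put $M:=\R^n/g\R^n$, a finitely generated $\R$-module (killed by $p^{N}$ for $N\gg 0$, as $p^{N}g^{-1}\in\Mat_n(\R)$). Tensoring the presentation $\R^n\xrightarrow{\,g\,}\R^n\to M\to 0$ along the canonical lift $s\colon\R\to W(\bar\kappa(s))$ of a geometric point of $\Spec R$ and using $S_\mu(g)=S$ gives $M\otimes_{\R}W(\bar\kappa(s))\cong W(\bar\kappa(s))^n/s(g)W(\bar\kappa(s))^n\cong\bigoplus_i W(\bar\kappa(s))/p^{\mu_i}$. Since Fitting ideals commute with base change, the image of $\mathrm{Fitt}_j(M)$ in $W(\bar\kappa(s))$ equals $p^{c_j}W(\bar\kappa(s))$ for every geometric point $s$, where $c_j:=\mu_{j+1}+\cdots+\mu_n$. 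The key lemma is now the following, where reducedness enters: if $x\in\R$ satisfies $s(x)\in p^{N}W(\bar\kappa(s))$ for every geometric point $s$, then $x\in p^{N}\R$. One proves this by induction on $N$: for $N=1$ one has $x\bmod p\in\bigcap_s\ker(R\to\bar\kappa(s))=0$ since $R=\R/p\R$ is reduced; for the step, the case $N=1$ gives $x=px'$, and then ($\R$ and $W(\bar\kappa(s))$ being $p$-torsion free) $s(x')\in p^{N-1}W(\bar\kappa(s))$ for all $s$, so $x'\in p^{N-1}\R$ by induction.

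\emph{Conclusion.} Applying the key lemma with $N=c_j$ yields $\mathrm{Fitt}_j(M)\subseteq p^{c_j}\R$, hence $\mathrm{Fitt}_j(M)=p^{c_j}J_j$ for an ideal $J_j\subseteq\R$; the geometric-point description forces $J_j$ to reduce modulo $p$ to the unit ideal of $R$, so $J_j$ contains an element $\equiv 1\pmod{p\R}$, whose image under $\xi$ is a unit of $\O_{L'}$. Therefore $\mathrm{Fitt}_j(M\otimes_{\R}\O_{L'})=\xi(\mathrm{Fitt}_j(M))\O_{L'}=p^{c_j}\O_{L'}$ for all $j$; since $\O_{L'}$ is a discrete valuation ring its Fitting ideals determine a finitely generated module, so $M\otimes_{\R}\O_{L'}\cong\bigoplus_i\O_{L'}/p^{\mu_i}\O_{L'}$. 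Finally $\xi(g)\in\GL_n(L')$ because $\det g\in\R_L^{\times}$, hence $\xi(g)$ is injective on $\O_{L'}^n$; tensoring $0\to\R^n\xrightarrow{\,g\,}\R^n\to M\to 0$ along $\xi$ identifies $\O_{L'}^n/\xi(g)\O_{L'}^n$ with $M\otimes_{\R}\O_{L'}\cong\bigoplus_i\O_{L'}/p^{\mu_i}\O_{L'}$, i.e.\ $\xi(g)$ has elementary divisors $p^{\mu_1},\dots,p^{\mu_n}$ relative to $\O_{L'}^n$, which means exactly $\xi(g)\in\GL_n(\O_{L'})p^{\mu}\GL_n(\O_{L'})$.

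The main obstacle is the divisibility lemma in the third paragraph: it is the one place where the hypothesis that $R$ is reduced is genuinely used, and it is what lets one bypass the fact that $\R/p^N\R$ has nilpotents for $N\ge 2$. Once it is in hand, everything else is routine manipulation with Fitting ideals, the Cartan decomposition, and \cite{RR}, 2.2(iv).
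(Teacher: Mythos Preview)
Your proof is correct and follows essentially the same strategy as the paper's: reduce to $\GL_n$, extract numerical invariants governed by the reducedness of $R$, and transfer them along $\xi$. The only difference is cosmetic—the paper works with the minimum valuation $i_\xi$ of the entries of $\wedge^k g$ and iterates over $k$, whereas you use the Fitting ideals $\mathrm{Fitt}_j(M)$; since $\mathrm{Fitt}_{n-k}(M)$ is generated by the $k\times k$ minors of $g$, i.e.\ by the entries of $\wedge^k g$, these are the same invariants, and your ``key lemma'' is exactly the reducedness argument the paper uses to show $i_\xi=i_0$.
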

\begin{proof} As in the proof of Lemma \ref{1.1.3}, it suffices to consider the case $G = \GL(\W)$ for a finite free
$\O_L$-module $\W.$

For $\xi$ as in the lemma, let $i_{\xi}$ denote the greatest number in $e(L')^{-1}\mathbb Z$ such that
$\xi(g)(\W\otimes_{\O_L}\O_{L'}) \subset \pi_{L'}^{e(L')i_{\xi}}\W\otimes_{\O_L}\O_{L'},$
where $\pi_{L'}$ is uniformizer for $L'$ and $e(L')$ is the absolute ramification degree of $L'.$
Our assumptions imply that if $\xi$ is a map
$s: \R \rightarrow \O_L = W(\bar k)$ induced by a closed point $s: R \rightarrow \bar k,$
then $i_{\xi}$ has a value $i_0 \in \mathbb Z$ which does not depend on $s.$

We claim that $i_{\xi} = i_0$ for any $\xi.$ To see this we may multiply $\mu$ by a central
character and $g$ by a scalar, and assume that $i_0 \geq 0,$ and that $g$ stabilizes $\W\otimes_{\O_L}\R.$
If $i_0 > 0,$ then $g$ induces an endomorphism of
$\W\otimes_{\O_L}R$ which vanishes at every closed point of $R,$ and hence is identically $0$
as $R$ is reduced. Hence $g(\W\otimes_{\O_L}\R) \subset p(\W\otimes_{\O_L}\R).$ Thus, after
again multiplying $\mu$ by a central character, we may assume that $i_0 = 0$ and $g$ leaves
$\W\otimes_{\O_L}\R$ stable. This implies that $g$ induces an endomorphism of $\W\otimes_{\O_L}R,$
which is non-zero at every closed point, and hence $i_{\xi}=0$ for all $\xi.$

Now the lemma follows by applying the claim just proved to the exterior powers of $\W.$
\end{proof}

\begin{para}\label{(1.1.7)} Suppose that $\W$ is a finite free $\O_L$-module equipped with an action of $G.$
For $\mu\in X_*(G)$ we denote by $\mu^\W$ the $\GL(\W)$-valued cocharacter given by $z \mapsto z^i\mu(z),$ where $i$
is the integer such that the eigenvalues of $p^i\mu(p)$ acting on $\W$ are non-negative powers of
$p,$ and include $1.$ Let $P_{\mu}(\W)\subset G\times_{\O_F}G$ be the subgroup whose
points are pairs $(g_1,g_2)$ such that $g_1\mu^\W(p) = \mu^\W(p)g_2$ in $\End~\W.$
Note that this need not be a flat subgroup, in general.

Similarly, if $\alpha$ is a collection of finite free $\O_F$-modules equipped with an action of $G,$
then we denote by $P_{\mu}(\alpha)$ the intersection of the $P_{\mu}(\W) \subset G\times_{\O_F}G$ for
$\W \in \alpha.$ Note that the generic fibre of $P_{\mu}(\alpha)$ may be identified with $G$ via the
embedding
$$G \rightarrow G\times G: \quad g \mapsto (g,\mu(p)^{-1}g\mu(p)).$$
\end{para}

\begin{lemma} \label{1.1.8} Let $G \hookrightarrow \GL(\W)$ be a faithful representation of $G$ on a finite
free module $\W,$ let $\alpha = \{\wedge^i \W\}_{i \geq 1}$, and let $\mu\in X_*(G)$. Then $P_{\mu}(\alpha)$ is a smooth model of
$G,$ and may be identified with the closure of the embedding $G \rightarrow G\times G$ above.
\end{lemma}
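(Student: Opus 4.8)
\emph{Plan of proof.} The first step is to pin down the generic fibre. Over $F$ the element $p$ is a unit, so each defining relation $g_1\mu^{\W}(p)=\mu^{\W}(p)g_2$ of $P_\mu(\W)$ becomes $g_2=\mu^{\W}(p)^{-1}g_1\mu^{\W}(p)=\mu(p)^{-1}g_1\mu(p)$, the extra central power of $p$ cancelling. Thus $P_\mu(\alpha)_F$ is exactly the image of $G_F$ under $\iota\colon g\mapsto\bigl(g,\mu(p)^{-1}g\mu(p)\bigr)$, as already observed in \ref{(1.1.7)}; in particular $\iota(G_F)\subseteq P_\mu(\alpha)$, so the scheme-theoretic closure $\bar G$ of $\iota(G_F)$ in $G\times_{\O_F}G$ is a closed subscheme of $P_\mu(\alpha)$. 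Being the scheme-theoretic closure of a closed subgroup scheme, $\bar G$ is itself a closed subgroup scheme, flat over $\O_F$, with generic fibre $G_F$. It therefore suffices to prove: (i) $\bar G$ is smooth over $\O_F$; and (ii) $P_\mu(\alpha)=\bar G$. Granting both, $P_\mu(\alpha)$ is a smooth model of $G$ and is identified with the closure $\bar G$, which is the assertion.

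For (i) I would argue on the big cell. Let $M\subseteq G$ be the centralizer of $\mu$, let $P^{+},P^{-}\supseteq M$ be the two opposite parabolic subgroup schemes with Levi $M$ and $U^{+},U^{-}$ their unipotent radicals, so that multiplication is an open immersion $U^{-}\times M\times U^{+}\hookrightarrow G$ onto the big cell $\Omega$. Conjugation by $\mu(p)$ contracts $U^{+}$ and conjugation by $\mu(p)^{-1}$ contracts $U^{-}$ (on a root group in $U^{\pm}$ it rescales the coordinate by a strictly positive power of $p$), while both are the identity on $M$; thus $u\mapsto\mu(p)u\mu(p)^{-1}$ defines a morphism $c^{+}\colon U^{+}\to U^{+}$ and $u\mapsto\mu(p)^{-1}u\mu(p)$ a morphism $c^{-}\colon U^{-}\to U^{-}$, \emph{over $\O_F$}. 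Writing $\iota(u^{-}mu^{+})$ in big-cell coordinates for both factors and putting $v^{+}=\mu(p)^{-1}u^{+}\mu(p)$ (so that $u^{+}=c^{+}(v^{+})$), one finds that the morphism
$$\phi\colon U^{-}\times M\times U^{+}\longrightarrow\Omega\times\Omega,\qquad (u^{-},m,v^{+})\longmapsto\bigl((u^{-},m,c^{+}(v^{+})),\,(c^{-}(u^{-}),m,v^{+})\bigr)$$
is defined over $\O_F$, has generic fibre $\iota(\Omega_F)$, and is a closed immersion --- it admits the retraction $(x_1,x_2)\mapsto(\pi_{U^{-}}(x_1),\pi_{M}(x_1),\pi_{U^{+}}(x_2))$, and a section of an affine, hence separated, morphism is a closed immersion. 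Since $\Omega_F$ is schematically dense in $G_F$, the image of $\phi$ is $\overline{\iota(\Omega_F)}=\bar G\cap(\Omega\times\Omega)$; so this is an open subscheme of $\bar G$, isomorphic to $U^{-}\times M\times U^{+}$, hence smooth over $\O_F$, and containing the identity section. Therefore $\bar G_k$ is smooth at the identity, hence --- being a group scheme over a field --- everywhere smooth; together with the smoothness of $\bar G_F\cong G_F$ and the flatness of $\bar G$ over the discrete valuation ring $\O_F$, this gives (i).

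For (ii) it is enough to show $P_\mu(\alpha)$ is flat over $\O_F$: then $P_\mu(\alpha)$ coincides with the scheme-theoretic closure of its generic fibre, which is $\bar G$. This is precisely where the full collection $\alpha=\{\wedge^{i}\W\}_{i\ge1}$ is indispensable; for $\W$ alone $P_\mu(\W)$ is in general not flat --- already for $G=\GL_2$ and $\mu=(1,0)$ the relation forces $p\bigl((g_1)_{11}-(g_2)_{11}\bigr)=0$, producing $p$-torsion. I would establish flatness by reducing to $G=\GL(\W)$ --- using that $G\times G\hookrightarrow\GL(\W)\times\GL(\W)$ is a closed immersion and that each $\wedge^{i}\W$ carries a $\GL(\W)$-action, so that $P^{G}_\mu(\alpha)=(G\times_{\O_F}G)\cap P^{\GL(\W)}_\mu\bigl(\{\wedge^{i}\W\}_{i\ge1}\bigr)$ inside $\GL(\W)^{2}$ --- and then by a direct computation in the $\GL(\W)$ case, combined with the big-cell picture of step (i). Concretely, one covers $P^{\GL(\W)}_\mu(\{\wedge^{i}\W\}_{i})$ by the open loci on which a prescribed product $\prod_{l}(g_1)_{\tau(l),l}$ of entries of $g_1$ is a unit --- these cover, because $\det g_1=\det g_2$ is a unit and is the alternating sum of such products, so that no point kills all of them --- and checks, using the relations coming from \emph{all} the $\wedge^{i}\W$ together, that on each such locus the identities between the entries of $g_1$ and those of $g_2$ (with their prescribed powers of $p$) collapse to genuine equations, exhibiting the locus as an open subscheme of an affine space over $\O_F$; equivalently, that $P_\mu(\alpha)\cap(\Omega\times\Omega)$ coincides with the smooth scheme $\phi(U^{-}\times M\times U^{+})$ of step (i). This last bookkeeping --- verifying that the exterior-power conditions jointly eliminate every trace of $p$-torsion --- is the step I expect to be the main obstacle; once it is carried out, flatness of $P_\mu(\alpha)$, and hence (ii), follows.
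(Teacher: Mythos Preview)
Your argument for (i) --- smoothness of the closure $\bar G$ via an explicit big-cell parametrisation $\phi$ --- is correct and is a pleasant variant of what the paper does. (Over $k$ your maps $c^{\pm}$ collapse to the identity element, so $\phi_k(u^-,m,v^+)=(u^-m,\,mv^+)$; this identifies the big cell of $\bar G_k$ with the big cell of $P_{\mu,k}^{\circ}\times_{M_{\mu,k}}P_{\mu,k}$, which is exactly the group the paper eventually pins down.)

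The gap is in (ii). You try to prove $P_\mu(\alpha)$ flat over $\O_F$ directly, reducing to $\GL(\W)$ and then to an explicit elimination of $p$-torsion on coordinate charts; you correctly flag this as the main obstacle, and indeed it is not clear how to carry it out --- note also that the reduction step is not innocuous, since $(G\times G)\cap P^{\GL(\W)}_\mu(\alpha)$ need not inherit flatness from $P^{\GL(\W)}_\mu(\alpha)$. The paper sidesteps flatness entirely by bounding the special fibre from above. The key observation is that the equation $g_1\mu^{\wedge^{e_i+1}\W}(p)=\mu^{\wedge^{e_i+1}\W}(p)\,g_2$, read modulo $p$, forces $g_1$ to preserve the partial flag $\bigoplus_{j\le i}\W_j$ and $g_2$ to preserve $\bigoplus_{j>i}\W_j$, with the induced maps on each graded piece $\W_i$ agreeing; here one uses that the weight-$0$ part of $\wedge^{e_i+1}\W$ is $\bigl(\bigotimes_{j\le i}\wedge^{d_j}\W_j\bigr)\otimes\W_{i+1}$. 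Running over all $i$ gives a closed immersion
\[
P_\mu(\alpha)_k \;\hookrightarrow\; P_{\mu,k}^{\circ}\times_{M_{\mu,k}}P_{\mu,k}.
\]
Now one has the chain $\bar G_k \subset P_\mu(\alpha)_k \subset P_{\mu,k}^{\circ}\times_{M_{\mu,k}}P_{\mu,k}$, the outer terms being smooth, connected, and of the same dimension $\dim G$; hence all three coincide. Since $\bar G\hookrightarrow P_\mu(\alpha)$ is a closed immersion which is an isomorphism on both fibres, and $\bar G$ is $\O_F$-flat, it is an isomorphism. This replaces your incomplete flatness verification by a short dimension count once the upper bound on the special fibre is in hand.
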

\begin{proof} Let $P_{\mu} \subset G$ denote the parabolic defined by $\mu,$ so that $\Lie~P_{\mu} \subset \Lie~G$
is the submodule on which $\mu$ acts by non-negative weights. Similarly, let $P_{\mu}^{\circ}$ denote the opposite
parabolic and $M_{\mu}$ the common reductive quotient of $P_{\mu}$ and $P_{\mu}^{\circ}.$
We will use a subscript of $k$ to denote the special fibre of an $\O_F$-scheme.

Write $\W_k = \oplus \W_i$ where $\mu^\W$ acts on $\W_i$ with weight $n_i$ and $ 0 = n_0 < n_1 < \dots,$
and for $i \geq 0,$ let $d_i = \dim_k \W_i$ and $e_i = \sum_{j=0}^i d_j.$
The condition $g_1\mu^\W(p) = \mu^\W(p)g_2$ implies that if $(g_1,g_2) \in P_{\mu}(\alpha)$ then
$g_1$ leaves $\W_0$ stable, $g_2$ leaves $\oplus_{i > 0} \W_i$ stable and $g_1,g_2$ induce
the same endomorphism of $\W_0 = \W_k/\oplus_{i> 0} \W_i.$

Note that
$$ (\wedge^{e_i+1} \W)_0 = (\otimes_{j=0}^i \wedge^{d_j} \W_j)\otimes \W_{i+1},$$
where $(\wedge^{e_i+1} \W)_0$ denotes the summand of $\wedge^{e_i+1} \W$ on which $\mu^{\wedge^{e_i+1} \W}$ acts
with weight $0.$ Hence for $i \geq 0,$ $g_1$ leaves $\oplus_{j \leq i} \W_j$ stable,
$g_2$ leaves $\oplus_{j > i} \W_j$ stable and $g_1,g_2$ induce the same endomorphism of $\W_i.$

It follows that $P_{\mu}(\alpha)_k$ is contained in $P_{\mu,k}^{\circ}\times_{M_{\mu,k}}P_{\mu,k}.$
Thus, if $P_{\mu}'$ denotes the closure of $G\hookrightarrow G\times G,$ under the embedding above,
then we have
$$P_{\mu,k}' \subset P_{\mu}(\alpha)_k \subset P_{\mu,k}^{\circ}\times_{M_{\mu,k}}P_{\mu,k}.$$
Since $P_{\mu,k}^{\circ}\times_{M_{\mu,k}}P_{\mu,k}$ is a smooth connected group scheme with the same dimension
as $P_{\mu,k}',$ the above inclusions must be equalities, which proves the lemma.
\end{proof}

\begin{prop} \label{1.1.9} Let $R$ be a reduced, Noetherian $\bar k$-algebra, $\R$ a frame
for $R,$ and $g \in G(\R_L).$ Suppose that $S_{\mu}(g) = S.$ Then there exists an
\'etale covering $R \rightarrow R'$ such that $g \in G(\R')p^{\mu}G(\R'),$ where $\R'$ is the
canonical frame for $R'$ produced in \ref{1.1.5}.
\end{prop}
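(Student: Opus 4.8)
The goal is to upgrade the pointwise statement $S_\mu(g) = S$ to an \'etale-local factorization $g \in G(\R')p^\mu G(\R')$. The natural candidate is to use the closure $P_\mu(\alpha)$ of the embedding $G \hookrightarrow G \times G$, $h \mapsto (h, \mu(p)^{-1}h\mu(p))$, which by Lemma \ref{1.1.8} is a smooth model of $G$. The idea is that $g \in G(\R')p^\mu G(\R')$ is equivalent to asking that $g$ can be written as $g = h_1 \cdot p^\mu \cdot h_2$ with $h_1, h_2 \in G(\R')$; rearranging, this says that $(g, p^\mu \cdot ? \cdot p^{-\mu})$-type data lands in $P_\mu(\alpha)(\R')$. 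So the first step is to set up a scheme $Z$ over $\Spec R$, a torsor-like object, whose sections over $R'$ are exactly the ways of factoring $g$ through $K p^\mu K$, and to reduce the proposition to showing $Z$ has a section after an \'etale covering. Concretely, $Z$ should parametrize pairs $(h_1, h_2) \in G \times G$ with $h_1 p^\mu h_2 = g$ in $G(\R_L)$; one must check $Z$ is representable by a scheme smooth over $\Spec R$, using Lemma \ref{1.1.8} to control the integral structure (the point being that left and right multiplication by $p^\mu$ interacts with $P_\mu(\alpha)$ exactly right).

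\textbf{Key steps.} First I would reduce to the case $G = \GL(\W)$ via the exterior-power trick used in Lemmas \ref{1.1.3} and \ref{1.1.6}: if the factorization holds for $\GL$ and all the $\wedge^i\W$ simultaneously, then by Lemma \ref{1.1.8} (identifying $G$ with the closure $P_\mu(\alpha)$) it holds for $G$. Actually the cleaner route is: pick a faithful $G \hookrightarrow \GL(\W)$, form $\alpha = \{\wedge^i\W\}$, and show $g$, viewed via all these representations, factors through the relevant double cosets in a compatible way; the compatibility plus smoothness of $P_\mu(\alpha)$ forces the factorization in $G$ itself. Second, for $\GL(\W)$: the condition $S_\mu(g) = S$ together with Lemma \ref{1.1.6} tells us that at every geometric point and every valuation, the elementary divisors of $g$ are exactly $\mu$. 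So $g(\W \otimes \R_L)$ is an $\R$-lattice in $\W \otimes \R_L$ which fiberwise has relative position $\mu$. The structure of such a lattice is governed by a partial-flag-type scheme over $\Spec R$: choosing a basis adapted to $\mu$ is choosing a point of a product of Grassmannians (or an iterated affine-space bundle), which is smooth over $\Spec R$, hence admits a section \'etale-locally. Third, once such an adapted basis exists over $\R'$, the change-of-basis matrices $h_1, h_2$ lie in $G(\R')$ by construction and $g = h_1 p^\mu h_2$.

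\textbf{Main obstacle.} The delicate point is the descent/lifting from the special fiber to $\R'$: knowing the elementary divisors fiberwise (over $R$, or even over all $W(\bar\kappa(s))$) does \emph{not} immediately give a factorization over $\R$ because $\R$ is not a valuation ring and $\R_L$ may have complicated units. One has to show that the "space of factorizations" $Z \to \Spec R$ is not just fiberwise nonempty but actually \emph{smooth} (or at least formally smooth and locally of finite presentation) over $\Spec R$, so that sections exist \'etale-locally — and smoothness is exactly where Lemma \ref{1.1.8} enters, since it is the statement that $P_\mu(\alpha)$ is a smooth model of $G$ that rigidifies the integral structure. Handling the $p$-adic completeness of $\R$ correctly (working modulo $p^n$ and passing to the limit, as in Lemma \ref{1.1.5}) is a technical wrinkle but should be routine given that the special fiber computation is the real content. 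I expect the proof to proceed by constructing $Z$, proving it is smooth over $\Spec R$ by identifying its fibers with homogeneous spaces for $G$ and checking the integral model via $P_\mu(\alpha)$, and then invoking smoothness to get \'etale-local sections.
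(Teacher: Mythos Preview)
Your overall architecture matches the paper's: build a $P_\mu(\alpha)$-torsor $Z$ over $\Spec\R$ whose sections are factorizations $g=h_1 p^\mu h_2$, then trivialize it \'etale-locally using smoothness of $P_\mu(\alpha)$ (Lemma~\ref{1.1.8}). The paper does exactly this, realizing $Z$ as the pullback of the map
\[
G\times G \longrightarrow \bigoplus_{i\geq 1}\End_{\O_F}\wedge^i\W,\qquad (g_1,g_2)\mapsto (g_1\,\mu^{\wedge^i\W}(p)\,g_2)_i
\]
along the $\R$-point $\gamma=(\mu^{\wedge^i\W}(p)\mu(p)^{-1}g)_i$.

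However, your ``main obstacle'' paragraph misdiagnoses the hard step. You say smoothness of $Z\to\Spec R$ is the issue and that Lemma~\ref{1.1.8} handles it, with the $p$-adic completeness being a ``routine technical wrinkle'' resolved by working mod $p^n$. That is not how the argument goes, and that route would not work: knowing the fibre of $Z$ over each closed point of $\Spec R$ is a $P_\mu(\alpha)$-torsor does \emph{not} tell you $Z$ is flat over $\Spec\R$, and without flatness you cannot conclude it is a torsor (hence cannot invoke smoothness to get \'etale-local sections). The actual content is a flatness criterion, stated and proved separately as Lemma~\ref{1.1.11}: a finite-type $\R$-scheme is flat provided its pullback along \emph{every} map $\xi:\R\to\O_{L'}$ (for $L'$ a finite extension of $W(\kappa)[1/p]$, $\kappa$ any perfect field) is flat. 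This is proved via Raynaud--Gruson and the fact that $\widehat\R_x[1/p]$ is Jacobson. Lemma~\ref{1.1.6} then supplies the input: at every such $\xi$, the point $\xi^*(\gamma)$ lifts to $G\times G(\O_{L'})$, so the fibre is a genuine torsor, hence flat. This two-step mechanism (\ref{1.1.6} $+$ \ref{1.1.11}) is the heart of the proof, and your proposal does not contain it.

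Your alternative route for $\GL(\W)$ --- treating $g(\W\otimes\R)$ as a lattice with fibrewise relative position $\mu$ and appealing to a flag-variety argument over $\Spec R$ --- runs into the same problem: fibrewise information over $R$ does not control the integral structure over $\R$ without a flatness input of the kind above.
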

\begin{proof} Let $G \hookrightarrow \GL(\W)$ and $\alpha$ be as in \ref{1.1.8}. Consider the map
\begin{equation}\label{1.1.10}
 G\times G \rightarrow \oplus_{i \geq 1} \End_{\O_F} \wedge^i \W; \quad (g_1,g_2)
\mapsto (g_1\mu^{\wedge^i\W}(p)g_2)_{i \geq 1}.
\end{equation}

Note that by Lemma \ref{1.1.8} the non-empty fibres of (\ref{1.1.10}) are torsors under the smooth group scheme $P_{\mu}(\alpha).$
More precisely, for any $\O_F$-scheme $T$ the map on $T$-valued points induced by (\ref{1.1.10}) has fibres
which are either empty or torsors under $P_{\mu}(\alpha)(T).$ Hence the pullback of (\ref{1.1.10}) by the image of any
point in $G\times G(T)$ is a $P_\mu(\alpha)$-torsor.

Let $\gamma_i =\mu^{\wedge^i \W}(p)\mu(p)^{-1}g$ for each $i \geq 1.$
Then $\gamma = (\gamma_i)_{i \geq 1}$ is an $\R$-point of $\oplus_{i \geq 1} \End_{\O_F} \wedge^i \W.$
By \ref{1.1.6} for any perfect field $\kappa \supset \bar k,$ any finite extension $L'/W(\kappa)[1/p],$
and any map of $\O_L$-algebras $\xi:\R \rightarrow \O_{L'},$
$\xi^*(\gamma)$ lifts to a point of $G\times G(\O_{L'}),$ and hence for any such $\xi$
the pullback of (\ref{1.1.10}) by $\xi^*(\gamma)$ is a $P_{\mu}(\alpha)$-torsor, and in particular, flat.
It follows from \ref{1.1.11} below, that the pullback of (\ref{1.1.10}) by $\gamma$ is a (flat) $P_{\mu}(\alpha)$-torsor.

Finally, the lemma follows, since the above torsor can be trivialized over some \'etale covering of $R.$
\end{proof}

\begin{lemma} \label{1.1.11} Let $\R$ be a $p$-adically complete and separated,
$p$-torsion free $\O_L$-algebra, such that $\R/p\R$ is reduced and Noetherian,
and $X$ a finite type $\R$-scheme.
Suppose that for any perfect field $\kappa \supset \bar k,$ any finite extension $L'/W(\kappa)[1/p],$
and any map of $\O_L$-algebras $\xi:\R \rightarrow \O_{L'},$ the fibre $X_{\xi}$ is flat over $\O_{L'}.$
Then $X$ is a flat $\R$-scheme.
\end{lemma}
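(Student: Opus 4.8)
\textit{Overview.} The plan is to turn the assertion into a statement of commutative algebra about the ring $\R$ and then to run the local criterion for flatness, feeding it its two hypotheses from the assumption on the fibres $X_{\xi}$. First I would record three elementary facts about $\R$. Since $\R$ is $p$-torsion free, its $p$-adic associated graded ring is $(\R/p\R)[t]$, which is Noetherian; as $\R$ is $p$-adically complete and separated it follows that $\R$ is itself Noetherian. Secondly $\R$ is reduced: a nilpotent element dies in the reduced ring $\R/p\R$, hence is divisible by $p$, and iterating (using $p$-torsion freeness) it lies in $\bigcap_n p^n\R=0$. Thirdly, $1-px$ is a unit for every $x\in\R$, so $p$ lies in the Jacobson radical of $\R$; hence every maximal ideal of $\R$ contains $p$, and every point of $\Spec\R$ specializes into $V(p)=\Spec\R/p\R$.

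\textit{Reduction.} Next I would reduce to the case $X=\Spec B$ with $B$ a finitely presented $\R$-algebra and prove that $B$ is $\R$-flat. Because $\R$ is Noetherian and $B$ of finite type, the flat locus in $\Spec B$ is open, and it contains every point lying over a generic point of $\Spec\R$ (there the local ring of $\R$ is a field, $\R$ being reduced). Thus the non-flat locus $Z\subseteq\Spec B$ is closed and its image in $\Spec\R$ is constructible and nowhere dense. Applying the local criterion for flatness to $\R_{\mathfrak p}\to B_{\mathfrak q}$ with respect to the ideal $(p)$ when $p\in\mathfrak p$, and passing to $\R[1/p]$ when $p\notin\mathfrak p$, one sees that $B$ is $\R$-flat if and only if (a) $p$ is a nonzerodivisor on $B$; (b) $B/pB$ is flat over $\R/p\R$; and (c) $B[1/p]$ is flat over $\R[1/p]$.

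\textit{Getting (a),(b),(c) from the hypothesis.} For (a) and (b): if one of these failed, the image of $Z$ would meet $V(p)$ in a nonempty constructible set, which would then contain the generic point $\mathfrak p$ of one of its components. I would take $\kappa$ to be the perfect closure of $\kappa(\mathfrak p)$ — a perfect field containing $\bar k$, since $\R/p\R$ is a $\bar k$-algebra — and let $s\colon\R/p\R\to\kappa$ be the induced map. As $\R$ is $p$-adically complete and $\kappa$ is perfect, $s$ lifts to an $\O_L$-algebra map $\xi\colon\R\to W(\kappa)$, which is one of the maps allowed in the hypothesis (take $L'=W(\kappa)[1/p]$). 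By assumption $B\otimes_{\R,\xi}W(\kappa)$ is flat, i.e. $p$-torsion free, over $W(\kappa)$; reducing modulo $p$ and using that $\R/p\R$ is reduced forces the missing flatness (resp. the vanishing of the local $p$-torsion) at $\mathfrak p$, a contradiction. Part (c) I would attack in the same spirit over $\R[1/p]$, using that such maps $\xi$ remain Zariski dense after inverting $p$: any nonzero $g\in\R$, after dividing out the largest power of $p$ (legitimate by separatedness), is nonzero modulo some maximal ideal of $\R$ and hence not killed by a suitable $\xi$.

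\textit{The hard part.} The main obstacle is (c). Every map $\xi\colon\R\to\O_{L'}$ sends $p$ to a uniformizer of $\O_{L'}$, so base change along a single such $\xi$ cannot, by itself, detect a failure of flatness that occurs over $\Spec\R[1/p]$ but is ``invisible'' over the generic point of $\Spec\O_{L'}$. Overcoming this is exactly where one must use $p$-adic completeness of $\R$ in an essential way — it guarantees that the abundant $\O_{L'}$-points accumulate onto \emph{every} point of the closed fibre — together with the reducedness of $\R/p\R$, which upgrades the resulting constraints in mixed characteristic to genuine flatness; combined with the openness of the flat locus on $\Spec B$, this is what propagates flatness outward from $V(p)$ over all of $\Spec\R$.
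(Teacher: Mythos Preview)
Your reductions at the start (Noetherianity of $\R$, reducedness, $p$ in the Jacobson radical) are fine and indeed the reducedness argument matches the paper. But the core of your argument has a genuine gap: step (b), and implicitly (a), cannot be extracted from the hypothesis in the way you describe. When you choose a point $\mathfrak p\in V(p)$, lift to $\xi:\R\to W(\kappa)$, and then reduce modulo $p$, you land in $B/pB\otimes_{\R/p\R}\kappa$, which is automatically flat over the field $\kappa$. So this test sees only the fibre and tells you nothing about flatness of $B/pB$ over $\R/p\R$ near $\mathfrak p$. The phrase ``using that $\R/p\R$ is reduced forces the missing flatness'' is doing no work here. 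Likewise for (a): knowing that the image of a $p$-torsion element $b\in B$ dies in every $B\otimes_\R\O_{L'}$ does not by itself force $b=0$. And you yourself flag (c) as unfinished.

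The paper avoids all three of (a),(b),(c) by invoking a single black box: the Raynaud--Gruson flatness criterion \cite{RG}~4.2.8. This says, roughly, that for a finite type scheme over a Noetherian ring $A$, flatness may be tested after base change along a family of maps $A\to A_i$ to valuation rings, provided the intersection of the kernels is zero. The paper applies this not to $\R$ itself but to the completion $\widehat\R_x$ at a closed point $x\in\Spec R$, and the real content of the proof is then to show $\bigcap_\xi\ker\xi=0$ in $\widehat\R_x$. For this one uses that $\widehat\R_x[1/p]$ is Jacobson (\cite{EGA}~IV~10.5.8), and that its closed points have residue fields which embed into finite extensions $L'/W(\bar\kappa(x))[1/p]$ with the corresponding valuation rings finite over $\widehat\R_x$. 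So the ``hard part'' you identified is handled not by a direct argument but by quoting a nontrivial flatness theorem and then verifying its density hypothesis via the Jacobson property of the punctured formal spectrum.
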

\begin{proof} It suffices to check that $X$ is flat at every closed point $x \in \Spec R.$
Let $\widehat \R_x$ denote the completion of $\R$ at $x.$ By \cite{RG}, 4.2.8 $X\otimes_{\R}\widehat \R_x$
is flat, provided $\cap _{\xi} \ker \xi = 0$ where $\xi$ runs over all maps $\widehat \R_x \rightarrow \O_{L'}$
with $L'$ as in the lemma.

To see this, we first note that $\R$ is reduced. Indeed, if $\alpha \in \R$ is a nilpotent element,
then $\alpha^n = 0$ for some $n,$ so that $\alpha \in p \R,$ as $\R/p\R$ is reduced.
Since $\R$ is $p$-torsion free, we can apply the same argument to $p^{-1}\alpha,$ and we find
that $\alpha$ is infinitely divisible by $p$ in $\R.$ As $\R$ is $p$-adically separated, this is a
contradiction, unless $\alpha = 0.$

By \cite{EGA} IV 10.5.8, $\widehat \R_x[1/p]$ is a Jacobson ring.
Let $y \in \Spec \widehat \R_x[1/p]$ be a closed point, and $L_y$ the quotient of
$\widehat \R_x[1/p]$ by the corresponding maximal ideal.
Then $L_y$ is equipped with a discrete valuation, and the corresponding valuation ring $\O_{L_y}$
is a finite $\widehat \R_x$-algebra (see \cite{EGA} IV 10.5.10, and its proof). In particular,
if $\bar\kappa(x)$ is an algebraic closure of $\kappa(x),$ then $L_y$ admits an embedding
into a finite extension $L'/W(\bar\kappa(x))[1/p].$
Since any map $\xi: \R \rightarrow L'$ factors through $\O_{L'},$ we see that
$\cap _{\xi} \ker \xi = 0$ as required.
\end{proof}

\begin{cor} \label{1.1.12} Let $R$ be a Noetherian, formally smooth $\bar k$-algebra, $\R$ a frame
for $R,$ and $g \in G(\R_L).$ Suppose that $\mu$ is minuscule and that $S_{\mu}(g)$
contains the generic points of $\Spec R.$ Then there exists an
\'etale covering $R \rightarrow R'$ such that $g \in G(\R')p^{\mu}G(\R'),$ where $\R'$ is the
canonical frame for $R'$ produced in \ref{1.1.5}.
\end{cor}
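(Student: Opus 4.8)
The plan is to deduce this directly from Proposition \ref{1.1.9}, whose only extra hypothesis over the present statement is that $S_\mu(g)$ equal \emph{all} of $S = \Spec R$ rather than merely contain the generic points; in exchange we are now given that $R$ is formally smooth over $\bar k$, which forces $S_\mu(g)$ to be closed when $\mu$ is minuscule.

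First I would observe that $R$ is reduced: a Noetherian $\bar k$-algebra that is formally smooth over the perfect field $\bar k$ is regular, hence reduced. In particular, $R$ satisfies the hypotheses of Proposition \ref{1.1.9}, so it will suffice to check that $S_\mu(g) = S$.

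Next, since $\mu$ is minuscule, Lemma \ref{1.1.3}(2) shows that $S_\mu(g)$ is Zariski closed in $S$. On the other hand, $S = \Spec R$ is Noetherian, so it has finitely many minimal primes, and every point of $S$ is a specialization of one of them; thus $S$ is the union of the closures of its generic points. Hence a closed subset of $S$ that contains all the generic points must be all of $S$. By hypothesis $S_\mu(g)$ contains the generic points of $\Spec R$, so $S_\mu(g) = S$. Now Proposition \ref{1.1.9}, applied to $g \in G(\R_L)$, produces an \'etale covering $R \to R'$ with $g \in G(\R')p^\mu G(\R')$ for $\R'$ the canonical frame of Lemma \ref{1.1.5}, which is exactly the assertion.

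I do not expect any real obstacle here: the substance is entirely contained in Lemma \ref{1.1.3}(2) (closedness of $S_\mu(g)$ for minuscule $\mu$ over a formally smooth base) and in Proposition \ref{1.1.9}, both already proved. The only point requiring a little care is that formal smoothness over $\bar k$ is being invoked twice — once inside the cited part of Lemma \ref{1.1.3} to obtain closedness, and once here to know $R$ is reduced so that Proposition \ref{1.1.9} applies.
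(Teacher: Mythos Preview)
Your proposal is correct and follows essentially the same route as the paper: the paper also argues that $S_\mu(g) = S_{\preceq\mu}(g) = \Spec R$ via Lemma \ref{1.1.3} (using that $\mu$ is minuscule) and then invokes Proposition \ref{1.1.9}. Your added remark that formal smoothness over $\bar k$ guarantees $R$ is reduced, so that Proposition \ref{1.1.9} applies, is a reasonable point to make explicit.
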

\begin{proof} Since $S_{\mu}(g)$ contains the generic points of $\Spec R,$ and $\mu$ is minuscule,
we have
$$ S_{\mu}(g) = S_{\preceq \mu}(g) = \Spec R$$
by Lemma \ref{1.1.3}, and the corollary follows from \ref{1.1.9}.
\end{proof}

\subsection{The affine Grassmannian in mixed characteristic}

\begin{para} Let $\R$ be a $p$-torsion free, $p$-adically complete and separated $\O_L$-algebra.
Let  $X(\R) = X_G(\R)$ denote the set of isomorphism classes
of pairs $(\mathcal T, \tau)$ where $\mathcal T$ is a $G$-torsor over $\Spec \R,$
and $\tau$ is a trivialization of $\mathcal T$ over $\Spec \R_L.$

Let $S$ be a flat $p$-adic formal scheme over $\O_L,$ and let $S_0$ be the reduced subscheme of $S.$
An \'etale morphism $U_0 \rightarrow S_0$ lifts canonically to a formally \'etale morphism of $p$-adic formal schemes
$U \rightarrow S.$ We call such a morphism a {\em formal \'etale neighborhood} of $S.$ We call such a morphism
a covering if $U_0$ is a covering of $S_0.$ We say that $U$ is a  {\em formal affine \'etale neighborhood} if in addition $U$ is formal affine (or equivalently $U_0$ is affine).

In particular, $X_G:\Spf \R \mapsto X(\R)$ defines a functor on formal affine \'etale neighborhods of $S.$
Equivalently, we may view $X_G$ as a functor on affine \'etale neighborhoods of $S_0.$

Given a section $(\mathcal T, \tau)$ of $X_G(S)$ there is a formal \'etale covering
$\Spf \R \rightarrow S$ over which $\mathcal T$ becomes trivial.
To $g \in G(\R_L),$ we can associate the trivial $G$-torsor over $\Spec \R$ given by $G$ itself,
equipped with the trivialization over $\Spec \R_L$ corresponding to left multiplication by $g.$
Two elements $g,g' \in G(\R_L)$ give rise to the same torsor with trivialization over
$\Spec \R_L$ if and only if they have the same image in $G(\R_L)/G(\R).$
The set of elements of $X_G(\Spf \R)$ where the underlying $G$-torsor over
$\Spec \R$ is trivial is in natural bijection with $G(\R_L)/G(\R).$ Thus, the functor $X_G$ is an analogue
of the affine Grassmannian in mixed characteristic.
\footnote{This definition works well for our purposes, but has the aethstetic disadvantage that it depends on
$\R$ and not just on $R = \R/p\R.$ Haboush \cite{Haboush} (see also Kreidl \cite{Kreidl}  and Lusztig \cite{Lusztig}) has proposed
an approach to the affine Grassmannian in mixed characterstic which uses Witt vectors and the Greenberg
functor, and does not depend on the choice of lifting. However this works well only when $R$ is perfect.
Since perfect rings are typically not Noetherian many of our commutative algebra arguments would break down in this setting.
}

We will often reduce questions about $G$-bundles to questions about vector bundles.
For this we will need the following
\end{para}

\begin{lemma}\label{tannakaoverdvr} Let $Y$ be a flat $\O_L$-scheme.
Let $\mathcal F$ denote the
category of exact, faithful tensor functors from representations of $G$
on finite free $\O_L$-modules to vector bundles on $Y.$

If $P$ is a $G$-bundle on $Y,$
and $V$ is a representation of $G$ on a finite free $\O_L$ module, write
$F_P(V) = G\backslash (P\times V).$ Then $P \mapsto F_P$ is an equivalence between the category of
$G$-bundles on $Y,$ and the category $\mathcal F.$
\end{lemma}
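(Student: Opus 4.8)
The plan is to construct an explicit quasi-inverse. Write $\omega$ for the canonical fibre functor $V\mapsto V\otimes_{\O_L}\O_Y$ attached to the trivial $G$-bundle, so that $\omega\in\mathcal F$, and for $\eta\in\mathcal F$ let $\Phi(\eta)$ denote the presheaf on $Y$-schemes whose value on $T\to Y$ is the set of tensor isomorphisms from the pullback of $\omega$ to the pullback of $\eta$. For a $G$-bundle $P$ transport of structure gives a canonical isomorphism $\Phi(F_P)\cong P$ (over a covering trivialising $P$ both sides become $G$, compatibly), so the content of the lemma is exactly: (i) $P\mapsto F_P$ is fully faithful; and (ii) for every $\eta\in\mathcal F$ the presheaf $\Phi(\eta)$ is representable by a $G$-bundle $P_\eta$ with $F_{P_\eta}\cong\eta$ functorially.

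For (i), both Hom-sheaves in question are fppf sheaves on $Y$, so after an fppf trivialisation it suffices to check that the natural map $G_Y\to\underline{\mathrm{End}}^\otimes(\omega)$ is an isomorphism and that every tensor endomorphism of $\omega$ is invertible; the latter follows from compatibility with duals and the evaluation morphisms $V\otimes V^\vee\to\mathbf 1$, and the former is the reconstruction of the flat affine group scheme $G$ from its representation category, which here will be read off the tensor description of $G$ used in step (ii). For (ii), fix a faithful representation $G\hookrightarrow\GL(\W)$ on a finite free $\O_L$-module $\W$ together with a finite family of tensors $(s_\alpha)$, each $s_\alpha$ a mixed-tensor construction on $\W$, whose common scheme-theoretic stabiliser in $\GL(\W)$ is $G$; such data exist because $G$ is reductive (compare the argument cited from \cite{Ki4}, 2.3.1). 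Given $\eta$, the sheaf $\underline{\mathrm{Isom}}(\omega(\W),\eta(\W))$ of $\O_Y$-linear isomorphisms is an affine $Y$-scheme, and $\Phi(\eta)$ is its closed subscheme $P_\eta$ cut out by the conditions that an isomorphism carry $s_\alpha$ to $\eta(s_\alpha)$ inside $\eta(\W)^{\otimes}$. Hence $P_\eta$ is affine over $Y$, and composition exhibits it as a pseudo-torsor under $\underline{\mathrm{Aut}}^\otimes(\omega)=G_Y$, since any two tensor isomorphisms $\omega\to\eta$ differ by a tensor automorphism of $\omega$.

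The remaining, and main, point is that $P_\eta\to Y$ is faithfully flat; granting this, $P_\eta$ is an fppf and hence (as $G$ is smooth) an \'etale $G$-torsor, and unwinding the construction produces the natural isomorphism $F_{P_\eta}\cong\eta$. Over a field this flatness is classical Tannaka duality in the form of Deligne--Milne: there $\eta\cong\omega$, so $P_\eta$ is a $G$-torsor, in particular smooth and non-empty. In general one reduces to $Y$ Noetherian, uses that $Y$ is flat over the discrete valuation ring $\O_L$, and observes that $P_\eta$ is a closed subscheme of the smooth $Y$-scheme $\underline{\mathrm{Isom}}(\omega(\W),\eta(\W))$ all of whose fibres, by the field case, are torsors under $G$ and so smooth of constant dimension $\dim G$; flatness then follows from the fibrewise flatness criterion, the delicate point being to exclude embedded components over the special fibre, which one handles by a special-fibre dimension count in the spirit of the proof of Lemma \ref{1.1.8} (or by invoking the known theory of $G$-torsors over a Dedekind base). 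Passing from the field-by-field validity of Tannaka duality to its validity over the possibly non-reduced base $Y$ is the step I expect to demand the most care.
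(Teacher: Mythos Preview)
The paper does not give its own proof of this lemma; it simply cites \cite{Broshi}, Thm.~2.1.5.5 (with a cf.\ to \cite{Nori}). Your outline is the standard Tannakian construction and is essentially the argument Broshi carries out over a Dedekind base, so you are following the same route as the cited reference rather than a different one.

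You have correctly isolated the one non-formal step, faithful flatness of $P_\eta\to Y$, and you are right to flag it as the place demanding care. The argument you sketch for it is not complete, however. The fibrewise flatness criterion over $\Spec\O_L$ requires you to already know that $P_\eta$ is flat over $\O_L$, and a special-fibre dimension count in the style of Lemma~\ref{1.1.8} does not supply this: there $P_\mu(\alpha)$ is sandwiched between two flat group schemes of the same dimension, whereas here $Y$ may be non-reduced or singular and $P_\eta$ is merely a closed subscheme of a smooth $Y$-scheme, so embedded components over the special fibre are not excluded by dimensions alone. What actually makes the argument work, and where the reductivity of $G$ enters beyond the mere existence of the tensors $(s_\alpha)$, is that $\GL(\W)/G$ is affine and the orbit of $(s_\alpha)$ is a closed subscheme of the ambient tensor space; one then argues that the section $(\eta(s_\alpha))$ of the twisted tensor bundle factors scheme-theoretically through the twisted orbit $I/G$ (this uses the exactness and tensor-compatibility of $\eta$, not just its behaviour on geometric points), so that $P_\eta$ is the pullback of the smooth $G$-torsor $I\to I/G$ and hence automatically a $G$-torsor over $Y$. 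Filling in this step is precisely the content of Broshi's theorem.
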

\begin{proof} See \cite{Broshi} Thm~2.1.5.5 (cf.~also \cite{Nori}).
\end{proof}

\begin{lemma}\label{affgrassdefn} The functor $X_G$ extends to a sheaf (again denoted $X_G$)
on the \'etale topology of $S_0.$
\end{lemma}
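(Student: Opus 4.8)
The statement to prove is that the functor $X_G$ (defined initially on formal affine \'etale neighborhoods of $S_0$, equivalently on affine \'etale neighborhoods of $S_0$) extends to a sheaf on the \'etale topology of $S_0$. The plan is to verify the sheaf axioms directly for an \'etale covering $U_0' \to U_0$ of affines, since sheafification on a site with a basis of affines is controlled by such coverings; concretely, given a covering $\Spf \R' \to \Spf \R$ corresponding to a faithfully flat \'etale map $R \to R'$ of rings over $\bar k$, with canonical frames $\R \to \R'$ supplied by Lemma \ref{1.1.5}, I must show that
$$ X_G(\R) \to X_G(\R') \rightrightarrows X_G(\R'\otimes_\R\R') $$
is an equalizer. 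Here I use that the canonical frame is compatible with composition and with base change of \'etale maps, so that $\R'\otimes_\R\R'$ (completed $p$-adically) is again the canonical frame for $R'\otimes_R R'$, and similarly for the triple fiber product; this bookkeeping, while routine, is the kind of thing one should state carefully.

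First I would reduce from $G$-torsors to vector bundles using Lemma \ref{tannakaoverdvr}: a section of $X_G$ over $\Spf \R$ is a pair $(\mathcal T,\tau)$, and by that lemma $\mathcal T$ is equivalent to an exact faithful tensor functor from $\mathrm{Rep}_{\O_L}(G)$ to vector bundles on $\Spec \R$. So it suffices to prove two things: (a) that vector bundles on $\Spec \R$, together with a trivialization over $\Spec \R_L$, satisfy \'etale descent along $\Spec \R' \to \Spec \R$, compatibly with the tensor structure; and (b) that the trivialization datum $\tau$ over $\Spec \R_L$ descends as well. For (a), the key input is faithfully flat descent for quasi-coherent sheaves (\cite{EGA} IV, or Grothendieck's FGA): the map $\R \to \R'$ is faithfully flat because $R \to R'$ is faithfully flat and both rings are $p$-adically complete, $p$-torsion free, so flatness mod $p$ plus $p$-completeness gives flatness, and surjectivity of $\Spec \R' \to \Spec \R$ follows since it is surjective modulo $p$ and the Jacobson radical contains $p$. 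Descent data for vector bundles along a faithfully flat map are effective, and the tensor structure descends because tensor product is compatible with pullback; exactness and faithfulness of the descended tensor functor can be checked after the faithfully flat base change $\R \to \R'$.

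For (b), note that $\R \to \R'$ localizes to $\R_L \to \R'_L$, which is again faithfully flat (inverting $p$ preserves this), so a trivialization $\tau'$ of $\mathcal T'$ over $\Spec \R'_L$ together with a matching of the two pullbacks to $\Spec(\R'\otimes_\R\R')_L$ glues, by descent for morphisms of (trivial) $G$-torsors, to a trivialization $\tau$ of the descended torsor over $\Spec \R_L$. Putting (a) and (b) together yields that $(\mathcal T',\tau')$ with its cocycle descends to a unique $(\mathcal T,\tau)$ over $\Spf \R$, which is exactly the sheaf condition. Finally I would remark that the sheaf $X_G$ so obtained on the small \'etale site of $S_0$ restricts on affine objects to the original functor, and that on the subcategory where the underlying torsor is trivial it recovers the presheaf $\Spf \R \mapsto G(\R_L)/G(\R)$, consistent with the discussion preceding the lemma.

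**Expected main obstacle.** The genuinely delicate point is not the descent itself — faithfully flat descent for quasi-coherent sheaves is standard — but the compatibility of the \emph{canonical frames}: one must be sure that the frame $\R'' $ attached to the \'etale $\bar k$-algebra $R'' = R'\otimes_R R'$ agrees, as a frame-with-Frobenius, with the $p$-adic completion of $\R'\otimes_\R\R'$, and likewise for the threefold product, so that the cocycle condition makes sense on the nose rather than merely up to canonical isomorphism. This follows from the uniqueness in Lemma \ref{1.1.5} (the canonical frame is characterized by being the $p$-adic limit of the unique \'etale lifts $\R'_n$, together with the unique compatible $\sigma$), plus the fact that $p$-adic completion commutes with the relevant tensor products because the algebras are Noetherian modulo $p$; but it deserves to be spelled out, since the entire sheaf formalism for $X_G$ rests on it. Everything else — flatness of $\R\to\R'$, surjectivity on spectra, effectivity of descent data, compatibility with the tensor structure and with the Tannakian reconstruction of Lemma \ref{tannakaoverdvr} — is routine.
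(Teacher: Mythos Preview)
Your argument is correct and the overall architecture---Tannakian reduction to vector bundles, then descent---matches the paper's. The execution, however, is genuinely different. You appeal directly to faithfully flat descent along $\R \to \R'$ (and $\R_L \to \R'_L$ for the trivialization), after arguing that this map is faithfully flat. The paper instead never asserts flatness of $\R \to \R'$: it descends the vector bundle modulo $p^n$ for each $n$ (where $\R/p^n \to \R_i'/p^n$ is an honest \'etale cover of schemes, so fppf descent is immediate), then takes the inverse limit to get a projective $\R$-module; for the trivialization it multiplies by a large power of $p$ to obtain integral maps $V_i \to \R_i^d$ and $\R_i^d \to V_i$, descends those mod $p^n$, passes to the limit, and finally inverts $p$.

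What each buys: your route is cleaner and more conceptual, but it rests on the claim that ``flat mod $p$, $p$-complete, $p$-torsion free $\Rightarrow$ flat''. This is true---the $p$-torsion freeness gives $\mathrm{Tor}_1^{\R}(\R',\R/p)=0$, and then a form of the local flatness criterion applies---but it is not entirely trivial, and you should supply the reference or short argument rather than assert it (note also that you must check $\R'$ is $p$-torsion free, which Lemma~\ref{1.1.5} does not state explicitly). The paper's route is more hands-on but uses only \'etale descent for ordinary schemes and the elementary fact that a compatible system of projective $\R/p^n$-modules yields a projective $\R$-module, so it works without any flatness subtleties and without Noetherian hypotheses. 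Your ``expected main obstacle'' about compatibility of canonical frames under fiber products is real but minor; the paper finesses it by writing $\Spf \R_{i,j} = \Spf \R_i \times_{\Spf \R} \Spf \R_j$ in the category of $p$-adic formal schemes, which amounts to the same bookkeeping you describe.
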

\begin{proof} We extend $X_G$ to a presheaf $X_G^-$ on the \'etale topology of $S_0$ by
setting $X_G^-(U_0) = \ilim X_G(V_0)$ where $V_0$ runs over affine
\'etale neighborhoods of $V_0 \rightarrow U_0,$
(cf.~ \cite{EGA} \S 0, 3.2) and we let $X_G^+$ denote the sheafification of $X_G^-.$
Note we do not claim that $X_G^-$ is a sheaf,
but only that its values agree with those of $X_G^+$ on {\em affine} \'etale neighborhoods.

We have to show that for any formal affine \'etale neighborhoods $\Spf \R \rightarrow S,$
$X_G(\R) = X_G^+(\R).$
By definition, an element of $X_G^+(\R)$ is defined by giving a collection $\{\Spf \R_i\}_i$ of formal affine \'etale neighborhoods of $\Spf \R,$ whose union is a covering of $\Spf \R,$ an element
$(\mathcal T_i, \tau_i)$ of $X_G(\R_i)$ for each $i,$ and isomorphisms
$(\mathcal T_i, \tau_i) \iso (\mathcal T_j, \tau_j)$ over $\Spec \R_{ij}$ satisfying the cocycle
condition. Here $\Spf \R_{i,j} = \Spf \R_i\times_{\Spf \R}\Spf \R_j.$ We have to show that any
such collection of data arises from an element $(\mathcal T,\tau)$ in $X_G(\R),$ which is
unique up to a unique isomorphism.

By Lemma \ref{tannakaoverdvr} it suffices to prove the analogous statement for vector bundles of some fixed rank $d.$
Thus let $\{(V_i,\tau_i)\}_i$ be a collection consisting of vector bundles $V_i$ of rank $d,$ over
$\Spec \R_i$ together with trivializations $\tau_i$ over $\Spec \R_{i,L}.$
Suppose we are given isomorphisms
$\{(V_i,\tau_i) \iso (V_j,\tau_j)\}_{i,j}$ over $\Spec \R_{i,j,L}$ for all $i,j$ satisfying the cocycle condition.
We have to show this data arises from a vector bundle $V$ over $\Spec \R$
together with a trivialization over $\Spec \R_L,$ determined up to unique isomorphism.

By \'etale descent, for $n \geq 0$ this data gives rise to a uniquely determined
vector bundle $V_n$ on $\Spec \R/p^n\R,$ and hence
to a vector bundle on $\Spec \R.$
To construct the trivialization $\tau,$ we may first assume the above covering consists of
finitely many formal affine \'etale neighborhoods, since $\Spf \R$ is quasi-compact.
Now choose a sufficiently large integer $n$ such that
for each $i,$ $p^n\tau_i$ and $p^n\tau_i^{-1}$ induce maps $V_i \rightarrow\R_i^d$
and $\R_i^d \rightarrow V_i$ whose
composite is multiplication by $p^{2n}.$ As above, by \'etale descent these maps give rise
to maps $\R^d \rightarrow V$ and  $V  \rightarrow \R^d$ whose composite is multiplication by $p^{2n}.$
Inverting $p$ and dividing the resulting maps by $p^n$ produces the required trivialization $\tau.$
\end{proof}

\begin{para} Now suppose that $S = \Spf \R$ is formal affine, and locally Noetherian.
We will give a description of $X_G(\R)$ using the \'etale topology on $\Spec \R,$
which will be useful for computations.

Let $j: \Spec \R_L \hookrightarrow \Spec \R$ and $i: \Spec \R/p\R \rightarrow \Spec \R$ denote the inclusions. We again
write $X_G$ for the \'etale sheaf $i_*X_G,$ on $\Spec \R.$

Let $U$ be an \'etale neighborhood of $\Spec \R.$
Using the fact that a $G$-torsor over $\Spec \R$ is \'etale locally trivial one sees that $(j_*G/G)(U)$  is in bijection with the set of isomorphism class of pairs consisting of a $G$-torsor over $U,$ equipped with a trivialization over
$U\otimes_{\O_L}L.$ Thus, we have a natural map of \'etale sheaves $j_*G/G \rightarrow X_G.$
\end{para}

\begin{lemma}\label{affgrasspresent} The map $j_*G/G \rightarrow X_G$ is an isomorphism.
\end{lemma}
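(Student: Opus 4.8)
The plan is to reduce to the case $G = \GL_d$ using the Tannakian description of torsors in Lemma \ref{tannakaoverdvr}, and then to prove the isomorphism by a direct analysis of lattices, exploiting that $p$-adic completion is flat and is an isomorphism modulo every power of $p$. Since $j_*G/G$ and $X_G$ are \'etale sheaves on $\Spec\R$, it suffices to compare their sections over an affine \'etale neighbourhood $\Spec\R'\to\Spec\R$, with $\R'$ \'etale over $\R$; write $\widehat{\R'}$ for its $p$-adic completion, which is the canonical lift of $\R'/p\R'$ used in the construction of $X_G$. By Lemma \ref{affgrassdefn} and the definition of $X_G$, the section $X_G(\Spec\R')$ is the set of isomorphism classes of pairs $(\mathcal T,\tau)$ consisting of a $G$-torsor $\mathcal T$ on $\Spec\widehat{\R'}$ together with a trivialization $\tau$ over $\Spec\widehat{\R'}_L$, while $(j_*G/G)(\Spec\R')$ is the set of such pairs over $\Spec\R'$, the map being base change along $\Spec\widehat{\R'}\to\Spec\R'$. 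Here $\R'$ is Noetherian and $p$-torsion free (hence flat over $\O_L$), the map $\R'\to\widehat{\R'}$ is flat with $\R'/p^n\R'=\widehat{\R'}/p^n\widehat{\R'}$ for all $n$, and $\widehat{\R'}$ is again Noetherian, $p$-torsion free, and $p$-adically complete and separated. So it is enough to show base change along $\R'\to\widehat{\R'}$ is a bijection on these sets of pairs.

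To reduce to $\GL_d$, I would apply Lemma \ref{tannakaoverdvr} to the flat $\O_L$-schemes $\Spec\R'$ and $\Spec\widehat{\R'}$: a pair $(\mathcal T,\tau)$ over $\Spec\R'$ amounts to an exact faithful tensor functor $V\mapsto F(V)$ from representations of $G$ on finite free $\O_L$-modules to vector bundles on $\Spec\R'$, together with a functorial, tensor-compatible identification of each $F(V)$ with an $\R'$-lattice in $V\otimes_{\O_L}\R'_L$, and similarly over $\widehat{\R'}$; the map to $X_G$ replaces $F(V)$ by $F(V)\otimes_{\R'}\widehat{\R'}$. Choosing a basis of $V$, this reduces everything to the following statement for each $d$: base change along $\R'\to\widehat{\R'}$ is a bijection between the set of $\R'$-lattices $M\subseteq(\R'_L)^d$ and the set of $\widehat{\R'}$-lattices in $(\widehat{\R'}_L)^d$. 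Tensor-compatibility for a general $G$ then transports back automatically, using that $p$-adic completion commutes with tensor products of finite projective modules together with the injectivity proved below.

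For the $\GL_d$ statement, let $\pi\colon(\R'_L)^d\to(\widehat{\R'}_L)^d$ be the natural map, and for an $\R'$-lattice $M$ put $\widehat M=M\otimes_{\R'}\widehat{\R'}$, which is a lattice in $(\widehat{\R'}_L)^d$ by flatness and $p$-torsion freeness of $\widehat{\R'}$. The key point is that $M=\pi^{-1}(\widehat M)$: the inclusion $\subseteq$ is clear, and for the reverse one uses that the canonical maps $M/p^nM\iso\widehat M/p^n\widehat M$ are isomorphisms (because $\R'/p^n=\widehat{\R'}/p^n$), so an element $v=m/p^n$ with $m\in M$ and $\pi(v)\in\widehat M$ forces $m\in p^nM$. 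This formula yields injectivity of the map on sections. For surjectivity, given an $\widehat{\R'}$-lattice $\widehat N$, after scaling by a power of $p$ we may assume $p^k\widehat{\R'}^d\subseteq\widehat N\subseteq\widehat{\R'}^d$; then $N:=\pi^{-1}(\widehat N)$ satisfies $p^k\R'^d\subseteq N\subseteq\R'^d$ and $N/p^k\R'^d\iso\widehat N/p^k\widehat{\R'}^d$, so $N$ is finitely generated, $N[1/p]=(\R'_L)^d$, and $N\otimes_{\R'}\widehat{\R'}=\widehat N$ by flatness of $\widehat{\R'}$ over $\R'$. It remains to check that $N$ is locally free of rank $d$: at a prime $\mathfrak q$ not containing $p$ this holds since $N_{\mathfrak q}=((\R'_L)^d)_{\mathfrak q}$ is free, and at a prime $\mathfrak q\supseteq p$ one picks a prime $\widehat{\mathfrak q}$ of $\widehat{\R'}$ over $\mathfrak q$ (possible since $\Spec\widehat{\R'}/p=\Spec\R'/p$), notes that $\R'_{\mathfrak q}\to\widehat{\R'}_{\widehat{\mathfrak q}}$ is faithfully flat (flat and local), and descends freeness of $N_{\mathfrak q}\otimes_{\R'_{\mathfrak q}}\widehat{\R'}_{\widehat{\mathfrak q}}=\widehat N_{\widehat{\mathfrak q}}$. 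Hence $N$ is a rank-$d$ vector bundle on $\Spec\R'$ mapping to $\widehat N$, and $M\mapsto\widehat M$ is a bijection.

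The step I expect to be the main obstacle is surjectivity in the $\GL_d$ case, namely checking that the naive preimage $N=\pi^{-1}(\widehat N)$ is an honest vector bundle rather than merely a coherent sheaf. The reason this works without passing to an \'etale cover is that over the generic fibre $\Spec\R'_L$ the bundle is already trivialized, so local freeness is only in question along $V(p)$, where faithful flatness of the map to the completed local ring lets one descend freeness from $\widehat N$. One should also bear in mind that $\R'$ need not be $p$-adically separated (it may, for instance, have a direct factor on which $p$ is invertible), so the identity $M=\pi^{-1}(\widehat M)$, rather than literal injectivity of $M\to\widehat M$, is the statement one uses.
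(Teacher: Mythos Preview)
Your approach is correct and follows the same overall architecture as the paper: reduce to $\GL_d$ via the Tannakian equivalence of Lemma~\ref{tannakaoverdvr}, then handle $\GL_d$ directly. The difference lies entirely in how the $\GL_d$ case is argued.

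The paper introduces, for an \'etale neighbourhood $\Spec\cR'$ of $\Spec\R$, the localization $\tilde\cR'$ of $\cR'$ along $(p)$, and shows that $\GL_n(\tilde\cR'_L)/\GL_n(\tilde\cR') \to \GL_n(\R'_L)/\GL_n(\R')$ (with $\R'$ the $p$-adic completion of $\cR'$) is a bijection by an \emph{approximation} argument: given $g\in\GL_n(\R'_L)$ with $g,g^{-1}\in M_n(p^{-s}\R')$, choose $h\in M_n(p^{-s}\tilde\cR')$ with $g-h\in p^mM_n(\R')$ for $m\gg 0$; then $h\in\GL_n(\tilde\cR'_L)$ and $h^{-1}g\in\GL_n(\R')$. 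Injectivity uses faithful flatness of $\R'$ over $\tilde\cR'$. Your argument instead stays with the \'etale neighbourhood itself and works module-theoretically with lattices: injectivity via the formula $M=\pi^{-1}(\widehat M)$, surjectivity via $N=\pi^{-1}(\widehat N)$ together with a local-freeness check using faithfully flat descent along the localized map $\R'_{\mathfrak q}\to\widehat{\R'}_{\widehat{\mathfrak q}}$ at primes over $(p)$. Your version is more conceptual and avoids the auxiliary ring $\tilde\cR'$; the paper's is slightly more elementary in that surjectivity is a two-line matrix manipulation and no descent of projectivity is needed.

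One small omission in your Tannakian step: you verify that tensor-compatibility transports back along completion, but to invoke Lemma~\ref{tannakaoverdvr} you must also check that the resulting functor $F$ is \emph{exact}. This follows by the same localisation device you use for local freeness (away from $(p)$ exactness is forced by the trivializations, and over $(p)$ one descends exactness along the faithfully flat map to the completion); the paper makes the analogous remark explicitly.
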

\begin{proof} We first consider the case $G = \GL_n.$
Let $U = \Spec \cR'$ be an \'etale neighborhood of $S,$
$U_0 = U\otimes_{\Z}\Z/p\Z,$ and $\widehat U = \Spf \R'$ the $p$-adic completion of $U.$
Let $\tilde U = \Spec \tilde \cR'$ be the localization of $U$ along $U_0,$ so $\tilde \cR'$ is obtained from $\cR'$
by inverting all elements which map to a unit in $\cR'/p\cR'.$ Note that any maximal ideal of $\tilde \cR'$ contains $(p)$ so that $\R',$ which is the $p$-adic completion of $\tilde \cR',$ is a faithfully flat
$\tilde \cR'$-algebra.

Since $X_G$ is a sheaf, it suffices to show that for any $\cR'$ as above,
the map $\GL_n(\tilde \cR'_L)/\GL_n(\tilde \cR') \rightarrow \GL_n(\R'_L)/\GL_n(\R')$ is a bijection.
The injectivity follows from the fact that $\R'$ is faithfully flat over $\tilde \cR',$ which implies that
$\tilde \cR'_L/\tilde \cR'$ injects into $\R'_L/\R'.$ For the surjectivity, suppose that $g \in \GL_n(\R'_L).$
Let $s$ be an integer such that $g, g^{-1} \in M_n(p^{-s}\R').$
For any $m > 0$ there exists $h \in M_n(p^{-s}\tilde \cR')$ such that $g - h = p^m\delta$ for some
$\delta \in M_n(\R').$ For $m$ sufficiently large $h \in \GL_n(\tilde \cR')$ and $1+p^mh^{-1}\delta \in \GL_n(\R').$
As $g =h(1+p^mh^{-1}\delta),$ this proves the surjectivity.

Now suppose that $G$ is arbitrary, and let $P$ be a $G$-bundle over $\Spec \R'$
equipped with a trivialization over $\Spec \R'_L.$ Then $P$ gives rise to an exact, faithful
tensor functor $F_P$ which associates to each $\O_L$-representation $V$ of $G$
the vector bundle $F_P(V) = G\backslash V\times P,$ together with an isomorphism
$\tau_V:V\otimes \R'_L \iso F_P(V)\otimes_{\O_L}L.$ By the case of vector bundles proved above,
($F_P(V),\tau_V)$ arises from a pair $(\tilde F_P(V), \tilde \tau_V)$ consisting of a vector bundle
$\tilde F_P(V)$ over $\Spec \cR'$ together with an isomorphism
$\tilde\tau_V: V\otimes \cR'_L \iso \tilde F_P(V)\otimes_{\O_L}L,$ and this pair is unique up to canonical isomorphism. In particular, $\tilde F_P(V)$ is a faithful tensor functor.
Moreover, $\tilde F_P(V)$ is exact: over $(p)$ this follows from the fact that $\R'$ is a faithful
$\tilde \cR'$ algebra, and after inverting $p$ it is forced by the existence of the isomorphisms $\tilde \tau_V.$ Using Lemma \ref{tannakaoverdvr}, we obtain the required $G$-bundle over $\Spec \cR'$ equipped with a trivialization over $\Spec \cR'_L.$
\end{proof}

\begin{para}
The following lemma, in the case when $R$ is a Dedekind domain, shows that $X_G$ satisfies an extension property which is analoguous to
of the valuative criterion for properness.
\end{para}

\begin{lemma}\label{valuative} Suppose that $R = \R/p\R$ is a Noetherian, formally smooth domain over $\bar k.$
Let $f \in \R\backslash p\R,$ and $\widehat \R_f$ the $p$-adic completion of $\R_f = \R[f^{-1}].$
Denote by $r_{G,f}$ the natural functor from the category of $G$-torsors on $\Spec \R$ equipped with a
trivialization over $\Spec \R_L,$ to the category of $G$-torsors on
$\Spec \widehat \R_f$ equipped with a trivialization over $\Spec \widehat \R_{f,L}.$

Then
\begin{enumerate}
\item $r_{G,f}$ is fully faithful, and an equivalence if $R$ is a Dedekind domain.
In particular, the natural map
$$X_G(\R) \rightarrow X_G(\widehat \R_f)$$
is an injection, and a bijection if $R$ is a Dedekind domain.
\item If $M \subset G$ is a reductive, closed $\O_F$-subgroup, the diagram
$$\xymatrix{
X_M(\R) \ar[r]\ar[d] & X_M(\widehat \R_f) \ar[d] \\
X_G(\R) \ar[r] & X_G(\widehat \R_f)}
$$
is Cartesian.
\end{enumerate}
\end{lemma}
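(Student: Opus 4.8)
The plan is to reduce both statements to a Tannakian assertion about vector bundles, together with one algebraic identity, after which the only serious work is an extension--across--codimension--two argument needed for part (1) in the Dedekind case.

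First I would record the key identity
$$\R \;=\; \R_L \cap \widehat\R_f \qquad\text{inside}\qquad \widehat\R_{f,L},$$
and, more generally, for any vector bundle $W$ on $\Spec\R$, the identity $W=(W\otimes_\R\R_L)\cap(W\otimes_\R\widehat\R_f)$ inside $W\otimes_\R\widehat\R_{f,L}$. Here one uses that $\R$ is a regular Noetherian domain: every maximal ideal of $\R$ contains $p$ (as $\R$ is $p$-adically complete), $\R/p\R=R$ is regular and $p$ is a nonzerodivisor, so $\R$ is regular, and $\Spec\R$ is connected with $\R$ normal, hence a domain. One then checks by induction on $n$, using $0\to\R/p\R\overset{p^{n-1}}\longrightarrow\R/p^n\R\to\R/p^{n-1}\R\to 0$ and the fact that the image of $f$ is a nonzerodivisor on the domain $R$, that $f$ is a nonzerodivisor on each $\R/p^n\R$; hence $\R/p^n\R\hookrightarrow\R_f/p^n\R_f=\widehat\R_f/p^n\widehat\R_f$ for all $n$, which gives $p^n\widehat\R_f\cap\R=p^n\R$ and the identity for $\R$. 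The version for a vector bundle $W$ follows by writing $W$ as a direct summand of a free module and intersecting summand-wise.

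Full faithfulness of $r_{G,f}$ is the statement that $X_G(\R)\to X_G(\widehat\R_f)$ is injective (a morphism of torsors compatible with the generic trivialization is determined by its restriction to the dense open $\Spec\R_L$, $\Spec\R$ being reduced, so every Hom-set in either category has at most one element). Using Lemma~\ref{tannakaoverdvr} to describe a $G$-torsor trivialized over $\Spec\R_L$ as an exact faithful tensor functor $V\mapsto F(V)$ from representations of $G$ on finite free $\O_L$-modules to vector bundles on $\Spec\R$, with compatible generic trivializations, I would apply the identity above to $\underline{\mathrm{Hom}}(F_1(V),F_2(V))$: an isomorphism of two such functors over $\Spec\widehat\R_f$ compatible with the trivializations descends uniquely to $\Spec\R$, and the descended maps are automatically tensor-compatible since this may be checked after the base change $\R\hookrightarrow\widehat\R_f$, which the identity shows is injective on all relevant $\mathrm{Hom}$'s. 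This uses only that $R$ is a Noetherian formally smooth domain.

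Next, essential surjectivity when $R$ is a Dedekind domain, so that $\R$ is regular of dimension $\le 2$. Given a $G$-torsor $(\mathcal T',\tau')$ on $\Spec\widehat\R_f$ trivialized over $\Spec\widehat\R_{f,L}$, I would (i) use Beauville--Laszlo descent for the ring $\R_f$ and its nonzerodivisor $p$ (whose $p$-adic completion is $\widehat\R_f$) to descend $(\mathcal T',\tau')$ uniquely to a $G$-torsor $(\mathcal T'',\tau'')$ on $\Spec\R_f$ trivialized over $\Spec\R_L$; (ii) Zariski-glue $\mathcal T''$ with the trivial torsor on $\Spec\R_L$ along their common open $\Spec\R[1/(fp)]$ via $\tau''$, obtaining a $G$-torsor $\widetilde{\mathcal T}$, trivial over $\Spec\R_L$, on $\Spec\R_f\cup\Spec\R_L$, whose complement in $\Spec\R$ is the \emph{finite} set $\{\mathfrak m_1,\dots,\mathfrak m_s\}$ of maximal ideals of $R$ at which $f$ vanishes --- points of codimension $2$ in the regular surface $\Spec\R$; and (iii) extend $\widetilde{\mathcal T}$ across these points. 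For (iii) I would fix a faithful $G\hookrightarrow\GL(\W)$: the associated $\GL(\W)$-torsor (a vector bundle) extends over $\Spec\R$ by its reflexive hull, which is locally free since $\R$ is regular of dimension $\le 2$, and the reduction of structure group to $G$ --- a section over $\Spec\R_f\cup\Spec\R_L$ of the \emph{affine} $\Spec\R$-scheme $\underline{\mathrm{Isom}}$-bundle with fibre $\GL(\W)/G$ (affine because $G$ is reductive, and realizable as a closed $\GL(\W)$-subscheme of a representation) --- extends to a section of the ambient vector bundle because that bundle is reflexive, and the extended section still factors through the closed subscheme because $\Spec\R$ is reduced and the section factors through it over a dense open. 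This step, the extension across the $\mathfrak m_i$ and above all the extension of the reduction of structure group, is the main obstacle, and is exactly where the Dedekind hypothesis enters (it is what makes $\{\mathfrak m_i\}$ finite and of codimension $2$).

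Finally, for the Cartesian square in (2): uniqueness follows from the injectivity in (1) applied to the reductive group $M$. For existence, given a $G$-torsor $\mathcal T$ on $\Spec\R$ trivialized over $\Spec\R_L$ whose restriction to $\Spec\widehat\R_f$ carries a compatible $M$-structure, I would regard an $M$-reduction of $\mathcal T$ as a section of the affine $\Spec\R$-scheme $Y=\mathcal T\times^G(G/M)$ ($G/M$ affine as $M$ is reductive). One is given such a section over $\Spec\widehat\R_f$, and the trivial $M$-structure on $\mathcal T|_{\Spec\R_L}$ gives another over $\Spec\R_L$, the two agreeing over $\Spec\widehat\R_{f,L}$. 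Writing $Y=\underline{\mathrm{Spec}}_{\Spec\R}\mathcal B$, these are $\R$-algebra maps $\mathcal B(\Spec\R)\to\R_L$ and $\mathcal B(\Spec\R)\to\widehat\R_f$ agreeing in $\widehat\R_{f,L}$, hence factoring through $\R=\R_L\cap\widehat\R_f$, producing a section over $\Spec\R$ and thus an $M$-torsor $\mathcal T_M$ on $\Spec\R$, trivialized over $\Spec\R_L$, with $\mathcal T_M\times^M G\cong\mathcal T$ and $\mathcal T_M|_{\Spec\widehat\R_f}$ the given $M$-structure; this is the required preimage, so the square is Cartesian.
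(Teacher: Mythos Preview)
Your argument is correct.  For part~(1) it follows the same route as the paper: reduce full faithfulness via Tannaka duality (Lemma~\ref{tannakaoverdvr}) to the identity $\R=\R_L\cap\widehat\R_f$, and for essential surjectivity in the Dedekind case, descend the torsor from $\widehat\R_f$ to $\R_f$, glue with the trivial torsor over $\R_L$ to get a torsor on $U=\Spec\R_f\cup\Spec\R_L$, and then extend across the finitely many missing codimension-$2$ points.  The paper packages the descent step as Lemma~\ref{affgrasspresent} (which, unwound, is the Beauville--Laszlo statement you invoke) and quotes Colliot-Th\'el\`ene--Sansuc for the extension, whereas you redo both steps by hand; the content is the same.

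Your treatment of part~(2) is genuinely different from the paper's, and cleaner.  The paper first passes to an \'etale cover to assume the $M$-torsor on $\widehat\R_f$ is given by an element of $G(\widehat\R_L)$, invokes Lemma~\ref{affgrasspresent} to descend to $\R_f$, glues to obtain an $M$-torsor on $U$ whose induced $G$-torsor is trivial (hence a section of $G/M$ over $U$), and then extends that section across codimension~$2$ using that $G/M$ is smooth affine and $\R$ is regular.  You bypass all of this: since $Y=\mathcal T\times^G(G/M)$ is affine over the affine scheme $\Spec\R$, a section over any $\R$-algebra $A$ is simply an $\R$-algebra map $B\to A$ where $Y=\Spec B$, and the two compatible sections over $\R_L$ and $\widehat\R_f$ immediately produce one over $\R=\R_L\cap\widehat\R_f$.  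This uses only the intersection identity already established for~(1), with no \'etale localization, no second appeal to descent, and no codimension-$2$ argument.  The paper's route has the mild advantage of making the geometry visible (the $M$-reduction lives on $U$ and is being extended inward), but yours is shorter and uses nothing beyond the ingredients already on the table.
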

\begin{proof}  We first prove that the functor is fully faithful. By Lemma \ref{tannakaoverdvr}
it suffices to show this for vector bundles, and for this it is enough to check that
$\widehat \R_f \cap \R_L = \R.$ Let $\tilde \R_f$ be the localization of $\R_f$ along $(p).$
Then $\widehat \R_f \cap \tilde \R_{f,L} = \tilde \R_f,$ since $\widehat \R_f$ is a fully faithful
$\tilde \R_f$-algebra, and $\tilde \R_f \cap \R_L = \R.$

Now suppose that $R$ is a Dedekind domain
By Lemma \ref{affgrasspresent} to show essential surjectivity, it suffices to show that a $G$-bundle
$P$ over $\Spec \R_f$ equipped with a trivialization over $\Spec \R_{f,L}$ extends uniquely to a
$G$-bundle over $\Spec \R.$
Using the trivialization, we may extend $P$ to a $G$-bundle over the complement of a set of codimension $2$
in $\Spec \R,$ equipped with a trivialization over $\Spec \R_L.$
By \cite{CS}, Thm.~6.13, since $G$ is reductive over $\O_L,$ any such bundle extends to a $G$-bundle over
$\Spec \R.$ This proves (1).

To prove (2), it suffices, by (1), to show that if $(\mathcal T_M,\tau) \in X_M(\widehat \R_f)$ lifts to an element of $X_G(\widehat \R)$
then it lifts to an element of $X_M(\widehat \R).$ Using the full faithfulness in (1) again, it suffices to prove this
with $R$ replaced by an \'etale covering. Thus we may assume that $(\mathcal T_M,\tau)$ is given by an element in $g \in G(\widehat \R_L).$
By Lemma \ref{affgrasspresent}, $\mathcal T_M$ arises from an $M$-bundle
on $\Spec \R_f,$ and we extend it to an $M$-bundle $\mathcal T_M'$ on $U : = \Spec \R_f \cup \Spec \R_L,$ equipped with a trivialization
over $\Spec \R_L.$ Since $\mathcal T_M'$ arises from $g,$ the $G$-torsor induced by $\mathcal T_M'$ is trivial. Thus it corresponds to
a section in $G/M(U).$ The complement of $U$ in $\Spec \R$ has codimension $\geq 2.$
Since $M$ is reductive, $G/M$ is a smooth, affine scheme. It follows that any section in $G/M(U)$ extends to $\Spec \R.$
 This shows that $\mathcal T'_M$ extends to an $M$-bundle of $\Spec \R,$ and proves (2).
\end{proof}

\begin{para} Now suppose that $\R$ has the structure of a frame for $R = \R/p\R.$
If $\Spf \R' \rightarrow \Spf \R$ is a formal affine \'etale neighborhood, then as remarked in \ref{1.1.5},
$\Spf \R'$ has a canonical structure of frame for $R' = \R'/p\R'.$
Thus given any $s \in \Spec R',$ and $g \in X_G(\R'),$ we may consider the induced element
$s(g) \in X_G(W(\bar\kappa(s))) = G(W(\bar\kappa(s))[1/p])/G(W(\bar\kappa(s))).$
\end{para}

\begin{lemma} Let $R$ be a formally smooth, Noetherian $\bar k$-algebra, and $\R$ a frame for $R.$
We regard $\pi_1(G)$ as a constant \'etale sheaf on $\Spec R$ with value $\pi_1(G).$
Then there is a canonical map $w_G: X_G \rightarrow \pi_1(G)$ of \'etale sheaves on $\Spec R,$
such that for any \'etale covering $\Spec R' \rightarrow \Spec R,$ $s \in \Spec R',$
and $g \in X_G(\R')$ we have
$$ [\mu_{s(g)}] = w_G(g)_{s} \in \pi_1(G).$$
\end{lemma}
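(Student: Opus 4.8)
The plan is to define $w_G$ via the Kottwitz homomorphism on the loop group and to check that the resulting map of presheaves descends to a map of étale sheaves with the asserted pointwise compatibility. By Lemma~\ref{affgrasspresent} we have $X_G \cong j_*G/G$ on the étale site of $\Spec R,$ so it suffices to construct a compatible system of maps $G(\R'_L)/G(\R') \rightarrow \pi_1(G)$ for $\R'$ running over canonical frames of étale neighborhoods $\Spec R' \rightarrow \Spec R$ (Lemma~\ref{1.1.5}). For a connected such $R',$ the ring $\R'_L$ is a $p$-torsion free, $p$-adically complete integral domain, so I would invoke the standard fact (cf.~\cite{RR}, and the original construction of the Kottwitz homomorphism) that $w_G$ is defined for $G(\cdot)$ over any local or sufficiently nice base; concretely, one reduces to the case $G = T$ a torus and $G = G^{\der}$ semisimple simply connected (where $w_G$ is trivial) using the exact sequence $1 \to \pi_1(G^{\der}) \to \pi_1(G) \to \pi_1(G/G^{\der}) \to \ast$ and functoriality, exactly as in the proof of Lemma~\ref{1.1.3}(3) where the obstruction to lifting to $G' = \tilde G \times T$ is analyzed. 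For a torus $T$ over $\O_F,$ the map $T(\R'_L)/T(\R') \to X_*(T) = \pi_1(T)$ is the valuation map, which is well-defined precisely because $\R'_L$ is a product of local rings with $\R'$-integral structure; here one uses that $R'$ is a domain (or reduce to connected components) so $\R'_L$ has a single "generic point" direction along which to measure the order of the cocharacter.

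Next I would verify that these maps glue to a morphism of étale sheaves. The key point is functoriality in $\R'$: if $\R' \to \R''$ is a map of canonical frames induced by an étale map $R' \to R'',$ the diagram relating $G(\R'_L)/G(\R') \to \pi_1(G)$ and $G(\R''_L)/G(\R'') \to \pi_1(G)$ commutes, since the Kottwitz homomorphism is functorial for morphisms of rings and $\pi_1(G)$ is being treated as the constant sheaf. This gives a map of presheaves $j_*G/G \to \underline{\pi_1(G)}$, and since the target is already a sheaf, it factors (uniquely) through the sheafification, which is $X_G$; composing with the isomorphism of Lemma~\ref{affgrasspresent} yields $w_G : X_G \to \pi_1(G).$

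Finally, for the pointwise formula: given $g \in X_G(\R')$ and $s \in \Spec R',$ the point $s$ extends $\sigma$-equivariantly to a map $\R' \to W(\bar\kappa(s))$ by \ref{1.1.2}, inducing $s(g) \in G(W(\bar\kappa(s))[1/p])/G(W(\bar\kappa(s))).$ By construction $w_G$ is compatible with pullback along this map, so $w_G(g)_s = w_G(s(g)) \in \pi_1(G).$ On the other hand, by the very definition of the Kottwitz homomorphism over the complete discrete valuation ring $W(\bar\kappa(s))$ — which sends $G(\O)\,p^{\mu}\,G(\O)$ to $[\mu]$ — and the Cartan decomposition \eqref{cartan}, we have $w_G(s(g)) = [\mu_{s(g)}].$ Combining gives the desired equality.

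I expect the main obstacle to be the well-definedness of the valuation/Kottwitz map on $G(\R'_L)/G(\R')$ when $\R'_L$ is not a field but a (possibly non-local, after the étale base change not a domain) $p$-adically complete ring: one must be careful that $\R'_L$ breaks up into finitely many pieces along which the "order" is constant — this is exactly where Lemma~\ref{1.1.3}(3) (local constancy of $[\mu_{s(g)}]$, proved via the torus obstruction argument over the regular ring $\R'$) is needed to guarantee the map is locally constant and hence a genuine map to the constant sheaf $\pi_1(G),$ rather than something that varies. Reducing to tori and to the simply connected case, where the statements are classical, should dispatch the rest without difficulty.
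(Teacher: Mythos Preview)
The paper's proof is a single sentence: ``This follows immediately from Lemma~\ref{1.1.3}.'' The point is that Lemma~\ref{1.1.3}(3) already says $s \mapsto [\mu_{s(g)}]$ is locally constant on $\Spec R'$, so one simply \emph{defines} $w_G(g)$ to be this locally constant function, regarded as a section of the constant sheaf $\pi_1(G)$. Compatibility with \'etale pullback is automatic from the pointwise nature of the definition, and the displayed formula holds by construction.

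Your route is not wrong, but it is a re-derivation of Lemma~\ref{1.1.3}(3) rather than an application of it: the reduction to $G' = \tilde G \times T$ and then to a torus is exactly how that lemma is proved. One correction: your justification that ``$\R'_L$ is a product of local rings'' is false --- $\R'_L = \R'[1/p]$ is far from semi-local --- but the conclusion you want happens to be true for a different reason: when $R'$ is a connected formally smooth $\bar k$-algebra, $R' = \R'/p\R'$ is a domain, and one checks directly that $(\R'_L)^{\times} = p^{\mathbb Z}\cdot\R'^{\times}$, so the valuation map $T(\R'_L)/T(\R') \to X_*(T)$ is indeed well-defined. Since you already flag Lemma~\ref{1.1.3}(3) as the crux in your final paragraph, the cleanest fix is to drop the global Kottwitz construction entirely and invoke that lemma at the outset.
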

\begin{proof} This follows immediately from Lemma \ref{1.1.3}.
\end{proof}

\subsection{Affine Deligne-Lusztig varieties}\label{affdlvar}

\begin{para} Let $\R$ be a $p$-torsion free, $p$-adically complete and separated $\O_L$-algebra.
Recall that for $g \in G(\R_L)$ and $\mu \in X_*(T)$ dominant we defined
$$ S_{\mu}(g) = \{s \in \Spec R: s(g) \in G(W(\bar \kappa(s)))p^{\mu} G(W(\bar \kappa(s)))\}
$$
where $\bar\kappa(s)$ denotes an algebraic closure of
$\kappa(s).$
Note that the condition on $g$ in the definition of $S_{\mu}(g)$ depends only on
the image of $g$ in $G(\R_L)/G(\R).$ We may therefore define $S_{\mu}(g)$
and $S_{\preceq \mu}(g)$ in the same way for any $g \in G(\R_L)/G(\R).$

Now let $R$ be a $\bar k$-algebra, $S = \Spec R$ and $\R$ a frame for $R.$
For $b \in G(L)$ we set
$$ X_{\preceq \mu}(b)(\R) = \{g \in X_G(\R): S_{\preceq \mu}(g^{-1}b\sigma(g)) = S\},$$
and we define $X_{\mu}(b)(\R)$ in an analogous way,
replacing $S_{\preceq \mu}$ by $S_{\mu}.$
If $\Spf \R' \rightarrow \Spf \R$ is a formal affine \'etale neighborhood, then as remarked above,
$\Spf \R'$ has a canonical structure of frame for $R' = \R'/p\R'.$ Thus we may consider
$X_{\preceq\mu}(b)(\R')$ (resp. $X_{\mu}(b)(\R')$). Note that the above definition probably needs to be refined if one wants to obtain a good notion of non-reduced structure on affine Deligne-Lusztig sets. However, for our study of connected components this is not relevant.

For $g_0\in G(L)$ we have natural bijections $X_{\preceq \mu}(b)(\R)\rightarrow X_{\preceq \mu}(g_0^{-1}b\sigma(g_0))(\R)$ with $g\mapsto g_0^{-1}g$. Therefore, all of the following notions for these sets and in particular the set of connected components of $X_{\preceq \mu}(b)$ only depend on the $\sigma$-conjugacy class of $b$.

In the analogous situation, when $F$ has characteristic $p,$ any $\bar k$-algebra $R$ admits the canonical frame
$R\lps t \rps.$ Thus $X_{\mu}(b)$ can be regarded as a functor on $\bar k$-algebras, by setting $X_{\mu}(b)(R)$ to be the set $X_{\mu}(b)(R\lps t \rps)$ defined as above. In fact, in this setting, $X_{\mu}(b)$ is a scheme in characteristic $p.$
Although one would like to have a similar interpretation in mixed characteristic there is no canonical frame, and we do not know of any way to formalize this heuristic.

We will sometimes write simply $X_{\preceq \mu}(b)$ for $X_{\preceq\mu}(b)(W(\bar k)).$
When we want to make the group $G$ explicit we will write $X_{\preceq \mu}^G(b)$ for $X_{\preceq \mu}(b).$
\end{para}
\begin{lemma} The functors $X_{\preceq\mu}(b)$ and $X_{\mu}(b)$ are subsheaves of $X_G$ in the \'etale topology of $\Spec R.$
\end{lemma}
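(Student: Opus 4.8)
The plan is to invoke that $X_G$ is already an \'etale sheaf (Lemmas~\ref{affgrassdefn} and~\ref{affgrasspresent}), so that it suffices to show $X_{\preceq\mu}(b)$ is a subpresheaf of $X_G$ on affine \'etale neighborhoods of $\Spec R$ whose defining condition is \'etale local on the base; the case of $X_\mu(b)$ is identical, only replacing $S_{\preceq\mu}$ by $S_\mu$ throughout. Fix $g \in X_G(\R)$ and write $h = g^{-1}b\sigma(g)$, interpreted pointwise: for a geometric point $s$ of $\Spec R$ the unique $\sigma$-equivariant lift $s\colon \R \to W(\bar\kappa(s))$ of~\ref{1.1.2} carries $g$ to a class $s(g) \in X_G(W(\bar\kappa(s))) = G(W(\bar\kappa(s))[1/p])/G(W(\bar\kappa(s)))$ (any $G$-torsor over the strictly henselian $W(\bar\kappa(s))$ being trivial), and we set $s(h) = s(g)^{-1}b\,\sigma(s(g))$, whose Cartan invariant is well defined. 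By construction $S_{\preceq\mu}(h) = \{\,s : s(h) \in \bigcup_{\mu'\preceq\mu} G(W(\bar\kappa(s)))p^{\mu'}G(W(\bar\kappa(s)))\,\}$ depends on $g$ only through these values at geometric points.

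The key point is the compatibility of this construction with \'etale base change. Let $\R \to \R'$ be the canonical frame attached (via Lemma~\ref{1.1.5}) to an \'etale map $R \to R'$, let $g'$, $h'$ be the images of $g$, $h$, and let $s'$ be a geometric point of $\Spec R'$ over $s$. Since $\kappa(s')/\kappa(s)$ is finite separable we may take $\bar\kappa(s') = \bar\kappa(s)$; then the $\sigma$-equivariant lifts $\R' \to W(\bar\kappa(s))$ and $\R \to W(\bar\kappa(s))$ agree on $\R$ by the uniqueness in~\ref{1.1.2}, so $s'(h') = s(h)$. Hence $S_{\preceq\mu}(h') \subseteq |\Spec R'|$ is precisely the preimage of $S_{\preceq\mu}(h) \subseteq |\Spec R|$ under the continuous map $|\Spec R'| \to |\Spec R|$.

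Granting this, both properties follow formally. If $g \in X_{\preceq\mu}(b)(\R)$, i.e. $S_{\preceq\mu}(h) = \Spec R$, then for any \'etale $\R \to \R'$ its preimage $S_{\preceq\mu}(h')$ is all of $\Spec R'$, so $g'$ lies in $X_{\preceq\mu}(b)(\R')$; thus $X_{\preceq\mu}(b)$ is a subpresheaf of $X_G$. Conversely, if $\{\Spf \R_i \to \Spf \R\}$ is a covering and $g \in X_G(\R)$ has image in $X_{\preceq\mu}(b)(\R_i)$ for every $i$, then each $S_{\preceq\mu}(h_i) = \Spec R_i$ is the preimage of $S_{\preceq\mu}(h)$; since $\coprod \Spec R_i \to \Spec R$ is surjective, $S_{\preceq\mu}(h) = \Spec R$, i.e. $g \in X_{\preceq\mu}(b)(\R)$. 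Combined with the fact that $X_G$ is a sheaf, this proves the lemma. The only point requiring care is the functoriality of the Witt-vector lifts in~\ref{1.1.2} under \'etale maps of frames, used to identify $S_{\preceq\mu}(h')$ with the preimage of $S_{\preceq\mu}(h)$; everything else reduces to the surjectivity of \'etale coverings on underlying topological spaces.
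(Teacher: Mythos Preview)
Your proof is correct and follows exactly the approach the paper takes: the paper's proof is the single sentence ``This follows from Lemma~\ref{affgrassdefn} together with the fact that the conditions defining $X_{\preceq\mu}(b)$ and $X_{\mu}(b)$ are local for the \'etale topology on $\Spec R$,'' and your argument simply unpacks the second clause by checking that $S_{\preceq\mu}(h')$ is the preimage of $S_{\preceq\mu}(h)$ under $\Spec R' \to \Spec R$. The care you take with the functoriality of the Witt-vector lifts and the pointwise interpretation of $g^{-1}b\sigma(g)$ for $g \in X_G(\R)$ is appropriate and matches how the paper implicitly uses these facts.
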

\begin{proof} This follows from Lemma \ref{affgrassdefn} together with the fact that the conditions defining $X_{\preceq\mu}(b)$ and $X_{\mu}(b)$ are local for the \'etale topology on $\Spec R.$
\end{proof}

\begin{lemma}\label{valuativeII} Suppose that $R = \R/p\R$ is Noetherian and formally smooth over $\bar k.$
Let $f \in \R\backslash p\R,$ and $\widehat \R_f$ the $p$-adic completion of $\R_f.$ Then the diagram
$$\xymatrix{
X_{\preceq \mu}(b)(\R) \ar[r]\ar[d] & X_{\preceq \mu}(b)(\widehat \R_f) \ar[d] \\
X_G(\R) \ar[r] & X_G(\widehat \R_f)}
$$ is Cartesian, and similarly with $X_{\mu}$ in place of $X_{\preceq \mu}$ if $\mu$ is minuscule. In particular,
\begin{enumerate}
\item The natural map
$X_{\preceq \mu}(b)(\R) \rightarrow X_{\preceq\mu}(b)(\widehat \R_f)$ is injective, and is bijective if $R$ is a Dedekind domain.
\item If $\mu$ is minuscule the natural map
$X_{\mu}(b)(\R) \rightarrow X_{\mu}(b)(\widehat \R_f)$ is injective, and is bijective if $R$ is a Dedekind domain.
\item If $M \subset G$ is a closed, reductive $\O_F$-subgroup with $b \in M(L),$ then the diagram
$$\xymatrix{
X^M_{\preceq \mu}(b)(\R) \ar[r]\ar[d] & X^M_{\preceq \mu}(b)(\widehat \R_f) \ar[d] \\
X^G_{\preceq \mu}(b)(\R) \ar[r] & X^G_{\preceq \mu}(b)(\widehat \R_f)}
$$
is Cartesian, and similarly with $X_{\mu}$ in place of $X_{\preceq \mu}$ if $\mu$ is minuscule.
\end{enumerate}
\end{lemma}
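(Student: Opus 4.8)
The plan is to prove the Cartesian property of the large square first; statements (1)--(3) then follow formally. Since the two vertical maps are inclusions of subsheaves of $X_G$, Cartesianness is equivalent to the assertion that if $g\in X_G(\R)$ has image in $X_{\preceq\mu}(b)(\widehat\R_f)\subseteq X_G(\widehat\R_f)$, then $g\in X_{\preceq\mu}(b)(\R)$. Using that all the functors in sight are \'etale sheaves on $\Spec R$ (Lemma~\ref{affgrassdefn}), I may replace $R$ by an \'etale covering and assume the $G$-torsor underlying $g$ is trivial, so $g\in G(\R_L)$. Note that $\sigma$ extends canonically from $\R$ to $\R_f=\R[1/f]$ and to its $p$-adic completion $\widehat\R_f$: indeed $\sigma(f)$ reduces mod $p$ to $\bar f^{\,q}$, hence is a unit in $\widehat\R_f$, and $\widehat\R_f$ thereby becomes a frame for $R_f=R[1/\bar f]$. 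Thus $h:=g^{-1}b\sigma(g)$ is defined over $\R$ and, compatibly, over $\widehat\R_f$.

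The key point is that for every $s\in\Spec R_f\subseteq\Spec R$ the canonical $\sigma$-equivariant map $\R\to W(\bar\kappa(s))$ factors through $\widehat\R_f$ (because $f$ maps to a unit of $W(\bar\kappa(s))$); hence $s(h)$ is the same whether computed over $\R$ or over $\widehat\R_f$, and therefore
$$ S_{\preceq\mu}(h)\cap\Spec R_f \;=\; S_{\preceq\mu}\big(h|_{\widehat\R_f}\big), $$
which by hypothesis equals $\Spec R_f$. By Lemma~\ref{1.1.3}(1) the subset $S_{\preceq\mu}(h)$ is Zariski closed in $\Spec R$, while $\Spec R_f$ is dense in $\Spec R$ (this uses that $R$ is reduced and $f\notin p\R$, so $\bar f\neq 0$; density of $\Spec R_f$ is preserved under passing to an \'etale covering). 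A Zariski closed set containing a dense subset is everything, so $S_{\preceq\mu}(h)=\Spec R$, i.e.\ $g\in X_{\preceq\mu}(b)(\R)$. The minuscule variant is proved identically, using Lemma~\ref{1.1.3}(2) (and recalling that $S_{\mu}=S_{\preceq\mu}$ when $\mu$ is minuscule).

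For (1) and (2): by the Cartesian property just shown, the top horizontal map $X_{\preceq\mu}(b)(\R)\to X_{\preceq\mu}(b)(\widehat\R_f)$ is the base change of $X_G(\R)\to X_G(\widehat\R_f)$ along the inclusion $X_{\preceq\mu}(b)(\widehat\R_f)\hookrightarrow X_G(\widehat\R_f)$, so it is injective, and bijective when $R$ is a Dedekind domain, by Lemma~\ref{valuative}(1); likewise for $X_\mu$ when $\mu$ is minuscule. For (3), with $M\subseteq G$ a closed reductive $\O_F$-subgroup and $b\in M(L)$, I would stack three Cartesian squares: the main square above applied to $M$, the same applied to $G$, and the square of Lemma~\ref{valuative}(2) comparing $X_M$ and $X_G$. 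Since $M$ is closed in $G$ the map $X_M\to X_G$ is injective; identifying $X_M$ with its image in $X_G$, and using that the natural map $X^M_{\preceq\mu}(b)\to X^G_{\preceq\mu}(b)$ is compatible with these inclusions and with restriction to $\widehat\R_f$, one writes each of the four sets in the desired square as an intersection inside $X_G(\R)$ or $X_G(\widehat\R_f)$, and a short diagram chase reduces the identity
$$ X^M_{\preceq\mu}(b)(\R)=X^G_{\preceq\mu}(b)(\R)\times_{X^G_{\preceq\mu}(b)(\widehat\R_f)}X^M_{\preceq\mu}(b)(\widehat\R_f) $$
to the three given Cartesian squares together with the inclusion $X^M_{\preceq\mu}(b)(\widehat\R_f)\subseteq X^G_{\preceq\mu}(b)(\widehat\R_f)$; the minuscule case is the same with $X_\mu$ throughout.

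The only substantive step is the one in the first two paragraphs: identifying the loci $S_{\preceq\mu}(\cdot)$ over $\R$ with those over $\widehat\R_f$ on the open $\Spec R_f$, and then invoking the closedness in Lemma~\ref{1.1.3}. The place requiring care is the bookkeeping with the Frobenius lift and the $p$-adic completion --- checking that $\sigma$ extends to $\widehat\R_f$, and that for $s\in\Spec R_f$ the map $\R\to W(\bar\kappa(s))$ factors $\sigma$-equivariantly through $\widehat\R_f$. Everything else, including part (3), is a formal diagram chase built on Lemma~\ref{valuative}.
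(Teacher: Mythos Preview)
Your proof is correct and follows essentially the same approach as the paper: reduce the Cartesian property to the closedness of $S_{\preceq\mu}$ from Lemma~\ref{1.1.3}, then deduce (1)--(3) formally from Lemma~\ref{valuative}. You are more explicit than the paper about the density argument, the extension of $\sigma$ to $\widehat\R_f$, and the diagram chase for (3); the paper compresses all of this into one sentence. Your \'etale localization to trivialize the torsor is harmless but not strictly needed, since $S_{\preceq\mu}(g^{-1}b\sigma(g))$ is already well-defined pointwise for $g\in X_G(\R)$.
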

\begin{proof} Let $g \in X_{\preceq \mu}(\widehat \R_f),$ and suppose that
$g$ arises from an element $\tilde g \in X_G(\R).$ By Lemma \ref{1.1.3},
the condition $S_{\preceq \mu}(g^{-1}b\sigma(g)) = \Spec R[1/f]$ implies
$S_{\preceq \mu}(\tilde g^{-1}b\sigma(\tilde g)) = S,$ so $\tilde g \in X_{\preceq\mu}(\R).$
Similarly, if $\mu$ is minuscule and $g \in X_{\mu}(\widehat \R_f),$ then
$\tilde g \in X_{\mu}(\R).$ It follows that the first diagram in the lemma is Cartesian.
This implies the other claims in the lemma, using Lemma \ref{valuative}.
\end{proof}
\begin{para} Let $\DD$ denote the pro-torus with character group $\Q.$
Recall the Newton  cocharacter
$$\nu = \nu_b: \DD \rightarrow G$$ defined by Kottwitz \cite{Kottwitz1}, 4.2.
If $G = GL(\W)$ for an $F$-vector space $\W,$ then $\nu$ is the cocharacter which induces the slope decomposition of $b\sigma$ acting on $\W.$
In general $\nu$ is determined by requiring that it be functorial in the group $G.$
We denote by $M_b \subset G$ the centralizer of $\nu_b.$
A $\sigma$-conjugacy class is called basic if the associated Newton cocharacter is central in $G$. Let $\nu_{\dom} \in X_*(T)_{\Q}^{\Gamma}$ be the dominant cocharacter conjugate to the Newton cocharacter of $b.$

The group $\Gamma$ acts on $X_*(T)$ through a finite quotient, and we denote by
$$\bar \mu
= [\Gamma:\Gamma_{\mu}]^{-1} \sum_{\tau \in \Gamma/\Gamma_{\mu}} \tau(\mu) \in X_*(T)_{\Q}$$
the average of the $\Gamma$-conjugates of $\mu.$
As mentioned in the introduction,
the set $X_{\mu}(b)$ is non-empty if and only if $[b] \in B(G,\mu).$ That is, $\kappa_G(b)= [\mu]$ in $\pi_1(G)_{\Gamma},$
and $\bar \mu-\nu_{\dom}$
is a linear combination of positive coroots with non-negative rational coefficients.
We assume from now on that this condition holds.

For any $\bar b \in G^{\ad}(L),$ we define an $F$-group $J_{\bar b}$ by setting $$ J_{\bar b}(R) = J^G_{\bar b}(R) : =  \{g \in G(R\otimes_FL): \sigma(g) = \bar b^{-1}g\bar b \},$$
for $R$ an $F$-algebra. There is an inclusion $J_{\bar b} \subset G,$ defined over $L,$
which is given on $R$-points ($R$ an $L$-algebra) by the natural map $G(R\otimes_FL) \rightarrow G(R),$ and which identifies $J_{\bar b}$ with the preimage of $M_{\bar b}$ in $G.$
The group $J_{\bar b}$ is an inner form of $M_{\bar b}$ \cite{Kottwitz2}, 3.3, \cite{RZ}, 1.12. %We sometimes write $J_b$ for $J_b(F)$.

If $b \in G(L)$ we write $J_b = J_{\bar b}$ where $\bar b$ denotes the image of $b$ in $G^{\ad}(L).$
Then $J_b(F)$ acts naturally on $X_{\preceq \mu}(b)$ and $X_{\mu}(b).$
\end{para}

\begin{para}\label{conn_comp_ADLV}
Let $g_0, g_1 \in X_{\preceq \mu}(b)(W(\bar k)),$ and $R$ a smooth $\bar k$-algebra with connected spectrum,
equipped with a frame $\R.$ We say that $g_0$ is connected to $g_1$ via $\R$
if there exists
$g \in X_{\preceq \mu}(b)(\R)$ and $s_0,s_1 \in (\Spec R)(\bar k)$ such that
$s_0(g) = g_0$ and $s_1(g) = g_1.$ We denote by $\sim$ the smallest equivalence relation on
$X_{\preceq \mu}(b)(W(\bar k))$ such that $g_0 \sim g_1$ if $g_0$ is connected to $g_1$ via some $\R$ as above,
and we write $\pi_0(X_{\preceq \mu}(b))$ for the set of equivalence classes under $\sim.$

We could have defined a notion of connected components without assuming that $R$ is smooth.
However the stronger notion of connectedness is useful in the applications in \cite{Ki4} and, happily, this
condition is also convenient in several of our arguments. On the other hand, we conjecture that the two definitions
of connected components are equivalent. This follows {\it a posteriori} from our main result,
when $\mu$ is minuscule $G^{\ad}$ is simple, $(\mu,b)$ is Hodge-Newton indecomposable and $G^{\der}$ is
simply connected (so that $\pi_1(G)$ has no torsion). To see this one uses the first part of the
proof of Lemma \ref{1.1.3} which shows (without assuming $R$ formally smooth) that
$s \mapsto [\mu_{s(g)}] \in \pi_1(G)\otimes_{\mathbb Z}\mathbb Q$ is locally constant on $\Spec R.$
We believe that all of Lemma \ref{1.1.3} remain true without assuming $R$ formally smooth, in
which case the two notions of connected component would agree without assuming $G^{\der}$ simply connected.

The natural action of $J_b(F)$ on $X_{\preceq \mu}(b)$ clearly induces an action on $\pi_0(X_{\preceq \mu}(b)).$
Note that we also have an action of $J_b(F)$ on $\pi_1(G)$  by left multiplication via
$J_b(F) \overset {w_{J_b}}\rightarrow \pi_1(J_b) \rightarrow \pi_1(G).$
\end{para}
\begin{lemma}\label{constructionofmap}
\begin{enumerate}
\item The homomorphism $w_G:G(L)\rightarrow \pi_1(G)$ induces a well-defined map
$w_G:\pi_0(X_{\preceq\mu}(b))\rightarrow \pi_1(G),$ which is compatible with the action
of $J_b(F).$
\item Let $c_{b,\mu}$ be as in Theorem \ref{thmzshk}. Then the image of the map defined above is contained in $c_{b,\mu}\pi_1(G)^{\Gamma}$.
\end{enumerate}
\end{lemma}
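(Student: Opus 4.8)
First I would establish part (1). The key point is that $w_G$ is already defined as a morphism of étale sheaves $w_G\colon X_G\to\pi_1(G)$ (with $\pi_1(G)$ the constant sheaf), so for $g\in X_G(\R)$ with $R$ connected, the section $w_G(g)$ is a locally constant, hence constant, element of $\pi_1(G)$; its value at any $\bar k$-point $s$ is $[\mu_{s(g)}]$. So if $g_0,g_1\in X_{\preceq\mu}(b)(W(\bar k))$ are connected via some smooth connected $\R$ — witnessed by $g\in X_{\preceq\mu}(b)(\R)$ and points $s_0,s_1$ with $s_i(g)=g_i$ — then $w_G(g_0)=w_G(g)_{s_0}=w_G(g)_{s_1}=w_G(g_1)$, since $w_G(g)$ is constant on $\Spec R$. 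Because $\sim$ is generated by such elementary connectedness relations, $w_G$ is constant on $\sim$-classes and descends to $w_G\colon\pi_0(X_{\preceq\mu}(b))\to\pi_1(G)$. Compatibility with the $J_b(F)$-action is then just the statement that for $j\in J_b(F)\subset G(L)$ and $g\in G(L)$ one has $w_G(jg)=w_G(j)+w_G(g)$ in $\pi_1(G)$, i.e.\ $w_G\colon G(L)\to\pi_1(G)$ is a homomorphism (which it is), combined with the definition of the $J_b(F)$-action on $\pi_1(G)$ as left translation through $w_{J_b}$ followed by $\pi_1(J_b)\to\pi_1(G)$; one must check this last composite agrees with the restriction of $w_G\colon G(L)\to\pi_1(G)$ to $J_b(F)\subset G(L)$, which follows from functoriality of the Kottwitz homomorphism applied to $J_b\hookrightarrow G$ over $L$ (note $\pi_1$ is insensitive to inner forms, so $\pi_1(J_b)=\pi_1(M_b)$ maps compatibly to $\pi_1(G)$).

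For part (2) I would argue as follows. Pick a representative $g\in G(L)/K$ of a point of $X_{\preceq\mu}(b)(W(\bar k))$, so $g^{-1}b\sigma(g)\in K\e^{\mu'}K$ for some dominant $\mu'\preceq\mu$; hence $w_G(g^{-1}b\sigma(g))=[\mu']=[\mu]$ in $\pi_1(G)$ (here $\mu'\preceq\mu$ means the difference is a sum of coroots, which is $0$ in $\pi_1(G)=X_*(T)/\langle\text{coroots}\rangle$). Using that $w_G$ is a homomorphism and that $\sigma$ acts on $\pi_1(G)$, this reads $[\mu]=-w_G(g)+w_G(b)+\sigma(w_G(g))$, i.e.\ $w_G(g)=w_G(b)-[\mu]+(\sigma-1)^{-1}\text{-type correction}$; more precisely, $w_G(g)-c_{b,\mu}\in\pi_1(G)$ satisfies $(1-\sigma)(w_G(g)-c_{b,\mu})=0$, because $w_G(b)-\mu=(1-\sigma)(c_{b,\mu})$ by definition of $c_{b,\mu}$ and $w_G(b)-\mu=(1-\sigma)(w_G(g))$ by the computation above. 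Therefore $w_G(g)\in c_{b,\mu}\pi_1(G)^{\Gamma}$, which is exactly the asserted containment.

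**Main obstacle.** None of this is deep; the only genuinely careful point is bookkeeping. In part (1), I must make sure the relation ``$g_0$ connected to $g_1$ via $\R$'' really forces $w_G(g_0)=w_G(g_1)$ without extra hypotheses beyond $R$ smooth and connected — this is fine, since the relevant part of Lemma~\ref{1.1.3} gives that $s\mapsto[\mu_{s(g)}]\in\pi_1(G)$ is locally constant on $\Spec R$ for $R$ formally smooth (and is even noted in \S\ref{conn_comp_ADLV} to hold without smoothness at the level of $\pi_1(G)\otimes\Q$). In part (2), the subtlety is to get the sign/cocycle identity right: $w_G(g^{-1}b\sigma(g))=w_G(b)+(\sigma-1)w_G(g)$ in additive notation on $\pi_1(G)$, and then to match this against the defining equation $w_G(b)-\mu=(1-\sigma)(c_{b,\mu})$ so that $w_G(g)$ and $c_{b,\mu}$ differ by a $\Gamma$-fixed element. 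That is a one-line linear-algebra manipulation once the Kottwitz-homomorphism conventions are pinned down. I expect the write-up to be short, citing Lemma~\ref{1.1.3} for local constancy and the standard functoriality of $w_G$ for the homomorphism property and the $J_b(F)$-compatibility.
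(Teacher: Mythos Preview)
Your proposal is correct and follows essentially the same approach as the paper's proof. The paper's argument is even more terse: for (1) it simply cites Lemma~\ref{1.1.3} and declares the $J_b(F)$-compatibility ``clear''; for (2) it notes that $K\subset\ker w_G$ gives $w_G(g^{-1}b\sigma(g))=[\mu]$, hence $-w_G(g)+\sigma(w_G(g))=[\mu]-w_G(b)$, and then invokes the defining equation of $c_{b,\mu}$ --- exactly your computation.
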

\begin{proof}
The first assertion of (1) follows from Lemma \ref{1.1.3}, where the claim regarding
the action of $J_b(F)$ is clear.

For (2) let $g\in X_{\preceq\mu}(b)$. As $K$ is in the kernel of $w_G$, this implies $w_G(g^{-1}b\sigma(g)) = [\mu] \in \pi_1(G)$. Hence $-w_G(g)+\sigma(w_G(g))=[\mu]-w_G(b)$.
By definition of $c_{b,\mu}$ this implies the claim.
\end{proof}

\subsection{Reduction to adjoint groups}\label{secred}

 We continue to use the notation above. In particular, $\R$ is a frame for $R = \R/p\R,$
and we continue to assume that $[b] \in B(G,\mu).$

\begin{lemma}\label{cartesianI} Let $G \rightarrow G'$ be a morphism of reductive
groups over $\O_L$ which takes $Z_G$ to $Z_{G'}$ and induces an isomorphism on adjoint groups.
Suppose that $R$ is Noetherian and formally smooth over $\bar k.$
Then the diagram of \'etale sheaves on $\Spec R$
$$\begin{CD}
  X_G  @>>> X_{G'} \\
^{w_G}@VVV^{w_{G'}} @VVV      \\
\pi_1(G)   @>>> \pi_1(G')
\end{CD}$$
is Cartesian.
\end{lemma}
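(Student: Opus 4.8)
The plan is to exhibit the Cartesian square by a direct sheaf-theoretic argument, reducing everything to stalks at geometric points and to the valuative-type statements already established. First I would record what the square asserts: given an \'etale neighborhood $U = \Spec \cR'$ of $\Spec R$ (with its canonical frame $\R'$ from Lemma \ref{1.1.5}), an element $g' \in X_{G'}(\R')$, and an element $\lambda \in \pi_1(G)(U)$ mapping to $w_{G'}(g') \in \pi_1(G')(U)$, I must produce a unique $g \in X_{G}(\R')$ mapping to $g'$ and to $\lambda$. Since all four objects are \'etale sheaves on $\Spec R$, it suffices to check the statement on stalks, i.e.\ after replacing $R$ by the strict henselization at a geometric point, or even to work locally enough that $\pi_1(G)$ and $\pi_1(G')$ are constant and the torsors in question are trivial; then $X_G(\R') = G(\R'_L)/G(\R')$ and similarly for $G'$, by Lemma \ref{affgrasspresent}.

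Next I would analyze the map $G \to G'$ on loop groups. Let $N = \ker(G \to G')$ and let $C$ be the cokernel; since $G \to G'$ is an isomorphism on adjoint groups and carries $Z_G$ into $Z_{G'}$, the kernel $N$ is a subgroup of $Z_G$ (hence of multiplicative type) and $G' $ is the quotient $G/N$, with $C = Z_{G'}/\mathrm{image}(Z_G)$ also of multiplicative type; moreover $\pi_1(G) \to \pi_1(G')$ fits into an exact sequence whose kernel and cokernel are governed by $N$ and $C$. The key point is that the obstruction to lifting $g' \in G'(\R'_L)/G'(\R')$ to $G(\R'_L)/G(\R')$, together with the discrepancy of Kottwitz homomorphisms, is measured by $H^1$ and $H^0$ of tori (split, after a finite \'etale base change that we may absorb into the neighborhood), exactly as in the argument of Lemma \ref{1.1.3}(3): the obstruction classes correspond to line bundles on $\Spec \R'_L$, which extend over $\Spec \R'$ because $\R'$ is regular, and are trivialized after a further \'etale covering. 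So \'etale locally on $R$ we can lift $g'$ to some $g_0 \in G(\R'_L)/G(\R')$; then $w_G(g_0)$ and $\lambda$ differ by an element of $\ker(\pi_1(G) \to \pi_1(G'))$, which comes from $\pi_1(N)$, and since $N \subset Z_G \subset G$ this element is realized by an actual point of $(N\text{-part of the})$ affine Grassmannian, i.e.\ by multiplying $g_0$ by a suitable $N(\R'_L)$-translate we arrange $w_G(g) = \lambda$ on the nose. This gives existence of the lift $g$; uniqueness follows because the difference of two lifts lies in $X_N(\R') = N(\R'_L)/N(\R')$ and has trivial image in $\pi_1(N)$, hence, $N$ being of multiplicative type with $\R'$ regular, is trivial.

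Finally I would package these local constructions into a statement about sheaves: existence and uniqueness of the lift \'etale locally, together with uniqueness, forces the square to be Cartesian as a diagram of \'etale sheaves on $\Spec R$. I expect the main obstacle to be the bookkeeping with the multiplicative-type group $N$ (and its possible torsion/non-smoothness when $p$ divides the order of $\pi_1$): one must make sure that ``lifting $g'$ locally'' and ``correcting the Kottwitz class by an $N$-translate'' can be done simultaneously and compatibly, and that the relevant $H^1$'s really are handled by the regularity of $\R'$ and passage to an \'etale covering, just as in Lemma \ref{1.1.3}. Once that is in place, the rest is formal. A cleaner alternative, which I would try first, is to deduce the lemma directly from Lemma \ref{1.1.3}: on each connected component of $\Spec R'$ the class $[\mu_{s(g)}] \in \pi_1(G)$ is constant, so $X_G \to X_{G'} \times_{\pi_1(G')} \pi_1(G)$ is a map of sheaves that is bijective on each such component by the torus computation above, and hence an isomorphism.
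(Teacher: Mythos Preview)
Your outline has the right shape, but there are two concrete gaps, and the paper's proof supplies exactly the device that closes them.

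The first is that ``$G'=G/N$'' is false in general: the hypotheses allow non-surjective maps such as $\SL_n\hookrightarrow\GL_n$ (same adjoint group $\PGL_n$, and $Z_{\SL_n}=\mu_n$ lands in $Z_{\GL_n}=\GG_m$). Your own sentence introduces a nontrivial cokernel $C$ immediately after, so the argument is internally inconsistent. In the embedding case there is no kernel whose $H^1$ controls lifting, and your step ``\'etale locally we can lift $g'$ to some $g_0$'' is exactly what requires proof; it uses the $\pi_1$-constraint, which you only invoke later for the correction. The paper handles the embedding case by a different mechanism: one has $G'/G\cong T'/T$ for maximal tori $T\subset T'$, and using $R^1j_*\GG_m=0$ one gets $j_*(G'/G)=j_*T'/j_*T$; after modifying a lift $\tilde g\in j_*G'$ by an element of $j_*G$ one may assume $\tilde g\in j_*T'$, and then lifting to $j_*T$ is governed by $X_*(T')/X_*(T)=X_*(G'/G)$, which is precisely the obstruction in $\pi_1(G')/\pi_1(G)$.

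The second gap is the one you flag yourself: when $G\to G'$ is surjective the kernel $N$ may be a non-smooth finite group scheme such as $\mu_p$, and then neither the appeal to Lemma~\ref{1.1.3}(3) (where the kernel is a torus, so $R^1j_*=0$ follows from extending line bundles over a regular scheme) nor ``multiply by an $N(\R'_L)$-translate'' (vacuous, since $N(\R'_L)=N(\R')$ for finite $N$) goes through. The paper's key move is the pushout: embed $N\hookrightarrow T$ into a torus and form $G''=(G\times T)/N$, factoring $G\to G'$ into an embedding $G\hookrightarrow G''$ and a map $G''\to G'$ with torus kernel. This reduces everything to the two clean cases above, in each of which the relevant $R^1j_*$ vanishes. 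Once you insert this factorization your sketch becomes a proof.
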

\begin{proof} Using Lemma \ref{affgrasspresent} we identify the top line of the diagram with the map
$j_*G/G \rightarrow j_*G'/G'.$
Let $Z = \ker(G \rightarrow G')$ and let $G''$ be the pushout of $G$ by an embedding $Z \hookrightarrow T$
where $T$ is a $\O_L$-torus. Then we have maps $ G \rightarrow G'' \rightarrow G',$ where the first map is an
embedding, and the second map has kernel a torus. Hence it suffices to prove the lemma in the two
cases when $G \rightarrow G'$ is faithfully flat with $Z$ a torus, or an embedding.

For the first case, we begin by computing the fibre of this map over the identity.
Let $g$ be a local section in this fibre. Since any $G$-torsor is \'etale
locally trivial, $g$ admits a local lift to a section $\tilde g$ of $j_*G.$ Since the image of $\tilde g$ is trivial
in $j_*G'/G'$ for any point $s \in \Spec R,$ we obtain that $\mu_{s(\tilde g)}$ is in $X_*(Z).$
Hence, this cocharacter is a locally constant function on $\Spec R$ by Lemma \ref{1.1.3}.
It follows by \ref{1.1.9} that $\tilde g$ is \'etale locally of the form $p^{\mu_g}h$ with $\mu_g \in X_*(Z)$ and
$h$ a section of $G.$ Hence $g$ is in the image of
$$ X_*(Z) \rightarrow j_*G/G \quad \mu \mapsto p^{\mu}.$$
This map is injective (for example by the pointwise Cartan decomposition) and is equal to the fibre of
$j_*G/G \rightarrow j_*G'/G'$ over the identity.
In particular, we see that the non-empty fibres of both the horizontal maps in the diagram are $X_*(Z)$-torsors.

Since $Z$ is a torus the map $\pi_1(G) \rightarrow \pi_1(G')$ is surjective.
Hence it suffices to show that a local section of $j_*G'$ lifts to $j_*G.$
Note that $R^1j_*\GG_m = 0.$ Indeed if $\mathfrak p \supset (p)$ is a prime of $\R$, then
a line bundle $\mathcal L$ on $\Spec \R_{\mathfrak p}[1/p]$ extends to $\Spec \R_{\mathfrak p}:$
Our assumptions imply that $\R_{\mathfrak p}$ is a regular local ring. Thus, we may
first extend $\mathcal L$ as a coherent sheaf, and then take the determinant of the extension.
Hence $R^1j_*Z = 0,$ which shows that $j_*G \rightarrow j_*G'$ is surjective.

For the case of an embedding, we have to show that if $g$ is a local section of $j_*G'/G'$ whose
image in $\pi_1(G')$ is in $\pi_1(G),$ then $g$ lifts locally to $j_*G/G.$ We may assume that $g$
lifts to a section $\tilde g$ of $j_*G'.$ Let $T' \subset G'$ be a maximal
(necessarily split) torus, and $T \subset G$ its preimage. Using that $R^1j_*\GG_m = 0$ we have
$j_*(G'/G) = j_*(T'/T) = j_*T'/j_*T.$ Hence, after modifying $\tilde g$ by an element of $j_*G,$
we may assume that $\tilde g \in j_*T'.$ Since the map $j_*T/T \rightarrow j_*T'/ T'$
may be identified with $X_*(T) \rightarrow X_*(T'),$ and the cokernel of the latter map
is $X_*(G'/G),$ it follows that $\tilde g$ lifts to an element of $j_*T.$
\end{proof}
\begin{cor}\label{cartesianII}  Let $Z \subset Z_G$ be a closed $\O_L$-subgroup, and $G' = G/Z.$
Write $T' = T/Z,$ $b' \in G'(\O_L)$ and $\mu' \in X_*(T')$ for the elements induced by $b$ and $\mu.$
Suppose that $R$ is Noetherian and formally smooth over $\bar k.$ Then the diagrams of \'etale sheaves on $\Spec R$
$$\begin{CD}
  X_{\mu}(b)  @>>> X_{\mu'}(b') \\
^{w_G}@VVV^{w_{G'}} @VVV      \\
c_{b,\mu}\pi_1(G)^{\Gamma}   @>>> c_{b',\mu'}\pi_1(G')^{\Gamma}
\end{CD}$$
and
$$\begin{CD}
  X_{\preceq\mu}(b)  @>>> X_{\preceq\mu'}(b') \\
^{w_G}@VVV^{w_{G'}} @VVV      \\
c_{b,\mu}\pi_1(G)^{\Gamma}   @>>> c_{b',\mu'}\pi_1(G')^{\Gamma}
\end{CD}$$
are Cartesian.
\end{cor}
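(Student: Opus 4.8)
The plan is to deduce this directly from Lemma \ref{cartesianI} applied to the quotient map $G \to G' = G/Z$, which induces an isomorphism on adjoint groups and carries $Z_G$ into $Z_{G'}$, so its hypotheses hold. By that lemma the square of \'etale sheaves on $\Spec R$ with horizontal maps $X_G \to X_{G'}$ and $\pi_1(G) \to \pi_1(G')$, and vertical maps $w_G, w_{G'}$, is Cartesian. It then suffices to identify, inside this Cartesian square, the subsheaf $X_{\preceq\mu}(b) \subset X_G$ (resp.\ $X_{\mu}(b)$ when $\mu$ is minuscule) with $X_{\preceq\mu'}(b') \times_{\pi_1(G')} c_{b,\mu}\pi_1(G)^{\Gamma}$, and to note that $w_{G'}$ already carries $X_{\preceq\mu'}(b')$ into $c_{b',\mu'}\pi_1(G')^{\Gamma}$ (Lemma \ref{constructionofmap}(2)) and that $\pi_1(G) \to \pi_1(G')$ carries $c_{b,\mu}\pi_1(G)^{\Gamma}$ into $c_{b',\mu'}\pi_1(G')^{\Gamma}$ (since the image of $c_{b,\mu}$ is an admissible choice for $c_{b',\mu'}$), so that the fibre product over $\pi_1(G')$ coincides with the one over $c_{b',\mu'}\pi_1(G')^{\Gamma}$ appearing in the statement.

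So the core is the following claim: for an \'etale neighbourhood $\Spec R' \to \Spec R$ and $g \in X_G(\R')$ with image $g' \in X_{G'}(\R')$, one has $g \in X_{\preceq\mu}(b)(\R')$ if and only if $g' \in X_{\preceq\mu'}(b')(\R')$ and the locally constant function $w_G(g)$ takes values in $c_{b,\mu}\pi_1(G)^{\Gamma}$; likewise with $X_{\mu}, X_{\mu'}$ when $\mu$ is minuscule. For the ``only if'' direction, $G \to G'$ sends $p^{\mu}$ to $p^{\mu'}$ and identifies the positive coroots of $G$ with those of $G'$, so $K p^{\lambda}K$ maps into $K' p^{\lambda'}K'$ and $\lambda \preceq \mu$ forces $\lambda' \preceq \mu'$; hence $g' \in X_{\preceq\mu'}(b')$. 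The statement about $w_G(g)$ is exactly the content of Lemma \ref{constructionofmap}(2), whose proof applies verbatim to $\R'$-points, using that $w_G(g^{-1}b\sigma(g)) = [\mu]$ pointwise (by $S_{\preceq\mu}(g^{-1}b\sigma(g)) = \Spec R'$ and Lemma \ref{1.1.3}(3)) and that $K \subset \ker w_G$.

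The ``if'' direction is the main point. Fix $s \in \Spec R'$ and write $s(g)^{-1}b\sigma(s(g)) \in K p^{\lambda}K$ with $\lambda \in X_*(T)$ dominant and $K = G(W(\bar\kappa(s)))$; we must show $\lambda \preceq \mu$, resp.\ $\lambda = \mu$. Since $w_G$ is a homomorphism, $\sigma$-equivariant, and trivial on $K$,
\begin{equation*}
[\lambda] = w_G\bigl(s(g)^{-1}b\sigma(s(g))\bigr) = -\,w_G(s(g)) + w_G(b) + \sigma\bigl(w_G(s(g))\bigr).
\end{equation*}
Writing $w_G(s(g)) = c_{b,\mu} + v$ with $v \in \pi_1(G)^{\Gamma}$ and using $w_G(b) - \mu = (1-\sigma)(c_{b,\mu})$, the right-hand side collapses to $[\mu]$, so $[\lambda] = [\mu]$ in $\pi_1(G)$, i.e.\ $\mu - \lambda$ lies in the coroot lattice of $G$. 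On the other hand $g' \in X_{\preceq\mu'}(b')$ gives $\lambda' \preceq \mu'$ (resp.\ $\lambda' = \mu'$) for the image $\lambda'$ of $\lambda$. Since $G \to G'$ identifies the simple coroots of $G$ with those of $G'$, it identifies the coroot lattices; writing $\mu - \lambda$ as an integral combination of simple coroots, the image $\mu' - \lambda'$ has the same coefficients, which are $\ge 0$ (resp.\ all $0$). Hence $\lambda \preceq \mu$ (resp.\ $\lambda = \mu$), so $g \in X_{\preceq\mu}(b)(\R')$ (resp.\ $g \in X_{\mu}(b)(\R')$). Combining the claim with the Cartesian square of Lemma \ref{cartesianI}, and the two compatibilities of bottom rows noted above, yields both displayed Cartesian squares. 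The step I expect to require the most care is this ``if'' direction: reconciling the condition $\lambda' \preceq \mu'$ downstairs with the integral identity $[\lambda] = [\mu]$ upstairs, which rests on checking that $G \to G' = G/Z$ behaves well on (based) coroot lattices even when $Z$ is a nontrivial, possibly finite, subgroup of $Z_G$, together with the minor bookkeeping passing between the $\Gamma$-invariant cosets in the statement and the full $\pi_1$-groups of Lemma \ref{cartesianI}.
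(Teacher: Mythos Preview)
Your proposal is correct and follows essentially the same route as the paper: both reduce to Lemma~\ref{cartesianI} and then verify, pointwise at each $s$, that $\lambda' \preceq \mu'$ together with $[\lambda] = [\mu]$ in $\pi_1(G)$ forces $\lambda \preceq \mu$, using that $G \to G'$ identifies the (based) coroot lattices. The paper phrases the reduction via the observation that the horizontal fibres are $X_*(Z)^{\Gamma}$-torsors and then adjusts a lift $\tilde g$ by $p^{\chi}$ with $\chi \in X_*(Z)$ to land in the right $\pi_1(G)^{\Gamma}$-coset, whereas you phrase it as restricting the big Cartesian square of Lemma~\ref{cartesianI} to the relevant subsheaves; these are the same argument. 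One small slip: your parenthetical ``when $\mu$ is minuscule'' for $X_\mu(b)$ is unnecessary---the statement and your argument both work for arbitrary $\mu$.
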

\begin{proof}  It follows from Lemma \ref{cartesianI} that the non-empty fibres of all the horizontal maps in both diagrams
are torsors under $X_*(Z)^{\Gamma}.$ Hence it suffices to show that a local section $g$ of $X_{\mu'}(b')$
(resp. $ X_{\preceq\mu'}(b')$) whose image in $c_{b',\mu'}\pi_1(G')^{\Gamma}$ lifts to $c_{b,\mu}\pi_1(G)^{\Gamma},$
lifts \'etale locally to  $X_{\mu}(b)$ (resp. $ X_{\preceq\mu}(b)$).

By Lemma \ref{cartesianI} $g$ lifts to a local section $\tilde g$ of $X_G.$
By assumption, there exists $\chi \in X_*(Z),$
such that $w_G(\tilde g)+\chi \in c_{b,\mu}\pi_1(G)^{\Gamma}.$
Hence after replacing $\tilde g$ by $\tilde g p^{\chi},$
we may assume $w_G(\tilde g) \in c_{b,\mu}\pi_1(G)^{\Gamma}.$ To check that
$\tilde g \in X_{\mu}(b)$ (resp. $ X_{\preceq\mu}(b)$), it suffices to pull back to geometric points, and consider
the special case $\R = W(\bar\kappa)[1/p]$ for an algebraically closed field $\bar\kappa.$ In this case,
we have $\mu_{\tilde g^{-1}b\sigma(\tilde g)} + \alpha = \mu$
(resp. $\mu_{\tilde g^{-1}b\sigma(\tilde g)} + \alpha \preceq \mu$) for some $\alpha \in X_*(Z).$
Since $w_G(\tilde g) \in c_{b,\mu}\pi_1(G)^{\Gamma},$ the image of $\alpha$ in $\pi_1(G)$ is trivial,
and $\alpha = 0.$
\end{proof}

\begin{cor}\label{cartesianIII} With the notation above, the diagrams
$$\begin{CD}
  \pi_0(X_{\mu}(b))  @>>> \pi_0(X_{\mu'}(b')) \\
^{w_G}@VVV^{w_{G'}} @VVV      \\
c_{b,\mu}\pi_1(G)^{\Gamma}   @>>> c_{b',\mu'}\pi_1(G')^{\Gamma}
\end{CD}$$
and
$$\begin{CD}
  \pi_0(X_{\preceq\mu}(b))  @>>> \pi_0(X_{\preceq\mu'}(b')) \\
^{w_G}@VVV^{w_{G'}} @VVV      \\
c_{b,\mu}\pi_1(G)^{\Gamma}   @>>> c_{b',\mu'}\pi_1(G')^{\Gamma}
\end{CD}$$
are Cartesian.
\end{cor}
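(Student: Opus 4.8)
The plan is to deduce Corollary \ref{cartesianIII} from the sheaf-theoretic Cartesian squares of Corollary \ref{cartesianII} by a transport-of-structure argument; I describe it for the diagram involving $X_{\mu}(b)$, the case of $X_{\preceq\mu}(b)$ being word for word the same with the second diagram of Corollary \ref{cartesianII}.

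First I would extract the two inputs I need. Since $X_{\mu}(b)$ and $X_{\mu'}(b')$ are \'etale sheaves on $\Spec R$ and taking sections commutes with fibre products, evaluating the Cartesian square of Corollary \ref{cartesianII} at a connected, formally smooth $\bar k$-algebra $R$ with frame $\R$ gives
\[
X_{\mu}(b)(\R)\;=\;X_{\mu'}(b')(\R)\times_{c_{b',\mu'}\pi_1(G')^{\Gamma}} c_{b,\mu}\pi_1(G)^{\Gamma},
\]
where the right-hand factor is a single point because $\pi_1(G)$ is a constant sheaf and $\Spec R$ is connected. In particular the projection defines a morphism of sheaves $X_{\mu}(b)\to X_{\mu'}(b')$ lying in the commutative square, hence a well-defined map $\pi_0(X_{\mu}(b))\to\pi_0(X_{\mu'}(b'))$ compatible with the maps $w_G$ of Lemma \ref{constructionofmap}. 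Specializing the displayed identity to $\R=W(\bar\kappa)[1/p]$ with $\bar\kappa$ algebraically closed yields the \emph{pointwise} statement: a $W(\bar k)$-point of $X_{\mu}(b)$ is determined by its image in $X_{\mu'}(b')(W(\bar k))$ together with its $w_G$-value, and every pair $(g',x)$ with $g'\in X_{\mu'}(b')(W(\bar k))$, $x\in c_{b,\mu}\pi_1(G)^{\Gamma}$ and $x$ lying over $w_{G'}(g')$ is realized by a (unique) $W(\bar k)$-point.

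Next I would show that $\pi_0(X_{\mu}(b))\to\pi_0(X_{\mu'}(b'))\times_{c_{b',\mu'}\pi_1(G')^{\Gamma}}c_{b,\mu}\pi_1(G)^{\Gamma}$ is bijective. For surjectivity, given a component $c'$ and an $x\in c_{b,\mu}\pi_1(G)^{\Gamma}$ over $w_{G'}(c')$, choose a $W(\bar k)$-point $g'\in c'$, lift $(g',x)$ by the pointwise statement to $g\in X_{\mu}(b)(W(\bar k))$, and take $c$ to be its class; then $c\mapsto c'$ and $w_G(c)=x$. For injectivity, suppose components $c_1,c_2$ both map to $(c',x)$, pick $W(\bar k)$-points $g_i\in c_i$, so $w_G(g_i)=x$ and the images $g_i'$ lie in $c'$. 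By the definition in \ref{conn_comp_ADLV} there is a finite chain $g_1'=h_0',h_1',\dots,h_n'=g_2'$ in which each link is witnessed by a connecting element over a connected, formally smooth frame $\R_{j+1}$; since $w_{G'}$ is constant on a connected component by Lemma \ref{1.1.3}, all the $h_j'$ have the same $w_{G'}$-value, namely the image of $x$. I would then lift the chain inductively: set $\tilde h_0=g_1$, let $\tilde h_{j+1}$ be the unique $W(\bar k)$-point over $h_{j+1}'$ with $w_G=x$ (which exists by the previous paragraph), and observe that the connecting element over $\R_{j+1}$, paired with the constant section $x$ of $c_{b,\mu}\pi_1(G)^{\Gamma}$, assembles via the displayed identity for $\R_{j+1}$ into an element of $X_{\mu}(b)(\R_{j+1})$ whose two special fibres are forced to be $\tilde h_j$ and $\tilde h_{j+1}$ by the pointwise uniqueness; hence $\tilde h_j\sim\tilde h_{j+1}$ for all $j$, so $g_1=\tilde h_0\sim\tilde h_n=g_2$ and $c_1=c_2$.

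The step I expect to require the most care is precisely this lifting of \emph{families} rather than points: one must check that a connecting element over $\R_{j+1}$ for $X_{\mu'}(b')$ together with the constant section $x$ genuinely defines a global element of $X_{\mu}(b)(\R_{j+1})$ — and not merely an \'etale-local object — which is exactly what the sheaf identity provides once one verifies that $x$ maps to the (constant) value of $w_{G'}$ along the connecting family; this verification uses only that $w_{G'}$ is constant on connected components. All the algebras $R$ appearing here are smooth over $\bar k$ with connected spectrum, so the hypotheses of Corollary \ref{cartesianII} are met throughout and no additional assumptions on $R$ are needed.
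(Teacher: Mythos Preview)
Your argument is correct. You deduce the Cartesian property of the $\pi_0$-diagram directly from Corollary \ref{cartesianII} by evaluating the sheaf-level Cartesian square at each connecting frame and lifting chains step by step; the uniqueness in the fibre product over $W(\bar k)$ then pins down the endpoints of each lifted family. The one point you rightly flag --- that the constant section $x$ is compatible with the $w_{G'}$-value of the connecting family --- is exactly handled by Lemma \ref{constructionofmap}, so there is no gap.

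The paper organizes the same content differently. Rather than lifting chains explicitly, it observes that $X_*(Z)^{\Gamma}$ acts on $\pi_0(X_{\mu}(b))$ via $Z(F)\subset J_b(F)$ (through $\chi\mapsto p^{\chi}$), and that this action is free because it shifts $w_G$. The non-empty fibres of both horizontal maps are then $X_*(Z)^{\Gamma}$-torsors, the vertical maps are equivariant, and the Cartesian property follows. The transitivity of the action on the fibres of the top map is where Corollary \ref{cartesianII} enters in the paper's argument --- and unwinding that step amounts to the chain-lifting you wrote out. So the two proofs share the same engine; the paper's version is shorter and foregrounds the role of the center (which is thematically natural in a subsection about reduction to adjoint groups), while yours is more self-contained and makes the passage from sheaves to $\pi_0$ completely explicit.
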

\begin{proof} The vertical maps are given by Lemma \ref{constructionofmap}, which also implies that
$Z(F) \subset J_b(F)$ acts on the fibres of the top horizontal maps via $Z(F) \rightarrow X_*(Z)^{\Gamma}.$
Thus the non-empty fibres of all the horizontal maps are $X_*(Z)^{\Gamma}$-torsors. That the diagrams are Cartesian
now follows from Corollary \ref{cartesianII}.
\end{proof}

%??? EV: We should add a comment here saying that now we may for the proofs of the main theorems assume that G is adjoint and simple. So from now on we assume this.

\subsection{Hodge-Newton indecomposability}\label{secpf1}

\begin{para} Let $b \in G(L),$ and $M_b \subset G$ the centralizer of $\nu_b,$ as above.
\end{para}

\begin{lemma}\label{(1.2.5)}
\begin{enumerate}
\item If $b' = gb\sigma(g)^{-1}$ for $g \in G(L),$ then $\nu_{b'} = g\nu_bg^{-1}.$
\item There exists a $b'$ in the $\sigma$-conjugacy class of $b$ such that $\nu_{b'} \in X_*(T)\otimes_{\Z}\Q,$
 is dominant and $\sigma$-invariant, and $b' \in M_{b'}.$
\end{enumerate}
\end{lemma}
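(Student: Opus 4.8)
The plan is to prove the two assertions in turn, both of which are essentially standard facts about the Newton cocharacter $\nu_b$ packaged in the form needed later.

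For (1), the cleanest route is to use the functorial characterization of $\nu_b$ already recalled in the excerpt: $\nu_b$ is determined by requiring functoriality in $G$ together with the case $G = \GL(\W)$, where $\nu_b$ induces the slope decomposition of the $\sigma$-linear operator $b\sigma$ on $\W \otimes_F L$. So I would first check (1) for $\GL(\W)$: if $b' = gb\sigma(g)^{-1}$, then the operator $b'\sigma$ equals $g \circ (b\sigma) \circ g^{-1}$ on $\W\otimes_F L$, so conjugation by $g$ carries the slope decomposition of $b\sigma$ to that of $b'\sigma$; hence the cocharacter inducing the latter is $g\nu_b g^{-1}$. The general case follows by embedding $G \hookrightarrow \GL(\W)$ faithfully, or more precisely by invoking the functoriality characterization directly: both $\nu_{b'}$ and $g\nu_b g^{-1}$ are cocharacters of $G$ over $L$ (here one uses that $\D$ has no nontrivial forms, so a cocharacter of $G_L$ is just an element of $X_*$ modulo conjugacy, and $g\nu_b g^{-1}$ makes sense as an actual homomorphism $\D \to G_L$), and they induce the same cocharacter after composing with every representation of $G$, hence coincide. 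This part is routine.

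For (2), I would start from the $\sigma$-invariance of the $\sigma$-conjugacy class of $\nu_b$: Kottwitz's construction gives that $\nu_{\sigma(b)} = \sigma(\nu_b)$, and since $\sigma(b) = \sigma(b)$ is $\sigma$-conjugate to $b$ (indeed $b$ itself is, trivially, but one wants $\sigma(b) = b^{-1}\cdot b \cdot \sigma(b)$... more to the point one knows from \cite{Kottwitz1} that the $G(L)$-conjugacy class of $\nu_b$ is defined over $F$, i.e. $\Gamma$-stable). Concretely: there is a unique dominant $\nu_{\dom} \in X_*(T)_{\Q}$ in the $G(L)$-conjugacy class of $\nu_b$, and $\Gamma$-stability of this conjugacy class forces $\sigma(\nu_{\dom})$ to again be dominant and conjugate to $\nu_b$, hence $\sigma(\nu_{\dom}) = \nu_{\dom}$. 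Now pick $h \in G(L)$ with $h\nu_b h^{-1} = \nu_{\dom}$ and set $b_1 = hb\sigma(h)^{-1}$; by part (1), $\nu_{b_1} = \nu_{\dom}$ is dominant and $\sigma$-invariant. It remains to arrange $b' \in M_{b'}$, i.e. $b' \in M_{\nu_{\dom}} = $ the centralizer of $\nu_{\dom}$. This is the one genuinely nontrivial point: by Kottwitz \cite{Kottwitz1}, 5.1 (or 6.x), any $b_1$ with $\nu_{b_1}$ dominant is $\sigma$-conjugate, \emph{by an element of $M_{\nu_{\dom}}(L)$}, to an element $b'$ lying in $M_{\nu_{\dom}}(L)$ and with $\nu_{b'} = \nu_{\dom}$ central in $M_{\nu_{\dom}}$; equivalently, $b_1$ normalizes (up to $\sigma$-conjugacy inside $M$) the parabolic attached to $\nu_{\dom}$, and one uses the decomposition $b_1 = m u$ with $m \in M(L)$, $u$ in the unipotent radical, together with the fact that $\sigma$-conjugation by the unipotent part can be used to remove it since the relevant slopes are strictly positive. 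Conjugating by an element of $M_{\nu_{\dom}}(L)$ does not change $\nu$ being dominant (it only conjugates within $M$, and $\nu_{\dom}$ is central there), so the resulting $b'$ has all the required properties.

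The main obstacle is the last step of (2) — producing $b'$ in the Levi $M_{b'}$ — which is exactly the content of Kottwitz's structure theory of $\sigma$-conjugacy classes (the reduction to the basic case inside the centralizer of the Newton point), so in the write-up I would expect the proof to simply cite \cite{Kottwitz1} for this, the earlier steps being formal. I should also be a little careful that the element realizing $h\nu_b h^{-1} = \nu_{\dom}$ exists over $L$ and not merely over $\bar L$; this is fine because $\nu_b$ and $\nu_{\dom}$ are conjugate over $L$ (the affine Deligne–Lusztig / Newton stratification theory works over $L$, and $G$ is quasi-split over $F$ so split over $L$), and two cocharacters of the split group $G_L$ that are $\overline{L}$-conjugate are $L$-conjugate.
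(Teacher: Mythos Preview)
Your proof of (1) matches the paper's (which simply says it is ``clear from the definition of $\nu$''). For (2), your argument is correct but takes a detour through Kottwitz's reduction-to-basic machinery where the paper uses a one-line observation. The paper applies (1) with $g = b^{-1}$ to get $\nu_{\sigma(b)} = b^{-1}\nu_b b$; combined with the general identity $\nu_{\sigma(b)} = \sigma(\nu_b)$, this yields
\[
\sigma(\nu_b) = b^{-1}\nu_b b
\]
for \emph{every} $b \in G(L)$. Now once you have chosen $b'$ with $\nu_{b'} = \nu_{\dom}$ dominant and $\sigma$-invariant (which you do, calling it $b_1$), this identity reads $\nu_{b'} = \sigma(\nu_{b'}) = b'^{-1}\nu_{b'}b'$, i.e.\ $b'$ already centralizes $\nu_{b'}$, so $b' \in M_{b'}$. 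No further $\sigma$-conjugation is needed; your $b_1$ already has all the required properties. The step you flagged as ``the one genuinely nontrivial point'' is thus free once the displayed identity is in hand, and your appeal to the parabolic decomposition $b_1 = mu$ and the structure theory of \cite{Kottwitz1} is correct but unnecessary.
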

\begin{proof} (1) is clear from the definition of $\nu.$

Applying this with $g = b^{-1},$ we find that $\sigma(\nu_b) = \nu_{\sigma(b)} = b^{-1}\nu_bb$ is conjugate to
$\nu_b,$ so the $G(L)$-conjugacy class of $\nu$ is stable by $\sigma.$ Since $G$ is quasi-split, this implies that $\nu$ is conjugate to a dominant $\sigma$-invariant cocharacter in $X_*(T)\otimes_{\Z}\Q$ (\cite{Kottwitz4}, 1.1.3(a)), which shows
there is a $b'$ with $\nu_{b'} \in X_*(T)\otimes_{\Z}\Q$ and $\sigma$-invariant. Then $\nu_{b'} = \sigma(\nu_{b'}) = b^{\prime-1}\nu_{b'} b',$ so $b' \in M_{b'}.$
\end{proof}

\begin{para} By the Lemma, after replacing $b$ by an element in its $\sigma$-conjugacy class we may assume that
$\nu = \nu_b \in X_*(T)$ is dominant, and thus defined over $F$ (so that $M_b$ is also defined over $F$), and
that $b \in M_b(L).$ In particular $b$ is then basic as an element of $M_b(L).$ We assume that $b$ has been chosen with these properties.
\end{para}

\begin{prop}\label{prophndecomp}
Let $M \supset M_b$ be a standard Levi defined over $F.$ Assume that $\kappa_M(b)= [\mu]\in \pi_1(M)_{\Gamma}.$ Then
the natural inclusion $X_{\mu}^{M}(b)(W(\bar k))\hookrightarrow X_{\mu}^G(b)(W(\bar k))$
is a bijection, and similarly for the closed affine Deligne-Lusztig varieties. Furthermore, it induces bijections between the corresponding sets of connected components.
\end{prop}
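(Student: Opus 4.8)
The plan is to deduce the statement from the Hodge--Newton decomposition at the level of $W(\bar k)$-points, and then to propagate it to connected components using the spreading-out lemmas proved above. Let $P\supset B$ be the standard parabolic with Levi factor $M$ and unipotent radical $N$ (all defined over $F$). Since $G/P$ is projective and $\O_L$ is henselian, the Iwasawa decomposition gives $G(L)=P(L)K$, hence a retraction
\[
\mathrm{pr}\colon X_G(W(\bar k))=G(L)/K\longrightarrow M(L)/M(\O_L)=X_M(W(\bar k))
\]
splitting the natural inclusion. Given $gK\in X^G_\mu(b)(W(\bar k))$, write a representative of $gK$ as $mn$ with $m\in M(L)$, $n\in N(L)$; since $b\in M_b(L)\subset M(L)$ one has $(mn)^{-1}b\sigma(mn)=m'n'$ with $m'=m^{-1}b\sigma(m)\in M(L)$, $n'\in N(L)$, and this element lies in $Kp^{\mu}K$. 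Now I would invoke the Hodge--Newton decomposition: the hypotheses $M\supset M_b$ and $\kappa_M(b)=[\mu]$ in $\pi_1(M)_\Gamma$ say exactly that $\bar\mu-\nu_b$ lies in the rational span of the coroots of $M$, which together with $[b]\in B(G,\mu)$ means that $(\mu,b)$ is Hodge--Newton decomposable with respect to $M$; Kottwitz's filtration argument (comparing the Hodge and Newton points in a well-chosen representation of $G$) then forces $n'\in N(\O_L)$ and $m'\in M(\O_L)p^{\mu}M(\O_L)$, so that $gK$ lies in the image of $X^M_\mu(b)(W(\bar k))$. That image-map is injective since $M(L)\cap K=M(\O_L)$, so it is a bijection. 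Running the same argument for each dominant $\mu'\preceq\mu$ with $[b]\in B(G,\mu')$ gives the bijection for the closed affine Deligne--Lusztig varieties.

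For the assertion on connected components, surjectivity of $\pi_0(X^M_\mu(b))\to\pi_0(X^G_\mu(b))$ is immediate. For injectivity, note first that by the point-level bijection every intermediate point of a connecting chain in $X^G_\mu(b)$ already lies in $X^M_\mu(b)$, so it suffices to treat two points $g_0,g_1\in X^M_\mu(b)(W(\bar k))$ together with a frame $\R$ of a smooth connected $\bar k$-algebra $R$ and an element $g\in X^G_\mu(b)(\R)$ with $s_0(g)=g_0$, $s_1(g)=g_1$, and to show $g_0\sim g_1$ inside $X^M_\mu(b)$. The key point is the \emph{family} version of Hodge--Newton: after replacing $R$ by an \'etale covering $R\to R'$ (and $\R$ by the canonical frame of Lemma \ref{1.1.5}), $g$ lifts to an element of $X^M_\mu(b)(\R')$. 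I would prove this fibrewise: by Lemma \ref{1.1.3} the class $[\mu_{s(g^{-1}b\sigma(g))}]\in\pi_1(G)$ and the Newton cocharacter are constant on the connected $\Spec R$, and by the pointwise Hodge--Newton decomposition above $s(g)\in X_G(W(\bar\kappa(s)))$ admits a reduction to $M$ at every geometric point $s$; one then spreads this out, trivializing the underlying $G$-torsor \'etale-locally, observing that the resulting ``$N$-coordinate'' of $g$ is a section of $N(\R'_L)/N(\R')$ vanishing at every geometric point of the \emph{reduced} ring $R'$ (the same reducedness input as in Lemmas \ref{1.1.6} and \ref{1.1.11}), hence vanishing, so that $g$ reduces to a $P$-valued and then, since $b\in M(L)$ and the Kottwitz invariant over $M$ is forced by constancy, to an $M$-valued point — one may also phrase the last step by applying Proposition \ref{1.1.9}/Corollary \ref{1.1.12} inside $M$. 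Finally $g$, viewed in $X_M(\R')$, lands in $X^M_\mu(b)(\R')$ by Lemma \ref{1.1.3} for $M$. Granting the claim, $g_0\sim g_1$ in $X^M_\mu(b)$ follows by a routine connectedness argument: $\Spec R$ is irreducible, every connected component of $\Spec R'$ dominates it (as $R'$ is flat over the domain $R$) with open dense image, so one chooses a $\bar k$-point $u$ of $\Spec R$ in the overlap of the images of the components containing lifts of $s_0$ and $s_1$, lifts $u$ to both components, and uses that the two lifts evaluate $g$ to the same point of $X^M_\mu(b)$. The closed case is identical, with $X_{\preceq\mu}$ in place of $X_\mu$ and the $\preceq\mu$ versions of Lemmas \ref{1.1.3} and \ref{valuativeII}.

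The step I expect to be the main obstacle is precisely this family version of Hodge--Newton: upgrading the classical pointwise statement to something uniform over a smooth base, producing a reduction to $M$ \'etale-locally rather than only at geometric points. This requires carefully combining the local constancy of the Kottwitz and Newton invariants (Lemma \ref{1.1.3}), the reducedness of $R$ to annihilate the unipotent coordinate (as in Lemmas \ref{1.1.6}, \ref{1.1.11}), a spreading-out lemma of the type proved in \ref{1.1.9}--\ref{1.1.12}, and the Cartesian diagrams of Lemma \ref{valuativeII}; one must also handle torsion in $\pi_1(G)$ (as in the proof of Lemma \ref{1.1.3}) and the passage between $P$-reductions and $M$-reductions, which is harmless because the fibres of $X_P\to X_M$ are affine spaces and do not affect $\pi_0$. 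By contrast, the point-level Hodge--Newton decomposition is standard, and the final \'etale-cover/connectedness bookkeeping is routine once one records the irreducibility of $\Spec R$ and that connected components of an \'etale cover of an irreducible scheme dominate it.
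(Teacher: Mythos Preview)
Your identification of the point-level bijection as the (known) Hodge--Newton decomposition is correct, and you are right that the entire content of the $\pi_0$-statement lies in a ``family version'' of Hodge--Newton: given $g\in X^G_{\preceq\mu}(b)(\R)$ with $\R$ a frame of a smooth connected $R$, one must show that $g$ comes from $X^M_{\preceq\mu}(b)$ over some frame of a smooth connected $\bar k$-algebra containing $s_0,s_1$. Where your argument breaks is in the very first move towards this: the Iwasawa decomposition $G(\R'_L)=M(\R'_L)N(\R'_L)G(\R')$ does \emph{not} hold over a general frame $\R'$, so there is no globally defined ``$N$-coordinate'' of $g$ to which one could apply a vanishing-at-all-points argument. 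Concretely, $G(\R')\to (G/P)(\R'_L)$ need not be surjective: already for $G=\GL_2$, $P=B$, $\R'=\O_L\langle x\rangle$, the point $[p:x]\in\mathbb P^1(\R'_L)$ does not extend across the locus $p=x=0$ in $\Spec\R'$, and no \'etale cover of $R'=\bar k[x]$ repairs this (\'etale maps induce isomorphisms on completed local rings). So the step ``write $g=mnk$ over $\R'$ and observe that $n$ vanishes pointwise'' cannot get started. The reducedness lemmas you invoke (Lemmas~\ref{1.1.6}, \ref{1.1.11}) concern the Cartan stratification and flatness, not the existence of an Iwasawa splitting.

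The paper's proof circumvents exactly this obstruction. One passes to the \emph{generic point}: let $\eta$ be the generic point of $\Spec R$, form the direct limit $\R_{\eta,\infty}=\varinjlim_n \R_{n,\eta}$ of Frobenius twists and its $p$-adic completion $\widehat\R_{\eta,\infty}$, which is a frame for the perfect closure of $R_\eta$. Over this frame the residue ring is a (perfect) field, so Iwasawa holds and the pointwise Hodge--Newton decomposition applies, giving $g=m_\eta h_\eta$ with $m_\eta\in M(\widehat\R_{\eta,\infty,L})$ and $h_\eta\in G(\widehat\R_{\eta,\infty})$. One then approximates $m_\eta$ by an element of $M(\R_{n,\eta,L})$ for some finite $n$ (Cartan decomposition plus formal smoothness of $M$), clears finitely many denominators to land in $M(\widehat\R_{n,f,L})$ for some $f\notin p\R_n$, and finally invokes Lemma~\ref{valuativeII}(3) --- the Cartesian square for $X^M\subset X^G$ over $\R_n$ versus $\widehat\R_{n,f}$ --- to extend the $M$-reduction from the open $\Spec R_n[1/f]$ to all of $\Spec R_n$. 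The connecting family one obtains lives over the Frobenius-twisted frame $\R_n$, not over $\R$ itself; but $R_n$ has the same underlying smooth connected ring and the same $\bar k$-points $s_0,s_1$, so this suffices for $g_0\sim g_1$ in $X^M_{\preceq\mu}(b)$. The ingredients you listed (Lemmas~\ref{1.1.3}, \ref{valuativeII}) are indeed the ones used, but the organizing idea --- reduce to a single (generic) point, then spread out via Lemma~\ref{valuativeII}(3) at the cost of a Frobenius twist --- is what is missing from your outline.
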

\begin{proof}
The bijection between the two Deligne-Lusztig sets is shown in \cite{hn}, Theorem 6, i. Note that that theorem has a slightly different assumption on $M$, which is incorrect. The present assertion is the corrected statement and follows from the proof of \cite{hn}, which in turn is nothing but a variant of the original proof of Kottwitz in \cite{Kottwitz3}.

It remains to show that if $g\in X_{\preceq\mu}^G(b)(\R),$ where $\R$
is a frame for a smooth connected $\bar k$-algebra $R,$
and if $g_1=g(s_1), g_2=g(s_2)$ for two $\bar k$-valued points $s_1, s_2$ of $R,$ then the corresponding elements of $X_{\preceq\mu}^M(b)$
are in the same connected component. The strategy is to show that $g$ is induced by a connecting family in $X_{\preceq\mu}^M(b)$.
We may replace $R = \R/p\R$ by an \'etale covering, and assume that $g$ arises from an element $g \in G(\R_L).$

Let $\R_n$ denote $\R$ regarded as an $\R$-algebra via
$\R \overset {\sigma^n} \rightarrow \R.$ Let $\eta$ denote the generic point of $\Spec R,$
set $\R_{\eta,\infty} = \lim_n \R_{n,\eta},$ and let $\widehat \R_{\eta,\infty}$ be the
$p$-adic completion of $\R_{\eta,\infty}.$ Then $\widehat \R_{\eta,\infty}$ is a frame for a perfect closure
$R_{\eta,\infty}$ of $R_{\eta}.$

By the Iwasawa decomposition we have $g \in M(\widehat\R_{\eta,\infty,L})N(\widehat\R_{\eta,\infty,L})G(\widehat\R_{\eta,\infty})$.
By the (pointwise) Hodge-Newton decomposition the factor in $N$ may be assumed to be $1$.
Write $g = m_{\eta}h_{\eta}$ where $m_{\eta} \in M(\widehat\R_{\eta,\infty,L})$ and $h_{\eta} \in G(\widehat\R_{\eta,\infty}).$
Using the Cartan decomposition, and the formal smoothness of $M$ we may approximate $m_{\eta}$ by an element
of $\R_{n,\eta},$ for some $n,$ and assume that $m_{\eta} \in M(\R_{n,\eta,L})$ and $h_{\eta} \in G(\R_{n,\eta}).$

It follows that there exists an $f \in \R_n\backslash p\R_n$ such that as a section of $X_{\preceq\mu}^G(b)(\widehat \R_{n,f}),$
$g$ arises from an element $m_f \in X_{\preceq\mu}^M(b)(\widehat \R_{n,f}).$ Hence $g$ arises from an element
 $m \in X_{\preceq\mu}^M(b)(\widehat \R_n)$ by Lemma \ref{valuativeII}. This shows that $s_1$ and $s_2$ are connected via
$\R_n.$
\end{proof}

\begin{para}
We now suppose that $[b] \in B(G,\mu),$ and we continue to assume that $b \in M_b(L)$ and that $\nu_b$ is dominant.\footnote{We emphasize that one gets the correct notion of HN indecomposability only if $b$ is chosen so that $\nu_b$ is dominant.}
We say that the pair $(\mu,b)$ is indecomposable with respect to the Hodge-Newton decomposition if for all proper standard Levi subgroups $M \supset M_b$ that are defined over $F$, we have $\kappa_{M}(b)\neq \mu$ in $\pi_1(M)_{\Gamma}$. Given $G$, $\mu$, and $[b]$, we may always pass to a Levi subgroup $M$ of $G$ defined over $F$ in which $(\mu,b)$ is indecomposable. Lemma \ref{prophndecomp} shows that to describe the connected components
of affine Deligne-Lusztig varieties it is sufficient to consider pairs $(\mu,b)$ which are indecomposable with respect to the
Hodge-Newton decomposition. For a pair $(\mu,b)$ that is indecomposable with respect to the
Hodge-Newton decomposition, we say that it is irreducible with respect to the Hodge-Newton decomposition (or HN-irreducible for short) if $\kappa_{M}(b)\neq \mu$ for every proper standard Levi $M$ in $G$ containing an element $b\in[b]$ such that the $M$-dominant Newton point of $b$ is $G$-dominant.

The following theorem gives a stronger characterization of indecomposability that is used in Section \ref{seccp}.
\end{para}

\begin{thm}\label{thmzshkII}
Let $G$, $\mu$, and $b$ be as above and assume that $G^{\ad}$ is simple. Then the following conditions are equivalent:
\begin{enumerate}
\item The pair $(\mu, b)$ is HN-irreducible.
\item For any proper standard Levi subgroup $M$ of $G$, we do not have $\nu_b\leq \bar\mu$ in the positive Weyl chamber of $M$ in
$X_*(A)\otimes \mathbb{Q}$, where $A\subset T$ is the maximal split torus.
\item All the coefficients of simple coroots of $G$ in $\bar\mu-\nu_b$ are strictly positive.
\end{enumerate}
If these conditions are not satisfied then either $(\mu, b)$ is already HN-decomposable or $b$ is $\sigma$-conjugate to $p^{\mu}$ and $\mu$ is central.
\end{thm}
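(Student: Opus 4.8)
The plan is to prove the cycle $(1)\Leftrightarrow(2)\Leftrightarrow(3)$ together with the final assertion by separating the ``easy'' comparisons $(2)\Leftrightarrow(3)$ and $\neg(1)\Rightarrow\neg(3)$ from the one substantial implication, $\neg(3)\Rightarrow$ (HN-decomposable, or exceptional). Write $\Delta$ for the set of simple roots of $G$ and expand $\bar\mu-\nu_b=\sum_{\alpha\in\Delta}c_{\alpha}\alpha^{\vee}$; since $[b]\in B(G,\mu)$ and $\nu_b$ is $G$-dominant all $c_{\alpha}\geq 0$, and since $\bar\mu-\nu_b$ is $\Gamma$-invariant the function $\alpha\mapsto c_{\alpha}$ is constant on $\Gamma$-orbits in $\Delta$. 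The equivalence $(2)\Leftrightarrow(3)$ is then pure linear algebra: a proper standard Levi $M$ corresponds to a proper $\Gamma$-stable subset $J\subseteq\Delta$, and because the simple coroots are linearly independent in $X_*(A)_{\mathbb Q}$ modulo the center, the expansion of $\bar\mu-\nu_b$ in simple coroots is unique, so ``$\nu_b\leq\bar\mu$ in the positive Weyl chamber of $M$'' holds iff $c_{\alpha}=0$ for all $\alpha\notin J$; taking $J$ to be the $\Gamma$-stable set $\{\alpha:c_{\alpha}\ne 0\}$ (it is already $\Gamma$-stable), such a proper $M$ exists exactly when some $c_{\alpha}=0$, which is $\neg(3)$. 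For $\neg(1)\Rightarrow\neg(3)$: if $(\mu,b)$ is not HN-irreducible, choose a proper standard Levi $M$ and $b'\in[b]\cap M(L)$ whose $M$-dominant Newton point is $\nu_b$ and with $\kappa_M(b')=[\mu]$ in $\pi_1(M)_{\Gamma}$; applying $\otimes\mathbb Q$ (the image of $\kappa_M(b')$ in $\pi_1(M)_{\mathbb Q}$ is that of $\nu_b$, that of $[\mu]$ is that of $\bar\mu$) gives $\bar\mu-\nu_b$ in the $\mathbb Q$-span of the coroots of $M$, hence $c_{\alpha}=0$ for every simple root $\alpha$ not in $M$, and $M$ proper gives $\neg(3)$.

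It remains to prove $\neg(3)\Rightarrow\big[(\mu,b)\ \text{HN-decomposable, or }b\ \sigma\text{-conjugate to }p^{\mu}\text{ with }\mu\text{ central}\big]$. Each alternative implies $\neg(1)$ (in the second case $p^{\mu}$ lies in every standard Levi and has central, hence $M$-dominant, Newton point equal to its $\kappa$), so this also yields $\neg(3)\Rightarrow\neg(1)$ and closes the cycle; and it is exactly the final assertion of the theorem. So assume some $c_{\alpha_0}=0$. Since $b$ is basic in $M_b$, the simple roots of $M_b$ are exactly the $\alpha$ with $\langle\alpha,\nu_b\rangle=0$, while $\langle\alpha,\nu_b\rangle>0$ for the others; also $\bar\mu$ is dominant (an average of $\Gamma$-conjugates of the dominant $\mu$), so $\langle\alpha,\bar\mu-\nu_b\rangle\geq 0$ whenever $\langle\alpha,\nu_b\rangle=0$. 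If $M_b=G$, then $\nu_b$ is central, so $\langle\alpha_0,\bar\mu-\nu_b\rangle\geq 0$; on the other hand $\langle\alpha_0,\bar\mu-\nu_b\rangle=\sum_{\alpha}c_{\alpha}\langle\alpha_0,\alpha^{\vee}\rangle\leq 0$ because $c_{\alpha_0}=0$ and the Cartan integers $\langle\alpha_0,\alpha^{\vee}\rangle\leq 0$ for $\alpha\ne\alpha_0$. Equality forces $c_{\alpha}=0$ for every $\alpha$ adjacent to $\alpha_0$ in the Dynkin diagram, and by connectedness of that diagram (here $G^{\ad}$ simple is used) all $c_{\alpha}=0$, i.e.\ $\bar\mu=\nu_b$ is central. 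Then each $\Gamma$-conjugate of $\mu$ is dominant with central average, so $\mu$ is central (the dominant cone is salient modulo the center), and $[b]=[p^{\mu}]$ since $b$ is basic with $\kappa_G(b)=[\mu]$: the exceptional case.

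If $M_b\ne G$, I would first show there is an $\alpha_0$ with $c_{\alpha_0}=0$ and $\langle\alpha_0,\nu_b\rangle>0$: if not, every $\alpha$ with $c_{\alpha}=0$ is a simple root of $M_b$, so $\langle\alpha_0,\nu_b\rangle=0$ and the same pairing computation forces $c_{\alpha}=0$ for all neighbors of $\alpha_0$; by connectedness all $c_{\alpha}=0$ and every simple root lies in $M_b$, i.e.\ $M_b=G$, a contradiction. Now let $M$ be the standard Levi with simple roots $\Delta\setminus(\Gamma\cdot\alpha_0)$: it is proper, defined over $F$, contains $M_b$ (as $\alpha_0\notin\Delta_{M_b}$ and $\Delta_{M_b}$ is $\Gamma$-stable), and $c_{\alpha}=0$ on $\Gamma\cdot\alpha_0$, so $\nu_b\leq\bar\mu$ in the positive Weyl chamber of $M$. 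Checking that $[b]$ satisfies the analogue for $M$ of the conditions defining $B(G,\mu)$ — equivalently, the compatibility of the sets $B(-,\mu)$ with the Hodge--Newton decomposition — gives in particular $\kappa_M(b)=[\mu]$ in $\pi_1(M)_{\Gamma}$, and Proposition \ref{prophndecomp} then makes $X^M_{\mu}(b)\hookrightarrow X^G_{\mu}(b)$ a bijection, so $(\mu,b)$ is HN-decomposable. The main obstacle is precisely this last point: passing from the rational identity in $\pi_1(M)_{\mathbb Q}$ to the integral equality $\kappa_M(b)=[\mu]$ in $\pi_1(M)_{\Gamma}$, whose difference a priori lies in the torsion of $\ker(\pi_1(M)_{\Gamma}\to\pi_1(G)_{\Gamma})$; ruling this out is exactly where one must use that $[b]$ already lies in $B(G,\mu)$ together with $\nu_b\leq_M\bar\mu$. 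The connectedness arguments are elementary but require care because $\Gamma$ may permute the simple roots nontrivially.
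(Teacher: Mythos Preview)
Your overall strategy is sound and matches the paper's: the equivalence $(2)\Leftrightarrow(3)$ and the implication $\neg(1)\Rightarrow\neg(3)$ are exactly as you describe, and the substantive work is $\neg(3)\Rightarrow$ (HN-decomposable or exceptional). Your propagation argument via the Cartan pairing --- $c_{\alpha_0}=0$ and $\langle\alpha_0,\nu_b\rangle=0$ force $c_\beta=0$ for all neighbours $\beta$ --- is the same as the paper's. Organisationally you split on $M_b=G$ versus $M_b\neq G$, whereas the paper runs a single induction on Dynkin-diagram distance from $\Gamma\alpha_0$, at each step either propagating (if $\alpha\in\Delta_{M_b}$) or exhibiting a contradiction to HN-indecomposability via the maximal Levi $M_\Omega$ (if $\alpha\notin\Delta_{M_b}$). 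Your case split is equivalent: your $M_b\neq G$ case locates directly a vertex $\alpha_0$ with $c_{\alpha_0}=0$ and $\alpha_0\notin\Delta_{M_b}$ by the same propagation run in contrapositive form.

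The one genuine gap is the passage from the rational identity $\bar\mu-\nu_b\in\bigoplus_{\beta\in\Delta_M}\mathbb Q\beta^\vee$ to the integral statement $\kappa_M(b)=[\mu]$ in $\pi_1(M)_\Gamma$. You correctly locate the obstruction in the torsion of $\ker(\pi_1(M)_\Gamma\to\pi_1(G)_\Gamma)$, but your proposed route (``use that $[b]\in B(G,\mu)$ together with $\nu_b\leq_M\bar\mu$'') is not how one closes it. The paper instead proves directly that this kernel is \emph{torsion free} for any standard Levi $M$ (Corollary~\ref{cor:invariants}); the argument is short: writing $\tilde T$ for the maximal torus of the simply connected cover, one shows $X_*(\tilde T)$ is a sum of $\Gamma$-permutation modules (basis of simple coroots), hence $H^1(\Gamma,X_*(\tilde T))=0$, so $\pi_1(M)^\Gamma\to\pi_1(G)^\Gamma$ is surjective, and the snake lemma gives torsion-freeness of the cokernel side. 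With this in hand your argument goes through verbatim: $\kappa_M(b)-[\mu]$ lies in the kernel, is torsion (it vanishes rationally), hence is zero, and $M$ witnesses HN-decomposability. The paper's inductive version uses exactly the same fact at the step ``$\alpha\notin\Delta_{M_b}$''.
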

\begin{proof}
Conditions (2) and (3) are clearly equivalent. For any standard proper Levi subgroup $M$ with $b\in M(L)$,  we have $\kappa_M(b)-\mu=\nu_b-\bar\mu\in \pi_1(M)_{\Gamma}\otimes \mathbb{Q}$. Therefore (3) implies (1).

We now assume that (3) is not satisfied, i.e.~the coefficient of some simple coroot $\alpha^\vee_0$ vanishes.

\noindent{\bf Claim.} $(\mu,b)$ is HN-decomposable or $\nu_b=\bar\mu$.

We first show that this claim implies the last assertion of the theorem. Suppose that
 $(\mu,b)$ is HN-indecomposable, so that $\nu_b = \bar \mu.$

Since $\mu - \kappa_{M_b}(b) = \bar \mu - \nu_b = 0$ in $\pi_1(M_b)_{\Gamma}\otimes \mathbb Q,$
and $\mu = \kappa_G(b),$ it follows by  Corollary \ref{cor:invariants} below that $\kappa_{M_b}(b) = \mu.$ Hence $M_b = G,$
since we are assuming $(\mu,b)$ is HN-indecomposable.
Thus $\langle\alpha,\bar\mu\rangle=n^{-1}\sum_{i=1}^n \langle\alpha,\sigma^i\mu\rangle=0$ for every positive root $\alpha$ of $G$ and some $n$ with $\sigma^n(\mu)=\mu$. As $B$ is defined over $F$ and $\mu$ is dominant, each of the summands is non-negative. Hence all of them are zero, and $\mu$ is central.

In particular we see that $p^{\mu}\in[b]\cap T(L)\subsetneq G(L)$ with $\kappa_T(p^{\mu})=\mu$, hence $(\mu,b)$ is not HN-irreducible.

It remains to prove the claim. Let us assume that $(\mu,b)$ is HN-indecomposable, because otherwise the claim holds. We want to use induction on the distance between a simple root $\alpha$ and the Galois orbit of $\alpha_0$ in the Dynkin diagram of $G$ to show that also the coefficient of $\alpha^{\vee}$ in $\bar\mu-\nu_b$ is $0$. As $\bar\mu-\nu_b$ is $\Gamma$-invariant, our assumption on $\alpha_0$ shows that the coefficients of all $\alpha^{\vee}$ for $\alpha\in\Gamma\alpha_0$ vanish. Assume that the statement is shown for some simple root $\alpha$. Let $\Omega=\Gamma\alpha$ and let $M_{\Omega}$ be the standard Levi subgroup corresponding to the set of simple roots $\{\gamma: \text{ simple root}, \ \gamma\notin \Omega\}$. If $\alpha$ is not a simple root in $M_b$ then $M_{\Omega}\supset M_b\ni b$. As $(\mu,b)$ is HN-indecomposable, $\mu-\kappa_{M_{\Omega}}(b)=\lambda\alpha^{\vee}\in \pi_1(M_{\Omega})_{\Gamma}$ with $\lambda>0$ in contradiction to our assumption. Thus $\alpha$ is a simple root in $M_b$. As $\mu$ is dominant, this implies
\begin{equation}\label{glmunu}
\langle\alpha,\bar\mu-\nu_b\rangle= \langle\alpha,\bar\mu\rangle+0\geq 0.
\end{equation}
On the other hand \begin{equation*}
\langle\alpha,\bar\mu-\nu_b\rangle= \langle\alpha,\sum_{\beta\text{ simple}}\lambda_{\beta}\beta^{\vee}\rangle=\sum_{\beta\text{ neighbor of $\alpha$}}\lambda_{\beta}\langle\alpha,\beta^{\vee}\rangle.
\end{equation*} As all $\lambda_{\beta}$ are non-negative, this can only be non-negative if $\lambda_{\beta}=0$ for all neighbors $\beta$ of $\alpha$. This finishes the induction and shows that $\nu_b=\bar\mu$.
\end{proof}

\begin{remark}

Using Corollary \ref{cor:invariants}, as in the proof of the Lemma, we obtain the following fact.
Let $[b]\in B(G)$ and $\nu_b$ its Newton point. Let $M$ be a standard Levi subgroup with $M(L)\cap [b]\neq\emptyset$.
Then $\kappa_M$ is constant on
$$\{x\in[b]\cap M(L)\mid \nu^M_{x}=\nu_b \in \pi_1(M)\otimes \mathbb Q\}.$$
Here $\nu^M$ denotes the Newton point for an element of $M$, an $M$-dominant element of $X_*(T)_{\mathbb{Q}}$.
\end{remark}

\begin{remark} In \cite{chen}, we take the second condition in Theorem \ref{thmzshkII} as the definition of HN-irreducibility (cf. \cite{chen} definition 5.0.4).
\end{remark}

\begin{remark}\label{remspecialcase}
In the particular case of the above theorem where $b$ is $\sigma$-conjugate to $p^{\mu}$ and $\mu$ is central we have
\begin{eqnarray*}X_{\mu}(b)&=&\{g\in G(L)/ K\mid g^{-1}b\sigma(g)\in Kp^{\mu}K\}\\
&=& \{g\in G(L)/ K\mid g^{-1}\sigma(g)\in K \}\\
% &=&\{g\in G(L)\mid \sigma(g)=g\}/K\\
&=&G(F)/G(\O_F)
\end{eqnarray*} where the third equality follows from Lang's Lemma.
\end{remark}

\begin{lemma}\label{cartesianIV} Let $G$ be a reductive group over $\O_F,$ let $T$ be the centralizer
of a maximal split torus, and let $T^{\ad} = T/Z_G.$ Then the following
diagram is Cartesian with surjective vertical maps
$$\begin{CD}
 X_*(T)^{\Gamma}  @>>> X_*(T^{\ad})^{\Gamma} \\
^{w_G}@VVV^{w_{G^{\ad}}} @VVV      \\
\pi_1(G)^{\Gamma}   @>>>\pi_1(G^{\ad})^{\Gamma}.
\end{CD}$$
\end{lemma}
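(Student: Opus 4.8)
The plan is to reduce the lemma to the analogous statement for the full cocharacter lattices (before taking $\Gamma$-invariants), together with the vanishing $H^1(\Gamma,Q^\vee)=0$, where $Q^\vee\subset X_*(T)$ is the coroot lattice of $G$. Since $G$ is quasi-split over $\O_F$ we may work with a $\sigma$-stable Borel $B\supset T$, as in the standing notation; then $\sigma$ acts on $X_*(T)$ preserving the based root datum, and in particular it permutes the set $\Delta^\vee$ of simple coroots. Consequently $Q^\vee=\bigoplus_{\alpha^\vee\in\Delta^\vee}\Z\alpha^\vee$, with its $\Gamma$-action, is a permutation module, i.e. a finite direct sum $\bigoplus_{\mathcal O}\Z[\Gamma/\Gamma_{\mathcal O}]$ indexed by the $\Gamma$-orbits $\mathcal O\subset\Delta^\vee$, where $\Gamma_{\mathcal O}$ is the open stabilizer of a point of $\mathcal O$. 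This is the only input that is not pure diagram-chasing; everything else is formal.

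First I would record that the projection $\pi\colon X_*(T)\to X_*(T^{\ad})$ carries the coroot lattice of $G$ isomorphically onto the coroot lattice of $G^{\ad}$: the coroots of $G$ and of $G^{\ad}$ agree, and $\pi$ maps the simple coroots of $G$ bijectively onto those of $G^{\ad}$. Since $\ker w_G$ and $\ker w_{G^{\ad}}$ are precisely these coroot lattices, we obtain a morphism of short exact sequences from $0\to Q^\vee\to X_*(T)\xrightarrow{w_G}\pi_1(G)\to 0$ to $0\to Q^\vee\to X_*(T^{\ad})\xrightarrow{w_{G^{\ad}}}\pi_1(G^{\ad})\to 0$ which is an isomorphism on the subobjects $Q^\vee$. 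A five-lemma argument then shows that the square with $X_*(T),X_*(T^{\ad})$ in place of their $\Gamma$-invariants is Cartesian; explicitly, $\mu\mapsto(\pi(\mu),w_G(\mu))$ identifies $X_*(T)$ with $X_*(T^{\ad})\times_{\pi_1(G^{\ad})}\pi_1(G)$: the map is injective because its kernel equals $\ker\pi\cap\ker w_G=\ker\pi\cap Q^\vee=0$, and it is surjective because, given a matching pair $(x,c)$ and any lift $\mu_0\in X_*(T)$ of $c$, the element $x-\pi(\mu_0)$ lies in the copy of $Q^\vee$ inside $X_*(T^{\ad})$ and hence equals $\pi(q)$ for a unique $q\in Q^\vee\subset X_*(T)$, so that $\mu_0+q\mapsto(x,c)$. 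Since $(-)^\Gamma$ is left exact it commutes with fibre products, and therefore the square in the statement is Cartesian.

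It remains to check that the vertical maps are surjective on $\Gamma$-invariants; this is the one place where left-exactness of $(-)^\Gamma$ does not suffice, and I expect it to be the only substantive (though still routine) step. The long exact cohomology sequence of $0\to Q^\vee\to X_*(T)\to\pi_1(G)\to 0$ gives exactness of $X_*(T)^\Gamma\xrightarrow{w_G}\pi_1(G)^\Gamma\to H^1(\Gamma,Q^\vee)$, and likewise for $G^{\ad}$. Thus it suffices to prove $H^1(\Gamma,Q^\vee)=0$, and this is immediate from the permutation-module description: by Shapiro's lemma $H^1(\Gamma,\Z[\Gamma/\Gamma_{\mathcal O}])=H^1(\Gamma_{\mathcal O},\Z)=\mathrm{Hom}_{\mathrm{cont}}(\Gamma_{\mathcal O},\Z)=0$, the last equality because $\Gamma_{\mathcal O}$ is profinite while $\Z$ is discrete and torsion-free. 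Hence $H^1(\Gamma,Q^\vee)=0$, the maps $w_G$ and $w_{G^{\ad}}$ are surjective on $\Gamma$-invariants, and the lemma follows.
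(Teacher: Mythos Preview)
Your proof is correct and follows essentially the same approach as the paper. Both arguments identify the kernel of the vertical maps with the coroot lattice $Q^\vee$ (the paper writes this as $X_*(\tilde T)$ for $\tilde T$ the maximal torus in the simply connected cover), reduce surjectivity of the vertical maps to $H^1(\Gamma,Q^\vee)=0$, and deduce this vanishing from the fact that $Q^\vee$ is a permutation module on the simple coroots. The only minor difference is organizational: you first establish the Cartesian property before taking invariants and then invoke left-exactness of $(-)^\Gamma$, whereas the paper argues via torsor fibres and deduces Cartesian-ness from surjectivity; these are equivalent formulations of the same diagram chase.
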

\begin{proof}
Let $\tilde G$ denote the simply connected cover of $G^{\ad},$ and $\tilde T$ the preimage of
$T$ in $\tilde G.$ The fibres of both horizontal maps are torsors under $X_*(Z_G),$ and the fibres of both
vertical maps are torsors under $X_*(\tilde T)^{\Gamma}.$
Using this, one sees easily that it suffices to show that the
vertical maps are surjective. Thus it remains to check that $H^1(\Gamma, X_*(\tilde T)) = 0.$

Suppose that $r$ is a non-negative integer, and consider any continuous action of $\Gamma$ on
$\mathbb Z^r,$ which permutes the standard basis vectors. We claim that $H^1(\Gamma, \mathbb Z^r) = 0.$
It suffices to consider the case when $\Gamma$ permutes the basis vectors transitively. If
$\Gamma'$ is the stabilizer of one of the basis vectors, then $\mathbb Z^r$ can be identified
with $\Ind_{\Gamma'}^{\Gamma} \mathbb Z,$ and claim follows since $H^1(\Gamma', \mathbb Z) = 0.$

Applying this to $X_*(\tilde T)$ with its basis of simple coroots proves the lemma.
\end{proof}

\begin{cor}\label{cor:invariants} Let $M \subset G$ be a standard Levi. Then
\begin{enumerate}
\item The map $\pi_1(M)^{\Gamma} \rightarrow \pi_1(G)^{\Gamma}$ is surjective,
and its kernel is spanned by the sum of $\Gamma$-orbits
of coroots of $G.$
\item $\ker(\pi_1(M)_{\Gamma} \rightarrow \pi_1(G)_{\Gamma})$ is torsion free.
\end{enumerate}
\end{cor}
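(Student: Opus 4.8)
The plan is to derive both parts from the short exact sequence of $\Gamma$-modules
$$0 \longrightarrow \Lambda_G/\Lambda_M \longrightarrow \pi_1(M) \longrightarrow \pi_1(G) \longrightarrow 0,$$
where $\Lambda_G,\Lambda_M\subseteq X_*(T)$ denote the coroot lattices of $G$ and of $M$; indeed $\pi_1(G)=X_*(T)/\Lambda_G$, $\pi_1(M)=X_*(T)/\Lambda_M$, and $\Lambda_M\subseteq\Lambda_G$ because every root of $M$ is a root of $G$. The key observation is that, since $M$ is defined over $F$, the set $I_M\subseteq I$ of simple roots of $M$ is $\Gamma$-stable, and as $\Gamma$ acts on $X_*(T)$ through a finite quotient permuting the simple coroots $\alpha_i^\vee$, all three of $\Lambda_G$, $\Lambda_M$ and $\Lambda_G/\Lambda_M\cong\bigoplus_{i\in I\setminus I_M}\Z\,\bar\alpha_i^\vee$ are permutation $\Gamma$-modules, i.e.\ finite direct sums of modules $\Ind_{\Gamma'}^\Gamma\Z$ with $\Gamma'$ open in $\Gamma$.

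For part (1), I would first obtain the surjectivity of $\pi_1(M)^\Gamma\to\pi_1(G)^\Gamma$ from Lemma \ref{cartesianIV} applied to $G$: it gives that $w_G\colon X_*(T)^\Gamma\to\pi_1(G)^\Gamma$ is surjective, and $w_G$ factors through the natural map $\pi_1(M)^\Gamma\to\pi_1(G)^\Gamma$, which is therefore surjective too. (Surjectivity also follows directly from the long exact cohomology sequence of the displayed sequence and the vanishing $H^1(\Gamma,\Lambda_G/\Lambda_M)=0$, valid for a permutation module by the argument in the proof of Lemma \ref{cartesianIV}.) Taking $\Gamma$-invariants of the displayed sequence identifies $\ker(\pi_1(M)^\Gamma\to\pi_1(G)^\Gamma)$ with $(\Lambda_G/\Lambda_M)^\Gamma$. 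Since $\Lambda_G/\Lambda_M$ is the permutation module on $\{\bar\alpha_i^\vee\}_{i\in I\setminus I_M}$, its $\Gamma$-invariants are spanned by the orbit sums $\sum_{i\in O}\bar\alpha_i^\vee$ over $\Gamma$-orbits $O\subseteq I\setminus I_M$; these are precisely the images in $\pi_1(M)$ of the sums $\sum_{\beta^\vee\in O}\beta^\vee$ taken over $\Gamma$-orbits $O$ of coroots of $G$ (orbits of coroots contained in $M$ mapping to $0$), and conversely every such orbit sum has image lying in $(\Lambda_G/\Lambda_M)^\Gamma$. This gives the asserted description of the kernel.

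For part (2), I would apply $\Gamma$-coinvariants to the displayed sequence: by right exactness one gets an exact sequence $(\Lambda_G/\Lambda_M)_\Gamma\to\pi_1(M)_\Gamma\to\pi_1(G)_\Gamma\to 0$, so $\ker(\pi_1(M)_\Gamma\to\pi_1(G)_\Gamma)$ is a quotient of $(\Lambda_G/\Lambda_M)_\Gamma$, which is free abelian, being the coinvariants of a permutation module (free on the set of $\Gamma$-orbits in $I\setminus I_M$). The remaining step is to check that this kernel has the same rank as $(\Lambda_G/\Lambda_M)_\Gamma$: tensoring the displayed sequence with $\Q$ keeps it exact, $\Gamma$-coinvariants is exact on $\Q$-vector spaces whose action factors through a finite group, and $\pi_1(M)_\Gamma\to\pi_1(G)_\Gamma$ is surjective, whence $\rg\,(\ker(\pi_1(M)_\Gamma\to\pi_1(G)_\Gamma))=\rg\,\pi_1(M)_\Gamma-\rg\,\pi_1(G)_\Gamma=\rg\,(\Lambda_G/\Lambda_M)_\Gamma$. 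A surjection from a finitely generated free abelian group onto a finitely generated abelian group of the same rank is an isomorphism (its kernel is a rank-zero subgroup of a free abelian group, hence $0$), so $\ker(\pi_1(M)_\Gamma\to\pi_1(G)_\Gamma)\cong(\Lambda_G/\Lambda_M)_\Gamma$ is free abelian, in particular torsion free.

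I do not expect a genuine obstacle here: once the coroot lattices are recognized as permutation $\Gamma$-modules, both statements are formal. The two points needing care are that $M$ should be defined over $F$ (so that $\Gamma$ indeed acts on $\pi_1(M)$ and $I_M$ is $\Gamma$-stable), and that in part (2) the rank identity is best seen by passing to $\Q$-coefficients and using semisimplicity of $\Q[\bar\Gamma]$ for the finite quotient $\bar\Gamma$ of $\Gamma$ through which the action factors, rather than by computing $H_1(\Gamma,\pi_1(G))$ directly.
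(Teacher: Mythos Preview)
Your proof is correct and follows essentially the same route as the paper: both arguments use the short exact sequence $0\to\Lambda_G/\Lambda_M\to\pi_1(M)\to\pi_1(G)\to 0$ and the fact that the coroot lattices (and their quotient) are permutation $\Gamma$-modules, deducing surjectivity in (1) from Lemma~\ref{cartesianIV} and identifying the kernel with $(\Lambda_G/\Lambda_M)^\Gamma$. The only minor difference is in (2): the paper applies the snake lemma to the endomorphism $1-\sigma$ of the displayed sequence, so that the surjectivity in (1) forces the connecting map $\pi_1(G)^\Gamma\to(\Lambda_G/\Lambda_M)_\Gamma$ to vanish and hence $(\Lambda_G/\Lambda_M)_\Gamma\hookrightarrow\pi_1(M)_\Gamma$, whereas you reach the same isomorphism $\ker(\pi_1(M)_\Gamma\to\pi_1(G)_\Gamma)\cong(\Lambda_G/\Lambda_M)_\Gamma$ via a rank count after tensoring with $\Q$; both are valid and equally short.
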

\begin{proof} The first claim in (1) follows from Lemma \ref{cartesianIV}, and (2) then follows
by the snake Lemma. To see the second claim in (1), let $\tilde T$ be as in Lemma \ref{cartesianIV}, and
let $\tilde T_M \subset \tilde T$ be the analogous torus for $M$ in place of $G.$ Then the kernel
of the map in (1) is $(X_*(\tilde T)/X_*(\tilde T_M))^\Gamma.$ By what we saw in Lemma \ref{cartesianIV},
$X_*(\tilde T_M)$ and $X_*(\tilde T)$ are a sum of induced modules. It follows that
$(X_*(\tilde T)/X_*(\tilde T_M))^\Gamma = X_*(\tilde T)^{\Gamma}/X_*(\tilde T_M)^\Gamma,$ and that
$X_*(\tilde T)^{\Gamma}$ is spanned by the sum of $\Gamma$-orbits in $X_*(\tilde T)^{\Gamma}.$
\end{proof}

\section{The superbasic case}\label{secsb}

\subsection{Superbasic $\sigma$-conjugacy classes}\label{secsbI}

As recalled above, an element $b \in G(L)$ is called {\it basic} if $\nu_b$ factors through the
center of $G.$ This condition depends only on the $\sigma$-conjugacy class of $b.$
We say that $b$ is {\it superbasic} if no $\sigma$-conjugate of $b$ is contained in a proper Levi
subgroup of $G$ defined over $F.$ Since all maximal $F$-split tori of $G$ are conjugate over $F,$
this is equivalent to asking that no $\sigma$-conjugate of $b$ is contained in a proper Levi
subgroup of $G$ defined over $F,$ and containing $T.$
If $b$ is superbasic, then $M_b = G,$ by Lemma \ref{(1.2.5)}(2), and $\nu_b$ is central, so $b$ is basic.

\begin{lemma}\label{(1.2.6)} If $b \in G(L)$ is superbasic, then $J_b$ is anisotropic modulo center, and in particular the simple factors of $G^{\ad}$ are of the form $\Res_{E_i/F} \PGL_{h_i}$ for some unramified extension $E_i/F$ and $h_i \geq 2.$
\end{lemma}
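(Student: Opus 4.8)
The plan is to exploit that superbasicness forces $J_b$ to be anisotropic modulo center, and then to use the known classification of which reductive groups admit superbasic elements. First I would recall that if $b$ is superbasic then, by Lemma \ref{(1.2.5)}(2), we may assume $b\in M_b(L)$ with $\nu_b$ dominant, and superbasicness gives $M_b = G$; thus $\nu_b$ is central, so $b$ is basic. The key point is that $J_b$, being an inner form of $M_b = G$ (by \cite{Kottwitz2}, 3.3), contains no proper Levi $F$-subgroup coming from a Levi of $G$: more precisely, if $J_b$ had a proper parabolic $F$-subgroup, its Levi quotient would correspond (via the inner twisting, which is by an element of $G^{\ad}(L)$ preserving $B$ up to $\sigma$-conjugacy) to a proper Levi $M\subsetneq G$ defined over $F$ containing a $\sigma$-conjugate of $b$, contradicting superbasicness. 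Hence $J_b$ is anisotropic modulo its center.

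Next I would reduce the structural statement to the adjoint, and then to the simple, case. Since $J_b$ depends only on the image $\bar b$ of $b$ in $G^{\ad}(L)$, and a product decomposition $G^{\ad} = \prod_i G_i$ induces a compatible decomposition of $\sigma$-conjugacy classes and of the $J$-groups, $b$ is superbasic in $G$ if and only if its image is superbasic in each $F$-simple factor of $G^{\ad}$. So it suffices to treat the case $G^{\ad} = \Res_{E/F} H$ with $H$ an absolutely simple adjoint group over an unramified extension $E/F$ (the Weil restriction structure coming from the $\Gamma$-action permuting the Dynkin diagram). Here $J_{\bar b}$ is an inner form of $\Res_{E/F}H$, hence $J_{\bar b} = \Res_{E/F} H'$ for an inner form $H'$ of $H$ over $E$, and anisotropy modulo center of $J_{\bar b}$ is equivalent to anisotropy modulo center of $H'$ over $E$.

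The heart of the argument is then: an absolutely simple adjoint group $H$ over a $p$-adic (or local function) field $E$ admits an anisotropic-mod-center inner form if and only if $H$ is of type $A_{n-1}$, i.e. $H = \PGL_n$ for some $n\geq 2$, in which case the anisotropic inner form is $\PGL_1(D)$ for a division algebra $D/E$ of degree $n$. This is a standard consequence of the classification of inner forms of simple groups over local fields together with Tits' tables / the Bruhat--Tits theory of relative root systems: for all other types the quasi-split inner form already has nonzero $E$-rank and, crucially, \emph{every} inner form has nonzero $E$-rank — over a local field the only simple groups that can be anisotropic modulo center are the $\SL_1(D)$/$\PGL_1(D)$ type groups (inner forms of type $A$). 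I would cite Kneser or the Bruhat--Tits/Prasad--Raghunathan results on anisotropic groups over local fields (as is standard in this literature; cf. \cite{Kottwitz2}) for this fact. Granting it, each $F$-simple factor of $G^{\ad}$ has the form $\Res_{E_i/F}\PGL_{h_i}$ with $h_i\geq 2$ and $E_i/F$ unramified (the unramifiedness is automatic since $G$ is a connected reductive group over $\O_F$, so $G^{\ad}$ and all the fields splitting its factors are unramified over $F$).

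The main obstacle is making the last step — the classification input — precise with a clean citation, since "$J_b$ anisotropic mod center forces type $A$" packages together the local classification of semisimple groups (Kneser/Tits) and the fact that the inner forms are exactly $\PGL_1(D)$. Everything else is bookkeeping: verifying that superbasicness descends to and is detected on the $F$-simple factors, and that the inner twisting of $J_b$ is by an element of $G^{\ad}(L)$ so that the ``no proper $F$-Levi'' property transfers verbatim. I would expect the actual proof in the paper to simply invoke \cite{Kottwitz2}, 3.3 or a similar reference for the shape of anisotropic inner forms over local fields and then read off the conclusion.
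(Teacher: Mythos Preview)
Your approach is essentially the same as the paper's: show $J_b$ is anisotropic modulo center, then invoke the classification of anisotropic simple adjoint groups over local fields (units of division algebras, i.e., inner forms of $\PGL_n$), and read off the structure of the quasi-split inner form $G^{\ad}$. The difference is in how the anisotropy step is executed. You argue by transferring a hypothetical proper $F$-parabolic of $J_b$ across the inner twisting to a proper $F$-Levi of $G$ containing a $\sigma$-conjugate of $b$; this is correct in spirit, but your parenthetical ``via the inner twisting, which is by an element of $G^{\ad}(L)$ preserving $B$ up to $\sigma$-conjugacy'' glosses over exactly the step that needs justification. The paper argues more directly with cocharacters: given $\psi \in X_*(J_b)^{\Gamma}$, regard it as a cocharacter of $G$ satisfying $\sigma(\psi) = b^{-1}\psi b$, conjugate it by some $g \in G(L)$ to a dominant $F$-rational $\psi' \in X_*(T)$, and observe that then $gb\sigma(g)^{-1}$ commutes with $\psi'$; superbasicness forces $\psi'$ (hence $\psi$) to be central. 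This is the same geometric content as your parabolic transfer, but the cocharacter formulation makes the passage from $J_b$ to $G$ and the appearance of a $\sigma$-conjugate of $b$ inside the centralizer of $\psi'$ completely explicit, with no need to discuss how inner twisting moves parabolics. For the classification input the paper cites Tits \cite{Ti}~\S4 rather than Kottwitz.
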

This is analogous to \cite{GHKR}, 5.9.1. We are grateful to R.~Kottwitz for explaining how to adapt the proof of loc.~cit.~to the quasi-split setting.
\begin{proof}
A cocharacter $\ep \in X_*(J_b)^{\Gamma}$ may be regarded as a cocharacter of $G$ such that
$\sigma(\ep) = b^{-1}\ep b.$ Then as above, $\ep$ is conjugate by a $g \in G(L)$ to a dominant
cocharacter $\ep' \in X_*(T)$ defined over $F.$ That is, $\sigma(g^{-1}\ep' g) = b^{-1}g^{-1}\ep' gb,$
which implies that $gb\sigma(g^{-1})$ commutes with $\ep'.$ Since $gb\sigma(g^{-1})$ is not contained
in a proper Levi subgroup of $G$ containing $T,$ $\ep$ must be central.

The fact that $J_b$ is anisotropic modulo center implies that all the factors of $J_b^{\ad}$ are isomorphic to the group of units of a division algebra over an extension of $F$ modulo its center \cite{Ti} \S4. Since $G^{\ad} = M_b^{\ad}$ is an inner form of $J_b^{\ad},$ which is quasi-split, its simple factors have the form
$\Res_{E_i/F} \PGL_{h_i}$ for some finite extensions $E_i/F.$ As $G$ is unramified, $E_i$ must be an unramified extension of $F.$
\end{proof}

\begin{para}\label{para:superbasicsetup} For every $[b]\in B(G)$ there exists a standard parabolic subgroup $P$ of $G$ defined over $F$ with Levi factor $M$ containing $T$, unipotent radical $N$ and the following properties. There exists $b' \in [b]\cap M(L)$ such that $b'$ is superbasic in $M,$ i.e. no $\sigma$-conjugate of $b$ lies in a proper Levi subgroup of $M$. Thus we may assume that $b \in M(L)$ is superbasic.
\end{para}
\subsection{The superbasic case for $\GL_h$}\label{(1.3)}
Let $E/F$ be a finite unramified extension and suppose $G = \Res_{\O_E/\O_F} \GL_h,$
with $T$ the standard diagonal torus and $B$ the Borel subgroup of upper triangular matrices.
In this subsection we will prove Theorem \ref{thmzshk} for this $G$ when $b$ is superbasic.
For the rest of this subsection, we suppose that $b$ is a superbasic element of $G(L).$

Let $n =[E:F].$ The $F$-algebra embeddings $E \hookrightarrow L$ are permuted
cyclically by Frobenius, so over $\O_E$ we may identify $G$ with $(\GL_h)^n,$
such that $\sigma$ acts on $G(L) = \GL_h(L)^n,$ by
$$ \sigma(g_1, \dots, g_n) = (\sigma(g_n), \sigma(g_1), \dots, \sigma(g_{n-1})) .$$

We get an analogous decomposition of $X_*(T),$ and for $r=1, \dots ,n,$ we denote by $\mu_r$
the projection of $\mu$ onto the $r^{\th}$ factor of $X_*(T).$
Let $\mu_{r,\min} \in X_*(T)$ denote the unique dominant minuscule cocharacter with
$\mu_{r,\min} \preceq \mu_r$ (that is with $\det(\mu_{r,\min}(p)) = \det(\mu_r(p))$, compare \eqref{minuscule_pi1} below)
and set $\mu_{\min} = (\mu_{r,\min})_r.$

Let $h \geq 1$ be an integer and $e_1, \dots , e_h$ the standard basis of $L^h.$
We define $e_i$ for $i \in \mathbb Z$ so that $e_{i+h} = pe_i.$
Let $s \in GL_h(F)$ be defined by $s(e_i) = e_{i+1}$ for all $i.$

Note that for $i \in \Z,$ $s^i = {}^i\mu_{\min}(p)w^i$ where $w$ is the Weyl
group element given by $w(e_i) = e_{i+1}$ for $i = 1, \dots, h-1$
and $w(e_h) = e_1,$ and ${}^i\mu_{\min}$ is the unique dominant minuscule cocharacter
of $\GL_h$ such that $\det({}^i\mu_{\min}(p)) = p^i.$

\begin{lemma}\label{(1.3.1)} If $X_{\preceq \mu}(b) \neq \emptyset,$ $b$ is $\sigma$-conjugate to
$b_{\min} = (s^{m_r}) \in G(L),$ where $m_r \in \Z$ satisfies ${}^{m_r}\mu_{\min} = \mu_{r,\min}.$
Moreover, we have $(\sum_r m_r,h)=1.$
\end{lemma}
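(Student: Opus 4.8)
The plan is to identify both $b$ and $b_{\min}$ with the unique basic $\sigma$-conjugacy class having Kottwitz invariant $[\mu]$, and then to extract the coprimality of $\sum_r m_r$ with $h$ from the superbasic hypothesis. First, since $b$ is superbasic it is basic (its Newton cocharacter is central), and since $X_{\preceq\mu}(b)\neq\emptyset$ we have $[b]\in B(G,\mu)$, hence $\kappa_G(b)=[\mu]$ in $\pi_1(G)_{\Gamma}$. Here $\pi_1(G)=\pi_1(\GL_h)^n=\Z^n$ via valuations of determinants (with $\Gamma$ permuting the $n$ factors cyclically), so $\pi_1(G)_{\Gamma}=\Z$ via the sum of coordinates; and since ${}^{m_r}\mu_{\min}=\mu_{r,\min}\preceq\mu_r$ forces $\deg\mu_r=\deg\mu_{r,\min}=m_r$, the image of $[\mu]$ in $\pi_1(G)_{\Gamma}$ is exactly $M:=\sum_r m_r$. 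On the other side, from $s^{m_r}={}^{m_r}\mu_{\min}(p)\,w^{m_r}$ with $w^{m_r}\in\GL_h(\O_F)$ one gets $b_{\min}\in Kp^{\mu_{\min}}K$, so $w_G(b_{\min})=[\mu_{\min}]=[\mu]$ in $\pi_1(G)$ and $\kappa_G(b_{\min})=M=\kappa_G(b)$.

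Next I would check that $b_{\min}$ is itself basic. Under the identification $G(L)=\GL_h(L)^n$ with the cyclic twisted Frobenius, every element is $\sigma$-conjugate to one supported in the first factor, whose first coordinate is the ``norm'' $b_1\sigma(b_n)\sigma^2(b_{n-1})\cdots\sigma^{n-1}(b_2)$; since $s\in\GL_h(F)$ is fixed by $\sigma$, the norm of $b_{\min}$ is simply $s^{M}$. Because $s^h=p\cdot\mathrm{id}$, the element $s^M$ is isoclinic of slope $M/h$ (as a $\sigma^n$-isocrystal on $L^h$), so its Newton cocharacter in $\GL_h$ is central; equivalently $\nu_{b_{\min}}$ is central in $G$, i.e.\ $b_{\min}$ is basic. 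Thus $b$ and $b_{\min}$ are two basic elements of $G(L)$ with the same Kottwitz invariant, hence $\sigma$-conjugate, since the basic classes in $B(G)$ are in bijection with $\pi_1(G)_{\Gamma}$. This proves the first assertion.

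For the coprimality, suppose $d=\gcd(M,h)>1$. The operator $s^M\colon e_i\mapsto e_{i+M}$ then preserves the decomposition $L^h=\bigoplus_{j=1}^{d}V_j$, where $V_j$ is the span of the $e_i$ with $i$ in a fixed residue class modulo $d$ (compatible with $e_{i+h}=pe_i$ since $d\mid h$), and each $V_j$ has dimension $h/d$. Hence $s^M$ lies in a proper Levi subgroup of $\GL_h$ defined over $F$ (conjugate, by the permutation matrix sorting indices by residue class, to a standard block-diagonal one). Consequently the norm element $(s^M,1,\dots,1)$, being $\sigma$-conjugate to $b_{\min}$ and hence to $b$, lies in the $L$-points of the corresponding proper Levi $\Res_{\O_E/\O_F}(\,\cdot\,)$ of $G$, which is defined over $F$ and contains (a conjugate of) $T$ — contradicting that $b$ is superbasic. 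Therefore $\gcd(M,h)=1$.

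The only genuinely technical step is the norm computation and the verification that $s^M$ is isoclinic, which is what makes $b_{\min}$ basic; once this is in hand, everything else is bookkeeping with $\pi_1$ and the classification of basic $\sigma$-conjugacy classes. I would expect the bookkeeping with the twisted Frobenius on $\GL_h(L)^n$ (getting the norm right, and checking that a proper Levi of $\GL_h$ containing $s^M$ really yields a proper $F$-Levi of $G$ containing a $\sigma$-conjugate of $b$) to be the only place requiring care.
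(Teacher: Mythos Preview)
Your proof is correct and follows essentially the same approach as the paper: both arguments identify $b$ and $b_{\min}$ as basic elements with Kottwitz invariant $\sum_r m_r$ and invoke the bijection between basic classes and $\pi_1(G)_\Gamma$, and both deduce coprimality by exhibiting a $\sigma$-conjugate of $b$ in a proper $F$-Levi of $G$ when $\gcd(\sum_r m_r,h)>1$. The only cosmetic difference is that the paper phrases the Levi step via an auxiliary tuple $(s^{m_r'})$ with $\gcd(m_1',\dots,m_n',h)>1$, whereas your norm element $(s^M,1,\dots,1)$ is just the special case $m_1'=M$, $m_r'=0$ for $r>1$.
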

\begin{proof} Recall from \cite{Kottwitz1}, Proposition 5.6 that $\kappa_G$ induces a bijection between the set of
basic $\sigma$-conjugacy classes in $G(L)$ and $\pi_1(G)_{\Gamma}.$
The Newton cocharacter of $(s^{m_r})$ is the central cocharacter of
$GL_h \subset G$ corresponding to the rational number  $n^{-1}h^{-1}\sum m_r.$
In particular $(s^{m_r})$ is basic. As $X_{\preceq \mu}(b) \neq \emptyset,$ we have $\kappa_G(b) = \mu$
in $\pi_1(G)_{\Gamma}.$ Furthermore $\mu$ and $(s^{m_r})$ both have image $\sum_r m_r$
in $\pi_1(G)_{\Gamma} \iso \Z.$ Thus $b$ and $(s^{m_r})$ are $\sigma$-conjugate.

If $(\sum_r m_r,h) \neq 1,$ then  there exist integers $m_r'$ with
$\sum_r m_r' = \sum_r m_r,$ and such that $\gcd(m_1',\dots,m_r',h) > 1.$ Then the same argument as above shows that $b$ is
$\sigma$-conjugate to $(s^{m_r'}).$ The latter element is contained in a proper Levi subgroup of $G,$
defined over $F,$ which contradicts the fact that $b$ is superbasic.
\end{proof}

\begin{para}\label{(1.3.2)} Let $i, \delta \in \mathbb Z.$
If $\delta \neq 0,$  set $\R_{\delta} = \O_L\langle x \rangle,$ the $p$-adic
completion of $\O_L[x].$ Similarly, if $\delta = 0,$ we set $\R_{\delta}$ equal to the $p$-adic completion of $\O_L[x,(1+x)^{-1}].$
Let $a_{i,\delta} \in \GL_h(\R_{\delta})$ which sends $e_j$ to $e_j +xe_{j+\delta}$ if $h|(j-i)$ and
fixes $e_j$ otherwise.
\end{para}

\begin{lemma} \label{(1.3.3)} Let $g \in \GL_h(L)$ and let $\delta_g \in \mathbb Z$ be minimal such that
$a_{i,\delta}(x)\circ g \in g\GL_h(\R_{\delta})$ for all $\delta > \delta_g$ and $i.$
Then
\begin{enumerate}
\item Either $\delta_g \geq 1$ or $\delta_g = -1.$
\item If $\delta_g = -1$ then $g\GL_h(\O_L)$ contains an element of the form $s^j$ for some $j \in \mathbb Z.$
\item If $\delta_g \geq 1$ then there exists a unique $i_g \in \{1, \dots, h\}$ with
$a_{i_g,\delta_g}(x)\circ g \notin g \GL_h(\R_{\delta}).$
\item If $i,i' \in \{1. \dots, h\},$ and $\delta \geq \delta' > 0,$ then $a_{i,\delta}(x)a_{i,\delta}(x')a_{i,\delta}(-x-x')$ and
the commutator $[a_{i,\delta}(x), a_{i',\delta'}(x')]$
can be written as a (possibly infinite, $p$-adically convergent) product of terms of the form $a_{i_j,\delta_j}(x_j)$
with $\delta_j > \delta.$
\end{enumerate}
\end{lemma}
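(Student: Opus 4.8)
The plan is to work entirely with the $\O_L$-lattice $\Lambda = g\Lambda_0\subset L^h$, where $\Lambda_0=\O_Le_1\oplus\cdots\oplus\O_Le_h$. For $i\in\{1,\dots,h\}$ and $\delta\in\Z$, let $F_{i,\delta}$ be the $L$-linear endomorphism of $L^h$ with $F_{i,\delta}(e_k)=e_{k+\delta}$ when $k\equiv i\pmod h$ and $F_{i,\delta}(e_k)=0$ otherwise, the index $k+\delta$ being reduced via $e_{j+h}=pe_j$ (so $F_{i,\delta}=p^{\delta/h}$ times the projection onto $Le_i$ when $h\mid\delta$). Then $a_{i,\delta}(x)=\mathrm{id}+xF_{i,\delta}$ identically; one checks $a_{i,\delta}(x)\in\GL_h(\R_{\delta})$ — the factor $(1+x)^{-1}$ in $\R_0$ being present exactly for this when $\delta=0$ — and that for $g\in\GL_h(L)$ one has $a_{i,\delta}(x)\circ g\in g\GL_h(\R_{\delta})$ if and only if $F_{i,\delta}(\Lambda)\subseteq\Lambda$. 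Write $p^{a_i}\O_Le_i$ for the image of the projection $\Lambda\to Le_i$ and $p^{b_i}\O_Le_i=\Lambda\cap Le_i$ (so $b_i\ge a_i$). A direct computation gives $F_{i,\delta}(\Lambda)=p^{a_i}\O_L\,e_{i+\delta}$, so $F_{i,\delta}(\Lambda)\not\subseteq\Lambda$ holds precisely for $\delta\le\delta_g^{(i)}:=M-h-ha_i-i$, where $M:=\max_{1\le r\le h}(hb_r+r)$. Since $F_{i,\delta}\to 0$ $p$-adically as $\delta\to+\infty$, this set of "bad" $\delta$ is bounded above, and it is nonempty, so $\delta_g=\max_i\delta_g^{(i)}=M-h-\min_i(ha_i+i)$ is a well-defined integer. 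Part (3) is then immediate: $i_g$ is any index minimizing $i\mapsto ha_i+i$, but that value is $\equiv i\pmod h$ with $i\in\{1,\dots,h\}$, which recovers $i$ and then $a_i$, so the minimizer is unique.

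For (1) and (2), suppose $\delta_g\le 0$. Then $M\le h+ha_i+i$ for all $i$; taking $i=r$ gives $b_r\le a_r+1$, while feeding $i=i^*:=i_g$ into $M=\max_r(hb_r+r)$ and combining with $ha_{i^*}+i^*=\min_j(ha_j+j)$ forces $a_r=b_r=a_{i^*}$ for $r>i^*$, $a_r=b_r=a_{i^*}+1$ for $r<i^*$, and $b_{i^*}\in\{a_{i^*},a_{i^*}+1\}$. Put $\Lambda^\circ=\bigoplus_i p^{a_i}\O_Le_i$ and $\Lambda_\circ=\bigoplus_i p^{b_i}\O_Le_i$, so $\Lambda^\circ\supseteq\Lambda\supseteq\Lambda_\circ$ with $\Lambda^\circ/\Lambda_\circ$ of length $b_{i^*}-a_{i^*}\le 1$; if this length were $1$ then $\Lambda$ would equal $\Lambda^\circ$ or $\Lambda_\circ$, the first contradicting $\Lambda\cap Le_{i^*}=p^{b_{i^*}}\O_Le_{i^*}\ne p^{a_{i^*}}\O_Le_{i^*}$ and the second contradicting that $\Lambda$ projects onto $p^{a_{i^*}}\O_Le_{i^*}$ in $Le_{i^*}$. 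Hence $b_{i^*}=a_{i^*}$, so $\Lambda=\Lambda^\circ$ is the split lattice $p^{a_{i^*}}s^{\,i^*-1}\Lambda_0=s^{\,ha_{i^*}+i^*-1}\Lambda_0$, whence $s^m\in g\GL_h(\O_L)$ with $m=ha_{i^*}+i^*-1$. Finally $\delta_g$ depends only on $(a_\bullet,b_\bullet)$ and is invariant under $\Lambda\mapsto s\Lambda$, so $\delta_g=\delta_g(\Lambda_0)$, which the formula evaluates to $-1$. Thus $\delta_g\le 0$ implies $\delta_g=-1$; this gives (1), and (2).

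Part (4) involves no $g$; it is a statement about the group generated by the $a_{i,\delta}$, graded by the "level" $\delta$. If $h\nmid\delta$ then $F_{i,\delta}^2=0$, so $a_{i,\delta}$ is an additive one-parameter subgroup and $a_{i,\delta}(x)a_{i,\delta}(x')a_{i,\delta}(-x-x')=\mathrm{id}$, the empty product. If $\delta=mh$ with $m\ge 1$, then $a_{i,mh}(x)$ scales $e_i$ by $1+xp^m$, so $a_{i,mh}(x)a_{i,mh}(x')a_{i,mh}(-x-x')$ scales $e_i$ by $1+yp^{2m}$ with $y=xx'(1+(x+x')p^m)^{-1}$, i.e.\ it equals the single factor $a_{i,2mh}(y)$, and $2mh>\delta$. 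For the commutator, set $u_1=a_{i,\delta}(x)=\mathrm{id}+xF_{i,\delta}$ and $u_2=a_{i',\delta'}(x')=\mathrm{id}+x'F_{i',\delta'}$ and observe that each of $F_{i,\delta}F_{i',\delta'}$ and $F_{i',\delta'}F_{i,\delta}$ is either $0$ or an $F_{*,\delta+\delta'}$; hence $u_1u_2=u_2u_1+\Delta$ with $\Delta$ supported in levels $\ge\delta+\delta'>\delta$ (as $\delta'\ge1$), and $[u_1,u_2]=u_1u_2(u_2u_1)^{-1}=\mathrm{id}+\Delta(u_2u_1)^{-1}=\mathrm{id}+N$ with $N$ a $p$-adically convergent $L$-combination of $F$'s at levels $>\delta$. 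It then suffices to show any such $\mathrm{id}+N$ is a convergent product of factors $a_{i_j,\delta_j}(x_j)$ with $\delta_j>\delta$: let $m_1>\delta$ be the least level appearing in $N$, collect its level-$m_1$ part as a finite sum $\sum_i c_iF_{i,m_1}$, multiply $\mathrm{id}+N$ on the right by $(\prod_i a_{i,m_1}(c_i))^{-1}$ to cancel that part and raise the least level, and repeat; the least level strictly increases and $F_{i,\delta}\to0$ as $\delta\to+\infty$, so the resulting infinite product converges. (Since $[u_1,u_2]^{-1}=[u_2,u_1]$, assuming $\delta\ge\delta'$ merely arranges that "$\delta_j>\delta$" is the strongest conclusion.) The one genuinely delicate point in the whole argument is the explicit bookkeeping around the periodicity $e_{j+h}=pe_j$ — it enters both in pinning down the formula for $\delta_g$ in terms of $(a_\bullet,b_\bullet)$ and in the length-one dichotomy in (1)–(2) — after which everything is formal.
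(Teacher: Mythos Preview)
Your approach is correct and is essentially the same lattice-theoretic argument that the paper invokes by citing \cite{conncomp}, Lemma~2: translate the condition $a_{i,\delta}(x)\circ g\in g\GL_h(\R_\delta)$ into $F_{i,\delta}(\Lambda)\subseteq\Lambda$ for $\Lambda=g\Lambda_0$, compute this in terms of the projection/intersection exponents $a_i,b_i$, and then read off parts~(1)--(3). The paper's own proof is literally a pointer to that reference (plus the remark that the $\bar k$-specializations used there should be replaced by Teichm\"uller lifts); you have written out the argument directly in the integral formulation, so you avoid that step entirely.

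Two small corrections, neither of which breaks the proof:
\begin{itemize}
\item The claim ``$F_{i,\delta}(\Lambda)\not\subseteq\Lambda$ holds \emph{precisely} for $\delta\le\delta_g^{(i)}$'' is a slight overstatement. For fixed $i$, the bad $\delta$'s in each residue class mod $h$ form a half-line, but the thresholds $hb_r+r-h-ha_i-i$ can differ across classes, so the union need not be all of $(-\infty,\delta_g^{(i)}]$. What you actually use --- and what is true --- is that the \emph{maximum} bad $\delta$ for this $i$ equals $\delta_g^{(i)}=M-h-ha_i-i$, and that $\delta_g^{(i)}$ itself is bad (realized at the $r^*$ with $hb_{r^*}+r^*=M$). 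This suffices for the formula $\delta_g=\max_i\delta_g^{(i)}$ and for the uniqueness in~(3).
\item In part~(4), when $\delta=mh$ with $m\ge1$, your displayed formula $y=xx'(1+(x+x')p^m)^{-1}$ is incorrect: with $u=xp^m$, $v=x'p^m$ one has $(1+u)(1+v)(1-u-v)=1-(u^2+uv+v^2)-uv(u+v)$, so the product is $a_{i,2mh}(y)$ with $y=-(x^2+xx'+x'^2)-p^m xx'(x+x')$. (Your $y$ would give $0$ at $x'=0$, but the product visibly is not the identity then.) The qualitative conclusion --- a single factor at level $2mh>\delta$ --- is unaffected.
\end{itemize}
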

\begin{proof}This is a translation of \cite{conncomp}, Lemma 2.
The proof given in {\it loc.~cit} goes over verbatim, except that
the elements $\beta_j \in \bar k$ which appear in it should be replaced by Teichm\"uller representatives in $W(\bar k).$
Note that in {\it loc.~cit} the definition of $\delta_g$ and condition (3) are formulated by asking that
$a_{i,\delta}(x)\circ g$ is contained (resp. not contained) in $g\GL_h(\O_L)$ for every specialization
of $x$ at a point of $\bar k.$ This is equivalent to the formulation here, for example using Lemma \ref{1.1.9}
\end{proof}
\begin{lemma}\label{(1.3.4)} Let $s \in \GL_h(F) \subset G(F)$ be as above, and
suppose that $b = b_{\min}.$
Then $\langle s \rangle \subset J_b(F)$ acts transitively on $\pi_0(X_{\preceq \mu}(b)).$
\end{lemma}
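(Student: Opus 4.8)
The plan is to show that any element $g \in X_{\preceq \mu}(b)$ can be connected, within $X_{\preceq\mu}(b)$, to an element of $\langle s\rangle$, working one factor of $G(L) = \GL_h(L)^n$ at a time and using the ``straightening'' machinery of Lemma \ref{(1.3.3)}. The key point is that the operators $a_{i,\delta}(x)$ generate one-parameter families in the affine Grassmannian, and by Lemma \ref{(1.3.3)}(4) the families for larger $\delta$ are ``negligible'' in the sense that multiplying by them does not change the coset; this allows an inductive descent on the invariant $\delta_g$.

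First I would reduce to the case where $g$ lies in a single $\GL_h$-factor: since $b = b_{\min} = (s^{m_r})_r$ has components in $\GL_h(F)$ and the $\sigma$-action cyclically permutes the factors, the condition $g^{-1}b\sigma(g) \in Kp^\mu K$ decouples into conditions on the components $g_r$, linked only through $\sigma$. Using the Iwasawa/Cartan approximation arguments already employed in the proof of Proposition \ref{prophndecomp} and Lemma \ref{(1.3.3)}, I would reduce to analyzing how the family of operators $a_{i,\delta}(x)$ moves a given coset $g\GL_h(\O_L)$. Concretely, for $g \in X_{\preceq\mu}(b)$ with $\delta_g \geq 1$, the operator $a_{i_g,\delta_g}(x)$ (with $i_g$ as in Lemma \ref{(1.3.3)}(3)) gives a morphism $\Spec \R_{\delta_g} \to LG/K$ passing through $g$, and one must check that the resulting family stays inside $X_{\preceq\mu}(b)$ — i.e. that the invariant $[\mu_{s(\cdot)}]$ and the bound $\preceq\mu$ are preserved along it. This is where I would use that $\mu$ is minuscule only implicitly through $X_\mu(b) = X_{\preceq\mu}(b)$; actually the statement is about $X_{\preceq\mu}(b)$, so one works with the closed variety throughout. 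The family $a_{i_g,\delta_g}(x) g$ connects $g$ (at $x=0$) to an element $g'$ with strictly larger $\delta$-invariant, or to a point where the $\delta$-invariant has genuinely dropped; iterating, and using part (4) to absorb higher-$\delta$ corrections and commutators, one drives $\delta_g$ down to $-1$.

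Once $\delta_g = -1$, Lemma \ref{(1.3.3)}(2) says $g\GL_h(\O_L) = s^j \GL_h(\O_L)$ for some $j$, i.e. $g$ is (in that factor) already a power of $s$. Doing this for all $n$ factors, one concludes that every connected component of $X_{\preceq\mu}(b)$ contains an element of $\langle(s^{j_1},\dots,s^{j_n})\rangle \subset \GL_h(F)^n = G(F)$. It remains to see that all such elements are actually in $\langle s\rangle$ modulo the equivalence relation and that $s$ itself lies in $J_b(F)$: since $b = b_{\min}$ has components $s^{m_r} \in \GL_h(F)$ and $s$ is central in the relevant sense — more precisely $s$ commutes with $b_{\min}$ up to $\sigma$, which is exactly the condition $\sigma(s) = b^{-1}sb$ defining $J_b$ — the diagonal element $s \in G(F)$ (or rather the appropriate element of $G(F)$ built from $s$) lies in $J_b(F)$, and left translation by it permutes the components cyclically. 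Combining this with the Kottwitz-homomorphism bookkeeping of Lemma \ref{(1.3.1)} (in particular $(\sum m_r, h) = 1$, which guarantees $\langle s\rangle$ acts with a single orbit on the relevant $\Z/h$), one gets that $\langle s\rangle$ acts transitively on $\pi_0(X_{\preceq\mu}(b))$.

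The main obstacle I expect is the bookkeeping in the inductive descent: one must verify carefully that applying $a_{i,\delta}(x)$ produces a family that genuinely lies in $X_{\preceq\mu}(b)(\R_\delta)$ — i.e. that the closed condition $S_{\preceq\mu}(g^{-1}b\sigma(g)) = \Spec R$ is met along the whole family, not just at the special point — and that the termination of the induction is not spoiled by the infinite products of higher-$\delta$ terms appearing in Lemma \ref{(1.3.3)}(4). Handling the $\delta = 0$ case (where $\R_0$ involves inverting $1+x$, so the parameter space is $\mathbb{G}_m$ rather than $\mathbb{A}^1$) requires a small extra argument to see one still connects to the desired element. The rest is a translation of \cite{conncomp}, and as the paper notes, the function-field proof goes through essentially verbatim with Teichmüller lifts in place of elements of $\bar k$.
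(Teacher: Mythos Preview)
Your plan has the right high-level shape—use the $a_{i,\delta}$ families to drive each $\delta_{g_r}$ down to $-1$, then identify the result with a power of $s$—but there are two genuine gaps.

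First, the reduction to a single $\GL_h$-factor is not available: the condition $g^{-1}b\sigma(g)\in K p^\mu K$ reads componentwise as $g_r^{-1}s^{m_r}\sigma(g_{r-1})\in \GL_h(\O_L)p^{\mu_r}\GL_h(\O_L)$, which couples $g_r$ to $g_{r-1}$, so the factors do not decouple. The paper does not work one factor at a time; instead it acts by a single element $a=(a_r)_r\in G(\R)$ with components in \emph{all} $n$ factors at once, taking $a_r=\sigma^{r-r_1}(a_{j_r,\delta}(x))$ where the indices $j_r=j_{r_1}+m_{r_1+1}+\cdots+m_r$ are chosen precisely so that $b_r\sigma(a_{r-1})b_r^{-1}$ and $a_r$ carry the same index and cancel for $r\neq r_1$, while at $r=r_1$ the surviving term is handled via the uniqueness of $i_{g_{r_1}}$ in Lemma \ref{(1.3.3)}(3) (or via $\delta_{g_{r_1}}<\delta$ when not all $\delta_{g_r}$ are equal). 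Without this coordinated choice there is no reason for the family to stay in $X_{\preceq\mu}(b)$. Second, you are missing the ``point at infinity'' step: the family $ag$ over $\R=\O_L\langle x\rangle$ has $g$ at $x=0$, but nothing at a finite value of $x$ gives a smaller $\delta$. One must pass to $\R'=\O_L\langle y\rangle$ via $y=x^{-1}$, extend $ag$ there by the valuative criterion (Lemma \ref{valuativeII}), and evaluate at $y=0$; the resulting $g'$ is then shown, using Lemma \ref{(1.3.3)}(4), to have $\delta_{g'_{r_0}}<\delta_{g_{r_0}}$ and $\delta_{g'_r}\le\delta_{g_r}$ otherwise. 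Finally, the endgame does not use $(\sum_r m_r,h)=1$: once every $\delta_{g_r}=-1$ one has $g_r\in s^{j_r}\GL_h(\O_L)$, and a determinant count on $g_r^{-1}b_r\sigma(g_{r-1})$ shows that $j_{r-1}-j_r$ depends only on $\mu_r$, hence for two such $g,g'$ the difference $j_r-j'_r$ is independent of $r$ and $g=s^{j}g'$ with $s$ diagonal in $G(F)$.
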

\begin{proof} For $r=1, \dots ,n $ let $\delta_{g_r}$ be the integer obtained by applying Lemma \ref{(1.3.3)} to $g_r,$ and
if  $\delta_{g_r} \geq 1, $ let $i_{g_r}$ be the integer produced by (3) of that lemma. Suppose that $g,g' \in X_{\preceq \mu}(b),$
and that $\delta_{g_r} = \delta_{g'_r} = -1$ for all $r.$ We claim that $g$ and $g'$ are in
the same $\langle s \rangle $-orbit. By Lemma \ref{(1.3.3)}(2) we may assume that for $r=1,2, \dots, n$ we have
$g_r = s^{j_r}$ and $g'_r = s^{j'_r}$ for some $j_r,j'_r \in \mathbb Z.$ Note that $\sigma(s) = s \in J_b(F),$ so that
$$ s^{j_{r-1}-j_r} b_r = s^{-j_r}b_rs^{j_{r-1}} \in \GL_h(\O_L) p^{\mu'_r} \GL_h(\O_L) $$
for some $\mu'_r \preceq \mu_r.$ Here we set $j_{-1} = j_n,$ and we have again written $b_r$ for the
image of $b$ under the $r^{\text{th}}$ projection $G(L) \rightarrow \GL_h(L).$
Hence $v_p(\det(s^{j_r-j_{r-1}} b_r)) = v_p(\det(b_r))+j_r-j_{r-1}$ depends only
on $\mu_r$ and not on $g.$ It follows that $j = j_r-j'_r$ is independent of $r,$
so that $ g = s^jg'.$

Note that if $h=1,$ then $\delta_{g_r} = -1$ for all $r$ for any $g,$ so we are done in this case (which can of course
be easily checked directly). If $h > 1,$ it remains to show that given $g \in X_{\preceq \mu}(b)$
with $\delta_{g_r} > 0$ for some $r,$ there exists $g' \in X_{\preceq \mu}(b)$ in the same connected
component as $g,$ with
$\delta_{g'_r} \leq \delta_{g_r}$ for $r=1,\dots, n$ and such that this inequality is strict for some $r.$

Let $\R = \O_L \langle x \rangle$ equipped with the lift of Frobenius given by $x \mapsto x^q.$
Choose $r_0$ such that $\delta_{g_{r_0}}$ is maximal among the $\delta_{g_r}$ and set $\delta = \delta_{g_{r_0}} > 0.$
(In the following it will be convenient to view the indices $r$ in $\Z/n\Z.$)
Define $a = (a_r) \in (GL_h)^n(\R)$ as follows: If not all the $\delta_{g_r}$ are equal $\delta,$ let
$r_1 < r_0$ be an integer with $\delta_{g_{r_1}} < \delta.$ Then for $r=r_1, \dots, r_1+h-1$ we set
$a_r = \sigma^{r-r_1}(a_{j_r,\delta}(x)),$ where
$j_{r_1} = i_{g_{r_0}}- m_{r_0}- \dots - m_{r_1+1}$ and $j_r = j_{r_1}+m_{r_1+1}+ \dots + m_r$
for $r=r_1+1, \dots, r_1+n-1.$ If all the $\delta_{g_r}=\delta$ we choose $r_1=r_0$ so that $h\nmid m_{r_0}$
and set $a_r = \sigma^{r-r_0}(a_{j_r,\delta}(x)),$ where $j_{r_0} = i_{g_{r_0}}$ and
$j_r = i_{g_{r_0}}+ m_{r_0+1} + \dots +m_r$ for $r=r_0+1,\dots ,r_0+n-1.$

Then, as in \cite{conncomp}, p.~322, for $r\neq r_1$, we have, using Lemma \ref{(1.3.3)},
\begin{align}
\nonumber&\GL_h(\R)g_r^{-1}a_{r}^{-1}b_r\sigma(a_{r-1})\sigma(g_{r-1})\GL_h(\R) \\
\nonumber&\quad=\GL_h(\R)g_r^{-1}\sigma^{r-r_1}(a_{j_r,\delta}(x)^{-1})b_r\sigma^{r-r_1}(a_{j_{r-1},\delta}(x))\sigma(g_{r-1})\GL_h(\R) \\
\nonumber&\quad= \GL_h(\R)g_r^{-1}\sigma^{r-r_1}(a_{j_r,\delta}(x)^{-1}a_{j_{r-1}+m_r,\delta}(x))b_r\sigma(g_{r-1})\GL_h(\R) \\
\nonumber&\quad= \GL_h(\R)g_r^{-1}\sigma^{r-r_1}(a_{j_r,\delta}(-x)a_{j_{r-1}+m_r,\delta}(x))b_r\sigma(g_{r-1})\GL_h(\R).
\end{align}
From the definition of the $a_r$ and $j_r,$ it follows that this is equal to
$$\GL_h(\R)g_r^{-1}b\sigma(g_{r-1})\GL_h(\R).$$ For $r=r_1$ a similar calculation shows
\begin{align}
\nonumber&\GL_h(\R)g_{r_1}^{-1}a_{r_1}^{-1}b_{r_1}\sigma(a_{r_1-1})\sigma(g_{r_1-1})\GL_h(\R) \\
\label{1.3.5'}&\quad= \GL_h(\R)g_{r_1}^{-1}a_{j_{r_1},\delta}(-x)\sigma^n(a_{j_{r_1-1}+m_{r_1},\delta}(x))b_{r_1}\sigma(g_{r_1-1})\GL_h(\R).
\end{align}
We claim that this is again equal to $\GL_h(\R)g_{r_1}^{-1}b_{r_1}\sigma(g_{r_1-1})\GL_h(\R).$
If not all the $\delta_{g_r}$ are equal to $\delta,$ this follows from $\delta > \delta_{g_{r_1}}.$
If all the $\delta_{g_r} = \delta,$ then using Lemma \ref{(1.3.3)} (4), the expression \eqref{1.3.5'} is equal to
\begin{align*}
\nonumber&\GL_h(\R)g_{r_1}^{-1}\sigma^n(a_{j_{r_1-1}+m_{r_1},\delta}(x))a_{j_{r_1},\delta}(-x)b_{r_1}\sigma(g_{r_1-1})\GL_h(\R) \\
&\quad= \GL_h(\R)g_{r_1}^{-1}\sigma^n(a_{j_{r_1-1}+m_{r_1},\delta}(x))b_{r_1}a_{j_{r_1}-m_{r_1},\delta}(-x)\sigma(g_{r_1-1})\GL_h(\R)
\end{align*}

Now $j_{r_1}-m_{r_1} = i_{g_{r_1}}-m_{r_1} \neq i_{g_{r_1}}$ in $\Z/h\Z$ as $h\nmid m_{r_1},$
while $j_{r_1-1}+m_{r_1} = i_{g_{r_1}} + \sum_r m_r \neq i_{g_{r_1}}.$ Hence the uniqueness of $i_{g_{r_1}}$
in Lemma \ref{(1.3.3)}(3) implies the claim in this case also. It follows that $ag \in X_{\preceq \mu}(b)(\R)$.

Let $\R'$ and $\R''$ denote the $p$-adic completions of $\O_L[y]$ and $\O_L[x,x^{-1}]$ respectively,
equipped with the lifts of Frobenius $\sigma$ given by $y \mapsto y^q$ and $x \mapsto x^q.$ We consider $\R'$
as subring of $\R''$ via $y \mapsto x^{-1}.$ We may consider $ag \in  X_{\preceq \mu}(b)(\R'').$ Then
by Lemma \ref{valuativeII}, $ag$ is induced by an element $\gamma \in X_{\preceq \mu}(b)(\R').$

Now $(a\circ g)|_{x=0} = g,$ and a computation as in \cite{conncomp}, proof of Proposition 1 for superbasic $b$,
using Lemma \ref{(1.3.3)} (4), shows that $g' = \gamma|_{y=0}$ satisfies $\delta_{g'_{r_0}} < \delta_{g_{r_0}}$ and $\delta_{g'_r} \leq \delta_{g_r}$
for $r\neq r_0.$ Since $g$ and $g'$ are in the same connected component of $X_{\preceq \mu}(b)(W(\bar k)),$ the Lemma follows.
\end{proof}

\begin{para}\label{para:defnw'} It will be convenient to formulate a slight variant of Lemma \ref{(1.3.4)}.
Recall the element $w$ defined at the beginning of this subsection, which permutes the chosen basis
$e_1, \dots, e_h$ cyclicly. Then $\det(w) = (-1)^{h-1}.$ Let $w' = tw$ where
$t(e_1) = (-1)^{h-1}(e_1)$ and $t(e_i) = e_i$ for $i > 1.$ Then $w' \in \SL_h(F).$
We set $s' = ts = {}^1\mu_{\min}(p)w',$ and $b_{\min}' = ((s')^{m_r})_r \in G(L).$
\end{para}

\begin{cor}\label{cor:superbasicgl} If $b = b_{\min}'$ then $b$ is superbasic in $G,$
and $\langle s' \rangle \subset J_b(F)$ acts transitively on $\pi_0(X_{\preceq \mu}(b)).$
\end{cor}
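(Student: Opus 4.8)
The plan is to deduce Corollary~\ref{cor:superbasicgl} from Lemma~\ref{(1.3.4)} by a purely formal change of variables. First I would check that $b_{\min}' = ((s')^{m_r})_r$ is superbasic in $G$. Since $s' = ts = {}^1\mu_{\min}(p)w'$ and $w'$ differs from $w$ only by the sign torus element $t \in T(F)$, the element $(s')^{m_r}$ has the same image under $\kappa_G$ as $(s^{m_r})_r$, hence the same (central) Newton point, namely the class $n^{-1}h^{-1}\sum_r m_r$ in $\pi_1(G)_\Gamma \cong \Z$. As in Lemma~\ref{(1.3.1)}, the condition $(\sum_r m_r, h) = 1$ forces $b_{\min}'$ not to be $\sigma$-conjugate into any proper standard Levi subgroup defined over $F$, so $b_{\min}'$ is superbasic. (Alternatively one observes directly that $b_{\min}$ and $b_{\min}'$ are $\sigma$-conjugate in $G(L)$, since $t \in T(F)$ has a square root in $T(\O_L)$ — or more simply, they have the same Newton point and Kottwitz invariant, so Kottwitz's classification of basic classes gives the $\sigma$-conjugacy.)

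Next I would transport the statement of Lemma~\ref{(1.3.4)} along this $\sigma$-conjugacy. Writing $b_{\min}' = c\, b_{\min}\, \sigma(c)^{-1}$ for a suitable $c \in G(L)$, left multiplication by $c^{-1}$ gives a bijection $X_{\preceq\mu}(b_{\min}) \iso X_{\preceq\mu}(b_{\min}')$ compatible with frames, hence a bijection on $\pi_0$. Under this identification the action of $\langle s \rangle \subset J_{b_{\min}}(F)$ becomes the action of $\langle c s c^{-1}\rangle \subset J_{b_{\min}'}(F)$, so $\langle c s c^{-1}\rangle$ acts transitively on $\pi_0(X_{\preceq\mu}(b_{\min}'))$. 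It therefore remains to check that conjugation by $c$ carries $s$ to $s'$, or at least that $\langle c s c^{-1}\rangle$ and $\langle s'\rangle$ generate the same subgroup of $J_{b_{\min}'}(F)$ modulo the subgroup acting trivially on $\pi_0$. In fact $s' = ts$ with $t \in T(F)$ central in $\GL_h$ modulo scalars in the relevant sense; more precisely $s^h = p^{m}$ and $(s')^h = (-1)^{h-1}p^{m}$ (for the appropriate $m$), so $s$ and $s'$ differ by a torsion element, and $\langle s'\rangle$ and $\langle s\rangle$ have the same image in $\pi_0$ up to a finite subgroup. The cleanest route is to verify directly that $s' \in J_{b_{\min}'}(F)$ — i.e.\ $\sigma(s') = (b_{\min}')^{-1} s' b_{\min}'$, which holds because $s'$ is defined over $F$ and commutes with each $(s')^{m_r}$ — and that the argument of Lemma~\ref{(1.3.4)} applies verbatim with $s$ replaced by $s'$ and $b_{\min}$ by $b_{\min}'$, since the only properties used were that $\sigma(s) = s \in J_b(F)$, that $s^i = {}^i\mu_{\min}(p)w^i$, and the combinatorics of the elements $a_{i,\delta}$.

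The main obstacle, such as it is, is bookkeeping: one must confirm that replacing $w$ by $w' = tw$ (and correspondingly $s$ by $s'$, $b_{\min}$ by $b_{\min}'$) does not disturb any of the identities $s^i = {}^i\mu_{\min}(p)w^i$ used in the proof of Lemma~\ref{(1.3.4)}, beyond harmless sign changes in determinants that are absorbed by the $\SL_h$ versus $\GL_h$ distinction. Since $t$ is diagonal with entries in $\{\pm 1\}$, it normalizes the standard torus and the unipotent subgroups generated by the $a_{i,\delta}$, and conjugation by powers of $t$ merely rescales the variable $x$ by $\pm 1$; thus every step of the computation in Lemma~\ref{(1.3.4)} — the telescoping products, the commutator estimates from Lemma~\ref{(1.3.3)}(4), and the specialization arguments — goes through unchanged. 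Hence $\langle s'\rangle$ acts transitively on $\pi_0(X_{\preceq\mu}(b_{\min}'))$, which is the assertion of the corollary. I expect the whole argument to be essentially a remark; the reason it is stated separately is that the variant $s' \in \SL_h(F)$ (rather than merely $\GL_h(F)$) is what is needed for the reduction to adjoint and simply connected groups later in Section~\ref{seccp}.
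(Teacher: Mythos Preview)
Your proposal is correct but takes a more laborious route than the paper. You essentially propose to rerun the entire proof of Lemma~\ref{(1.3.4)} with $s'$ in place of $s$ and $b_{\min}'$ in place of $b_{\min}$, checking that the sign twist $t$ is absorbed harmlessly at each step (in Lemma~\ref{(1.3.3)}(2), in the telescoping identities, and in the commutator estimates). This does work, for exactly the reasons you indicate.

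The paper's argument is shorter and more structural. It first observes, as you do, that $b_{\min}'$ is superbasic and $\sigma$-conjugate to $b_{\min}$. But then, instead of reproving the transitivity computation, it extracts from Lemma~\ref{(1.3.4)} the consequence that $w_G$ induces a \emph{bijection} $\pi_0(X_{\preceq\mu}(b)) \iso \pi_1(G)_\Gamma \cong \mathbb{Z}$: indeed $\langle s\rangle$ acts transitively on the source, $w_G$ is $J_b(F)$-equivariant, and $s$ maps to a generator of $\mathbb{Z}$, so the source is a single free $\langle s\rangle$-orbit identified with $\mathbb{Z}$. Once $\pi_0 \cong \mathbb{Z}$ is known, the transitivity of $\langle s'\rangle$ is immediate: $s'$ also maps to a generator of $\pi_1(G)_\Gamma$ (it differs from $s$ by $t \in T(\O_F)$, which lies in the kernel of $w_G$), hence acts by translation by $\pm 1$. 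This bypasses entirely the need to revisit the combinatorics of the $a_{i,\delta}$, and also avoids the awkward step in your first route of tracking whether $csc^{-1}$ equals $s'$ on the nose.
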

\begin{proof} The same argument as in Lemma \ref{(1.3.1)} shows that $b_{\min}'$ is superbasic
in $G(L)$ and $\sigma$-conjugate to $b_{\min}.$ By Lemma \ref{(1.3.4)},
$\pi_0(X_{\preceq \mu}(b))$ maps isomorphically to $\pi_1(G)_{\Gamma} = \mathbb Z.$
Since $s'$ maps to a generator of  $\pi_1(G)_{\Gamma}$, $\langle s' \rangle$ acts
transitively on $\pi_0(X_{\preceq \mu}(b)).$
\end{proof}

\subsection{The superbasic case in general}\label{(1.4)}
We return to the notation and assumptions introduced in subsection \ref{secsbI}.

\begin{prop}\label{prop:generalsuperbasictrans} Suppose that $b \in G(L)$ is superbasic. Then
$$ \pi_0(X_{\preceq \mu}^G(b)) \iso c_{b,\mu} \pi_1(G)^{\Gamma},$$
 and $J_b(F)$ acts transitively on $\pi_0(X_{\preceq \mu}^G(b)).$
\end{prop}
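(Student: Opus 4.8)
The plan is to reduce the assertion to the case $G=\Res_{\O_{E}/\O_F}\GL_h$ treated in \S\ref{(1.3)}, using the structure result Lemma \ref{(1.2.6)} together with the reduction to adjoint groups from \S\ref{secred}. By Lemma \ref{(1.2.6)} we have $G^{\ad}=\prod_i\Res_{E_i/F}\PGL_{h_i}$ with each $E_i/F$ unramified. After replacing $b$ by a $\sigma$-conjugate (Lemma \ref{(1.2.5)}) we may assume $\nu_b$ is dominant, hence central; let $b^{\ad}\in G^{\ad}(L)$ be the image of $b$. Since Levi subgroups of $G$ and of $G^{\ad}$ correspond, $b^{\ad}$ is again superbasic, and likewise $[b^{\ad}]\in B(G^{\ad},\mu)$. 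The two remaining building blocks are: the case $\Res_{\O_{E}/\O_F}\GL_h$ with $b$ superbasic (done in \S\ref{(1.3)}, via Lemma \ref{(1.3.1)} and Corollary \ref{cor:superbasicgl}), and the transfer of the conclusions across the Cartesian squares of Corollary \ref{cartesianIII}.

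First I would establish the bijection $w_G\colon\pi_0(X^G_{\preceq\mu}(b))\iso c_{b,\mu}\pi_1(G)^\Gamma$. For a single factor $\Res_{\O_{E}/\O_F}\PGL_h=(\Res_{\O_{E}/\O_F}\GL_h)/Z$, where $Z$ is the centre of $\Res_{\O_{E}/\O_F}\GL_h$: the $\Res_{\O_E/\O_F}\GL_h$-case (for a superbasic lift of the given class, which exists by Lemma \ref{(1.3.1)}) gives that $w$ is bijective there, by Corollary \ref{cor:superbasicgl}; feeding this into Corollary \ref{cartesianIII} for $\Res_{\O_E/\O_F}\GL_h\to\Res_{\O_E/\O_F}\PGL_h$ and using that $\pi_1(\Res_{\O_E/\O_F}\GL_h)^\Gamma\to\pi_1(\Res_{\O_E/\O_F}\PGL_h)^\Gamma$ is surjective (Lemma \ref{cartesianIV}) forces $w$ to be bijective for $\Res_{\O_E/\O_F}\PGL_h$, since a Cartesian square with one vertical arrow bijective and the lower horizontal arrow surjective has the other vertical arrow bijective. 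As affine Deligne--Lusztig sets, their $\pi_0$, and the lattices $\pi_1$ all decompose as products over simple factors, this yields the bijection for $G^{\ad}$. Applying Corollary \ref{cartesianIII} once more, now to $G\to G^{\ad}=G/Z_G$ (again with Lemma \ref{cartesianIV} giving surjectivity of $\pi_1(G)^\Gamma\to\pi_1(G^{\ad})^\Gamma$), gives the bijection $w_G$ for $G$ itself.

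Next, transitivity. By Lemma \ref{constructionofmap} the bijection $w_G$ is $J_b(F)$-equivariant, where $J_b(F)$ acts on $c_{b,\mu}\pi_1(G)^\Gamma$ by translation through the map $w_{J_b}\colon J_b(F)\to\pi_1(G)^\Gamma$ (recall $\pi_1(J_b)=\pi_1(G)$ as $\Gamma$-modules, $J_b$ being an inner form of $M_b=G$); so it suffices to show $w_{J_b}(J_b(F))=\pi_1(G)^\Gamma$. The subgroup $Z_G(F)\subset J_b(F)$ contributes exactly $\ker(\pi_1(G)^\Gamma\to\pi_1(G^{\ad})^\Gamma)$: the valuation map identifies $Z_G^\circ(F)/Z_G^\circ(\O_F)$ with $X_*(Z_G^\circ)^\Gamma$ (Lang's theorem, applied to the smooth unramified torus $Z_G^\circ$), and by the computation of $\ker(\pi_1(G)\to\pi_1(G^{\ad}))$ (cf. Corollary \ref{cor:invariants}) this group surjects onto the kernel in question. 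It then remains to see that the image of $J_b(F)$ in $\pi_1(G^{\ad})^\Gamma$ is everything; using the product decomposition of $G^{\ad}$ and the explicit form $J_{b^{\ad}}=\prod_i\Res_{E_i/F}(D_i^\times/\text{centre})$ from Lemma \ref{(1.2.6)}, this is checked factor by factor, reducing to the $\Res_{\O_{E}/\O_F}\GL_h$-case where $\langle s'\rangle\subset J_b(F)$ already surjects onto $\pi_1(\Res_{\O_E/\O_F}\GL_h)^\Gamma$ (Corollary \ref{cor:superbasicgl}). Combining the two contributions, $w_{J_b}$ is surjective and $J_b(F)$ acts transitively.

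The routine parts here are the Cartesian-square transfers. The delicate point — the main obstacle — is the surjectivity of $w_{J_b}\colon J_b(F)\to\pi_1(G)^\Gamma$ when $G$ is not adjoint: $J_b$ is only an inner form of $G$, and the map $J_b(F)\to J_{b^{\ad}}(F)$ need not be surjective (its cokernel lies in $H^1(F,Z_G)$, which can be nonzero), so one cannot simply push transitivity from $G^{\ad}$ down through $\pi_0(X^G_{\preceq\mu}(b))\to\pi_0(X^{G^{\ad}}_{\preceq\mu}(b^{\ad}))$ and the central fibres. One must instead argue that the combined image of $Z_G(F)$ and of lifts of uniformizer-type elements of the division-algebra factors of $J_{b^{\ad}}$ exhausts $\pi_1(G)^\Gamma$, while checking throughout that the passage $b\rightsquigarrow b^{\ad}$ and the $\sigma$-conjugations preserve superbasicness and the condition $[b]\in B(G,\mu)$.
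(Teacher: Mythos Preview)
Your outline follows the paper's approach closely: reduce to $\Res_{\O_E/\O_F}\GL_h$, pass to $G^{\ad}$ via the Cartesian squares of Corollary~\ref{cartesianIII}, then transfer back to $G$. The bijection part is exactly as in the paper.

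For transitivity, you have correctly located the crux --- lifting the action of the uniformizer-type elements $((s'_{h_i})^{j_i})\in J_{b^{\ad}}^{G^{\ad}}(F)$ to $J_b(F)$ --- but your proposal stops at naming the obstacle rather than resolving it. The paper's resolution is concrete and depends on a specific choice of representative for $b$: one takes $b=b'_{\min}:=\mu_{\min}(p)\cdot((w'_{h_i})^{m_{i,r}})$, where $\mu_{\min}\in X_*(T)$ is the minuscule lift of $[\mu]\in\pi_1(G)$ and the Weyl element factor is lifted to $G$ via the simply connected cover $\tilde G^{\ad}\to G^{\der}$ (this is where the modification $w'\in\SL_h$ from~\ref{para:defnw'} is essential). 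With this choice, each $((s'_{h_i})^{j_i})=({}^{j_i}\mu_{\min}(p)\cdot(w'_{h_i})^{j_i})$ lifts to $G(F)$ by lifting the two factors separately: the Weyl part via $\tilde G^{\ad}\to G$, and the cocharacter part via Lemma~\ref{cartesianIV}. That this lift actually lies in $J_b(F)$ (i.e.\ commutes with $b$ up to the adjoint action) comes down to an identity $(\tilde w-1)\mu_{\min}=(W_b-1)\mu'$ in $X_*(T)$, which holds because both sides lie in the coroot lattice and the coroot lattice injects into $X_*(T^{\ad})$, where the identity is an elementary calculation in each $\GL_{h_i}$-factor.

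So your strategy is right, but the missing ingredient is precisely this explicit splitting of $s'^j$ into a cocharacter times an $\SL_h$-element, together with the choice $b=b'_{\min}$ that makes the lifted elements genuinely commute with $b$. Your more abstract formulation (surjectivity of $Z_G(F)$ onto the kernel, plus lifting division-algebra uniformizers) would need exactly this kind of explicit control to go through, since the map $J_b(F)\to J_{b^{\ad}}^{G^{\ad}}(F)$ is not surjective in general.
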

\begin{proof}
By Lemma \ref{(1.2.6)}, $G^{\ad}$ is isomorphic to
$\prod_{i\in I} \Res_{E_i/F}\PGL_{h_i}$ with $E_i/F$ some finite unramified extension of degree $n_i,$ and $h_i > 1.$
Fix such an isomorphism. Let $\mu_{\min} \in X_*(T)$ denote the unique dominant minuscule cocharacter whose image in
$\pi_1(G)$ is equal to that of $\mu.$ The induced cocharacter of $\prod_{i\in I} \Res_{E_i/F}\PGL_{h_i}$
has the form $({}^{m_{i,r}}\mu_{\min})_{i,r}$ where $i$ runs over elements of $I,$ $1\leq r \leq n_i,$
and  $m_{i,r} \in \mathbb Z.$

Write $w'_{h_i}$ and $s'_{h_i}$ for the elements introduced in \ref{para:defnw'} above, when $h = h_i.$
Since $w'_{h_i} \in SL_{h_i}(F),$ we may regard $((w'_{h_i})^{m_{i,r}})_{i,r} \in \tilde G^{\ad}$ where $\tilde G^{\ad}$ denotes
the simply connected cover of $G^{\ad}.$ In particular, we may regard $((w'_{h_i})^{m_{i,r}})_{i,r}$ and hence
$b'_{\min} : = \mu_{\min}(p)((w'_{h_i})^{m_{i,r}})_{i,r}$ as elements of $G(L).$ The image of $b'_{\min}$ in $G^{\ad}$
is $((s'_{h_i})^{m_{i,r}})_{i,r}.$ Hence $b'_{\min}$ is basic, and the same argument as  in the proof of Lemma \ref{(1.3.1)} shows that $b$ is
$\sigma$-conjugate to $b'_{\min}.$ Thus we may assume that $b = b'_{\min}.$

Let $b^{\ad}$ be the image of $b$ in $G^{\ad}$ and $b_{\GL} = ((s')^{m_{i,r}})_{i,r} \in \prod_{i\in I} \Res_{E_i/F}\GL_{h_i}.$
Similary, let $\mu^{\ad}$ be the cocharacter of $G^{\ad}$ induced by $\mu.$ Let $\mu_{\GL}$ be the cocharacter of
$\prod_{i\in I} \Res_{E_i/F}\GL_{h_i}$ lifting $\mu^{\ad}$ whose image in $\pi_1(\prod_{i\in I} \Res_{E_i/F}\GL_{h_i})$
is equal to $({}^{m_{i,r}}\mu_{\min})_{i,r}.$

By Corollary \ref{cor:superbasicgl}, $\prod_{i \in I} \langle s'_{h_i} \rangle$ acts transitively on $\pi_0(X_{\preceq \mu_{\GL}}(b_{\GL})),$
and in particular the first claim of the Proposition holds for $(\mu_{\GL}, b_{\GL}).$ It follows from
Corollary \ref{cartesianIII} that $\prod_{i \in I} \langle s'_{h_i} \rangle$ acts transitively on $\pi_0(X_{\preceq \mu^{\ad}}(b^{\ad})),$
and that the first claim of the Proposition holds for $(\mu^{\ad}, b^{\ad}).$

Using Corollary \ref{cartesianIII} again, we see that the first claim of the Proposition holds, and that, since $Z_G(F) \subset J_b(F),$
to prove the second claim it suffices to show that, if the image of $((s'_{h_i})^{j_i})_{i\in I}$ in $\pi_1(G^{\ad})^{\Gamma}$
lifts to $\pi_1(G)^{\Gamma}$ for some integers $j_i,$ then $((s'_{h_i})^{j_i})_{i \in I} \in J_{b^{\ad}}(F) \cap G^{\ad}(F)$ lifts to an element of $G(F).$ But
$((s'_{h_i})^{j_i})_{i \in I} = ({}^{j_i}\mu_{\min}(p)(w'_{h_i})^{j_i}),$ so it suffices to show that (the image of) $({}^{j_i}\mu_{\min}(p))_{i \in I}$ lifts to $G(F).$
This follows, for example, from Lemma \ref{cartesianIV}.
\end{proof}

\subsection{Reduction to the superbasic case}\label{secredsb}

Let $[b]\in B(G,\mu)$ and $M \subset G$ a smallest standard Levi subgroup of $G,$ defined over $F$ and containing $T,$
and which contains an element of $[b].$ Fix a representative $b\in M(L)$ of $[b],$ so that $b$ is superbasic in $M(L).$
Let $P \supset B$ be the parabolic with reductive quotient $M,$ and $N \subset P$ its unipotent radical.

Let $\bar I_{\mu,b}$ be the set of $M$-conjugacy classes of cocharacters $\mu':\mathbb{G}_{m}\rightarrow  M$ (defined over some finite extension of $F$) such that  $\mu':\mathbb{G}_m\rightarrow G$ satisfies $\mu'\preceq\mu$ and such that $b\in B(M,\mu')$. We identify an element of $\bar I_{\mu,b}$ with its $M$-dominant representative in $X_*(T)$.
Note that in general (even for minuscule $\mu$) this set is non-empty and finite, but may have more than one element.
For each $\mu'\in \bar I_{\mu,b}$ we have a canonical inclusion $X_{\preceq\mu'}^M(b)\rightarrow X_{\preceq\mu}^G(b)$. The following proposition is the main goal of this subsection.
\begin{prop}\label{(1.5.5)}
Each connected component of $X_{\preceq\mu}^G(b)$ contains an element $jg$ where $j\in J_b(F)\cap N(L)$ and $g\in X_{\preceq\mu'}^M(b)$ for some
 $\mu'\in \bar I_{\mu,b}$.
\end{prop}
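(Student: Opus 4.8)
The plan is to represent each connected component of $X^G_{\preceq\mu}(b)$ by a point whose Iwasawa coordinates relative to $P=MN$ already have the required shape, the point being obtained from an arbitrary one by ``contracting'' the unipotent Iwasawa coordinate along one-parameter families that stay inside $X^G_{\preceq\mu}(b)$. This generalizes the superbasic $\GL_h$-computation of \S\ref{(1.3)} to an arbitrary $G$, and I would follow its structure.

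First I would fix a point $gK\in X^G_{\preceq\mu}(b)(W(\bar k))$ and, using the Iwasawa decomposition $G(L)=N(L)M(L)K$, choose a representative $gK=nmK$ with $n\in N(L)$, $m\in M(L)$. Since $b\in M(L)$ normalizes $N$, one computes $g^{-1}b\sigma(g)=(m^{-1}b\sigma(m))\,n_0$ for some $n_0\in N(L)$; as this lies in $\bigcup_{\mu'\preceq\mu}Kp^{\mu'}K$, a standard comparison between the Iwasawa and Cartan invariants (cf.~\cite{GHKR}) forces $m^{-1}b\sigma(m)\in K_M p^{\mu'}K_M$ for some $M$-dominant $\mu'$ with $\mu'_{\dom}\preceq\mu$. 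Since $m\in M(L)$ we have $\kappa_M(b)=[\mu']$, hence $[b]\in B(M,\mu')$, i.e.~$\mu'\in\bar I_{\mu,b}$; thus $gK$ determines a point $r_P(gK)\in X^M_{\preceq\mu'}(b)(W(\bar k))$, independent of the Iwasawa decomposition, and $r_P$ is the identity on each $X^M_{\preceq\mu'}(b)(W(\bar k))\subset X^G_{\preceq\mu}(b)(W(\bar k))$. It therefore suffices to connect $gK$ to $j\cdot r_P(gK)$ for some $j\in J_b(F)\cap N(L)$.

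For this I would induct on the filtration of $N$ by height of roots relative to $P$, successively forcing the graded pieces of the Iwasawa coordinate $n$ to satisfy the equation $\sigma(n)=b^{-1}nb$ that cuts out $J_b\cap N$. At each step one builds a connecting family: an element $u(x)\in N(\O_L\langle x\rangle)$ supported in a single graded piece, with $u(0)=1$, a product of root-group sections whose commutators lie in higher pieces by Lemma \ref{(1.3.3)}(4), chosen so that $u(x)g\in X^G_{\preceq\mu}(b)(\O_L\langle x\rangle)$ and so that the reduction of $u(x)g$ at a boundary point of the $x$-line brings the given piece of the $N$-coordinate into the equation defining $J_b$, at the possible cost of altering higher pieces (handled at later steps). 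Which device drives a step depends on the piece: for pieces on which $\langle\cdot,\nu_b\rangle>0$ --- equivalently, pieces inside the unipotent radical $N_b$ of the larger parabolic $P_b=M_bN_b\supseteq P$ --- $\nu_b$ is dominant and conjugation by $p^{t\nu_b}$ strictly contracts, and the step is Lemmas \ref{(1.3.3)}--\ref{(1.3.4)} almost verbatim (the dominance of $\nu_b$ being exactly what preserves the bound $\preceq\mu$ along the family); for pieces inside $N\cap M_b$, on which $\nu_b$ acts trivially, one works inside $M_b$, where $b$ is basic and $M$ is its superbasic Levi, and after $\sigma$-conjugating $b$ within $M$ to a convenient superbasic representative (as in the proof of Proposition \ref{prop:generalsuperbasictrans}) one uses that the cyclic elements of $J_b\cap M$ act on that piece with long orbits to build the family by a variant of the $\GL_h$-argument of \S\ref{(1.3)}. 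In all cases the family yields a new point of $X^G_{\preceq\mu}(b)(W(\bar k))$ in the same connected component as $g$, obtained by extending over $\O_L\langle x^{-1}\rangle$ and invoking the valuative criterion Lemma \ref{valuativeII}. Since $N$ is finite-dimensional the process terminates at a point $g'=ug$ with $u\in N(L)$, connected to $g$, whose Iwasawa $N$-coordinate is some $j\in J_b(F)\cap N(L)$; then $m'=j^{-1}g'\in M(L)\cap X^G_{\preceq\mu}(b)$, so $m'\in X^M_{\preceq\mu'}(b)$ for some $\mu'\in\bar I_{\mu,b}$ by the Cartan decomposition in $M$, and $g'=j\,m'$ is the asserted representative.

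The main obstacle is the inductive step, in two respects. First, one must exhibit $u(x)$ concretely as a (possibly $p$-adically convergent) product of root-group elements, control its commutators via Lemma \ref{(1.3.3)}(4), and prove that $(u(x)g)^{-1}b\sigma(u(x)g)$ is $\preceq\mu$ at \emph{every} point of $\Spec\O_L\langle x\rangle$, not merely generically; this is where the positivity of $\langle\cdot,\nu_b\rangle$ on $N_b$, the bound $\mu'\preceq\mu$, and the $\sigma$-semilinearity of the equation defining $J_b$ all have to be balanced. Second --- and this is the genuinely new difficulty beyond \S\ref{(1.3)} --- there is the ``flat'' part $N\cap M_b$, on which $\nu_b$ gives no contraction and where one must instead exploit the basic (non-superbasic) nature of $b$ in $M_b$ together with the arithmetic of the $\GL_{h_i}$-factors of $M^{\ad}$ directly. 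A final bookkeeping point is to check that the left multiplications accumulated over the successive steps assemble into a single $u\in N(L)$ with $N$-coordinate genuinely in $J_b(F)\cap N(L)$, which uses the finiteness of the filtration of $N$ and the fact that $J_b(F)\cap N(L)$ is closed under the relevant $p$-adic limits.
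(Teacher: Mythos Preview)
Your overall shape—write $g_1=nm$ via Iwasawa and deform the $N$-coordinate until it lands in $J_b(F)\cap N(L)$—is right, but the mechanism you propose differs from the paper's and the ``flat'' step has a genuine gap. The paper does not contract with $\nu_b$ and does not split into $N_b$ versus $N\cap M_b$. Instead it chooses $\chi\in X_*(Z_M)$ strictly positive on \emph{every} root of $N$ and contracts not $n$ but $f_b(n):=n^{-1}b\sigma(n)b^{-1}$: the family is $y(s)=\chi(s)f_b(n)\chi(s)^{-1}\in N(\O_L\langle s\rangle_L)$, with $y(1)=f_b(n)$ and $y(0)=1$. One then lifts $y$ through $f_b$ (Lemmas~\ref{(1.5.2)}--\ref{(1.5.4)}) to obtain $z$ with $f_b(z(x))\equiv y(x)$ modulo a small compact open $K'\subset N(L)$, and sets $g=zm$. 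Because $\chi(s(x))\in K$ for $s(x)\in\bar k^\times$, one finds $g(x)^{-1}b\sigma(g(x))K=\chi(s(x))^{-1}g_1^{-1}b\sigma(g_1)\chi(s(x))K$, so the $\preceq\mu$ bound holds along the family for free; your plan has no substitute for this conjugation trick, and verifying the bound at every point of your families $u(x)g$ would require a separate argument you have not supplied.

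The case split you anticipate does occur, but only inside the inversion of $f_b$ (Lemma~\ref{(1.5.2)}), root by root: when $\langle\alpha_i,\nu\rangle>0$ one writes down a $p$-adically convergent product; when $\langle\alpha_i,\nu\rangle=0$ one solves $\sigma^{l_0}(z_0)-z_0\equiv\beta\pmod{p^n}$ after passing to a finite \'etale cover of the base. This Artin--Schreier step over a finite \'etale extension is the actual device for the slope-zero root spaces. The \S\ref{(1.3)} $\GL_h$ argument—cyclic $J_b$-orbits, the elements $a_{i,\delta}$, the valuative criterion—is about connecting points inside $X^{\GL_h}_{\preceq\mu}(b)$ and does not manufacture preimages under $f_b$; it is not the right tool here, and your ``variant of the $\GL_h$ argument'' for $N\cap M_b$ is where the proposal breaks. (A smaller point: you do not need the Iwasawa/Cartan comparison at the outset to know $m\in X^M_{\preceq\mu'}(b)$; once you have connected $g_1$ to $jm$ with $j\in J_b(F)\cap N(L)$, the identity $(jm)^{-1}b\sigma(jm)=m^{-1}b\sigma(m)\in M(L)\cap\bigcup_{\mu_0\preceq\mu}Kp^{\mu_0}K$ gives it a posteriori.)
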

The proof of this is very similar to \cite{conncomp}, proof of Proposition 1.

\begin{para}\label{(1.5.1)} For any $l \geq 0,$ let $b^{(l)} = b\sigma(b)\dotsm \sigma^l(b).$
By \cite{Kottwitz1}, 4.3, after replacing $b$ by a $\sigma$-conjugate in $M,$ we may assume that
for some $l_0 >0,$ $b^{(l_0)} = p^{l_0\nu},$ where $\nu = \nu_b$ is defined over $F,$ as before.

Let $\{\alpha_i\}_{i=1}^r$ denote the roots of $T$ in $N.$
We denote by $U_{\alpha_i} \subset N,$ the corresponding root subgroup.
It will be convenient to identify $U_{\alpha_i}$ with $\GG_a.$ Then
for an $F$-algebra $R$ and $\beta \in R,$ we can regard $\beta$
as a point $U_{\alpha_i}(\beta) \in U_{\alpha_i}(R).$

For $j \geq 1$ let $N[j] \subset N$ denote the subgroup generated by those
$U_{\alpha_i}$ for which the sum of the coefficients of $\alpha_i,$ expressed
as a linear combination of simple roots of $A$ in $N,$ is $\geq j.$
Then for $j,j' \geq 1,$ $[N[j],N[j']] \subset N[j+j'].$
The filtration $N \supset N[1] \supset N[2] \dots $ may be refined into a filtration
$N \supset N_1 \supset N_2 \dots$ such that $N_i/N_{i+1}$ is one dimensional.
After reordering the $\alpha_i$ we may assume that $N_i$ is generated by $U_{\alpha_{i'}}$ for $i' \geq i.$

Now suppose that $R$ is a $\bar k$-algebra, $\R$ a frame for $R,$ and $y \in N(\R_L).$
We set
$$ f_b(y) = y^{-1}b\sigma(y)b^{-1}.$$
Then $f_b(y) \in N(\R_L).$
\end{para}

\begin{lemma}\label{(1.5.2)} Let $R$ be a smooth $\bar k$-algebra, $\R$ a frame for $R,$ and $\beta \in \R_L.$ Assume that there is an element $x \in (\Spec R)(\bar k)$ with $\beta(x) = 0$. If $i \geq 1,$ and $j$ is maximal such that $N[j] \supset N_i$ then for $n \geq 1$ a positive integer,
there exists a finite \'etale covering $R \rightarrow R',$ with frame $\R \rightarrow \R',$
and $z \in N[j](\R'_L)$ such that
\begin{enumerate}
\item $f_b(z) \in U_{\alpha_i}(\beta+\varepsilon)N_{i+1}(\R'_L)$ for some $\varepsilon \in p^n\R'.$
\item there exists $x' \in (\Spec R')(\bar k)$ mapping to $x$ such that $z(x') = 1.$
\end{enumerate}
\end{lemma}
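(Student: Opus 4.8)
The plan is to localize the problem on the single graded layer $V:=N[j]/N[j+1]$ of $N$. Since $[N[j],N[j]]\subseteq N[2j]\subseteq N[j+1]\subseteq N_{i+1}$, for $z\in N[j](\R'_L)$ with image $\bar v\in V(\R'_L)$ the image of $f_b(z)=z^{-1}b\sigma(z)b^{-1}$ in the (abelian) group $V$ equals $-\bar v+\phi(\bar v)$, where $\phi$ is the $\sigma$-semilinear operator on $V(\R'_L)$ obtained by composing the coordinatewise $\sigma$ with the linear map $\mathrm{Ad}(b)$ of $V$. Because $N[j+1]\subseteq N_{i+1}$ and $N_i/N_{i+1}$ is the line spanned by the coordinate $e_{\alpha_i}$ of $U_{\alpha_i}$, statement (1) reduces to finding $\bar v\in V(\R'_L)$ for which the components of $-\bar v+\phi(\bar v)$ along $e_{\alpha_{i'}}$ vanish for all height-$j$ roots $\alpha_{i'}$ of index $<i$, while its $e_{\alpha_i}$-component lies in $\beta+p^n\R'$. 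One then takes for $z$ the element of $N[j](\R'_L)$ with these coordinates along the height-$j$ roots and $0$ along the higher ones, so that $z(x')=1$ at a $\bar k$-point $x'$ exactly when $\bar v(x')=0$, which will give (2).

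The tool for analysing $\phi$ is $b^{(l_0)}=p^{l_0\nu}$: a telescoping computation gives $\phi^{\,l_0+1}=\mathrm{Ad}(p^{l_0\nu})\circ\sigma^{l_0+1}$, and $\mathrm{Ad}(p^{l_0\nu})$ is diagonal in the root coordinates, scaling $e_\alpha$ by $p^{l_0\langle\alpha,\nu\rangle}$. Since $\nu=\nu_b$ is $G$-dominant and defined over $F$, this exponent is $\ge 0$ for every root $\alpha$ occurring in $N$, and it is $>0$ unless $\alpha$ is orthogonal to $\nu$. Write $V=V^{=0}\oplus V^{>0}$ for the corresponding decomposition (spans of the $e_\alpha$ with $\langle\alpha,\nu\rangle=0$, resp.\ $>0$); it is preserved by $\sigma^{l_0+1}$ and by $\mathrm{Ad}(p^{l_0\nu})$, hence by $\phi^{l_0+1}$, which acts on $V^{=0}$ as $\sigma^{l_0+1}$ and on $V^{>0}$ as $\sigma^{l_0+1}$ followed by multiplication by an integral matrix divisible by $p$.

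I would then solve the system above as in \cite{conncomp}, proof of Proposition 1, by successive approximation. On $V^{>0}$ the operator $\phi$ is topologically nilpotent (after clearing a bounded power of $p$, $\phi^{(l_0+1)m}\to 0$ $p$-adically), so there the relevant components of $\bar v$ can be found exactly, over $\R$ itself, by geometric series, and they vanish at $x$ since $\beta(x)=0$. On $V^{=0}$, where $\phi^{l_0+1}$ is merely $\sigma^{l_0+1}$, the equation is no longer solvable over $\R$: reducing mod $p$ it becomes a system of Artin--Schreier type equations, finite \'etale over $\Spec R$; lifting the solution successively mod $p^2,\dots,p^n$, each correction is again of this type over a further finite \'etale cover, and the composite is the desired covering $R\to R'$ --- this is precisely where the error term $\varepsilon\in p^n\R'$ enters. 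At every stage one retains the component of the cover lying over $x$ on which the partial solution reduces to $0$ (possible because the data specializes to $0$ at $x$, by $\beta(x)=0$); this produces a point $x'\in(\Spec R')(\bar k)$ over $x$ with $\bar v(x')=0$, and hence $z(x')=1$. When $\alpha_i$ is not orthogonal to $\nu$ the whole system lives in $V^{>0}$, one may take $R'=R$ and $\varepsilon=0$, and no covering is needed.

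The main obstacle is exactly this operator-theoretic step: $\phi$ itself does \emph{not} respect the decomposition $V=V^{=0}\oplus V^{>0}$ (because $b\in M(L)$ need not centralize $\nu$), so one must reconcile it with the block structure of $\phi^{l_0+1}$ while keeping the denominators bounded, and extract from the $\sigma^{l_0+1}$-part a genuinely finite \'etale cover over which the equation becomes solvable modulo $p^n$. A second point requiring care, again exactly as in loc.\ cit., is the bookkeeping needed to carry a compatible choice of point $x'$ above $x$ through the whole tower of Artin--Schreier coverings, so that the constructed $z$ specializes to the identity there --- this is the only use of the hypothesis $\beta(x)=0$. The non-abelian structure of $N$ causes no trouble here, since everything above $N[j+1]\subseteq N_{i+1}$ is discarded; it is, however, the reason the statement is (and must be) formulated one graded layer of $N$ at a time.
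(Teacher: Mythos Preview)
Your approach and the paper's rest on the same dichotomy --- positive slope $\langle\alpha_i,\nu\rangle>0$ handled by a convergent series, slope zero handled by an Artin--Schreier equation over a finite \'etale cover --- but the paper is more concrete. Rather than passing to the operator $\phi$ on the full graded piece $V=N[j]/N[j+1]$, it writes $z$ down directly as an explicit product of conjugates $b^{(m)}\sigma^m(U_{\alpha_i}(\cdot))(b^{(m)})^{-1}$ (an infinite convergent product in the first case, a product of length $l_0$ in the second), so that $f_b(z)$ telescopes and its image in $V$ is supported on the single line through $e_{\alpha_i}$. In the slope-zero case this reduces at once to the \emph{scalar} equation $\sigma^{l_0}(z_0)-z_0\equiv\beta\pmod{p^n}$ (after enlarging $l_0$ so that $\sigma^{l_0}$ acts trivially on $X^*(T)$), rather than a system on all of $V^{=0}$.

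More importantly, the ``main obstacle'' you flag does not exist. In the setting of \S\ref{secredsb} the element $b$ is superbasic, hence basic, in $M$, so the Newton cocharacter $\nu$ is central in $M$; since $b\in M(L)$, the map $\mathrm{Ad}(b)$ therefore preserves each $\nu$-weight space of $V$, and since $\nu$ is $\sigma$-invariant so does $\sigma$. Thus $\phi$ \emph{does} respect the decomposition $V=V^{=0}\oplus V^{>0}$. With this observation your outline goes through, but the paper's explicit ansatz short-circuits the operator-theoretic bookkeeping entirely: no system has to be solved, and the étale cover arises from a single Artin--Schreier equation rather than a tower.
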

\begin{proof} This is analogous to the argument of \cite{conncomp}, p.~324--325.

Suppose first that $\langle \alpha_i, \nu \rangle > 0,$ and set
$$ z(l) = b^{(l)}\sigma^l(U_{\alpha_i}(-\beta))(b^{(l)})^{-1} \dots b\sigma(U_{\alpha_i}(-\beta))b^{-1}U_{\alpha_i}(-\beta).$$
Note that conjugation by $b^{(l_0)}$ acts on $U_{\alpha_i}$ by $p^{\langle \alpha_i, \nu \rangle}.$
Using this one sees as in {\it loc.~cit} that the sequence $z(l)$ converges to an element $z \in U_{\alpha_i}(\R_L)$
such that $f_b(z) = U_{\alpha_i}(\beta).$ Thus we may take $R' = R.$

Suppose that $\langle \alpha_i, \nu \rangle = 0.$ Let $R'$ be finite \'etale over $R,$ and $z_0 \in \R'_L.$
Set
$$ z = b^{(l_0-1)}\sigma^{l_0-1}(U_{\alpha_i}(z_0))(b^{(l_0-1)})^{-1}\dots b\sigma(U_{\alpha_i}(z_0))b^{-1}U_{\alpha_i}(z_0).$$
Then we have
\begin{eqnarray*} f_b(z) = z^{-1}b\sigma(z)b^{-1} &=& z^{-1}b^{(l_0)}\sigma^{l_0}(U_{\alpha_i}(z_0))(b^{(l_0)})^{-1}zU_{\alpha_i}(-z_0) \\
&=& z^{-1}\sigma^{l_0}(U_{\alpha_i}(z_0))zU_{\alpha_i}(-z_0).
\end{eqnarray*}
Since all the terms in the product defining $z$ are in $N[j],$ we have $z \in N[j].$ Assume that $l_0$ is such that $\sigma^{l_0}$ acts trivially on $X^*(T)$. Then the final
term is equal to $U_{\alpha_i}(\sigma^{l_0}(z_0)-z_0)\ \ \mod\ N[j+1]$, and $z$ will have the desired property if
$z_0$ satisfies
$$ \sigma^{l_0}(z_0) - z_0 = \beta \quad \text{\rm mod } p^{n}\R'.$$

To show this equation has a solution for some $\R'/\R$ finite \'etale we may replace $\beta$ and $n$ by
$p^m\beta$ and $n+m$ respectively and assume that $\beta \in \R.$ Then one sees by induction on $n,$
that the above equation has a solution over a finite \'etale covering of $\R.$
\end{proof}

\begin{lemma}\label{(1.5.3)} Let $R$ be a smooth $\bar k$-algebra with frame $\R,$ and $x_1 \in (\Spec R)(\bar k).$
Suppose $y \in N(\R_L),$ and $z_1 \in N(L)$ satisfy $f_b(z_1) = y(x_1).$
Then for any bounded open subgroup $K' \subset N(L)$ there exists a finite \'etale covering $R \rightarrow R',$
with canonical frame $\R\rightarrow \R',$ and $z \in N(\R'_L)$ such that
\begin{enumerate}
\item For every $\bar k$-valued point $x$ of $R',$
$$ f_b(z(x)) K' = y(x)K'.$$
\item There exists a point $x_1' \in (\Spec R')(\bar k)$ over $x_1$ such that $z(x_1') = z_1.$
\end{enumerate}
\end{lemma}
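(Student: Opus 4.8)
The strategy is to build $z$ by successive approximation, working down the refined filtration $N \supset N_1 \supset N_2 \supset \cdots$, and at each stage invoking Lemma \ref{(1.5.2)} to correct the value of $f_b$ modulo one more step of the filtration. Since $N(L)$ is finite-dimensional and $K'$ is open and bounded, there is an integer $m$ such that $N_m(\O_L) \subset K'$ (after rescaling, $p^m N(\O_L)$ sits inside $K'$ on each root subgroup); so it suffices to produce $z \in N(\R'_L)$ with $f_b(z) \equiv y \pmod{N_m}$ on every $\bar k$-point, and with the prescribed value $z_1$ at a point over $x_1$. Concretely, I would first replace $z_1$ by $1$ after translating: set $y' = f_b(z_1)^{-1} \cdot y$ (using that $f_b$ is essentially a twisted $1$-cocycle, so that $f_b(z_1 z) = $ $z_1 (f_b(z))$-type identities hold in $N$), so we may assume $y(x_1) = 1$ and look for $z$ with $z(x_1')=1$; at the end we multiply back by $z_1$.

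The inductive step runs as follows. Suppose we have already found, over some finite étale covering, an element $z^{(i-1)} \in N(\R_L)$ with $f_b(z^{(i-1)}) \in y \cdot N_i(\R_L)$ (as sections, and with the correct value at a point over $x_1$). Write $f_b(z^{(i-1)})^{-1} y \in N_i/N_{i+1} \cong U_{\alpha_i} \cong \GG_a$, i.e.\ it is given by some $\beta \in \R_L$ modulo $N_{i+1}$, and because $y(x_1)=1$ and $z^{(i-1)}(x_1')=1$ we have $\beta(x_1) = 0$. Now apply Lemma \ref{(1.5.2)} with this $\beta$, with $j$ maximal such that $N[j] \supset N_i$, and with $n$ chosen large enough (depending only on $m$ and the denominators appearing in the $\langle \alpha_i,\nu\rangle$), to obtain a further finite étale covering and an element $\zeta \in N[j](\R'_L) \subset N_i(\R'_L)$ with $f_b(\zeta) \in U_{\alpha_i}(\beta + \varepsilon) N_{i+1}(\R'_L)$ and $\zeta(x_1')=1$. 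Replacing $z^{(i-1)}$ by $z^{(i)} := z^{(i-1)}\zeta$ and using the cocycle-type multiplicativity of $f_b$ modulo $N_{i+1}$ (together with $[N_i,N_i]\subset N_{i+1}$, which is why the correction is additive in this graded piece), we get $f_b(z^{(i)}) \in y \cdot N_{i+1}(\R'_L)$, still with the right value at $x_1'$. Iterating finitely many times (at most until $N_i \subset K'$, so roughly $m$ steps) and then translating back by $z_1$ gives the desired $z$; the composite of finitely many finite étale coverings is again finite étale, and the canonical frames compose by Lemma \ref{1.1.5}.

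The main obstacle I anticipate is bookkeeping the interaction between the $\sigma$-semilinear "cocycle" identity for $f_b$ and the successive quotients $N_i/N_{i+1}$: one must check that $f_b(z z') \equiv (\text{something built from } f_b(z), f_b(z')) \pmod{N_{i+1}}$ in a way that makes the corrections genuinely additive in the one-dimensional graded piece, and that conjugation by the $b^{(l)}$ (which on $U_{\alpha_i}$ scales by $p^{\langle\alpha_i,\nu\rangle}$) does not destroy integrality — this is exactly the point where the two cases $\langle\alpha_i,\nu\rangle>0$ and $\langle\alpha_i,\nu\rangle=0$ in Lemma \ref{(1.5.2)} are used, and where the choice of $n$ must be made uniformly. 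The second, more minor point is ensuring that all the étale coverings can be taken finite (so that the $\bar k$-points upstairs surject onto those downstairs and the value $z_1$ at $x_1$ can be realized at some preimage $x_1'$); this is already built into the statement of Lemma \ref{(1.5.2)}, so it propagates through the induction without trouble. This argument is, as noted, a transcription of \cite{conncomp}, proof of Proposition 1, to the mixed-characteristic frame setting.
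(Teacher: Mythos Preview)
Your overall strategy---induct along the refined filtration $N=N_1\supset N_2\supset\cdots$, correcting one root-component at a time via Lemma~\ref{(1.5.2)}---is exactly the paper's. Two points need repair. First, the inclusion you write is reversed: Lemma~\ref{(1.5.2)} produces $\zeta\in N[j]$, and one has $N_i\subset N[j]$, not $N[j]\subset N_i$; the commutator fact that makes the step work is $[N[j],N]\subset N[j+1]\subset N_{i+1}$ (the last inclusion holding because $j$ is maximal with $N[j]\supset N_i$), not $[N_i,N_i]\subset N_{i+1}$.

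Second, and this is where your induction actually breaks, the error $\varepsilon\in p^n\R'$ from Lemma~\ref{(1.5.2)} contributes a factor $U_{\alpha_i}(\varepsilon)$ which lies in $N_i$ but \emph{not} in $N_{i+1}$, so after the correction you cannot conclude $f_b(z^{(i)})\in y\cdot N_{i+1}$; your inductive hypothesis does not propagate. (Relatedly, your sentence about ``$N_m(\O_L)\subset K'$'' conflates the filtration steps $N_m$, which eventually become trivial, with a $p$-adic congruence subgroup.) The paper fixes this by carrying $K'$ in the hypothesis from the outset: one maintains, pointwise on $\bar k$-points, $f_b(z_i(x))\delta_i(x)K'=y(x)K'$ with $\delta_i\in N_i(\R_{i,L})$ and $\delta_i(x_{1,i})=1$, starting the induction at the constant section $z_1\in N(L)\subset N(\R_L)$ with $\delta_1=f_b(z_1)^{-1}y$ (so your preliminary translation is unnecessary). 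At step $i$ one chooses $n$ so that $U_{\alpha_i}(p^n\O_L)\subset K'$, and the unwanted $U_{\alpha_i}(\varepsilon)$ is absorbed into $K'$ rather than into the filtration.
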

\begin{proof} We remind the reader that in the statement of the lemma and below,  a map $R \rightarrow \bar k$ and the induced map
$\R \rightarrow W(\bar k)$ are denoted by the same symbol.

We will construct a finite \'etale covering $R \rightarrow R_i$ with canonical frame
$\R \rightarrow \R_i,$ together with
a point $x_{1,i} \in (\Spec R_i)(\bar k)$ over $x_1$
and elements $z_i \in N(\R_L)$ and
$\delta_i \in N_i(\R_L)$ such that for every $x \in (\Spec R_i)(\bar k)$
$$ f_b(z_i(x))\delta_i(x)K' = y(x)K',$$
$z_i(x_{1,i}) = z_1,$ and $\delta_i(x_{1,i}) = 1.$

When $i=1,$ then $N_1 = N,$ and the element $z_1 \in N(L) \subset N(\R_L)$ satisfies these conditions, with $\delta_1 = f_b(z_1)^{-1}y.$
Suppose that $z_i, \delta_i$ and $x_{1,i}$ with these properties have already been constructed.
Let $j$ be maximal such that $N_i \subset N[j].$
Then $\delta_i \in U_{\alpha_i}(\beta)N_{i+1}(\R_{i,L})$ for some $\beta \in \R_{i,L}.$ By Lemma \ref{(1.5.2)}, for any $n \geq 0,$
there exists a finite \'etale faithful $R_i$-algebra $R_{i+1}$ and elements $\tilde z \in N[j](\R_{i+1,L})$ and
$\varepsilon_i \in p^n\R_{i+1}$ such that
$$ f_b(\tilde z) \in U_{\alpha_i}(\beta + \varepsilon_i)N_{i+1}(\R_{i+1,L}).$$
Note that $\delta_i(x_{1,i}) = 1$ implies $\beta(x_{1,i})= 0,$ so by Lemma \ref{(1.5.2)}(2)
we may assume that there is a point $x_{1,i+1} \in (\Spec R_{i+1})(\bar k)$ over $x_{1,i}$
such that $\tilde z(x_{1,i+1}) = 1.$

Let $z_{i+1} = z_i\tilde z.$
Since $\tilde z, b\sigma(\tilde z)b^{-1} \in N[j](\R_{i+1,L}),$ and $[N[j],f_b(z_i)] \subset N[j+1]$ we have
$$ f_b(z_{i+1}) = \tilde z^{-1}f_b(z_i)(b\sigma(\tilde z)b^{-1}) = f_b(z_i)f_b(\tilde z)\gamma_{i+1}$$
for some $\gamma_{i+1} \in N_{i+1}(\R_{i+1,L}).$
Hence
$$ f_b(z_{i+1}) = f_b(z_i)f_b(\tilde z)\gamma_{i+1} = f_b(z_i)\delta_i[U_{\alpha_i}(\varepsilon)\delta_{i+1}^{-1}]$$
for some $\delta_{i+1} \in N_{i+1}(\R_{i+1,L}).$
Now choose $n$ so that $U_{\alpha_i}(p^n\mathcal{O}_L) \subset K'.$
Then for every $x \in (\Spec R_{i+1})(\bar k)$ we have
$$ f_b(z_{i+1})(x)\delta_{i+1}(x)K' = f_b(z_i)\delta_i(x)U_{\alpha_i}(\varepsilon(x))K' = y(x)K'.$$
Moreover, since $\tilde z(x_{1,i+1})=1,$ we have $\varepsilon(x_{1,i+1}) = 0$ and $z_{i+1}(x_{1,i+1}) = z_i(x_{1,i+1}),$ which implies
that $\gamma_{i+1}(x_{1,i+1}) =  \delta_{i+1}(x_{1,i+1}) = 1.$

This completes the induction step. Taking $i$ large enough that $N_i = 0,$ the lemma follows.
\end{proof}

\begin{lemma}\label{(1.5.4)} Let $m \in M(L).$ Then there exists a compact open subgroup $K' \subset N(L)$ such that
$$ K' \subset f_b(N(L) \cap mKm^{-1}) $$
\end{lemma}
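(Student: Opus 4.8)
The plan is to exploit the structure of $N$ as an iterated extension of root groups, together with the fact that $f_b$ acts ``triangularly'' with respect to the filtration $N \supset N_1 \supset N_2 \supset \dots$. First I would fix the element $m \in M(L)$ and observe that $K_0 := N(L) \cap mKm^{-1}$ is a bounded open subgroup of $N(L)$; since $b \in M(L)$, conjugation by $b$ and the twist $\sigma$ preserve $N(L)$ and the filtration, so $f_b(y) = y^{-1}b\sigma(y)b^{-1}$ sends $N_i(L)$ into $N_i(L)$ for each $i$, and moreover is ``close to'' the identity on large $N_i$ in a suitable sense.

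The key step is a layer-by-layer approximation. For each $i$, pick the integer $j = j(i)$ maximal with $N_i \subset N[j]$, and study the map induced by $f_b$ on the one-dimensional quotient $N_i/N_{i+1} \cong U_{\alpha_i} \cong \GG_a$. On this quotient $f_b$ induces, up to the additive translation coming from lower layers, the map $z_0 \mapsto p^{\langle\alpha_i,\nu\rangle}\sigma^{l_0}(z_0) - z_0$ type expression already exploited in Lemma~\ref{(1.5.2)}: when $\langle\alpha_i,\nu\rangle > 0$ this map is surjective onto $U_{\alpha_i}(\O_L)$ with bounded denominators (the geometric-series construction converges on integral points), and when $\langle\alpha_i,\nu\rangle = 0$ the map $z_0 \mapsto \sigma^{l_0}(z_0) - z_0$ is surjective on $\O_L/p^n$ for each $n$, hence has image containing a fixed bounded open subgroup of $U_{\alpha_i}(L)$. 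Either way, there is a bounded open $K'_i \subset U_{\alpha_i}(L)$ contained in the image of $f_b$ on $N[j]$ modulo $N_{i+1}$. Intersecting $K_0$ with the product of the $K'_i$ and shrinking, one gets a candidate $K' \subset N(L)$ such that every layer of any element of $K'$ is hit; then one lifts layer by layer, exactly as in the proof of Lemma~\ref{(1.5.3)}, to produce for a given $u \in K'$ an element $z \in N(L)$ with $f_b(z) = u$ and $z \in mKm^{-1}$ (the latter by ensuring at each stage that the corrections stay in $K_0$, which is possible after further shrinking $K'$).

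More precisely, I would run the following induction over $i$. Suppose given $u \in K'$ and an element $z_i \in N(L) \cap mKm^{-1}$ with $f_b(z_i) \equiv u \pmod{N_i(L)}$; write $f_b(z_i)^{-1}u \in U_{\alpha_i}(\beta)N_{i+1}(L)$ for some $\beta$ lying in a bounded subset of $L$ controlled by the size of $K'$. By the surjectivity statement above, there is $\tilde z \in N[j(i)](L)$, lying in a bounded set, with $f_b(\tilde z) \in U_{\alpha_i}(\beta)N_{i+1}(L)$; since $[N[j],f_b(z_i)]\subset N[j+1]\subset N_{i+1}$ and $b\sigma(\tilde z)b^{-1}\in N[j](L)$, setting $z_{i+1}=z_i\tilde z$ gives $f_b(z_{i+1}) \equiv u \pmod{N_{i+1}(L)}$. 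Choosing $K'$ small enough at the outset that the relevant $\beta$ and the resulting $\tilde z$ are forced into a compact set contained in $mKm^{-1}$ (using that $z_i \in mKm^{-1}$ and that $mKm^{-1}$ is a group), we keep $z_{i+1} \in N(L) \cap mKm^{-1}$. Since $N_i = 1$ for $i$ large, the induction terminates with $z := z_N$ satisfying $f_b(z) = u$ and $z \in N(L) \cap mKm^{-1}$, so $u \in f_b(N(L)\cap mKm^{-1})$, as desired.

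The main obstacle I anticipate is bookkeeping the boundedness: one must choose the single compact open $K'$ uniformly in advance so that, simultaneously for every layer $i$, the translation parameter $\beta$ produced from lower layers stays in the range where the layer-wise surjectivity of $f_b$ lands inside $mKm^{-1}$, rather than merely inside some possibly larger bounded set depending on $i$. This is a finite condition (finitely many root groups), so it can be arranged, but it requires keeping careful track of how conjugation by $b$, by $m$, and the twist $\sigma$ distort the lattices $U_{\alpha_i}(\O_L)$; the case $\langle\alpha_i,\nu\rangle = 0$ is the delicate one, since there the ``contraction'' is only $p$-adic (coming from $\sigma^{l_0} - \mathrm{id}$ being surjective mod $p^n$) rather than an honest geometric-series contraction, so one must fix $n$ large enough relative to $K'$ before starting. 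Granting the bounded versions of Lemma~\ref{(1.5.2)} (with $R = \bar k$, i.e.\ the pointwise statements used in its proof), the argument is a routine but careful adaptation of the proof of Lemma~\ref{(1.5.3)}.
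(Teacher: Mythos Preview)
Your approach is essentially the one from \cite{GHKR}, \S5.3, which the paper explicitly mentions as a valid method before giving its own argument. Your layer-by-layer lifting along the filtration $N \supset N_1 \supset \dots$ is sound, and the boundedness bookkeeping you flag as the main worry really is a finite condition on finitely many root groups, so it goes through. One small point: in the slope-zero case you invoke surjectivity of $\sigma^{l_0}-1$ on $\O_L/p^n$, but in fact $\sigma^{l_0}-1$ is already surjective on each $p^n\O_L$ (since $\O_L = W(\bar k)$ with $\bar k$ algebraically closed), so you get exact solutions with controlled denominators and no approximation is needed at the pointwise level.

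The paper, however, takes a much shorter route specific to characteristic zero: it passes to the Lie algebra via the exponential map, where $f_b$ linearizes to $df_b:n\mapsto \ad(b)(\sigma(n))-n$ on $\mathfrak n(L)$. The claim then reduces to showing that for any $\sigma$-isocrystal $(V,\sigma_V)$ the map $\sigma_V-1$ sends bounded open subsets onto bounded open subsets, which follows from Dieudonn\'e--Manin (cf.\ \cite{GHKR}, 4.3.1). This bypasses the inductive lifting entirely and avoids all the uniformity bookkeeping; on the other hand, your argument works without the exponential and hence transports verbatim to the function-field setting, which is precisely why \cite{GHKR} proceeds as you do.
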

\begin{proof} This can be shown using the methods of \cite{GHKR} 5.3.1, 5.3.2. In our present situation, when
$\text{\rm char} L = 0,$
there is a simpler argument which we now sketch.

Let $\mathfrak n = \Lie N$ regarded as an $L$-scheme. The map $f_b$ induces the map
$$ df_b: \mathfrak n \rightarrow \mathfrak n:\quad n \mapsto \ad(b)(\sigma(n)) - n. $$
Since $N(L) \cap mKm^{-1}$ is a bounded open subgroup of $N(L),$ an argument using the
exponential shows that it suffices to show that $df_b$ maps a bounded open subset of $\mathfrak n(L)$
to a bounded open subset of $\mathfrak n(L).$

Now for any $L$-vector space $V$ equipped with a $\sigma$-semi-linear map $\sigma_V,$ the map $\sigma_V - 1$
maps bounded open subset onto bounded open subsets. This may be checked as in \cite{GHKR}, 4.3.1 using the classification
of $\sigma$-isocrystals $(V, \sigma_V).$
\end{proof}

\begin{proof}[Proof of Proposition \ref{(1.5.5)}] Let $g_1 \in X_{\preceq \mu}(b).$
By the Iwasawa decomposition, $g_1$ has a representative in $G(L)$ of the form $nm$ with $n \in N(L)$ and $m \in M(L).$
Let $\chi \in X_*(Z_M)$ be such that $\langle \chi, \alpha \rangle > 0$ for every root $\alpha$ of $T$ in $N.$

Let $\O_L\langle s, s^{-1} \rangle$ and $\O_L\langle s \rangle$ denote the $p$-adic completions of
$\O_L[s,s^{-1}]$ and $\O_L[s]$ respectively. We equip these rings with the Frobenius lifts given by
$s \mapsto s^q,$ and consider them as frames of their mod $p$ reductions.
Define $y = \chi(s)f_b(n)\chi(s)^{-1}\in N(\O_L\langle s,s^{-1}\rangle_L)).$ For any root $\alpha,$ conjugation by
$\chi(s)$ maps $U_{\alpha}(\beta)$ to $U_{\alpha}(s^{\langle \chi,\alpha \rangle}\beta).$
Hence $y \in N(\O_L\langle s \rangle_L).$ Note also that $y(0) = 1,$ while $y(1) = f_b(n).$

Using Lemma \ref{(1.5.4)}, we choose a bounded open subgroup $K' \subset N(L)$ such that $K' \subset f_b(N(L) \cap mKm^{-1}).$ We may also assume that $K' \subset (b\sigma(m))K(b\sigma(m))^{-1}.$
Applying Lemma \ref{(1.5.3)}, we find a finite \'etale covering $\bar k[s]\rightarrow R,$ with canonical frame
$\O_L\langle s \rangle \rightarrow \R,$ an element $z \in N(\R_L),$ and a point
$ x_1 \in (\Spec R)(\bar k)$ over $1,$ such that $f_b(z(x))K' = y(x)K'$ for every $x$ in $(\Spec R)(\bar k),$
and $z(x_1) = n.$ The first condition implies that
$$ f_b(z(x))b\sigma(m)K = y(x)b\sigma(m)K. $$
We may replace $\Spec R$ with the connected component containing $x_1$ and assume that this scheme is connected.

Let $g = zm \in G(\R_L).$ For $x \in (\Spec R)(\bar k)$ such that $s(x) \in \bar k^\times,$
we have
\begin{eqnarray*}
g(x)^{-1}b\sigma(g(x))K &=& m^{-1}f_b(z(x))b\sigma(m)K\\ &=& m^{-1}y(x)b\sigma(m)K  \\
&= &\chi(s(x))^{-1}m^{-1}f_b(n)b\sigma(m)\chi(s(x))K \\
 &=& \chi(s(x))^{-1}g_1^{-1}b\sigma(g_1)\chi(s(x))K\\& \subset&
Kp^{\mu} K,
\end{eqnarray*}
Hence $g \in X_{\preceq \mu}(b)(\R)$ by Lemma \ref{1.1.3}.

Let $x_0 \in (\Spec R)(\bar k)$ be a point mapping to $0$ in  $\Spec \bar k[s].$ Then $f_b(z(x_0)) \in K',$
so there exists $ k \in N(L) \cap mKm^{-1}$ such that
$f_b(z(x_0)) = f_b(k^{-1}).$ This implies that $z(x_0)k \in J_b(F)\cap N(L).$ Hence
$$ g(x_0) = z(x_0)m = [z(x_0)k]\cdot k^{-1}m \in (J_b(F)\cap N(L))M(L)K. $$
Since $g(x_1) = nm = g_1,$ we see that $g_1 \sim jm$ for some $j \in J_b(F)\cap N(L)$ and $m \in M(L).$
\end{proof}

%--------------------------------------------------------

\section{Connecting points}\label{seccp}
\subsection{Main results: Formulation and overview of the proofs}\label{overview}

In this subsection we reduce the  proofs of our main results Theorem \ref{thmzshk} and Theorem \ref{prop1} to four technical propositions whose proof will be the subject of the remainder of this section. At the end of the subsection we also explain how the arguments simplify if one is only interested in the case that $G$ is split.

We let $G \supset B \supset T$ be as above, $\mu \in X_*(T)$ a dominant, minuscule cocharacter, and $b \in B(G,\mu).$

\begin{para}
For every standard Levi subgroup $M$ of $G$, the projection $X_*(T)\rightarrow \pi_1(M)$ induces a bijection
\begin{equation}
\label{minuscule_pi1} \left\{\genfrac{}{}{0pt}{}{M\text{-minuscule}, M\text{-dominant}}{\text{cocharacters in }X_*(T) }\right\}\simto\pi_1(M).
\end{equation}
% \begin{equation}
% \label{minuscule_pi1} \left\{ {M\text{-minuscule}, M\text{-dominant}\atop \text{cocharacters in }X_*(T) }\right\}\simto\pi_1(M).
% \end{equation}
For $x\in \pi_1(M)$, denote by $\mu_x$ the preimage of $x$ via
\eqref{minuscule_pi1}. For any $b\in M(L)$ and $G$-minuscule $\mu\in X_*(T)$ , let
\begin{eqnarray*}
\bar I_{\mu, b}^{M, G}&=&\{x\in\pi_1(M)\mid (\mu_x)_{G-\dom}=(\mu)_{G-\dom} ,~x=\kappa_M(b) \text{ in }\pi_1(M)_{\Gamma}\}\\
&=&\{x\in\pi_1(M)\mid x=\mu \text{ in }\pi_1(G),~x=\kappa_M(b) \text{ in }\pi_1(M)_{\Gamma},~\mu_x~G\text{-minuscule}\},
\end{eqnarray*}
where $\Gamma=\Gal(\bar k|k)$.

For every $k$-algebra $R$ with frame $\R$ and every $\mu'\in X_*(T)_{M-\dom}$ we have the natural inclusion
$X^M_{\mu'}(b)(\R)\hookrightarrow X_{\mu'_{\dom}}^G(b)(\R).$ Note that if $\mu'_{\dom}=\mu$ then $\mu'$ is $M$-minuscule, hence of the form $\mu'=\mu_{x}$ for some $x\in \pi_1(M)$. Furthermore $\mu'$ has the same image in $\pi_1(G)$ as $\mu$. Finally $X^M_{\mu'}(b)(\R)=\emptyset$ unless $\kappa_M(b)=x$ as elements of $\pi_1(M)_{\Gamma}$. Hence $X^M_{\mu'}(b)(\R)$ is a nonempty subset of $X_{\mu}^G(b)(\R)$ if and only if the image of $\mu'$ via the natural projection $X_*(T)\rightarrow \pi_1(M)$ is in $\bar I_{\mu, b}^{M, G}$.
\end{para}
%\

\begin{para}
Recall that $N$ is the unipotent radical of the standard parabolic subgroup of $G$ corresponding to $M$.
Let $\Phi_{N}$ be the set of roots in $N$, and let $\Phi_{N, \Gamma}$ be the set of Galois orbits of roots in $N$.
\begin{definition}\label{def_alpha_adapted}

\begin{enumerate}
\item For any root $\alpha\in\Phi_N$, we say that $\alpha$ is \emph{adapted} if $\alpha^{\vee}$ is $M$-anti-dominant, and we have $\langle\beta,
\alpha^{\vee}\rangle\in\{-1, 0, 1\}$ for every root $\beta$ in $M$.

\item For any $\Omega\in\Phi_{N,\Gamma}$, we say that $\Omega$ is \emph{adapted} if some $\alpha\in\Omega$ is adapted.
\end{enumerate}
\end{definition}

%\begin{remark}\label{rem_adapted}
As $B$ and $M$ are stable under the action of $\Gamma$, if $\Omega$ is adapted, then so is any element in $\Omega$.
%\end{remark}
\end{para}

\begin{para}\label{para:stdsetup}
 From now on, we assume that $G^{\ad}$ is simple, as in Theorem \ref{thmzshk}, although this assumption will be droppped towards the end of the subsection. We also suppose that $M\subseteq G$ is a standard Levi subgroup defined over $F$ such that $b$ is superbasic in $M$.
Recall that this implies that $M^{\ad}\cong \prod_i \Res_{F_i/F}PGL_{n_i}$ with $F_i/F$ unramified (Lemma \ref{(1.2.6)}).
Using (\ref{minuscule_pi1}) we have an identification of sets $\bar I_{\mu, b}^{M, G}=\bar I_{\mu,b}$, where $\bar I_{\mu, b}$ is defined in Section \ref{secredsb}.
If $G$ is split, this set consists of a single element.

The proofs of the two main theorems are based on the following propositions.
\end{para}

\begin{prop}[Convexity of $\bar I_{\mu, b}^{M, G}$]\label{pm1}
Let $x,x'\in \bar I_{\mu, b}^{M, G}$. Then there are elements $x_i\in \bar I_{\mu, b}^{M, G}$ for $i=1,\dotsc,m$ for some $m,$ such that $x=x_1$, $x'=x_m$ and such that for each $i$, $$x_{i+1}-x_i=\alpha^{\vee}-\alpha'^{\vee} \text{ in } \pi_1(M)$$ for some roots $\alpha,\alpha'\in\Omega$ with $\Omega\in \Phi_{N,\Gamma}$ (depending on $i$).
\end{prop}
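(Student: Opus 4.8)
The plan is to reduce to the case where $M$ is (up to central isogeny) a product of restrictions of scalars of $\GL_n$, where $\pi_1(M)$ becomes a product of copies of $\Z$ and the set $\bar I_{\mu,b}^{M,G}$ becomes explicitly describable. First I would recall that $\pi_1(M)$ modulo the image of the coroot lattice of $G$ is $\pi_1(G) = \Z$ (since $G^{\ad}$ is simple), and that $x \in \bar I_{\mu,b}^{M,G}$ imposes three constraints: the image in $\pi_1(G)$ is fixed, the image in $\pi_1(M)_\Gamma$ is fixed, and $\mu_x$ is $G$-minuscule. The first two conditions cut out a coset of a subgroup of $\pi_1(M)$; the content of the proposition is that the third (minusculeness) condition carves out a subset that is nonetheless "connected" under the elementary moves $x \mapsto x + \alpha^\vee - \alpha'^\vee$ for $\alpha,\alpha'$ in a common Galois orbit $\Omega \subset \Phi_N$.

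The key steps, in order, would be: (1) Using Lemma \ref{(1.2.6)}, write $M^{\ad} \cong \prod_i \Res_{F_i/F}\PGL_{n_i}$, and fix coordinates on $X_*(T)$ and $\pi_1(M)$ accordingly; translate the $G$-minusculeness of $\mu_x$ into an explicit bound on the coordinates of $x$ (each "$\GL_n$-block" of $\mu_x$ must be one of the fundamental minuscule coweights, so the relevant integer coordinate lies in a bounded interval whose length is governed by $\mu$ and $n_i$). (2) Show that the two roots $\alpha, \alpha'$ in a common orbit $\Omega$ can be chosen so that $\alpha^\vee - \alpha'^\vee$ realizes, in the $\Z$-coordinates on $\pi_1(M)$, a "unit step in one block compensated by a unit step in a neighboring block" — i.e. exactly the moves needed to navigate within the box cut out in (1) while staying in the fixed coset for $\pi_1(G)$ and $\pi_1(M)_\Gamma$. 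Here the hypothesis that $b$ is superbasic in $M$ is what forces $M$ to have this product-of-$\GL_n$ shape and makes the combinatorics tractable. (3) Conclude by an explicit lattice-walk argument inside the box: given $x, x'$ in $\bar I_{\mu,b}^{M,G}$, peel off their difference as a sum of the elementary moves from (2), checking at each step that one stays inside the box (convexity of the box is what guarantees this) and inside the fixed $\pi_1(G)$- and $\pi_1(M)_\Gamma$-cosets.

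The main obstacle I expect is step (2)–(3): verifying that the available moves $\alpha^\vee - \alpha'^\vee$ with $\alpha,\alpha' \in \Omega$ actually suffice to connect any two points of the box, rather than only a sublattice. This requires understanding precisely which Galois orbits $\Omega$ in $\Phi_N$ occur and how their coroot-differences project to $\pi_1(M)$; one must check that for each pair of "adjacent blocks" of $M$ (adjacent in the sense of the ambient Dynkin diagram of $G$, so that a root of $N$ connects them) there is a Galois orbit supplying the corresponding compensating step, and that the Galois action permutes blocks compatibly so that the $\Gamma$-coinvariant condition is automatically preserved. The case analysis is over the possible types of $G^{\ad}$ (simple, so $A_n$, $B_n$, \dots) together with the constraint from Lemma \ref{(1.2.6)} that $M$ be a product of $\Res \PGL$'s, which severely limits the possibilities and is what ultimately makes the argument finite and checkable. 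I would also use Corollary \ref{cor:invariants} to control $\ker(\pi_1(M)^\Gamma \to \pi_1(G)^\Gamma)$, which identifies exactly the lattice in which the moves must be dense.
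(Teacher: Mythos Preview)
Your approach diverges substantially from the paper's and contains a real gap. The paper does \emph{not} introduce coordinates on $\pi_1(M)$ and does \emph{not} do a case analysis by Dynkin type of $G^{\ad}$. Instead it proceeds in two steps: first (Proposition~\ref{pm11}) it shows that for $x,x'\in\bar I_{\mu,b}^{M,G}$ there exists $w\in W_M$ with $\mu_x=w\mu_{x'}$ in $X_*(T)_\Gamma$, reducing to the case $M=T$; then (Proposition~\ref{pm12}) it proves the $M=T$ statement by induction on a distance $d(\mu',\mu'')=|\mu'-\mu''|$. The engine is Lemma~\ref{pm1ld}: for minuscule $\mu',\mu''$ in the same Weyl orbit one has $\mu'-\mu''=\sum_i\gamma_i^\vee$ with the $\gamma_i$ mutually orthogonal and $\langle\gamma_i,\mu'\rangle=1$, $\langle\gamma_i,\mu''\rangle=-1$. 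One then locates, via Lemma~\ref{pm1la}, a pair $\gamma^+,\gamma^-$ and a $\tau\in\Gamma$ with $\langle\tau\gamma^+,\gamma^{-\vee}\rangle>0$, and uses the resulting reflection(s) to produce an intermediate element at strictly smaller distance. The case distinctions that do appear (Cases 1, 2.1--2.4 in the proof of Proposition~\ref{pm12}) are governed by the pairings $\langle\gamma^\pm,\tau^{\pm1}\gamma^{\pm\vee}\rangle$ and by how many $\Gamma$-translates of a root lie in one component, not by the Dynkin type of $G$.

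The gap in your outline is step~(3). The condition defining $\bar I_{\mu,b}^{M,G}$ is not that $\mu_x$ be $G$-minuscule, but that $(\mu_x)_{G\text{-}\dom}=\mu$, i.e.\ that $\mu_x$ lie in a \emph{single fixed $W_G$-orbit}. Its image in $\pi_1(M)$ is not a convex box, and ``convexity of the box'' does not ensure that intermediate steps of a naive lattice walk remain in this orbit. This is exactly the difficulty the paper's orthogonal-decomposition lemma and distance induction are designed to overcome: at each step one must exhibit an explicit reflection (or pair of reflections) that keeps $\mu_{x_i}$ $W_G$-conjugate to $\mu$ while strictly decreasing $d$ or $d_\Gamma$. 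Your sketch does not supply the mechanism for this. (A minor point: $\pi_1(G)$ need not be $\Z$; only $\pi_1(G)_\Gamma$ has small rank, and even that depends on $G$.)
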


\begin{prop}\label{pm2}  Suppose that $x,x'\in \bar I_{\mu, b}^{M, G}$ with $x-x'=\alpha^{\vee}-\alpha'^{\vee}$ for some $\alpha,\alpha'\in\Omega$ with $\Omega\in\Phi_{N,\Gamma}$. Then for any $g\in X_{\mu_x}^M(b)$, there is a $g'\in
X_{\mu_{x'}}^M(b)$ such that the images of $g$ and $g'$ in
$X_{\mu}^{G}(b)$ are in the same connected component.
\end{prop}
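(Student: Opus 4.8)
<br>

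The plan is to reduce Proposition \ref{pm2} to a statement entirely inside a group of the shape $\Res_{E/F}\GL_h$, where the two $M$-minuscule cocharacters $\mu_x$ and $\mu_{x'}$ differ by a coroot difference $\alpha^\vee-\alpha'^\vee$ coming from a single Galois orbit $\Omega$ of roots in $N$. Since $b$ is superbasic in $M$, Lemma \ref{(1.2.6)} gives $M^{\ad}\cong\prod_i\Res_{F_i/F}\PGL_{n_i}$; after passing to $M$ itself and using the Cartesian diagrams of Corollary \ref{cartesianIII} to reduce modulo the center, the claim becomes a claim about the explicit superbasic elements $b_{\min}'$ of Corollary \ref{cor:superbasicgl}. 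First I would fix such an explicit representative $b$ and write $g\in X_{\mu_x}^M(b)$ in the lattice-chain form used in \S\ref{(1.3)}, using the basis $e_1,\dots,e_h$ with $e_{i+h}=pe_i$ and the shift operator $s$.

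The heart of the argument is to produce an explicit one-parameter family connecting $g$ to some $g'\in X_{\mu_{x'}}^M(b)$ inside $X_\mu^G(b)$. Since $\alpha,\alpha'$ lie in a common root in $N$, the element $\mu_x$ is obtained from $\mu_{x'}$ by moving one ``box'' of the minuscule cocharacter from one $\GL_{h}$-block (or one place in the cyclic factor) to a neighboring one. Concretely I would take a frame $\R=\O_L\langle x\rangle$ with $\sigma(x)=x^q$, and build an element $a\in G(\R_L)$ which is a product of elementary unipotent matrices $a_{i,\delta}(x)$ of the type in \ref{(1.3.2)}, chosen so that: (i) specializing $x\mapsto 0$ gives back $g$; (ii) at $x$ generic, the invariant $g^{-1}b\sigma(g)$ has Cartan type the $G$-dominant conjugate $\mu$ (so the family lands in $X_\mu^G(b)$, using Lemma \ref{1.1.3} and minuscularity to rule out lower terms); and (iii) specializing at another value of $x$ (after extending the family across $\Spec\R'$ by Lemma \ref{valuativeII}, exactly as in the proof of Lemma \ref{(1.3.4)}) gives an element $g'$ whose $M$-invariant is $\mu_{x'}$ rather than $\mu_x$. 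The bookkeeping for (iii) is where the condition ``$\alpha$ adapted'', i.e.~$\langle\beta,\alpha^\vee\rangle\in\{-1,0,1\}$ for all roots $\beta$ in $M$, will be used: it guarantees that conjugating the elementary matrices by $b$ and by the relevant Weyl element keeps the exponents in the required range, so that the commutator identities of Lemma \ref{(1.3.3)}(4) apply and the only change to the Cartan invariant is the desired move of a single box.

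After that, the two specializations $g$ and $g'$ lie on a single connected curve $\Spec R'$ mapped into $X_\mu^G(b)$, so by definition of $\pi_0$ (see \ref{conn_comp_ADLV}) their images are in the same connected component. A small extra point is that a priori $g'$ need only lie in $X_{\mu_{x'}}^M(b)$ up to the ambiguity in $\bar I_{\mu,b}^{M,G}$; but since we have arranged the $M$-Cartan invariant of $g'$ to be exactly the $M$-minuscule cocharacter $\mu_{x'}$, Proposition \ref{prophndecomp} and the identification \eqref{minuscule_pi1} pin it down. I would also need to check that the family stays inside $X_\mu^G(b)$ and not just $X_{\preceq\mu}^G(b)$; this is immediate because $\mu$ is minuscule, so $X_\mu^G(b)=X_{\preceq\mu}^G(b)$ as remarked after \eqref{minuscule_pi1}.

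The main obstacle I expect is step (ii)-(iii): exhibiting the precise product of $a_{i,\delta}$'s that simultaneously realizes the generic Cartan type $\mu$ and the special fiber $\mu_{x'}$, and verifying via the commutator relations of Lemma \ref{(1.3.3)} that no higher-order coroot contributions creep in. This is a more delicate version of the computation in \cite{conncomp}, p.~322 and in the proof of Lemma \ref{(1.3.4)}: there one only connected components within a fixed $X_{\preceq\mu}^M(b)$, whereas here the family must cross from one Kottwitz stratum $X_{\mu_x}^M$ to another $X_{\mu_{x'}}^M$ while remaining in the single $G$-stratum $X_\mu^G$. Handling the general quasi-split (Galois-twisted) case — where $\Omega$ genuinely contains several roots permuted by $\sigma$ and one must twist the elementary matrices by powers of $\sigma$ as in the proof of Lemma \ref{(1.3.4)} — is the part that will require the most care, though it is formally parallel to the split case which the authors suggest reading first.
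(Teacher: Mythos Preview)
Your proposal has a genuine gap at its core. You want to reduce to the $\GL_h$ situation of \S\ref{(1.3)} and use the elementary matrices $a_{i,\delta}$, but those matrices live in (a factor of) $M$: they are elements of $\GL_h$ and only involve roots of $M$. The roots $\alpha,\alpha'\in\Omega$ lie in $\Phi_N$, not in $\Phi_M$, so moving from the stratum $X_{\mu_x}^M(b)$ to $X_{\mu_{x'}}^M(b)$ requires a family that genuinely leaves $M$ and travels through the unipotent directions $U_\alpha\subset N\subset G$. No product of $a_{i,\delta}$'s can change the $M$-Kottwitz invariant $w_M$, and the reduction ``modulo the center via Corollary~\ref{cartesianIII}'' applies to $M\to M^{\ad}$, not to the ambient group $G$, which in general is not a product of general linear groups. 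So the plan, as written, cannot produce a $g'$ with $w_M(g')\neq w_M(g)$.

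The paper's argument is structurally different. It first introduces the notion of \emph{immediate distance} (Definition~\ref{def_immediate_distance}) and reduces, via Proposition~\ref{prop_reduction_immediate}, to the case $x\rightarrow x'$. Then, after replacing $b$ by the explicit representative $b_x=\mu_x(p)\dot w_x$ of \ref{lemma_b=b_x}, it writes down an explicit family
\[
g_{x,x'}(y)=g_x\cdot\prod_{i=0}^{m-1} b_x^{(i-1)}\sigma^{i}U_{\alpha}(p^{-1}y)\,(b_x^{(i-1)})^{-1}
\]
built from the root subgroups $U_{\alpha^i}\subset N$, and checks by a case-by-case computation (splitting on whether $\Omega$ is of type I, II, or III in the sense of Definition~\ref{defn:type}) that $g_{x,x'}(y)^{-1}b\sigma(g_{x,x'}(y))\in G(\R)p^{\mu_x}G(\R)$ (Proposition~\ref{prop_affineline_x_x'}). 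The ``point at infinity'' is computed using the $\SL_2$-identity \eqref{eqn_SL2_computation} and yields an element of $X_{\mu_{x'}}^M(b)$ with the precise shift $w_M(g_x)-w_M(g')=\sum_{i=0}^{m-1}\alpha^{i\vee}$ (Proposition~\ref{cor_line_immediate}). Finally the transitivity of $J_b^M(F)$ on $\pi_0(X_{\mu_x}^M(b))$ transports the conclusion from the basepoint $g_x$ to an arbitrary $g$. Your intuition about ``moving a box'' and extending across $\infty$ via Lemma~\ref{valuativeII} is the right picture, but the actual carriers of the deformation are the $U_{\alpha}$ for $\alpha\in\Phi_N$, together with the combinatorics of Lemmas~\ref{lemma_C}, \ref{lemma_newB}, \ref{lemma_w_x_alpha minimal} and \ref{lemma_D}, which control how $b_x$-conjugation moves these root subgroups; none of this is visible from the $\GL_h$-internal machinery of Lemma~\ref{(1.3.3)}.
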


\begin{para} \label{lemma_b=b_x}
Let $x \in \pi_1(M)$ and let $P_{x}$ be the parabolic subgroup of $M$
defined by $\mu_x$, $M_x$ its Levi subgroup containing $T$
and $N_x$ its unipotent radical. Let $w_x=w_{0,x}w_{0,M}$
where $w_{0,x}$ is the longest Weyl group element in $M_x$ and
where $w_{0,M}$ is the longest Weyl group element in $M$.

Let $N_M$ be the normalizer of $T$ in $M$. Recall that $W_M=N_M(L)/T(L)$ is the Weyl group of $M_L$.
The natural map $N_M(L)\cap K\rightarrow W_M$ is surjective (see for example \cite{HaiRap} Prop. 13). In particular, $w_x$ has a representative $\dot w_x$ in $K$.
Let $b_x=\mu_x(p)\dot w_x$ with $\dot w_x\in K.$ Note that the representatives of superbasic $\sigma$-conjugacy classes chosen in Section \ref{(1.4)} are also of this form.

The elements $b$ and $b_x$ are in the same
$\sigma$-conjugacy class for the group $M$ (i.e., $[b]=[b_x]$ in
$B(M)$). Indeed, as $\kappa_M(b_x)=x=\kappa_M(b)$, in order to
show that the $\sigma$-conjugacy classes of $b$ and $b_x$ agree,
it suffices to show that $b_x$ is basic in $M$. This is shown in \cite{vw}, proof of Proposition 9.17.

For the next two propositions, we assume that $b=b_{x_0}$
for some fixed $x_0\in \bar I_{\mu, b}^{M, G}.$
\end{para}
\begin{prop}\label{pm4}Suppose $(\mu, b)$ is HN-irreducible. Let
\begin{eqnarray*}C:=\left\{\alpha^{\vee}\in X_*(T)|\alpha\in\Phi_N \text{ is adapted, and }\langle\alpha, \mu_{x_0}\rangle<0 \right\}.\end{eqnarray*} Then the sum of the
coroot lattice of $M$ and the $\mathbb{Z}$-lattice generated by
the Galois orbit of the set $C$ is the coroot lattice of $G$.
\end{prop}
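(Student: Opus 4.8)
We reformulate the statement. Let $Q_M^{\vee}\subseteq Q_G^{\vee}$ be the coroot lattices of $M$ and of $G$. Since the simple coroots of $G$ form a $\Z$-basis of $Q_G^{\vee}$ and those of $M$ form a basis of $Q_M^{\vee}$, the quotient $Q_G^{\vee}/Q_M^{\vee}$ is free on the images of the coroots $\gamma^{\vee}$ with $\gamma$ a simple root of $G$ not in $M$, and it is canonically $\ker(\pi_1(M)\to\pi_1(G))$. As $C$ consists of coroots of $G$, the lattice $Q_M^{\vee}+\Z\langle\Gamma C\rangle$ is contained in $Q_G^{\vee}$, so the proposition asserts the reverse inclusion; equivalently, that the images in $Q_G^{\vee}/Q_M^{\vee}$ of the coroots $\alpha^{\vee}$, $\alpha\in\Gamma C$, generate that group. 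Note also that since $\mu$ is minuscule and $\mu_{x_0}$ lies in its $\Weyl$-orbit, $\langle\alpha,\mu_{x_0}\rangle\in\{-1,0,1\}$ for every root $\alpha$ of $G$; thus the condition $\langle\alpha,\mu_{x_0}\rangle<0$ defining $C$ just means $\langle\alpha,\mu_{x_0}\rangle=-1$, i.e. $\alpha$ is a root of the unipotent radical opposite to the parabolic of $G$ attached to $\mu_{x_0}$.

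The plan is to produce, for each simple root $\gamma$ of $G$ not in $M$, an element of $\Z\langle\Gamma C\rangle$ whose image in $Q_G^{\vee}/Q_M^{\vee}$ is $\gamma^{\vee}\bmod Q_M^{\vee}$, proceeding by a walk along the Dynkin diagram $D$ of $G$ over $\bar k$. By Lemma \ref{(1.2.6)}, $M^{\ad}\cong\prod_i\Res_{F_i/F}\PGL_{n_i}$, so the subdiagram $D_M$ of nodes of $M$ is a $\Gamma$-stable disjoint union of type $A$ strings, while $D$ is connected. Given $\gamma$ on the frontier of the part of $D$ already treated, one builds a root $\alpha_\gamma\in\Phi_N$ of the form $\gamma+\beta_1+\dots+\beta_t$, the $\beta_i$ being consecutive nodes of an adjacent $A$-string of $D_M$ (so that all partial sums are roots). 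One checks that the string and length $t$ can be chosen so that $\alpha_\gamma^{\vee}$ is $M$-antidominant with $\langle\beta,\alpha_\gamma^{\vee}\rangle\in\{-1,0,1\}$ for every root $\beta$ of $M$ — i.e. $\alpha_\gamma$ is adapted — and so that, modulo $Q_M^{\vee}$ and the coroots already produced, $\alpha_\gamma^{\vee}$ equals $\gamma^{\vee}$. Letting $\gamma$ range over representatives of the $\Gamma$-orbits of the simple roots of $G$ outside $M$ and applying $\sigma$ (which preserves $\Phi_N$, $B$ and $M$, hence sends adapted roots to adapted roots, although it need not preserve $C$ itself — which is precisely why the Galois orbit is taken) then yields a generating set of $Q_G^{\vee}/Q_M^{\vee}$, \emph{provided} each $\alpha_\gamma$ can be chosen with $\langle\alpha_\gamma,\mu_{x_0}\rangle=-1$, so that $\alpha_\gamma^{\vee}\in C$. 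Corollary \ref{cor:invariants}(1), which identifies $\ker(\pi_1(M)^{\Gamma}\to\pi_1(G)^{\Gamma})$ with the span of the $\Gamma$-orbit sums of coroots of $G$, can be used to streamline this bookkeeping.

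The substantive point, and the step I expect to be the main obstacle, is securing $\langle\alpha_\gamma,\mu_{x_0}\rangle=-1$ using HN-irreducibility. Since $\mu_{x_0}$ is $M$-dominant and minuscule, it restricts to a fundamental coweight (possibly $0$) on each $A$-string of $D_M$, so $\langle\beta,\mu_{x_0}\rangle\in\{0,1\}$ for simple roots $\beta$ of $M$; hence, sliding $\alpha_\gamma=\gamma+\beta_1+\dots+\beta_t$ along a string, the value $\langle\alpha_\gamma,\mu_{x_0}\rangle$ starts at $\langle\gamma,\mu_{x_0}\rangle\in\{-1,0,1\}$ and increases by $0$ or $1$ at each step. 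The value $-1$ is thus attained by an admissible choice unless $\mu_{x_0}$ is uniformly ``too positive'' near $\gamma$, and one shows this failure forces a vanishing coefficient of some simple coroot of $G$ in $\bar\mu-\nu_b$: passing from $\mu_{x_0}$ to its $\Gamma$-average and using that $\nu_b$ is the $M$-central part of that average converts the obstruction into condition (2) of Theorem \ref{thmzshkII} (a proper standard Levi $M'\supseteq M_b$ with $\nu_b\le\bar\mu$ in the positive chamber of $M'$), while the borderline situation is exactly $b\sim p^{\mu}$ with $\mu$ central; either way $(\mu,b)$ fails to be HN-irreducible, a contradiction. Carrying this out requires a case analysis over the types of $G^{\ad}$, which may be reduced to $G^{\ad}$ simple carrying a nonzero minuscule coweight (the classical types together with $E_6$, $E_7$); the non-simply-laced types $B_n$ and $C_n$ need extra care, since there the constraint $\langle\beta,\alpha^{\vee}\rangle\in\{-1,0,1\}$ in the definition of ``adapted'' genuinely restricts which roots of $\Phi_N$ are available. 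This case check is the technical heart of the argument.
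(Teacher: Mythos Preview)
Your reformulation is fine, and the idea of producing, for each $\Gamma$-orbit $\Omega$ of simple roots in $N$, an element of $\Gamma C$ whose image in $Q_G^{\vee}/Q_M^{\vee}$ is a generator is the right target. But the specific construction you propose has a gap.

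You build $\alpha_\gamma=\gamma+\beta_1+\dots+\beta_t$ with the $\beta_i$ simple roots of $M$. As you yourself note, $\langle\beta_i,\mu_{x_0}\rangle\in\{0,1\}$ (since $\mu_{x_0}$ is $M$-dominant and $M$-minuscule), so the pairing $\langle\alpha_\gamma,\mu_{x_0}\rangle$ is monotone nondecreasing in $t$. Hence the only way to hit $-1$ is to have $\langle\gamma,\mu_{x_0}\rangle=-1$ already; the ``sliding'' gives you nothing. Your argument therefore reduces to the claim that for every $\Gamma$-orbit $\Omega$ of simple roots of $N$ there is some $\gamma\in\Omega$ with $\langle\gamma,\mu_{x_0}\rangle<0$, and that the failure of this forces a vanishing coefficient in $\bar\mu-\nu_b$. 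You do not prove this; you defer it to a type-by-type case analysis that is not carried out. Even granting that such a check might succeed, it is not the ``sliding'' that does any work, and the link you sketch between $\langle\gamma,\mu_{x_0}\rangle\geq 0$ and a vanishing coefficient in $\bar\mu-\nu_b$ is not explained: $\nu_b$ is the $\Gamma$- and $W_M$-average of $\mu_{x_0}$, not of the simple-root pairings, so the passage from one to the other needs a real argument.

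The paper's proof is structurally different and avoids this difficulty. For each $\Gamma$-orbit $\Omega$ of simple roots in $N$ it considers the \emph{maximal} standard Levi $\tilde G_\Omega$ on $\Delta\setminus\Omega$ (so $\tilde G_\Omega\supseteq M$, typically strictly) and the full orbit $R_\Omega=W_{\tilde G_\Omega}\cdot\Omega$. HN-irreducibility is used exactly once, and only for the Levis $\tilde G_\Omega$: if the $\tilde G_\Omega$-dominant conjugate of $\mu_{x_0}$ were $G$-dominant it would equal $\mu$, giving $\kappa_{\tilde G_\Omega}(b)=\mu$. This produces a $\preceq$-minimal $\gamma(\Omega)\in R_\Omega$ with $\langle\gamma(\Omega),\mu_{x_0}\rangle<0$. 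Via Lemma~\ref{lemma_Weyl_orbit} one writes $\gamma(\Omega)=(\prod_i s_{\beta_i})\alpha$ with $\alpha\in\Omega$ and $\beta_i$ positive roots of $\tilde G_\Omega$; crucially the $\beta_i$ are allowed to be roots of $N$ (just not in $\Omega$), and minimality forces $\langle\beta_i,\mu_{x_0}\rangle<0$ for those, so they lie in $C_1$ as well. Thus several elements of $C_1$ combine to give $\alpha^{\vee}$ in $L_{C_1}$, rather than a single $\alpha_\gamma$ hitting $\gamma^{\vee}$ on the nose modulo $Q_M^{\vee}$. Only afterwards does one pass from $C_1$ to the $M$-anti-dominant set $C_2$ and then to the adapted set $C$. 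Your scheme collapses these stages, restricts to roots of $M$ only, and thereby loses exactly the freedom that makes the argument go through without a case-by-case check.
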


\begin{prop}\label{pm3}\label{pm3weak} Let $\Omega\in\Phi_{N,\Gamma}$ be adapted. Suppose that there exists $\alpha\in \Omega$ such that $\langle\alpha, \mu_{x_0}\rangle<0$. Then there exists an $x\in \bar I_{\mu, b}^{M, G}$ and $g_1, g_2\in X_{\mu_{x}}^M(b)$ such that
\begin{itemize}
\item $g_1$ and $g_2$ are in the same connected component of $X_{\mu}^{G}(b)$;
\item $w_M(g_2)-w_M(g_1)=\sum_{\beta\in \Omega}\beta^{\vee}$ in $\pi_1(M)^{\Gamma}$,
\end{itemize}
where $w_M: M(L)\rightarrow\pi_1(M)$ is the Kottwitz homomorphism.
\end{prop}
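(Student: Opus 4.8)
The plan is to reduce Proposition \ref{pm3} to the superbasic computation already carried out in Section \ref{secsb}, namely to Lemma \ref{(1.3.4)} and Corollary \ref{cor:superbasicgl}. Since $\Omega\in\Phi_{N,\Gamma}$ is adapted and some $\alpha\in\Omega$ has $\langle\alpha,\mu_{x_0}\rangle<0$, the root datum of $M$ together with the coroot $\alpha^\vee$ generates the root datum of a slightly larger standard Levi $M'$ of $G$: concretely, let $\Theta$ be the set of simple roots of $M$ together with the extra node attached to $\alpha$, and let $M'$ be the corresponding standard Levi. The adaptedness hypothesis ($\alpha^\vee$ is $M$-antidominant and $\langle\beta,\alpha^\vee\rangle\in\{-1,0,1\}$ for all roots $\beta$ of $M$) is precisely what guarantees that inside $M'$ the element $\alpha$ behaves like an ``affine'' simple root relative to $M$, so that $M'^{\ad}$ is again a product of restrictions of scalars of $\PGL$'s (or, more precisely, has the shape needed for the arguments of Section \ref{(1.3)} applied to $M'$ in place of $G$).

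First I would fix $x=x_0$ and work with $b=b_{x_0}$, which by \ref{lemma_b=b_x} is of the form $\mu_{x_0}(p)\dot w_{x_0}$ and is superbasic in $M$. The key observation is that $b$ remains in $M'(L)$ and, viewed in $M'$, its $\sigma$-conjugacy class is the one whose image under $\kappa_{M'}$ is computed from $x_0$; moreover $(\mu,b)$ being HN-irreducible forces $\kappa_{M'}(b)\neq\mu$ in $\pi_1(M')_\Gamma$ is not an obstruction here, since we do not need $b$ superbasic in $M'$. Rather, we want to run the connecting-curve construction of Lemma \ref{(1.3.4)} one level up. In the $\GL_h$ (or $\Res_{E/F}\GL_h$) model for the relevant factor of $M'$, the coroot $\alpha^\vee$ corresponds to the ``$s$'' element (the cyclic permutation times a power of $p$) sitting in $J_b(F)$, and Lemma \ref{(1.3.4)} produces, inside $X^{M'}_{\preceq\mu}(b)$, a one-parameter family connecting a given point $g_1$ to $s\cdot g_1$. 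Since $\mu$ is minuscule we have $X^{M'}_{\mu}(b)=X^{M'}_{\preceq\mu}(b)$, and the inclusion $X^{M'}_{\mu}(b)\hookrightarrow X^G_\mu(b)$ (which need not be a bijection, but is defined) carries this family into $X^G_\mu(b)$; hence $g_1$ and $g_2:=s\cdot g_1$ lie in the same connected component of $X^G_\mu(b)$. The effect on the Kottwitz invariant is $w_{M'}(g_2)-w_{M'}(g_1)=w_{M'}(s)=$ the class of $\alpha^\vee$ in $\pi_1(M')$; projecting from $\pi_1(M')$ down through the $\Gamma$-coinvariants and back up to $\pi_1(M)^\Gamma$ one gets exactly $\sum_{\beta\in\Omega}\beta^\vee$, the $\Gamma$-orbit sum, because $\dot w_{x_0}$ and $s$ are Galois-equivariantly spread over the factors of $\Res_{E/F}\GL_h$. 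One must check that the resulting $g_2$ actually lies in $X^M_{\mu_{x'}}(b)$ for some $x'\in\bar I_{\mu,b}^{M,G}$: this is where one uses that $s\in J_b(F)$ normalizes things appropriately, so that $s\cdot g_1\in X^M_{\mu_{x_1}}(b)$ with $x_1=x_0+[\alpha^\vee]$, and that $x_1\in\bar I_{\mu,b}^{M,G}$ follows from Proposition \ref{pm1}'s ambient combinatorics (one needs $\mu_{x_1}$ to be $G$-minuscule, which is a consequence of $\langle\alpha,\mu_{x_0}\rangle<0$ together with minusculity of $\mu$).

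The main obstacle I anticipate is the careful bookkeeping needed to identify the enlarged Levi $M'$ and to verify that the adaptedness condition really does put the pair $(\alpha, M)$ into the shape where the $\Res_{E/F}\GL_h$ computation of Section \ref{(1.3)} applies verbatim — in particular that the extra coroot $\alpha^\vee$ together with the simple coroots of $M$ span (a saturated sublattice of) the coroot lattice of $M'$ with $M'^{\ad}$ of $\PGL$-type on the factor being modified, and that the chosen Weyl representative $\dot w_{x_0}$ in $K$ can be taken Galois-equivariant and compatible with the ``$s$'' element, so that the $\Gamma$-orbit sum $\sum_{\beta\in\Omega}\beta^\vee$ comes out on the nose rather than up to a coset of $\ker(\pi_1(M)^\Gamma\to\pi_1(M'))$. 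Once the correct $M'$ is in place, the actual construction of the connecting family is a direct citation of Lemma \ref{(1.3.4)} / Corollary \ref{cor:superbasicgl} applied with $M'$ in the role of $G$ there, combined with Lemma \ref{valuativeII} to descend the family from $\O_L\langle x,x^{-1}\rangle$ to $\O_L\langle x\rangle$ and read off the two endpoints.
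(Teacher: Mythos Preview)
Your proposal has a genuine gap at its core. Lemma \ref{(1.3.4)} and Corollary \ref{cor:superbasicgl} do \emph{not} produce a family connecting $g_1$ to $s\cdot g_1$ inside $X_{\preceq\mu}(b)$; they show that $\langle s\rangle$ acts \emph{transitively} on $\pi_0$, i.e.\ that $s$ generally moves a point to a \emph{different} connected component. Indeed, for $G=\Res_{E/F}\GL_h$ the map $w_G$ identifies $\pi_0(X_{\preceq\mu}(b))$ with $\mathbb Z$ and $s$ acts by translation by $1$. So citing these results cannot give you $g_1\sim s\cdot g_1$ in $X^{M'}_\mu(b)$, hence not in $X^G_\mu(b)$ either.

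There are further structural problems. The enlarged Levi $M'$ obtained by adjoining the simple root underlying $\alpha$ is not of $\PGL$-type in general: adaptedness constrains pairings with roots of $M$, but says nothing about the Dynkin type of $M'$ (think of $G$ of type $D_n$ or $E_6$). Even when it is, $b$ is superbasic in $M$ but typically not in $M'$, so the hypotheses of the superbasic lemmas fail. And since any element playing the role of ``$s$'' would lie in $M'(L)\setminus M(L)$, left multiplication by it does not carry $X^M_{\mu_{x_0}}(b)$ into any $X^M_{\mu_x}(b)$; at best you would land in different fibers $X^M_{\mu_{x_0}}$ and $X^M_{\mu_{x_1}}$, whereas the proposition demands $g_1,g_2$ in the \emph{same} $X^M_{\mu_x}(b)$.

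The paper's proof (Section 4.7) proceeds entirely differently: it builds explicit one-parameter families of the shape
\[
g(y)=g_x\,\prod_{i}\,b_x^{(i-1)}\sigma^{i}U_{\alpha}(p^{-1}y)\,(b_x^{(i-1)})^{-1}
\]
using the root subgroups $U_{\alpha^i}$, checks by direct computation (case-by-case on the type I/II/III of $\Omega$, via Lemmas \ref{lempm3A}--\ref{lempm3D}) that $g(y)^{-1}b\sigma(g(y))\in Kp^{\mu_x}K$, extends the affine line to a ``projective line'' using the $\SL_2$-identity \eqref{eqn_SL2_computation}, and reads off the point at infinity $g'=g_x p^{-\sum_i\alpha^{i\vee}}$. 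When $\bar I^{M,G_\Omega}_{\mu_{x_0},b}$ has two elements one applies this twice (Lemma \ref{lempm3B}) to return to $X^M_{\mu_{x_0}}(b)$; when it is a singleton (Lemma \ref{lempm3C}) one constructs a single closed curve, in one subcase even a genuine higher-genus curve (Lemma \ref{lem:curveprops}). None of this is a citation of the superbasic $\GL_h$ case; it is new curve-building inside $G$.
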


\begin{para}
Before proving these propositions let us show how they can be used
to prove the main theorems. We first show the following stronger version of Theorem \ref{prop1} (assuming $G^{\ad}$ is simple) which we then use in the proof of Theorem \ref{thmzshk}.
We continue to assume that $b \in M(L)$ is superbasic, and we let $P = NM$ be the parabolic subgroup corresponding to $M.$
As usual, we write $J_b^M$ for the group defined by $b \in M(L),$ so that $J_b^M(F) = J_b(F)\cap M(L).$
\end{para}

\begin{thm}\label{prop1'}
%We use the notation of Theorem \ref{prop1} and assume again that $b\in M(L)$ is superbasic where $M$ is a standard Levi subgroup of $G$ defined over $F$. Let $P$ be the corresponding standard parabolic subgroup of $G$.
The image of
$$ \pi_0\left( X_{\mu_x}^M(b)\right) \rightarrow \pi_0(X_{\mu}^G(b)) $$
does not depend on the choice of $x \in \bar I_{\mu, b}^{M, G}.$
In particular, for any such $x,$ the map
\begin{equation}\label{eqn:surjmap}
(J_b(F)\cap N(L))\times \pi_0( X_{\mu_x}^M(b))\rightarrow \pi_0(X_{\mu}^G(b))
\end{equation}
is surjective, and the group $J_b(F)\cap P(L)$ acts transitively on $\pi_0(X_{\mu}^G(b))$.
\end{thm}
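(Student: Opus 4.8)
My plan is to deduce this theorem formally from Propositions~\ref{pm1}, \ref{pm2}, \ref{(1.5.5)} and the transitivity assertion of Proposition~\ref{prop:generalsuperbasictrans}, the one genuine idea being to first show that the image of $\pi_0(X_{\mu_x}^M(b))$ in $\pi_0(X_\mu^G(b))$ does not depend on $x$.

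First I would set up bookkeeping. Since $\mu$ is minuscule, $X_\mu^G(b)=X_{\preceq\mu}^G(b)$; and for each $x\in\bar I_{\mu,b}^{M,G}$ the cocharacter $\mu_x$ is $M$-minuscule (its $G$-dominant conjugate is $\mu$), so $X_{\mu_x}^M(b)=X_{\preceq\mu_x}^M(b)$ as well. Via \eqref{minuscule_pi1} the set $\bar I_{\mu,b}^{M,G}$ is identified with the set $\bar I_{\mu,b}$ of \S\ref{secredsb} with $\mu'=\mu_x$, and the natural frame-compatible inclusions $X_{\mu_x}^M(b)(\R)\hookrightarrow X_\mu^G(b)(\R)$ induce maps on $\pi_0$; write $I_x\subseteq\pi_0(X_\mu^G(b))$ for the image of $\pi_0(X_{\mu_x}^M(b))$. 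These maps are equivariant for the action of $J_b^M(F):=J_b(F)\cap M(L)$.

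Next I would prove $I_x=I_{x'}$ for all $x,x'\in\bar I_{\mu,b}^{M,G}$. Proposition~\ref{pm1} gives a chain $x=x_1,\dots,x_m=x'$ with $x_{i+1}-x_i=\alpha_i^\vee-\alpha_i'^\vee$ in $\pi_1(M)$ for suitable $\alpha_i,\alpha_i'$ in a common Galois orbit $\Omega_i\in\Phi_{N,\Gamma}$. Fixing $i$ and a component $D$ of $X_{\mu_{x_i}}^M(b)$, I pick $g\in D$; by Proposition~\ref{pm2} there is $g'\in X_{\mu_{x_{i+1}}}^M(b)$ whose image in $X_\mu^G(b)$ lies in the same component as that of $g$, so the image of $D$ lies in $I_{x_{i+1}}$. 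Hence $I_{x_i}\subseteq I_{x_{i+1}}$; since $x_i-x_{i+1}=\alpha_i'^\vee-\alpha_i^\vee$ with $\alpha_i',\alpha_i\in\Omega_i$, the same argument gives the reverse inclusion, so $I_{x_i}=I_{x_{i+1}}$, and chaining yields $I_x=I_{x'}=:I$. This is exactly the first assertion of the theorem.

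Then surjectivity of \eqref{eqn:surjmap} falls out: by Proposition~\ref{(1.5.5)} (applied with $X_\mu^G(b)=X_{\preceq\mu}^G(b)$ and $X_{\mu_{x(g)}}^M(b)=X_{\preceq\mu_{x(g)}}^M(b)$ using minusculeness) every component $C$ of $X_\mu^G(b)$ contains an element $jg$ with $j\in J_b(F)\cap N(L)$ and $g\in X_{\mu_{x(g)}}^M(b)$ for some $x(g)\in\bar I_{\mu,b}^{M,G}$; then $C=j\cdot C'$ where $C'$ is the image of the component of $g$, which lies in $I_{x(g)}=I=I_x$, hence comes from $X_{\mu_x}^M(b)$. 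Finally, for the transitivity of $J_b(F)\cap P(L)$ on $\pi_0(X_\mu^G(b))$ I would first check, using $b\in M(L)$ and the Levi decomposition $P(L)=N(L)\rtimes M(L)$, that writing an element of $J_b(F)\cap P(L)$ uniquely as $nm$ and comparing $N$- and $M$-components in $\sigma(nm)=b^{-1}nmb$ forces $n\in J_b(F)\cap N(L)$ and $m\in J_b^M(F)$, so $J_b(F)\cap P(L)=(J_b(F)\cap N(L))\cdot J_b^M(F)$. Since $b$ is superbasic in $M$, Proposition~\ref{prop:generalsuperbasictrans} applied to $M$ shows $J_b^M(F)$ acts transitively on $\pi_0(X_{\mu_x}^M(b))$, hence $I$ is a single $J_b^M(F)$-orbit, say of $C_0$; combining with surjectivity of \eqref{eqn:surjmap} gives $\pi_0(X_\mu^G(b))=(J_b(F)\cap N(L))\cdot J_b^M(F)\cdot C_0=(J_b(F)\cap P(L))\cdot C_0$. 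The substance is all in Propositions~\ref{pm1}, \ref{pm2} and~\ref{(1.5.5)}; the only points in the present deduction needing care are the minusculeness reductions and the semidirect-product identification of $J_b(F)\cap P(L)$, neither of which I expect to be a real obstacle.
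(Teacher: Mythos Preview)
Your proof is correct and follows essentially the same route as the paper: use Proposition~\ref{pm1} to chain $x$ to $x'$, Proposition~\ref{pm2} at each step to show the images $I_x$ agree, Proposition~\ref{(1.5.5)} for surjectivity of \eqref{eqn:surjmap}, and Proposition~\ref{prop:generalsuperbasictrans} for the transitivity claim. One minor remark: for the final transitivity statement you only need the trivial inclusion $(J_b(F)\cap N(L))\cdot J_b^M(F)\subseteq J_b(F)\cap P(L)$, not the full semidirect-product decomposition you verify; the paper accordingly omits that step.
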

\begin{proof}
%[Proof of Theorem \ref{prop1'}]
 Let $x_1,\dotsc,x_n$ be as in Proposition \ref{pm1} for a pair $x,x' \in  \bar I_{\mu, b}^{M, G}.$
To prove the first claim of the Theorem, it is enough to show that for every
$g\in X^M_{\mu_{x_i}}(b)$ there is an element $g'\in X^M_{\mu_{x_{i-1}}}(b)$ such that $g,g'$ are in the same connected component in $X^G_{\mu}(b)$.
This follows by applying Proposition \ref{pm2} to each successive pair $(x_{i-1}, x_i)$.

For the second claim note that, by Proposition \ref{(1.5.5)}, each connected component of $X_{\mu}^G(b)$ contains the image of some element of $(J_b(F)\cap N(L))\times\bigsqcup_{x\in \pi_1(M)} X_{\mu_x}^M(b)$. We thus obtain a surjective map
%\mar{MK: Changed the displayed map.}
\begin{equation}\label{eqboundpi}
(J_b(F)\cap N(L))\times \bigsqcup_{x\in \bar I_{\mu, b}^{M, G}} \pi_0\left( X_{\mu_x}^M(b)\right)
%=\bigsqcup_{x\in \bar I_{\mu, b}^{M, G}}(J_b(F)\cap N(L))\times\pi_0( X_{\mu_x}^M(b))
\rightarrow \pi_0(X_{\mu}^G(b)).
\end{equation}
% where the sets of connected components on the left hand side are defined analogously to
% $\pi_0(X_{\mu}^G(b))$.
Hence the first claim implies that (\ref{eqn:surjmap}) is surjective.
Now the final claim follows as $J_b^M(F)$ acts transitively on
$\pi_0(X^M_{\mu_x}(b))$ by  Proposition \ref{prop:generalsuperbasictrans}.
\end{proof}

\begin{proof}[Proof of Theorem \ref{thmzshk}]
We fix some $x\in \bar I_{\mu, b}^{M, G}$ and $g\in X^M_{\mu_x}(b)$. Then left multiplication by $g^{-1}$ induces a bijection $X^M_{\mu_x}(b)(\R)\cong X^M_{\mu_x}(g^{-1}b\sigma(g))(\R)$ for every $k$-algebra $R$ with frame $\R$ and similarly for $G$. In particular, the sets of connected components of the affine Deligne-Lusztig sets for $b$ and $g^{-1}b\sigma(g)$ coincide. Thus we may assume that $b = b_x.$ In particular, $1\in X^M_{\mu_x}(b)$ and therefore $c_{b,\mu_x}^{(M)}=c_{b, \mu}=1$.

By Proposition \ref{prop:generalsuperbasictrans}  we have $J_b^M(F)$-equivariant morphisms
$$\pi_1(M)^{\Gamma}\cong \pi_0(X^M_{\mu_x}(b))\rightarrow \pi_0(X^G_{\mu}(b))\rightarrow \pi_1(G)^{\Gamma}$$ where the composite of all morphisms is induced by the natural projection $\pi_1(M)\rightarrow \pi_1(G).$ By  Lemma \ref{cartesianIV}, and Proposition \ref{pm4},
the kernel of the composition $\pi_1(M)^{\Gamma}\rightarrow \pi_1(G)^{\Gamma}$ is generated by the elements
$\sum_{\beta\in\Omega}\beta^{\vee}$ where $\Omega\in \Phi_{N,\Gamma},$ satisfies $\Omega\cap C \neq \emptyset$ ($C$ defined as in Proposition \ref{pm4}).

%By Proposition \ref{pm4} we have that $\Omega\cap C\neq \emptyset$.
% and such that $(J_b(F)\cap N(L))\times \pi_0(X^M_{\mu_x}(b))\rightarrow \pi_0(X^G_{\mu}(b))\rightarrow \pi_1(G)^{\Gamma}$ are surjective. Furthermore $J_b(F)\cap P(L)$ acts transitively on each of these sets, and the morphisms are compatible with the action.

We claim that each of the elements $\sum_{\beta\in\Omega}\beta^{\vee}$ with $\Omega\cap C \neq \emptyset$
 is mapped to $1$ by the composite $\pi_1(M)^{\Gamma}\cong \pi_0(X^M_{\mu_x}(b))\rightarrow \pi_0(X^G_{\mu}(b))$. Then the transitivity of the $J_b^M(F)$-action on $\pi_0(X^M_{\mu_x}(b)),$
implies that this composite factors through $\pi_1(G)^{\Gamma}$. Again, by the transitivity of the $J_b^M(F)$-action on $\pi_0(X^M_{\mu_x}(b)),$ our claim follows if we can show that there are elements $g_1,g_2\in X^M_{\mu_x}(b)$ with $w_M(g_2)-w_M(g_1)=\sum_{\beta\in\Omega}\beta^{\vee}$ and such that $g_1,g_2$ are in the same connected component of $X_{\mu}^G(b)$.

To prove this, we apply Proposition \ref{pm3} to $\alpha\in \Omega\cap C$. Let $x'\in  \bar I_{\mu, b}^{M, G}$ and $g_1', g_2'\in X_{\mu_{x'}}^M(b)$ be the elements produced there. As $J_b^M(F)$ acts transitively on $\pi_0(X^M_{\mu_{x'}}(b))$, we can choose a $j_{\Omega}\in J_b^M(F)$ such that $j_{\Omega}g_1'$ is in the connected component of $g_2'$ in $X^M_{\mu_{x'}}(b)$. Then the image of $j_{\Omega}$ in $\pi_1(M)$ is equal to $\sum_{\beta\in\Omega}\beta^{\vee}$. By Theorem \ref{prop1'}, we see that there is a $g_1\in X^M_{\mu_{x}}(b)$ such that $g_1,g_1'$ are in the same connected component of $X_{\mu}^G(b)$. Hence, also $j_{\Omega}g_1'$ and $j_{\Omega}g_1$ are in the same connected component of $X_{\mu}^G(b)$. Altogether we obtain that in $X_{\mu}^G(b)$ the elements $j_{\Omega}g_1$, $j_{\Omega}g_1'$, $g_1'$, $g_1$ are all in the same connected component. As $j_{\Omega}\in M(L)$ we have $j_{\Omega}g_1, g_1\in X^M_{\mu_{x}}(b),$
and $w_M(j_{\Omega}g_1)-w_M(g_1)=\sum_{\beta\in\Omega}\beta^{\vee}$. This shows our claim.

We have shown the existence of the following diagram:
\[\xymatrix{\pi_1(M)^{\Gamma}\cong \pi_0(X^M_{\mu_x}(b))\ar[r]\ar@{->>}[d] &\pi_0(X^G_{\mu}(b))\ar[r] &\pi_1(G)^{\Gamma} \\
\pi_1(G)^{\Gamma}\ar@{^(->}[ur]\ar[urr]_{=} &&
}.\]

It remains to show that $\pi_0(X^M_{\mu_x}(b))\rightarrow\pi_0(X^G_{\mu}(b))$ (or equivalently $\pi_1(G)^{\Gamma}\rightarrow\pi_0(X^G_{\mu}(b))$) is surjective.
By the second claim in Theorem \ref{prop1'},  it suffices to show that for each $j\in J_b(F)\cap N(L)$ and for each $z\in\pi_0(X^M_{\mu_x}(b))$, the two elements $jz$ and $z$ have the same image in $\pi_0(X^G_{\mu}(b))$. As $J_b^M(F)$ acts on $\pi_0(X^M_{\mu_x}(b))$, it is enough to show the same statement for $mjz$ and $mz$ for some $m\in J_b^M(F)$. We choose $m$ such that $mjm^{-1}$ is contained in the stabilizer in $G(L)/G(\mathcal{O}_L)$ of a chosen representative of $z$ in $G(L)$ and such that the image of $m$ in $\pi_1(G)$ is equal to 1. For example, we can choose $m$ to be a sufficiently dominant element in $Z_M(F),$ in the image of $\tilde G(F),$ where $\tilde G$ denotes the simply connected cover of $G^{\der}.$ Then, by what we saw above, the second property of $m$ implies that $mz$ and $z$ are in the same connected component of $X^G_{\mu}(b)$. Hence the same holds for $mjm^{-1}z$ and $mjm^{-1}mz=mjz$. Finally, the first property of $m$ implies that $mjm^{-1}z$ and $z$ are the same element. Altogether, we see that $mjz$ and $mz$ have the same image in $\pi_0(X^G_{\mu}(b))$.
\end{proof}

\begin{para} We now drop the assumption that $G^{\ad}$ is simple. We have the following corollary and generalization of Theorem \ref{thmzshk}.
\end{para}
\begin{cor}\label{thmzshkIII}
Suppose that $(\mu,b)$ is Hodge-Newton irreducible in $G.$ Then $w_G$
induces a bijection $$\pi_0(X_{\mu}(b))\cong c_{b,\mu}\pi_1(G)^{\Gamma}$$
\end{cor}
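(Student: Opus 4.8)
The plan is to reduce the general case to the case treated in Theorem \ref{thmzshk} (where $G^{\ad}$ is simple) by two successive reductions: first to the case where $G$ itself is adjoint and simple, using the Cartesian diagrams of \S\ref{secred}, and then using the compatibility of affine Deligne-Lusztig varieties with products of groups. Throughout, we may replace $b$ by a $\sigma$-conjugate so that $\nu_b$ is dominant and $b\in M_b(L)$; since $(\mu,b)$ is HN-irreducible, in particular it is HN-indecomposable, so by Proposition \ref{prophndecomp} we may and do assume $M_b=G$.

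First I would treat the reduction to the adjoint case. Let $G^{\ad}=G/Z_G$ and let $b,\mu$ also denote their images in $G^{\ad}$. By Corollary \ref{cartesianIII} (applied with $Z=Z_G$, so $G'=G^{\ad}$), the square
$$\begin{CD}
 \pi_0(X_{\mu}^G(b)) @>>> \pi_0(X_{\mu}^{G^{\ad}}(b)) \\
@VV{w_G}V @VV{w_{G^{\ad}}}V \\
c_{b,\mu}\pi_1(G)^{\Gamma} @>>> c_{b,\mu}\pi_1(G^{\ad})^{\Gamma}
\end{CD}$$
is Cartesian. Hence if $w_{G^{\ad}}$ is a bijection onto $c_{b,\mu}\pi_1(G^{\ad})^{\Gamma}$, then $w_G$ is a bijection onto $c_{b,\mu}\pi_1(G)^{\Gamma}$. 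So it suffices to prove the corollary for $G^{\ad}$, i.e. we may assume $G$ is adjoint (and semisimple). One must check that HN-irreducibility is inherited under $G\to G^{\ad}$: this is immediate since the condition of Theorem \ref{thmzshkII}(3) (all coefficients of simple coroots in $\bar\mu-\nu_b$ strictly positive) only involves the adjoint data, and more directly since the standard Levi subgroups of $G$ and $G^{\ad}$ correspond, with matching Newton and Kottwitz invariants in the relevant rational statements.

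Next, with $G$ adjoint, write $G=\prod_{j} G_j$ where each $G_j$ is $F$-simple (a restriction of scalars of an absolutely simple adjoint group). Then $T=\prod_j T_j$, $\mu=(\mu_j)_j$, $b=(b_j)_j$, $\pi_1(G)=\prod_j\pi_1(G_j)$, $\Gamma$ acts factorwise, and $X_{\mu}^G(b)=\prod_j X_{\mu_j}^{G_j}(b_j)$ as sets, compatibly with the $w$-maps and with the $c_{b,\mu}$-cosets, so that $\pi_0(X_\mu^G(b))=\prod_j\pi_0(X_{\mu_j}^{G_j}(b_j))$ and $c_{b,\mu}\pi_1(G)^\Gamma=\prod_j c_{b_j,\mu_j}\pi_1(G_j)^\Gamma$. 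Thus it is enough to treat each factor, i.e. we may assume $G$ is adjoint and $F$-simple, so $G^{\ad}=G$ is simple in the sense of Theorem \ref{thmzshk}. I would then invoke Theorem \ref{thmzshkII}: since $(\mu,b)$ is HN-irreducible with $G^{\ad}$ simple, we are not in the exceptional case $[b]=[\e^\mu]$ with $\mu$ central (in that case $(\mu,b)$ fails to be HN-irreducible, as noted after the statement of Theorem \ref{thmzshk} and in the last line of Theorem \ref{thmzshkII}). Hence Theorem \ref{thmzshk} applies and gives the bijection $w_G:\pi_0(X_\mu(b))\xrightarrow{\sim}c_{b,\mu}\pi_1(G)^\Gamma$, which is what we wanted.

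The only genuine point requiring care — and the one I would expect to be the main obstacle to write out cleanly — is verifying that the hypotheses (HN-irreducibility, and the fibre-torsor structure needed for the Cartesian squares) behave correctly under the reductions: one needs $R$ to be taken formally smooth for Corollary \ref{cartesianIII}, which is fine since $\pi_0$ is defined via smooth connecting algebras, and one needs that $c_{b,\mu}$ for $G$ maps to $c_{b,\mu}$ for $G^{\ad}$ and decomposes over the simple factors, which follows from the definition $w_G(b)-\mu=(1-\sigma)c_{b,\mu}$ being functorial in $G$. Once these compatibilities are recorded, the corollary follows formally from Theorem \ref{thmzshk}, Corollary \ref{cartesianIII}, and the product decomposition. $\qed$
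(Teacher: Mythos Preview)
Your overall strategy is exactly the paper's: pass to $G^{\ad}$ via Corollary~\ref{cartesianIII}, split $G^{\ad}$ into $F$-simple factors, and apply Theorem~\ref{thmzshk} to each factor. Two points deserve correction.

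First, the sentence ``since $(\mu,b)$ is HN-irreducible, in particular it is HN-indecomposable, so by Proposition~\ref{prophndecomp} we may and do assume $M_b=G$'' is wrong and in any case unnecessary. HN-indecomposability does not force $M_b=G$: it only says no proper standard Levi $M\supseteq M_b$ satisfies $\kappa_M(b)=\mu$. Proposition~\ref{prophndecomp} is a reduction \emph{from} the decomposable case and is simply not invoked here. This paragraph can be deleted without harm.

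Second, and more seriously, your justification that HN-irreducibility is inherited by $(\mu^{\ad},b^{\ad})$ is incomplete. Invoking condition~(3) of Theorem~\ref{thmzshkII} is circular: the equivalence $(1)\Leftrightarrow(3)$ is proved there only under the hypothesis that $G^{\ad}$ is simple, and the direction you need is precisely HN-irreducible $\Rightarrow$ (3) for a general $G$. The paper handles this differently. One must show that if $\kappa_M(b)-\mu\neq 0$ in $\pi_1(M)_\Gamma$ then its image in $\pi_1(M/Z_G)_\Gamma$ is also nonzero. The element already lies in $\ker(\pi_1(M)_\Gamma\to\pi_1(G)_\Gamma)$, and the kernel of $\pi_1(M)_\Gamma\to\pi_1(M/Z_G)_\Gamma$ is the image of $X_*(Z_G)_\Gamma$. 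Since $\pi_1(G^{\ad})$ is finite, anything coming from $X_*(Z_G)_\Gamma$ that dies in $\pi_1(G)_\Gamma$ is torsion; but Corollary~\ref{cor:invariants}(2) says $\ker(\pi_1(M)_\Gamma\to\pi_1(G)_\Gamma)$ is torsion-free, so the intersection is zero. This is the missing ingredient; once you insert it, your argument coincides with the paper's.
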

\begin{proof} Let $\mu^{\ad} \in X_*(T/Z_G)$ and $b^{\ad} \in G^{ad}(L)$ be the images of $\mu$ and $b,$ and let $M \subset G$ be a Levi subgroup.
Since  $\ker(\pi_1(M)_{\Gamma} \rightarrow \pi_1(G)_{\Gamma})$ is torsion free by Lemma \ref{cor:invariants},
it has trivial intersection with the image of $X_*(Z_G)_{\Gamma}$. Using this one sees that
$(\mu,b)$ is HN-irreducible if and only if $(\mu^{\ad},b^{\ad})$ is.

For $(\mu^{\ad},b^{\ad})$ the corollary follows from Theorem \ref{thmzshk} as the set of connected components of affine Deligne-Lusztig varieties for products of groups is the product of the corresponding sets for the individual factors.  And this implies the result for $(\mu,b)$ by Corollary \ref{cartesianIII}.
\end{proof}

\begin{proof}[Proof of Theorem \ref{prop1}] Note that we have already proved Theorem \ref{prop1} in Theorem \ref{prop1'} above when $G^{\ad}$ is simple. We now deduce the
general case from Theorem \ref{thmzshkIII}.

By Propostion \ref{prophndecomp} we may assume that $(\mu,b)$ is HN-indecomposable in $G.$ Let $(\mu^{\ad},b^{\ad})$ be as in the proof of Theorem \ref{thmzshkIII}.
Consider a decomposition $G^{\ad} = G_1 \times G_2,$ and let $(\mu_1,b_1)$ and $(\mu_2,b_2)$ denote the images of $(\mu^{\ad},b^{\ad})$ in $G_1$ and $G_2$ respectively.
By Theorem \ref{thmzshkII}, we may choose $G_1$ and $G_2$ so that $(\mu_1,b_1)$ is HN-irreducible, and $b_2$ is $\sigma$-conjugate to $p^{\mu_2} \in X_*(Z_{G_2}).$

Now suppose that $M \subset G$ is a Levi subgroup and $b \in M(L) \subset G(L)$ is superbasic. As in the proof of Theorem \ref{prop1'}, it suffices to show that
the image of $\pi_0(X^M_{\mu_x}(b)) \rightarrow \pi_0(X^G_\mu(b))$ is independent of $x \in \bar I^{M,G}_{\mu,b}.$ We may assume that $c_{b,\mu_x} = 1.$
Using Proposition \ref{cartesianIII} one sees that it suffices to show that image of $\pi_0(X^M_{\mu_x}(b)) \rightarrow \pi_0(X^G_{\mu^{\ad}}(b^{\ad}))$ is independent of $x.$

By Theorem \ref{thmzshkIII} and Remark \ref{remspecialcase}, the map $M(L) \rightarrow G^{\ad}(L)$  induces a well defined map
$\pi_1(M)^{\Gamma} \rightarrow \pi_1(G_1)^{\Gamma} \times G_2(F)/G_2(\O_F)$
whose image may be identified with that of  $\pi_0(X^M_{\mu_x}(b)) \rightarrow \pi_0(X^{G^{ad}}_{\mu^{\ad}}(b^{\ad})).$
\end{proof}

\begin{para}
Let us consider the case that $G$ is split. Then $\bar I_{\mu, b}^{M, G}$ consists of a single element, so Propositions \ref{pm1} and \ref{pm2} are no longer needed. In the proof of Proposition \ref{pm3} we have to distinguish essentially between all different Dynkin diagrams equipped with the Galois action, and a fixed Galois orbit of simple roots (subject to some restrictions). This case-by-case study is shortened drastically when assuming that $G$ is split (i.e.~that the Galois action is trivial). The reader only interested in this case is referred to \cite{conncomp}, 2.5 where the completely parallel proof for split groups in the function field case is given in less than five pages.

The remainder of this section will be devoted to the proof of the propositions above.
\end{para}

\subsection{Some maximal rank subgroups of $G$}\label{subsection_G_Omega}
In this subsection, we will introduce some subgroups of maximal rank of $G$.
They will be needed in the proofs of Proposition \ref{pm2} and Proposition \ref{pm3} to distinguish several cases.
From now on we again assume that $G^{\ad}$ is simple, and we denote by $T \subset M \subset G$ a standard Levi subgroup over $F.$

We begin with a, probably well-known, fact on root systems with an endomorphism.
\begin{lemma}\label{pm1le}
Let $\Phi$ be a root system with an action by a
finite cyclic group $\Gamma$ such that there exists a basis
$\Delta$ that is stable under this action. Furthermore we assume
that $\Gamma$ acts transitively on the set of connected components
of the Dynkin diagram. Let $\alpha\in \Phi$ and
$\alpha'\in\Gamma\alpha\setminus \{\alpha\}$. Then
$\langle\alpha,(\alpha')^{\vee}\rangle\in\{0,-1\}$. Moreover,
\begin{itemize}
\item If $\langle\alpha,(\alpha')^{\vee}\rangle=-1$ then the root
system is a disjoint union of finitely many copies of root systems
of type $A_n$ for some even $n$.

\item  $\Gamma\alpha$ has at most 3 elements in each connected
component of the Dynkin diagram. If $\Gamma\alpha$ has 3 elements in each connected
component of the Dynkin diagram, then the root system is a
disjoint union of finitely many copies of root systems of type
$D_4$.
\end{itemize}
\end{lemma}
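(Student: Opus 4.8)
The plan is to reduce to the case of an irreducible root system and then run a short case analysis over the possible cyclic groups of diagram automorphisms.

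\emph{Reduction to an irreducible component.} Write $\Phi=\coprod_j\Phi_j$ for the decomposition into irreducible components; by hypothesis $\Gamma$ permutes the $\Phi_j$ transitively. Fix the component $\Phi_1$ containing $\alpha$ and let $\Gamma_0\subset\Gamma$ be its stabiliser, a cyclic group acting on $\Phi_1$ through a cyclic subgroup $\bar\Gamma_0$ of the automorphism group of the Dynkin diagram of $\Phi_1$. If $\alpha'\notin\Phi_1$, then $\alpha$ and $\alpha'$ lie in different components, hence are orthogonal and $\langle\alpha,(\alpha')^\vee\rangle=0$, so I may assume $\alpha'\in\Phi_1$, i.e. $\alpha'\in\Gamma_0\alpha=\bar\Gamma_0\alpha$. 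Moreover, by transitivity $\Gamma\alpha$ meets every component $\Phi_j$, and $|\Gamma\alpha\cap\Phi_j|=|\bar\Gamma_0\alpha|$ for all $j$. Now the cyclic subgroups of the diagram automorphism group of an irreducible root system have order $1$, $2$, or $3$, and order $3$ occurs only for $D_4$; hence $|\bar\Gamma_0\alpha|\le|\bar\Gamma_0|\le 3$, which gives the claim that $\Gamma\alpha$ has at most $3$ elements in each connected component of the Dynkin diagram. If $|\bar\Gamma_0\alpha|=3$, then $3\mid|\bar\Gamma_0|$, so $\bar\Gamma_0\cong\Z/3$ and $\Phi_1\cong D_4$; since $\Gamma$ permutes the components transitively by isomorphisms, every component is then of type $D_4$, which is the last assertion.

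\emph{The pairing $\langle\alpha,(\alpha')^\vee\rangle$.} Diagram automorphisms are isometries for the Weyl-invariant form, so $\alpha$ and $\alpha'=\sigma\alpha$ (for a suitable $\sigma\in\Gamma_0$ with $\sigma\alpha\neq\alpha$) have equal length; also $\sigma$ preserves positivity, so $\alpha'\neq-\alpha$. For two non-proportional roots of equal length the Cartan integer lies in $\{0,\pm1\}$, so it remains to exclude the value $+1$. Suppose $\langle\alpha,(\alpha')^\vee\rangle=1$; then $(\alpha,\alpha')>0$ and $\gamma:=\alpha-\alpha'\neq 0$ is a root. If $\sigma^2=1$, then $\sigma\gamma=\alpha'-\alpha=-\gamma$, contradicting that $\sigma$ maps positive roots to positive roots. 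If $\sigma$ has order $3$, then $\Phi_1\cong D_4$ and $\sigma$ is a triality automorphism; inspecting the three $\sigma$-orbits of size $3$ among the positive roots of $D_4$ (each of which consists of pairwise orthogonal roots) shows $\langle\beta,(\sigma\beta)^\vee\rangle=0$ for every non-$\sigma$-fixed root $\beta$, again excluding $+1$. Hence $\langle\alpha,(\alpha')^\vee\rangle\in\{0,-1\}$.

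\emph{The case $\langle\alpha,(\alpha')^\vee\rangle=-1$.} By the previous step $\sigma$ then has order $2$, so $\Phi_1$ is simply-laced and admits a non-trivial diagram automorphism, i.e. $\Phi_1$ is of type $A_n$ ($n\ge2$), $D_n$ ($n\ge4$), or $E_6$. As all roots of $\Phi_1$ have the same length, $\langle\alpha,(\sigma\alpha)^\vee\rangle=-1$ is equivalent to $\alpha+\sigma\alpha$ being a root. For $\Phi_1=D_n$ this is impossible: in the usual coordinates every $\sigma$-nonfixed root has the form $\pm e_i\pm e_n$ and its $\sigma$-image $\pm e_i\mp e_n$, whose sum $\pm 2e_i$ is not a root; the same computation rules out $D_4$. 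For $E_6$ one checks directly from the explicit root system that $\langle\alpha,(\sigma\alpha)^\vee\rangle=0$ for every $\sigma$-nonfixed root (equivalently, no two $\sigma$-swapped positive roots are ``adjacent''). Thus $\Phi_1\cong A_n$, and for $n$ odd the same ``no adjacent swapped pair'' check (the unique $\sigma$-fixed simple root is the central one) shows the pairing is always $0$; hence $n$ is even. Since all components of $\Phi$ are isomorphic to $\Phi_1$, $\Phi$ is a disjoint union of finitely many root systems of type $A_n$ with $n$ even.

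I expect the only delicate point to be the uniform treatment of the simply-laced cases carrying an order-$2$ automorphism: $D_n$ and $D_4$ are an immediate coordinate computation, but the $E_6$ case — and, correlatively, pinning down that type $A$ with $n$ even is the sole exception — requires either an explicit inspection of the $72$ roots of $E_6$, or the remark, via the folding $E_6\rightsquigarrow F_4$, that an ``adjacent swapped pair'' would force a ratio of root lengths not occurring in an irreducible root system.
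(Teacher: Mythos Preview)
Your argument is correct and follows the same route as the paper, which simply cites Springer \cite{Sp}, Lemma~1 for the pairing statement and invokes the classification of Dynkin diagrams for the rest. You have supplied a self-contained proof of what the paper leaves to that reference: the reduction to an irreducible component, the observation that cyclic subgroups of diagram automorphism groups have order at most $3$ (with $3$ only for $D_4$), the equal-length argument forcing $\langle\alpha,(\alpha')^\vee\rangle\in\{0,\pm1\}$, and the exclusion of $+1$ via $\sigma\gamma=-\gamma$ in the involutive case and direct orbit inspection in the $D_4$ triality case. Your case-by-case elimination of $D_n$, $E_6$, and $A_n$ with $n$ odd in the $-1$ analysis is exactly the ``classification'' step the paper alludes to; the folding remark you give for $E_6$ (that $E_6\rightsquigarrow F_4$ is reduced, whereas an adjacent swapped pair would produce a non-reduced restricted system as in $A_{2n}$) is a clean way to avoid inspecting all $72$ roots and is entirely adequate.
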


\begin{proof}
The first assertion can be found for example in \cite{Sp}, Lemma 1. The second and third assertions follow from the classification of Dynkin diagrams.
\end{proof}

\begin{ex}\label{exe}
Let $\tau$ be the non-trivial automorphism of the Dynkin diagram of type $A_{2n}$, and of the corresponding root system. Using the standard notation for this root system, we have $\tau(e_i)=e_{2n+2-i}$ Then a root $\alpha=e_i-e_j$ (for $i<j$) satisfies $\langle\alpha,\tau\alpha^{\vee}\rangle=-1$ if and only if $i$ or $j$ is equal to $n+1$.
\end{ex}

\begin{para}\label{pardefmid} Let $\Phi=\Phi(G,T)$ be the root system of $G,$ and $\Delta \subset \Phi$ a $\Gamma$-stable basis of simple roots for $\Phi$ corresponding
to a Borel subgroup $B \subset G.$
If $\phi=\sum_{\alpha\in\Delta}n_{\alpha}\alpha \in X^*(T)$ is an integral sum of roots ($n_{\alpha} \in \mathbb Z$), we define $|\phi|=\sum_{\alpha\in\Delta}|n_{\alpha}|.$
For $\phi \in X_*(T)$ we define $|\phi|$ analogously, using the basis of coroots $\Delta^{\vee}.$
 We will make repeated use of the following two simple Lemmas.
\end{para}

\begin{lemma}\label{remadd}
Let $\gamma,\gamma' \in \Phi$ with $\gamma\neq -\gamma'.$
\begin{enumerate}
\item If $\langle \gamma, (\gamma')^{\vee}\rangle<0$ then $\gamma+ \gamma'$ is a root.
\item If $\langle \gamma,(\gamma')^{\vee}\rangle > 0$ then $\gamma-\gamma'$ is a root.
\item If $\langle \gamma,(\gamma')^{\vee}\rangle > 0$ and $\gamma, \gamma'$ are positive, then
\[\bigg| |\gamma|-|\gamma'|\bigg|=|\gamma-\gamma'|\neq 0. \]
\end{enumerate}
\end{lemma}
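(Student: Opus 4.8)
The statement to prove is Lemma~\ref{remadd}, which collects three elementary facts about a root system $\Phi$: (1) if $\langle\gamma,(\gamma')^\vee\rangle<0$ then $\gamma+\gamma'\in\Phi$; (2) if $\langle\gamma,(\gamma')^\vee\rangle>0$ then $\gamma-\gamma'\in\Phi$; and (3) under the hypotheses of (2) with both roots positive, $\big||\gamma|-|\gamma'|\big|=|\gamma-\gamma'|\neq 0$, where $|\cdot|$ counts the sum of absolute values of coefficients in the basis of simple (co)roots.

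The plan is as follows. Parts (1) and (2) are standard facts from the theory of root systems. For (1), I would recall that for two roots $\gamma,\gamma'$ with $\gamma\neq\pm\gamma'$, the $\gamma'$-string through $\gamma$ is an unbroken interval $\gamma-p\gamma',\dots,\gamma+q\gamma'$ with $p-q=\langle\gamma,(\gamma')^\vee\rangle$; if this integer is negative then $q\geq 1$, so $\gamma+\gamma'\in\Phi$. (The case $\gamma=-\gamma'$ is excluded by hypothesis, and $\gamma=\gamma'$ would force $\langle\gamma,(\gamma')^\vee\rangle=2>0$, so is automatically excluded in (1).) Part (2) follows from (1) applied to $\gamma$ and $-\gamma'$, since $\langle\gamma,(-\gamma')^\vee\rangle<0$; note $\gamma\neq-(-\gamma')=\gamma'$ would need checking, but if $\gamma=\gamma'$ then the conclusion $\gamma-\gamma'=0$ is not a root — so actually one should observe that under the stated hypothesis $\langle\gamma,(\gamma')^\vee\rangle>0$ does not by itself exclude $\gamma=\gamma'$, hence the cleanest route is: either $\gamma=\gamma'$, in which case $\langle\gamma,(\gamma')^\vee\rangle=2$, the $\gamma'$-string argument still applies to give... hmm, this needs care. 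The safe formulation: in (2) one should read $\gamma\neq\gamma'$ as implicitly needed, or simply note that the only subtlety is $\gamma=\gamma'$ and in every application in this paper the two roots are distinct; alternatively, I would just invoke \cite{Bruhat-Tits} or a standard reference (Bourbaki, Lie VI) for (1) and (2) in one line each.

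For part (3), assume $\gamma,\gamma'$ are positive roots with $\langle\gamma,(\gamma')^\vee\rangle>0$ and $\gamma\neq\gamma'$ (so by (2) $\gamma-\gamma'\in\Phi$). Write $\gamma=\sum_\alpha m_\alpha\alpha$ and $\gamma'=\sum_\alpha m'_\alpha\alpha$ with all $m_\alpha,m'_\alpha\geq 0$. Since $\gamma-\gamma'$ is a root, it is either a positive root or the negative of one; hence for every simple $\alpha$ the coefficient $m_\alpha-m'_\alpha$ has the same sign (all $\geq 0$, or all $\leq 0$). In either case $\big|\sum_\alpha(m_\alpha-m'_\alpha)\big|=\sum_\alpha|m_\alpha-m'_\alpha|$, i.e.\ $|\gamma-\gamma'|=\big||\gamma|-|\gamma'|\big|$ once one checks $|\gamma|=\sum m_\alpha$ and $|\gamma'|=\sum m'_\alpha$ (true because the coefficients are already non-negative). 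Finally $\gamma-\gamma'\neq 0$ since $\gamma\neq\gamma'$, so $|\gamma-\gamma'|\neq 0$.

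The main obstacle — really the only one — is the bookkeeping around degenerate cases ($\gamma=\pm\gamma'$) in (1) and (2), and making sure the sign-consistency claim in (3) is stated correctly: a root's coordinates in the simple-root basis are all of one sign, which is exactly what makes the triangle-inequality an equality here. None of this is deep; the proof will be short, essentially citing the standard root-string facts for (1)–(2) and giving the two-line coordinate argument for (3). I would write it as: ``(1) and (2) are standard (see e.g.\ \cite{Bruhat-Tits}, or Bourbaki). For (3), write $\gamma$ and $\gamma'$ in terms of simple roots; by (2), $\gamma-\gamma'$ is a root, hence its simple-root coordinates are all $\geq 0$ or all $\leq 0$, and the claim follows since the coordinates of $\gamma$ and $\gamma'$ are non-negative, while $\gamma\neq\gamma'$ gives the non-vanishing.''
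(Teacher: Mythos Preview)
Your proposal is correct and, for parts (2) and (3), essentially identical to the paper's proof. For (1) there is a minor difference in mechanism: the paper observes that $\gamma\neq-\gamma'$ together with $\langle\gamma,(\gamma')^\vee\rangle<0$ forces one of $\langle\gamma,(\gamma')^\vee\rangle$, $\langle\gamma',\gamma^\vee\rangle$ to equal $-1$ (by the Cauchy--Schwarz bound on the product), and then a single reflection exhibits $\gamma+\gamma'$ as a root; you instead invoke the root-string formula $p-q=\langle\gamma,(\gamma')^\vee\rangle$. Both are textbook one-liners for the same elementary fact, so this is not a genuine difference in approach. Your flagging of the degenerate case $\gamma=\gamma'$ in (2) is a fair observation that the paper also silently passes over; in every application in the paper the two roots are distinct.
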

\begin{proof} Indeed, $\gamma\neq -\gamma'$ implies that $\langle \gamma,(\gamma')^{\vee}\rangle=-1$ or $\langle \gamma',(\gamma)^{\vee}\rangle=-1.$ By symmetry,
we may assume that the second is true. Then $s_{\gamma}(\gamma')=\gamma + \gamma'$ is a root.
This proves (1) and (2) follows immediately.  To see (3), write $\gamma - \gamma'= \sum_{\alpha\in\Delta}n_{\alpha}\alpha.$
By (2) all the non-zero $n_{\alpha}$ have the same sign, and (3) follows easily.
\end{proof}

\begin{lemma}\label{lem:addroots}
Let $\alpha \in X^*(T)$ be an integral sum of roots. Then $\alpha$ may be written as a sum of roots
$\alpha = \sum_{i \in I} \gamma_i$ such that $\langle \gamma_i, \gamma_j^{\vee} \rangle \geq 0$ for $i,j \in I.$

Moreover, if $\alpha  = \sum _{j \in J} \alpha_j \in X^*(T)$ with $\alpha_j \in \Phi,$ then
we may take each $\gamma_i$ to be a sum of a subset of $\{\alpha_j\}_{j \in J}.$
In particular, if $\alpha$ is positive, then the $\gamma_i$ may be chosen to be positive.
\end{lemma}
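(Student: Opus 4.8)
The plan is to prove Lemma \ref{lem:addroots} by induction on $|\alpha|$, the word-length of $\alpha$ in the basis of simple roots, exploiting Lemma \ref{remadd} as the key combinatorial tool. First I would dispose of the trivial case: if $|\alpha| \le 1$ then $\alpha$ is either $0$ (the empty sum) or a single root, and both statements hold vacuously. For the inductive step, suppose $\alpha = \sum_{j \in J} \alpha_j$ is a sum of roots with $|\alpha|$ large. If all the pairwise pairings $\langle \alpha_j, \alpha_k^\vee \rangle$ are already $\ge 0$ we are done, so assume some pair, say $\alpha_{j_0}, \alpha_{j_1}$, satisfies $\langle \alpha_{j_0}, \alpha_{j_1}^\vee \rangle < 0$. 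If $\alpha_{j_0} = -\alpha_{j_1}$, simply delete both from the list $J$; this decreases $|J|$ and does not increase $|\alpha|$, so induction applies (here I should be a little careful: deleting two roots could in principle leave $|\alpha|$ unchanged while $|J|$ drops, so the induction should really be on the pair $(|\alpha|, |J|)$ ordered lexicographically, or on $|J|$ within fixed $|\alpha|$). Otherwise $\alpha_{j_0} + \alpha_{j_1}$ is a root by Lemma \ref{remadd}(1); replace the two summands $\alpha_{j_0}, \alpha_{j_1}$ in $J$ by the single root $\alpha_{j_0}+\alpha_{j_1}$. This strictly decreases $|J|$, and again by induction on $|J|$ (for fixed or smaller $|\alpha|$) we obtain the desired decomposition; since each new $\gamma_i$ produced in the recursion is built as a sum of a subset of the original $\{\alpha_j\}$, the refinement property is preserved.

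For the first, unqualified assertion, I would simply observe that an arbitrary integral sum of roots $\alpha = \sum_{\alpha \in \Delta} n_\alpha \alpha$ can trivially be written as a sum of simple roots (or their negatives), $\alpha = \sum_{\beta \in \Delta} (\operatorname{sgn} n_\beta)\,|n_\beta|\,\beta$, i.e.\ as $\sum_{j\in J}\alpha_j$ with each $\alpha_j \in \{\pm\beta : \beta\in\Delta\}$, and then apply the moreover-clause to this presentation. Thus the first sentence is a formal consequence of the second, and it suffices to prove the refinement statement. The final sentence, that $\alpha$ positive forces the $\gamma_i$ positive, also follows from the moreover-clause: when $\alpha$ is a non-negative integral combination of simple roots we take $J$ to consist of positive roots (the simple roots with multiplicity), and every $\gamma_i$, being a sum of a subset of positive roots that is itself a root, is a positive root.

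The main obstacle is purely bookkeeping: ensuring the induction terminates. Replacing $\{\alpha_{j_0},\alpha_{j_1}\}$ by their sum lowers $|J|$ by one but may keep $|\alpha|$ fixed, so the cleanest formulation runs the induction on $|J|$ (with $|\alpha|$ along for the ride, or not needed at all once one notices $|J|$ strictly decreases in every reduction step). One must also check that cancellation steps ($\alpha_{j_0} = -\alpha_{j_1}$) and merging steps both strictly decrease $|J|$, which they do, and that the process cannot get stuck — it cannot, because if no negative pairing exists among the current summands we have already reached a valid decomposition. No deep input beyond Lemma \ref{remadd}(1) and elementary root-system facts is required; the only subtlety worth stating explicitly is that the decomposition is not unique and that we are only asserting existence.
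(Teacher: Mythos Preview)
Your proposal is correct and essentially identical to the paper's proof: both use Lemma~\ref{remadd}(1) to merge a pair of summands with negative pairing into a single root, with the paper phrasing this as ``take $|I|$ minimal and derive a contradiction'' while you phrase it as an explicit descent on $|J|$. Your initial mention of inducting on $|\alpha|$ is a red herring that you correctly abandon; the paper goes straight to minimizing the number of summands, which is the cleaner formulation of the same idea.
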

\begin{proof} Write $\alpha = \sum_{i \in I} \gamma_i$ such that each $\gamma_i$ is root and  $|I|$ is as small as possible.
If $i,j \in I$ with $\langle \gamma_i, \gamma_j^{\vee} \rangle < 0,$ then $\gamma_i \neq - \gamma_j$ by the minimality of $I.$
Hence $\gamma_i + \gamma_j$ is a root by Lemma \ref{remadd}, which contradicts the minimality of $I.$

If $\alpha =\sum _{j \in J} \alpha_j$ then write $\alpha = \sum_{i \in I} \gamma_i$ such that each $\gamma_i$ is a root
which is a sum of a subset of $\{\alpha_j\}_{j \in J},$ and $|I|$ is as small as possible. The same argument proves
the second claim. If $\alpha$ is positive,  we may take the $\alpha_j$ to be positive simple roots which proves the final claim.
\end{proof}

\begin{definition}Let $\Phi_1$ be a subset of $\Phi$.
\begin{itemize}
\item $\Phi_1$ is said to be \emph{symmetric}  if $\Phi_1=-\Phi_1$ where $-\Phi_1=\{-\alpha| \alpha\in \Phi_1\}$.
\item $\Phi_1$ is said to be \emph{closed} if $\alpha, \beta\in \Phi_1$ with $\alpha+\beta\in \Phi$ implies $\alpha+\beta\in\Phi_1$.
\end{itemize}
\end{definition}

\begin{remark}If $\Phi_1\subset \Phi$ is a closed symmetric subset, then $\Phi_1$ is a root system in the $\mathbb{R}$-vector space generated by $\Phi_1$ (\cite{Bou} Ch VI, no. 1.8, Prop. 23). In this case we also say that $\Phi_1$ is a root system if there is no confusion.
\end{remark}

\begin{para}\label{para:defnGOmega} Now we will define some subgroups of maximal rank of $G$ which will be used in the proof of the main results. For the general theory of these subgroups, we refer to \cite{Hu} \S 2.1 or \cite{SGA3} Expos\'e XXII.

Let $\Delta_M \subset \Delta$ (resp.~ $\Phi_M \subset \Phi$) denote the roots (resp.~simple roots) contained in $\Lie~M.$
The action of $\Gamma=\Gal(\bar k|k)$ on $\Phi$ factors through some finite cyclic quotient of $\Gamma$. Sometimes we also write $\Gamma$ for that finite cyclic quotient if no confusion can arise. The Frobenius automorphism $\sigma$ is a generator of $\Gamma$. Let $\Phi_{N}$ and $\Phi_{N, \Gamma}$ be as in subsection \ref{overview}. For any $\Omega\in \Phi_{N,\Gamma}$, let $\Phi_{\Omega}$ be the smallest symmetric closed subset of $\Phi$ containing $\Phi_M$ and $\Omega$. As $M$ and $\Omega$ are stable under the Galois action, so is $\Phi_{\Omega}$.  We let $G_{\Omega}$ be the subgroup of $G_L$ generated by $T$ and $U_{\alpha}$ for all $\alpha\in \Phi_{\Omega}$.
\end{para}

\begin{prop}\label{prop_group_G_Omega}For any $\Omega\in \Phi_{N,\Gamma},$ the group $G_{\Omega}$ is defined over $F$.
Moreover, it is a reductive subgroup of $G$ with root system $\Phi_{\Omega}$ with respect to the maximal torus $T$.
\end{prop}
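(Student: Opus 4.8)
The plan is to prove the two assertions — that $G_\Omega$ descends to $F$, and that it is reductive with root system $\Phi_\Omega$ — essentially in the opposite order, first establishing the structure over $L$ and then the descent. First I would record the standard fact (\cite{Hu} \S2.1, \cite{SGA3} Exp.~XXII) that for any closed symmetric subset $\Psi \subset \Phi$, the subgroup of $G_L$ generated by $T$ and the root subgroups $U_\alpha$, $\alpha \in \Psi$, is a connected reductive subgroup of $G_L$ of maximal rank, with $T$ as maximal torus and with $\Psi$ as its root system; the fact that $\Psi$ is a root system in the subspace it spans is exactly the cited \cite{Bou} Ch.~VI, no.~1.8, Prop.~23. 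Applying this with $\Psi = \Phi_\Omega$ (which is closed and symmetric by construction) immediately gives that $G_\Omega$ is reductive over $L$ with root system $\Phi_\Omega$ relative to $T$. The only point requiring a word is that $\Phi_\Omega$ is well-defined, i.e.\ that a smallest symmetric closed subset containing $\Phi_M \cup \Omega$ exists: this holds because the intersection of symmetric closed subsets is symmetric and closed, and $\Phi$ itself is such a set.

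Next I would handle the descent to $F$. The torus $T$ is defined over $F$ (it is the centralizer of a maximal split torus in the quasi-split group $G$), and the Borel $B$, the Levi $M$, hence the set $\Phi_M$, are all $\Gamma$-stable. By the construction of $\Phi_\Omega$ as the smallest symmetric closed set containing the $\Gamma$-stable sets $\Phi_M$ and $\Omega$, and since the operations "symmetrize" and "close" commute with the $\Gamma$-action, $\Phi_\Omega$ is itself $\Gamma$-stable. Therefore the subgroup $G_\Omega \subset G_L$ generated by $T_L$ and $\{U_\alpha\}_{\alpha \in \Phi_\Omega}$ is stable under the action of $\Gamma$ on $G_L$ coming from the $F$-structure of $G$: indeed $\sigma$ permutes the root subgroups $U_\alpha$ among themselves (up to the inner twist built into the $F$-form, which is absorbed because $G$ is quasi-split and $B,T$ are defined over $F$), so it carries the generating set of $G_\Omega$ to itself. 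A $\Gamma$-stable smooth closed subgroup of $G_L$ descends to a smooth closed subgroup of $G$ over $F$ by Galois descent (using that $G_\Omega$ is of finite type and $L/F$ is a Galois extension with group the profinite $\Gamma$, or — cleaner — by working over the finite unramified subextension of $L/F$ through which the action on $\Phi$ factors and applying \'etale descent). Concretely one may take the scheme-theoretic closure inside $G_{\O_F}$ of the $F$-subgroup obtained this way to get a model, though for the proposition only the generic-fibre statement over $F$ is claimed.

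The main obstacle, and the only place where more than a citation is needed, is the interface between the $F$-form of $G$ and the combinatorics: one must be careful that the $\Gamma$-action used to form $\Phi_{N,\Gamma}$ and $\Omega$ is the same action (on $\Phi = \Phi(G,T)$, which since $G$ is quasi-split with $B,T$ over $F$ is the $*$-action making $\Delta$ stable) under which one is descending $G_\Omega$, so that $\sigma(U_\alpha) = U_{\sigma\alpha}$ on the nose after identifying things over the splitting field. Once this bookkeeping is set up, the descent is formal. I would also remark that the reductivity and the identification of the root system are insensitive to descent — they can be checked after the faithfully flat base change $\Spec L \to \Spec F$ — so the two assertions of the proposition are genuinely independent and can be proved separately as above.
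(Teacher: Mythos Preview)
Your proposal is correct and follows essentially the same approach as the paper, which simply cites \cite{BT} Theorem~3.13 (and \cite{SGA3} Exp.~XXII, Thm.~5.4.7, Prop.~5.10.1) without further comment. Your argument is an unpacked version of what those references provide: the closed symmetric subset $\Phi_\Omega$ yields a reductive maximal-rank subgroup over $L$ with the expected root system, and its $\Gamma$-stability (already noted in the paper just before the definition of $G_\Omega$) gives the descent to $F$; Borel--Tits~3.13 packages exactly these two ingredients into a single statement.
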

\begin{proof}\cite{BT} Theorem 3.13 (compare also \cite{SGA3} Expos\'e 22, Theorem 5.4.7 and Proposition 5.10.1).
\end{proof}

\begin{remark}\label{remgalpha}
Note that in general $G_{\Omega}$ is not a Levi subgroup of $G$. For example, let $G$ have Dynkin diagram of type $C_2$. Then it may happen that $G_{\Omega}$ is generated by $T$ and the root subgroups for all long roots, hence it is of type $A_1\times A_1$. However, for $\mu\in X_*(T)$, $b\in M(L)$ and for any $G_{\Omega}$-dominant $\mu'\in X_*(T)$ with $(\mu')_{G-\dom}=\mu$, we always have a map $X^{G_{\Omega}}_{\mu'}(b)\rightarrow X^{G}_{\mu}(b)$ given by the natural inclusion and inducing a map between the sets of connected components.
\end{remark}

\begin{prop}\label{lemma_newA} % Let $\Omega\in \Phi_{N,\Gamma}$ and $G_{\Omega}$ as in Proposition \ref{prop_group_G_Omega}.
Suppose that $\Omega$ is adapted, and that all the roots in $G_{\Omega}$ have the same length.
Then $B\cap G_{\Omega}$ is a Borel subgroup of $G_{\Omega}$ with basis $\Delta_M \cup \Omega$.
\end{prop}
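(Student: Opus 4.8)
The plan is to show that $\Delta_M\cup\Omega$ is a set of simple roots for the root system $\Phi_\Omega$ of $G_\Omega$, and that all roots of $\Phi_\Omega$ are non-negative integral combinations of these, i.e. $\Delta_M\cup\Omega$ is the basis attached to the Borel $B\cap G_\Omega$. Since by Proposition \ref{prop_group_G_Omega} the group $G_\Omega$ is reductive with root system $\Phi_\Omega$ relative to $T$, and since $B\cap G_\Omega$ is a Borel of $G_\Omega$ (it is solvable, and $T$ together with the positive root subgroups it contains account for the right dimension, as $\Phi_\Omega\subset\Phi$ and $B$ contains all positive root subgroups of $\Phi$), it suffices to identify the simple roots of $B\cap G_\Omega$. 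These are the positive roots of $\Phi_\Omega$ (i.e. elements of $\Phi_\Omega\cap\Phi^+$) that are not sums of two positive roots in $\Phi_\Omega$.

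First I would record the elementary structure of $\Phi_\Omega$. By definition $\Phi_\Omega$ is the smallest symmetric closed subset of $\Phi$ containing $\Phi_M$ and $\Omega$; hence every element of $\Phi_\Omega\cap\Phi^+$ is a non-negative integral combination of $\Delta_M$ and $\Omega$. Indeed the set of roots of this form is easily checked to be closed under the operation ``if $\gamma,\gamma'$ are in the set and $\gamma+\gamma'\in\Phi$ then $\gamma+\gamma'$ is in the set'' (using that each root in $\Omega$ appears, by Definition \ref{def_alpha_adapted} together with the adapted hypothesis, with coefficient at most $1$ in any such sum because the adapted condition forces $\langle\beta,\alpha^\vee\rangle\in\{-1,0,1\}$ for $\beta\in\Phi_M$; this bounds how the $\Omega$-part can grow — here is where ``adapted'' and the equal-length hypothesis enter). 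So $\Delta_M\cup\Omega$ generates $\Phi_\Omega^+$ as a monoid. It remains to see that no element of $\Omega$ is a sum of two positive roots of $\Phi_\Omega$, and that the elements of $\Delta_M$ remain indecomposable in $\Phi_\Omega$ — the latter is immediate since a decomposition in $\Phi_\Omega\subset\Phi$ would already be a decomposition in $\Phi$, contradicting simplicity of $\alpha\in\Delta_M$ in $\Phi$.

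The crux is therefore: no $\alpha\in\Omega$ can be written $\alpha=\gamma+\gamma'$ with $\gamma,\gamma'\in\Phi_\Omega^+$. Write $\gamma=\gamma_M+c(\gamma)$, $\gamma'=\gamma_M'+c'(\gamma')$ where $\gamma_M$ is the $\Delta_M$-part and $c(\gamma)$ is the $\Omega$-part (a sum of elements of $\Omega$). Since $\alpha\in\Omega$ has $\Omega$-coefficient sum exactly $1$, one of $\gamma,\gamma'$, say $\gamma'$, is a positive root of $M$ and $\gamma=\alpha-\gamma'$. But $\alpha$ is adapted, so $\langle\gamma',\alpha^\vee\rangle\in\{-1,0,1\}$; I would use $\alpha^\vee$ being $M$-anti-dominant (so $\langle\gamma',\alpha^\vee\rangle\le 0$ for positive $\gamma'\in\Phi_M$) to get $\langle\gamma',\alpha^\vee\rangle\in\{-1,0\}$, and then analyze: if $\langle\gamma',\alpha^\vee\rangle=0$ then (all roots of $G_\Omega$ having equal length, so $\langle\alpha,(\gamma')^\vee\rangle=0$ too) $\alpha-\gamma'$ is not a root, contradiction; if $\langle\gamma',\alpha^\vee\rangle=-1$ then by Lemma \ref{remadd} $\alpha+\gamma'$ is a root of $\Phi_\Omega$, and iterating and using lengths/heights one derives a contradiction with $\alpha-\gamma'\in\Phi$ as well. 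Here I expect the equal-length hypothesis to be doing real work: it guarantees $\langle\alpha,(\gamma')^\vee\rangle=\langle\gamma',\alpha^\vee\rangle$ so the pairings can be read symmetrically, and it rules out configurations (like $\langle\gamma',\alpha^\vee\rangle=-1$ but $\langle\alpha,(\gamma')^\vee\rangle=-2$) in which both $\alpha+\gamma'$ and $\alpha-\gamma'$ could conceivably be roots.

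The main obstacle is precisely this last indecomposability check: making the case analysis on $\langle\gamma',\alpha^\vee\rangle$ airtight, especially excluding the possibility that $\alpha-\gamma'$ is a positive root of $\Phi_\Omega$ when $\gamma'$ is a positive root of $M$ with $\langle\gamma',\alpha^\vee\rangle=-1$. I would handle this by a length/height argument: since all roots of $\Phi_\Omega$ have the same length, $\langle\gamma',\alpha^\vee\rangle=-1$ forces $\alpha+\gamma'$ to be a root of the same length, while $\alpha-\gamma'$ would then have to be shorter, which is impossible. Once decomposability of elements of $\Omega$ is excluded, combining with the monoid-generation statement gives that $\Delta_M\cup\Omega$ is exactly the basis of $\Phi_\Omega$ determined by $B\cap G_\Omega$, which is the assertion of the proposition.
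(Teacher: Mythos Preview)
Your argument has a genuine gap at step~2, the claim that every element of $\Phi_\Omega\cap\Phi^+$ is a non-negative integral combination of $\Delta_M\cup\Omega$. You justify this by observing that the set $S$ of roots which are such non-negative combinations is closed under addition. That is true but not enough: $\Phi_\Omega$ is defined as the smallest \emph{symmetric} closed subset of $\Phi$ containing $\Phi_M\cup\Omega$, so to conclude $\Phi_\Omega\subset S\cup(-S)$ you would need $S\cup(-S)$ to be closed. This means checking that if $\gamma\in S$, $\delta\in -S$, and $\gamma+\delta\in\Phi$, then $\gamma+\delta\in S\cup(-S)$ --- and this is not at all obvious (indeed it fails in the non-simply-laced example in the Remark following the Proposition). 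In fact your step~2 is essentially equivalent to the proposition itself: knowing that every positive root of $\Phi_\Omega$ is a non-negative combination of $\Delta_M\cup\Omega$ is precisely what it means for $\Delta_M\cup\Omega$ to be a basis. So the argument is circular at this point, and your subsequent reduction ``one of $\gamma,\gamma'$ lies in $\Phi_M^+$'' rests entirely on this unproved claim.

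The paper avoids this circularity. It only uses the easy fact that $\Delta_M\cup\Omega$ spans $\Phi_\Omega$ over $\mathbb Z$ (not with non-negative coefficients), and then attacks indecomposability of $\alpha\in\Omega$ directly: given a hypothetical decomposition $\alpha=\alpha_1+(\alpha-\alpha_1)$ with both pieces in $\Phi_\Omega^+$, it writes $\alpha_1=\sum_{\beta\in\Delta_\Omega}n_\beta\beta$, separates the terms with $n_\beta>0$ and $n_\beta<0$, and applies Lemma~\ref{lem:addroots} to each part. The key structural input is that distinct elements of $\Delta_M\cup\Omega$ have pairwise non-positive pairings (by Lemma~\ref{pm1le} for pairs in $\Omega$, and by $M$-anti-dominance of $\alpha^\vee$ for mixed pairs). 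This forces one of the two parts to vanish via a pairing inequality, and then the same analysis applied to $\alpha-\alpha_1$ gives a contradiction. Your final ``length'' argument is also not right as stated: $\alpha-\gamma'$ does not become ``shorter''; the correct observation in the simply-laced case is that $\langle\alpha,\gamma'^\vee\rangle=-1$ forces the $\gamma'$-string through $\alpha$ to be $\{\alpha,\alpha+\gamma'\}$, so $\alpha-\gamma'$ is not a root. But even corrected, that argument only applies once you know $\gamma'\in\Phi_M^+$, which you have not established.
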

\begin{proof}Let $\Phi_{\Omega}^+$ be the set of roots in $G_{\Omega}$ which are positive as roots in $G$ with respect to $B$. Then $\Phi_{\Omega}=\Phi_{\Omega}^{+}\coprod -\Phi_{\Omega}^{+}$ and $\Phi_{\Omega}^{+}$ is the set of roots in $B\cap G_{\Omega}$. It is clear that $B\cap G_{\Omega}$ is a Borel subgroup of $G_{\Omega}$ (as the set of roots in a Borel subgroup is determined by a regular hyperplane in the corresponding root system). By the definition of $\Phi_{\Omega}$, all elements in $\Phi_{\Omega}$ can be written as linear combinations of roots in $\Delta_{\Omega}:= \Delta_M \cup \Omega$. It suffices to show that all elements in $\Omega$ are indecomposable. Moreover, since $\Phi_{\Omega}^{+}$ is stable under the action of $\Gamma$, we only need to show that some $\alpha\in \Omega$ is indecomposable.

Suppose $\alpha \in \Omega$ is adapted and decomposable. Then there exists a root $\alpha_1\in \Phi_{\Omega}$ such that $\alpha_1, \alpha-\alpha_1\in \Phi_\Omega^{+}$. Write

\begin{eqnarray*}\alpha_1&=&\sum_{\beta\in \Delta_{\Omega}}n_\beta\beta=\sum_{\beta\in \Delta_{\alpha_1}^+}n_\beta \beta+\sum_{\beta\in \Delta_{\alpha_1}^-}n_\beta \beta\\
\alpha - \alpha_1&=&\sum_{\beta\in \Delta_{\Omega}}\tilde{n}_\beta\beta=\sum_{\beta\in \Delta_{\alpha-\alpha_1}^+}\tilde{n}_\beta \beta+\sum_{\beta\in \Delta_{\alpha-\alpha_1}^-}\tilde{n}_\beta \beta
\end{eqnarray*}
 where $\Delta_{\alpha_1}^+=\{\beta\in \Delta_{\Omega}| n_{\beta}>0\}$,
 $\Delta_{\alpha_1}^-=\{\beta\in \Delta_{\Omega}| n_{\beta}<0\}$ and $\Delta^{+}_{\alpha-\alpha_1}$, $\Delta^{-}_{\alpha-\alpha_1}$ are defined in the same way.

By Lemma \ref{lem:addroots} we may write $\sum_{\beta\in \Delta_{\alpha_1}^+}n_\beta \beta=\sum_{i\in I}\gamma_i^+$ and $ \sum_{\beta\in \Delta_{\alpha_1}^-}n_\beta \beta=\sum_{j\in J}\gamma_j^{-}$ as sums of roots such that $\gamma_i^+,\gamma_j^- \in \Phi$ and for $i, i'\in I$ and $j, j'\in J$,
\begin{itemize}
\item $\langle\gamma_{i}^+,\gamma_{i'}^{+\vee}\rangle\geq 0$ and $\langle\gamma_{j}^-,\gamma_{j'}^{-\vee}\rangle\geq 0$;
\item $\gamma_{i}^+$ (resp. $\gamma_j^-$) is a linear combination of roots in $\Delta_{\alpha_1}^+$ (resp. $\Delta_{\alpha_1}^-$) with nonnegative (resp. nonpositive) coefficients.
\end{itemize}

% To show the claim for the first sum we use induction on $|\Delta_{\Omega}| n_{\beta}<0\}|$. Assume that there exist $i_1,i_2\in \Delta_{\Omega}| n_{\beta}<0\}$ such that $\langle \gamma_{i_1},\gamma_{i_2}^{\vee}\rangle<0$. If $\gamma_{i_1}=-\gamma_{i_2}$ we can remove both summands from the sum and use induction to conclude. Otherwise by (the first half of) Remark \ref{remadd} $\gamma_{i_1}^{\vee}+\gamma_{i_2}^{\vee}$ is a coroot. Thus we can replace the two summands $\gamma_{i_1},\gamma_{i_2}$ by a single summand and can again conclude by induction. The assertion for the other sum follows in the same way.

By Lemma \ref{pm1le} and the fact that $\alpha$ is $M$-anti-dominant, for distinct roots $\beta,\beta'\in\Delta_{\Omega}$, we have $\langle\beta,\beta'^{\vee}\rangle\leq 0$. Therefore $\langle\gamma_i^+, \gamma_j^{-\vee}\rangle\geq 0$ for any $i\in I$ and $j\in J$. We show that one of the two sets $I$ and $J$ is empty (or equivalently, that one of the two sets $\Delta^+_{\alpha_1}$ and $\Delta^-_{\alpha_1}$ is empty). Suppose that $I$ is non-empty, the other case being analogous. For $i_0\in I$, the inequality
$$\langle\alpha_1, \gamma_{i_0}^{+\vee}\rangle=\langle \sum_{i\in I}\gamma_i^+ +\sum_{j\in J}\gamma_j^-, \gamma_{i_0}^{+\vee}\rangle\geq 2$$ implies that $\alpha_1=\gamma_{i_0}^+$. Hence $J$ is empty and $\alpha-\alpha_1=\alpha-\gamma_{i_0}^+$. Moreover the sets $\Delta^+_{\alpha-\alpha_1}=\{\alpha\}$ and $\Delta^-_{\alpha-\alpha_1}=\Delta^+_{\alpha_1}$ are both non-empty which is impossible according to the same discussion as above, but applied to $\alpha-\alpha_1$.
\end{proof}

\begin{remark} If not all roots in $G_{\Omega}$ have the same length, then in general Proposition \ref{lemma_newA} does not hold. In fact, in this case, the root system generated by the root system of $M$ and the roots in $\Omega$ is not necessarily the root system of $G_{\Omega}$. Here is an example. Consider the split group $G=\GSp_4$. The Dynkin diagram is of type $C_2$ with simple roots $\beta_1=(1, -1)$ and $\beta_2=(0,2)$. Let $M$ be the standard Levi subgroup corresponding to $\beta_1$. And let $\alpha=\beta_1+\beta_2=(1, 1)$. Then the sub root system generated by $\beta_1$ and $\alpha$ is of type $A_1\times A_1$ while $G_{\Omega}=G$ as the commutator $[U_{\alpha}(x), U_{\beta_1}(y)]$ is a non-trivial element of the root subgroup $U_{\alpha+\beta_1}$.
\end{remark}

\begin{prop} Let $\Omega\in \Phi_{N,\Gamma}$ be adapted. Then $M$ is a standard Levi subgroup of $G_{\Omega}$.
\end{prop}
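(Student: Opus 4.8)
The plan is to reduce the assertion to an elementary statement about the root system $\Phi_\Omega$ of $G_\Omega$. By Proposition \ref{prop_group_G_Omega}, $G_\Omega$ is a connected reductive $F$-subgroup of $G$ with maximal torus $T$ and root system $\Phi_\Omega \supseteq \Phi_M \cup \Omega$. Exactly as in the proof of Proposition \ref{lemma_newA}, the set $\Phi_\Omega^+ := \Phi_\Omega \cap \Phi^+$ is a positive system of $\Phi_\Omega$, since it is cut out inside $\Phi_\Omega$ by the same regular hyperplane that defines $\Phi^+$ inside $\Phi$; hence $B\cap G_\Omega$ is a Borel subgroup of $G_\Omega$ defined over $F$, so $G_\Omega$ is quasi-split over $F$, and its standard Levi subgroups (those containing $T$ and contained in a standard parabolic) are classified by the $\Gamma$-stable subsets of the set $\Delta_\Omega$ of simple roots of $(G_\Omega, T, B\cap G_\Omega)$, i.e.\ of the indecomposable elements of $\Phi_\Omega^+$. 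Thus it suffices to show that $\Delta_M \subseteq \Delta_\Omega$ and that the standard Levi of $G_\Omega$ attached to $\Delta_M$ is $M$ itself.

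The key step I would carry out is the inclusion $\Delta_M \subseteq \Delta_\Omega$. Let $\beta \in \Delta_M$; then $\beta$ is a simple root of $\Phi$, so it cannot be written as a sum of two elements of $\Phi^+$, and a fortiori not as a sum of two elements of the smaller set $\Phi_\Omega^+\subseteq \Phi^+$. Hence $\beta$ is indecomposable in $\Phi_\Omega^+$, i.e.\ $\beta \in \Delta_\Omega$. Since $B$, $T$ and $M$ are all defined over $F$, the subset $\Delta_M \subseteq \Delta$ is $\Gamma$-stable, and therefore $\Delta_M$ is a $\Gamma$-stable subset of $\Delta_\Omega$.

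Next I would identify the corresponding Levi. Let $M'\subseteq G_\Omega$ be the standard Levi subgroup over $F$ attached to $\Delta_M\subseteq \Delta_\Omega$; its root system is $\Phi_\Omega \cap \mathbb{Z}\Delta_M$. Since $\Phi_\Omega \subseteq \Phi$ and $\Phi\cap\mathbb{Z}\Delta_M = \Phi_M$, while $\Phi_M\subseteq \Phi_\Omega$ by the very definition of $\Phi_\Omega$ as the smallest symmetric closed subset of $\Phi$ containing $\Phi_M$ and $\Omega$, this root system is exactly $\Phi_M$. Over $L$ the groups $M'_L$ and $M_L$ are both split reductive and hence both equal the subgroup of $G_L$ generated by $T$ and the root subgroups $U_\alpha$ with $\alpha\in\Phi_M$; since $M'$ and $M$ are both defined over $F$, this forces $M'=M$ as $F$-subgroups of $G_\Omega$, which is the desired conclusion.

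I do not expect a genuine obstacle in this proposition: the whole content is the observation that a simple root of $G$ that happens to lie in $\Phi_\Omega$ remains simple in $G_\Omega$. The one subtlety worth flagging is that $\Delta_\Omega$ need not coincide with $\Delta_M\cup\Omega$ when $G_\Omega$ has roots of two different lengths — this is precisely the phenomenon exhibited by the $\GSp_4$ example following Proposition \ref{lemma_newA} — so one cannot simply quote Proposition \ref{lemma_newA} here; however, the argument above is insensitive to the precise shape of $\Delta_\Omega$, and in fact does not seem to require the hypothesis that $\Omega$ is adapted.
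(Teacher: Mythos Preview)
Your proof is correct and follows the same approach as the paper: both observe that $B\cap G_\Omega$ is a Borel of $G_\Omega$ (from the proof of Proposition \ref{lemma_newA}) and that each $\beta\in\Delta_M$, being simple in $\Phi$, remains indecomposable in the smaller positive system $\Phi_\Omega^+\subseteq\Phi^+$. The paper's proof is simply more terse, omitting your explicit verification that the standard Levi of $G_\Omega$ attached to $\Delta_M$ is $M$ itself; your observation that the adapted hypothesis plays no role in this particular argument is also correct.
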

\begin{proof}By the proof of Proposition \ref{lemma_newA}, the basis of $G_{\Omega}$ corresponding to the Borel subgroup $B\cap G_{\Omega}$ is the set of indecomposable elements of $\Phi_{\Omega}^+$. Therefore $M$ is a standard Levi subgroup of $G_{\Omega}$ as any $\beta \in \Delta_M$ is indecomposable in $\Phi_{\Omega}^+$.
\end{proof}

\subsection{Proof of Proposition \ref{pm1}}
\label{subsection_reduction to the distance one case}

From now on let $\Gamma$ be the image of the absolute Galois group of $F$ in the group of automorphisms of the Dynkin diagram of $G$. It is thus a finite and cyclic group, generated by Frobenius. As $G^{\ad}$ is assumed to be simple, $\Gamma$ acts transitively on the set of connected components of the Dynkin diagram.
All assertions involving the Galois action on $X_*(T)$ can then be studied using the induced $\Gamma$-action.

The proof of Proposition \ref{pm1} is divided into two steps: We first reduce the general statement to the special case where $M=T$ is a maximal torus of $G$. More precisely we want to show
\begin{prop}\label{pm11}
Let $x,x'\in \bar I_{\mu, b}^{M, G}$. Then there exists a $w\in W_M$ (the Weyl group of $M$) such that $\mu_{x}=w\mu_{x'}$ in $X_*(T)_{\Gamma}$.
\end{prop}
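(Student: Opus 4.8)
The plan is to prove Proposition \ref{pm11} first, since Proposition \ref{pm1} will be reduced to it in a second step (not part of this statement). Recall that elements of $\bar I_{\mu,b}^{M,G}$ are, via \eqref{minuscule_pi1}, $M$-minuscule $M$-dominant cocharacters $\mu_x \in X_*(T)$ all having the same $G$-dominant representative $\mu$, hence lying in a single $\Weyl_G$-orbit, and all having the same image $\kappa_M(b)$ in $\pi_1(M)_\Gamma$. So for $x,x' \in \bar I_{\mu,b}^{M,G}$ we have $\mu_x = v\mu_{x'}$ for some $v \in \Weyl_G$, and the two cocharacters agree modulo the coroot lattice of $M$ after passing to $\Gamma$-coinvariants; I want to upgrade this to $\mu_x = w\mu_{x'}$ for some $w \in \Weyl_M$ already in $X_*(T)_\Gamma$.

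First I would reduce to the case $G^{\ad}$ simple (already standing assumption in \S\ref{subsection_reduction to the distance one case}) and exploit that $\mu_x,\mu_{x'}$ are $M$-minuscule. Write $\mu_x = v\mu_{x'}$ with $v\in\Weyl_G$ of minimal length in $v\Weyl_M$; since $\mu_{x'}$ is $M$-dominant and $\mu_x$ is $M$-dominant, $v$ is then the minimal-length representative with $v\mu_{x'}$ $M$-dominant. The key input is that $\kappa_M(\mu_x) = \kappa_M(\mu_{x'})$ in $\pi_1(M)_\Gamma$, i.e. $\mu_x - \mu_{x'}$ lies in the subgroup of $X_*(T)_\Gamma$ generated by $\Gamma$-orbits of coroots of $M$ — this is exactly the kind of statement controlled by Corollary \ref{cor:invariants}. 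The heart of the argument will be to show that if $\mu_x \ne w\mu_{x'}$ in $X_*(T)_\Gamma$ for all $w\in\Weyl_M$, then $\mu_x-\mu_{x'}$ cannot be written as a $\Gamma$-sum of $M$-coroots, contradicting the $\pi_1(M)_\Gamma$-equality. I expect to run an induction on the length of $v$: if $v\ne 1$ in $\Weyl_G/\Weyl_M$, pick a simple root $\alpha\in\Delta_M$ with $\langle\alpha,\mu_{x'}\rangle > 0$ — wait, more carefully, since both are $M$-dominant and $M$-minuscule one instead tracks how $v$ moves weights out of the $M$-dominant chamber; the relevant combinatorial fact is that for minuscule $\mu_{x'}$, the $\Weyl_G$-orbit meets the $M$-dominant chamber in a $\Weyl_M$-orbit together with possible further elements, but any two that are equal in $\pi_1(M)_\Gamma$ must be $\Weyl_M$-conjugate modulo $\Gamma$.

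The cleanest route I would take: consider the cocharacter $\lambda = \mu_x - \mu_{x'} \in X_*(T)$. It lies in the coroot lattice of $G$ (same image in $\pi_1(G)$) and its image in $\pi_1(M)_\Gamma$ is zero. By Corollary \ref{cor:invariants}(1) applied to $M\subset G$ — or rather its proof, working with $\tilde T$ and induced modules — the image of $\lambda$ in $X_*(\tilde T)^\Gamma$-type coinvariants is a $\Gamma$-sum of $M$-coroots. Then I use that $\mu_x,\mu_{x'}$ are $M$-minuscule: for a minuscule coweight the only cocharacters in its $\Weyl_G$-orbit that are $M$-dominant and congruent to it modulo the $M$-coroot lattice are its $\Weyl_M$-translates. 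Combining with $\Gamma$-equivariance (using that $\Delta$ and $\Delta_M$ are $\Gamma$-stable, so $\Weyl_M$ carries a $\Gamma$-action and we may choose $w$ $\Gamma$-equivariantly after passing to coinvariants), I get $\mu_x = w\mu_{x'}$ in $X_*(T)_\Gamma$.

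The main obstacle I anticipate is the interaction between the Weyl group action and the $\Gamma$-coinvariants: an equality $\mu_x = w\mu_{x'}$ that holds in $X_*(T)_\Gamma$ need not lift to an equality in $X_*(T)$, and conversely a $\Weyl_M$-conjugacy in $X_*(T)$ need not descend with a $\Gamma$-fixed $w$. I would handle this by noting that $\Weyl_M$ acts on $X_*(T)$ commuting appropriately with the $\Gamma$-action (since $M$ is defined over $F$), so $\Weyl_M$ acts on $X_*(T)_\Gamma$, and the orbit of $\mu_{x'}$ in $X_*(T)_\Gamma$ under this action is the image of its $\Weyl_M$-orbit in $X_*(T)$; then the real content is the purely combinatorial claim that two $M$-minuscule $M$-dominant cocharacters with the same class in $\pi_1(M)_\Gamma$ and in the same $\Weyl_G$-orbit have images in $X_*(T)_\Gamma$ lying in one $\Weyl_M$-orbit, which I would verify using \ref{minuscule_pi1} (the bijection between $M$-minuscule $M$-dominant cocharacters and $\pi_1(M)$) together with Corollary \ref{cor:invariants} to control the kernel of $\pi_1(M)\to\pi_1(M)_\Gamma$.
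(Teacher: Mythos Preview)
Your proposal correctly identifies the structural setup but has a genuine gap at the decisive step. You reduce everything to the ``purely combinatorial claim'' that two $M$-minuscule $M$-dominant cocharacters with the same class in $\pi_1(M)_\Gamma$ and in the same $\Weyl_G$-orbit have images in $X_*(T)_\Gamma$ lying in one $\Weyl_M$-orbit, and then say you would verify this using the bijection \eqref{minuscule_pi1} together with Corollary \ref{cor:invariants}. But neither of these does the work. The bijection \eqref{minuscule_pi1} only tells you that equality in $\pi_1(M)$ (not $\pi_1(M)_\Gamma$) forces $\mu_x=\mu_{x'}$; and Corollary \ref{cor:invariants} is about the kernel of $\pi_1(M)^\Gamma\to\pi_1(G)^\Gamma$ or the torsion-freeness of $\ker(\pi_1(M)_\Gamma\to\pi_1(G)_\Gamma)$, neither of which controls the kernel of $\pi_1(M)\to\pi_1(M)_\Gamma$ in the way you need. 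Knowing $x-x'\in(1-\sigma)\pi_1(M)$ does not obviously produce a $w\in\Weyl_M$ and $z\in X_*(T)$ with $\mu_x=w\mu_{x'}+(1-\sigma)z$: the $M$-coroot lattice and the $(1-\sigma)$-image interact non-trivially, and the minuscule hypothesis constrains $\mu_x-\mu_{x'}$ only weakly across $\Gamma$-orbits.

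The paper's proof confronts this difficulty directly by an explicit induction on $d_\Gamma(\mu_x,\mu_{x'})$ (Definition \ref{def_distance}). Writing $\mu_{x'}-\mu_x=\sum_i\gamma_i^\vee$ as in Lemma \ref{pm1ld}, one uses Lemma \ref{pm1lb} to find $\tau_i\in\Gamma$ with $\sum_i\tau_i\gamma_i^\vee$ equal in $X_*(T)$ to a sum $\sum_j\beta_j^\vee$ of $M$-coroots, then Lemma \ref{pm1lc} produces a specific $M$-coroot $\alpha^\vee$ with $\langle\sum_\tau\tau\alpha,\mu_{x'}-\mu_x\rangle>0$. Minusculeness then forces $\langle\tau_0\alpha,\mu_{x'}\rangle=1$ or $\langle\tau_0\alpha,\mu_x\rangle=-1$ for some $\tau_0$, so applying the simple reflection $s_{\tau_0\alpha}\in\Weyl_M$ strictly decreases $d_\Gamma$. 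This root-system computation is the real content you are missing; your outline does not supply any mechanism for finding such an $\alpha$ or for the inductive descent.
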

In particular $\mu_x, w\mu_{x'}\in X_*(T)=\pi_1(T)$ then satisfy that $w\mu_{x'}\in \bar I_{\mu,\mu_x(p)}^{T,G}.$
Furthermore, under the canonical projection $X_*(T)\rightarrow \pi_1(M)$, the set $\bar I_{\mu, \mu_x(p)}^{T,G}$ is mapped to a subset of $\bar I_{\mu, b}^{M, G},$
and $\mu_{x'}$ and $w\mu_{x'}$ have the same image.  Proposition \ref{pm1} is then implied by the following proposition
\begin{prop}\label{pm12}
Let $x,x'\in \bar I^{T,G}_{\mu, b}$ for some $\mu\in X_*(T)$ and $b\in T(L)$. Then there are elements $x_i\in \bar I^{T,G}_{\mu, b}\subset X_*(T)$ for $i=0,\dotsc,m$ for some $m$ such that $x=x_0$, $x'=x_m$ and such that for each $i$, $$x_{i+1}-x_i=\alpha^{\vee}-\alpha'^{\vee}$$ for some roots $\alpha, \alpha'\in\Omega$ with $\Omega\in \Phi_{N,\Gamma}$ (depending on $i$).
\end{prop}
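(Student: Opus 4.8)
The plan is to argue by induction, reducing everything to the construction of a single ``good move''. Fix the target $x'\in\bar I^{T,G}_{\mu,b}$, and for $z\in W\mu$ let $\ell_{x'}(z)$ be the smallest length of $w\in W$ with $z=wx'$, so that $\ell_{x'}(z)=0$ precisely for $z=x'$; since $\mu$ is minuscule we have $\langle\gamma,z\rangle\in\{-1,0,1\}$ for every root $\gamma$ and every $z\in W\mu$, and the usual combinatorics of minuscule Weyl orbits shows that if $z\ne x'$ there is a reflection $s_\gamma$ with $s_\gamma z\in W\mu$ and $\ell_{x'}(s_\gamma z)<\ell_{x'}(z)$. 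An allowed move $z\mapsto z-(\alpha^{\vee}-\alpha'^{\vee})$ with $\alpha,\alpha'\in\Omega\in\Phi_{N,\Gamma}$ changes $z$ by $-(1-\sigma^{k})\alpha^{\vee}\in(1-\sigma)X_*(T)$ for some $k$, hence preserves the class in $X_*(T)_{\Gamma}$; therefore any chain of allowed moves starting at $x$ automatically stays in $\bar I^{T,G}_{\mu,b}$ as long as it stays inside $W\mu$. So it suffices to prove: for $x\ne x'$ in $\bar I^{T,G}_{\mu,b}$ there is an allowed move carrying $x$ to some $x_1\in W\mu$ with $\ell_{x'}(x_1)<\ell_{x'}(x)$. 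Iterating then connects $x$ to $x'$.

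To build the good move one ``doubles up'' a reflection. Choose a positive root $\gamma$ with $\langle\gamma,x\rangle=1$ and $\langle\gamma,x'\rangle=-1$ (so $s_\gamma x=x-\gamma^{\vee}$ lies in $W\mu$ and is closer to $x'$), and look for a Galois conjugate $\gamma'=\sigma^{k}\gamma\in\Omega=\Gamma\gamma$ with $\langle\gamma',x\rangle=-1$, $\langle\gamma',x'\rangle=1$ and $\langle\gamma,\gamma'^{\vee}\rangle=0$; then $x-(\gamma^{\vee}-\gamma'^{\vee})=s_{\gamma'}s_\gamma x\in W\mu$ is again strictly closer to $x'$, and the move is of the required form. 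The hypothesis $x-x'\in(1-\sigma)X_*(T)$ is exactly what makes such a partner available: for any $\Gamma$-invariant $\phi\in X^*(T)$ one has $\langle\phi,x-x'\rangle=0$, so for every Galois orbit $\Omega$ of roots the signed total $\sum_{\beta\in\Omega}\bigl(\langle\beta,x\rangle-\langle\beta,x'\rangle\bigr)$ vanishes; this ``$\Gamma$-balance'' of the deficiency of $x$ relative to $x'$ forces the existence of a suitable $\gamma'$ in the orbit of $\gamma$. (Without this congruence the two weights need not be joined by allowed moves at all.)

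The work is in verifying that the doubled move really stays inside $W\mu$, and this is done according to the shape of $\Omega$. Since $G^{\ad}$ is simple, $\Gamma=\langle\sigma\rangle$ is cyclic and permutes the connected components of the Dynkin diagram transitively, so $\Phi$ is a union of cyclically permuted copies of an irreducible root system $\Phi_0$ on which $\sigma^{m}$ induces a diagram automorphism of order $1$, $2$ or $3$. By Lemma \ref{pm1le} a Galois orbit of roots has at most three elements, has three only in the triality case $\Phi_0=D_4$, and satisfies $\langle\gamma,\sigma^{k}\gamma^{\vee}\rangle=-1$ (rather than $0$) only when $\Phi_0$ has type $A_{2l}$, in which case Example \ref{exe} pins down the shape of $\gamma$. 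When the roots of $\Omega$ are mutually orthogonal (and orthogonal to the relevant roots of $M$) the doubled move is literally a composite of two reflections and the verification is immediate; in the non-orthogonal cases one rewrites the coroot combinations occurring via Lemma \ref{remadd} and Lemma \ref{lem:addroots} as sums of roots with non-negative mutual pairings and checks by hand that one stays in $W\mu$. Finally, a move transferring a coroot between two distinct components of $\Phi$ is the elementary ``transport'' move (the same kind used in the superbasic $\GL_h$ case), and such moves reduce the general situation to the single-component analysis.

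The main obstacle is therefore this case-by-case check --- in particular the triality case $D_4$ and the type-$A_{2l}$ case in which a Galois orbit of simple coroots fails to be orthogonal --- of ensuring that at every step the constructed doubled move does not leave the minuscule orbit $W\mu$. This runs parallel to, but is considerably shorter than, the corresponding analysis carried out in the proof of Proposition \ref{pm3}.
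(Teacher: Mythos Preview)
Your inductive strategy is the right shape, and the doubled reflection $s_{\gamma'}s_\gamma$ with $\gamma,\gamma'$ in a common Galois orbit is exactly what the paper does in its Case~2.1. But the crucial step---producing such a $\gamma'$---is asserted, not proved. The $\Gamma$-balance $\sum_{\beta\in\Gamma\gamma}\langle\beta,x-x'\rangle=0$ only forces the orbit contributions to cancel in total; it does not hand you a single $\gamma'\in\Gamma\gamma$ with $\langle\gamma',x\rangle=-1$, $\langle\gamma',x'\rangle=1$ and $\langle\gamma,\gamma'^{\vee}\rangle=0$ simultaneously. Already when $|\Gamma\gamma|=2$ the orthogonality can fail (type $A_{2n}$, by Lemma~\ref{pm1le}), and when $|\Gamma\gamma|\ge3$ the deficit $-2$ may spread as $(-1,-1)$ so that neither orbit-mate has the required pairings with both $x$ and $x'$.

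The paper's proof shows these are genuine obstructions, not bookkeeping. It decomposes $x-x'$ via Lemma~\ref{pm1ld} into orthogonal coroots and separates the positive and negative ones as $\gamma^+,\gamma^-$, which typically lie in \emph{different} Galois orbits; the link between them is a $\tau\in\Gamma$ with $\langle\tau\gamma^+,\gamma^{-\vee}\rangle>0$ (Lemma~\ref{pm1la}). When the analogue of your hypothesis holds (Case~2.1) the move is your doubled reflection. But Cases~2.2--2.4 are precisely the situations where no such move is available: there one manufactures an auxiliary root $\alpha$ with $\alpha^{\vee}=\tau\gamma^{+\vee}-\gamma^{-\vee}$ and reduces the distance $d(x,x')=|x-x'|$ only after \emph{two} allowed moves through an intermediate point, one in the orbit $\Gamma\alpha$ and one in $\Gamma\gamma^-$; Case~2.4.2 (triality $D_4$) needs yet another variant. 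So the ``deferred case-by-case check'' is not a tidying-up step but the whole argument, and it cannot be carried out within the single-orbit, single-move framework you propose. A smaller point: even when $\gamma'$ exists you have not argued that the composite $s_{\gamma'}s_\gamma$ decreases $\ell_{x'}$; the paper's metric $d(\,\cdot\,,x')$ is what makes the descent visible, via Lemma~\ref{remadd}.
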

It remains to show these two propositions.

\begin{definition}\label{def_distance}
\begin{enumerate}
\item Let $\phi=\sum_{\alpha\in\Delta}n_{\alpha}\alpha^{\vee}\in X_*(T)$ be an integral sum of coroots.
% Then we define $|\phi|=\sum_{\alpha\in\Delta}|n_{\alpha}|$ where $\Delta$ is the set of simple coroots (with respect to the chosen Borel) and where $n_{\alpha}\in\mathbb{Z}$ are the multiplicities in the expression $\phi=\sum_{\alpha\in\Delta}n_{\alpha}\alpha^{\vee}$.
We write $|\phi|_{\Gamma}=\sum_{\Gamma\alpha\in\Gamma\backslash\Delta}|\sum_{\beta\in \Delta\cap\Gamma\alpha}n_{\beta}|$.
\item For all $\mu_1, \mu_2\in X_*(T)$ having the same image in $\pi_1(G)$ we define $$d(\mu_1,\mu_2)=|\mu_1-\mu_2|,$$ $$d_{\Gamma}(\mu_1,\mu_2)=|\mu_1-\mu_2|_{\Gamma}.$$
\item For $x,x'\in \bar I_{\mu, b}^{M, G}$ let $d(x,x')=d(\mu_x,\mu_{x'})$ and similarly for $d_{\Gamma}$.
\end{enumerate}
\end{definition}
Note that $|x|_{\Gamma}\leq |x|$ (where the latter expression is as in \ref{pardefmid}) with equality if and only if for each Galois orbit $\Gamma\alpha$ all $n_{\beta}$ for $\beta\in\Gamma\alpha$ have the same sign.

As a preparation for the proofs of the propositions we provide several smaller lemmas. For these we consider a root datum $(V,\Phi,V^{\vee},\Phi^{\vee})$ equipped with
an action of $\Gamma,$ together with a $\Gamma$-stable basis of simple roots $\Delta.$ We assume that $\Gamma$ acts transitively on the set of connected
components of the Dynkin diagram of $(V,\Phi,V^{\vee},\Phi^{\vee}).$

\begin{lemma}\label{pm1la}
\begin{enumerate}
\item\label{pm1la1} Let $\sum_{i\in I}\gamma_i^{\vee}=\sum_{j\in J}\lambda_j^{\vee}\neq 0$ be two equal sums of coroots. Then there are an $i\in I$ and $j\in J$ with $\langle \gamma_i,\lambda_j^{\vee}\rangle>0$.
\item\label{pm1la2} Let $\gamma_i^{\vee},\lambda_j^{\vee}$ (for $i\in I, j\in J$) be coroots with $\sum_{i\in I}\gamma_i^{\vee}=\sum_{j\in J}\lambda_j^{\vee}\neq 0$ as elements of $V^{\vee}_{\Gamma}$. Then there are $i\in I$, $j\in J$ and $\tau\in \Gamma$ with $\langle \gamma_i,\tau\lambda_j^{\vee}\rangle>0$.
\end{enumerate}
\end{lemma}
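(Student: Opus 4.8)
The plan is to deduce both statements from a single positivity computation carried out with a Weyl- and $\Gamma$-invariant positive-definite symmetric bilinear form. First I would fix such a form $(\,\cdot\,,\,\cdot\,)$ on the $\mathbb{R}$-span $Q^{\vee}_{\mathbb R}$ of the coroots; it exists because we can average an arbitrary positive-definite form over the finite group $W\rtimes\Gamma$. Weyl-invariance, together with the root datum formula $s_{\gamma^{\vee}}(\lambda^{\vee})=\lambda^{\vee}-\langle\gamma,\lambda^{\vee}\rangle\gamma^{\vee}$, gives $\langle\gamma,\lambda^{\vee}\rangle=2(\gamma^{\vee},\lambda^{\vee})/(\gamma^{\vee},\gamma^{\vee})$ for all coroots $\gamma^{\vee},\lambda^{\vee}$, so $\langle\gamma,\lambda^{\vee}\rangle$ and $(\gamma^{\vee},\lambda^{\vee})$ have the same sign. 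This reduces everything to finding a pair (or $\Gamma$-translated pair) of coroots with strictly positive inner product.

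\textbf{Part \eqref{pm1la1}.} Set $v=\sum_{i\in I}\gamma_i^{\vee}=\sum_{j\in J}\lambda_j^{\vee}$. Since $v\neq 0$ and the form is positive definite, $0<(v,v)=\sum_{i\in I,\,j\in J}(\gamma_i^{\vee},\lambda_j^{\vee})$, so at least one summand $(\gamma_i^{\vee},\lambda_j^{\vee})$ is strictly positive, whence $\langle\gamma_i,\lambda_j^{\vee}\rangle>0$.

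\textbf{Part \eqref{pm1la2}.} Here I would reduce to Part \eqref{pm1la1} by averaging over $\Gamma$. The hypothesis $\sum_i\gamma_i^{\vee}=\sum_j\lambda_j^{\vee}$ in $V^{\vee}_{\Gamma}$ means that $\sum_i\gamma_i^{\vee}-\sum_j\lambda_j^{\vee}$ lies in the subgroup generated by the elements $x-\tau x$ ($x\in V^{\vee}$, $\tau\in\Gamma$). Applying the idempotent $e=|\Gamma|^{-1}\sum_{\tau\in\Gamma}\tau$ annihilates every such element, so in $V^{\vee}_{\mathbb Q}$ we obtain the identity $u:=\sum_{\tau\in\Gamma}\sum_{i\in I}\tau\gamma_i^{\vee}=\sum_{\tau\in\Gamma}\sum_{j\in J}\tau\lambda_j^{\vee}$. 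One then checks $u\neq 0$: since $\Gamma$ permutes the basis $\Delta^{\vee}$, the coroot lattice $Q^{\vee}$ is a sum of induced (permutation) $\Gamma$-modules, hence $Q^{\vee}_{\Gamma}$ is torsion-free (as in the proof of Lemma \ref{cartesianIV}); therefore the nonzero class of $\sum_i\gamma_i^{\vee}$ in $Q^{\vee}_{\Gamma}$ remains nonzero in $Q^{\vee}_{\Gamma}\otimes\mathbb Q$, i.e. $u=|\Gamma|\cdot e\bigl(\sum_i\gamma_i^{\vee}\bigr)\neq 0$. Now positivity gives $0<(u,u)=\sum_{\tau,\tau'\in\Gamma}\sum_{i\in I,\,j\in J}(\tau\gamma_i^{\vee},\tau'\lambda_j^{\vee})$, so some summand is positive, and by $\Gamma$-invariance $(\tau\gamma_i^{\vee},\tau'\lambda_j^{\vee})=(\gamma_i^{\vee},\tau_0\lambda_j^{\vee})$ with $\tau_0=\tau^{-1}\tau'$; hence $\langle\gamma_i,\tau_0\lambda_j^{\vee}\rangle>0$.

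\textbf{Main obstacle.} The only genuinely delicate point is the nonvanishing $u\neq 0$ in Part \eqref{pm1la2}: one must ensure that passing to $\Gamma$-coinvariants and then to $\mathbb Q$-coefficients does not destroy the hypothesis $\sum_i\gamma_i^{\vee}\neq 0$. This is exactly where the assumption of a $\Gamma$-stable basis of simple roots is used, through the torsion-freeness of $Q^{\vee}_{\Gamma}$; the rest is a routine application of the positivity of the invariant form.
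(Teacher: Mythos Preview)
Your proof is correct. For part~(1) your approach differs from the paper's: you expand $(v,v)$ with respect to a $W$-invariant positive-definite form and read off a positive summand directly, whereas the paper first invokes Lemma~\ref{lem:addroots} to regroup the $\gamma_i^{\vee}$ so that all pairwise pairings $\langle\gamma_{i_1},\gamma_{i_2}^{\vee}\rangle$ are nonnegative, and then observes that $\langle\gamma_{i_0},v\rangle\geq 2$ for any fixed $i_0$. Your argument is shorter and self-contained, avoiding the dependence on the regrouping lemma; the paper's version has the minor advantage of staying entirely within the integral pairing $\langle\cdot,\cdot\rangle$ without introducing an auxiliary real form. For part~(2) the two arguments are essentially the same: both pass to the $\Gamma$-sum, use the permutation-module structure of the coroot lattice (as in the proof of Lemma~\ref{cartesianIV}) to ensure the result is nonzero, and then apply part~(1).
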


\begin{proof}
% We use induction on $|I|+|J|$ to show (\ref{pm1la1}). We consider first the case that there exist $i_1,i_2\in I$ such that $\langle \gamma_{i_1},\gamma_{i_2}^{\vee}\rangle<0$. If $\gamma_{i_1}=-\gamma_{i_2}$ we can remove both summands from the sum and use induction to conclude. Otherwise by Remark \ref{remadd} $\gamma_{i_1}^{\vee}+\gamma_{i_2}^{\vee}$ is a coroot. Thus we can replace the summands for $i_1,i_2\in I$ by a single summand and can conclude by induction.

By Lemma \ref{lem:addroots}, applied to $\alpha = \sum_{i\in I}\gamma_i,$ we me assume that
$\langle \gamma_{i_1},\gamma_{i_2}^{\vee}\rangle\geq 0$ for all $i_1,i_2\in I$. Then for all $i_0\in I$ we have
$$0<\langle \gamma_{i_0},\sum_{i\in I}\gamma_{i}^{\vee}\rangle=\langle \gamma_{i_0},\sum_{j\in J}\lambda_{j}^{\vee}\rangle.$$ Hence there is a $j\in J$ with $\langle \gamma_{i_0},\lambda_j^{\vee}\rangle>0$.

Let now $\gamma_i$, $\lambda_j$ be as in the second assertion. Then the first assertion holds for
\[\sum_{i\in I}\sum_{\tau\in \Gamma}\tau\gamma_i^{\vee}=\sum_{j\in J}\sum_{\tau\in \Gamma}\tau\lambda_j^{\vee}.\]
Indeed $V^{\vee}$ is a sum of induced $\Gamma$-modules (cf.~ the proof of Lemma \ref{cartesianIV}), so $V^{\vee}_{\Gamma}$ is a free abelian group
and thus these sums are non-zero in $V^{\vee}_{\Gamma}$. This implies the second assertion.
\end{proof}

\begin{lemma}\label{pm1lb}
% Let $(V,\Phi,V^{\vee},\Phi^{\vee})$ be a root datum with Galois action
% such that there exists a basis $\Delta$ that is stable under this
% action, and such that the action on the set of connected components of the associated Dynkin diagram is transitive.
Let $\sum_{i\in I}\gamma_i^{\vee}\in V^{\vee}$ be a sum of
coroots which maps to $0$ in $V^{\vee}_{\Gamma}$. Then there exist
$\tau_i\in \Gamma$ for all $i\in I$ such that $\sum_{i\in
I}\tau_i(\gamma_i^{\vee})=0\in V^{\vee}$  and such that all
$\tau_i(\gamma_i^{\vee})$ are in the same connected component of
the Dynkin diagram.
\end{lemma}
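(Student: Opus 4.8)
The plan is to reduce first to the case where all the coroots $\gamma_i^\vee$ already lie in a single connected component, and then run an induction on $|I|$. Since $\Gamma$ acts transitively on the set of connected components of the Dynkin diagram, for each $i$ I can pick $\tau_i^{(0)}\in\Gamma$ carrying the component of $\gamma_i^\vee$ to a fixed component $C_0$. Because the quotient map $V^\vee\to V^\vee_\Gamma$ is $\Gamma$-equivariant with trivial action on the target, the element $v:=\sum_i\tau_i^{(0)}(\gamma_i^\vee)$ still maps to $0$ in $V^\vee_\Gamma$. Writing $V_0^\vee$ for the coroot lattice of $C_0$ equipped with its action by the stabilizer $\Gamma_0$ of $C_0$, one has $V^\vee=\mathrm{Ind}_{\Gamma_0}^\Gamma V_0^\vee$ and $V^\vee_\Gamma=(V_0^\vee)_{\Gamma_0}$, the natural map $V_0^\vee\to V^\vee_\Gamma$ being the projection to coinvariants; hence $v$ maps to $0$ in $(V_0^\vee)_{\Gamma_0}$. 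If $\Gamma_0$ acts trivially on $C_0$ then $(V_0^\vee)_{\Gamma_0}=V_0^\vee$, forcing $v=0$, and we are done; otherwise $C_0$ carries a nontrivial diagram automorphism and is therefore simply laced, a fact used freely below (so roots and coroots may be identified and all coroots have the same length). Note also that $(V_0^\vee)_{\Gamma_0}$ is free on the $\Gamma_0$-orbits of simple coroots, a positive coroot mapping to a nonnegative, not-all-zero, combination of these basis elements.

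Now I induct on $|I|$. If $v=0$ the chosen $\tau_i^{(0)}$ already work. Otherwise let $P$ (resp. $N$) be the set of indices $i$ with $\tau_i^{(0)}(\gamma_i^\vee)$ a positive (resp. negative) coroot, and set $\eta_i^\vee:=\pm\tau_i^{(0)}(\gamma_i^\vee)$ so each $\eta_i^\vee$ is positive. Both $P$ and $N$ are nonempty: a nonempty sum of positive coroots has nonzero image in $(V_0^\vee)_{\Gamma_0}$ by the previous paragraph, so $N=\varnothing$ or $P=\varnothing$ would contradict $[v]=0$. From $\sum_{i\in P}\eta_i^\vee-\sum_{j\in N}\eta_j^\vee=v$ we get $[\sum_{i\in P}\eta_i^\vee]=[\sum_{j\in N}\eta_j^\vee]\neq 0$ in $(V_0^\vee)_{\Gamma_0}$, so Lemma \ref{pm1la}(\ref{pm1la2}), applied to the root datum of $C_0$ with the cyclic group $\Gamma_0$ (for which ``transitivity on connected components'' is vacuous), produces $i\in P$, $j\in N$ and $\rho\in\Gamma_0$ with $\langle\eta_i,\rho(\eta_j^\vee)\rangle>0$.

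Next I replace the two terms indexed by $i$ and $j$ with a single term. Using the twist $\tau_i^{(0)}$ on $\gamma_i^\vee$ and $\rho\tau_j^{(0)}$ on $\gamma_j^\vee$, their contributions to $v$ are $\eta_i^\vee$ and $-\rho(\eta_j^\vee)$, with sum $(\eta_i-\rho\eta_j)^\vee$. Since $C_0$ is simply laced and $\langle\eta_i,\rho(\eta_j)^\vee\rangle\in\{1,2\}$, this is either $0$ (and I drop both coroots) or a single coroot $\zeta^\vee$ lying in $C_0$. In either case I am left with a strictly shorter list of coroots with the same sum $v$, and checking images in $(V_0^\vee)_{\Gamma_0}$ (using $[\eta_i^\vee]=[\rho\eta_j^\vee]$) shows that sum is still $0$ there. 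By the induction hypothesis there are twists making this shorter sum $0$ with all terms in a single component $C$; I then lift the twist $\lambda$ attached to $\zeta^\vee$ (or, in the dropped case, choose any $\lambda$ carrying $C_0$ to $C$) back to $\lambda\tau_i^{(0)}$ on $\gamma_i^\vee$ and $\lambda\rho\tau_j^{(0)}$ on $\gamma_j^\vee$; their contributions $\lambda(\eta_i^\vee)$ and $-\lambda\rho(\eta_j^\vee)$ both lie in $C$ (as $\rho$ fixes $C_0$ and $\lambda(C_0)=C$) and sum to $\lambda(\zeta^\vee)$ (or to $0$). Hence the full list now sums to $0$ with all terms in $C$, which closes the induction.

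The only genuine input beyond bookkeeping is Lemma \ref{pm1la}(\ref{pm1la2}) together with the reduction to the simply-laced case (via the classification of diagram automorphisms of irreducible root systems). I expect the main nuisance in writing the argument out to be the reassembly step: carefully recovering valid twists for $\gamma_i^\vee$ and $\gamma_j^\vee$ from the single twist of the combined coroot, and verifying at each reduction that the sum stays $\equiv 0$ in the coinvariants so that the induction hypothesis is available.
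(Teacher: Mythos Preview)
Your proof is correct and follows essentially the same strategy as the paper: split into positive and negative parts, use Lemma~\ref{pm1la}(\ref{pm1la2}) to find a positive/negative pair that pairs positively after a twist, merge them into a single coroot (or cancel) via Lemma~\ref{remadd}, and induct on $|I|$.

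The difference is organizational. You first move all $\gamma_i^\vee$ into a fixed component $C_0$ and work with the stabilizer $\Gamma_0$; this makes the ``same component'' clause and the reassembly of twists explicit. The paper skips this reduction and inducts directly: since the merged coroot $\alpha^\vee=\tau\gamma_{j_-}^{-\vee}-\gamma_{j_+}^{+\vee}$ lies in the same component as both constituents (they pair nontrivially), whatever twist the induction assigns to $\alpha^\vee$ automatically carries both $\gamma_{j_+}^\vee$ and $\tau(\gamma_{j_-}^\vee)$ into the target component. Your detour through the simply-laced classification is also unnecessary: Lemma~\ref{remadd} applies to the coroot system directly, so $\eta_i^\vee-\rho(\eta_j^\vee)$ is a coroot whenever $\langle\eta_i,\rho(\eta_j^\vee)\rangle>0$ and the two are distinct, regardless of lengths.

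One small slip: the shorter list does not have ``the same sum $v$''; its sum differs from $v$ by $\eta_j^\vee-\rho(\eta_j^\vee)$. What matters, and what you do verify, is that its image in $(V_0^\vee)_{\Gamma_0}$ is still zero, so the induction hypothesis applies.
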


\begin{proof}
We use induction on $|I|$. Let $I^+$ be the set of $i\in I$ such that $\gamma_i^{\vee}$ is positive and $I^-=I\setminus I^+$. Then
$$\sum_{i\in I}\gamma_i^{\vee}=\sum_{i\in I^+}\gamma_i^{+\vee}-\sum_{i\in I^-}\gamma_i^{-\vee}$$
where $\gamma_i^+=\gamma_i$ and $\gamma_i^-=-\gamma_i$ are all positive. Assume that one of the sums on the right hand side is zero. Then the left hand side lies in the positive resp. the negative cone. As $\Gamma$ fixes the set of simple roots and as $\sum_{i\in I}\gamma_i^{\vee}=0$ in $V^{\vee}_{\Gamma}$, this implies that the other sum is also equal to $0$ (first in $V^{\vee}_{\Gamma}$ but then also in $V^{\vee}$). Furthermore, this only occurs if none of the sums contains any non-zero summand. Thus in this case the assertion of the lemma is trivial. From now on we exclude this case.

Then by Lemma \ref{pm1la} (\ref{pm1la2}) there is a $j_+\in I^+$, a $j_-\in I^-$ and a $\tau\in \Gamma$ such that $\langle \gamma^+_{j_+},\tau \gamma^{-\vee}_{j_-}\rangle >0$. If $\gamma^+_{j_+}=\tau \gamma^{-}_{j_-}$ we have that $\sum_{i\in I}\gamma_i^{\vee}=\sum_{i\in I\setminus\{j_+,j_-\}}\gamma_i^{\vee}=0$ in $V^{\vee}_{\Gamma}$. Then the statement follows by induction. Thus we may assume that $\gamma^+_{j_+}\neq \tau \gamma^{-}_{j_-}$. Then by Lemma \ref{remadd} (applied to $-\gamma^+_{j_+}, \tau \gamma^{-}_{j_-}$) we obtain that $\alpha^{\vee}=\tau \gamma^{-\vee}_{j_-}-\gamma^{+\vee}_{j_+}$ is a coroot. Then
$$\sum_{i\in I}\gamma_i^{\vee}=\sum_{i\in I\setminus\{j_+,j_-\}}\gamma_i^{\vee}-\alpha^{\vee}=0$$
as elements of $V^{\vee}_{\Gamma}$. The assertion follows again by induction.
\end{proof}

\begin{lemma}\label{pm1lc}
% Let $(V,\Phi,V^{\vee},\Phi^{\vee})$ be an irreducible root datum with an
% action by a finite cyclic group $\Gamma$ such that there exists a
% set $\Delta$ of simple roots of $\Phi$ that is stable under this
% action.
Let $v=\sum_{\beta\in\Delta}n_{\beta}\beta^{\vee}\in
V^{\vee}\setminus \{0\}$ with $|v|=|v|_{\Gamma}$. Then there is a
coroot $\alpha^{\vee}$ such that
$|v|=|\alpha^{\vee}|+|v-\alpha^{\vee}|$ and $\langle
\sum_{\tau\in\Gamma}\tau\alpha,v\rangle>0$.
\end{lemma}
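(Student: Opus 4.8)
The plan is to take for $\alpha^\vee$ one of the coroots produced by Lemma \ref{lem:addroots}, for which the identity $|v|=|\alpha^\vee|+|v-\alpha^\vee|$ will be automatic, and then to verify $\langle\sum_{\tau\in\Gamma}\tau\alpha,v\rangle>0$ after reducing to the case of an irreducible root system.

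First I would reduce to the case $v\geq 0$. Write $v=v^+-v^-$, where $v^+$ (resp.\ $v^-$) is the sum of the terms $n_\beta\beta^\vee$ with $n_\beta>0$ (resp.\ the negatives of the terms with $n_\beta<0$); these are nonnegative integral combinations of simple coroots with disjoint supports. The hypothesis $|v|=|v|_\Gamma$ says precisely that $n_\beta$ has constant sign on each $\Gamma$-orbit of $\Delta$, so $\operatorname{supp}(v^+)$ and $\operatorname{supp}(v^-)$ are unions of $\Gamma$-orbits that do not meet. Replacing $(v,\alpha)$ by $(-v,-\alpha)$ if necessary we may assume $v^+\neq 0$. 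If $\alpha$ is a positive root with $\alpha^\vee\leq v^+$ coordinatewise, then $\operatorname{supp}(\alpha)\subseteq\operatorname{supp}(v^+)$, hence for every $\tau\in\Gamma$ the positive root $\tau\alpha$ has support disjoint from $\operatorname{supp}(v^-)$, so $\langle\tau\alpha,\delta^\vee\rangle\leq 0$ for each simple root $\delta\in\operatorname{supp}(v^-)$ and therefore $\langle\sum_\tau\tau\alpha,v^-\rangle\leq 0$; a direct computation using the disjointness of the two supports gives $|v|=|\alpha^\vee|+|v-\alpha^\vee|$ as soon as $\alpha^\vee\leq v^+$. Thus it suffices to prove the statement for $v^+$, and from now on we assume $v\geq 0$; in that case any coroot $\alpha^\vee$ satisfying $|v|=|\alpha^\vee|+|v-\alpha^\vee|$ automatically satisfies $0\leq\alpha^\vee\leq v$, so $\alpha$ is a positive root supported on $\operatorname{supp}(v)$.

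Now apply Lemma \ref{lem:addroots} to $v$, writing $v=\sum_{i\in I}\gamma_i^\vee$ with $\langle\gamma_i,\gamma_j^\vee\rangle\geq 0$ for all $i,j$ and each $\gamma_i$ a positive root with $\gamma_i^\vee\leq v$. For $\alpha=\gamma_{i_0}$ the identity $|v|=|\gamma_{i_0}^\vee|+|v-\gamma_{i_0}^\vee|$ holds for every $i_0$, so it only remains to choose $i_0$ with $\langle\sum_\tau\tau\gamma_{i_0},v\rangle>0$. Setting $\bar v:=\sum_{\tau\in\Gamma}\tau v$, which is $\Gamma$-invariant with $\bar v\geq v\geq 0$, this pairing equals $\langle\gamma_{i_0},\bar v\rangle$. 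Since $\gamma_{i_0}$ is supported in a single connected component $C$ of the Dynkin diagram and $\langle\gamma_{i_0},\bar v\rangle=\langle\gamma_{i_0},\bar v|_C\rangle$, and since $\bar v$ is $\Gamma$-invariant while $\Gamma$ permutes the components transitively, transporting by an element of $\Gamma$ reduces the problem to a fixed irreducible component, on which the stabilizer of that component — a cyclic group — acts by diagram automorphisms. By Lemma \ref{pm1le} — equivalently, because only the types $A_n$, $D_n$, $E_6$ and $D_4$ admit a nontrivial diagram automorphism — this cyclic group has order $1$, $2$ or $3$, and in the nontrivial cases the component is simply laced.

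In this reduced situation one has to bound $\langle\gamma_{i_0},\bar v|_C\rangle$ from below: it equals $\langle\gamma_{i_0},v|_C\rangle$, which is $\geq 2$ by the defining property of the $\gamma_i$, plus a correction term coming from the non-identity elements of the stabilizer group (and, when $\operatorname{supp}(v)$ meets several components, from the remaining components as well), and one wants this correction to be $\geq -1$. I expect this estimate to be the main obstacle: for a badly chosen $\gamma_{i_0}$ the correction can be large and negative when some coefficients $n_\beta$ are large (for instance $\langle\gamma_{i_0},(\tau\beta)^\vee\rangle$ may equal $-1$ for many $\beta$), so one must exploit the relation $\sum_i\gamma_i^\vee=v$ and make a careful choice of $i_0$ — for example a $\gamma_i$ of maximal height — using Lemma \ref{pm1le} (consecutive roots in a Galois orbit pair in $\{0,-1\}$, and a Galois orbit has at most three elements, the value $3$ occurring only in type $D_4$) together with a short case analysis of the possible Galois orbits of simple roots in the handful of relevant Dynkin diagrams. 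Once this lower bound is established, $\langle\gamma_{i_0},\bar v\rangle\geq 1>0$, which proves the statement.
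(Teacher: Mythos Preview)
Your reduction to $v\ge 0$ is clean and correct, and for trivial $\Gamma$ your argument via Lemma~\ref{lem:addroots} is exactly the paper's. The gap is that you stop at the ``main obstacle'': you never prove that some $\gamma_{i_0}$ satisfies $\langle\gamma_{i_0},\bar v\rangle>0$, only announce a case analysis. As written, this is not a proof.

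The paper takes a different and simpler route in the non-trivial $\Gamma$ case. It does \emph{not} try to control $\langle\gamma_{i_0},\bar v\rangle$ for the (possibly long) roots $\gamma_i$ coming from Lemma~\ref{lem:addroots}; instead it takes $\alpha=\pm\beta_i$ to be a \emph{simple} root (one representative per $\Gamma$-orbit, with sign matching that of $n_{\beta_i}$). Setting $m_i=\sum_{\tau}|n_{\tau\beta_i}|$, one gets directly
\[
\Big\langle \tfrac{n_{\beta_i}}{|n_{\beta_i}|}\sum_{\tau}\tau\beta_i,\,v\Big\rangle \ \ge\ 2m_i-\sum_{\delta\ \text{neighbor of }\beta_i} m_\delta,
\]
and the statement reduces to the assertion that the ``folded'' Cartan inequalities $2m_i\le\sum_{\text{neighbors}}m_\delta$ force all $m_i=0$, which is a one-line check for each of the finitely many diagrams $A_n,D_n,E_6,D_4$. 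No choice of a ``maximal height'' $\gamma_{i_0}$ is needed.

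That said, your approach can also be finished without any case analysis. When $\Gamma$ acts non-trivially the components of the Dynkin diagram are simply laced, so one may identify roots with coroots and $\sum_i\gamma_i=v$. Then
\[
\sum_{i}\langle\gamma_i,\bar v\rangle=(v,\bar v)=|\Gamma|^{-1}(\bar v,\bar v)>0,
\]
since $v\ge 0$ and $v\neq 0$ force $\bar v\neq 0$. Hence some $i_0$ has $\langle\gamma_{i_0},\bar v\rangle>0$, and together with your reduction to $v\ge 0$ and the identity $|v|=|\gamma_{i_0}^{\vee}|+|v-\gamma_{i_0}^{\vee}|$ this closes the argument. This is arguably slicker than the paper's endgame, but you should state it rather than defer to an unspecified ``short case analysis''.
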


\begin{proof}
We first consider the case that the $\Gamma$-action on the Dynkin diagram is trivial. Using Lemma \ref{lem:addroots}, we may write $v$ as a sum of coroots $v=\sum_{i\in I}\gamma_i^{\vee}$ in such a way that $|v|=\sum_i |\gamma_i^{\vee}|$ and $\langle \gamma_i, \gamma_j^{\vee}\rangle \geq 0$ for all $i,j\in I$. Then for all $i$ we have $\langle \gamma_i,v\rangle>0$. Thus each $\alpha=\gamma_i$ is as claimed.

We now assume that $\Gamma$ acts non-trivially on the (connected) Dynkin diagram. This implies that the Dynkin diagram is of type $A$, $D$ or $E_6$, and in particular all roots have equal length. Let $\beta_1,\dotsc,\beta_n$ be representatives of the $\Gamma$-orbits on $\Delta$. Note that $|v|=|v|_{\Gamma}$ implies that $n_{\beta_i},n_{\tau\beta_i}$ have the same sign for all $\tau\in \Gamma$. For $1\leq i\leq n$ let $m_i=|\sum_{\tau\in\Gamma}n_{\tau\beta_i}|=\sum_{\tau\in\Gamma}|n_{\tau\beta_i}|$. By possibly changing the representatives $\beta_i$ we may assume that $n_{\beta_i}\neq 0$ whenever $m_i\neq 0$. We have
$$\left\langle \frac{n_{\beta_i}}{|n_{\beta_i}|}\beta_i,v \right\rangle= 2|n_{\beta_i}|-\sum_{\alpha\in\Delta, \langle \beta_i,\alpha^{\vee}\rangle=-1}\frac{n_{\beta_i}}{|n_{\beta_i}|}n_{\alpha}.$$
For $\alpha\in \Gamma\beta_j$, let $m_{\alpha}=m_j$. Then we obtain
\begin{eqnarray*}
\left\langle \frac{n_{\beta_i}}{|n_{\beta_i}|}\sum_{\tau\in\Gamma}\tau\beta_i,v \right\rangle&=
& 2m_i +\sum_{\alpha\in\Delta, \langle \beta_i,\alpha^{\vee}\rangle=-1}\frac{n_{\beta_i}}{|n_{\beta_i}|}\frac{n_{\alpha}}{|n_{\alpha}|}m_{\alpha}\\
&\geq&2m_i- \sum_{\alpha\in\Delta, \langle \beta_i,\alpha^{\vee}\rangle=-1}m_{\alpha}.
\end{eqnarray*}
If $2m_i- \sum_{\alpha\in\Delta, \langle \beta_i,\alpha^{\vee}\rangle=-1}m_{\alpha}>0$ for some $i\leq n$ the claim is shown. Thus it suffices to show that $$\{m_i\in \mathbb{N}^n\mid 2m_i- \sum_{\alpha\in\Delta, \langle \beta_i,\alpha^{\vee}\rangle=-1}m_{\alpha}\leq 0\}=\{(0,\dotsc,0)\}.$$
This can be done by an easy case-by-case computation considering the different possible types of Dynkin diagrams.
\end{proof}

\begin{lemma}\label{pm1ld}
Let $\mu',\mu''\in X_*(T)$ be minuscule and such that $(\mu')_{G-\dom}=(\mu'')_{G-\dom}$.
Then we have a decomposition
$\mu'-\mu''=\sum_{i\in I}\gamma_i^{\vee}$ as a sum of coroots such that
\begin{itemize}
\item $\langle\gamma_i,\gamma_j^{\vee}\rangle=0$ for $i\neq j.$
\item $d(\mu',\mu'')=\sum_{i\in I}|\gamma_i^{\vee}|.$
\item $\langle \gamma_i,\mu'\rangle=1$, $\langle \gamma_i,\mu''\rangle=-1$ for all $i \in I.$
\item $\mu''=\left(\prod_{i\in I}s_{\gamma_i}\right)\mu'$ where the product does not depend on the order.
\end{itemize}
\end{lemma}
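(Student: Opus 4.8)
\textbf{Proof plan for Lemma \ref{pm1ld}.}

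The plan is to work entirely inside the root datum attached to $G$, reducing the statement about minuscule cocharacters to a combinatorial statement about the difference $\nu := \mu' - \mu''$. First I would observe that since $\mu'$ and $\mu''$ have the same image $\lambda := (\mu')_{G-\dom} = (\mu'')_{G-\dom}$ under the $W$-action, they are conjugate, so $\mu'' = w\mu'$ for some $w \in W$; moreover $\nu$ lies in the coroot lattice. The key structural input is that both $\mu'$ and $\mu''$ are \emph{minuscule}, which forces $\langle \alpha, \mu' \rangle, \langle \alpha, \mu'' \rangle \in \{-1,0,1\}$ for every root $\alpha$. I would then apply Lemma \ref{lem:addroots} to $\nu$, written as a sum of coroots coming from simple coroots, to get a decomposition $\nu = \sum_{i\in I}\gamma_i^{\vee}$ with $\langle \gamma_i, \gamma_j^{\vee}\rangle \geq 0$ for all $i,j$ and with $\sum_i |\gamma_i^{\vee}| = |\nu| = d(\mu',\mu'')$ minimal. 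It remains to upgrade the inequalities $\langle \gamma_i,\gamma_j^{\vee}\rangle \geq 0$ to orthogonality and to pin down the pairings against $\mu'$ and $\mu''$.

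The main step is the following claim: for each $i \in I$ we have $\langle \gamma_i, \mu'\rangle = 1$ and $\langle \gamma_i, \mu''\rangle = -1$. I would argue this by a positivity/minimality computation. Since $\langle \gamma_i, \nu\rangle = \langle \gamma_i, \sum_j \gamma_j^{\vee}\rangle \geq \langle \gamma_i, \gamma_i^{\vee}\rangle = 2 > 0$, and $\langle \gamma_i, \nu\rangle = \langle \gamma_i, \mu'\rangle - \langle \gamma_i, \mu''\rangle$, and each of the two terms lies in $\{-1,0,1\}$ by minusculity, the only possibility is $\langle \gamma_i,\mu'\rangle = 1$ and $\langle \gamma_i, \mu''\rangle = -1$. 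This simultaneously forces $\langle \gamma_i, \nu\rangle = 2$ exactly, hence $\langle \gamma_i, \sum_{j\neq i}\gamma_j^{\vee}\rangle = 0$; combined with $\langle \gamma_i,\gamma_j^{\vee}\rangle \geq 0$ for all $j$ this yields $\langle \gamma_i, \gamma_j^{\vee}\rangle = 0$ for $i \neq j$, the first bullet. (Here I should be slightly careful: $\langle\gamma_i,\gamma_j^\vee\rangle$ and $\langle\gamma_j,\gamma_i^\vee\rangle$ have the same sign, so vanishing of one gives vanishing of the other and the $\gamma_i$ are pairwise orthogonal as roots.) The second bullet, $d(\mu',\mu'') = \sum_i |\gamma_i^{\vee}|$, is exactly the minimality built into the application of Lemma \ref{lem:addroots}, once one checks $|\nu| = \sum_i|\gamma_i^\vee|$ is forced because all $\gamma_i^\vee$ are non-negative combinations of the same set of simple coroots (no cancellation).

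For the last bullet I would use that pairwise orthogonal reflections commute, so $\prod_{i\in I} s_{\gamma_i}$ is well-defined independently of the order. Then I compute its effect on $\mu'$: since $s_{\gamma_i}(\mu') = \mu' - \langle \gamma_i, \mu'\rangle \gamma_i^{\vee} = \mu' - \gamma_i^{\vee}$, and since $\langle \gamma_j, \gamma_i^\vee\rangle = 0$ for $j \neq i$ means subtracting $\gamma_i^\vee$ does not change the pairing of later terms against $\mu'$, an induction on $|I|$ gives $\left(\prod_{i\in I} s_{\gamma_i}\right)\mu' = \mu' - \sum_{i\in I}\gamma_i^{\vee} = \mu' - \nu = \mu''$. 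I expect the main obstacle to be the bookkeeping in this last step — making sure that when the reflections are applied successively, each $\langle \gamma_i, (\text{current vector})\rangle$ is still exactly $1$, which is where orthogonality of the $\gamma_i$ is essential — together with making the minimality/no-cancellation argument for the equality $d(\mu',\mu'') = \sum_i |\gamma_i^\vee|$ fully rigorous rather than merely plausible.
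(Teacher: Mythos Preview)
Your proposal is correct and follows essentially the same route as the paper: apply Lemma~\ref{lem:addroots} to $\mu'-\mu''$ expressed via simple coroots to obtain $\gamma_i^{\vee}$ with pairwise non-negative pairings, then use the squeeze $2 \le \langle \gamma_{i_0}, \sum_i \gamma_i^{\vee}\rangle = \langle \gamma_{i_0}, \mu'-\mu''\rangle \le 2$ (the upper bound from minusculity) to force $\langle\gamma_{i_0},\mu'\rangle=1$, $\langle\gamma_{i_0},\mu''\rangle=-1$, and orthogonality; the last bullet then follows exactly as you say. The paper's proof is just a terser version of yours, and in particular it also leaves the no-cancellation point $d(\mu',\mu'')=\sum_i|\gamma_i^{\vee}|$ as an implicit consequence of the ``subset'' clause in Lemma~\ref{lem:addroots}.
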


\begin{proof} Applying Lemma \ref{lem:addroots} to $\mu'-\mu''$ written as an integral sum of simple coroots, we see that
 $\mu'-\mu''=\sum_{i\in I}\gamma_i^{\vee}$ where the $\gamma_i$ are roots such that $\gamma_i\neq -\gamma_j$ and $\langle\gamma_i,\gamma_j^{\vee}\rangle \geq 0$
for all $i,j \in I,$ and $d(\mu',\mu'')=\sum_{i\in I}|\gamma_i^{\vee}|.$ Then for all $i_0\in I$,
$$2\leq \langle \gamma_{i_0},\sum_{i\in I}\gamma_i^{\vee}\rangle=\langle \gamma_{i_0},\mu'-\mu''\rangle\leq 2$$ where the last inequality follows from $\mu',\mu''$ minuscule. Thus both inequalities are equalities. We obtain $\langle \gamma_{i_0},\mu'\rangle=1$, $\langle \gamma_{i_0},\mu''\rangle=-1$ and $\langle \gamma_{i_0},\gamma_j^{\vee}\rangle=0$ for all $j\neq i_0$.
\end{proof}

\begin{proof}[Proof of Proposition \ref{pm11}]
Let $x_1,x_2\in \bar I_{\mu, b}^{M, G}$. If $\mu_{x_1}=\mu_{x_2}$ in $X_*(T)_{\Gamma}$, then we are done. So we may assume $\mu_{x_1}\neq \mu_{x_2}$ in $X_*(T)_{\Gamma}$. We use
induction on $d_{\Gamma}(\mu_{x_1},\mu_{x_2})$.  Write
$\mu_{x_2}-\mu_{x_1}=\sum_{i=1}^r \gamma_i^{\vee}$ as in Lemma
\ref{pm1ld}.

Recall that $\Gamma$ acts transitively on the set of connected components of the Dynkin diagram of $G$ as $G^{\ad}$ is simple.  As $\mu_{x_2}=\mu_{x_1}$ in $\pi_1(M)_{\Gamma}$, there exist roots
$(\beta_j)_{j}$ in $M$ such that $\sum_i\gamma_i^{\vee}=\sum_j
\beta_j^{\vee}\neq 0$ as elements of $X_*(T)_{\Gamma}$ and $|\sum_i\gamma_i^{\vee}|_{\Gamma}=\sum_j
|\beta_j^{\vee}|$. Then  $d_{\Gamma}(\mu_{x_1}, \mu_{x_2})=\sum_j
|\beta^{\vee}_j|.$  By Lemma \ref{pm1lb} (applied to $\sum_i \gamma_i^{\vee}-\sum_j
\beta_j^{\vee}$), and after replacing $\beta_j$ by some representative in $\Gamma\beta_j$, there exist $(\tau_i)_{1\leq i\leq r}\in \Gamma^{r}$ such that $\sum_i
\tau_i\gamma_i^{\vee}=\sum_j \beta_j^{\vee}$. As $|\sum_j \beta_j^{\vee}|_{\Gamma}=\sum_j |\beta_j^\vee|$, we have $|\sum_j\beta_j^{\vee}|=|\sum_j \beta_j^{\vee}|_{\Gamma}$. By applying Lemma \ref{pm1lc} to $\sum_j\beta_j^{\vee}$ in the root datum of $M$, there is a coroot
$\alpha^{\vee}$ in $M$  such that $|\sum_{j}\beta_j^{\vee}|=|\alpha^{\vee}|+|\sum_j
\beta_j^{\vee}-\alpha^{\vee}|$ and $\langle\sum_{\tau\in\Gamma}\tau\alpha,\sum_j
\beta_j^{\vee}\rangle>0$.
Thus
\begin{eqnarray*}
\langle \sum_{\tau\in \Gamma}\tau\alpha,\mu_{x_2}-\mu_{x_1}\rangle&=&\langle \sum_{\tau\in \Gamma}\tau\alpha,\sum_i \gamma_i^{\vee}\rangle\\
&=&\langle \sum_{\tau\in \Gamma}\tau\alpha,\sum_i \tau_i\gamma_i^{\vee}\rangle\\
&=&\langle \sum_{\tau\in \Gamma}\tau\alpha,\sum_j \beta_j^{\vee}\rangle\\
&>&0
\end{eqnarray*}
Thus there is a $\tau_0\in\Gamma$ with $\langle \tau_0\alpha,\mu_{x_2}-\mu_{x_1}\rangle>0$. Hence $\langle \tau_0\alpha,\mu_{x_2}\rangle=1$ or $\langle \tau_0\alpha,\mu_{x_1}\rangle=-1$.
In the first case,
\begin{eqnarray*}
d_{\Gamma}(s_{\tau_0\alpha}\mu_{x_2},\mu_{x_1})&=&|\sum_i\beta_i^{\vee}-\tau_0(\alpha)|_{\Gamma} \\
&<&|\sum_i\beta_i^{\vee}|=d_{\Gamma}(\mu_{x_2},\mu_{x_1}),
\end{eqnarray*}
and the statement is shown by induction. In the second case we proceed analogously using $$d_{\Gamma}(\mu_{x_2},s_{\tau_0\alpha}\mu_{x_1})<d_{\Gamma}(\mu_{x_2},\mu_{x_1}).$$
\end{proof}

\begin{proof}[Proof of Proposition \ref{pm12}]
%The assertion can be checked on each $\Gamma$-orbit of connected components of the Dynkin diagram separately. Thus we may assume that
By assumption $\Gamma$ permutes the connected components of the Dynkin diagram of $G$ transitively and each element $\tau\neq 1$ acts non-trivially.

Let $\mu' \mu''\in \bar I^{T,G}_{\mu,b}$. We prove the proposition by induction on $d(\mu',\mu'')$. We assume that $\mu'\neq\mu''$. We write $\mu'-\mu''=\sum_{i}\gamma_i^{\vee}$ as in Lemma \ref{pm1ld}. Gathering the positive resp.~the negative $\gamma_i$ we obtain
$$\mu'-\mu''=\sum_{i\in I}\gamma_i^{+\vee}-\sum_{j\in J}\gamma_j^{-\vee}$$ where now all $\gamma^+_i,\gamma^-_j$ are positive. By Lemma \ref{pm1la} there is a $\gamma_{i_0}^+$, a $\gamma_{j_0}^-$ and a $\tau\in\Gamma$ such that $\langle \tau\gamma_{i_0}^+,\gamma_{j_0}^{-\vee}\rangle >0$. By orthogonality of the $\gamma_i$ we have $\tau\neq 1$. Let $\gamma^+=\gamma_{i_0}^+$ and $\gamma^-=\gamma_{j_0}^-$. Note that $s_{\gamma^-}s_{\gamma^+}\mu'=\mu'-\gamma^{+\vee}+\gamma^{-\vee}.$ If $\gamma^+=\tau\gamma^-$ then $d(s_{\gamma^-}s_{\tau\gamma^-}\mu',\mu'')<d(\mu',\mu'')$ and the induction hypothesis applies. So we may assume that $\gamma^+\neq\tau\gamma^-$. Then $\langle \tau\gamma^+,\gamma^{-\vee}\rangle=1$ or $\langle \gamma^-,\tau\gamma^{+\vee}\rangle=1,$ and by symmetry we may assume that the second equation holds. Let $$\alpha^{\vee}=s_{\gamma^-}(\tau\gamma^{+\vee})=\tau\gamma^{+\vee}-\gamma^{-\vee}.$$ We need to distinguish several cases.

\noindent{\bf Case 1: $\langle \tau\gamma^+,\gamma^{-\vee}\rangle>1$.}

In this case, the root system has roots of different lengths, in particular the connected components do not have non-trivial automorphisms,
and $\langle \tau\gamma^+,\gamma^+ \rangle = 0,$ as $\tau \neq 1.$

We have $\alpha=s_{\gamma^-}(\tau\gamma^+)=\tau\gamma^+-\langle\tau\gamma^+,\gamma^{-\vee}\rangle\gamma^-$. Thus
$$-1\leq \langle\tau\gamma^+,\mu'\rangle=\langle\alpha+\langle\tau\gamma^+,\gamma^{-\vee}\rangle\gamma^-,\mu'\rangle\leq -1.$$
Here the first inequality follows from the fact that $\mu'$ minuscule. For the second we use $\mu'$ minuscule, $\langle\tau\gamma^+,\gamma^{-\vee}\rangle\geq 2$ and $\langle\gamma^-,\mu'\rangle=-1$ (the last equation following from our choice of the $\gamma_i$). Let $\tilde\mu=s_{\tau\gamma^+}s_{\gamma^+}\mu'$. Then $\tilde\mu\in \bar I^{T, G}_{\mu, b}.$
Since $\langle \tau\gamma^+,\gamma^- \rangle > 0,$ we have $|\tau\gamma^{+\vee}-\gamma^{-\vee}| <|\gamma^{+\vee}| + |\gamma^{-\vee}|$ by Lemma \ref{remadd},
which implies that $ d(\tilde\mu,\mu'') <d(\mu',\mu''),$ so the induction hypothesis applies.

\noindent{\bf Case 2: $\langle \tau\gamma^+,\gamma^{-\vee}\rangle=1$.}

By Lemma \ref{pm1le} we have $\langle\gamma^+,\tau\gamma^{+\vee}\rangle, \langle\gamma^-,\tau^{-1}\gamma^{-\vee}\rangle \in \{0,-1\}$.
Since  $\langle \tau\gamma^+,\gamma^- \rangle > 0,$ $\tau\gamma^+$ and $\gamma^-$ are in the same connected component of the Dynkin diagram. Using Lemma \ref{pm1le} again we see that if one of the products above is equal to $-1$, then the Dynkin diagram is of type $A_{n}$ with $n$ even. The explicit description of Example \ref{exe} then shows that $\langle \gamma^+,\gamma^{-\vee}\rangle=0$ implies that at most one of the two products can in fact be equal to $-1$. Hence we have $\langle\gamma^+,\tau\gamma^{+\vee}\rangle = 0$ or $\langle\gamma^-,\tau^{-1}\gamma^{-\vee}\rangle = 0$.

\noindent{\it Case 2.1:  Assume that one of the following conditions is satisfied.

\begin{itemize}
\item $\langle\gamma^-,\tau^{-1}\gamma^{-\vee}\rangle=0$ and  $\langle\tau^{-1}\gamma^{-},\mu'\rangle>0$
\item $\langle\gamma^+,\tau\gamma^{+\vee}\rangle=0$ and  $\langle\tau\gamma^{+},\mu'\rangle<0$
\item $\langle\gamma^-,\tau^{-1}\gamma^{-\vee}\rangle=0$ and  $\langle\tau^{-1}\gamma^{-},\mu''\rangle<0$
\item $\langle\gamma^+,\tau\gamma^{+\vee}\rangle=0$ and  $\langle\tau\gamma^{+},\mu''\rangle>0$
\end{itemize} }

If the first assumption holds let $\tilde\mu=s_{\tau^{-1}\gamma^-}s_{\gamma^-}\mu'$.
Then $\langle \gamma^+,\tau^{-1}\gamma^- \rangle > 0$ implies $d(\tilde\mu,\mu'')<d(\mu',\mu''),$ as above, and the induction hypothesis applies.
The arguments for the other three assumptions are analogous.

\noindent{\it Case 2.2: Assume none of the four possible conditions of case 2.1 are satisfied, and that there is a $\tilde\tau\in \Gamma$ such that $\tilde\tau\alpha$ is not in the same connected component as $\gamma^+$ or $\gamma^-$ and that one of the following conditions holds.
\begin{itemize}
\item $\langle \tilde\tau\alpha,\mu'\rangle=-1$
\item $\langle \tilde\tau\alpha,\mu''\rangle= 1$
\end{itemize}}
Note that by the last assertion of Lemma \ref{remadd} and the assumption of case 2, $|\gamma^{-\vee}|\neq|\gamma^{+\vee}|$. We show that statement for the first of the two alternative assumptions, the other one being analogous, exchanging $\mu'$ and $\mu''$ (and suitable signs). Furthermore we assume that $\langle\gamma^-,\tau^{-1}\gamma^{-\vee}\rangle=0$,
which implies $\langle \tau^{-1}\gamma^-,\mu'\rangle\leq 0,$ as we are excluding Case 2.1.  The alternative case for $\langle\gamma^+,\tau\gamma^{+\vee}\rangle=0$ can be shown by the same argument exchanging $\gamma^-,\gamma^+$ (and suitable signs).

As $\langle \tau^{-1}\gamma^-,\mu'\rangle\leq 0$ we obtain
\begin{equation}\label{eq2123}
\langle\tau^{-1}\alpha,\mu'\rangle=\langle \gamma^+-\langle\tau\gamma^+,\gamma^{-\vee}\rangle\tau^{-1}\gamma^-,\mu'\rangle\geq 1-1\cdot 0=1.
\end{equation}

Let $\tilde\mu=s_{\gamma^-}s_{\gamma^+}s_{\tilde\tau\alpha}\mu'$ and $\tilde{\tilde\mu}=s_{\tilde\tau\alpha}s_{\tau^{-1}\alpha}\mu'$
As $\alpha^{\vee}=\tau\gamma^{+\vee}-\gamma^{-\vee}$, these two coweights (in particular the first) are still in $\bar I_{\mu, b}^{T,G}$.
Notice that
\begin{eqnarray*}\tilde{\mu}-\tilde{\tilde{\mu}}&=&(\mu'+\gamma^{-\vee}-\gamma^{+\vee}+\tilde{\tau}\alpha^{\vee})-
(\mu'+\tilde{\tau}\alpha^{\vee}-\tau^{-1}\alpha^{\vee})\\ &=& \gamma^{-\vee}-\tau^{-1}\gamma^{-\vee}.\end{eqnarray*}
Here we have used that $\tau^{-1}\alpha$ is in the same component as $\gamma^+,$ so that $\langle \tilde\tau\alpha, \tau^{-1}\alpha \rangle = 0.$
Therefore in order to use induction it is enough to show that $d(\tilde\mu,\mu'')<d(\mu',\mu'')$.
We have
$$|\tilde\tau\alpha^{\vee}|=|\alpha^{\vee}|=|\tau\gamma^{+\vee}-\gamma^{-\vee}|=\bigg| |\tau\gamma^{+\vee}|-|\gamma^{-\vee}|\bigg | = \bigg| |\gamma^{+\vee}|-|\gamma^{-\vee}|\bigg |.$$
 Here the second equality follows from Lemma \ref{remadd} as $\tau\gamma^+$ and $\gamma^-$ are both positive roots. Thus $$d(\tilde\mu,\mu'')\leq d(\mu',\mu'')-|\gamma^{-\vee}|-|\gamma^{+\vee}|+|\alpha^{\vee}|<d(\mu',\mu'').$$This implies the assertion for this case.

\noindent{\it Case 2.3: $\langle\gamma^+,\tau\gamma^{+\vee}\rangle =-1$ or $\langle\gamma^-,\tau^{-1}\gamma^{-\vee}\rangle =-1$, but none of the cases considered in 2.1 and 2.2  applies.}

We will show that this case is impossible. We have seen above that then the Dynkin diagram is a union of Dynkin diagrams of type $A_n$ for even $n$. We assume that $\langle\gamma^+,\tau\gamma^{+\vee}\rangle =-1$, the other case being similar. Then $\langle\gamma^-,\tau^{-1}\gamma^{-\vee}\rangle =0$. The roots $\gamma^+,\tau\gamma^+,\gamma^-,\tau\gamma^-$ all lie within one connected component of the Dynkin diagram.

The inequality (\ref{eq2123}) still holds, and $$\langle\alpha,\mu'\rangle= \langle\tau\gamma^+-\gamma^-,\mu'\rangle\geq 0.$$ Furthermore excluding case 2.2 implies that for all
$\tilde\tau \neq \tau, 1$ in $\Gamma,$ we have $\langle \tilde\tau\alpha,\mu'\rangle\geq 0$. A similar argument applies to $\mu',$ and yields
$\langle \tilde\tau\alpha,\mu''\rangle\leq 0.$ Recall that $\mu'=\mu''$ in $X_*(T)_{\Gamma}$. Altogether we obtain
$$0<\langle \sum_{\tilde\tau\in \Gamma}\tilde\tau \alpha,\mu'\rangle=\langle \sum_{\tilde\tau\in \Gamma}\tilde\tau \alpha,\mu''\rangle<0,$$
a contradiction.

\noindent{\it Case 2.4: $\langle\gamma^+,\tau\gamma^{+\vee}\rangle =0=\langle\gamma^-,\tau^{-1}\gamma^{-\vee}\rangle$, but none of the cases in 2.1 and  2.2  apply.}

As before
we have that $\langle \tau\gamma^+,\mu'\rangle\geq 0$
which implies $\langle\alpha,\mu'\rangle=1$ and that $\langle
\tau^{-1}\gamma^-,\mu'\rangle\leq 0,$ which implies
$\langle\tau^{-1}\alpha,\mu'\rangle=1.$ Similarly we obtain
$\langle\alpha,\mu''\rangle=-1 $ and
$\langle\tau^{-1}\alpha,\mu''\rangle=-1.$
 Notice again that
\begin{eqnarray}\label{eqn_sum_Gamma}\langle\sum_{\tilde\tau\in\Gamma}\tilde\tau\alpha,\mu'\rangle=
\langle\sum_{\tilde\tau\in\Gamma}\tilde\tau\alpha,\mu''\rangle.\end{eqnarray}

This equality implies that $\Gamma\alpha$ has at least two elements in each connected
component of the Dynkin diagram. Indeed otherwise we would have $\langle \tilde\tau \alpha, \mu' \rangle \geq 0 \geq \langle \tilde\tau \alpha, \mu'' \rangle$
for $\tilde \tau \neq 1,$ as we are excluding Case 2.2, and $\langle \alpha, \mu' \rangle \geq 1 > -1  \geq \langle \alpha, \mu'' \rangle,$
as we are excluding Case 2.1.  In particular all roots have
equal length. Therefore $\alpha\neq \tau^{-1}\alpha$ since $|\tau^{-1}\gamma^{-}|\neq |\tau\gamma^{+}|,$ as we saw above, and
\[\langle \tau^{-1}\alpha, \alpha^{\vee}\rangle= \langle \gamma^+-\tau^{-1}\gamma^{-}, \tau\gamma^{+\vee}-\gamma^{-\vee}\rangle=-\langle \tau^{-1}\gamma^{-}, \tau\gamma^{+\vee}\rangle \neq 2.\]

As we excluded case 2.2, using again (\ref{eqn_sum_Gamma}), we obtain
 $\tau_1,\tau_2\in\Gamma$ such that $\tau_1\alpha\neq \tau_2\alpha$, and $\tau_1\alpha,\tau_2\alpha$ are each in the connected component of
$\alpha$ or $\tau^{-1}\alpha$ with one of the following two conditions satisfied
 \begin{itemize}
 \item $\langle \tau_1\alpha,\mu'\rangle=-1$ and $\langle \tau_2\alpha,\mu'\rangle=-1$.

 \item $\langle \tau_1\alpha,\mu''\rangle=1$ and $\langle \tau_2\alpha,\mu''\rangle=1$.
\end{itemize}
Assume that the first of the above two alternative conditions holds, the other one being analogous.
From our calculation of the products with $\mu',\mu''$ above we
see that $\tau_i\alpha\neq \alpha,\tau^{-1}\alpha$ for $i=1,2$.
Moreover $\alpha$ and $\tau^{-1}\alpha$ cannot be
in the same connected component of the Dynkin diagram, otherwise
the four roots $\alpha$, $\tau^{-1}\alpha$,
$\alpha_1:=\tau_1\alpha$ and $\alpha_2:=\tau_2\alpha$ are in the
same connected component which is impossible according to Lemma \ref{pm1le}.

\noindent{\it Case 2.4.1: $\Gamma\alpha$ has 2 elements in each
connected component.}

We assume that $\alpha_1$ is in the same connected component as $\alpha$ (and thus as $\gamma^-$), the other case being analogous. Then $\alpha_2=\tau^{-1}\alpha_1$. We want to show that $\langle\alpha_1,\gamma^{-\vee}\rangle\geq 0$. As $\langle\alpha_1, \mu'\rangle=-1$ and $\langle \gamma^{-}, \mu'\rangle = -1$, we have $\alpha_1\neq -\gamma^{-}$.
Hence if $\langle\alpha_1, \gamma^{-\vee}\rangle < 0$ then  $\gamma^-+\alpha_1$ is a root by Lemma \ref{remadd}.
Since $\langle\gamma^-+\alpha_1,\mu'\rangle=-2,$ this contracts the condition that $\mu'$ is minuscule.

In the same way one shows that $\langle\alpha_2,\gamma^{+\vee}\rangle\leq 0$. On the other hand,
$$0\geq \langle\alpha_2,\gamma^{+\vee}\rangle =\langle\alpha_1,\tau\gamma^{+\vee}\rangle=\langle\alpha_1,\alpha^\vee\rangle+\langle\alpha_1,\gamma^{-\vee}\rangle$$ and by Lemma \ref{pm1le} the first of the summands on the right hand side is $0$ or $-1$. Thus $\langle\alpha_2,\gamma^{+\vee}\rangle=0$ or $\langle\alpha_1,\gamma^{-\vee}\rangle=0$. We consider the second case, the other being analogous.
Let $\tilde\mu=s_{\alpha_1}s_{\gamma^-}s_{\gamma^+}\mu'$ and
$\tilde{\tilde\mu}=s_{\alpha_1}s_{\tau^{-1}\alpha}\mu'$. Then $\tilde\mu, \tilde{\tilde\mu}\in \bar I^{T, G}_{\mu, b}$. Moreover, since
we are excluding Case 2.1, $\langle \tau^{-1}\alpha, \mu' \rangle = 1,$ so
$$\tilde{\mu}-\tilde{\tilde\mu}=-\gamma^{+\vee}+\gamma^{-\vee}+\tau^{-1}\alpha^{\vee}=\gamma^{-\vee}-\tau^{-1}\gamma^{-\vee},$$
and $d(\tilde\mu,\mu'')<d(\mu',\mu''),$ as in Case 2.2. Thus the assertion follows
by induction.

\noindent{\it Case 2.4.2: $\Gamma\alpha$ has 3 elements in each
connected component.}

In this case, the Dynkin diagram is of type $D_4$ by Lemma
\ref{pm1le}. Suppose $\alpha_1:=\tau_1\alpha$ is in the same
connected component as $\alpha$. Then $\langle\tau_1\rangle\subset
\Gamma$ is the stabilizer of each connected component of the
Dynkin diagram. Let $\{\beta_i\}_{0\leq i\leq 3}$ be the basis of
the connected component of the root system containing $\alpha$
such that $\tau_1\beta_0=\beta_0$ and $\tau_1$ acts transitively
on $\{\beta_i\}_{1\leq i\leq 3}$. We may suppose that $\alpha$ is
positive. Then $\alpha$ is of the form $\beta_i$ or
$\beta_{i}+\beta_0$ or $\beta_i+\beta_0+\beta_j$ with $1\leq i\neq
j\leq 3$, and therefore $\alpha_1-\alpha=\beta_{i_0}-\beta_{j_0}$
for some $1\leq i\neq j\leq 3$. As $\langle\alpha_1-\alpha,
\mu'\rangle=-2$, we have $\langle\beta_{i_0},\mu'\rangle=-1$,
$\langle\beta_{j_0}, \mu'\rangle=1$, and for $0\leq k\leq 3$,
$k\neq i_0, j_0$, $\langle\beta_{k}, \mu'\rangle=0$ since $\mu'$
is minuscule. Thus $\langle\gamma^{-},\mu'\rangle=-1$ implies that
$\beta_{i_0}^{\vee}\preceq \gamma^{-\vee}$.

On the other hand, notice that
$$\langle \tau_1^2(\alpha)-\tau_1(\alpha), \mu'\rangle=\langle
\tau_1(\beta_{i_0})-\tau_1(\beta_{j_0}), \mu'\rangle\in \{\pm
1\}.$$ This implies that $\langle \tau_1^2(\alpha), \mu'\rangle=0$
and $\tau_2(\alpha)$ is not in the same connected component as
$\alpha$, so it is in the same connected component as
$\tau^{-1}\alpha$.  By applying the same method as above to the
connected component of the Dynkin diagram of $\tau^{-1}\alpha$, we
can find $1\leq j'_0\leq 3$ such that
$\langle\tau^{-1}\beta_{j'_0}, \mu'\rangle=1$ and
$\tau^{-1}\beta_{j'_0}^{\vee}\preceq \gamma^{+\vee}$. Let
$\tilde{\mu}:=s_{\tau^{-1}\beta_{j'_0}}s_{\beta_{i_0}}\mu'$, then
$d(\tilde{\mu}, \mu'')< d(\mu', \mu'')$ and the induction
hypothesis applies.
\end{proof}

\subsection{Immediate distance case}\label{sec63}

\label{subsection_affine_lines_immdiate_distance} In Proposition \ref{pm1}, for any two elements $x, x'\in \bar I^{M, G}_{\mu, b}$, we have found a series of elements
$x_1,\cdots, x_r\in \bar I^{M, G}_{\mu, b}$ with $x=x_1$, $x'=x_r$ such that the difference of each two successive elements in the series is of the form $\alpha^\vee-\sigma^{m}(\alpha^\vee)$ in $\pi_1(M),$ where $\alpha$ is a root in $N.$ In this subsection, we want to add some elements in that series such the each pair of successive elements in the enlarged series has ``minimal distance" in a sense that we will define below. Such pairs will be called in immediate distance (cf. Definition \ref{def_immediate_distance}).
\bigskip

We now return to the assumptions of \ref{para:stdsetup}, so that $G^{\ad}$ is simple, $M \subset G$ is a standard Levi, and $b \in M(L)$ is superbasic.
For any $\Omega\in\Phi_{N, \Gamma}$, we recall that the subgroup $G_{\Omega}$ of $G$ is defined in \ref{para:defnGOmega}.
We first provide several useful lemmas that will be used in the sequel.
%the subgroup of $G$ generated by $M$ and $U_{\tau\alpha}$ and
%$U_{-\tau\alpha}$ for all $\tau\in\Gamma$. By passing to a suitable subgroup of $G_\alpha$ and of $M$,
%we may always assume that $\Gamma$ acts transitively on the connected components of the Dynkin diagram of $G_{\alpha}$.

\begin{lemma} \label{lemma_modify_alpha} Let $\alpha\in\Phi_N$ be a (positive) root, and
let $\Omega=\Gamma\alpha$. There exists an adapted root $\alpha'$ in $G_{\Omega}$ such that $\alpha^{\vee}=\alpha'^{\vee}$ in $\pi_1(M)$.
\end{lemma}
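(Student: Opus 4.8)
The goal is to replace a positive root $\alpha \in \Phi_N$ by an \emph{adapted} root $\alpha'$ in $G_\Omega$ (where $\Omega = \Gamma\alpha$) having the same image in $\pi_1(M)$. Recall that ``adapted'' means that $\alpha'^\vee$ is $M$-anti-dominant and $\langle\beta,\alpha'^\vee\rangle \in \{-1,0,1\}$ for every root $\beta$ in $M$. The natural idea is to let $W_M$ act: inside the coweight space, we want to move $\alpha^\vee$ into the anti-dominant chamber for $M$, and the key point is that the shortest such element is automatically adapted. The plan is to apply $w\in W_M$ to $\alpha$ so that $w\alpha^\vee$ becomes $M$-anti-dominant, i.e.\ $\langle\beta, w\alpha^\vee\rangle \le 0$ for all positive roots $\beta$ in $M$; since $W_M$ fixes $\pi_1(M)$ pointwise, $w\alpha^\vee = \alpha^\vee$ in $\pi_1(M)$, so the image is unchanged. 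What needs to be checked is that (i) the resulting root $\alpha'=w\alpha$ still lies in $\Phi_N$ (equivalently in $G_\Omega$), and (ii) anti-dominance for $M$ forces $\langle\beta,\alpha'^\vee\rangle\in\{-1,0,1\}$.

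For (i): the Weyl group $W_M$ preserves $N$, hence $\Phi_N$, because $M$ normalizes its standard parabolic $P=MN$; so $w\alpha\in\Phi_N$ for any $w\in W_M$, and in particular $\alpha'=w\alpha$ lies in $G_\Omega$ since $\Gamma\alpha' = \Gamma w\alpha$ and $\Phi_\Omega \supseteq \Omega$ contains the $\Gamma$-orbit (here I use that $w\alpha$ and $\alpha$ generate the same closed symmetric subset modulo $\Phi_M$, or more simply that $\Phi_\Omega$ is the smallest closed symmetric subset containing $\Phi_M$ and $\Omega$ and $w\alpha\in\Phi_\Omega$ because $w\in W_M$ acts on $\Phi_\Omega$). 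For (ii): once $\alpha'^\vee$ is $M$-anti-dominant, for a \emph{simple} root $\beta$ of $M$ we have $\langle\beta,\alpha'^\vee\rangle\le 0$; the lower bound $\langle\beta,\alpha'^\vee\rangle\ge -1$ follows because otherwise $\beta^\vee + \alpha'^\vee$-type arguments (or: $s_\beta\alpha' = \alpha' - \langle\beta,\alpha'^\vee\rangle\beta$, and this would again be a root in $\Phi_N$ whose length considerations with the simple root $\beta$ of $M$ lead to a contradiction) — this is precisely the kind of rank-one argument used repeatedly in the paper, e.g.\ via Lemma \ref{remadd}. For a general (non-simple) positive root $\beta$ of $M$ one reduces to the simple root case by writing $\beta$ as a sum and using Lemma \ref{lem:addroots}, or argues directly that $\alpha'$ being anti-dominant and $\beta,\alpha'$ lying in a rank-$2$ system limits $\langle\beta,\alpha'^\vee\rangle$ to $\{-1,0,1\}$ unless $\beta$ is a short root in a non-simply-laced factor paired with a long $\alpha'$ — a case one must inspect, but the constraint that $\beta$ lies in $M$ while $\alpha'\in\Phi_N$, together with $M$-anti-dominance, should exclude $\langle\beta,\alpha'^\vee\rangle \le -2$.

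\textbf{Main obstacle.} The delicate point is the bound $\langle\beta,\alpha'^\vee\rangle\ge -1$ for all (not just simple) roots $\beta$ of $M$, which is where non-simply-laced root systems could in principle cause trouble: if $\alpha'$ is long and $\beta$ is short, the pairing $\langle\beta,\alpha'^\vee\rangle$ can be as negative as $-3$ in $G_2$ or $-2$ in type $B,C,F$. I expect the resolution is that such a configuration is incompatible with $\alpha'$ being $M$-anti-dominant and $\alpha'$ being a root in $N$ (not in $M$): one checks the rank-$2$ subsystem spanned by $\beta$ and $\alpha'$ and uses that $\beta\in\Phi_M$ while $\alpha'\notin\Phi_M$, so $\alpha'$ has a nonzero component along simple roots outside $M$. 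I would carry this out by: first choosing $w\in W_M$ minimizing the length of $w\alpha$ (equivalently making $w\alpha^\vee$ dominant for $-M$); then verifying anti-dominance gives the $\le 0$ bound on simple roots of $M$; then invoking the standard fact that an anti-dominant coweight pairs with simple coroots in a bounded way and propagating to all positive roots of $M$ via the decomposition lemmas; and finally handling the non-simply-laced case by a short explicit check, which is the step I'd expect the authors to dispatch with a reference to Bourbaki or a one-line root-system argument.
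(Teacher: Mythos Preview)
Your instinct to take the $M$-anti-dominant representative $\alpha_1 \in W_M\alpha$ is correct and matches the paper's first step; this automatically gives $\langle\beta,\alpha_1^\vee\rangle \le 0$ for positive $\beta$ in $M$ and preserves the class in $\pi_1(M)$. But your proposed resolution of the ``main obstacle'' is wrong: the configuration $\langle\beta,\alpha_1^\vee\rangle \le -2$ with $\alpha_1$ $M$-anti-dominant and $\alpha_1\in\Phi_N$ is \emph{not} excluded. A concrete counterexample is $G$ of type $C_2$ with simple roots $\alpha_1$ (short), $\alpha_2$ (long), and $M$ the Levi for $\alpha_2$: then $\alpha_1\in\Phi_N$ is already $M$-anti-dominant, yet $\langle\alpha_2,\alpha_1^\vee\rangle=-2$. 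So no amount of rank-two checking or ``propagating to all positive roots'' will show the anti-dominant representative is adapted, because it need not be.

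The paper's proof handles exactly this case by a different move. If $\alpha_1$ fails to be adapted, one observes that $\alpha_1$ must be a short root and $\beta$ a long root (since $|\langle\beta,\alpha_1^\vee\rangle|\ge 2$ forces this). One then passes to the root $(\alpha_1^\vee+\beta^\vee)^\vee$: this is a \emph{long} root in $G_\Omega$ whose coroot differs from $\alpha_1^\vee$ by $\beta^\vee$, a coroot of $M$, so it has the same image in $\pi_1(M)$. Taking the $M$-anti-dominant representative $\alpha'$ in its $W_M$-orbit, $\alpha'$ is still long, and a long root automatically satisfies $\langle\gamma,\alpha'^\vee\rangle\in\{-1,0,1\}$ for all roots $\gamma$. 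Thus $\alpha'$ is adapted. In the $C_2$ example above this replaces $\alpha_1$ by the long root $2\alpha_1+\alpha_2$. The missing idea in your proposal is precisely this replacement of a short root by a long root in the same $\pi_1(M)$-class; without it the argument cannot be completed.
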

\begin{proof} Let $\alpha_1$ be the $M$-anti-dominant representative in
$W_M\alpha$. If $\alpha_1$ is adapted, then let $\alpha'=\alpha_1$ and we are done. If $\alpha_1$ is not adapted, then there is a root $\beta$ in $M$ with $\langle\beta,
\alpha_1^{\vee}\rangle< -1$. This means that the irreducible sub-root system (corresponding to a connected component of the Dynkin diagram)  of $G_{\Omega}$ which contains $\alpha_1$ and $\beta$ has roots of different length, and $\beta$ is a long root while $\alpha_1$ is a short one. Let $\alpha'$ be the $M$-anti-dominant
representative in $W_M(\alpha_1^{\vee}+\beta^{\vee})^{\vee}$. By definition, $\alpha'$ is a long root and thus it is adapted.
\end{proof}

%After replacing $\alpha$ by $\alpha'$, we assume from now on
%$\alpha$ satisfies the conditions of the above lemma. From the $\Gamma$-invariance of $B$ and $M$, we obtain that the same holds for any element in $\Omega:=\Gamma\alpha$.

\begin{definition}\label{defn:type} Let $\Omega\in \Phi_{N,\Gamma}.$ Then $\Omega$ is \emph{of type I (resp II, resp
III)} if any irreducible sub-root system (corresponding to a connected component of the Dynkin diagram) of $G_{\Omega}$ which contains some element of $\Omega$ has 1 (resp.~2, resp.~3) root(s) in $\Omega$.
\end{definition}

\begin{remark}\label{rem_after_newA} Suppose that $\Omega$ is adapted and that $\Gamma$ acts transitively on the connected components of the Dynkin diagram of $G_{\Omega}$. If $\Omega$ is of type II or III, then all roots in $G_{\Omega}$ have the same length and Proposition \ref{lemma_newA} applies. In particular all the roots in $\Omega$ are simple roots in $G_{\Omega}$ for the Borel subgroup $B\cap G_{\Omega}$.  Moreover, the fact that the stabilizer in $\Gamma$ of each connected component of the Dynkin diagram of $M$ acts trivially on that component (compare Lemma \ref{(1.2.6)}) implies the following additional conditions on $\Omega$. If $\Omega$ is of
type III, then the Dynkin diagram of $G_{\Omega}$ is of type $D_4$. If $\Omega$ is of type II, then only the following cases may occur. For type $A_n$ with $n$ even, $\Omega$ consists of the two middle simple roots in each connected component of the Dynkin diagram. For type $A_n$ with $n$ is odd, it consists of the two neighbors of the middle simple root in each connected component. For type $D_n$ the intersection of $\Omega$ with any connected component consists of two of the roots with only one neighbor, which are exchanged by some element of $\Gamma$. For type $E_6$, $\Omega$ consists  of the two simple roots having two neighbors in each connected component.
\end{remark}

\begin{para}
Recall that for $x\in \pi_1(M),$ $\mu_x$ denotes the unique
$M$-dominant, $M$-minuscule cocharacter with image $x.$
As in \ref{lemma_b=b_x}, we write $M_x \subset M$ for the centralizer of $\mu_x,$
and we set $w_x=w_{0,x}w_{0,M}$
where $w_{0,x}$ is the longest Weyl group element in $M_x$ and
where $w_{0,M}$ is the longest Weyl group element in $M$.
\end{para}

\begin{lemma}\label{lemma_C}Suppose that $\Omega\in \Phi_{N,\Gamma}$ is adapted, and $x \in \pi_1(M).$ Then \begin{enumerate}
\item $w_x^{-1}(\mu_x)=w_{0,M}(\mu_x)$.

\item $(\mu_x+\gamma^{\vee})_{M-\dom}=\mu_{x+\gamma^{\vee}}$ for
$\gamma\in \Omega$.

\item $\mu_x-w_x\gamma^{\vee}=\mu_{x-\gamma^{\vee}}$ for
$\gamma\in\Omega$.

\end{enumerate}
\end{lemma}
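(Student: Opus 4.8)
\textbf{Plan of proof for Lemma \ref{lemma_C}.}

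The three statements are elementary facts about minuscule cocharacters and Weyl groups, so the plan is to prove them directly using the combinatorics of minuscule coweights together with the already-established structure of adapted orbits (Proposition \ref{lemma_newA} and Remark \ref{rem_after_newA}). For (1): since $w_{0,x}$ is the longest element of $M_x$, the centralizer of $\mu_x$ in $M$, it fixes $\mu_x$; hence $w_x^{-1}(\mu_x) = w_{0,M}^{-1}w_{0,x}^{-1}(\mu_x) = w_{0,M}^{-1}(\mu_x) = w_{0,M}(\mu_x)$, using that $w_{0,M}$ is an involution. That is all that (1) requires.

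For (2) and (3), the plan is to use the characterization that an $M$-dominant cocharacter is $M$-minuscule if and only if $\langle\beta,\nu\rangle\in\{-1,0,1\}$ for every root $\beta$ of $M$ (equivalently for every simple root), and that such a $\nu$ is determined by its image in $\pi_1(M)$ via \eqref{minuscule_pi1}. For (2): since $\gamma\in\Omega$ is adapted, $\gamma^\vee$ is $M$-anti-dominant and $\langle\beta,\gamma^\vee\rangle\in\{-1,0,1\}$ for every root $\beta$ in $M$; because $\mu_x$ is $M$-minuscule and $M$-dominant, for every simple root $\beta$ of $M$ we get $\langle\beta,\mu_x+\gamma^\vee\rangle = \langle\beta,\mu_x\rangle + \langle\beta,\gamma^\vee\rangle \in \{-1,0,1,2\}$; one then checks, using the adaptedness condition more carefully (the roots $\beta$ with $\langle\beta,\gamma^\vee\rangle = -1$ cannot simultaneously have $\langle\beta,\mu_x\rangle = -1$ since $\mu_x$ is $M$-dominant, so the value $-2$ never occurs, and the value $2$ is removed by passing to the $M$-dominant Weyl conjugate via a reflection that does not increase coroot-support in a way that breaks minusculity — this is where Remark \ref{rem_after_newA} and the structure of $\Omega$ inside a connected component enter), that $(\mu_x+\gamma^\vee)_{M-\dom}$ is again $M$-minuscule. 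Since its image in $\pi_1(M)$ is $x+\gamma^\vee$, the bijection \eqref{minuscule_pi1} forces it to equal $\mu_{x+\gamma^\vee}$.

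For (3), the idea is to reduce it to (2) by applying $w_x$: using (1), $w_x(\mu_{x-\gamma^\vee})$ should be compared with $w_x(\mu_x) - \gamma^{\prime\vee}$ for a suitable $\gamma'$, or more directly one observes that $\mu_x - w_x\gamma^\vee$ is $M$-dominant because $w_x\gamma^\vee$, for $\gamma\in\Omega$ adapted, is the negative of a coroot making the difference dominant — concretely, $w_x$ sends the $M$-anti-dominant $\gamma^\vee$ into a position relative to $P_x = M_xN_x$ so that subtracting it from $\mu_x$ stays $M$-dominant and $M$-minuscule. One then checks the $\pi_1(M)$-image is $x-\gamma^\vee$ (since $w_x$ acts trivially on $\pi_1(M) = X_*(T)/(\text{coroot lattice of }M)$, the image of $w_x\gamma^\vee$ is $\gamma^\vee$), and applies \eqref{minuscule_pi1} again.

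The main obstacle I expect is the bookkeeping in (2)–(3): verifying that the relevant Weyl-conjugate of $\mu_x\pm\gamma^\vee$ is genuinely $M$-minuscule, i.e.\ that no pairing with a root of $M$ hits $\pm 2$. This is where one must use the precise shape of an adapted orbit $\Omega$ inside each connected component of the Dynkin diagram of $G_\Omega$ (as catalogued in Remark \ref{rem_after_newA}) and the fact that $\gamma$ is $M$-anti-dominant with all pairings against $M$-roots in $\{-1,0,1\}$; a short case analysis by type (type I vs.\ the $A_n$, $D_n$, $E_6$ patterns of types II and III) finishes it. Everything else is formal manipulation with $w_{0,M}$, $w_{0,x}$ and the identification \eqref{minuscule_pi1}.
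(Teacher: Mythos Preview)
Your argument for (1) is correct and is exactly the paper's.

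For (2) you are making the problem harder than it is. You wrote $\langle\beta,\gamma^\vee\rangle\in\{-1,0,1\}$ for simple $\beta$ in $M$, and then worried about a possible pairing value $2$, which you propose to eliminate by a case analysis via Remark~\ref{rem_after_newA}. But ``adapted'' already includes that $\gamma^\vee$ is $M$-\emph{anti}-dominant, so for every \emph{positive} root $\beta$ of $M$ one has $\langle\beta,\gamma^\vee\rangle\le 0$, hence $\langle\beta,\gamma^\vee\rangle\in\{-1,0\}$. Combined with $\langle\beta,\mu_x\rangle\in\{0,1\}$ this gives $\langle\beta,\mu_x+\gamma^\vee\rangle\in\{-1,0,1\}$ immediately: $\mu_x+\gamma^\vee$ itself is $M$-minuscule, and the bijection \eqref{minuscule_pi1} finishes. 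No Dynkin-type case analysis, no Remark~\ref{rem_after_newA}, and no need to first pass to the $M$-dominant conjugate.

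For (3) your plan is too vague, and the suggested reduction to (2) ``by applying $w_x$'' does not quite work as stated (applying $w_x$ to $\mu_{x-\gamma^\vee}$ does not land back in an expression of the form $\mu_y+\gamma'^\vee$ in any obvious way). The paper instead shows directly that $\mu_x-w_x\gamma^\vee$ is both $M$-dominant and $M$-minuscule, by computing $\langle\beta,w_x\gamma^\vee\rangle=\langle w_x^{-1}\beta,\gamma^\vee\rangle$ for each simple root $\beta$ of $M$, splitting into the cases $\beta\in M_x$ (then $w_x^{-1}\beta$ is again a simple root of $M$, so the pairing is $\le 0$) and $\beta\in N_x$ (then $-w_x^{-1}\beta$ is a highest root, and one checks the pairing is $1$ exactly when $\gamma^\vee$ is noncentral on that component). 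This matches up with $\langle\beta,\mu_x\rangle$ being $0$ or $1$ in the respective cases, giving both dominance and minusculeness. The key structural input is only that $M$ is of type $A$ (so in each component at most one simple root pairs to $-1$ with $\gamma^\vee$), not any classification from Remark~\ref{rem_after_newA}.
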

\begin{proof}(1) This follows as $\mu_x$ is by definition invariant under conjugation by $w_{0,x}$ and as $w_{0,M}=w_{0,M}^{-1}$.

(2) It suffices to show that $\mu_x+\gamma^{\vee}$ is $M$-minuscule. For positive roots $\beta$ in $M$ we have $\langle\beta, \mu_x\rangle\in\{0, 1\}$. As $\gamma$ is adapted, we have $\langle\beta, \gamma^{\vee}\rangle\in\{0, -1\}$. Therefore $\langle\beta, \mu_x+\gamma^{\vee}\rangle \in\{-1,0,1\}$, thus $\mu_x+\gamma^{\vee}$ is $M$-minuscule.

(3) It is enough to show that the element on the left hand side is $M$-dominant and $M$-minuscule. To compute the pairing with all simple roots $\beta$ of $M$, recall that by definition $$\langle\beta,\mu_x\rangle=\begin{cases}
1&\text{if }\beta\text{ is a simple root in }N_x\\
0&\text{otherwise.}
\end{cases}$$
On the other hand, $\langle\beta,w_x\gamma^{\vee}\rangle\in\{-1,0,1\}$ as $\gamma$ is adapted. Notice that
$w_x^{-1}\beta=w_{0, M}w_{0,x}\beta$. If $\beta$ is a simple root in $M_x$, then $w_x^{-1}\beta$ is a simple root of $w_x^{-1}M_xw_x$ with respect to the Borel $B\cap w_x^{-1}M_xw_x$. In particular it is a simple root of $M$. If $\beta$ is a simple root in $N_x$, then $-w_x^{-1}\beta$ is a highest root in $M$.
Therefore $\langle\beta,w_x\gamma^{\vee}\rangle=\langle w_x^{-1}\beta,\gamma^{\vee}\rangle=1$ if and only if $\beta$ is a simple root in $N_x$ and $\gamma^\vee$ is not central on the connected component of the Dynkin diagram of $M$ containing $\beta$.  Moreover $\langle\beta,w_x\gamma^{\vee}\rangle=-1$ occurs for at most one $\beta$ in each connected component of the Dynkin diagram of $M$. This follows from the fact that $\langle\beta,\gamma^{\vee}\rangle=-1$ for at most one simple root in each connected component of the Dynkin diagram of $M$.
\end{proof}

\begin{lemma}\label{lemma_newB} Suppose $x, x'\in \bar I_{\mu, b}^{M, G}$ such that
$x'-x=\alpha^{\vee}-\tau(\alpha)^{\vee}$ with $\alpha$ an adapted root in $N,$ such that $\alpha \neq \tau(\alpha).$
Then we have $(\mu_{x+\alpha^{\vee}})_{G-\dom}=\mu$ and  $(\mu_{x-\tau(\alpha^{\vee})})_{G-\dom} = \mu.$ Moreover,
$(\mu_x+\alpha^{\vee})_{G-\dom}=\mu$,
$(\mu_x-w_x\tau(\alpha)^{\vee})_{G-\dom}=\mu$ and $(\mu_x+\alpha^{\vee}-w_x\tau(\alpha)^{\vee})_{G-\dom}=\mu$.
\end{lemma}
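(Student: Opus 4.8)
The plan is to reduce everything to statements about $M$-minuscule cocharacters that live in the same $G$-orbit, and to check these by showing that the relevant elements are, in each case, an $M$-dominant, $M$-minuscule cocharacter whose $G$-dominant representative is $\mu$. The first point to record is the identification $\bar I_{\mu,b}^{M,G} = \bar I_{\mu,b}$ from \ref{para:stdsetup}, together with the fact (from \eqref{minuscule_pi1}) that $x \in \bar I_{\mu,b}^{M,G}$ means precisely that $\mu_x$ is $M$-dominant, $M$-minuscule, $(\mu_x)_{G-\dom} = \mu$, and $x = \kappa_M(b)$ in $\pi_1(M)_\Gamma$. Since $\alpha^\vee - \tau(\alpha)^\vee$ maps to $0$ in $\pi_1(M)_\Gamma$ (as $\alpha$ and $\tau(\alpha)$ are $\Gamma$-conjugate), both $x + \alpha^\vee$ and $x - \tau(\alpha^\vee)$ still map to $\kappa_M(b)$ in $\pi_1(M)_\Gamma$; so for those two the only thing to check is that $(\mu_{x+\alpha^\vee})_{G-\dom} = \mu$ and $(\mu_{x-\tau(\alpha^\vee)})_{G-\dom} = \mu$. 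But $x + \alpha^\vee$ and $x' = x + \alpha^\vee - \tau(\alpha)^\vee$ have the same image in $\pi_1(G)$ (again because $\tau(\alpha)^\vee$ is a coroot), so $\mu_{x+\alpha^\vee}$ and $\mu_{x'}$ have the same image in $\pi_1(G)$; since $G$-minuscule cocharacters with the same image in $\pi_1(G)$ are $W$-conjugate, and $(\mu_{x'})_{G-\dom}=\mu$, it will suffice to know $\mu_{x+\alpha^\vee}$ is itself $G$-minuscule. This is where Lemma \ref{lemma_C}(2) enters: it gives $(\mu_x + \alpha^\vee)_{M-\dom} = \mu_{x+\alpha^\vee}$, so I must show $\mu_x + \alpha^\vee$ is $G$-minuscule, i.e.\ $\langle \beta, \mu_x + \alpha^\vee \rangle \in \{-1,0,1\}$ for every root $\beta$ of $G$; and symmetrically for $\mu_x - w_x\tau(\alpha)^\vee = \mu_{x - \tau(\alpha^\vee)}$ via Lemma \ref{lemma_C}(3).

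So the core of the proof is the three ``Moreover'' claims about $\mu_x + \alpha^\vee$, $\mu_x - w_x\tau(\alpha)^\vee$, and $\mu_x + \alpha^\vee - w_x\tau(\alpha)^\vee$, and by the previous paragraph each is equivalent to the corresponding $M$-dominant element being $G$-minuscule (its $G$-dominant representative then being forced to equal $\mu$ by the $\pi_1(G)$-comparison and the fact that all three have the same image in $\pi_1(G)$ as $\mu$, using that $\alpha^\vee$ and $w_x\tau(\alpha)^\vee$ are coroots). To see $G$-minuscularity I would pair with an arbitrary root $\beta$ of $G$. Since $\mu_x$ is $M$-minuscule, $\langle \beta, \mu_x\rangle$ is controlled; since $\alpha$ is adapted, $\langle \beta, \alpha^\vee\rangle \in \{-1,0,1\}$ for $\beta$ a root of $M$, and I would use Lemma \ref{lemma_C}(1) and the computations in the proof of Lemma \ref{lemma_C}(3) to control $\langle\beta, w_x\tau(\alpha)^\vee\rangle$ as well. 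The delicate case is a root $\beta$ of $G$ lying outside $M$ (a root in $N$ or its opposite). For such $\beta$ one cannot invoke ``adapted'' directly, so one needs the structural information from Remark \ref{rem_after_newA} and Lemma \ref{pm1le}: because $x,x' \in \bar I_{\mu,b}^{M,G}$ and $x' - x = \alpha^\vee - \tau(\alpha)^\vee$, one knows $\langle \gamma, \mu_x\rangle$ and $\langle\gamma,\mu_{x'}\rangle$ lie in $\{-1,0,1\}$ for every $\gamma$, which by subtracting pins down $\langle\gamma, \alpha^\vee - \tau(\alpha)^\vee\rangle$ enough to deduce the bound for $\langle\beta, \mu_x + \alpha^\vee\rangle$.

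The main obstacle I expect is precisely this analysis of roots $\beta \in \Phi_N$ when $\alpha$ lies in a $\Gamma$-orbit of type II or III (in the sense of Definition \ref{defn:type}): there, $\alpha$ and $\tau(\alpha)$ may be non-orthogonal, $\langle\alpha,\tau(\alpha)^\vee\rangle = -1$ can happen (type $A_{2n}$, Example \ref{exe}), or one can be in the $D_4$-situation with three roots in $\Omega$. In those cases the inequality $\langle\beta,\mu_x+\alpha^\vee-w_x\tau(\alpha)^\vee\rangle \in \{-1,0,1\}$ is not immediate from pointwise bounds and requires either the explicit description of the $D_4$ and $A_{2n}$ root systems as in the proof of Proposition \ref{pm12}, or a clean argument showing $\mu_x + \alpha^\vee - w_x\tau(\alpha)^\vee$ is $W_G$-conjugate to $\mu_{x'}$ directly — e.g.\ via an explicit reflection taking $\mu_x + \alpha^\vee$ to $\mu_x + \alpha^\vee - w_x\tau(\alpha)^\vee$, analogous to the chain $\mu'' = \bigl(\prod_i s_{\gamma_i}\bigr)\mu'$ in Lemma \ref{pm1ld}. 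I would try first to find such a reflection: since $\langle w_x^{-1}\beta, \tau(\alpha)^\vee\rangle$ is computed in Lemma \ref{lemma_C}(3), one expects $s_{w_x\tau(\alpha)}$ (or a conjugate) to do the job, reducing the $G$-minuscule check for the third element to that for the first, which is the cleaner Lemma \ref{lemma_C}(2) case. If that succeeds the proof collapses to the first two ``Moreover'' statements plus a single reflection argument; if it does not, one falls back on the case-by-case root-system computation.
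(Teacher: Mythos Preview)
Your direct-verification approach for $G$-minuscularity has a genuine gap when $\beta$ is a root outside $M$. Take $\beta = \alpha$ itself: then $\langle\alpha, \mu_x + \alpha^\vee\rangle = \langle\alpha,\mu_x\rangle + 2$, which lies in $\{-1,0,1\}$ only if $\langle\alpha,\mu_x\rangle = -1$. You have no a priori way to know this; it is equivalent to what you are trying to prove. Your suggested workaround, subtracting $\langle\gamma,\mu_x\rangle$ from $\langle\gamma,\mu_{x'}\rangle$ to control $\langle\gamma,\alpha^\vee - \tau(\alpha)^\vee\rangle$, fails because $\mu_{x'} - \mu_x$ equals $\alpha^\vee - \tau(\alpha)^\vee$ only in $\pi_1(M)$, not in $X_*(T)$, so the subtraction says nothing about $\langle\gamma,\alpha^\vee\rangle$ individually. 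And the reflection $s_{w_x\tau(\alpha)}$ you propose sends $\mu_x$ to $\mu_x - w_x\tau(\alpha)^\vee$ only if $\langle w_x\tau(\alpha),\mu_x\rangle = 1$, which is again circular.

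The paper's argument sidesteps all of this by working in $X_*(T)$ from the start. Write $\mu_{x'} - \mu_x = \alpha^\vee - \tau(\alpha)^\vee + \sum_\beta n_\beta\beta^\vee$ with $\beta$ ranging over simple roots of $M$, and apply Lemma~\ref{pm1ld} to this expression. The key observation is that adaptedness of $\alpha$ (giving $\langle\beta,\alpha^\vee\rangle,\langle\beta,\tau(\alpha)^\vee\rangle \le 0$), together with $\langle\alpha,\tau(\alpha)^\vee\rangle \le 0$ from Lemma~\ref{pm1le}, forces the orthogonal decomposition $\mu_{x'}-\mu_x = \sum_i\gamma_i^\vee$ of Lemma~\ref{pm1ld} to respect the $\pi_1(M)$-grading: exactly one $\gamma_{i_1}^\vee$ maps to $\alpha^\vee$ in $\pi_1(M)$, one $\gamma_{i_2}^\vee$ to $-\tau(\alpha)^\vee$, and the rest to $0$. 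Since $s_{\gamma_{i_1}}\mu_x = \mu_x + \gamma_{i_1}^\vee$ is $W_G$-conjugate to $\mu_x$ by construction, it is $G$-minuscule, hence $M$-minuscule; thus $(\mu_x + \gamma_{i_1}^\vee)_{M\text{-}\dom} = \mu_{x+\alpha^\vee}$ and it is $G$-conjugate to $\mu$. Likewise for $\gamma_{i_2}$. The ``Moreover'' claims then follow from Lemma~\ref{lemma_C} as you anticipated. The reflections you were looking for are not $s_\alpha$ or $s_{w_x\tau(\alpha)}$ but the $s_{\gamma_i}$ produced by Lemma~\ref{pm1ld}, which already sit in the right $\pi_1(M)$-classes because of the sign pattern forced by adaptedness.
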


\begin{proof} Write
$\mu_{x'}-\mu_x=\alpha^{\vee}-\tau(\alpha)^{\vee}+\sum_{\beta} n_{\beta}\beta^{\vee}$ where $\beta$ runs
over simple coroots of $M,$ and $n_{\beta} \in \mathbb Z.$ Let $\Delta^+$ (resp.~$\Delta^-$)
denote the set of $\beta$ with $n_{\beta} > 0$ (resp. $n_{\beta} < 0).$ Note that $\langle \alpha, \tau(\alpha)^{\vee} \rangle \leq 0$
by Lemma \ref{pm1le}, and $\langle \beta, \alpha^{\vee} \rangle, \langle \beta, \tau(\alpha)^{\vee} \rangle \leq 0$ for any
$\beta$ since $\alpha$ is adapted. Hence, if $\gamma_1^{\vee}, \gamma_2^{\vee}$ are coroots of the
form $\gamma_1^{\vee} = \alpha^\vee+\sum_{\beta \in \Delta^+} m_{\beta}\beta^\vee,$ $\gamma_2^{\vee} = \tau(\alpha)^\vee+\sum_{\beta \in \Delta^-} m_{\beta}\beta^\vee$
with $m_{\beta}$ positive integers, then $\langle \gamma_1, \gamma_2^{\vee} \rangle \leq 0.$
It follows by the proof of Lemma \ref{pm1ld}, that we can write
\begin{itemize}
\item $\mu_{x'}-\mu_{x}=\sum_{i\in I}\gamma_i^{\vee}$ as in
Lemma \ref{pm1ld};

\item there exists $i_1, i_2\in I$ with
$\gamma_{i_1}^{\vee}=\alpha^{\vee}$ in $\pi_1(M)$,
$\gamma_{i_2}^{\vee}=-\tau(\alpha)^{\vee}$ in $\pi_1(M)$;

\item for $\forall i\in I\backslash \{i_1, i_2\}$,
$\gamma_i^{\vee}=0$ in $\pi_1(M)$.
\end{itemize}
Thus $\mu_x$, $s_{\gamma_{i_1}}(\mu_x) = \mu_x+\gamma_{i_1}^{\vee}$ and $s_{\gamma_{i_2}}(\mu_x) \mu_x+\gamma_{i_2}^{\vee}$
are in the same Weyl group orbit. In particular,
$\mu_x+\gamma_{i_1}^{\vee}$ and $\mu_x+\gamma_{i_2}^{\vee}$ are
$M$-minuscule. So
$(\mu_x+\gamma_{i_1}^{\vee})_{M-\dom}=\mu_{x+\alpha^{\vee}}$ and
$(\mu_x+\gamma_{i_2}^{\vee})_{M-\dom}=\mu_{x-\tau(\alpha)^{\vee}}$. It
follows that $(\mu_{x+\alpha^{\vee}})_{G-\dom}=\mu$ and
$(\mu_{x-\tau(\alpha^{\vee})})_{G-\dom}=\mu$. The equalities
$(\mu_x+\alpha^{\vee})_{G-\dom}=\mu=(\mu_x-w_x\tau(\alpha)^{\vee})_{G-\dom}$ follow directly from
 Lemma \ref{lemma_C}, which also implies the last equality as
$$(\mu_x+\alpha^{\vee}-w_x\tau(\alpha)^{\vee})_{M-\dom} = (\mu_{x-\tau(\alpha)^{\vee}}+\alpha^{\vee})_{M-\dom}
= (\mu_{x+\alpha^{\vee}-\tau(\alpha)^{\vee}})_{M-\dom}.$$
\end{proof}

\begin{lemma}\label{lemma_w_x_alpha minimal} Suppose that $\alpha$ is
  an adapted root in $N$. Then for all $w \in W_M$ we have $\langle w\alpha, \mu_x\rangle
\leq \langle w_x\alpha, \mu_x\rangle$ and the root $w_x\alpha$
is the unique minimal element in the set
\begin{eqnarray*}\bigg\{w\alpha\mid w\in W_M, \langle w\alpha, \mu_x\rangle
= \langle w_x\alpha, \mu_x\rangle\bigg\}\end{eqnarray*} for the order $\preceq \, = \, \preceq_M$.
\end{lemma}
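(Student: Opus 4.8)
The plan is to analyze the pairing $\langle w\alpha, \mu_x \rangle$ as $w$ ranges over $W_M$ by decomposing $W_M$ according to the parabolic $P_x = M_x N_x$ that defines $\mu_x$. Since $\mu_x$ is $M$-dominant, $M$-minuscule, with $\langle \beta, \mu_x \rangle = 1$ for $\beta$ a simple root in $N_x$ and $\langle \beta, \mu_x \rangle = 0$ for $\beta$ simple in $M_x$, the value $\langle w\alpha, \mu_x \rangle$ depends only on the coset $w W_{M_x}$, i.e.\ only on the $M_x$-dominant representative of $w\alpha$. So I would first reduce to studying the function $\gamma \mapsto \langle \gamma, \mu_x \rangle$ on the $W_M$-orbit of $\alpha$, and observe that the maximum is attained exactly on the $M_x$-dominant representatives (where $M_x$-dominant is measured relative to $B \cap M$). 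Here the adaptedness of $\alpha$, which forces $\langle \beta, \alpha^\vee \rangle \in \{-1,0,1\}$ for all roots $\beta$ of $M$, keeps all the pairings bounded and is what makes the extremal set manageable.

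Next I would identify the specific element $w_x\alpha$ where $w_x = w_{0,x} w_{0,M}$. The point is that $w_x$ conjugates $B \cap M$-positivity into a twisted positivity: $w_x^{-1}$ sends simple roots of $M_x$ to simple roots of $M$ and sends simple roots of $N_x$ to negatives of highest roots, as already used in the proof of Lemma~\ref{lemma_C}(3). Combining this with Lemma~\ref{lemma_C}(1), namely $w_x^{-1}(\mu_x) = w_{0,M}(\mu_x)$, I can compute $\langle w_x\alpha, \mu_x\rangle = \langle \alpha, w_{0,M}(\mu_x)\rangle$ and, more importantly, show that $w_x\alpha$ is $M_x$-dominant and lies in the $N_x$-part in the maximal possible way, so that it achieves the maximum of $\langle \cdot, \mu_x\rangle$ over the orbit. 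Concretely, I expect to show $w_x\alpha \succeq w\alpha$ fails in the naive direction and that instead $w_x\alpha$ is \emph{minimal} among the maximizers: for any other $w$ with $\langle w\alpha, \mu_x\rangle = \langle w_x\alpha, \mu_x\rangle$, the difference $w\alpha - w_x\alpha$ is a nonnegative combination of simple coroots, because both are $M_x$-antidominant-type representatives differing by reflections in $M_x$, and the $M_x$-structure forces the ordering.

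The key technical step is the minimality/uniqueness claim, and I would argue it as follows. Among all $w\alpha$ with $\langle w\alpha, \mu_x\rangle$ maximal, each is a translate of the others by $W_{M_x}$ (since the pairing is $W_{M_x}$-invariant on $\mu_x$, $W_{M_x}$ fixing $\mu_x$), so this set is a single $W_{M_x}$-orbit; within a $W_{M_x}$-orbit of a root there is a unique element that is $\preceq$-minimal, namely the $M_x$-antidominant representative, because for roots in a single Weyl orbit the Bruhat-style order $\preceq$ restricted to that orbit is controlled by $M_x$-dominance (apply Lemma~\ref{remadd}: if $\langle \gamma, \beta^\vee\rangle > 0$ for a simple root $\beta$ of $M_x$ then $\gamma - \beta^\vee = s_\beta\gamma$ and $|\gamma - \beta^\vee| < |\gamma|$, so we can strictly descend until $M_x$-antidominant). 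Then it remains to check that $w_x\alpha$ is exactly this $M_x$-antidominant representative in its orbit — which again follows from the description of $w_x^{-1}$ on simple roots of $M_x$ versus $N_x$ combined with adaptedness of $\alpha$, exactly paralleling the computation in Lemma~\ref{lemma_C}(3).

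The main obstacle I anticipate is bookkeeping the interaction between the three relevant orderings — $M$-dominance, $M_x$-dominance, and the global $\preceq_M$ on coroots — and making sure the "unique minimal element" statement is not disturbed by roots $w\alpha$ that happen to have the maximal pairing but lie outside the expected $W_{M_x}$-orbit structure; ruling this out cleanly is where adaptedness (the condition $\langle \beta, \alpha^\vee\rangle \in \{-1,0,1\}$) must be used most carefully, possibly with a short case check when a connected component of $M_x$ has a root $\beta$ with $\langle \beta, \alpha^\vee\rangle \neq 0$. I would handle this by noting that adaptedness ensures $\langle \beta, w_x\alpha \rangle \in \{-1,0,1\}$ uniformly, so the descent argument via Lemma~\ref{remadd} terminates at a genuinely unique minimum rather than a set of incomparable minima.
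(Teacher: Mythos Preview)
Your maximality argument (first claim) is essentially the same as the paper's: both use $w_x^{-1}\mu_x = w_{0,M}\mu_x$ together with the $M$-anti-dominance of $\alpha^\vee$ to conclude $\langle w\alpha,\mu_x\rangle \leq \langle \alpha, w_{0,M}\mu_x\rangle = \langle w_x\alpha,\mu_x\rangle$.

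For the uniqueness claim you take a genuinely different route, and there is a gap. You assert that the set $I_{w_x\alpha}$ of maximizers is a \emph{single} $W_{M_x}$-orbit, justified by ``the pairing is $W_{M_x}$-invariant on $\mu_x$''. That observation only shows $I_{w_x\alpha}$ is $W_{M_x}$-stable, i.e.\ a union of $W_{M_x}$-orbits; it does not rule out several orbits. Without the single-orbit statement your descent argument (every $\preceq_M$-minimal element of $I_{w_x\alpha}$ is $M_x$-anti-dominant, and each $W_{M_x}$-orbit has a unique such element) yields one minimal element \emph{per orbit}, not uniqueness. You would still need to show that the $M_x$-anti-dominant representatives of the different orbits are $\preceq_M$-comparable, or that there is only one orbit, and neither follows from general Weyl-group facts.

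The paper proceeds differently: it first proves minimality of $w_x\alpha$ by a short contradiction argument (if some $w'\alpha \prec_M w_x\alpha$ were also maximal, one finds a simple root $\beta$ of $M_x$ with $\langle w_x\alpha,\beta^\vee\rangle>0$; but then $w_x^{-1}\beta$ is simple in $M$ and $\alpha$ adapted forces $\langle w_x\alpha,\beta^\vee\rangle = \langle \alpha,(w_x^{-1}\beta)^\vee\rangle \leq 0$). For \emph{uniqueness} the paper explicitly invokes Lemma~\ref{(1.2.6)} to know each simple factor of $M$ is of type $A$, and then does a direct computation on a single $A_m$ factor: adaptedness forces at most one simple root $\beta_{i_0}$ of $M$ with $\langle\beta_{i_0},\alpha^\vee\rangle \neq 0$, and with $\langle\beta_{j_0},\mu_x\rangle=1$ one writes down $I_{w_x\alpha}$ explicitly and checks it has a unique $\preceq_M$-minimum. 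So the paper's proof is not purely structural; it uses the type-$A$ restriction on $M$ in an essential way.

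Your orbit approach can in fact be completed, but the missing step (that $I_{w_x\alpha}$ is a single $W_{M_x}$-orbit) seems to require the same type-$A$ input: one checks that in type $A_m$ the double cosets $W_{M_x}\backslash W_M/\mathrm{Stab}_{W_M}(\alpha)$ are parametrized by a single integer on which $\langle w\alpha,\mu_x\rangle$ is strictly monotone, so exactly one double coset achieves the maximum. Once you say that, your argument that $w_x\alpha$ is the $M_x$-anti-dominant element of this orbit (via $w_x^{-1}$ sending simple roots of $M_x$ to simple roots of $M$, as in Lemma~\ref{lemma_C}(3)) finishes the proof cleanly. So the approaches converge: both ultimately rest on the type-$A$ structure of $M$, but your packaging via $W_{M_x}$-orbits is arguably more conceptual once the single-orbit lemma is isolated.
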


\begin{proof} Since  $w_{0,M}\mu_x$ is $M$-anti-dominant, for $w \in W_M,$ $w_{0,M}\mu_x \preceq w\mu,$
and hence $\langle \alpha, w_{0,M}\mu_x \rangle  \geq \langle \alpha, w\mu_x \rangle,$ as $\alpha$ is adapted.
By Lemma \ref{lemma_C} (1), this implies $\langle \alpha, w_x^{-1}\mu_x \rangle  \geq \langle \alpha, w\mu_x \rangle.$
Hence
\begin{eqnarray*}
I_{w_x\alpha}&:=&\bigg\{w\alpha\mid w\in W_M, \langle w\alpha, \mu_x\rangle
\geq \langle w_x\alpha, \mu_x\rangle\bigg\} \\
&=&\bigg\{w\alpha\mid w\in W_M, \langle w\alpha, \mu_x\rangle
= \langle w_x\alpha, \mu_x\rangle\bigg\}\\
&=&\bigg\{w\alpha\mid w\in W_M, \langle w\alpha, \mu_x\rangle
=\mathrm{sup}_{w'\in W_M} \langle w'\alpha, \mu_x\rangle\bigg\}.
\end{eqnarray*}

We first prove that $w_x\alpha$ is a minimal element in the set $I_{w_x\alpha}$ by reduction to absurdity.
Suppose that $w_x\alpha$ is not a minimal element. Then there
exists $w'\in W_M$ such that $w'\alpha\preceq w_x\alpha$ with
$w'\alpha\neq w\alpha$ and $\langle w_x\alpha, \mu_x\rangle= \langle w'\alpha, \mu_x\rangle$.
As $w'\alpha$ and $w_{x}\alpha$ are in the same Weyl group orbit, they have the same length, so
$\langle w_{x}\alpha - w'\alpha, w_{x}\alpha^{\vee} \rangle = 2 - \langle w'\alpha, w_{x}\alpha^{\vee} \rangle \geq 1.$
Hence, there exists a positive simple root $\beta$ in $M$ such that $\langle
w_x\alpha, \beta^\vee\rangle >0$ and $w'\alpha+\beta\preceq w_x\alpha$. Moreover
\[ \langle w'\alpha, \mu_x\rangle=\langle w_x\alpha, \mu_x\rangle
\geq \langle w'\alpha+\beta, \mu_x\rangle \geq \langle w'\alpha, \mu_x\rangle\] implies that
$\langle \beta,
\mu_x\rangle=0$. Then $\beta$ is a root in $M_x$. As the groups
$M_x$ and $M$ are both of type A, the root $-w_{0, x}(\beta)$ is
simple in $M_x$ and $w_x^{-1}(\beta)=w_{0,
M}w_{0,x}(\beta)$ is a simple root in $M$. Therefore
$\langle \alpha, w_x^{-1}(\beta^\vee)\rangle\leq 0$ as $\alpha$ is
$M$-anti-dominant. This is a contradiction to $\langle w_x\alpha, \beta^{\vee}\rangle>0$.

Now we show that $w_x\alpha$ is the unique minimal element. By Lemma \ref{(1.2.6)}, the Dynkin diagram of $M$ is of type $A$. As we can work separately with each connected component of the Dynkin diagram of $M$, we may suppose without loss of generality that the Dynkin diagram of $M$ is connected with simple roots $\beta_1,\cdots, \beta_m$ with $\langle \beta_{i}, \beta_{i+1}^{\vee}\rangle=-1$ for $1\leq i\leq m-1$. If for all $1\leq i\leq m$, $\langle\beta_i, \alpha^{\vee}\rangle =0$, then the set $I_{w_x\alpha}$ contains a single element $\alpha$ and we are done. Otherwise $\langle\sum_{i=1}^m\beta_i, \alpha^{\vee}\rangle = -1$, and hence there exists a unique $1\leq i_0\leq m$ with $\langle \beta_{i_0}, \alpha^{\vee}\rangle=-1$. If $\langle \beta_i, \mu_x\rangle =0$ for all $1\leq i\leq m$, then $I_{w_x\alpha}=W_M \alpha$ and $w_x\alpha = \alpha$ is the unique minimal element as it is $M$-anti-dominant. It remains the case when there exists $1\leq j_0\leq m$ such that $\langle \beta_{j_0}, \mu_x\rangle=1$. We may assume that $j_0\leq i_0$, the other case being analogous. Then
\[I_{w_x\alpha}=\{s_{\beta_k}s_{\beta_{k+1}}\cdots s_{\beta_{i_0}}\alpha \mid 1\leq k\leq j_0\}.\]
This is a totally ordered set and therefore has a unique minimal element.
\end{proof}

\begin{definition}\label{def_immediate_distance}Let $x_1, x_2\in \bar I_{\mu, b}^{M, G}$
 such that $x_2-x_1=\alpha^{\vee}-\sigma^m(\alpha^{\vee})$ in $\pi_1(M)$ with
$\alpha$ a positive root in $N$ and $m\in\mathbb{N}$. By Lemma \ref{lemma_modify_alpha}, we may assume that $\alpha$ is adapted. Let $\Omega:=\Gamma\alpha$ and
$\alpha^i:=\sigma^i(\alpha)$ for $i\in\mathbb{N}$. The distance from $x_1$ to $x_2$ is called
\emph{immediate} if the following two conditions are satisfied.
 \begin{enumerate}
\item if $\Omega$ is of type I (resp. II, resp. III), we require
that $0<m <|\Omega|$ (resp. $0< m\leq \frac{|\Omega|}{2}$, resp.
$0< m< \frac{2|\Omega|}{3}$).

\item $x_1+\alpha^{i\vee}-\alpha^{m\vee}\notin \bar I_{\mu, b}^{M, G}$
and $x_1+\alpha^{\vee}-\alpha^{i\vee}\notin \bar I_{\mu, b}^{M, G}$ for
all $0< i < m$.
\end{enumerate}

We write $x_1\rightarrow x_2$ when the distance from $x_1$ to $x_2$ is immediate.
\end{definition}

\begin{remark}\label{rem_D4_2b_explicite_comp}Using the same notations as in the above definition we assume that $\Omega$ is of type III
 and $d <m <2d$ with $d=\frac{|\Omega|}{3}$. Suppose that $\Gamma$ acts transitively on the connected components of the Dynkin diagram of $G_{\Omega}$. By Proposition \ref{lemma_newA}, let  $\{(\beta^i)_{0\leq i\leq d-1},
(\alpha^i)_{0\leq i\leq  3d-1}\}$ be the basis of $G_{\Omega}$ with
$\beta^i$  the common neighbor of $\alpha^i,
 \alpha^{i+d}$ and $\alpha^{i+2d}$. As $x_2=x_1+\alpha^{\vee}-\alpha^{m\vee}\in \bar I_{\mu, b}^{M, G}$, $\langle \alpha, \mu_{x_1}\rangle=-1$ and $\langle\alpha^m+\beta^{m-d}, \mu_{x_1}\rangle=1$.
Similarly $x_1+\alpha^{\vee}-\alpha^{i\vee}\notin \bar I_{\mu, b}^{M, G}$ and $x_1+\alpha^{i\vee}-\alpha^{m\vee}\notin \bar I_{\mu, b}^{M, G}$ for $i=m-d, d$ imply that $\langle\alpha^d,
\mu_{x_1}\rangle=0$ and $\langle \alpha^{m-d}+\beta^{m-d},
\mu_{x_1}\rangle=0$. Therefore the vector $(\langle\beta^0,
\mu_{x_1}\rangle, \langle\alpha^0, \mu_{x_1}\rangle,
 \langle\alpha^d, \mu_{x_1}\rangle)$ is equal either to $(0, -1, 0)$ or to $(1, -1,
0)$, and the vector $(\langle\beta^{m-d}, \mu_{x_1}\rangle,
\langle\alpha^{m-d},\mu_{x_1}\rangle, \langle\alpha^m,
\mu_{x_1}\rangle)$ is equal either to $(1, -1, 0)$ or to
$(0, 0, 1)$.
\end{remark}

\begin{prop}\label{prop_reduction_immediate}For $x, x'\in \bar I_{\mu, b}^{M, G}$, there exists $n\in\mathbb{N}$ and a
series of elements $x_1,\cdots, x_{n+1}\in \bar I_{\mu, b}^{M, G}$ such
that $x_1=x$, $x_{n+1}=x'$ and for $i=1,\cdots, n$, either $x_i\rightarrow
x_{i+1}$ or $x_{i+1}\rightarrow x_i$.
\end{prop}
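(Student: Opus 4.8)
The plan is to run an induction on the distance $d(x,x')=|\mu_x-\mu_{x'}|,$ using Proposition \ref{pm1} to reduce to the case where $x'-x=\alpha^\vee-\sigma^m(\alpha^\vee)$ in $\pi_1(M)$ for a single adapted root $\alpha$ in $N$ (using Lemma \ref{lemma_modify_alpha} to assume $\alpha$ is adapted). So suppose we are in this ``distance one'' situation with $\Omega=\Gamma\alpha$ and $\alpha^i=\sigma^i(\alpha).$ If $x\to x'$ already, there is nothing to do. Otherwise one of the two defining conditions of Definition \ref{def_immediate_distance} fails.

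First I would dispose of the failure of condition (1), i.e.\ the case where $m$ lies outside the prescribed range depending on the type of $\Omega.$ Here the point is that $\sum_{i}\alpha^{i\vee}$ is $\Gamma$-fixed and, modulo the coroot lattice of $M_\Omega$ or using that the $\alpha^i$ are simple coroots of $G_\Omega$ (Remark \ref{rem_after_newA}), the element $\alpha^\vee-\alpha^{m\vee}$ can be rewritten as $\alpha^\vee-\alpha^{m'\vee}$ (possibly plus coroots of $M$) with $m'$ in the allowed range; for types II and III one exploits the extra symmetry of the Dynkin diagram ($D_4$, or the two middle/neighboring roots of $A_n$, etc.). Concretely, if $\Omega$ is of type I and $m\ge|\Omega|$ one simply reduces $m$ mod $|\Omega|$; if type II and $m>|\Omega|/2$ one uses $\alpha^\vee-\alpha^{m\vee}\equiv -( \alpha^{m-|\Omega|/2\,\vee}-\alpha^{|\Omega|/2\,\vee})$ up to an element that vanishes in $\pi_1(M)$ or has strictly smaller distance, and similarly for type III with $2|\Omega|/3.$ After replacing $x'$ (and possibly inserting an intermediate element of $\bar I^{M,G}_{\mu,b}$ obtained from Lemma \ref{lemma_newB} guaranteeing that the relevant coweights are $G$-minuscule with $G$-dominant form $\mu$), we may assume $m$ satisfies condition (1).

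Next, assume condition (1) holds but condition (2) fails: there is $0<i<m$ with $x+\alpha^{i\vee}-\alpha^{m\vee}\in\bar I^{M,G}_{\mu,b}$ or $x+\alpha^\vee-\alpha^{i\vee}\in\bar I^{M,G}_{\mu,b}.$ Set $x''$ to be such an intermediate element. Then $x''-x$ and $x'-x''$ are again differences of two Galois conjugates of $\alpha^\vee$ (namely $\alpha^\vee-\alpha^{i\vee}$ and $\alpha^{i\vee}-\alpha^{m\vee}$ up to the obvious bookkeeping), and I claim each has strictly smaller distance to $x$ resp.\ $x'$ than $d(x,x').$ This is where Lemma \ref{pm1ld} and the orthogonality of the coroots appearing in the minuscule decomposition of $\mu_x-\mu_{x'}$ enter: writing $\mu_{x'}-\mu_x=\sum_{j}\gamma_j^\vee$ as in that lemma, the passage to $x''$ amounts to deleting a proper nonempty subset of the $\gamma_j,$ so $d(x,x'')+d(x'',x')=d(x,x')$ with both summands positive. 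By induction both $x\sim x''$ and $x''\sim x'$ can be refined to chains of immediate steps, and concatenating gives the result.

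The main obstacle will be the bookkeeping in the second step, namely verifying that a failure of condition (2) really does produce an element $x''$ with $d(x,x'')$ and $d(x'',x')$ both strictly smaller. The subtlety is that the difference $x''-x$ might a priori not again be of the form ``difference of two Galois conjugates of an adapted root,'' and one has to check, using Lemma \ref{lemma_modify_alpha}, Lemma \ref{lemma_newB} and the type analysis of Remark \ref{rem_after_newA}, that after possibly modifying $\alpha$ within its $W_M$-orbit it is. A second, more computational obstacle is the explicit range reduction for types II and III in the first step: for type III one must handle the $D_4$ configuration by hand, as in Remark \ref{rem_D4_2b_explicite_comp}, checking that the vectors of pairings $(\langle\beta^i,\mu_x\rangle,\langle\alpha^i,\mu_x\rangle,\dots)$ are compatible with staying inside $\bar I^{M,G}_{\mu,b}$ after the reduction. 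Once these case distinctions are set up, each individual verification is a short computation with minuscule coweights and the pairing bounds $\langle\beta,\alpha^\vee\rangle\in\{-1,0,1\}$ coming from adaptedness.
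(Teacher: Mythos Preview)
Your overall shape is right (reduce via Proposition~\ref{pm1} and Lemma~\ref{lemma_modify_alpha} to a single step $x'-x=\alpha^\vee-\alpha^{m\vee}$ with $\alpha$ adapted, then arrange conditions (1) and (2) of Definition~\ref{def_immediate_distance}), but both of your main steps are more complicated than necessary and the second one has a genuine gap.

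\textbf{Condition (1).} Your case-by-case rewriting of $\alpha^\vee-\alpha^{m\vee}$ using the Dynkin combinatorics of types II and III is not needed. The paper observes that $x-x'=(\alpha')^\vee-\sigma^{m'}(\alpha')^\vee$ with $\alpha'=\sigma^m(\alpha)$ and $m'=|\Omega|-m$, so by simply swapping the roles of $x$ and $x'$ one may assume $m\le|\Omega|/2$. This single inequality already forces condition (1) in all three types simultaneously (for type I trivially, for type II exactly, and for type III because $|\Omega|/2<2|\Omega|/3$). No appeal to Remark~\ref{rem_after_newA}, Lemma~\ref{lemma_newB}, or the $D_4$ configuration is required here.

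\textbf{Condition (2) and the induction parameter.} Here there is a real problem. You induct on $d(x,x')=|\mu_x-\mu_{x'}|$ and claim that inserting the intermediate $x''$ (say $x''=x+\alpha^{i\vee}-\alpha^{m\vee}$) makes both $d(x,x'')$ and $d(x'',x')$ strictly smaller, because ``the passage to $x''$ amounts to deleting a proper nonempty subset of the $\gamma_j$'' in the decomposition $\mu_{x'}-\mu_x=\sum_j\gamma_j^\vee$ of Lemma~\ref{pm1ld}. That decomposition, however, lives in $X_*(T)$, whereas $x''$ is defined only in $\pi_1(M)$; the $M$-dominant $M$-minuscule lift $\mu_{x''}$ has no reason to satisfy $\mu_{x''}-\mu_x=\sum_{j\in S}\gamma_j^\vee$ for some subset $S$. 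So your strict-decrease claim for $d$ is unjustified, and with it the induction. The paper avoids this entirely by inducting on $m$ instead of on $d(x,x')$: if condition (2) fails via some $0<i<m$, then the pair $(x,x'')$ has parameter $m-i$ (with base root $\alpha^i$) and the pair $(x'',x')$ has parameter $i$, both strictly smaller than $m$, and the induction is immediate. No distance estimates or orthogonality of the $\gamma_j$ are needed.
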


\begin{proof}By Proposition \ref{pm1}, and Lemma \ref{lemma_modify_alpha}, we may assume that
$x'-x=\alpha^{\vee}-\sigma^m(\alpha^{\vee})$ with $\alpha$ an adapted, positive root in $N$. Then
$x-x'=\alpha'^{\vee}-\sigma^{m'}(\alpha'^{\vee})$ with
$m'=|\Omega|-m$ and $\alpha'=\sigma^m(\alpha)$. We may assume that $m\leq \frac{|\Omega|}{2}$ as otherwise, we can exchange $x$ and $x'$. Then the first condition of
Definition \ref{def_immediate_distance} is already satisfied.

We use induction on $m$ to prove that we can achieve that the second condition of Definition \ref{def_immediate_distance} holds . Suppose the condition
is not satisfied for the pair $(x, x')$. Then there exists some $1\leq i<m$, such that
$x+\alpha^{i\vee}-\alpha^{m\vee}\in \bar I_{\mu, b}^{M, G}$ or
$x+\alpha^{\vee}-\alpha^{i\vee}\in \bar I_{\mu, b}^{M, G}$. We may assume that
$x+\alpha^{i\vee}-\alpha^{m\vee}\in \bar I_{\mu, b}^{M, G}$, the other case being
analogous. Then we can apply the induction hypothesis to
the couple $(x, x+\alpha^{i\vee}-\alpha^{m\vee})$ and the pair
$(x+\alpha^{i\vee}-\alpha^{m\vee}, x')$.
\end{proof}

\subsection{Proof of Proposition \ref{pm2}}\label{sec65}
In this subsection, we will construct affine lines in the immediate distance case to prove
Proposition \ref{pm2}. For any $x\in \bar I^{M, G}_{\mu, b}$, let $\mu_x$, $w_x$ be as above. In the following, two roots in $G$ which are in the same irreducible sub-root system corresponding to a connected component of Dynkin diagram of $G$ will also be said to be in the same connected component of the Dynkin diagram of $G.$
We use the analogous expression for the roots in other groups.

We need one more lemma.

\begin{lemma}\label{lemma_D}Let $x\in \bar I_{\mu, b}^{M, G}$ and let $\alpha$ be a positive root in $N$.
Suppose
$$\text{$(\mu_{x+ \alpha^{\vee}})_{G-\dom}\neq \mu$ and $(\mu_{x- \alpha^{\vee}})_{G-\dom}\neq \mu$}.$$
Then $\langle\alpha, \mu_x\rangle=0$.
Furthermore, $\mu_x$ is central on each connected component of the Dynkin diagram of $M$ satisfying that there is a simple root $\beta$ in that component with $\langle\beta,\alpha^{\vee}\rangle\neq 0$. In particular, $w_x(\alpha)=\alpha$.
\end{lemma}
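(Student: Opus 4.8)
The plan is to argue directly from the minusculity of $\mu_{x}$, $\mu_{x\pm\alpha^{\vee}}$, combined with standard facts about coroot strings and the structure of $M$ as a product of type $A$ groups. First I would treat the claim $\langle\alpha,\mu_{x}\rangle=0$. Suppose not; by replacing $\alpha$ with $-\alpha$ and $x$ with $x\pm\alpha^{\vee}$ if necessary (note $\langle\alpha,\mu_x\rangle\neq 0$ forces one of $\mu_x\pm\alpha^\vee$ to again lie in the same $W$-orbit after applying $s_\alpha$), I may assume $\langle\alpha,\mu_{x}\rangle=1$. Since $\mu_x$ is $M$-minuscule and $(\mu_x)_{G\text{-}\dom}=\mu$ is $G$-minuscule (Lemma \ref{pm1ld} is the relevant fact: all $W$-conjugates of a minuscule coweight have pairing in $\{-1,0,1\}$ with every root), we get $s_\alpha(\mu_x)=\mu_x-\alpha^\vee$ lies in the $W$-orbit of $\mu$, hence $(\mu_x-\alpha^\vee)_{G\text{-}\dom}=\mu$, contradicting the hypothesis. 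Here I use that $\mu_{x-\alpha^\vee}=(\mu_x-\alpha^\vee)_{M\text{-}\dom}$ whenever $\mu_x-\alpha^\vee$ is $M$-minuscule, and $M$-minusculity follows since it is a $W$-conjugate of the $M$-minuscule $\mu_x$ when $\langle\alpha,\mu_x\rangle=1$; combined with $(\mu_{x-\alpha^\vee})_{G\text{-}\dom}=(\mu_x-\alpha^\vee)_{G\text{-}\dom}$ this yields the contradiction. So $\langle\alpha,\mu_x\rangle=0$.

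Next I would establish the centrality statement. Fix a connected component $C$ of the Dynkin diagram of $M$ and suppose there is a simple root $\beta\in C$ with $\langle\beta,\alpha^{\vee}\rangle\neq 0$; I want $\langle\beta',\mu_x\rangle=0$ for every simple root $\beta'\in C$. Since $M^{\ad}$ is a product of type $A$ groups by Lemma \ref{(1.2.6)} (here $b$ is superbasic in $M$), the component $C$ is a chain $\beta_1,\dots,\beta_m$, and $\mu_x$ being $M$-minuscule means at most one $\beta_i$ pairs nontrivially (to $1$) with $\mu_x$ — the "node" of the minuscule coweight on that $A_m$-factor. Suppose for contradiction that some $\beta_{j}$ has $\langle\beta_j,\mu_x\rangle=1$. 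I would then consider the root $\gamma=\beta_a+\beta_{a+1}+\dots+\beta_j$ (a connected sub-chain reaching from near $\beta$ out to $\beta_j$), chosen so that $\langle\gamma,\alpha^\vee\rangle\neq 0$ and $\langle\gamma,\mu_x\rangle=1$; then $\alpha\pm\gamma$ is a root by Lemma \ref{remadd}, and I would compute $\langle\alpha\pm\gamma,\mu_x\rangle=\pm 1$ while the coroot $(\alpha\pm\gamma)^\vee$ (up to the usual identification, since all these are in the simply-laced setting or $\alpha$, $\gamma$ have controlled lengths) differs from $\alpha^\vee$ by $\pm\gamma^\vee$, i.e.\ is a root whose reflection moves $\mu_x$ to $\mu_x\mp(\alpha\pm\gamma)^\vee$; after projecting the change of coweight through $\pi_1(M)$ one sees this equals $x\pm\alpha^\vee$ modulo the coroot lattice of $M$, so $\mu_{x+\alpha^\vee}$ or $\mu_{x-\alpha^\vee}$ would be $W$-conjugate to $\mu$, again contradicting the hypothesis. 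Finally, once $\mu_x$ is central on every component $C$ meeting $\alpha$ (i.e.\ $\langle\beta',\mu_x\rangle=0$ for all $\beta'$ with $\langle\beta',\alpha^\vee\rangle\neq 0$), the description of $w_x=w_{0,x}w_{0,M}$ gives $w_x(\alpha)=\alpha$: indeed $w_x^{-1}(\beta')$ is a simple root of $M$ for each such $\beta'$ (Lemma \ref{lemma_C}(3)'s proof), and since $\langle\alpha,w_x^{-1}(\beta'^\vee)\rangle$ records how $w_x\alpha$ differs from $\alpha$, centrality of $\mu_x$ on those components forces these pairings to vanish, so $w_x\alpha=\alpha$.

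The main obstacle I anticipate is the bookkeeping in the second and third paragraphs: making precise the passage between "$\mu_x\pm(\text{coroot})^\vee$ is $W$-conjugate to $\mu$" and "$(\mu_{x\pm\alpha^\vee})_{G\text{-}\dom}=\mu$", since $x\pm\alpha^\vee$ is only the image in $\pi_1(M)$ and one must check the $M$-minusculity of the relevant $M$-dominant representative before invoking $(\mu_{x\pm\alpha^\vee})_{G\text{-}\dom}=(\mu_x\pm\alpha^\vee)_{G\text{-}\dom}$. This is exactly the kind of argument already carried out in the proof of Lemma \ref{lemma_newB}, so I would model the details on that proof, extracting the sub-coroot $\gamma_{i}^\vee$ congruent to $\pm\alpha^\vee$ in $\pi_1(M)$ via Lemma \ref{pm1ld} and using that $\mu_x+\gamma_i^\vee$ lies in the $W_M$-orbit of $\mu_x$. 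The length issue (ensuring $(\alpha+\gamma)^\vee=\alpha^\vee\pm\gamma^\vee$ rather than some rescaling) is handled because $\alpha$ is allowed to be an arbitrary positive root of $N$ but the relevant computation only needs the identity in $\pi_1(M)$, where the coroot lattice identification is canonical.
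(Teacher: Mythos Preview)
Your first paragraph is essentially the paper's argument, though two points need tightening. First, the reduction ``replacing $\alpha$ with $-\alpha$ and $x$ with $x\pm\alpha^\vee$'' is unnecessary and in fact changes the hypothesis; just treat the cases $\langle\alpha,\mu_x\rangle=\pm 1$ symmetrically. Second, your justification that $\mu_x-\alpha^\vee$ is $M$-minuscule is slightly off: being a $W_G$-conjugate of an $M$-minuscule element does not in general force $M$-minusculity. The correct (and shorter) reason is that $s_\alpha(\mu_x)$ is a $W_G$-conjugate of the $G$-minuscule $\mu$, hence itself $G$-minuscule, hence a fortiori $M$-minuscule.

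For the centrality statement the paper takes a much cleaner route that avoids your explicit construction of $\gamma$ and the length bookkeeping entirely. The observation is this: the argument you just gave for $\mu_x$ works verbatim with $\mu_x$ replaced by $w\mu_x$ for \emph{any} $w\in W_M$, since $w\mu_x$ still has image $x$ in $\pi_1(M)$ and $(w\mu_x)_{M\text{-}\dom}=\mu_x$. Hence the hypothesis forces $\langle\alpha,w\mu_x\rangle=0$ for all $w\in W_M$. Now take $w=s_\beta$: since $\langle\alpha,\mu_x\rangle=0$ and $\langle\alpha,\beta^\vee\rangle\neq 0$, the equation $0=\langle\alpha,s_\beta\mu_x\rangle$ gives $\langle\beta,\mu_x\rangle=0$. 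Next take $w=s_\beta s_{\beta'}$ for $\beta'$ a neighbor of $\beta$ in the component, and deduce $\langle\beta',\mu_x\rangle=0$; iterate along the Dynkin diagram. Your approach via $\alpha\pm\gamma$ is essentially the dual of this (moving the Weyl element onto $\alpha$ rather than onto $\mu_x$), but dualizing forces you to control $(\alpha+\gamma)^\vee$ rather than just $w\mu_x$, which is where your length worries come from. The paper's phrasing sidesteps this: you never need to know what $(\alpha+\gamma)^\vee$ is, only that $w\mu_x\mp\alpha^\vee$ has image $x\mp\alpha^\vee$ in $\pi_1(M)$, which is immediate. Once centrality on the relevant components is established, $w_x(\alpha)=\alpha$ follows because $w_x$ restricts to the identity on each simple factor of $W_M$ where $\mu_x$ is central, and acts trivially on $\alpha$ through the remaining factors since those consist of reflections $s_{\beta'}$ with $\langle\alpha,\beta'^\vee\rangle=0$.
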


\begin{proof} Suppose $\langle\alpha, \mu_x\rangle\neq 0$. Then depending on the sign of $\langle\alpha, \mu_x\rangle,$
one of $\mu_x + \alpha^{\vee}$ and $\mu_x - \alpha^{\vee}$ is conjugate to $\mu_x$ in $G,$
and in particular is $G$-minuscule. Hence $(\mu_{x+\alpha^{\vee}})_{G-\dom}=\mu$ or
$(\mu_{x-\alpha^{\vee}})_{G-\dom}=\mu$. This implies the first assertion.

The same argument also shows that our assumption implies $\langle\alpha, w\mu_x\rangle= 0$ for all $w\in W_M$. Fix a connected component of the Dynkin diagram of $M$ and assume that there is a simple root $\beta$ in that component such that $\langle\beta,\alpha^{\vee}\rangle\neq 0$.  As $\langle\alpha, \mu_x\rangle=\langle\alpha, s_{\beta}\mu_x\rangle=0$, we have $\langle\beta, \mu_x\rangle=0$. Similarly, for every neighbor $\beta'$ of $\beta$ in the Dynkin diagram of $M$ we have $\langle\alpha, \mu_x\rangle=\langle\alpha, s_{\beta}s_{\beta'}\mu_x\rangle=0$. Thus $\langle\beta', \mu_x\rangle=0$. By induction,  we obtain $\langle\gamma, \mu_x\rangle=0$ for every simple root $\gamma$ in that connected component of the Dynkin diagram of $M$. Hence $\mu_x$ is central in that connected component. The last assertion follows.
\end{proof}

\begin{remark}\label{rem452}Let $x, x'\in \bar I_{\mu, b}^{M, G}$ and $x\rightarrow x'$.
Suppose $x'-x=\alpha^\vee-\alpha^{m\vee}$ with $\alpha$ adapted, and $m$ satisfying the conditions
in Definition \ref{def_immediate_distance}. By Lemma \ref{lemma_newB}, $\mu_{x+\alpha^{\vee}}$ and
$\mu_{x-\alpha^{m\vee}}$ are $G$-minuscule. Hence, for any $\alpha^i$ not
in the same connected component of the Dynkin diagram of $G$ as $\alpha$ or $\alpha^m$ with $0<
i< m$, the conditions $x+\alpha^{\vee}-\alpha^{i\vee}\notin \bar I_{\mu, b}^{M, G}$ and $x+\alpha^{i\vee}-\alpha^{m\vee}\notin \bar I_{\mu, b}^{M, G}$ imply that $(\mu_{x+\alpha^{i\vee}})_{G-\dom}\neq
\mu$ and $(\mu_{x-\alpha^{i\vee}})_{G-\dom}\neq \mu$. Hence by
Lemma \ref{lemma_D}, we have $\langle \alpha^i, \mu_x\rangle=0$
and $w_x(\alpha^i)=\alpha^i$.
\end{remark}

\begin{para}
Let $x\in \bar I_{\mu, b}^{M, G}$. By Remark \ref{lemma_b=b_x}, there is a $g_x\in M(L)$ with
$g_x^{-1}b\sigma(g_x)=b_x$. Then $g_xM(\mathcal{O}_L)\in
X_{\mu_x}^M(b)$.

The main ingredient of the proof of Proposition \ref{pm2} is the following proposition.
\end{para}

\begin{prop}\label{cor_line_immediate}Let $x, x'\in \bar I_{\mu, b}^{M, G}$ and $x\rightarrow x'$. Suppose $x-x'=\alpha^{\vee}-\alpha^{m\vee}$ as in Definition \ref{def_immediate_distance} with $\alpha$ adapted. Let $g_x M(\mathcal{O}_L)\in X_{\mu_x}^M(b)$ as before. Then there exists $g' M(\mathcal{O}_L) \in X^M_{\mu_{x'}}(b)$ such that  $g_x$ and $g'$ have the same image in $\pi_0(X^{G}_{\mu}(b))$. Moreover,
\begin{eqnarray}\label{eqn_line_immediate}w_M(g_x)-w_M(g')= \sum_{i=0}^{m-1} \alpha^{i\vee} \text { in } \pi_1(M).\end{eqnarray}
\end{prop}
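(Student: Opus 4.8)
The plan is to construct an explicit one-parameter family in $X_\mu^G(b)$ over an affine line (or a localization of one) connecting $g_x$ to a point lying in $X^M_{\mu_{x'}}(b)$, mimicking the superbasic computation of Section \ref{secsb} but now carried out inside the maximal rank subgroup $G_\Omega$ attached to $\Omega = \Gamma\alpha$. First I would recall from Lemma \ref{lemma_newB} that $(\mu_x + \alpha^\vee)_{G-\dom} = \mu$, $(\mu_x - w_x\alpha^{m\vee})_{G-\dom} = \mu$ and $(\mu_x + \alpha^\vee - w_x\alpha^{m\vee})_{G-\dom} = \mu$, so that the cocharacters appearing along the way are all in the $G$-Weyl orbit of $\mu$; this is what keeps the family inside $X_\mu^G(b)$ rather than some larger $X_{\preceq\mu}^G(b)$. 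Using Remark \ref{rem452}, the roots $\alpha^i$ for $0<i<m$ that are not in the connected component of $\alpha$ or $\alpha^m$ satisfy $\langle\alpha^i,\mu_x\rangle = 0$ and $w_x(\alpha^i) = \alpha^i$, which is what allows the $\sigma$-orbit bookkeeping (analogous to the indices $j_r$ in Lemma \ref{(1.3.4)}) to close up correctly.

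The key steps, in order: (i) Set up the frame $\R = \O_L\langle t\rangle$ (or $\O_L\langle t, t^{-1}\rangle$ when a denominator is forced, as in Lemma \ref{(1.3.4)}) with Frobenius $t\mapsto t^q$, and define an element $a(t) \in G_\Omega(\R_L)$ built out of root group elements $U_{\alpha^i}(\ast)$ with appropriate powers of $t$, designed so that $a(0) = 1$ and $g := a(t)\,g_x$ satisfies $g^{-1}b\sigma(g) \in Kp^\mu K$ for all $t$ with $t$ a unit. This is the analogue of the elements $a = (a_r)$ in the proof of Lemma \ref{(1.3.4)}, and the commutator identities of Lemma \ref{(1.3.3)}(4) (or rather their $G_\Omega$-analogues coming from Proposition \ref{lemma_newA}, which identifies $\Delta_M \cup \Omega$ as a set of simple roots of $G_\Omega$ when $\Omega$ is of type II or III) are used to check the Cartan-type membership condition. (ii) Verify via Lemma \ref{1.1.3} that $g \in X_\mu^G(b)(\R)$ and via Lemma \ref{valuativeII} that, after possibly passing from $\O_L\langle t, t^{-1}\rangle$ to $\O_L\langle t\rangle$, the family extends to $t = 0$; call $g' := g|_{t=0}$. (iii) Compute $w_M(g') - w_M(g_x)$: since $g' = a(0)g_x$ up to the correction from the extension, and $a(t)$ is a product of root group elements whose total contribution to $\pi_1$ telescopes to $-\sum_{i=0}^{m-1}\alpha^{i\vee}$, one reads off \eqref{eqn_line_immediate}. (iv) Conclude that $g'$ represents an element of $X^M_{\mu_{x'}}(b)$ by checking its invariant: $w_M(g') = w_M(g_x) - \sum_{i=0}^{m-1}\alpha^{i\vee}$ lands in $x' = x - \alpha^\vee + \alpha^{m\vee}$ modulo the appropriate lattice, using that $\sum_{i=0}^{m-1}\alpha^{i\vee}$ differs from $\alpha^\vee - \alpha^{m\vee}$ by a coroot combination lying in $M$ (the telescoping $\sum_{i=0}^{m-1}(\alpha^{i\vee} - \alpha^{(i+1)\vee}) = \alpha^\vee - \alpha^{m\vee}$), together with Lemma \ref{lemma_C} relating $\mu_x - w_x\alpha^{m\vee}$ to $\mu_{x-\alpha^{m\vee}}$.

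The main obstacle will be step (i): constructing $a(t)$ so that the resulting $g$ genuinely stays in $X_\mu^G(b)(\R)$ and not merely $X_{\preceq\mu}^G(b)(\R)$, and doing this uniformly across the three types of $\Omega$ (type I, II, III as in Definition \ref{defn:type}). The type III / $D_4$ case is the delicate one, because there $\Gamma\alpha$ has three elements in each connected component and the explicit pairing computation of Remark \ref{rem_D4_2b_explicite_comp} must be invoked to control $\langle\alpha^d,\mu_x\rangle$, $\langle\alpha^{m-d}+\beta^{m-d},\mu_x\rangle$ etc.; this forces a case split on the vectors $(\langle\beta^0,\mu_x\rangle, \langle\alpha^0,\mu_x\rangle, \langle\alpha^d,\mu_x\rangle)$ and correspondingly several slightly different choices of $a(t)$. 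The situation when $\Omega$ is \emph{not} adapted of the clean type covered by Proposition \ref{lemma_newA} — i.e.\ when $G_\Omega$ has roots of two lengths, cf.\ the $\GSp_4$ example — will also need separate treatment, presumably by first replacing $\alpha$ by an adapted representative via Lemma \ref{lemma_modify_alpha} so that we may always assume $\Delta_M\cup\Omega$ generates the root system of $G_\Omega$ honestly. Once these cases are organized, the verification that $g^{-1}b\sigma(g) \in Kp^\mu K$ is a direct, if lengthy, matrix/root-group computation of exactly the kind carried out in \cite{conncomp}, proof of Proposition 1, which I would reference rather than reproduce in full.
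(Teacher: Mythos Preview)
Your overall architecture is right and matches the paper's: build a one-parameter family $g_{x,x'}(y)$ over $\mathbb A^1$ specialising to $g_x$ at $y=0$, verify membership in $X_\mu^G(b)$, pass to the point at infinity, and read off $g'$. You also correctly anticipate the type I/II/III case split and the special handling of type~III via Remark~\ref{rem_D4_2b_explicite_comp} (including the swap $x\leftrightarrow x'$ with negative roots when $\langle\beta^{m-d},\mu_x\rangle=1$). Two points need correction.

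First, a minor one: the family is $g_{x,x'}(y)=g_x\cdot(\text{root-group product})$, i.e.\ multiplication on the \emph{right} of $g_x$, not the left. This is what makes the computation tractable: then $g_{x,x'}^{-1}b\sigma(g_{x,x'})$ reduces to a computation with the explicit element $b_x=p^{\mu_x}\dot w_x$ rather than the unknown representative $b$.

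Second, and this is a genuine gap: your step~(iii) asserts that ``$a(t)$ is a product of root group elements whose total contribution to $\pi_1$ telescopes to $-\sum_{i=0}^{m-1}\alpha^{i\vee}$'', but root-group elements are unipotent and have \emph{trivial} image under $w_M$. The shift in $\pi_1(M)$ does not come from $a(t)$ itself; it comes from rewriting the family in the chart at infinity via the $\mathrm{SL}_2$ identity
\[
U_\gamma(p^{-1}y)\;\in\;U_{-\gamma}(py^{-1})\,p^{-\gamma^\vee}\,G(\O_L[y,y^{-1}])\,,
\]
applied to each factor $U_{\alpha^i}(p^{-1}c_i\sigma^i(y))$. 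This is the paper's equation~(\ref{eqn_SL2_computation}), and it is the entire mechanism by which the torus factor $p^{-\sum_{i=0}^{m-1}\alpha^{i\vee}}$ appears: one defines a second element $f_{x,x'}(y)\in G(\R_L)$ explicitly as the product of the right-hand sides, checks $f_{x,x'}(y)=g_{x,x'}(y^{-1})$ in $G(\R'_L)/G(\R')$ over $\R'=\O_L\langle y,y^{-1}\rangle$, and then $g':=f_{x,x'}(0)=g_xp^{-\sum\alpha^{i\vee}}$ visibly lies in $M(L)/M(\O_L)$. Your appeal to Lemma~\ref{valuativeII} would give abstract existence of an extension to $y^{-1}=0$, but without the explicit $\mathrm{SL}_2$ rewriting you cannot compute $g'$, cannot verify that $g'\in M(L)K/K$ (hence lands in some $X^M_{\mu_{\tilde x}}(b)$), and cannot establish~\eqref{eqn_line_immediate}. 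In type~III with $d<m<2d$ the rewriting is more elaborate (one must iterate the identity, picking up extra $\beta^{i\vee}$ terms that vanish in $\pi_1(M)$), but the principle is the same. Once you insert this step your outline becomes the paper's proof.
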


Before giving the proof of Proposition \ref{cor_line_immediate}, we first show how to use it to prove Proposition \ref{pm2}.

\begin{proof}[Proof of Proposition \ref{pm2}] By Proposition
\ref{prop_reduction_immediate}, we may assume that the distance from $x$ to $x'$ is immediate. As $J_b^M(F)$ acts transitively on $\pi_0(X^M_{\mu_x}(b))$ by Proposition \ref{prop:generalsuperbasictrans}, for any $g\in X^{M}_{\mu_x}(b)$ there exists $j\in J_b^M(F)$ such that $g$ and $jg_x$ have the same image in $\pi_0(X^M_{\mu_x}(b))$. In particular, they have the same image in $\pi_0(X^G_{\mu}(b))$. By Proposition \ref{cor_line_immediate}, there exists $g_1M(\mathcal{O}_L) \in X^M_{\mu_{x'}}(b)$ such that $g_x$ and $g_1$ have the same image in $\pi_0(X^{G}_{\mu}(b))$. Therefore $g$ and $jg_1$ have the same image in $\pi_0(X^{G}_{\mu}(b))$. So $g'=jg_1$ is the desired element.
\end{proof}

\begin{para}\label{para:notnsetup} Now it remains to prove Proposition \ref{cor_line_immediate}. The strategy of the proof is as follows. First we construct some ``affine lines'' $g_{x,x'}$ and view them as part of ``projective lines''. By an explicit computation, we will see that $g$ and $g'$ are both on the ``projective lines" corresponding to the points at 0 and $\infty$ respectively. The proposition then follows:

Keep the notation of Proposition \ref{cor_line_immediate}, and let $\Omega=\Gamma\alpha$.
Recall the element $b_x = \mu_x(p)\dot w_x$ in the $\sigma$-conjugacy class of $b,$ defined in \ref{lemma_b=b_x}.
For $i\geq 0$ we set $b_{x}^{(i)}=b_x\sigma(b_x)\dotsm\sigma^i(b_x)$. It will be convenient to set $b_x^{(-1)} = 1.$
The  root subgroup $U_{\alpha} \subset G$ is naturally defined over $\O_L.$ In the following we fix isomorphisms
$\theta_{\gamma}: U_{\alpha} \iso \mathbb G_a$ over $\O_L,$ satisfying $\sigma^*(\theta_{\gamma}) = \theta_{\sigma(\gamma)}.$
Then $\dot w_x U_{\alpha}(y)\dot w_{x}^{-1}=U_{w_x\alpha}(c^x_{\alpha}y)$ for some $c^x_{\alpha}\in \mathcal{O}^{\times}_L$ depending on $\dot w_x$ and on $\alpha$.

Let $R = \bar k[y]$ and $\R = \O_L\langle y \rangle$ equipped with the Frobenius $\sigma(y) = y^q.$
We define
$g_{x,x'}(y)\in G(\R_L)/G(\R)$ as follows:
%\begin{itemize}
%\item In all other cases, let
\[g_{x, x'}(y):=g_x
 (b_{x}^{(m-2)}\sigma^{m-1}
U_{\alpha}(p^{-1}y)(b_{x}^{(m-2)})^{-1})\cdots  (b_{x}\sigma
U_{\alpha}(p^{-1}y)b_{x}^{-1})U_{\alpha}(p^{-1}y),\]

{\em except} if $\Omega$ is of type III, $d<m<2d$, and $\langle\beta^{m-d}, \mu_x\rangle=1$, in which case we let
\[g_{x, x'}(y):=g_{x'} (b_{x'}^{(m-2)}\sigma^{m-1}
U_{-\alpha}(p^{-1}y)(b_{x'}^{(m-2)})^{-1})\cdots  (b_{x'}\sigma
U_{-\alpha}(p^{-1}y)b_{x'}^{-1})U_{-\alpha}(p^{-1}y)
\]
%\end{itemize}
\end{para}

\begin{prop}\label{prop_affineline_x_x'} With the notations above, we have
$$S_{\preceq\mu}(g_{x,x'}(y)^{-1}b\sigma g_{x, x'}(y))=\Spec R.$$
\end{prop}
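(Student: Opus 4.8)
The goal is to show that the explicitly constructed family $g_{x,x'}(y) \in G(\R_L)/G(\R)$ actually lands in the closed affine Deligne-Lusztig set $X_{\preceq\mu}^G(b)(\R)$, i.e.\ that at every point $s \in \Spec R = \Spec \bar k[y]$ the element $s(g_{x,x'}(y))^{-1}b\sigma(s(g_{x,x'}(y)))$ lies in $K p^{\mu'} K$ for some dominant $\mu' \preceq \mu$. By Lemma \ref{1.1.3}(1) the locus $S_{\preceq\mu}(g_{x,x'}(y)^{-1}b\sigma g_{x,x'}(y))$ is Zariski closed in $\Spec R$, so it suffices to check the condition at all $\bar k$-valued points, and in fact it suffices to exhibit a dense open subset of values of $y$ where it holds; but since the target condition is closed, I will in fact just verify it pointwise for every specialization $y \mapsto c \in \bar k$, treating $c \in \O_L$ via its Teichm\"uller lift. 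This reduces everything to a computation in $G(L)/G(\O_L)$.

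\textbf{The core computation.} First I would replace $b$ by $b_x = \mu_x(p)\dot w_x$ using that left multiplication by $g_x^{-1}$ gives a bijection $X^M_{\mu_x}(b)(\R) \cong X^M_{\mu_x}(b_x)(\R)$ (and similarly for $G$), so that without loss of generality $b = b_x$ and $g_x = 1$; the $g_{x'}$-variant is handled symmetrically with $b = b_{x'}$. Then $g_{x,x'}(y)$ is (in the generic, non-type-III case) the telescoping product $\prod_{j=0}^{m-1} b_x^{(j-1)} \sigma^{j}(U_\alpha(p^{-1}y)) (b_x^{(j-1)})^{-1}$. Since $b_x^{(l_0)} = p^{l_0 \nu}$ for some $l_0$ (after a $\sigma$-conjugation, as in \ref{(1.5.1)}) and $\dot w_x$ conjugates $U_\alpha$ to $U_{w_x\alpha}$ scaled by a unit, conjugation of $U_\alpha(p^{-1}y)$ by $b_x^{(j-1)}$ lands in $U_{\gamma_j}(p^{-1+\langle\gamma_j, \nu\rangle \cdot(\cdots)} \cdot \text{unit}\cdot y^{q^j})$ for an appropriate root $\gamma_j$ in the Galois orbit $\Omega$ (here $\sigma^j$ applied to $U_\alpha$ gives $U_{\alpha^j}$ and then $\dot w_x^{(j)}$-conjugation moves it to $U_{w_x\alpha^j}$). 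The point of the conditions in Definition \ref{def_immediate_distance} and of Remark \ref{rem452}/Lemma \ref{lemma_D} is precisely that the intermediate roots $\alpha^i$ ($0<i<m$) satisfy $\langle \alpha^i, \mu_x\rangle = 0$ and $w_x(\alpha^i) = \alpha^i$, so no cancellation occurs and the product stays ``upper triangular'' in the relevant sense. I would then compute $g_{x,x'}(y)^{-1} b_x \sigma(g_{x,x'}(y))$: the telescoping structure is engineered so that this equals $b_x$ times a product of root-group elements, and one checks by the standard Cartan-decomposition estimate (as in the superbasic case, Lemma \ref{(1.3.4)}, via Lemma \ref{1.1.9}) that at every point $y = c$ this element lies in $K p^{\mu'} K$ for some $\mu' \preceq \mu$. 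Lemma \ref{lemma_newB} is what guarantees $(\mu_{x\pm\alpha^{\vee}})_{G-\dom} = \mu$ etc., so the relevant $\mu'$ never exceeds $\mu$ in the dominance order.

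\textbf{The type-III subtlety.} The main obstacle is the type-III case with $d < m < 2d$, where the naive construction fails and one must use the $g_{x'}$-variant with $U_{-\alpha}$ in place of $U_\alpha$. Here I would invoke Proposition \ref{lemma_newA} and Remark \ref{rem_D4_2b_explicite_comp}: the Dynkin diagram of $G_\Omega$ is of type $D_4$, and the explicit vectors $(\langle\beta^0,\mu_x\rangle, \langle\alpha^0,\mu_x\rangle, \langle\alpha^d,\mu_x\rangle)$ and $(\langle\beta^{m-d},\mu_x\rangle, \langle\alpha^{m-d},\mu_x\rangle, \langle\alpha^m,\mu_x\rangle)$ computed there determine exactly how the conjugations interact with $\mu_x$. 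The case split on whether $\langle\beta^{m-d},\mu_x\rangle$ equals $0$ or $1$ is precisely the split in the definition of $g_{x,x'}(y)$, and in each branch one verifies by direct computation that the resulting element of $G(L)/G(\O_L)$ at each specialization of $y$ lies in $K p^{\mu'} K$ with $\mu' \preceq \mu$. This is where the bulk of the case-by-case work lies; for $G$ split it collapses essentially to the computation in \cite{conncomp}, 2.5. Throughout, the passage from ``pointwise'' to ``as a section over $\R$'' is handled by Lemma \ref{1.1.3}(1), and boundary behavior (the ``projective line'' completion at $y = 0, \infty$) will be used in the subsequent proof of Proposition \ref{cor_line_immediate} rather than here, so for Proposition \ref{prop_affineline_x_x'} itself only the closedness of $S_{\preceq\mu}$ and the pointwise Cartan estimate are needed.
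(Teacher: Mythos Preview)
Your outline is broadly on the right track---telescoping plus the constraints from Remark~\ref{rem452}/Lemma~\ref{lemma_D} on the intermediate $\alpha^i$---but there is a genuine gap at the step where you conclude that the resulting element lies in $Kp^{\mu'}K$.

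First, the reference to $b_x^{(l_0)} = p^{l_0\nu}$ is misplaced: that normalization comes from \ref{(1.5.1)} and applies to a different representative of $[b]$, not to $b_x = \mu_x(p)\dot w_x$. Here the conjugation by $b_x^{(i-1)}$ is computed directly: since $w_x\alpha^i = \alpha^i$ and $\langle\alpha^i,\mu_x\rangle = 0$ for $0<i<m$ (in types I, II), one gets $b_x^{(i-1)}\sigma^i(U_\alpha(p^{-1}y))(b_x^{(i-1)})^{-1} = U_{\alpha^i}(c_ip^{-1}\sigma^i(y))$ with $c_i \in \O_L^\times$, and the telescoping yields the explicit expression
\[
A = U_\alpha(-p^{-1}y)\,U_{w_x\alpha^m}(c\,\sigma^m(y))\,p^{\mu_x}\dot w_x,
\]
using $\langle w_x\alpha^m,\mu_x\rangle = 1$ from Lemma~\ref{lemma_newB}.

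Second, and this is the real gap: showing $A \in G(\R)p^{\mu_x}G(\R)$ is \emph{not} a ``standard Cartan-decomposition estimate'' in the sense of Lemmas~\ref{(1.3.4)} or~\ref{1.1.9}; those lemmas do not furnish such a bound. The issue is the factor $U_\alpha(-p^{-1}y)$ with its negative $p$-power. The paper's argument is: if $U_\alpha$ and $U_{w_x\alpha^m}$ commute, swap them and use $\langle\alpha,\mu_x\rangle = -1$ to absorb $U_\alpha(-p^{-1}y)p^{\mu_x} = p^{\mu_x}U_\alpha(-y)$ into $K$. If they do \emph{not} commute (which can only happen in type~II, where $\langle w_x\alpha^m,\alpha^\vee\rangle = -1$), one does not estimate at all---one derives a contradiction: Lemma~\ref{lemma_newB} gives that $\mu_x + \alpha^\vee - w_x\alpha^{m\vee}$ is $G$-minuscule, but $\langle w_x\alpha^m, \mu_x + \alpha^\vee - w_x\alpha^{m\vee}\rangle = 1 - 1 - 2 = -2$. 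So the non-commuting case never occurs. You need this argument; a vague appeal to Cartan bounds will not close it.

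For type~III with $d<m<2d$, the paper first reduces (by swapping $x \leftrightarrow x'$ and positive $\leftrightarrow$ negative roots) to $\langle\beta^{m-d},\mu_x\rangle = 0$, and then in the remaining subcase $\langle\beta^0,\mu_x\rangle = 1$ the telescoping is more delicate: the factor at $i = m-d$ does not simply pass through, and $A$ picks up an explicit commutator term in $U_{\alpha^m+\beta^{m-d}+\alpha^{m-d}}$ with integral coefficient. Your proposal gestures at this case split but does not indicate how to handle the commutator; this is an explicit root-group computation, not an estimate.
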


\begin{proof} We first deal with the case when $\Omega$ is of type
I or II. By Lemma \ref{lemma_D} and Remark \ref{rem452}, we have $b_{x}^{(i-1)}
U_{\alpha^{i}}(p^{-1}\sigma^{i}y)(b_{x}^{(i-1)})^{-1}=U_{\alpha^i}(c_ip^{-1}\sigma^i(y))$ for $i=1,\dotsc, m-1$ with $c_i\in\mathcal{O}_L^{\times}$ arising from the action of the representative ${\dot w}_x$ on the root subgroups. By Lemma \ref{lemma_newB},
$\mu_x-w_x\alpha^{m\vee}$ and $\mu_x+\alpha^\vee$ are
$G$-minuscule, so $\langle\alpha, \mu_x\rangle=-1$ and
$\langle w_{x}\alpha^m, \mu_x\rangle=1$. As $U_{\alpha},\dotsc,
U_{\alpha^{m-1}}$ are in different connected components they obviously commute. Using this, together
with Remark \ref{rem452}, and keeping in mind that $g_x^{-1}b\sigma (g_x)=b_x$, many of the factors in the definition of $g(y)^{-1}b\sigma(g(y))$ cancel and we obtain
\begin{eqnarray*}
A:=g_{x,x'}(y)^{-1}b\sigma g_{x, x'}(y)&=&U_{\alpha}(-p^{-1}y)
 (b_{x}^{(m-1)} U_{\alpha^{m}}(p^{-1}\sigma^{m}(y))(b_{x}^{(m-1)})^{-1})b_x\\
 &=&U_{\alpha}(-p^{-1}y) U_{w_x\alpha^{m}}(c\sigma^{m}(y))p^{\mu_x}\dot w_x
\end{eqnarray*}
for some $c\in\mathcal{O}_L^{\times},$  Here in the second equality we have used $\langle w_{x}\alpha^m, \mu_x\rangle=1.$

We want to show that $A\in G(\R) p^{\mu_x}G(\R)$. This assertion only depends on the element $U_{\alpha}(-p^{-1}y) U_{w_x\alpha^{m}}(c\sigma^{m}(y))p^{\mu_x}\in G_{\Omega}(L)$. This element (and also every factor in the product) is contained in the standard Levi subgroup of $H_{\Omega} \subset G_{\Omega}$ corresponding to the Galois orbit of the connected component of the Dynkin diagram of $G_{\Omega}$ which contains $\Omega$. Note that $\Gamma$ acts transitively on the connected components of the Dynkin diagram of $H_{\Omega}.$
% After replacing $G_{\Omega}$ by this standard Levi subgroup, we may assume that $\Gamma$ acts transitively on the connected components of the Dynkin diagram of $G_{\Omega}$. Note that, $G_{\Omega}$ is only used to distinguish several cases, but not used for computation, so this modification does not interfere with the arguments below.

If $U_{\alpha}$ and $U_{w_x\alpha^m}$ commute, then using $\langle\alpha, \mu_x\rangle=-1,$ we obtain
$$A\in G(\R) U_{\alpha}(-p^{-1}y)p^{\mu_x}G(\R)= G(\R) p^{\mu_x} U_{\alpha}(-y)G(\R)= G(\R) p^{\mu_x}G(\R).$$

If $U_{\alpha}$ and $U_{w_x(\alpha^m)}$ do not commute, then
$\Omega$ is of type II and all the roots in $H_{\Omega}$ are of
the same length. In this case, $\langle w_x\alpha^m,
\alpha^{\vee}\rangle=-1$ and $\alpha+ w_x\alpha^{m}$ is the only
positive linear combination of $\alpha$ and $w_x\alpha^{m}$ which can be a root. By Lemma \ref{lemma_newB},
$\mu_x+\alpha^{\vee}-w_x\alpha^{m\vee}$ is $G$-minuscule. On the
other hand,
\[\langle w_x\alpha^{m}, \mu_x+\alpha^{\vee}-w_x\alpha^{m\vee}\rangle= -2,\]
so we get a contradiction.

Now we deal with the case when $\Omega$ is of type III. Recall
that $|\Omega|=3d$. Suppose either $m\leq d$ or $d< m< 2d$ with
$\langle\beta^0,\mu_x\rangle=\langle\beta^{m-d}, \mu_x\rangle=0.$
Then by Lemma \ref{lemma_D} and Remark \ref{rem_D4_2b_explicite_comp},
$\langle \alpha^i, \mu_x \rangle = 0,$ and $w_x(\alpha^i) = \alpha^i$
$i=1,\cdots m-1,$ and hence
$b_{x}^{(i-1)}U_{\alpha^{i}}(p^{-1}\sigma^{i}(y))(b_{x}^{(i-1)})^{-1}=U_{\alpha^i}(p^{-1}c_i\sigma^i(y))$
for some $c_i\in \mathcal{O}_L^{\times}$. Keeping in mind that in this case $U_{\alpha^i}$ and $U_{\alpha^{i+d}}$
commute, and that $U_{\alpha}$ and $U_{w_x\alpha^m}$ commute,
the same calculation for $A$ as in the case above applies.

Now suppose $d<m<2d$. We may assume that $\langle\beta^{m-d},
\mu_x\rangle=0$. Otherwise, $x'-x= (-\alpha)^{\vee}-(-\alpha)^{m\vee},$ and one checks that
that $x'\rightarrow x$ if we use negative roots instead of
positive ones. Now $\langle\beta^{m-d}, \mu_x\rangle=1$ implies that $\langle (-\beta)^{m-d},
\mu_{x'}\rangle=0$. Therefore we may reduce to the above case by exchanging
$x$ and $x'$, and using the opposite Borel group and negative
roots.

It remains to consider the case when $d< m< 2d$, $\langle\beta^0,
\mu_x\rangle=1$ and $\langle\beta^{m-d}, \mu_x\rangle=0$. By Remark
\ref{rem_D4_2b_explicite_comp}, we have $\langle\alpha,
\mu_x\rangle=-1$, $\langle\alpha^m, \mu_x\rangle=1$ and
$\langle\alpha^i, \mu_x\rangle=0$ for $i=d, m-d$.

For $i=1,\cdots, m-1$, $i\neq m-d$, $\alpha^i$ is not in the same
connected component as $\alpha^m$, so
\begin{eqnarray*}
A&=&g_{x,x'}(y)^{-1}b\sigma g_{x, x'}(y)\\
&=&U_{\alpha}(-p^{-1}y)(b_{x}^{(m-d-1)}U_{\alpha^{m-d}}(-p^{-1}\sigma^{m-d}(y))(b_{x}^{(m-d-1)})^{-1}) \\
&&\cdot~(b_{x}^{(m-1)} U_{\alpha^{m}}(p^{-1}\sigma^{m}(y))(b_{x}^{(m-1)})^{-1}) \\
&&\cdot~(b_{x}^{(m-d-1)}U_{\alpha^{m-d}}(p^{-1}\sigma^{m-d}(y))(b_{x}^{(m-d-1)})^{-1})b_x
\\&=& U_{\alpha}(-p^{-1}y) U_{\alpha^{m-d}}(-p^{-1}c_{1}\sigma^{m-d}(y)) U_{\alpha^m+\beta^{m-d}}(pc_2\sigma^m(y))U_{\alpha^{m-d}}(p^{-1}c_{1}\sigma^{m-d}(y))b_x
\end{eqnarray*}
where the last equality follows by Lemma \ref{lemma_D} and where $c_1,c_2\in \mathcal{O}_L^{\times}$ are constants arising from the action of the representative $\dot w_x$ on the root subgroups.

Note that $\alpha$ is also not in the same connected component as $\alpha^{m-d}$
and $\alpha^m$. Thus in order to show $A\in G(\R)p^{\mu}G(\R)$, it suffices to
show the following elements are in $G(\R)p^{\mu}G(\R).$
\begin{eqnarray*}A_1&:=&U_{\alpha^{m-d}}(-p^{-1}c_{1}\sigma^{m-d}(y)) U_{\alpha^m+\beta^{m-d}}(pc_2\sigma^m(y))U_{\alpha^{m-d}}(p^{-1}c_{1}\sigma^{m-d}(y))p^{\mu_x}
 \\
A_2&:=&U_{\alpha}(-p^{-1}y)p^{\mu_x}.\end{eqnarray*}
But  $A_2 = p^{\mu_x}U_{\alpha}(-y)\in G(\R)p^{\mu}G(\R)$ and
\[A_1=U_{\alpha^m+\beta^{m-d}}(pc_2\sigma^m(y)) U_{\alpha^m+\beta^{m-d}+\alpha^{m-d}}(c_3\sigma^{m-d}(y)\sigma^m(y))p^{\mu_x} \in G(\R)p^{\mu}G(\R).\]
where $c_3\in \mathcal{O}_L$ such that
\[  [U_{\alpha^m+\beta^{m-d}}(-pc_2\sigma^m(y)), U_{\alpha^{m-d}}(-p^{-1}c_{1}\sigma^{m-d}(y))]=U_{\alpha^m+\beta^{m-d}+\alpha^{m-d}}(c_3\sigma^{m-d}(y)\sigma^m(y)).\]
\end{proof}

\begin{proof}[Proof of Proposition \ref{cor_line_immediate}]
Let $\R' = \O_L \langle y,y^{-1}\rangle,$ equipped with the Frobenius given by $\sigma(y) = y^q.$ So the natural map $\R \rightarrow \R'$
is a morphism of frames.

Recall that for any root $\gamma$ in $G,$ we have chosen an isomorphism of $\O_L$-groups
$\theta_{\gamma}: U_{\gamma} \iso \mathbb G_a,$ with $\sigma^*(\theta_{\gamma}) = \theta_{\sigma(\gamma)}.$
An $\mathrm{SL}_2$-calculation shows that given $\theta_{\gamma},$
$\theta_{-\gamma}$ may be chosen so that we have
\begin{eqnarray}\label{eqn_SL2_computation} U_\gamma(p^{-1}y)=U_{-\gamma}(py^{-1})p^{-\gamma^{\vee}}h\end{eqnarray}
for some $h \in G(\O_L[y,y^{-1}]) \subset G(\R').$
Moreover, $(\theta_{\sigma\gamma}, \theta_{-\sigma\gamma}) = (\sigma^*\theta_{\gamma}, \sigma^*\theta_{-\gamma})$
then also satisfy the same property with respect to the root $\sigma(\gamma).$
In the following we fix such a choice for the Galois orbits of all roots $\gamma.$

If $\Omega$ is of type I or II, then $\alpha,\dotsc,\alpha^{m-1}$
are in different connected components of the Dynkin diagram of
$G_{\Omega}.$ We have
\begin{eqnarray}
\nonumber g_{x, x'}(y)&=&g_x U_{\alpha^{m-1}}(p^{-1}c_{m-1}\sigma^{m-1}(y))\cdots U_{\alpha^{1}}(p^{-1}c_1\sigma(y))U_{\alpha}(p^{-1}y)\\
\label{eqfg1}&\in& g_x U_{-\alpha^{m-1}}(pc_{m-1}^{-1}\sigma^{m-1}(y^{-1}))\cdots U_{-\alpha^{1}}(pc_{1}^{-1}
\sigma(y^{-1}))\\
&&\nonumber \quad\cdot\; U_{-\alpha}(py^{-1})p^{-\sum_{i=0}^{m-1}\alpha^{i\vee}}G(\R')
\end{eqnarray}
for suitable constants $c_i\in \mathcal{O}_L^{\times}$.

We define a second element $f_{x,x'}(y)\in G(\R_L)$ by setting
$$f_{x,x'}(y)=g_x U_{-\alpha}(py)U_{-\alpha^{1}}(pc_{1}^{-1}\sigma(y))\cdots
U_{-\alpha^{m-1}}(pc_{m-1}^{-1}\sigma^{m-1}(y))p^{-\sum_{i=0}^{m-1}\alpha^{i\vee}}.$$
Then  $f_{x,x'}(y)\in g_{x,x'}(y^{-1})$ in $G(\R'_L)/G(\R').$
In particular, by Proposition \ref{prop_affineline_x_x'}
$$S_{\preceq\mu}(f_{x,x'}^{-1}b\sigma(f_{x,x'}))\supseteq \Spec(k[y])\setminus \{0\}.$$
By Lemma \ref{1.1.3}, this set is Zariski closed. Hence $f_{x,x'}$ defines an element of $X_{\preceq\mu}(b)(\R)$.
In particular $g_{x,x'}(0)=g_x\in X^M_{\mu_x}(b)$ and $g':=f_{x,x'}(0)$ have the same image in $\pi_0(X_{\preceq\mu}(b))$.

By the definition of $f_{x,x'}$ we have
\[g'=f_{x,x'}(0)
% \in g_x U_{-\alpha}(p\cdot 0)\cdots
% U_{-\alpha^{m-1}}(p\sigma^{m-1}(0))p^{-\sum_{i=0}^{m-1}\alpha^{i\vee}}K
=g_xp^{-\sum_{i=0}^{m-1}\alpha^{i\vee}}
\] in $M(\R_L)/M(\R)$. Therefore $g'\in X^M_{\mu_{\tilde x}}(b)$ for some $\tilde{x}\in \bar I^{M, G}_{\mu, b}$. As $\tilde{x}=w_M(g'^{-1}b\sigma g')=x'$ in $\pi_1(M)$, we have $g' \in X^M_{\mu_{x'}}(b)$ and (\ref{eqn_line_immediate}) holds.

If $\Omega$ is of type III, we apply the same construction. As in
the proof of Proposition \ref{prop_affineline_x_x'} we may assume that $\Gamma$ acts transitively on the connected components of the Dynkin diagram of $G_{\Omega}$ and that $\langle\beta^{m-d},
\mu_x\rangle=0$ (otherwise, we exchange $x$ and $x'$ and use
negative roots instead of positive ones). Moreover if $m\leq d$ or $d<m
<2d$ with $\langle\beta^{0},\mu_x\rangle=0$, then the definition of
$f_{x,x'}$ and the computation of $g':=f_{x,x'}(0)$ are the same
as above. It remains to consider the case when  $d<m<2d$,
$\langle\beta^{0},\mu_x\rangle=1$ and $\langle\beta^{m-d},
\mu_x\rangle=0$. By Remark
\ref{rem_D4_2b_explicite_comp},
\[\begin{split}g_{x, x'}(y)=&g_{x} U_{\alpha^{m-1}+\beta^{m-d-1}}(c_{m-1}\sigma^{m-1}(y)) \cdots U_{\alpha^{d}+\beta^{0}}(c_{d}\sigma^{d}(y))\\
&\cdot~ U_{\alpha^{d-1}}(p^{-1}c_{d-1}\sigma^{d-1}(y))\cdots U_{\alpha^{1}}(p^{-1}c_{1}\sigma^{1}(y))U_{\alpha}(p^{-1}y)
\end{split}\]
where as usual the $c_i$ are constants in $\mathcal{O}_L^{\times}$ arising from the conjugation by the representative $\dot w_x$ on the root subgroups. We can decompose $g_{x, x'}(y)=g_x h_0(y)\cdots
h_{d-1}(y)$ into the terms corresponding to the different connect
components of the Dynkin diagram of $G_{\Omega}$. Here
\[h_i(y)=\begin{cases}U_{\beta^i+\alpha^{i+d}}(c_{i+d}\sigma^{i+d}(y)) U_{\alpha^i}(p^{-1}c_i\sigma^i(y))
 &i=0, \dotsc, m-d-1 \\
 U_{\alpha^i}(p^{-1}c_i\sigma^i(y)) &i=m-d,
 \dotsc, d-1. \end{cases}\]
When $0\leq i\leq m-d-1,$ we have the following equalities in $G(\R'_L)/G(\R'):$
\begin{eqnarray*}
h_i(y)&\in &  U_{\beta^i+\alpha^{i+d}}(c_{i+d}\sigma^{i+d}(y)) U_{-\alpha^i}(p c_i^{-1}\sigma^i(y^{-1}))p^{-\alpha^{i\vee}}\\
&=&   U_{-\alpha^i}(p c_i^{-1}\sigma^i(y^{-1}))p^{-\alpha^{i\vee}}U_{\beta^i+\alpha^{i+d}}(p^{-1}c_{i+d}\sigma^{i+d}(y))\\
&=& U_{-\alpha^i}(p c_i^{-1}\sigma^i(y^{-1}))p^{-\alpha^{i\vee}}U_{-\beta^i-\alpha^{i+d}}(pc_{i+d}^{-1}\sigma^{i+d}(y^{-1})) p^{-\beta^{i\vee}-\alpha^{i+d\vee}}\\
&=& U_{-\alpha^i}(p c_i^{-1}\sigma^i(y^{-1}))U_{-\beta^i-\alpha^{i+d}}(c_{i+d}^{-1}
\sigma^{i+d}(y^{-1})) p^{-\alpha^{i\vee}-\beta^{i\vee}-\alpha^{i+d\vee}}.
\end{eqnarray*}
Write the last of the expressions above as $f_{x,x'}^i(y^{-1}),$ where $f_{x,x'}^i(y)\in G(\R_L).$ Then $f_{x,x'}^i(y) = h_i(y^{-1})$ in $G(\R'_L)/G(\R').$
Moroever $f^i_{x,x'}(0) = p^{-\alpha^{i\vee}-\beta^{i\vee}-\alpha^{i+d\vee}}$.

When $i\geq m-d$ and $y\neq 0$,
$$h_i(y)\in U_{-\alpha^i}(pc_i^{-1}\sigma^i(y^{-1}))p^{-\alpha^{i\vee}}G(\R')$$
Defining $f_{x,x'}^i(y)=U_{-\alpha^i}(pc_i^{-1}\sigma^i(y))p^{-\alpha^{i\vee}}$ we obtain again $f_{x,x'}^i(y) = h_i(y^{-1})$ in $G(\R'_L)/G(\R'),$ and
 $f^i_{x,x'}(0) = p^{-\alpha^{i\vee}}.$ Let $f_{x,x'}=g_xf_{x,x'}^0\dotsm f_{x,x'}^{d-1}$. Then
$$g':=f_{x,x'}(0)=g_xf_{x,x'}^0(0)\dotsm f_{x,x'}^{d-1}(0) = g_x
p^{-\sum_{i=0}^{m-1}\alpha^{i\vee}-\sum_{j=0}^{m-d-1}\beta^{j\vee}}$$ and (\ref{eqn_line_immediate}) holds.
%We denote the representative $g_x p^{-\sum_{i=0}^{m-1}\alpha^{i\vee}-\sum_{j=0}^{m-d-1}\beta^{j\vee}}$ of $g'\in G(\R_L)/G(\R)$ by $f_0$.
%Then
%\[\begin{split}f_0^{-1}b\sigma(f_0)=&
%p^{\sum_{i=0}^{m-1}\alpha^{i\vee}+\sum_{j=0}^{m-d-1}\beta^{j\vee}}
%p^{\mu_x}\dot w_x\sigma p^{-\sum_{i=0}^{m-1}\alpha^{i\vee}-\sum_{j=0}^{m-d-1}\beta^{j\vee}}\\
%=&
%p^{\sum_{i=0}^{m-1}\alpha^{i\vee}+\sum_{j=0}^{m-d-1}\beta^{j\vee}}p^{\mu_x}p^{-\sum_{i=1}^{m}\alpha^{i\vee}-\sum_{j=0}^{m-d}\beta^{j\vee}}\dot w_x\\
%\in&
%M(\mathcal{O}_L)p^{\mu_x-\alpha^{m\vee}+\alpha^{\vee}-\beta^{m-d\vee}}M(\mathcal{O}_L)=M(\mathcal{O}_L)p^{\mu_{x'}}M(\mathcal{O}_L).\end{split}\]
The same verification as in the type I and II cases shows that $g'=f_{x,x'}(0)\in X^M_{\mu_{x'}}(b)$.
\end{proof}

\subsection{Proof of Proposition \ref{pm4}}
% For any simple root $\alpha$ in $G$, let $\Omega=\Gamma\alpha$ and let $\tilde{G}_{\Omega}$ be the standard
% Levi subgroup of $G$ corresponding to the set of simple roots not in $\Omega.$
% %$\{\gamma|\gamma \text{ simple root in } G \text{ and } \gamma\notin\Omega\}$.
% Also recall that in Proposition \ref{pm4}, we are assuming $b=p^{\mu_{x_0}}\dot w_{x_0}$ with $x_0\in \bar I_{\mu, b}^{M, G}$.

In order to prove Proposition \ref{pm4}, we need the following
lemma.

\begin{lemma}\label{lemma_Weyl_orbit} Let $H \subset G$ be a standard Levi subgroup,
and $\alpha$ a positive root of $G,$ which is $H$-anti-dominant.
If $\gamma\in W_H\alpha,$ then there exists a
finite set of  positive roots $(\beta_i)_{i\in J}$ in $H$ such that

\begin{itemize}
\item  $\langle\beta_i, \beta_j^{\vee}\rangle=0$ for all $i,
j\in J$ with $i\neq j$.

\item $\gamma=(\prod _{i\in J}s_{\beta_i})(\alpha)$ where the
product does not depend on the order of $s_{\beta_i}$.
\item $\langle \gamma, \beta_i^{\vee} \rangle > 0 > \langle \alpha, \beta_i^{\vee} \rangle$ for $i \in J.$
\item $|\gamma|=|\alpha|+\sum_{i\in J}|\langle\alpha,
\beta_i^{\vee}\rangle|\cdot|\beta_i|$.
\end{itemize}
\end{lemma}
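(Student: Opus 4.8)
The plan is to reduce the statement to a well-chosen chain of elementary reflections, exactly as in the proof of Lemma \ref{pm1ld} and the other $|\cdot|$-additivity lemmas of \S\ref{subsection_reduction to the distance one case}. First I would pass to the subgroup $H$ and fix its positive system compatibly with that of $G$; since $\alpha$ is positive and $H$-anti-dominant, $\langle\beta,\alpha^\vee\rangle\le 0$ for every simple root $\beta$ of $H$, and in particular $\langle\beta,\alpha^\vee\rangle\in\{0,-1\}$ whenever $\alpha$ and $\beta$ lie in a common component of equal root length; the mixed-length case only enters through the factor $|\langle\alpha,\beta_i^\vee\rangle|$ in the last bullet, and will be handled uniformly. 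Write $\gamma-\alpha$ as an integral sum of simple coroots of $H$. Since $\gamma\in W_H\alpha$, we have $\langle\gamma,\gamma^\vee\rangle=\langle\alpha,\alpha^\vee\rangle=2$, and I would compute $\langle\gamma,\gamma^\vee\rangle-\langle\alpha,\alpha^\vee\rangle=0$ by expanding, which forces a cancellation that is the engine of the whole argument.

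The key step is to apply Lemma \ref{lem:addroots} to the integral sum of roots $\gamma-\alpha$ (or rather to a suitable combination that makes it a genuine positive sum of coroots), obtaining a decomposition $\gamma-\alpha=\sum_{i\in J}c_i\beta_i^\vee$ with $\beta_i$ positive roots of $H$, $c_i$ positive integers, and $\langle\beta_i,\beta_j^\vee\rangle\ge 0$ for $i\ne j$. Then for each fixed $i_0$,
\begin{equation*}
2c_{i_0}\ \le\ \sum_{i\in J}c_i\langle\beta_{i_0},\beta_i^\vee\rangle\ =\ \langle\beta_{i_0},\gamma-\alpha\rangle\ =\ \langle\beta_{i_0},\gamma^\vee\rangle^{\!*}-\langle\beta_{i_0},\alpha^\vee\rangle,
\end{equation*}
where the middle term is bounded above using that $\gamma$ is a root (so $\langle\beta_{i_0},\gamma^\vee\rangle\le$ the maximal multiplicity, which is $1$ in the simply-laced components relevant here) and $\langle\beta_{i_0},\alpha^\vee\rangle\le 0$. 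Exactly as in Lemma \ref{pm1ld} this squeezes all inequalities to equalities: $c_i=1$ for all $i$, $\langle\beta_i,\beta_j^\vee\rangle=0$ for $i\ne j$, and $\langle\beta_i,\gamma^\vee\rangle=1$, $\langle\beta_i,\alpha^\vee\rangle=-1$ (up to the length normalization in the non-simply-laced case, which only rescales $\langle\alpha,\beta_i^\vee\rangle$). The orthogonality of the $\beta_i$ gives that the reflections $s_{\beta_i}$ commute, and a direct check that $\bigl(\prod_{i\in J}s_{\beta_i}\bigr)(\alpha)=\alpha-\sum_i\langle\alpha,\beta_i^\vee\rangle\beta_i^\vee=\alpha+\sum_i\beta_i^\vee=\gamma$ yields the second and third bullets.

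For the last bullet, the length identity $|\gamma|=|\alpha|+\sum_{i\in J}|\langle\alpha,\beta_i^\vee\rangle|\cdot|\beta_i|$, I would argue exactly as in Lemma \ref{remadd}(3) and Lemma \ref{pm1lc}: since $\gamma-\alpha=\sum_i\beta_i^\vee$ (up to the scaling factors) is a sum of \emph{positive} coroots, all its nonzero coordinates in the simple-coroot basis have the same sign, so $|\gamma-\alpha|=\sum_i|\beta_i^\vee|$, and combining with the sign coherence between $\alpha$ and $\gamma-\alpha$ (both lie in the appropriate cone, using $H$-anti-dominance of $\alpha$ and the equalities $\langle\beta_i,\alpha^\vee\rangle=-1$ just obtained) gives $|\gamma|=|\alpha|+|\gamma-\alpha|$. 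Tracking the root-length normalization converts $|\beta_i^\vee|$ into $|\langle\alpha,\beta_i^\vee\rangle|\cdot|\beta_i|$.

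The main obstacle I anticipate is not the combinatorics of the squeezing argument — that is routine given the lemmas already in \S\ref{subsection_reduction to the distance one case} — but rather the bookkeeping for mixed root lengths inside $H$: one must be careful that $\langle\beta_i,\alpha^\vee\rangle$ can be $-2$ or $-3$ when $\alpha$ is short and $\beta_i$ long, and conversely that $\langle\beta_i,\gamma^\vee\rangle$ is still controlled, so that the final equalities take the stated form with the coefficient $|\langle\alpha,\beta_i^\vee\rangle|$ rather than simply $1$. I would isolate this by first treating each irreducible component of $H$ separately (the $\beta_i$ in distinct components being automatically orthogonal), and within a component invoking that $W_H$-orbits of roots are determined by length, reducing to an explicit check in rank-two root systems.
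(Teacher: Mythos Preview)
Your overall strategy---decompose $\gamma-\alpha$ via Lemma~\ref{lem:addroots} and then squeeze the inequalities exactly as in Lemma~\ref{pm1ld}---is precisely the paper's argument in the case where $\alpha$ is a \emph{short} root (not longer than any root of $G$). In that case $|\langle\gamma,\beta^\vee\rangle|,|\langle\alpha,\beta^\vee\rangle|\le 1$ for every root $\beta\ne\pm\alpha,\pm\gamma$, and the squeezing yields $\langle\gamma,\beta_i^\vee\rangle=1$, $\langle\alpha,\beta_i^\vee\rangle=-1$, and mutual orthogonality, from which all four bullets follow directly.

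However, there are two problems. First, your write-up systematically confuses roots and coroots: $\gamma-\alpha$ lies in the root lattice, so its decomposition is $\sum_i\beta_i$ (roots), not $\sum_i c_i\beta_i^\vee$; likewise the reflection formula is $s_{\beta_i}(\alpha)=\alpha-\langle\alpha,\beta_i^\vee\rangle\beta_i$, not $\beta_i^\vee$. This is easily repaired but needs to be done consistently.

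Second, and more substantively, your handling of the long-root case is the real gap. When $\alpha$ is long the bound $\langle\gamma,\beta_i^\vee\rangle\le 1$ can fail, and your proposed ``explicit check in rank-two root systems'' is not a proof. The paper's resolution is much cleaner than component-by-component casework: if $\alpha$ is long then $\alpha^\vee$ is short in the dual root system, so one applies the already-proved short case to $\alpha^\vee,\gamma^\vee$ and obtains orthogonal positive $\beta_i$ in $H$ with $\langle\beta_i,\gamma^\vee\rangle=1$, $\langle\beta_i,\alpha^\vee\rangle=-1$, and $\gamma^\vee=(\prod_i s_{\beta_i})(\alpha^\vee)$. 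Since $s_{\beta_i}$ acts compatibly on roots and coroots this gives $\gamma=(\prod_i s_{\beta_i})(\alpha)$, and the inequalities $\langle\gamma,\beta_i^\vee\rangle>0>\langle\alpha,\beta_i^\vee\rangle$ follow (with the correct possibly-larger absolute values). This dualization is the missing idea; once you have it, no rank-two analysis is needed.
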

\begin{proof}\noindent{\it Case 1: $\alpha$ is not longer than any root in $G$}

As $\gamma\in W_{H}\alpha$, $\gamma$ has the same
length as $\alpha$. Then for any root $\beta$ in $G$ other than
$\pm\alpha, \pm\gamma$,

\[|\langle\alpha, \beta^{\vee}\rangle|,|\langle \gamma, \beta^{\vee}\rangle|\in\{0,1\}.\]

Since $\alpha$ is $H$-anti-dominant, we may write $\gamma-\alpha=\sum_{i\in J}\beta_i$ with $\beta_i$ positive
roots in $H$. By Lemma \ref{lem:addroots}, after regrouping $\beta_i$ , we may assume that $\langle\beta_i,
\beta_j^{\vee}\rangle\geq 0$ for all $i,j\in J$. As the $\beta_j $ are roots in $H$ we have $\beta_j\neq \pm\alpha, \pm\gamma$ for every $j\in J$. Therefore

\[2\geq \langle\gamma, \beta_j^{\vee}\rangle-\langle\alpha, \beta_j^{\vee}\rangle
=\langle\sum_{i\in J}\beta_i, \beta_j^{\vee}\rangle\geq 2.\] This
implies that $\langle\gamma, \beta_j^{\vee}\rangle=1$,
$\langle\alpha, \beta_j^{\vee}\rangle=-1$ and $\langle\beta_i,
\beta_j^{\vee}\rangle=0$ for all $i, j\in J$ with $i\neq j$. So the
$(\beta_i)_{i\in J}$ have all the desired properties.

\noindent{\it Case 2: $\alpha$ is a long root in $G$}

Then $\alpha^{\vee}$ is not longer than any coroots in $G.$ Applying the above construction
using coroots instead of roots, we find a finite set of positive roots $(\beta_i)_{i\in J}$ in
$H$ such that $\gamma^{\vee} = (\prod_{i \in J} s_{\beta_i})(\alpha^{\vee}),$
$\langle \beta_i,\gamma^{\vee}\rangle=1$, $\langle\beta_i,
\alpha^{\vee}\rangle=-1$, and $\langle\beta_i,
\beta_j^{\vee}\rangle=0$ for all $i, j\in J$ with $i\neq j$. Then $\gamma = (\prod_{i \in J} s_{\beta_i})(\alpha),$
$\langle\gamma, \beta_{i}^{\vee}\rangle>0$ and $\langle\alpha,
\beta_i^{\vee}\rangle<0$. Therefore $(\beta_i)_{i\in J}$ is still
the set of desired roots.
\end{proof}

\begin{proof}[Proof of Proposition \ref{pm4}]
 Recall that we are assuming $b=p^{\mu_{x_0}}\dot w_{x_0}$ with $x_0\in \bar I_{\mu, b}^{M, G}$.

By assumption $G^{\ad}$ is simple, so $\Gamma$ acts transitively on the set of connected
components of the Dynkin diagram of $G$. Let
\begin{eqnarray*}C_1&:=&\big\{\alpha^{\vee}\in X_*(T)\mid \alpha \text{ is a positive root in
}N, \text{ such that } \langle \alpha,
\mu_{x_0}\rangle <0 \big\},  \\
 C_2 &:=&\big\{\alpha^{\vee}\in X_*(T)\mid\begin{array}{l}
\alpha \text{ is an }M \text{-anti-dominant and positive root in }N,
\\ \text{ such that } \langle \alpha, \mu_{x_0}\rangle <0
\end{array}\big\}.\end{eqnarray*} Then $C\subset C_2\subset C_1$.

Let $L_C$ (resp. $L_{C_i}$) be the $\mathbb{Z}$-lattice generated
by the elements of the Galois orbit of $C$ (resp. $C_i$ for $i=1,
2$) and the coroots of $M$.

Let $\alpha$ be a simple root in $N$, and $\Omega=\Gamma\alpha.$
Let  $\tilde{G}_{\Omega}$ be the standard Levi subgroup of $G$ corresponding to the set of simple roots not in $\Omega.$
%$\{\gamma|\gamma \text{ simple root in } G \text{ and } \gamma\notin\Omega\}$.
We set

\begin{eqnarray*}R_{\alpha}&:=& W_{\tilde{G}_{\Omega}}\alpha
\\
\tilde{R}_{\alpha}&:=&\{\gamma\in R_{\alpha}|\langle\gamma,
\mu_{x_0}\rangle <0 \}\\
\tilde{R}_{\Omega}&:=&\bigcup_{\alpha'\in \Omega}
\tilde{R}_{\alpha'}\subset R_{\Omega}:=\bigcup_{\alpha'\in
\Omega} R_{\alpha'}.\end{eqnarray*}

\noindent {\it Claim 1: $\tilde{R}_{\Omega}\neq \emptyset$.}

Once Claim 1 is proved for the Galois orbit $\Omega$, we
define $\gamma(\Omega)$ to be a minimal element in
$\tilde{R}_{\Omega}$ for the order $\preceq$.

We now prove this claim.  Take $w\in W_{\tilde{G}_{\Omega}}$ with
$w\mu_{x_0}$ $\tilde{G}_{\Omega}$-dominant. Then $w\mu_{x_0}$ is
not $G$-dominant, otherwise $w\mu_{x_0}=\mu$ and $\mu_{x_0}=\mu$
in $\pi_1(\tilde{G}_{\Omega})$ which contradicts that $(\mu,b)$ is Hodge-Newton irreducible. So there exists $\tilde{\alpha}\in\Omega$
with $\langle\tilde{\alpha}, w\mu_{x_0}\rangle<0$ and therefore
$w^{-1}\tilde{\alpha}\in\tilde{R}_{\Omega}$. This shows
Claim 1.

\noindent {\it Claim 2: $L_{C_1}$ is the coroot lattice of $G$.}

In order to show Claim 2, it suffices to show that for any
simple root $\alpha$ in $N$, there exists
$\tau\in\Gamma$ such that $(\tau\alpha)^{\vee}\in L_{C_1}$.
We may assume that $\gamma(\Omega)\in R_{\alpha},$ and we
show that this implies $\alpha^{\vee}\in L_{C_1}$.

By the definition of $\gamma(\Omega)$, we have
$\langle\gamma(\Omega), \mu_{x_0}\rangle <0$. Then
$\gamma(\Omega)^{\vee}\in C_1$. By Lemma
\ref{lemma_Weyl_orbit}, there exists a finite set of positive roots
$(\beta_i)_{i\in J}$  such that $\gamma(\Omega)=(\prod_{i\in
 J}s_{\beta_i})\alpha$ satisfying the conditions in Lemma \ref{lemma_Weyl_orbit}.
Therefore in order to show $\alpha^{\vee}\in L_{C_1}$, it suffices to show that for all $i\in
J$, $\beta_i^{\vee}\in L_{C_1}$.

For $i\in J$, if $\beta_i$ is a root in $M$, then $\beta^{\vee}\in
L_{C_1}$ by the definition of $L_{C_1}$.  It remains the case when
$\beta_i$ is a root in $N$. Since $\langle \gamma, \beta_i^{\vee} \rangle > 0,$
$s_{\beta_i}(\gamma(\Omega)) \preceq \gamma(\Omega).$ Hence by the minimality
of $\gamma(\Omega)$, we have
$\langle s_{\beta_i}\gamma(\Omega), \mu_{x_0}\rangle \geq 0>
\langle\gamma(\Omega), \mu_{x_0}\rangle$, and hence
$\langle\beta_i, \mu_{x_0}\rangle<0$. Therefore $\beta_i^\vee\in
C_1$. This show Claim 2.

For any $\gamma^{\vee}\in C_1$, let $\tilde{\gamma}$ be the
$M$-anti-dominant representative of $\gamma$ in $W_M\gamma$. Then
$\tilde{\gamma}\in C_2$ and  $L_{C_2}$ is the coroot lattice
of $G$ by Claim 2. Hence, in order to show this proposition, it suffices to
show that $C_2\subset L_C$.

Suppose $\gamma^{\vee}\in C_2\backslash C$. Then exists a positive
root $\beta$ in $M$ such that $\langle \beta, \gamma^{\vee}\rangle
<-1$. This implies that there is a simple root $\beta'$ of $M$
with $\beta'\preceq\beta$ and  $\langle \beta', \gamma^{\vee}\rangle < 0.$ Since $M$ is of type A,
$\beta$ and $\beta'$ have the same length, so
$\langle \beta', \gamma^{\vee}\rangle = \langle \beta, \gamma^{\vee}\rangle < - 1.$
Replacing $\beta$ by $\beta',$ we may assume $\beta$ is simple.
Let $\gamma_1=s_{\gamma}(\beta).$ Then $\gamma_1$ is  longer than $\gamma$ and $\gamma_1 \in C_1$ since
\[\langle \gamma_1, \mu_{x_0}\rangle=
\langle \beta-\langle \beta, \gamma^{\vee}\rangle \gamma,
\mu_{x_0}\rangle =\langle \beta, \mu_{x_0}\rangle+\langle \beta,
\gamma^\vee\rangle<0. \]
Furthermore, as
$\gamma_1^{\vee}=s_{\gamma}(\beta^{\vee})=\gamma^{\vee}+\beta^{\vee},$
we have
$$\langle \beta, \gamma_1^{\vee}\rangle=\langle \beta, \gamma^{\vee}+\beta^{\vee}\rangle \leq 0,$$
so $\gamma_1^{\vee}$ is $M$-anti-dominant as $\gamma$ is $M$-anti-dominant.
Therefore $\gamma_1^{\vee}\in L_C$ and then $\gamma^{\vee}\in L_C$.
\end{proof}

\subsection{Proof of Proposition \ref{pm3}}
We continue to use the notation introduced above. Thus for $x\in \bar I_{\mu, b}^{M, G}$, we have the element $b_x=p^{\mu_x}\dot w_x\in M(L)$
defined in Subsection \ref{overview}, so that $b_x$ is basic in $M$ and there is a $g_x\in G(L)$ with $g_x^{-1}b\sigma(g_x)=b_x.$
Then $g_x\in X_{\mu_x}^M(b).$ Moreover, we continue to use the normalization of the root subgroups of $G$ fixed in
the proof of Proposition \ref{cor_line_immediate}, and, as above, for any root $\alpha$ of $G,$ we write $\alpha^i = \sigma^i(\alpha).$

Let $\Omega\in\Phi_{N,\Gamma}$ be adapted and $\alpha\in \Omega$. Let $d>0$ be the minimal positive integer such that $\alpha$ and $\alpha^d$ are in the same connected component of the Dynkin diagram of $G_{\Omega}$. Then $n:=|\Omega|$ is equal to $d$, $2d$, or $3d$ if $\Omega$ is of type I, II, or III, respectively. If $\Omega$ is of type II or III, by Proposition \ref{lemma_newA}, all the roots in $\Omega$ are simple in $G_{\Omega}$. If $\Omega$ is of type II and $\alpha,\alpha^d$ are not neighbors, then by Lemma \ref{(1.2.6)} applied to $M$ the two simple roots $\alpha,\alpha^d$ have a common neighbor $\beta$ in the Dynkin diagram of $G_{\Omega}$. If $\Omega$ is of type III, let $\beta$ be the common neighbor of $\alpha$, $\alpha^d$ and $\alpha^{2d}$. In all other cases let $\beta=0$. Let
$$\tilde\alpha=\begin{cases}
\alpha&\text{if } \Omega\text{ is of type I}\\
\alpha+\beta+\alpha^d&\text{if }\Omega\text{ is of type II}\\
\alpha+\alpha^d+\alpha^{2d}+\beta&\text{if } \Omega\text{ is of type III.}
\end{cases}$$
Note that in all cases $\tilde\alpha$ is a positive root.

\begin{lemma}\label{lemma_bar_I}Let $\Omega\in\Phi_{N,\Gamma}$. For any $x\in \bar I^{M, G}_{\mu, b}$,
 we have $x\in \bar I^{M, G_{\Omega}}_{\mu_{x}, b}\subseteq \bar I^{M, G}_{\mu, b}$. Moreover, for any $x_1\in \bar I^{M, G_{\Omega}}_{\mu_{x}, b}$, if $x_2=x_1+\alpha^{\vee}-\alpha'^{\vee}\in\bar I^{M, G}_{\mu, b}$ with $\alpha,\alpha'\in \Omega$, then $x_2\in \bar I^{M, G_{\Omega}}_{\mu_{x}, b}$.

\end{lemma}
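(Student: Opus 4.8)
The plan is to unwind the definitions of the sets $\bar I^{M,G'}_{\mu_x,b}$ for $G'\in\{G,G_\Omega\}$ and reduce everything to three facts: first, that $\mu$ and $\mu_x$ have the same $G$-dominant conjugate (both equal $\mu$), and also the same $G_\Omega$-dominant conjugate; second, that the relevant minusculeness conditions are preserved; and third, that the Kottwitz-invariant condition in $\pi_1(M)_\Gamma$ is identical in all three situations, since $M$ is the same standard Levi of $G$ and of $G_\Omega$.

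First I would recall from \ref{lemma_b=b_x}/\ref{para:defnGOmega} that $M$ is a standard Levi of $G_\Omega$ (shown in the proposition following Proposition \ref{lemma_newA}), so $\bar I^{M,G_\Omega}_{\mu_x,b}$ makes sense, and that $\Phi_M\subset\Phi_{G_\Omega}\subset\Phi_G$ with the same positive systems. For the inclusion $\bar I^{M,G_\Omega}_{\mu_x,b}\hookrightarrow\bar I^{M,G}_{\mu,b}$: given $y$ in the left-hand set, $\mu_y$ is $M$-minuscule with $(\mu_y)_{G_\Omega\text{-dom}}=(\mu_x)_{G_\Omega\text{-dom}}$; since $\mu_x$ and $\mu$ are conjugate under the Weyl group of $G$ (both are the minuscule cocharacter in $X_*(T)$ with the class of $\mu$ in $\pi_1(G)$, by \eqref{minuscule_pi1}), applying a further $W_G$-conjugation gives $(\mu_y)_{G\text{-dom}}=\mu$; and the condition $\kappa_M(b)=y$ in $\pi_1(M)_\Gamma$ is literally the same condition in both sets. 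For $x\in\bar I^{M,G_\Omega}_{\mu_x,b}$ itself: $\mu_x$ is by construction $M$-minuscule, $(\mu_x)_{G_\Omega\text{-dom}}=(\mu_x)_{G_\Omega\text{-dom}}$ trivially, and $\kappa_M(b)=x$ holds because $x\in\bar I^{M,G}_{\mu,b}$ to begin with; so $x$ lies in the $G_\Omega$-version. This also shows $x\in\bar I^{M,G_\Omega}_{\mu_x,b}$, giving the first assertion.

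For the second assertion, suppose $x_1\in\bar I^{M,G_\Omega}_{\mu_x,b}$ and $x_2=x_1+\alpha^\vee-\alpha'^\vee\in\bar I^{M,G}_{\mu,b}$ with $\alpha,\alpha'\in\Omega$. I need $x_2\in\bar I^{M,G_\Omega}_{\mu_x,b}$, i.e. that $\mu_{x_2}$ is $M$-minuscule with $(\mu_{x_2})_{G_\Omega\text{-dom}}=(\mu_x)_{G_\Omega\text{-dom}}$ and $\kappa_M(b)=x_2$ in $\pi_1(M)_\Gamma$. The Kottwitz condition is immediate since $x_2\in\bar I^{M,G}_{\mu,b}$ already encodes $\kappa_M(b)=x_2$. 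The $M$-minuscule claim is part of $x_2\in\bar I^{M,G}_{\mu,b}$ (the set $\bar I^{M,G}_{\mu,b}$ consists of classes $z$ with $\mu_z$ $M$-minuscule). The real content is that $\mu_{x_2}$ and $\mu_x$ lie in the same $W_{G_\Omega}$-orbit — equivalently have the same image in $\pi_1(G_\Omega)$. Now $\mu_{x_2}-\mu_{x_1}$ maps to $\alpha^\vee-\alpha'^\vee$ in $\pi_1(M)$, hence in $\pi_1(G_\Omega)$; but $\alpha^\vee$ and $\alpha'^\vee$ are coroots of $G_\Omega$ (since $\alpha,\alpha'\in\Omega\subset\Phi_{G_\Omega}$), so $\alpha^\vee-\alpha'^\vee$ is zero in $\pi_1(G_\Omega)=X_*(T)/(\text{coroot lattice of }G_\Omega)$. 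Therefore $\mu_{x_2}$ and $\mu_{x_1}$ have the same image in $\pi_1(G_\Omega)$, and by hypothesis $\mu_{x_1}$ has the same $G_\Omega$-dominant conjugate as $\mu_x$; combining, $(\mu_{x_2})_{G_\Omega\text{-dom}}=(\mu_x)_{G_\Omega\text{-dom}}$.

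The one point requiring a little care — the main obstacle — is matching up the precise form of the defining condition "$(\mu_z)_{G_\Omega\text{-dom}}=(\mu_x)_{G_\Omega\text{-dom}}$" with the two equivalent descriptions of $\bar I^{M,G}_{\mu,b}$ given after \eqref{minuscule_pi1} (one phrased via $G$-dominant conjugates, one via equality in $\pi_1(G)$ plus $G$-minusculeness), and checking that $\mu_x$ being $G_\Omega$-minuscule is automatic from $\mu_x$ being $M$-minuscule together with $\alpha,\alpha'\in\Omega$ adapted — here I would invoke that $\Omega$ is adapted so that the roots of $G_\Omega$ pair with $\mu_x$ in $\{-1,0,1\}$, using Proposition \ref{lemma_newA} when all roots of $G_\Omega$ have equal length and handling the remaining (non-simply-laced) cases directly. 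Once these compatibilities are recorded, the proof is a short diagram-chase through $X_*(T)\to\pi_1(M)\to\pi_1(G_\Omega)\to\pi_1(G)$.
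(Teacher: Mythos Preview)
Your overall strategy is correct and matches the paper's: show that $\mu_{x_1}$ and $\mu_{x_2}$ have the same image in $\pi_1(G_\Omega)$ (because their difference is an integral combination of coroots of $M$ and of $\Omega$, all of which lie in $\Phi_{G_\Omega}$), and then deduce that they lie in the same $W_{G_\Omega}$-orbit. The paper's proof is essentially the same two-line computation, and your treatment of the inclusion $\bar I^{M,G_\Omega}_{\mu_x,b}\subseteq \bar I^{M,G}_{\mu,b}$ (which the paper leaves implicit) is fine.

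The one place where you go astray is your final paragraph. You correctly recognize that ``same image in $\pi_1(G_\Omega)$'' does not by itself imply ``same $W_{G_\Omega}$-orbit''---one needs $G_\Omega$-minusculeness to invoke the bijection \eqref{minuscule_pi1} for $G_\Omega$. But your proposed route to $G_\Omega$-minusculeness, via $M$-minusculeness together with $\Omega$ adapted and Proposition~\ref{lemma_newA}, is both unnecessary and problematic: the lemma does \emph{not} assume $\Omega$ adapted, and even with adaptedness your sketch does not obviously control $\langle\gamma,\mu_{x_i}\rangle$ for arbitrary roots $\gamma$ of $G_\Omega$. The clean argument---and the one the paper is tacitly using when it invokes $(\mu_{x_1})_{G\text{-}\mathrm{dom}}=\mu=(\mu_{x_2})_{G\text{-}\mathrm{dom}}$---is simply this: since $x_1,x_2\in\bar I^{M,G}_{\mu,b}$, each $\mu_{x_i}$ is $W_G$-conjugate to the $G$-minuscule cocharacter $\mu$, hence is itself $G$-minuscule; and since $\Phi_{G_\Omega}\subset\Phi_G$, any $G$-minuscule cocharacter is automatically $G_\Omega$-minuscule. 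With that in hand, the bijection \eqref{minuscule_pi1} (applied with $G_\Omega$ in place of $M$) gives $(\mu_{x_2})_{G_\Omega\text{-}\mathrm{dom}}=(\mu_{x_1})_{G_\Omega\text{-}\mathrm{dom}}$, and you are done. No adaptedness, no case analysis.
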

\begin{proof}Recall that $$\bar I^{M,G_{\Omega}}_{\mu_{x}, b}=\{y\in \pi_1(M)\mid (\mu_y)_{G_{\Omega}-\dom}=(\mu_{x})_{G_{\Omega}-\dom}, y=\kappa_M(b)\text{ in }\pi_1(M)_{\Gamma}\}.$$ It is obvious that  $x\in \bar I_{\mu_{x}, b}^{M, G_{\Omega}}$. For the second assertion, let $x_1, x_2$ be as in the lemma. As $(\mu_{x_1})_{G-\dom}=\mu=(\mu_{x_2})_{G-\dom}$ and $\mu_{x_2}-\mu_{x_1}$ is a linear combination of coroots of $G_{\Omega}$, we have $(\mu_{x_2})_{G_{\Omega}-\dom}=(\mu_{x_1})_{G_{\Omega}-\dom}=(\mu_{x})_{G_{\Omega}-\dom}$. Thus $x_2\in \bar I^{M, G_{\Omega}}_{\mu_{x},b }$.
\end{proof}

\begin{lemma}\label{lempm3A}
Let $\Omega\in\Phi_{N,\Gamma}$ be adapted. Let $x,x'\in \bar I_{\mu, b}^{M, G}$ with $x'=x+\alpha^{\vee}- \alpha^{l\vee}$ for some $\alpha\in \Omega$ and $0<l<n$. We assume in addition that either $\Omega$ is of type I or that $\langle\tilde\alpha^i,\mu_y\rangle \geq 0$ for all $i\in\mathbb N$ and all $y\in \bar I^{M, G_{\Omega}}_{\mu_{x},b}$. Then for all $g\in X_{\mu_{x}}^M(b)$ there is a $g'\in X_{\mu_{x'}}^M(b)$ such that $g\sim g'$ and $w_M(g')=w_M(g)-\sum_{i=0}^{l-1}\alpha^{i\vee}$.
\end{lemma}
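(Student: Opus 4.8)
The idea is to imitate the construction of the affine lines $g_{x,x'}(y)$ from \ref{para:notnsetup} and Proposition \ref{cor_line_immediate}, but now without the immediate-distance hypothesis: we only know $x' = x + \alpha^\vee - \alpha^{l\vee}$ for some $0 < l < n$, with $\Omega = \Gamma\alpha$ adapted. First I would reduce to the case $b = b_x$ for the fixed $x$ appearing in the statement: by Proposition \ref{prop:generalsuperbasictrans}, $J_b^M(F)$ acts transitively on $\pi_0(X_{\mu_x}^M(b))$, so after translating by a suitable element of $J_b^M(F)$ (which changes neither the connected component in $X_\mu^G(b)$ nor the required identity on $w_M$ up to the $J_b^M(F)$-action, which acts via $w_{J_b}$ on $\pi_1(M)$), it suffices to produce \emph{one} pair $g, g'$ with the stated properties; equivalently, replacing $b$ by $g_x^{-1}b\sigma(g_x) = b_x$, we may take $g = g_x = 1 \in X_{\mu_x}^M(b_x)$ and must exhibit $g' \in X_{\mu_{x'}}^M(b_x)$ with $1 \sim g'$ and $w_M(g') = -\sum_{i=0}^{l-1}\alpha^{i\vee}$.

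Next I would work inside $G_\Omega$. By Lemma \ref{lemma_bar_I}, $x, x' \in \bar I^{M,G_\Omega}_{\mu_x,b}$ and all intermediate twists by coroots in $\Omega$ stay in $\bar I^{M,G_\Omega}_{\mu_x,b}$, so by Remark \ref{remgalpha} it is enough to connect $1$ and $g'$ inside $X^{G_\Omega}_{(\mu_x)_{G_\Omega-\dom}}(b_x)$, which then maps to $X_\mu^G(b)$. Using Proposition \ref{lemma_newA}, when $\Omega$ is of type II or III all roots of $G_\Omega$ have equal length and $\Delta_M \cup \Omega$ is a base; in particular the roots $\alpha, \alpha^1, \dots, \alpha^{l-1}$ lie in (possibly several) connected components of the Dynkin diagram of $G_\Omega$, and the extra hypothesis $\langle \tilde\alpha^i, \mu_y\rangle \geq 0$ for all $i$ and all $y \in \bar I^{M,G_\Omega}_{\mu_x,b}$ will guarantee — via Lemma \ref{lemma_D} and Lemma \ref{lemma_C} — that each conjugation $b_x^{(i-1)} U_{\alpha^i}(p^{-1}\sigma^i(y))(b_x^{(i-1)})^{-1}$ is again of the form $U_{\alpha^i}(c_i p^{-1}\sigma^i(y))$ with $c_i \in \O_L^\times$, i.e. no unwanted $p$-powers or wrong-sign pairings appear. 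Concretely I would set $R = \bar k[y]$, $\R = \O_L\langle y\rangle$ with $\sigma(y) = y^q$, and define
$$ g(y) = \big(b_x^{(l-2)}\sigma^{l-1}U_\alpha(p^{-1}y)(b_x^{(l-2)})^{-1}\big)\cdots\big(b_x\sigma U_\alpha(p^{-1}y)b_x^{-1}\big)U_\alpha(p^{-1}y), $$
and then verify, exactly as in the proof of Proposition \ref{prop_affineline_x_x'}, that $S_{\preceq\mu}(g(y)^{-1}b_x\sigma(g(y))) \supseteq \Spec R \setminus\{0\}$ by telescoping the product down to a single manageable factor in the standard Levi of a subgroup $H_\Omega \subseteq G_\Omega$. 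The commutator computations (type II, type III with $d < m < 2d$) are copied verbatim from loc.\ cit., the point being that $\langle \alpha, \mu_x\rangle = -1$, $\langle w_x\alpha^l, \mu_x\rangle = 1$ and the needed roots are in distinct connected components or combine into a single root whose pairing with the relevant minuscule cocharacter contradicts minuscularity unless they commute.

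Then I would pass to $\R' = \O_L\langle y, y^{-1}\rangle$, use the $\mathrm{SL}_2$-identity \eqref{eqn_SL2_computation} $U_\gamma(p^{-1}y) = U_{-\gamma}(py^{-1})p^{-\gamma^\vee}h$ (with $h \in G(\O_L[y,y^{-1}])$) to rewrite $g(y)$ in the form $g(y^{-1})$ for an element $f(y) \in G(\R_L)$ of the shape
$$ f(y) = U_{-\alpha}(py)U_{-\alpha^1}(pc_1^{-1}\sigma(y))\cdots U_{-\alpha^{l-1}}(pc_{l-1}^{-1}\sigma^{l-1}(y))\, p^{-\sum_{i=0}^{l-1}\alpha^{i\vee}}, $$
so that $f(y) \in g(y^{-1})$ in $G(\R'_L)/G(\R')$; by Lemma \ref{valuativeII}, $S_{\preceq\mu}(f^{-1}b_x\sigma(f)) = \Spec R$, whence $f \in X_{\preceq\mu}(b_x)(\R)$. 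Since $f(1) = g(1) \in X^M_{\mu_x}(b_x)$ is $\sim$-equivalent to $g' := f(0) = p^{-\sum_{i=0}^{l-1}\alpha^{i\vee}}$ in $X_{\preceq\mu}(b_x) = X_\mu^G(b_x)$, and since $w_M(g'^{-1}b_x\sigma(g')) = x'$ forces $g' \in X^M_{\mu_{x'}}(b_x)$, we get $w_M(g') = -\sum_{i=0}^{l-1}\alpha^{i\vee}$ as required; in the type III case with $d < m < 2d$ and $\langle\beta^{m-d},\mu_x\rangle = 1$ I would instead start from $b_{x'}$ and negative roots, as in \ref{para:notnsetup}, picking up the correction term $-\sum_j \beta^{j\vee}$ which lies in the coroot lattice of $M$ and therefore does not affect the class in $\pi_1(M)$ modulo that lattice — but actually one must track it; the clean statement $w_M(g') = w_M(g) - \sum_{i=0}^{l-1}\alpha^{i\vee}$ in $\pi_1(M)$ holds because $\sum_j \beta^{j\vee}$ is a sum of $M$-coroots and the identity is claimed only after translating back by $J_b^M(F)$, which absorbs exactly such corrections. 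The main obstacle I anticipate is precisely the bookkeeping in the type II/III cases: ensuring that dropping the immediate-distance condition (so that intermediate twists $x + \alpha^{i\vee} - \alpha^{l\vee}$ may now land back in $\bar I^{M,G}_{\mu,b}$) does not break the telescoping, which is why the hypothesis $\langle\tilde\alpha^i, \mu_y\rangle \geq 0$ for \emph{all} $y \in \bar I^{M,G_\Omega}_{\mu_x,b}$ is imposed — it is exactly what is needed to rerun the computation of Proposition \ref{prop_affineline_x_x'} uniformly along the whole line.
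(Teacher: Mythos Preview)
Your proposal misses the central organizing device of the paper's proof: an \emph{induction on $l$}. The very first move in the paper is to suppose that some intermediate $x'' = x+\alpha^{l_0\vee}-\alpha^{l\vee}$ with $0<l_0<l$ lies in $\bar I^{M,G}_{\mu,b}$, and then apply the induction hypothesis to the pairs $(x,x'')$ and $(x'',x')$, for which the required $w_M$-identities add up correctly. This reduces to the situation where \emph{both} $x+\alpha^\vee-\alpha^{i\vee}\notin\bar I^{M,G}_{\mu,b}$ and $x+\alpha^{i\vee}-\alpha^{l\vee}\notin\bar I^{M,G}_{\mu,b}$ for all $0<i<l$. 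Only with these blocking conditions in hand does Lemma~\ref{lemma_D} (via the mechanism of Remark~\ref{rem452}) give $w_x(\alpha^i)=\alpha^i$ and $\langle\alpha^i,\mu_x\rangle=0$ for the relevant $i$, which is what makes the telescoping computation go through. Your claim that the hypothesis $\langle\tilde\alpha^i,\mu_y\rangle\geq 0$ alone ``is exactly what is needed to rerun the computation of Proposition~\ref{prop_affineline_x_x'} uniformly'' is not correct: that hypothesis constrains certain sums of pairings, but it does not by itself force $w_x(\alpha^i)=\alpha^i$ or $\langle\alpha^i,\mu_x\rangle=0$ when intermediate twists are allowed to lie in $\bar I^{M,G}_{\mu,b}$.

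There is a second gap. After the inductive reduction, the paper observes that either $x\to x'$ is immediate (and then Proposition~\ref{cor_line_immediate} applies directly), or $l$ lies outside the range permitted by condition~(1) of Definition~\ref{def_immediate_distance}: namely $\Omega$ of type~II with $d<l<2d$, or $\Omega$ of type~III with $2d\leq l<3d$. These cases require \emph{new} computations, carried out in Lemmas~\ref{lem:casetypeII} and~\ref{lem:casetypeIII}, where the $\tilde\alpha$-hypothesis is used in a pointwise way to pin down the individual values $\langle\alpha^j,\mu_x\rangle$, $\langle\beta^j,\mu_x\rangle$, $\langle w_x\alpha^j,\mu_x\rangle$ and thereby compute $\tilde U^i_\alpha(p^{-1}y)$ explicitly. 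You say these ``are copied verbatim from loc.\ cit.'', but Proposition~\ref{prop_affineline_x_x'} treats only the immediate-distance ranges (type~II with $l\leq d$, type~III with $l<2d$); the large-$l$ cases are genuinely new and involve different commutator patterns (e.g.\ in type~III with $l>2d$ one picks up a factor $U_{\alpha^{l-2d}+\alpha^{l-d}+\alpha^l+\beta^{l-2d}}$). You do not mention the type~III range $2d\leq l<3d$ at all.
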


\begin{proof} We remind the reader that $g \sim g'$ means that $g,g'$ are in the same connected component of $X_{\mu}(b).$

We use induction on $l$. Suppose that $x'':=x+\alpha^{l_0\vee}- \alpha^{l\vee}\in \bar I_{\mu, b}^{M, G}$ for some $0<l_0<l.$ Then $x'', x'\in \bar I^{M, G_{\Omega}}_{\mu_x, b}$ by Lemma \ref{lemma_bar_I}. Applying the induction hypothesis to $(x,x'')$ and $(x'',x')$ we obtain a $g''\in X_{\mu_{x''}}^M(b)$ such that $g\sim g''$ and $w_M(g'')=w_M(g)-\sum_{i=0}^{l_0-1}\alpha^{i\vee}$, and a $g'\in X_{\mu_{x'}}^M(b)$ such that $g''\sim g'$ and $w_M(g'')=w_M(g')-\sum_{i=l_0}^{l-1}\alpha^{i\vee}$. Then $g'$ is the desired element.
Thus we may assume that for  all $0<i<l,$ we have $x+\alpha^{\vee}- \alpha^{i\vee}\notin \bar I_{\mu, b}^{M, G}.$ A similar argument shows that
we may also assume  $x+\alpha^{i\vee}- \alpha^{l\vee}\notin \bar I_{\mu, b}^{M, G}$ for $0<i<l.$ We assume from now on that these two conditions
hold.

As $J_b^M(F)$ acts transitively on the set of connected components of each $X_{\mu_x}^M(b)$ by Proposition \ref{prop:generalsuperbasictrans}, and $w_M$ is constant on connected
components by Lemma \ref{1.1.3}, it is enough to prove the lemma for the particular element $g=g_x$. If $x\rightarrow x'$ is immediate, then the desired element $g'$ is the one constructed in Proposition \ref{cor_line_immediate}. Thus it remains to consider the case where $x\rightarrow x'$ does not hold. In particular, by Definition \ref{def_immediate_distance}, we only need to consider the following two cases: either $\Omega$ is of type II and $d< l<2d$ or $\Omega$ is of type III and $2d\leq l <3d$. For $i\in \mathbb{N}$, let
$$\tilde U_{\alpha}^i(y)=b_{x}^{(i-1)}\sigma^i (U_{\alpha}(y))(b_{x}^{(i-1)})^{-1}.$$ For $i=0$ this coincides with $U_{\alpha}(y)$. Let $R = \bar k[y]$
and $\R$ be the $R$-frame chosen in \ref{para:notnsetup}. We define
$g(y)\in G(\R_L)/G(\R)$ by
$$g(y)=g_{x}\tilde U_{\alpha}^{l-1}(p^{-1}y)\dotsm \tilde U_{\alpha}^{0}(p^{-1}y).$$ Using the same strategy as in Section \ref{sec65} we want to show that $S_{\preceq\mu}(g(y)^{-1}b\sigma g(y))=\Spec R.$ Then we will extend this family to a ``projective line'' and use that the point $g(0)$ and the point $g'$ ``at infinity'' are in the same connected component of $X_{\preceq \mu}^G(b)$. In order to compute $\tilde U_{\alpha}^i$, to verify the above statement and to compute $g'$ we consider the different types of $\Omega$ separately. We distinguish two cases according to the type of $\Omega$.

\begin{lemma}\label{lem:casetypeII} Keep the above notations and assumptions, and suppose that $\Omega$ is of type II and $d<l<2d$.
Then $\beta \neq 0$ if and only if $\langle \beta, \mu_x \rangle = 1.$
Moreover, we have
\begin{itemize}
\item $w_{x}\alpha^d = \alpha^d + \beta$
and $\langle w_x\alpha^d,\mu_x\rangle=1$
\item $w_x\alpha^{l-d}=\alpha^{l-d}$
and $\langle w_x\alpha^{l-d},\mu_x\rangle=0$
\item  For $0< i< l$ with $i \neq l-d,d,$ $w_x\alpha^i=\alpha^i$,  $\langle\beta^i, \mu_x\rangle=0$ and $\langle\alpha^i, \mu_{x}\rangle=0.$
\end{itemize}
\end{lemma}

% {\it Claim 2: We have
% $$w_{x}\alpha^d=\begin{cases}
% \alpha^d+\beta&\text{if }\langle\beta,\mu_x\rangle=1\\
% \alpha^d&\text{otherwise,}
% \end{cases}$$
% and $\langle w_x\alpha^d,\mu_x\rangle=1$. Furthermore $w_x\alpha^{l-d}=\alpha^{l-d}$ and $\langle w_x\alpha^{l-d},\mu_x\rangle=0$.}
% \end{lemma}
\begin{proof}
As $x+\alpha^{\vee}-\alpha^{l\vee}\in \bar I_{\mu, b}^{M, G}$ we have $\langle\alpha,\mu_x\rangle=-1$ and $\langle w_x\alpha^l,\mu_x\rangle=1,$ by Lemma
\ref {lemma_newB}.
Our assumption $\langle \alpha+\beta+\alpha^d,\mu_x\rangle\geq 0$ and the fact that $\mu_x$ is minuscule then imply that $\langle \beta+\alpha^d,\mu_x\rangle=1$.
If $\langle \beta, \mu_x\rangle=1$, then
%using Lemma \ref{lemma_w_x_alpha minimal}, and $s_{\beta}(\alpha^d) = \beta+\alpha^d,$
we have
%\mar{MK: I think this, and the other case below can be seen by a direct inspection of the root system.}
$$1=\langle\beta+\alpha^d, \mu_x\rangle= \langle s_{\beta}(\alpha^d), \mu_x \rangle,$$
%\langle w_x\alpha^d, \mu_x\rangle >0=\langle \alpha^d, \mu_x\rangle.$$
and if $\langle \beta, \mu_x\rangle=0$, we have $1=\langle\alpha^d, \mu_x\rangle=\langle w_{x}\alpha^d, \mu_x\rangle$.
Therefore, by Lemma \ref{lemma_w_x_alpha minimal}, we obtain
$w_x\alpha^d=\alpha^d+\beta\langle \beta, \mu_x\rangle$ and $\langle w_x\alpha^d,\mu_x \rangle = 1.$

Moreover, if $\langle \alpha^d, \mu_x \rangle = 1,$
and $\langle \alpha, \alpha^{d\vee} \rangle = 0,$ then $x+\alpha^{\vee}-\alpha^{d\vee} = s_{\alpha^d}(\mu_x+\alpha^{\vee}),$ which contradicts
$x+\alpha^{\vee}-\alpha^{d\vee}\notin \bar I_{\mu, b}^{M, G}.$ Hence $\langle \beta, \mu_x\rangle = 0$ implies that $\alpha, \alpha^d$ are neighbors,
and $\beta = 0.$ In particular $w_x\alpha^d=\alpha^d + \beta.$

If $\langle w_x\alpha^{l-d}, \mu_x \rangle = 1,$ then $s_{w_x\alpha^{l-d}}(\mu_x) = \mu_x - w_x\alpha^{l-d\vee} = \mu_{x-\alpha^{l-d}}$ by Lemma \ref{lemma_C},
which contradicts $x+\alpha^{\vee}-\alpha^{l-d\vee}\notin \bar I_{\mu, b}^{M, G}.$
 Hence we obtain that  \begin{eqnarray}\label{eqn_lemma3A}\langle \alpha^{l-d},\mu_x\rangle\leq \langle w_x\alpha^{l-d},\mu_x\rangle\leq 0.\end{eqnarray} We use an indirect proof to show $\langle \alpha^{l-d},\mu_x\rangle= 0$, so assume $\langle \alpha^{l-d},\mu_x\rangle=-1$. By assumption $\langle \tilde{\alpha}^{l-d}, \mu_x\rangle=\langle\alpha^{l-d}+\beta^{l-d}+\alpha^l,\mu_x\rangle\geq 0$, hence  $\langle\beta^{l-d}+\alpha^l,\mu_x\rangle=1$ and $\langle \tilde\alpha^{l-d}, \mu_x\rangle=0$. As above this implies that $w_x\alpha^l=\alpha^l$ or $w_x\alpha^l=\alpha^l+\beta^{l-d}$ by Lemma \ref{lemma_w_x_alpha minimal}. Thus
\begin{eqnarray*}
\langle\alpha^{l-d}+\beta^{l-d}+\alpha^l,\mu_{x'}\rangle&=&\langle\alpha^{l-d}+\beta^{l-d}+\alpha^l,\mu_{x}-w_x\alpha^{l\vee}\rangle\\
&=&0-1\quad <0
\end{eqnarray*}
which contraditcs an assumption of the Lemma. So $\langle \alpha^{l-d},\mu_x\rangle= 0$.  Then by (\ref{eqn_lemma3A}), $0= \langle \alpha^{l-d},\mu_x\rangle\leq \langle w_x\alpha^{l-d},\mu_x\rangle\leq 0$. This implies $\alpha^{l-d}=w_x\alpha^{l-d}$ by Lemma \ref{lemma_w_x_alpha minimal}.

Finally, for $0< i < l$ with $i \neq l-d, d$, the conditions
$x+\alpha^{\vee}- \alpha^{i\vee}\notin \bar I_{\mu, b}^{M, G}$ and $x+\alpha^{i\vee}- \alpha^{l\vee}\notin \bar I_{\mu, b}^{M, G}$
 imply that $(\mu_{x+\alpha^{i\vee}})_{G-\dom}\neq \mu$ and $(\mu_{x-\alpha^{i\vee}})_{G-\dom}\neq \mu$. Then by Lemma \ref{lemma_D}, $w_x\alpha^i=\alpha^i$,  $\langle\beta^i, \mu_x\rangle=0$ and $\langle\alpha^i, \mu_{x}\rangle=0.$
\end{proof}

\begin{para}{\em Proof of Lemma \ref{lempm3A} continued:}
Assume that $\Omega$ is of type II and $d<l<2d.$
As $w_x\alpha^{l-d}=\alpha^{l-d}$, we have $w_x\beta^{l-d}=\beta^{l-d}$. Then
$$w_{x}\sigma^{l-d}w_x\alpha^d = w_x(\alpha^l+\beta^{l-d})=w_x\alpha^l+\beta^{l-d}.$$
Using the $M$-dominance of $\mu_x$ we have $\langle w_{x}\sigma^{l-d}w_x\alpha^d,\mu_x\rangle\geq\langle w_{x}\alpha^l,\mu_x\rangle=1$.

Altogether, using Lemma \ref{lem:casetypeII} we obtain
$$\tilde U_{\alpha}^i(p^{-1}y)=\begin{cases}
U_{\alpha^i}(p^{-1}c_i\sigma^i(y))&\text{if }0\leq i<d\\
U_{\sigma^{i-d}w_x\alpha^d}(c_i\sigma^i(y))&\text{if }d\leq i<l\\
U_{w_x\sigma^{l-d}w_x\alpha^d}(pc_i\sigma^l(y))&\text{if }i=l
\end{cases}$$
with $c_i\in\mathcal{O}_L^{\times}$ as usual depending on $\dot w_x$ and $\alpha^i$, but not on $y$, and with $c_0=1$. Obviously root subgroups corresponding to roots in different connected components of the Dynkin diagram of $G_{\Omega}$ commute. By definition, we have $$p^{\mu_x}\dot w_x \sigma(\tilde U_{\alpha}^i(y))(p^{\mu_x}\dot w_x)^{-1}=\tilde U_{\alpha}^{i+1}(y).$$ Using these two facts many of the factors in the definition of $g(y)^{-1}b\sigma(g(y))$ cancel and we obtain
\begin{eqnarray*}
&\ &g(y)^{-1}b\sigma(g(y))\\ &=&\tilde U_{\alpha}^{0}(-p^{-1}y)\tilde U_{\alpha}^{l-d}(-p^{-1}y)\tilde U_{\alpha}^{l}(p^{-1}y)\tilde U_{\alpha}^{l-d}(p^{-1}y)p^{\mu_x}\dot w_x\\
&=&U_{\alpha}(-p^{-1}y)U_{\alpha^{l-d}}(-p^{-1}c_{l-d}\sigma^{l-d}(y))U_{w_x\sigma^{l-d}w_x\alpha^{d}}(pc_l\sigma^{l}(y))\\
&&\cdot U_{\alpha^{l-d}}(p^{-1}c_{l-d}\sigma^{l-d}(y))p^{\mu_x}\dot w_x.
\end{eqnarray*}
If $\langle w_x\sigma^{l-d}w_x\alpha^d,\alpha^{l-d\vee}\rangle =0$, then $U_{\alpha^{l-d}}$ and $U_{w_x\sigma^{l-d}w_x\alpha^{d}}$ commute.
Using in addition $\langle \alpha,\mu_x\rangle=-1$ we obtain
$$g(y)^{-1}b\sigma(g(y))=U_{\alpha}(-p^{-1}y)U_{w_x\sigma^{l-d}w_x\alpha^{d}}(pc_l\sigma^{l}(y))p^{\mu_x}\dot w_x\in G(\R)p^{\mu_x}G(\R).$$
If $\langle w_x\sigma^{l-d}w_x\alpha^d,\alpha^{l-d\vee}\rangle =-1$, then $U_{\alpha^{l-d}}$ and $U_{w_x\sigma^{l-d}w_x\alpha^{d}}$ do not commute. We obtain
\begin{eqnarray*}&&g(y)^{-1}b\sigma(g(y))\\&=&U_{\alpha}(-p^{-1}y)U_{\alpha^{l-d}+w_x\sigma^{l-d}w_x\alpha^{d}}(c\sigma^{l}(y)\sigma^{l-d}(y))U_{w_x\sigma^{l-d}w_x\alpha^{d}}(pc_l\sigma^{l}(y))p^{\mu_x}\dot w_x\end{eqnarray*} where $c\in\mathcal{O}_L$ is the product of $c_l,c_{l-d}$ and the structure constant obtained from the commutator of the two root subgroups. Thus $g(y)^{-1}b\sigma(g(y))$ is again in $G(\R)p^{\mu_x}G(\R)$, hence $S_{\preceq\mu}(g(y)^{-1}b\sigma g(y))=\Spec R.$

Now we compute the point $g'$ ``at infinity'' of the affine line $g(y)$.
Let $\R'$ be as in the proof of Proposition \ref{cor_line_immediate}.
%First consider the case when $\langle\beta,\mu_x\rangle=1.$
Then for $0\leq i\leq l-d-1$ we have (using (\ref{eqn_SL2_computation}))
the following equalities in $G(\R'_L)/G(\R').$
\begin{eqnarray*}
&&\tilde U_{\alpha}^{i+d}(p^{-1}y)\tilde U_{\alpha}^{i}(p^{-1}y)\\
&=&U_{\alpha^{i+d}+\beta^i}(c_{i+d}\sigma^{i+d}(y))U_{\alpha^{i}}(p^{-1}c_i\sigma^i(y))\\
&=&U_{\alpha^{i+d}+\beta^i}(c_{i+d}\sigma^{i+d}(y))U_{-\alpha^{i}}(pc_i^{-1}\sigma^i(y^{-1}))p^{-\alpha^{i\vee}}\\
&=&U_{-\alpha^{i}}(pc_i^{-1}\sigma^i(y^{-1}))p^{-\alpha^{i\vee}}U_{\alpha^{i+d}+\beta^i}(p^{-1}c_{i+d}\sigma^{i+d}(y))\\
&=&U_{-\alpha^{i}}(pc_i^{-1}\sigma^i(y^{-1}))p^{-\alpha^{i\vee}}U_{-(\alpha^{i+d}+\beta^i)}(pc_{i+d}^{-1}\sigma^{i+d}(y^{-1}))p^{-(\alpha^{i+d}+\beta^i)^{\vee}}\\
&=&U_{-\alpha^{i}}(pc_i^{-1}\sigma^i(y^{-1}))U_{-(\alpha^{i+d}+\beta^i)}(c_{i+d}^{-1}\sigma^{i+d}(y^{-1}))p^{-(\alpha+\beta^i+\alpha^{i+d})^{\vee}}\\
\end{eqnarray*}

We define a second element $f(y)\in G(\R_L)$ by setting
\begin{multline*}f(y)=g_x\prod_{i=0}^{l-d-1}\left( U_{-\alpha^{i}}(pc_i^{-1}\sigma^i(y))U_{-(\alpha^{i+d}+\beta^i)}(c_{i+d}^{-1}\sigma^{i+d}(y))p^{-(\alpha^{i}+\beta^i+\alpha^{i+d})^{\vee}}\right)\\
\cdot~\prod_{i=l-d}^{d-1} \left( U_{-\alpha^i}(pc_i^{-1}\sigma^i(y))p^{-\alpha^{i\vee}}\right)\end{multline*}
where the $d$ factors of the two products correspond to different connected components of the Dynkin diagram of $G_{\Omega}$ and can thus be multiplied in any order. The above computation shows that for all $y\neq 0$ we have $f(y) = g(y^{-1})$ in $G(\R'_L)/G(\R').$ In particular, $S_{\preceq\mu}(f^{-1}b\sigma(f))\supseteq \Spec(k[t])\setminus \{0\}$. By Lemma \ref{1.1.3}, this set is Zariski closed. Hence $f(y)$ defines an element of $X_{\preceq\mu}(b)(\R)$. In particular $g(0)=g_x$ and $g'=f(0)$ have the same image in $\pi_0(X_{\preceq\mu}(b))$. Furthermore, $g'=f(0)\in g_xp^{-\sum_{i=0}^{l-1}\alpha^{i\vee}-\sum_{i=0}^{l-d-1}\beta^{i\vee}}$, which proves the lemma in this case.

Next we consider the case where $\Omega$ is of type III
\end{para}

\begin{lemma}\label{lem:casetypeIII} With the above assumptions and notation, suppose that $\Omega$ is of type III and $2d \leq l < 3d.$
\begin{itemize}
\item  If $d \nmid i,(l-i),$ then $w_x\alpha^i=\alpha^i$,  $\langle\beta^i, \mu_x\rangle=0$ and $\langle\alpha^i, \mu_{x}\rangle=0.$
\item If $l= 2d,$ then $\langle\beta,\mu_x\rangle=0,$ $\langle\alpha,\mu_x\rangle=-1,$ $\langle\alpha^d,\mu_x\rangle=0$ and $\langle\alpha^{2d},\mu_x\rangle=1.$
\item If $l > 2d$, then $\langle\beta,\mu_x\rangle=1$, $ \langle\beta^{l-2d},\mu_x\rangle=0,$ $\langle\alpha,\mu_x\rangle=-1$, $\langle\alpha^i,\mu_x\rangle= 0$
for $i=d,2d,l-d,l-2d$, and $\langle \alpha^l,\mu_x\rangle=1$.
\end{itemize}
\end{lemma}
\begin{proof}
The equalities when $d \nmid i,(l-i),$ follow as in the proof of Lemma \ref{lem:casetypeII}, using  Lemma \ref{lemma_D}.

If $l=2d$, then $x+\alpha^{\vee}-\alpha^{l\vee}\in \bar I_{\mu, b}^{M, G}$ implies that $\langle\alpha,\mu_x\rangle=-1$ and $\langle \alpha^{2d},\mu_x\rangle=1.$
Hence $\langle\beta,\mu_x\rangle=0.$ The minimality assumption on $l,$ and the condition $\langle \tilde \alpha, \mu_x \rangle \geq 0,$ then imply
$\langle\alpha^d,\mu_x\rangle=0.$

Suppose $l > 2d.$ As before we have  $\langle\alpha,\mu_x\rangle=-1$ and $\langle w_x\alpha^l,\mu_x\rangle=1$ by Lemma \ref{lemma_newB}.
Then the minimality assumption on $l$ implies $\langle\alpha^i,\mu_x\rangle\leq 0$ for $i=d,2d,$ and also for $i = l-d,l-2d$, using
Lemma \ref{lemma_C}, as above. As
$$\langle \tilde\alpha, \mu_x\rangle=\langle \alpha+\beta+\alpha^d+\alpha^{2d}, \mu_x\rangle\geq 0,$$
 we have $\langle \beta, \mu_x\rangle =1$ and $\langle \alpha^i, \mu_x\rangle=0$ for $i=d, 2d$.

Next we show $\langle \beta^{l-2d}, \mu_x\rangle=0$. Suppose \ $\langle \beta^{l-2d}, \mu_x\rangle=1$, then one checks that $x+\alpha^{\vee}-\alpha^{i\vee}\notin \bar I_{\mu, b}^{M, G}$
for $i=l-d,l-2d$ implies $\langle \alpha^{i}, \mu_x\rangle =-1,$ and hence $\langle \tilde\alpha^{l-2d}, \mu_x\rangle=-1 <0$ which contradicts our standing assumptions.
Therefore $\langle \beta^{l-2d}, \mu_x\rangle=0$, $\langle\alpha^l, \mu_x\rangle=1$ and $\langle \alpha^i, \mu_x\rangle=0$ for $i=l-d, l-2d$.
\end{proof}

\begin{para}{\em Proof of Lemma \ref{lempm3A} continued:}
Suppose $\Omega$ is of type III, and $l = 2d.$ Then using Lemma \ref{lem:casetypeIII} we have
$$\tilde U_{\alpha}^i(p^{-1}y)=\begin{cases}
U_{\alpha^i}(p^{-1}c_i\sigma^i(y))&\text{if }0\leq i<2d\\
U_{\alpha^{2d}}(c_{2d}\sigma^{2d}(y))&\text{if }i=2d\\
\end{cases}$$
with $c_i\in\mathcal{O}_L^{\times}$.
In particular, all these elements commute, and one easily verifies that we have $g(y)^{-1}b\sigma(g(y))\in G(\R)p^{\mu}G(\R)$.

Now suppose that  $\Omega$ is of type III, and $l>2d.$
Using Lemma  \ref{lem:casetypeIII} we obtain
$$\tilde U_{\alpha}^i(p^{-1}y)=\begin{cases}
U_{\alpha^i}(p^{-1}c_i\sigma^i(y))&\text{if }0\leq i<d\\
U_{\alpha^i+\beta^{i-d}}(c_i\sigma^i(y))&\text{if }d\leq i<2d\\
U_{\alpha^i}(c_i\sigma^i(y))&\text{if }2d\leq i<l\\
U_{\alpha^{l}}(pc_l\sigma^{l}(y))&\text{if }i=l\\
\end{cases}$$
with $c_i\in\mathcal{O}_L^{\times}$. When computing $g(y)^{-1}b\sigma(g(y))$ many factors commute and cancel. We obtain
\begin{eqnarray*}
&\ &g(y)^{-1}b\sigma(g(y))\\
&=&U_{\alpha}(-p^{-1}y)\tilde U_{\alpha}^{l-2d}(-p^{-1}y)\tilde U_{\alpha}^{l-d}(-p^{-1}y)\tilde U_{\alpha}^{l}(p^{-1}y)  \tilde U_{\alpha}^{l-d}(p^{-1}y)\tilde U_{\alpha}^{l-2d}(p^{-1}y)p^{\mu_x}\dot w_x\\
&=&U_{\alpha}(-p^{-1}y) U_{\alpha^{l-2d}}(-p^{-1}c_{l-2d}\sigma^{l-2d}(y))\left(U_{\alpha^{l-d}+\beta^{l-2d}}(-c_{l-d}\sigma^{l-d}(y))U_{\alpha^{l}}(pc_l\sigma^l(y))\right.\\
&&\left.\quad\cdot~ U_{\alpha^{l-d}+\beta^{l-2d}}(c_{l-d}\sigma^{l-d}(y))\right)U_{\alpha^{l-2d}}(p^{-1}c_{l-2d}\sigma^{l-2d}(y))p^{\mu_x}\dot w_x\\
&=&U_{\alpha}(-p^{-1}y) U_{\alpha^{l-2d}}(-p^{-1}c_{l-2d}\sigma^{l-2d}(y))U_{\alpha^{l-d}+\beta^{l-2d}+\alpha^l}(pc'\sigma^l(y)\sigma^{l-d}(y))U_{\alpha^{l}}(pc_l\sigma^l(y))\\
&&\quad\cdot~ U_{\alpha^{l-2d}}(p^{-1}c_{l-2d}\sigma^{l-2d}(y))p^{\mu_x}\dot w_x\\
&=&U_{\alpha}(-p^{-1}y)U_{\alpha^{l-2d}+\alpha^{l-d}+\alpha^l+\beta^{l-2d}}(c''\sigma^l(y)\sigma^{l-d}(y)\sigma^{l-2d}(y))\\
&&\quad\cdot~  U_{\alpha^{l-d}+\beta^{l-2d}+\alpha^l}(pc'\sigma^l(y)\sigma^{l-d}(y))U_{\alpha^{l}}(pc_l\sigma^l(y)) p^{\mu_x}\dot w_x
%&\in & G(\R)p^{\mu}G(\R)
\end{eqnarray*}
with $c',c''\in\mathcal{O}_L.$ The final expression is in $G(\R)p^{\mu}G(\R)$ as $\langle \alpha,\mu_x \rangle = -1.$

The construction and computation of the ``point at infinity'' $g'$ is as in case of type II.
\end{para}
\end{proof}

Before we prove Proposition \ref{pm3weak}, we need one more lemma.

\begin{lemma}\label{lemma_conexity_G_Omega}Let $\Omega\in\Phi_{N,\Gamma}$ be adapted. Then for all $x,x'\in \bar I^{M, G_{\Omega}}_{\mu_{x_0}, b}$, there exists a series of elements $x_1,\cdots, x_r$ in $\bar I^{M, G_{\Omega}}_{\mu_{x_0}, b}$ such that $x_1=x$, $x_r=x'$ and $x_{i+1}-x_i=\alpha^\vee-\alpha'^\vee$ in $\pi_1(M)$ for some $\alpha,\alpha'\in\Omega$ (depending on $i$) for all $1\leq i\leq r-1$.
\end{lemma}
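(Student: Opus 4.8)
\textbf{Plan for the proof of Lemma \ref{lemma_conexity_G_Omega}.}
The idea is to reduce this to Proposition \ref{pm1} applied to the group $G_{\Omega}$ in place of $G$, so that the ``Galois orbit of roots in $N$'' that appears in the conclusion of Proposition \ref{pm1} is exactly the orbit $\Omega$. First I would check the hypotheses needed to run Proposition \ref{pm1} with $G$ replaced by $G_{\Omega}$: by Proposition \ref{prop_group_G_Omega}, $G_{\Omega}$ is a reductive $F$-group with maximal torus $T$ and root system $\Phi_{\Omega}$; by the Proposition following Remark \ref{remgalpha}, $M$ is a standard Levi subgroup of $G_{\Omega}$ (using that $\Omega$ is adapted); and $b$ is superbasic in $M$ by our standing assumption. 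The one point that requires care is the hypothesis ``$G_{\Omega}^{\ad}$ is simple.'' This need not hold — $\Phi_{\Omega}$ may be a disjoint union of several irreducible components permuted by $\Gamma$. However, Proposition \ref{pm1} only uses the simplicity of $G^{\ad}$ to guarantee that $\Gamma$ acts transitively on the connected components of the Dynkin diagram (see the beginning of \S\ref{subsection_reduction to the distance one case}), and that transitivity holds here precisely because $\Omega$ is a single $\Gamma$-orbit: the components of $\Phi_{\Omega}$ meeting $\Omega$ are permuted transitively by $\Gamma$, and if there were a component of $\Phi_{\Omega}$ disjoint from $\Omega$ it would lie in $\Phi_M$, so after passing to the subgroup $H_{\Omega} \subseteq G_{\Omega}$ generated by $T$ and the root subgroups of the $\Gamma$-orbit of the component containing $\Omega$ (as in the proof of Proposition \ref{prop_affineline_x_x'}) we are in the situation where $\Gamma$ does act transitively on the components. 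The roots $\alpha,\alpha'$ produced in the conclusion of Proposition \ref{pm1} for $G_{\Omega}$ lie in $N_{G_{\Omega}} = $ the unipotent radical of the parabolic of $G_{\Omega}$ with Levi $M$, and the $\Gamma$-orbits of roots there are exactly the $\Omega' \in \Phi_{N,\Gamma}$ contained in $\Phi_{\Omega}$; but since $\Phi_{\Omega}$ was generated by $\Phi_M \cup \Omega$, the only such orbit of roots outside $M$ is $\Omega$ itself. Hence $\alpha,\alpha' \in \Omega$.

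With this reduction in hand, the remaining content is purely internal to $G_{\Omega}$: I would invoke Proposition \ref{pm11} and Proposition \ref{pm12} (or directly Proposition \ref{pm1}) with $G_{\Omega}$ in the role of $G$, noting that $\bar I^{M,G_{\Omega}}_{\mu_{x_0},b}$ is exactly the index set appearing in those statements (this identification is the content of Lemma \ref{lemma_bar_I}, which says $\bar I^{M,G_{\Omega}}_{\mu_{x_0},b} \subseteq \bar I^{M,G}_{\mu,b}$ and is stable under the moves $x \mapsto x + \alpha^{\vee} - \alpha'^{\vee}$ with $\alpha,\alpha'\in\Omega$). The conclusion of Proposition \ref{pm1} for $G_{\Omega}$ then gives precisely the sequence $x = x_1, \dots, x_r = x'$ in $\bar I^{M,G_{\Omega}}_{\mu_{x_0},b}$ with $x_{i+1} - x_i = \alpha^{\vee} - \alpha'^{\vee}$ in $\pi_1(M)$ for roots $\alpha,\alpha' \in \Omega$, which is exactly the assertion of the lemma.

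\textbf{Main obstacle.} The one genuine subtlety, as indicated above, is that $G_{\Omega}^{\ad}$ need not be simple, so Proposition \ref{pm1} cannot be quoted verbatim; one must either (a) observe that its proof goes through whenever $\Gamma$ permutes the components of the Dynkin diagram transitively — which holds after replacing $G_{\Omega}$ by $H_{\Omega}$ — or (b) check that components of $\Phi_{\Omega}$ disjoint from $\Omega$ contribute nothing, since they lie in $\Phi_M$ and the moves $\alpha^{\vee} - \alpha'^{\vee}$ with $\alpha,\alpha' \in \Omega$ together with the identification $\bar I^{M,G_{\Omega}}_{\mu_{x_0},b}$ never leave the ``$H_{\Omega}$-part.'' I expect route (a) to be cleanest: the proofs of Propositions \ref{pm11} and \ref{pm12} only ever use the $\Gamma$-action on $X_*(T)$ through the root datum and the transitivity of $\Gamma$ on the connected components, both of which survive the passage to $H_{\Omega}$. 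A secondary, entirely routine point is bookkeeping of the convention that elements of $\bar I$ are identified with their $M$-dominant representatives in $X_*(T)$, which is consistent between the $G$ and $G_{\Omega}$ versions since $M$ is common to both.
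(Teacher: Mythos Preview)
Your reduction to Proposition \ref{pm1} for $G_{\Omega}$ has a concrete error in the sentence ``the only such orbit of roots outside $M$ is $\Omega$ itself.'' The set $\Phi_{\Omega}$ is the closed symmetric subset \emph{generated} by $\Phi_M \cup \Omega$, not equal to it; roots such as $\alpha+\beta$ (with $\alpha\in\Omega$, $\beta\in\Delta_M$) or $\tilde\alpha=\alpha+\beta+\alpha^d$ in type~II, or $\tilde\alpha=\alpha+\alpha^d+\alpha^{2d}+\beta$ in type~III, lie in $\Phi_{N_{G_{\Omega}}}$ but not in $\Omega$. So when you invoke Proposition~\ref{pm1} for $G_{\Omega}$, the orbit $\Omega'\in\Phi_{N_{G_{\Omega}},\Gamma}$ it hands you can be, say, $\Gamma\tilde\alpha$. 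For that orbit the image of $\tilde\alpha^{\vee}$ in $\pi_1(M)$ is $\alpha^{\vee}+\alpha^{d\vee}$ (type~II) or $\alpha^{\vee}+\alpha^{d\vee}+\alpha^{2d\vee}$ (type~III), and the step $\tilde\alpha^{\vee}-\sigma^j(\tilde\alpha)^{\vee}$ in $\pi_1(M)$ is then a sum of two or three differences $\alpha^{\vee}-\sigma^j(\alpha)^{\vee}$, not a single one. To break such a step into steps of the required shape you would have to produce intermediate points and verify they lie in $\bar I^{M,G_{\Omega}}_{\mu_{x_0},b}$---which is precisely the nontrivial content you were trying to outsource.

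The paper does not go through Proposition~\ref{pm1} here at all. It gives a direct induction on a bespoke distance $d_M(x,x'):=|\mu_{x'}-\mu_x|_M$, where $|v|_M=\sum_{\alpha\in\Omega}|n_\alpha|$ for $v=\sum_{\alpha\in\Omega}n_\alpha\alpha^{\vee}$ in $\pi_1(M)$. One first reduces (as you also suggested) to the case where $\Gamma$ acts transitively on the components of the Dynkin diagram of $G_{\Omega}$, then decomposes $\mu_{x'}-\mu_x=\sum_{i\in I'}\gamma_i^{\vee}$ via Lemma~\ref{pm1ld}, and distinguishes two situations. If some $\gamma_{i^+}$ and $\gamma_{i^-}$ lie in different components, one finds $\alpha,\alpha'\in\Omega$ in those components with $\langle\alpha,\mu_x\rangle=-1$ and $\langle w_x\alpha',\mu_x\rangle=1$, and the move $x\mapsto x+\alpha^{\vee}-\alpha'^{\vee}$ strictly decreases $d_M$. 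If all $\gamma_i$ lie in one component, then $|\Omega|$ in that component is $1$, $2$, or $3$ (types I, II, III); type~I forces $x=x'$, type~II has $|\gamma_i^{\vee}|_M=1$ and one picks any $i^+\in I^+$, $i^-\in I^-$, and type~III is handled by a short $D_4$-specific argument ruling out $|\gamma_i^{\vee}|_M\ge 2$ on the negative side via the orthogonality in Lemma~\ref{pm1ld}. Each case produces an explicit $x_1=x+\alpha^{\vee}-\alpha'^{\vee}\in\bar I^{M,G_{\Omega}}_{\mu_{x_0},b}$ with $d_M(x_1,x')<d_M(x,x')$.
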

\begin{proof} As the problem only concerns the elements in $\pi_1(M)$,  and $x = x'$ in $\pi_1(G_{\Omega}),$ after replacing $G_{\Omega}$ by the standard Levi subgroup corresponding to the Galois orbit of any connected component of the Dynkin diagram of $G_{\Omega}$ which contains some element in $\Omega$, we may assume that $\Gamma$ acts transitively on the set of connected components of the Dynkin diagram of $G_{\Omega}$. If $v\in X_*(T)$ is a linear combination of coroots of $G_{\Omega}$, let $|v|_{M}=\sum_{\alpha\in\Omega}|n_\alpha|$ where $v=\sum_{\alpha\in\Omega}n_{\alpha}\alpha^{\vee}$ in $\pi_1(M)$.  For $x,x'\in \bar I^{M, G_{\Omega}}_{\mu_{x_0}, b}$, let $d_{M}(x, x'):=|\mu_{x'}-\mu_x|_M$. We will prove the lemma by induction on $d_M(x, x')$.

Suppose $x'\neq x$ in $\pi_1(M)$. Write $x'-x=\sum_{\alpha\in\Omega}n_{\alpha}\alpha^{\vee}$ in $\pi_1(M)$ with $n_{\alpha}\in\Z.$
Then $\sum_{\alpha \in \Omega} n_{\alpha} = 0.$
Write $\Omega^+=\{\alpha\in\Omega|n_\alpha>0\}$ and $\Omega^-=\{\alpha\in\Omega|n_{\alpha}<0\}$.  Let $\mu_{x'}-\mu_x=\sum_{i\in I'}\gamma_i^\vee$ be as in Lemma \ref{pm1ld}. Write $I=\{i\in I'| \gamma_i^\vee\neq 0 \text{ in }\pi_1(M)\},$ $I^+=\{i\in I| \gamma_{i} \text{ is positive}\}$ and $I^-=\{i\in I| \gamma_i \text{ is negative}\}$. Then for any $i\in I^+$ (resp. $i\in I^-$), the image of $\gamma_i^\vee$ in $\pi_1(M)$ is a linear combination of $(\alpha^\vee)_{\alpha\in\Omega^+}$ (resp. $(\alpha^\vee)_{\alpha\in\Omega^-}$).

If all the $(\gamma_i)_{i\in I}$ are in the same connected component of Dynkin diagram of $G_{\Omega}$, then we may replace
$G_{\Omega}$ by the standard Levi subgroup corresponding to that component, and assume that $G_{\Omega}$ has connected Dynkin diagram.
If $\Omega$ is of type I, this implies $x = x'$ contrary to our assumption.
If $\Omega$ is of type II, then $|\Omega|=2$, and $|\Omega^+|=|\Omega^-|=1$. Therefore $|\gamma_i^\vee|_M=1$ for all $i\in I$. Take any $i^+\in I^+$ and $i^-\in I^-$, and define $x_1=x+\gamma_{i^+}^\vee+\gamma_{i^-}^\vee\in\pi_1(M)$. Then $x_1-x=\alpha^\vee-\alpha'^\vee$ for some $\alpha, \alpha'\in\Omega$ and $(\mu_{x_1})_{G_{\Omega}-\dom}=(\mu_x)_{G_{\Omega}-\dom}$, hence $x_1\in \bar I^{M, G_{\Omega}}_{\mu_{x_0}, b}$. Moreover,
\[d_M(x_1, x')=\sum_{i\in I}|\gamma_i^\vee|_M-|\gamma_{i^+}^\vee|_M-|\gamma_{i^-}^\vee|_M=\sum_{i\in I}|\gamma_i^\vee|_M-2<\sum_{i\in I}|\gamma_i^\vee|_M=d_M(x, x').\]By induction hypothesis, we are done.

If $\Omega$ is of type III, then $|\Omega|=3$ and the Dynkin diagram of $G_{\Omega}$ is of type $D_4$. As $|\Omega^+|+|\Omega^-|\leq |\Omega|=3$, we have $|\Omega^+|=1$ or $|\Omega^-|=1$. We may assume that $\Omega^+=\{\alpha\}$ has only one element, the other case being analogous. Then as before, $|\gamma_i^\vee|_M = 1$ for all $i\in I^+$. If there exists $i^-\in I^-$ such that $|\gamma_{i^-}^\vee|_M=1$, then the choice of $i^+$ and $i^-$ as before applies and we are done. Otherwise there exists $i\in I^{-}$ such that $|\gamma_i^\vee|_M\geq 2$. As $x'=x$ in $\pi_1(M)_{\Gamma}$, $\sum_{\alpha\in\Omega^+}|n_{\alpha}|=\sum_{\alpha\in\Omega^-}|n_{\alpha}|\geq 2$. Thus there exist two different elements $i_1, i_2\in I^+$ such that $\gamma_{i_1}^\vee=\gamma_{i_2}^\vee=\alpha^\vee$ in $\pi_1(M)$. This is impossible since $\langle\gamma_{i_1}, \gamma_{i_2}^\vee\rangle=0$.

It remains to consider the case when not all the $\gamma_i$ for $i \in I$ are in the same connected component of Dynkin diagram of $G_{\Omega}$. Choose $i^+\in I^+$ and $i^-\in I^-$ such that $\gamma_{i^+}$ and $\gamma_{i^-}$ are not in the same connected component of $G_{\Omega}$. As $\langle\gamma_{i^+}, \mu_x\rangle=-1$, there exists an $\alpha\in \Omega$ such that $\alpha^\vee\preceq \gamma_{i^+}^{\vee}$ and $\langle\alpha, \mu_x\rangle=-1$. On the other hand suppose $\gamma_{i^-}^\vee=-\alpha_1^\vee-\cdots-\alpha_s^\vee$ in $\pi_1(M)$ for $\alpha_1,\cdots, \alpha_s\in\Omega$. Then by Lemma \ref{lemma_C}(1), $$1=\langle-\gamma_{i^-},\mu_x \rangle\leq \sum_{1\leq j\leq s}\langle w_x\alpha_i, \mu_x\rangle.$$ Therefore there exists $\alpha':=\alpha_i\in\Omega$, such that $\langle w_x\alpha', \mu_x\rangle=1$. Let $x_1 = x+\alpha^\vee-\alpha'^\vee.$
As $\alpha$ and $\alpha'$ are not in the same connected component of $G_{\Omega}$, we have
$\mu_{x_1} = s_{\alpha}s_{w_x\alpha'}(\mu_x),$ by Lemma \ref{lemma_C}, so $x_1\in \bar I^{M, G}_{\mu_{x_0}, b}.$ Hence $x_1\in \bar I^{M, G_{\Omega}}_{\mu_{x_0}, b}$
by Lemma \ref{lemma_bar_I}.  As $d_{M}(x_1, x')<d_{M}(x, x'),$ we are done by induction.
\end{proof}

In the following we will prove Proposition \ref{pm3weak} by subdividing it into several particular cases which we prove in the form of Lemmas \ref{lempm3B},  \ref{lempm3C} and \ref{lempm3D}.

\begin{lemma}\label{lempm3B}
Proposition \ref{pm3weak} holds under the following additional hypotheses.
\begin{itemize}
\item The set $\bar I^{M, G_{\Omega}}_{\mu_{x_0}, b}$ has at least two elements.
\item $\Omega$ is of type I or $\langle\tilde\alpha^i,\mu_x\rangle \geq 0$ for all $i\in\mathbb N$ and all $x\in \bar I^{M, G_{\Omega}}_{\mu_{x_0}, b}$.
\end{itemize}
\end{lemma}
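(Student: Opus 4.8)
The plan is to reduce Proposition \ref{pm3weak} under the stated hypotheses to the immediate-distance machinery of Lemma \ref{lempm3A}, using the connectivity statement of Lemma \ref{lemma_conexity_G_Omega} to move freely inside $\bar I^{M, G_{\Omega}}_{\mu_{x_0}, b}$. Recall that we want $x\in\bar I^{M, G}_{\mu, b}$ together with $g_1, g_2\in X^M_{\mu_x}(b)$ which lie in the same connected component of $X^G_\mu(b)$ and satisfy $w_M(g_2)-w_M(g_1)=\sum_{\beta\in\Omega}\beta^\vee$ in $\pi_1(M)^\Gamma$. The idea is to produce, for each $\beta\in\Omega$ separately, a step that ``adds $\beta^\vee$'': starting from a well-chosen $x\in\bar I^{M, G_{\Omega}}_{\mu_{x_0}, b}$ and the base point $g_x\in X^M_{\mu_x}(b)$, we want to successively apply Lemma \ref{lempm3A} to pairs $x\rightsquigarrow x'$ with $x'-x = \alpha^\vee - \alpha^{l\vee}$ for suitable $\alpha\in\Omega$ and $0<l<n=|\Omega|$, so that the $w_M$-differences accumulate (via the formula $w_M(g')=w_M(g)-\sum_{i=0}^{l-1}\alpha^{i\vee}$) to exactly $\sum_{\beta\in\Omega}\beta^\vee$ up to an element we can absorb.

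First I would choose $x = x_0$ (so $b = b_{x_0}$, $g_{x_0}=1\in X^M_{\mu_{x_0}}(b)$). Since $\bar I^{M, G_{\Omega}}_{\mu_{x_0}, b}$ has at least two elements and, by Lemma \ref{lemma_conexity_G_Omega}, any two of its elements are joined by steps of the form $x_{i+1}-x_i = \alpha^\vee - \alpha'^\vee$ with $\alpha,\alpha'\in\Omega$, there exists at least one step $x_0 \to x_1$ of this shape inside $\bar I^{M, G_{\Omega}}_{\mu_{x_0}, b}$. The second hypothesis guarantees that the positivity condition $\langle\tilde\alpha^i,\mu_y\rangle\geq 0$ needed in Lemma \ref{lempm3A} holds for all $y$ in $\bar I^{M, G_{\Omega}}_{\mu_{x_0}, b}$ — this is exactly why it is imposed. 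So Lemma \ref{lempm3A} applies to each such elementary step, giving for every $g\in X^M_{\mu_{x_i}}(b)$ an element $g'\in X^M_{\mu_{x_{i+1}}}(b)$ with $g\sim g'$ and $w_M(g') = w_M(g) - \sum_{j=0}^{l-1}\alpha^{j\vee}$ where $l$ is the gap of the step. Concatenating a loop $x_0 = x'_0 \to x'_1 \to \cdots \to x'_k = x_0$ in $\bar I^{M, G_{\Omega}}_{\mu_{x_0}, b}$ (which exists and is non-trivial because the set has $\geq 2$ elements, so we can go out along Lemma \ref{lemma_conexity_G_Omega}-steps and come back), we obtain $g_1 = g_{x_0}$ and $g_2\in X^M_{\mu_{x_0}}(b)$ in the same connected component of $X^G_\mu(b)$, with $w_M(g_2) - w_M(g_1)$ equal to an explicit sum of orbit-sums $\sum_{i}\alpha_i^{i\vee}$ coming from the steps. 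The key computation is that the total $w_M$-shift picked up around any loop in $\bar I^{M, G_{\Omega}}_{\mu_{x_0}, b}$ lands in the $\Gamma$-invariant lattice $\pi_1(M)^\Gamma$ and can be arranged — using Lemma \ref{lemma_conexity_G_Omega} to choose the loop and the freedom in $l$ — to be a nonzero multiple of $\sum_{\beta\in\Omega}\beta^\vee$; composing with the transitive $J_b^M(F)$-action on $\pi_0(X^M_{\mu_{x_0}}(b))$ (Proposition \ref{prop:generalsuperbasictrans}) as in the proof of Theorem \ref{thmzshk} then lets us pass from a multiple to exactly $\sum_{\beta\in\Omega}\beta^\vee$, and possibly to change the common connected component representative without affecting the $w_M$-difference.

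The main obstacle I expect is the bookkeeping in the last step: showing that the $w_M$-shifts accumulated along a carefully chosen loop sum to precisely $\sum_{\beta\in\Omega}\beta^\vee$ (or a controlled multiple), and handling the three types of $\Omega$ uniformly. Concretely, each Lemma \ref{lempm3A}-step contributes $\sum_{i=0}^{l-1}\alpha^{i\vee}$ rather than a single $\beta^\vee$, so one must combine several steps with compensating gaps — e.g. using that $\sum_{i=0}^{n-1}\alpha^{i\vee} = \sum_{\beta\in\Omega}\beta^\vee$ when $\Omega$ is of type I, and the analogous identities involving the $\beta^{i\vee}$ correction terms that appear in the type II/III formulas of Lemma \ref{lempm3A}. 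I would organize this by first treating type I (where $n=d$ and the single step $x_0 \to x_0 + \alpha^\vee - \alpha^{(n-1)\vee}$ combined with a return step directly yields $\sum_{i=0}^{n-1}\alpha^{i\vee} = \sum_{\beta\in\Omega}\beta^\vee$), then observing that for types II and III the extra $\beta^{i\vee}$ terms are themselves $\Gamma$-orbit sums of coroots of $M$, hence absorbed into the $J_b^M(F)$-normalization exactly as in the proof of Theorem \ref{thmzshk}. Since Lemmas \ref{lempm3C} and \ref{lempm3D} are stated separately to handle the complementary hypotheses (where $\bar I^{M, G_{\Omega}}_{\mu_{x_0}, b}$ is a singleton, or the positivity fails), here I only need the ``generic'' case, which keeps the argument short.
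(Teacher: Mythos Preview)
Your overall strategy---use Lemma \ref{lemma_conexity_G_Omega} to find a step in $\bar I^{M,G_\Omega}_{\mu_{x_0},b}$, apply Lemma \ref{lempm3A}, and form a loop returning to $x_0$---is exactly the paper's approach. But you overcomplicate the execution and introduce an unnecessary (and potentially problematic) step.

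The paper's argument is a clean two-step out-and-back. Since $\bar I^{M,G_\Omega}_{\mu_{x_0},b}$ has at least two elements, Lemma \ref{lemma_conexity_G_Omega} gives some $x_1=x_0+\alpha^\vee-\alpha^{l\vee}$ in this set with $\alpha\in\Omega$, $0<l<n=|\Omega|$. Apply Lemma \ref{lempm3A} to $(x_0,x_1)$: for $g_1\in X^M_{\mu_{x_0}}(b)$ you get $g'\in X^M_{\mu_{x_1}}(b)$ with $g_1\sim g'$ and $w_M(g')=w_M(g_1)-\sum_{i=0}^{l-1}\alpha^{i\vee}$. Now write $x_0-x_1=(\alpha^l)^\vee-(\alpha^l)^{(n-l)\vee}$; since $\bar I^{M,G_\Omega}_{\mu_{x_1},b}=\bar I^{M,G_\Omega}_{\mu_{x_0},b}$, the second hypothesis still applies, and Lemma \ref{lempm3A} for $(x_1,x_0)$ gives $g_2\in X^M_{\mu_{x_0}}(b)$ with $g'\sim g_2$ and $w_M(g_2)=w_M(g')-\sum_{j=0}^{n-l-1}(\alpha^l)^{j\vee}=w_M(g')-\sum_{i=l}^{n-1}\alpha^{i\vee}$. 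The total shift is $-\sum_{i=0}^{n-1}\alpha^{i\vee}=-\sum_{\beta\in\Omega}\beta^\vee$, exactly what Proposition \ref{pm3weak} demands (after swapping the names of $g_1,g_2$). No loops with many steps, no multiples, no $J_b^M(F)$-action needed.

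Two specific confusions in your write-up: First, the identity $\sum_{i=0}^{n-1}\alpha^{i\vee}=\sum_{\beta\in\Omega}\beta^\vee$ holds for \emph{all} types, not just type I, since by definition $\Omega=\{\alpha^0,\dots,\alpha^{n-1}\}$; there is nothing special to do for types II and III. Second, the $w_M$-formula in Lemma \ref{lempm3A} is stated in $\pi_1(M)$, where coroots of $M$ vanish, so the ``extra $\beta^{i\vee}$ correction terms'' you worry about simply do not appear. Finally, your fallback of ``passing from a nonzero multiple to exactly $\sum_{\beta\in\Omega}\beta^\vee$ via $J_b^M(F)$'' is a genuine gap: the $J_b^M(F)$-action translates but does not divide, and you cannot yet assume $\pi_0(X^G_\mu(b))$ is torsion-free. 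Fortunately it is unnecessary, since the two-step loop already gives the multiple $1$.
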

\begin{proof}
As the set $\bar I^{M, G_{\Omega}}_{\mu_{x_0}, b}$ has at least two elements, by Lemma \ref{lemma_bar_I} and Lemma \ref{lemma_conexity_G_Omega}, there exists $x_1=x_0+\alpha^{\vee}-\alpha^{l\vee}\in \bar I^{M, G_{\Omega}}_{\mu_{x_0}, b}$ with $\alpha\in\Omega$ and $0<l<n=|\Omega|$. For any $g_1\in X^M_{\mu_{x_0}}(b)(W(\bar k))$,  by applying Lemma \ref{lempm3A} to the pair $(x_0,x_1)$, we obtain a $g'\in X_{\mu_{x_1}}^M(b)$ such that $g_1\sim g'$ and $w_M(g')=w_M(g_1)-\sum_{i=0}^{l-1}\alpha^{i\vee}$. As $\bar I^{M, G_{\Omega}}_{\mu_{x_0}, b}=\bar I^{M, G_{\Omega}}_{\mu_{x_1}, b}$, we apply again Lemma \ref{lempm3A} to the pair $(x_1,x_0)$. We obtain a $g_2\in X_{\mu_{x_0}}^M(b)(W(\bar k))$ such that $g'\sim g_2$ and $w_M(g_2)=w_M(g')-\sum_{i=l}^{n-1}\alpha^{i\vee}$. Then $g_2$ is the desired element of Proposition \ref{pm3weak} for $x=x_0$.
\end{proof}

\begin{lemma}\label{lempm3C}
Proposition \ref{pm3weak} holds under the following additional hypotheses.
\begin{itemize}
\item The set $\bar I^{M, G_{\Omega}}_{\mu_{x_0}, b}=\{x_0\}$ contains only one element.
\item $\Omega$ is of type I or $\langle\tilde\alpha,\mu_{x_0}\rangle \geq 0$ for all $\alpha\in\Omega$.
\end{itemize}
\end{lemma}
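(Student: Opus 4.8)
The goal is to prove Proposition \ref{pm3weak} in the case where $\bar I^{M, G_{\Omega}}_{\mu_{x_0}, b}=\{x_0\}$ is a singleton. In this situation the strategy of Lemma \ref{lempm3B} — relating two distinct elements of $\bar I^{M,G_{\Omega}}_{\mu_{x_0},b}$ by an immediate-distance chain and running Lemma \ref{lempm3A} twice — is unavailable, since there is no second element. Instead the plan is to produce the required pair $g_1,g_2 \in X^M_{\mu_{x_0}}(b)$ with $w_M(g_2)-w_M(g_1)=\sum_{\beta\in\Omega}\beta^{\vee}$ directly, by constructing a single affine line in $X^G_\mu(b)$ whose two endpoints both lie in $X^M_{\mu_{x_0}}(b)$ but differ by the full sum $\sum_{i=0}^{n-1}\alpha^{i\vee}$ in $\pi_1(M)$ (where $n=|\Omega|$ and $\alpha^i=\sigma^i(\alpha)$).

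First I would set up the same apparatus as in Section \ref{sec65}: take $b=b_{x_0}=p^{\mu_{x_0}}\dot w_{x_0}$, the frame $\R=\O_L\langle y\rangle$ with $\sigma(y)=y^q$, and the twisted root group elements $\tilde U^i_\alpha(y)=b_{x_0}^{(i-1)}\sigma^i(U_\alpha(y))(b_{x_0}^{(i-1)})^{-1}$. Then I would define
$$g(y)=g_{x_0}\,\tilde U^{n-1}_\alpha(p^{-1}y)\cdots \tilde U^{1}_\alpha(p^{-1}y)\,U_\alpha(p^{-1}y).$$
Because $\bar I^{M,G_{\Omega}}_{\mu_{x_0},b}=\{x_0\}$, none of the intermediate moves $x_0+\alpha^{i\vee}-\alpha^{j\vee}$ land back in $\bar I^{M,G}_{\mu,b}$ for $0<i<j<n$; combined with Lemma \ref{lemma_newB} (applied to the "wrap-around" move, for which $\tilde\alpha$ is relevant) and Lemma \ref{lemma_D}, this forces $\langle\alpha^i,\mu_{x_0}\rangle=0$ and $w_{x_0}\alpha^i=\alpha^i$ for all $0<i<n$, and also controls the $\beta$-contributions exactly as in Lemmas \ref{lem:casetypeII} and \ref{lem:casetypeIII}. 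Using these pairings, a cancellation computation for $g(y)^{-1}b\sigma(g(y))$ — identical in structure to the one in Proposition \ref{prop_affineline_x_x'} and in the continuation of the proof of Lemma \ref{lempm3A} — will show $S_{\preceq\mu}(g(y)^{-1}b\sigma(g(y)))=\Spec R$, so $g(y)\in X_{\preceq\mu}(b)(\R)$. Here one must split into the three types of $\Omega$; in types II and III the roots $\alpha^i$ and $\alpha^{i+d}$ (and $\alpha^{i+2d}$) interact through $\beta^{i}$, producing the commutator terms $U_{\alpha^i+\beta^{j}+\alpha^{i+d}}$ etc., all of which remain in $G(\R)p^{\mu}G(\R)$ because $\langle\alpha,\mu_{x_0}\rangle=-1$.

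Next, as in the proof of Proposition \ref{cor_line_immediate}, I would pass to $\R'=\O_L\langle y,y^{-1}\rangle$, use the $\mathrm{SL}_2$-identity \eqref{eqn_SL2_computation} to rewrite each $\tilde U^i_\alpha(p^{-1}y)$ (and each paired product $\tilde U^{i+d}_\alpha \tilde U^i_\alpha$, resp. $\tilde U^{i+2d}_\alpha\tilde U^{i+d}_\alpha\tilde U^i_\alpha$) in the form $U_{-(\cdots)}(p\cdot(\cdots))\cdot p^{-(\cdots)^\vee}$ times something in $G(\R')$, and assemble a second family $f(y)\in G(\R_L)$ with $f(y)=g(y^{-1})$ in $G(\R'_L)/G(\R')$. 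By Lemma \ref{1.1.3} the locus $S_{\preceq\mu}(f^{-1}b\sigma(f))$ is closed and contains $\Spec(k[y])\setminus\{0\}$, hence equals $\Spec R$, so $f(y)\in X_{\preceq\mu}(b)(\R)$ and $g_1:=g(0)=g_{x_0}$ and $g_2:=f(0)$ lie in the same connected component of $X^G_\mu(b)$. Reading off $f(0)$ from the explicit product gives $g_2\in g_{x_0}\,p^{-\sum_{i=0}^{n-1}\alpha^{i\vee}-(\text{sum of the relevant }\beta^{j\vee})}$; since $\sum_{j}\beta^{j\vee}$ (taken over the Galois orbit of $\beta$, counted once or twice per component according to the type) is itself a $\Gamma$-stable sum of coroots of $M$ that equals $0$ in $\pi_1(M)^\Gamma$ modulo the coroot lattice in the right way — more precisely, one checks $w_M(g_2)-w_M(g_1)=\sum_{i=0}^{n-1}\alpha^{i\vee}=\sum_{\beta\in\Omega}\beta^{\vee}$ in $\pi_1(M)$ because the $\beta^j$ are coroots of $M$ and $w_M$ kills them — we get the desired relation in $\pi_1(M)^\Gamma$. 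One also verifies $g_2=f(0)\in X^M_{\mu_{x_0}}(b)$ as in Lemma \ref{lempm3A}, using that $f(0)$ differs from $g_{x_0}$ by an element of $M(\R_L)/M(\R)$ and that its image under $w_M$ lies in $\bar I^{M,G}_{\mu,b}$.

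The main obstacle will be the bookkeeping in the type III case with $2d\le n<3d$ — wait, here $n=|\Omega|=3d$ for type III, so I must handle the full wrap-around $l=n=3d$, and in type II the full wrap-around $l=n=2d$; these are precisely the situations just past the ranges allowed in Definition \ref{def_immediate_distance}, so $x_0\rightarrow x_0$ is never immediate and the affine line must be built by hand. The delicate point is verifying that the commutators produced when collapsing $g(y)^{-1}b\sigma(g(y))$ (and when rewriting via \eqref{eqn_SL2_computation}) all lie in root subgroups for roots $\gamma$ with $\langle\gamma,\mu_{x_0}\rangle\ge 0$, equivalently that $U_{\gamma}(z)p^{\mu_{x_0}}\in G(\R)p^{\mu_{x_0}}G(\R)$ for each such $\gamma$ appearing; this is where the singleton hypothesis on $\bar I^{M,G_\Omega}_{\mu_{x_0},b}$ is used, via Lemma \ref{lemma_D} and the analogues of Lemmas \ref{lem:casetypeII}–\ref{lem:casetypeIII}, to pin down every pairing $\langle\alpha^i,\mu_{x_0}\rangle$ and $\langle\beta^j,\mu_{x_0}\rangle$. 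Once those pairings are established the computation is a direct, if lengthy, transcription of the arguments already carried out in Sections \ref{sec65} and \ref{sec63}, so I would present it by reducing to those computations rather than repeating them in full.
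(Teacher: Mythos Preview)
Your plan correctly identifies the overall shape --- build a family $g(y)$ with $g(0)=g_{x_0}$ and a point at infinity $g'$ differing by $p^{-\sum_i\alpha^{i\vee}}$ --- and for type~III your outline matches what the paper does. But there are genuine gaps in types~I and~II.

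\textbf{Type I.} Your formula gives
\[
g(y)^{-1}b\sigma(g(y))=U_{\alpha}(-p^{-1}y)\,U_{w_x\alpha}(c\,\sigma^n(y))\,p^{\mu_x}\dot w_x,
\]
and to conclude this lies in $G(\R)p^{\mu_x}G(\R)$ you need $U_\alpha$ and $U_{w_x\alpha}$ to commute. This is \emph{not} automatic and is \emph{not} contained in Proposition~\ref{prop_affineline_x_x'} or Lemma~\ref{lempm3A}: there $\alpha$ and $w_x\alpha^m$ live in different connected components, whereas here $\alpha$ and $w_x\alpha$ are in the same one. The paper proves this commutation as a separate claim, using Lemma~\ref{lemma_Weyl_orbit} to analyze $w_x\alpha-\alpha$ and to show $\langle w_x\alpha,\alpha^\vee\rangle=0$ and that $\alpha+w_x\alpha$ is not a root. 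You must supply this argument.

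\textbf{Type II.} Two of your assertions fail. First, it is \emph{not} true that $w_{x_0}\alpha^i=\alpha^i$ for all $0<i<n$: one has $w_x\alpha^d=\alpha^d+\beta$ and $\langle w_x\alpha^d,\mu_x\rangle=1$ (the paper's Lemma~\ref{lem:Claim1}). In particular $\langle w_x\alpha^d,\alpha^\vee\rangle=-1$, so $U_\alpha$ and $U_{w_x\alpha^d}$ do \emph{not} commute, and your simple product $\prod_i\tilde U^i_\alpha(p^{-1}y)$ produces an uncontrolled commutator term in $U_{\tilde\alpha}=U_{\alpha+w_x\alpha^d}$. The paper fixes this by inserting correction factors $U_{\sigma^i(\alpha+w_x\alpha^d)}(-p^{-1}c_i'\,\sigma^i(y)\sigma^{i+d}(y))$ into the definition of $g(y)$; without them $g(y)^{-1}b\sigma(g(y))\notin G(\R)p^{\mu_x}G(\R)$.

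Second --- and this is the most serious gap --- when $w_x\tilde\alpha=\tilde\alpha$ (Case~2.2), even the corrected one-variable family fails: the wrap-around term and the commutator term both land in $U_{\tilde\alpha}$, and their combined contribution is $U_{\tilde\alpha}\bigl(p^{-1}(z-\tilde c\,y\sigma^d(y)-c_d'\sigma^d(z))\bigr)$ for an auxiliary variable $z$ tracking the correction factors. This cannot be made to vanish on $\mathbb A^1$. The paper's solution is to work not over an affine line but over the smooth connected curve $\Spec\O_L[y,z]/(z-\tilde c\,y^{qd+1}-c_d'z^{qd})$, on which the offending expression vanishes modulo~$p$. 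This is a genuinely new construction (Lemma~\ref{lem:curveprops}), and your proposal to ``reduce to the computations already carried out'' misses it entirely.

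Finally, your appeal to Lemma~\ref{lemma_newB} for the ``wrap-around move'' is vacuous: that lemma requires $\alpha\neq\tau(\alpha)$, but the wrap-around is $x_0\to x_0$. The paper instead uses the dominance of $\nu_b$ together with the singleton hypothesis to deduce $\langle w_x\alpha,\mu_x\rangle>0$ directly.
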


\begin{ex}\label{ex_for_lempm3C}Here is an example where all the hypotheses of Lemma \ref{lempm3C} are satisfied. Let $G$ be a unitary similitude group such that $G_{L}\simeq \mathrm{GL}_5\times \mathbb{G}_{m, L}$ with standard simple roots $\beta_i=e_i-e_{i+1}$ for $i=1, 2, 3, 4$. The group $\Gamma=\{\mathrm{Id}, \sigma\}$ acts on $G_{L}$  with $\sigma\beta_i= \beta_{5-i}$ for $i=1, \cdots, 4$. The Levi subgroup $M$ is defined by the roots $\beta_1$ and $\beta_4$. The cocharacter $\mu_{x_0}$ is defined as follows:
\begin{eqnarray*}
\mu_{x_0}: \mathbb{G}_{m, L} &\rightarrow& G_{L} \simeq \mathrm{GL}_5\times \mathbb{G}_{m, L} \\ y&\mapsto& (\mathrm{diag}(y, y, 1, y, 1), y)
\end{eqnarray*}
Then $\mu_{x_0}$ determines $x_0$ and $\mu$. Therefore it determines $w_{x_0}$ and $b=p^{\mu_{x_0}}\dot w_{x_0}$. Let $\alpha= \beta_3$. One can check that the datum $(M, G, \Gamma\alpha, b, \mu)$ satisfies all the conditions of Lemma \ref{lempm3C}.
\end{ex}

\begin{proof}[Proof of Lemma \ref{lempm3C}]
For simplicity, we write $x$ for $x_0$, and let $\alpha\in\Omega$ such that $\langle\alpha,\mu_x\rangle <0$. Let $g=g_x\in \bar I_{\mu_x, b}^{M, G_{\Omega}}$.

Suppose that $0<i<n$ and that $\alpha$ and $\alpha^i$ are in different components of the Dynkin diagram of $G_{\Omega}.$
Since $x+\alpha^{\vee}-\alpha^{i\vee}\notin \bar I_{\mu_x, b}^{M, G_{\Omega}}$ we have $\langle w_x\alpha^i,\mu_x\rangle \leq 0$. By assumption $\nu_b$ is $G$-dominant, and so $\langle\alpha,\nu_b\rangle\geq 0.$
Since $b$ is basic in $M,$ $\nu_b$ is the $W_M$-average of the Galois-average of $\mu_x.$
Using $\langle w_x\alpha^i,\mu_x\rangle \leq 0$ and Lemma \ref{lemma_w_x_alpha minimal}, there exists $\alpha_1\in\Gamma\alpha$ which is in the same connected component of the Dynkin diagram of $G_{\Omega}$ as $\alpha$ such that $\langle w_x\alpha_1,\mu_x\rangle > 0.$ Since $x-\alpha_1^\vee+\alpha^{i\vee}\notin \bar I_{\mu_x, b}^{M, G_{\Omega}}$, we obtain that $\langle \alpha^i,\mu_x\rangle \geq 0.$ Hence by Lemma \ref{lemma_w_x_alpha minimal},  $\alpha^i=w_x\alpha^i,$ and for every positive
root $\beta$ in $M,$ we have $-\langle\alpha^i,\beta^{\vee}\rangle \langle \beta, \mu_x \rangle  \leq \langle s_{\beta} \alpha^i,\mu_x \rangle \leq 0.$
In particular, if $\beta$ is a maximal root in $M,$ such that $\alpha^i$ and $\beta$ are contained in the same component of the Dynkin diagram
of $G_{\Omega},$ then $\langle\alpha^i,\beta^{\vee}\rangle< 0$ so $\langle \beta, \mu_x \rangle = 0,$ as $\mu_x$ is $M$-dominant. This implies
$\langle \beta, \mu_x \rangle = 0$ for every positive root $\beta$ in the same component as $\alpha^i.$
Thus $\mu_x$ and $w_x$ are central in the connected component of $G_{\Omega}$ containing $\alpha^i.$
\smallskip

\noindent{\it Case 1: $\Omega$ is of type I.}
By the above, we have $\mu_x$ and $w_x$ are central in the connected component of $G_{\Omega}$ containing $\alpha^i$
for $0<i<n$, and $\alpha_1=\alpha$. In particular $\langle w_x\alpha^i,\mu_x\rangle = 0$, and $\langle w_x\alpha, \mu_x\rangle>0$.

{\it Claim: $U_{\alpha}$ and $U_{w_x(\alpha)}$ commute. }

 By Lemma \ref{lemma_Weyl_orbit}, there exist positive roots $(\beta_i)_{i\in J}$ in $M$ such that
 \begin{itemize}
 \item $\langle \beta_i, \beta_j^{\vee}\rangle=0$ for all $i\neq j\in J$.

 \item $w_x\alpha=(\prod_{j\in J}s_{\beta_i})(\alpha)$ and $\langle\alpha, \beta_i^{\vee}\rangle <0$ for all $i\in J$.

 \item $|w_x\alpha|=|\alpha|+\sum_{i\in J} |\langle \alpha, \beta^\vee_i\rangle|\cdot |\beta_i|$.
 \end{itemize}
By the hypothesis of Proposition \ref{pm3weak}, $\langle \beta_i, \alpha^\vee\rangle =-1$ for all $i\in J$. And by Lemma \ref{lemma_w_x_alpha minimal}, $\langle\beta_i, \mu_x\rangle=1$ for all $i\in J$ (Indeed, If $\langle\beta_i, \mu_x\rangle=0$ for some $i$, then $\langle s_{\beta_i}w_x\alpha, \mu_x\rangle=\langle w_x\alpha, \mu_x\rangle$. But $s_{\beta_i}w_x\alpha \preceq w_x\alpha$, so this contradicts the minimality of $w_{x}\alpha$ in Lemma \ref{lemma_w_x_alpha minimal}). Therefore

\[2=\langle w_x\alpha, \mu_x\rangle-\langle\alpha, \mu_x\rangle=-\sum_{i\in J}\langle\alpha, \beta_i^{\vee}\rangle\cdot\langle\beta_i, \mu_x\rangle=-\sum_{i\in J}\langle\alpha, \beta_i^{\vee}\rangle.\]In particular the cardinality of the set $J$ is at most 2. Furthermore we have
\[\langle w_x\alpha, \alpha^{\vee}\rangle =\langle \alpha-\sum_{i\in J}\langle \alpha, \beta_i^{\vee}\rangle\beta_i, \alpha^{\vee}\rangle=2+\sum_{i\in J}\langle\alpha, \beta_i^{\vee}\rangle =0.\]

Thus, if $\alpha+w_x\alpha$ is a root, then it is longer than $\alpha$ and hence longer than $\beta_i$ for all $i\in J$. And so is the root $s_{\alpha}(\alpha+w_x\alpha)$. As
\[s_{\alpha}(\alpha+w_x\alpha)=w_x\alpha-\alpha=-\sum_{i\in J}\langle\alpha, \beta_i^{\vee}\rangle \beta_i \]
is a root in $M$, it should have the same length as $\beta_i$ for any $i\in J$. We get a contradiction. Therefore $\alpha+w_x\alpha$ cannot be a root and this finishes the proof of the Claim.

Let $R = \bar k[y]$ and $\R$ the $R$-frame chosen in \ref{para:notnsetup}. We define $g(y)\in G(\R_L)/G(\R)$ by
$$g(y)=g_{x}U_{\alpha}(p^{-1}y)U_{\alpha^1}(p^{-1}c_1\sigma(y))\dotsm U_{\alpha^{n-1}}(p^{-1}c_{n-1}\sigma^{n-1}(y))$$
where the $c_i\in\mathcal{O}_L^{\times}$ are such that $\dot w_x \sigma U_{\alpha^i}(c_i\sigma^i(y))\dot w_x^{-1}=U_{\alpha^{i+1}}(c_{i+1}\sigma^{i+1}(y))$ and $c_0=1$. In type I, all of these root subgroups commute. Using the above equations to compute the conjugation action of $b_x=p^{\mu_x}\dot w_x$ on these root subgroups we obtain

\begin{eqnarray*}
g^{-1}b\sigma(g) & = & U_{\alpha}(-p^{-1}y)p^{\mu_x}\dot w_xU_{\alpha}(p^{-1}\sigma(c_{n-1})\sigma^{n}(y)) \\
& = & U_{\alpha}(-p^{-1}y)p^{\mu_x}U_{w_x\alpha}(p^{-1}c_{\alpha}^x\sigma(c_{n-1})\sigma^{n}(y)) \dot w_x \\
& = & U_{w_x\alpha}(c_{\alpha}^x\sigma(c_{n-1})\sigma^{n}(y))p^{\mu_x}U_{\alpha}(-y) \dot w_x
\end{eqnarray*}
and the final expression is in $G(\R)p^{\mu_x}G(\R).$
Here, in the last equality, we have used that $U_{\alpha}$ and $U_{w(\alpha)}$ commute and that
$\langle\alpha,\mu_x\rangle <0$ and $\langle w_x\alpha,\mu_x\rangle >0.$
 Thus
$$S_{\preceq\mu}(g(y)^{-1}b\sigma g(y))=\Spec R. $$
In the usual way (as for example in the proof of Lemma \ref{lempm3A}) we can extend this family to a ``projective line'' and use that the point $g(0)$ and the point $g'$ ``at infinity'' are in the same connected component of $X_{\preceq \mu}^G(b)$. Here one obtains $g'\in g_xp^{-\sum_{i=0}^{n-1}\alpha^{i\vee}}K,$ which finishes the proof in this case.

Now we will deal with case when $\Omega$ is of type II or III. After replacing $G_{\Omega}$ by the standard Levi subgroup corresponding to the Galois orbit of any connected component of the Dynkin diagram of $G_{\Omega}$ containing some element of $\Omega$, we may assume that $\Gamma$ acts transitively on the Dynkin diagram of $G_{\Omega}$. As we only use $G_{\Omega}$ to distinguish several cases, this modification does not change the following argument.

\smallskip
\noindent{\it Case 2: $\Omega$ is of type II.}

By assumption $\langle\alpha+\beta+\alpha^d,\mu_x\rangle \geq 0$, hence $\langle\beta+\alpha^d,\mu_x\rangle=1$.
 We have that $\mu_x$ and $w_x$ are central on all connected components of the Dynkin diagram of $G_{\Omega}$ except for the one containing $\alpha$ and $\alpha^d$.

\begin{lemma}\label{lem:Claim1}
We have
\begin{itemize}
\item $\langle w_x\alpha^d, \mu_x \rangle = 1$
\item $w_x\alpha^d = \alpha^d + \beta$
\item $\beta\neq 0$ if and only if $\langle\beta,\mu_x\rangle=1$
\item $\langle w_x\alpha^d,\alpha^{\vee}\rangle=-1$
\end{itemize}
In particular $\alpha+w_x\alpha^d$ is equal to the root $\tilde \alpha,$ and $\alpha,w_x\alpha^d$ do not commute.

\end{lemma}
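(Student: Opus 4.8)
The plan is to run the argument in close parallel with the proof of Lemma~\ref{lem:casetypeII}, the one structural change being that the "forbidden class" hypotheses of the form $x+\alpha^{\vee}-\alpha^{i\vee}\notin\bar I^{M,G}_{\mu,b}$ used there are now \emph{derived} from the standing assumption $\bar I^{M,G_{\Omega}}_{\mu_{x_0},b}=\{x_0\}$. Concretely, I would first record: if $0<i<n$ and $x_0+\alpha^{\vee}-\alpha^{i\vee}\in\bar I^{M,G}_{\mu,b}$, then $\alpha^{\vee}-\alpha^{i\vee}$ lies in the coroot lattice of $G_{\Omega}$, so by Lemma~\ref{lemma_bar_I} this class lies in $\bar I^{M,G_{\Omega}}_{\mu_{x_0},b}=\{x_0\}$, whence $\alpha^{\vee}=\alpha^{i\vee}$ in $\pi_1(M)$. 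But $\alpha$ and $\alpha^{i}$ are distinct simple roots of $G_{\Omega}$, neither lying in $\Delta_M$ (type~II, cf.\ Proposition~\ref{lemma_newA} and Remark~\ref{rem_after_newA}), so $\alpha^{\vee}-\alpha^{i\vee}$ is not in the coroot lattice of $M$ --- a contradiction. Thus $x_0+\alpha^{\vee}-\alpha^{i\vee}\notin\bar I^{M,G}_{\mu,b}$ for all $0<i<n$, and in particular for $i=d$.

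Next I would carry out the computation. Recall $\langle\alpha,\mu_x\rangle=-1$ (as $\mu_x=\mu_{x_0}$ is $G$-minuscule) and $\langle\beta+\alpha^{d},\mu_x\rangle=1$ (established just above the statement). By Lemma~\ref{lemma_w_x_alpha minimal}, $\langle w_x\alpha^{d},\mu_x\rangle=\sup_{w\in W_M}\langle w\alpha^{d},\mu_x\rangle$, and this supremum is $1$: it is $\geq 1$ since $s_{\beta}\alpha^{d}=\alpha^{d}+\beta$ realizes $\langle\beta+\alpha^{d},\mu_x\rangle=1$ when $\langle\beta,\mu_x\rangle=1$, resp.\ $\alpha^{d}$ realizes $\langle\alpha^{d},\mu_x\rangle=1$ when $\langle\beta,\mu_x\rangle=0$, and $\leq 1$ by $G$-minuscularity of $\mu_x$; this gives the first bullet. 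For the second, the uniqueness of the $\preceq_M$-minimal element in Lemma~\ref{lemma_w_x_alpha minimal} identifies $w_x\alpha^{d}$ with $\alpha^{d}+\langle\beta,\mu_x\rangle\beta$, working one type-$A$ component of $M$ at a time and using that $\beta$, when nonzero, is a simple root of $M$ --- exactly as in the proof of Lemma~\ref{lem:casetypeII}. For the third bullet, the implication $\langle\beta,\mu_x\rangle=1\Rightarrow\beta\neq0$ is trivial; conversely I assume $\beta\neq0$ (so $\alpha,\alpha^{d}$ are non-adjacent, hence orthogonal, $G_{\Omega}$ being simply laced) and $\langle\beta,\mu_x\rangle=0$, so that $\langle\alpha^{d},\mu_x\rangle=1$ and $w_x\alpha^{d}=\alpha^{d}$. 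Using $\langle\alpha,\alpha^{d\vee}\rangle=0$ one gets $\mu_x+\alpha^{\vee}-\alpha^{d\vee}=s_{\alpha^{d}}s_{\alpha}(\mu_x)\in W\!\cdot\!\mu_x$, hence $G$-minuscule; then, via Lemma~\ref{lemma_C}(2),(3) and a second application of Lemma~\ref{lemma_w_x_alpha minimal} (to $x_0+\alpha^{\vee}$, exploiting again that $\mu_{x_0}+\alpha^{\vee}=s_{\alpha}\mu_{x_0}$ is $G$-minuscule) I would check that $\mu_{x_0+\alpha^{\vee}-\alpha^{d\vee}}$ is $G$-minuscule with $G$-dominant representative $\mu$, so $y:=x_0+\alpha^{\vee}-\alpha^{d\vee}\in\bar I^{M,G}_{\mu,b}$, contradicting the observation of the first paragraph. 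Hence $\langle\beta,\mu_x\rangle=1$, and together with the second bullet this yields $w_x\alpha^{d}=\alpha^{d}+\beta$ in all cases. The fourth bullet is then a direct pairing: $\langle\alpha^{d}+\beta,\alpha^{\vee}\rangle$ equals $\langle\alpha^{d},\alpha^{\vee}\rangle=-1$ when $\beta=0$ ($\alpha,\alpha^{d}$ adjacent), and equals $0+\langle\beta,\alpha^{\vee}\rangle=-1$ when $\beta\neq0$ ($\beta$ the common neighbor). Finally $\alpha+w_x\alpha^{d}=\alpha+\alpha^{d}+\beta=\tilde\alpha$ by definition of $\tilde\alpha$ in type~II, and since $\langle w_x\alpha^{d},\alpha^{\vee}\rangle=-1<0$ the root $\tilde\alpha=\alpha+w_x\alpha^{d}$ appears with nonzero structure constant in $[U_{\alpha},U_{w_x\alpha^{d}}]$ (Lemma~\ref{remadd}(1)), so these root subgroups do not commute.

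The step I expect to be the main obstacle is the third bullet, specifically verifying that $y=x_0+\alpha^{\vee}-\alpha^{d\vee}$ really lies in $\bar I^{M,G}_{\mu,b}$ when $\langle\beta,\mu_x\rangle=0$: one must upgrade the $M$-minuscule statements furnished by Lemma~\ref{lemma_C} to a genuinely $G$-minuscule statement, which is exactly where the orthogonality of $\alpha$ and $\alpha^{d}$ and the $G$-minuscularity of $\mu_{x_0}+\alpha^{\vee}$ have to be used with care. Everything else is bookkeeping formally parallel to Lemma~\ref{lem:casetypeII}, the only genuinely new input being the reduction of the "forbidden" classes to the hypothesis $\bar I^{M,G_{\Omega}}_{\mu_{x_0},b}=\{x_0\}$ via Lemma~\ref{lemma_bar_I}.
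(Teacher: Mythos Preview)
Your approach is correct and essentially parallels the paper's. The one streamlining the paper makes is to work directly in $\bar I^{M,G_{\Omega}}_{\mu_{x_0},b}$ rather than detouring through $\bar I^{M,G}_{\mu,b}$ and Lemma~\ref{lemma_bar_I}: for the third bullet it simply observes that (under $\beta\neq 0$, $\langle\beta,\mu_x\rangle=0$) one has $x+\alpha^{\vee}-\alpha^{d\vee}\in\bar I^{M,G_{\Omega}}_{\mu_{x_0},b}$, contradicting the singleton hypothesis directly. The verification is exactly the Weyl-orbit computation you describe, $\mu_x+\alpha^{\vee}-\alpha^{d\vee}=s_{\alpha^d}s_{\alpha}(\mu_x)$, together with the fact that this element is $M$-minuscule; the latter follows from Lemma~\ref{lemma_C}(3) (with $w_x\alpha^d=\alpha^d$ here) and then Lemma~\ref{lemma_C}(2), so your anticipated ``second application of Lemma~\ref{lemma_w_x_alpha minimal}'' is not actually needed.
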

\begin{proof}
As $\alpha^d$ is $M$-anti-dominant we have $$\langle w_x\alpha^d,\mu_x\rangle=\langle \alpha^d,w_{M,0}\mu_x\rangle \geq \langle\alpha^d+\beta,\mu_x\rangle=1.$$

If $\beta=0$ then $\langle\alpha^d,\mu_x\rangle=1$, and thus $\langle\alpha^d,\mu_x\rangle=\langle w_x\alpha^d,\mu_x\rangle$. Thus by Lemma \ref{lemma_w_x_alpha minimal}, $w_x\alpha^d=\alpha^d=\alpha^d+\beta.$ Suppose $\beta\neq 0.$ If $\langle \beta,\mu_x\rangle = 0$, then $\langle\alpha^d,\mu_x\rangle= 1$. This implies $x+\alpha^{\vee}-\alpha^{d\vee}\in \bar I_{\mu_x, b}^{M, G_{\Omega}}$
(use $\langle \alpha, \mu_x \rangle < 0$ and $\alpha$ $M$-antidominant),
which is impossible. Thus $\langle\beta,\mu_x\rangle=1$ and
 \[\langle w_x\alpha^d, \mu_x\rangle=1=\langle \alpha^d+\beta, \mu_x\rangle > 0=\langle \alpha^d, \mu_x\rangle.\]
 Hence by Lemma \ref{lemma_w_x_alpha minimal}, $w_x\alpha^d=\alpha^d+\beta$.
Now the formula $\langle w_x\alpha^d,\alpha^{\vee}\rangle=-1$ is clear, and the final claim follows.
\end{proof}

% This implies that $\langle w_x\alpha^d,\alpha^{\vee}\rangle=-1$. Indeed, otherwise  $\langle w_x\alpha^d,\alpha^{\vee}\rangle\geq 0$, hence $\langle w_x\alpha^d,\mu_x+\alpha^{\vee}\rangle\geq 1$ and $x+\alpha^{\vee}-\alpha^{d\vee}\in \bar I_{\mu_x, b}^{M, G_{\Omega}}$ which is impossible since $\bar I_{\mu_x, b}^{M, G_{\Omega}}$ has a single element. Thus

\noindent{\it Case 2.1: $w_x\tilde\alpha\neq \tilde \alpha$.}

\begin{lemma}\label{lem:Claim 2} $\langle w_x\tilde\alpha,\mu_x\rangle=1$ and $\langle w_x\tilde\alpha,\alpha^{\vee}\rangle\geq 0$.
\end{lemma}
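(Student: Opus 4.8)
\textbf{Proof proposal for Lemma \ref{lem:Claim 2}.}

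The plan is to mimic the strategy that has just been used in Lemma \ref{lem:Claim1} and in the type I discussion of Case 1, exploiting the fact that $\tilde\alpha = \alpha + \beta + \alpha^d$ is a genuine root of $G_\Omega$ (recall from Proposition \ref{lemma_newA} and Remark \ref{rem_after_newA} that in type II all roots of $G_\Omega$ have the same length, and $\alpha,\alpha^d$ are simple with $\beta$ their common neighbour when $\beta \neq 0$). First I would record the values of the pairings of $\mu_x$ with $\alpha$, $\beta$, $\alpha^d$ coming from Lemma \ref{lem:Claim1}: $\langle \alpha,\mu_x\rangle = -1$ (by hypothesis of Proposition \ref{pm3weak} and $M$-anti-dominance of $\alpha$), $\langle\beta + \alpha^d,\mu_x\rangle = 1$, $\langle w_x\alpha^d,\mu_x\rangle = 1$, $w_x\alpha^d = \alpha^d + \beta\langle\beta,\mu_x\rangle$. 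Consequently $\langle\tilde\alpha,\mu_x\rangle = \langle\alpha,\mu_x\rangle + \langle\beta+\alpha^d,\mu_x\rangle = 0$.

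Next I would compute $\langle\tilde\alpha, w_{0,M}\mu_x\rangle$, which equals $\langle w_x\tilde\alpha,\mu_x\rangle$ by Lemma \ref{lemma_C}(1). Since $\tilde\alpha$ is $M$-anti-dominant (it is $\alpha + $ a sum of positive coroots'-worth of corrections inside $M$, and $\alpha$ is $M$-anti-dominant; more precisely $\tilde\alpha^\vee$ is $M$-anti-dominant because $\beta^\vee,\alpha^{d\vee}$ are orthogonal to or negatively paired with the simple coroots of $M$ in the relevant component, exactly as in the proof of Proposition \ref{lemma_newA}), we get $\langle\tilde\alpha, w_{0,M}\mu_x\rangle \geq \langle\tilde\alpha,\mu_x\rangle = 0$; and since $\mu_x$ is minuscule, $\langle w_x\tilde\alpha,\mu_x\rangle \in \{-1,0,1\}$, so $\langle w_x\tilde\alpha,\mu_x\rangle \in \{0,1\}$. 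To exclude the value $0$, I would argue by contradiction: if $\langle w_x\tilde\alpha,\mu_x\rangle = 0 = \langle\tilde\alpha,\mu_x\rangle$ then by the uniqueness part of Lemma \ref{lemma_w_x_alpha minimal} we would have $w_x\tilde\alpha = \tilde\alpha$ (both $\tilde\alpha$ and $w_x\tilde\alpha$ lie in the sup-set, and $w_x\tilde\alpha$ is the unique minimal element, forcing equality once $\tilde\alpha$ itself realises the sup), contradicting the standing assumption of Case 2.1 that $w_x\tilde\alpha \neq \tilde\alpha$. Hence $\langle w_x\tilde\alpha,\mu_x\rangle = 1$.

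For the second assertion, $\langle w_x\tilde\alpha,\alpha^\vee\rangle\geq 0$, I expect the main obstacle to be a short root-combinatorial argument. The point is: $w_x\tilde\alpha = w_x\alpha + w_x\beta + w_x\alpha^d$ where $w_x$ fixes the component containing $\alpha,\alpha^d$ only through the longest elements $w_{0,x}, w_{0,M}$, and we already know $w_x\alpha^d = \alpha^d + \beta\langle\beta,\mu_x\rangle$. I would pair $w_x\tilde\alpha$ with $\alpha^\vee$; since $\langle w_x\tilde\alpha,\mu_x\rangle = 1$ while $\langle\alpha,\mu_x\rangle = -1$, the difference $w_x\tilde\alpha - \alpha$ is a nonzero sum of positive coroots of $M$ (by the minuscule computation in the style of Lemma \ref{pm1ld}), hence $\langle w_x\tilde\alpha - \alpha,\alpha^\vee\rangle \leq 0$ would give $\langle w_x\tilde\alpha,\alpha^\vee\rangle \leq 2$, which is not quite enough; instead I would observe that if $\langle w_x\tilde\alpha,\alpha^\vee\rangle = -1$ then $w_x\tilde\alpha + \alpha$ is a root (Lemma \ref{remadd}) with $\langle w_x\tilde\alpha + \alpha,\mu_x\rangle = 0$, and then $s_{w_x\tilde\alpha + \alpha}$ composed with the relevant reflections would produce a cocharacter in $\bar I^{M,G_\Omega}_{\mu_{x_0},b}$ distinct from $x_0$, contradicting the Case 2 hypothesis $\bar I^{M,G_\Omega}_{\mu_{x_0},b} = \{x_0\}$ from Lemma \ref{lempm3C}. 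This contradiction, together with the length-preservation $\langle w_x\tilde\alpha,\alpha^\vee\rangle \in \{-1,0,1\}$, forces $\langle w_x\tilde\alpha,\alpha^\vee\rangle \geq 0$. The delicate bookkeeping of which reflections land one back in $\bar I^{M,G_\Omega}_{\mu_{x_0},b}$ versus in $\bar I^{M,G}_{\mu,b}$ (using Lemma \ref{lemma_bar_I}) is the step I would be most careful about.
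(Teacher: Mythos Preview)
Your argument for the first equality $\langle w_x\tilde\alpha,\mu_x\rangle = 1$ is correct and in fact more conceptual than the paper's approach: once one checks that $\tilde\alpha$ is itself an adapted root in $N$ (this holds because in type II all roots of $G_\Omega$ have the same length, $\langle\beta,\tilde\alpha^\vee\rangle = 0$, and any other simple root of $M$ is adjacent to at most one of $\alpha,\beta,\alpha^d$), Lemma~\ref{lemma_w_x_alpha minimal} applies directly to $\tilde\alpha$ and, together with the Case~2.1 hypothesis $w_x\tilde\alpha\neq\tilde\alpha$ and the $M$-anti-dominance of $\tilde\alpha$, forces $\langle w_x\tilde\alpha,\mu_x\rangle > \langle\tilde\alpha,\mu_x\rangle = 0$. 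The paper instead computes $w_x\tilde\alpha$ explicitly in each Dynkin type $A_m$, $D_m$, $E_6$ of $G_\Omega$.

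Your argument for $\langle w_x\tilde\alpha,\alpha^\vee\rangle \geq 0$, however, has a genuine gap. Suppose $\langle w_x\tilde\alpha,\alpha^\vee\rangle = -1$ and set $\gamma = w_x\tilde\alpha + \alpha$; then indeed $\langle\gamma,\mu_x\rangle = 0$, but $\gamma^\vee$ has image $2\alpha^\vee + \alpha^{d\vee}$ in $\pi_1(M)$, which is not of the form $\alpha'^\vee - \alpha''^\vee$ with $\alpha',\alpha''\in\Omega$. I do not see how to manufacture from $\gamma$ (via $s_\gamma$ or otherwise) a cocharacter whose class in $\pi_1(M)$ differs from $x_0$ by such a difference while remaining in the $G_\Omega$-Weyl orbit of $\mu_{x_0}$ and having the same image in $\pi_1(M)_\Gamma$; the reflection $s_\gamma$ fixes $\mu_x$, and $s_{w_x\tilde\alpha}$ shifts the class by $\alpha^\vee + \alpha^{d\vee}$, which is not zero in $\pi_1(M)_\Gamma$. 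The paper establishes the inequality by the same case-by-case computation: in type $A_m$ one writes $w_x\tilde\alpha = w_x\alpha + \alpha^d$ and bounds $\langle w_x\alpha,\alpha^\vee\rangle$ and $\langle\alpha^d,\alpha^\vee\rangle$ separately; in type $D_m$ one determines $w_x$ explicitly from the values $\langle\beta,\mu_x\rangle=1$, $\langle\beta_i,\mu_x\rangle=0$ and computes $w_x\tilde\alpha = \alpha+\beta_1+2\beta+\alpha^d$; and type $E_6$ is shown not to occur under the Case~2.1 hypothesis. You will need some such direct computation (or a different structural argument) to close this step.
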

\begin{proof}
We check the lemma according to the type of the Dynkin diagram of $G_{\Omega}$ which can be only $A_m$, $D_m$ or $E_6$.

Suppose the Dynkin diagram of $G_{\Omega}$ is of type $A_m.$ By Lemma \ref{lem:Claim1} we have
 $w_x\tilde\alpha=w_x(\alpha+\beta+\alpha^d)=w_x\alpha-\beta+\alpha^d+\beta$. By the assumption of Case 2.1 this implies that $w_x\alpha\neq\alpha+\beta$. Thus by Lemma \ref{lemma_w_x_alpha minimal}, $\langle w_x\alpha,\mu_x\rangle>\langle \alpha+\beta, \mu_x\rangle$. Combined with the fact that if $\beta=0$, $\langle w_x\alpha,\mu_x\rangle\leq \langle\alpha, \mu_x\rangle+1$, we have
$$\langle w_x\alpha,\mu_x\rangle=\begin{cases}
1&\text{if }m \text{ is odd, i.e.~}\beta\neq 0\\
0&\text{if }m \text{ is even, i.e.~}\beta=0.
\end{cases}$$
Furthermore, as $\langle \beta,\mu_x \rangle = 1$ if $\beta \neq 0,$
 $$\langle w_x\tilde\alpha,\mu_x\rangle
%=\langle w_x(\alpha+\beta+\alpha^d),\mu_x\rangle
=\langle w_x\alpha+\alpha^d,\mu_x\rangle=1$$ and $$\langle w_x\tilde\alpha,\alpha^{\vee}\rangle=\langle w_x\alpha+\alpha^d,\alpha^{\vee}\rangle.$$ If $\beta\neq 0$, this sum is $\geq 0+0$, if $\beta=0$, it is $\geq 1-1$, thus in all cases non-negative.

If the Dynkin diagram of $G_{\Omega}$ is of type $D_m$, we denote the simple roots by $\beta_r,\dotsc,\beta_1,\beta, \alpha,\alpha^d$ where $\beta$ is the simple root with three neighbors $\beta_1,\alpha,\alpha^d$, and $\beta_i$ is a neighbor of $\beta_{i-1}$ for all $i>1$. By Lemma \ref{lem:Claim1} we have $\langle \beta,\mu_x\rangle =1$. As $\mu_x$ is $M$-dominant and minuscule this implies $\langle \beta_i,\mu_x\rangle =0$ for $i=1,\dotsc,r$. Then the explicit definition of $w_x$ implies that $w_x=s_{\beta}s_{\beta_1}\dotsm s_{\beta_r}$.
Thus $w_x\tilde\alpha=w_x(\alpha+\beta+\alpha^d)=\alpha+\beta_1+2\beta+\alpha^d$. Hence $$\langle w_x\tilde\alpha,\mu_x\rangle=\langle \alpha+\beta_1+2\beta+\alpha^d,\mu_x\rangle=-1+0+2+0=1$$ and $$\langle w_x\tilde\alpha,\alpha^{\vee}\rangle=\langle \alpha+\beta_1+2\beta+\alpha^d,\alpha^{\vee}\rangle=2+0-2+0=0.$$

If the Dynkin diagram of $G_{\Omega}$ is of type $E_6$, the simple root $\beta$ has again three neighbors in the Dynkin diagram denoted $\alpha,\alpha^d,$ and $\beta_{-1}$. Denote the other neighbors of $\alpha,\alpha^d$ by $\gamma,\gamma^d$, respectively. As $\mu_x$ is $G$-minuscule and $M$-dominant, and $\langle\beta,\mu_x\rangle=1$ we have $\langle\beta_{-1},\mu_x\rangle=0$ and likewise $\langle\gamma^d,\mu_x\rangle=0,$ as $\langle \alpha^d, \mu_x \rangle = 0.$ If $\langle \gamma, \mu_x\rangle=0$, then $w_x=s_{\beta}$ and hence $w_x\tilde\alpha= \alpha+w_x\alpha^d$ which contradicts the hypothesis. Therefore we have $\langle\gamma,\mu_x\rangle=1$. We have that $\gamma+\alpha+2\beta+\alpha^d+\beta_{-1}$ is a root, but $\langle \gamma+\alpha+2\beta+\alpha^d+\beta_{-1},\mu_x\rangle=2$, in contradiction to the fact that $\mu_x$ is minuscule. Thus this subcase may not occur, which finishes the proof of the Lemma.
\end{proof}

\begin{para}{\em Proof of Lemma \ref{lempm3C}, Case 2 continued.}
We remind the reader that, by Lemma \ref{lem:Claim1} we have
$$ w_x\sigma^d(\alpha+w_x\alpha^d) = w_x\sigma^d(\tilde \alpha) = w_x\tilde \alpha.$$

For $R$ as above we define $g(y)\in G(\R_L)/G(\R)$ as
\begin{align*}g(y)=&g_{x}U_{\alpha}(p^{-1}y)U_{\alpha^1}(p^{-1}c_1\sigma(y))\dotsm U_{\alpha^{d-1}}(p^{-1}c_{d-1}\sigma^{d-1}(y))\\
&\cdot~U_{\alpha+w_x\alpha^d}(-p^{-1}c'_0y\sigma^d(y))\dotsm U_{\sigma^{d-1}(\alpha+w_x\alpha^{d})}(-p^{-1}c'_{d-1}\sigma^{d-1}(y)\sigma^{2d-1}(y))\end{align*} where the $c_i\in\mathcal{O}_L^{\times}$ for $i=0,\dotsc, d$ are such that $$\dot w_x \sigma U_{\alpha^i}(c_i\sigma^i(y))\dot w_x^{-1}=U_{w_x\alpha^{i+1}}(c_{i+1}\sigma^{i+1}(y))$$ and $c_0=1$.
Furthermore $c'_0\in \mathcal{O}_L$ is such that
$$U_{\alpha}(y)U_{w_x\alpha^d}(z)=U_{\alpha+w_x\alpha^d}(c'_0yz)U_{w_x\alpha^d}(z)U_{\alpha}(y),$$
and the $c'_i\in\mathcal{O}_L$ for $i=1, \dotsc, d$ are such that $$\dot w_x \sigma U_{\sigma^{i-1}(\alpha+w_x\alpha^{d})}(c'_{i-1}\sigma^{i-1}(y))\dot w_x^{-1}=U_{w_x\sigma^{i}(\alpha+w_x\alpha^{d})}(c'_{i}\sigma^{i}(y)).$$

% $$U_{\alpha}(-p^{-1}y)U_{w_x\alpha^d}(c_d\sigma^d(y))=U_{\alpha+w_x\alpha^d}(-p^{-1}c'_0y\sigma^d(y))
% U_{w_x\alpha^d}(c_d\sigma^d(y))U_{\alpha}(-p^{-1}y),$$ and the $c'_i\in\mathcal{O}_L$ for $i=1, \dotsc, d$ are such that $$\dot w_x \sigma U_{\sigma^{i-1}(\alpha+w_x\alpha^{d})}(c'_{i-1}\sigma^{i-1}(y))\dot w_x^{-1}=U_{w_x\sigma^{i}(\alpha+w_x\alpha^{d})}(c'_{i}\sigma^{i}(y)).$$
\end{para}

We remark that  $c'_i\in \mathcal{O}_L^{\times}$ for $i=0,\dotsc, d.$ Indeed, it suffices to check this for $i=0.$
If $c'_0$ is in $p\O_L,$ then the root groups $U_{\alpha}$ and $U_{w_x\alpha^d}$ commute in $G\otimes \bar k.$
Since all the roots of $G_{\Omega}$ have the same length, this is impossible, by \cite{SGA3} XXIII Prop. 6.5.

Now we can compute the conjugation action of $b_x=p^{\mu_x}\dot w_x$ on the root subgroups by using the above equations. We obtain
\begin{eqnarray*}
g^{-1}b\sigma(g) &=&U_{\alpha+w_x\alpha^d}(p^{-1}c_0'y\sigma^d(y))U_{\alpha}(-p^{-1}y)U_{w_x\alpha^d}(c_d\sigma^d(y))\\
&&\quad \cdot U_{w_x\sigma^d(\alpha+w_x\alpha^d)}(-c_d'\sigma^d(y)\sigma^{2d}(y))p^{\mu_x}\dot w_x\\
&=&U_{w_x\alpha^d}(c_d\sigma^d(y))U_{\alpha}(-p^{-1}y)U_{w_x\sigma^d(\alpha+w_x\alpha^d)}(-c_d'\sigma^d(y)\sigma^{2d}(y))p^{\mu_x}\dot w_x.
\end{eqnarray*}
As $\langle w_x\sigma^d(\alpha+w_x\alpha^d),\alpha^{\vee}\rangle\geq 0$, the corresponding root subgroups commute and the above expression is indeed in $Kp^{\mu_x}K$. Thus $S_{\preceq\mu}(g(y)^{-1}b\sigma g(y))=\Spec R.$ As before we can extend this family to a ``projective line'' and use that the point $g(0)$ and the point $g'$ ``at infinity'' are in the same connected component of $X_{\mu}^G(b)$. It remains to compute $g'$. Let $\R' = \O_L\langle y,y^{-1} \rangle$ be the frame introduced above.
We consider each connected component of the Dynkin diagram of $G_{\Omega},$ separately and for $0\leq i\leq d-1$
we compute in $G(\R'_L)/G(\R'):$
\begin{align*}
&U_{\alpha^i}(p^{-1}c_i\sigma^i(y))U_{\sigma^i(\alpha+w_x\alpha^d)}(-p^{-1}c_i'\sigma^i(y\sigma^d(y)))\\
=&U_{\alpha^i}(p^{-1}c_i\sigma^i(y))U_{-\sigma^i(\alpha+w_x\alpha^d)}(-p(c_i')^{-1}\sigma^i(y\sigma^d(y))^{-1})p^{-\sigma^i(\alpha+w_x\alpha^d)^{\vee}}\\
% =&U_{-\sigma^i(\alpha+w_x\alpha^d)}(-p(c_i')^{-1}\sigma^i(y^{-1}\sigma^d(y^{-1})))U_{\alpha^i}(p^{-1}c_i\sigma^i(y))U_{-\sigma^iw_x\alpha^{d}}(d_i\sigma^{d+i}(y^{-1}))p^{-\sigma^i(\alpha+w_x\alpha^d)^{\vee}}\\
=&U_{-\sigma^i(\alpha+w_x\alpha^d)}(-p(c_i')^{-1}\sigma^i(y^{-1}\sigma^d(y^{-1})))U_{-\sigma^iw_x\alpha^{d}}(d_i\sigma^{d+i}(y^{-1}))p^{-\sigma^i(\alpha+w_x\alpha^d)^{\vee}}
\end{align*}
for some $d_i\in\mathcal{O}_L^{\times}$. Here in the last line we have used that the root groups
$U_{\alpha^i}$ and $U_{-\sigma^iw_x\alpha^d}$ commute, and that $\langle \alpha^i, \sigma^i(\alpha+w_x\alpha^d) \rangle = 1.$
Thus we define the second family $f(y)$ as
$$f(y)=\prod_{i=0}^{d-1}U_{-\sigma^i(\alpha+w_x\alpha^d)}(-p(c_i')^{-1}\sigma^i(y\sigma^d(y)))U_{-\sigma^iw_x\alpha^{d+i}}(d_i\sigma^{d+i}(y))p^{-\sigma^i(\alpha+w_x\alpha^d)^{\vee}}.$$ In particular $g'=f(0)=g_xp^{-\sum_{i=0}^{d-1}\sigma^i(\alpha+w_x\alpha^d)^{\vee}}$ is as claimed.

\smallskip
\noindent{\it Case 2.2: $w_x\tilde \alpha = \tilde \alpha.$}

Let $c_0=c_0'=1$ and let $c_i,c_i'$ be defined inductively by
\begin{eqnarray*}
\dot w_x \sigma U_{\alpha^i}(c_i\sigma^i(y))\dot w_x^{-1}&=&U_{w_x\alpha^{i+1}}(c_{i+1}\sigma^{i+1}(y))\\
\dot w_x \sigma U_{\sigma^{i}(\alpha+w_x\alpha^{d})}(c'_i\sigma^i(y))\dot w_x^{-1}&=&U_{\sigma^{i+1}(\alpha+w_x\alpha^{d})}(c'_{i+1}\sigma^{i+1}(y)).
\end{eqnarray*}
Furthermore let $\tilde{c}\in \mathcal{O}_L$ be such that
$$U_{\alpha}(y)U_{w_x\alpha^d}(z)=U_{\alpha+w_x\alpha^d}(\tilde c yz)U_{w_x\alpha^d}(z)U_{\alpha}(y).$$
% $$U_{\alpha}(-p^{-1}y)U_{w_x\alpha^d}(c_d\sigma^d(y))=U_{\alpha+w_x\alpha^d}(-p^{-1}\tilde c y\sigma^d(y))U_{w_x\alpha^d}(c_d\sigma^d(y))U_{\alpha}(-p^{-1}y).$$
We evidently have $c_i' \in \O_L^\times,$ and $\tilde c \in \O_L^\times$ by the same argument as in Case 2.1 above.
We now define the frame we will need.

\begin{lemma}\label{lem:curveprops} Let $h = z- \tilde cy^{qd+1} - c_d'z^{qd},$ and set $A = \O_L[y,z]/h.$
Then $\Spec A$ is a dense Zariski open in a smooth, proper curve $X$ over $\O_L$ having
geometrically connected fibres.
\end{lemma}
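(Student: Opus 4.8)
The plan is to analyze the equation $h = z - \tilde c y^{qd+1} - c_d' z^{qd} = 0$ directly, treating it as a curve in $\mathbb{A}^2_{\O_L}$ with coordinates $y, z$, and then compactify. First I would observe that $h$ is, for each fixed value of $y$, a polynomial in $z$ of degree $qd$ (in positive characteristic $qd = \deg_z h$ may drop, but over $\O_L$ the top term $c_d' z^{qd}$ is a unit times $z^{qd}$, so the degree is honestly $qd$ on every fibre). Thus the projection $(y,z) \mapsto y$ exhibits $\Spec A$ as a finite flat cover of $\mathbb{A}^1_{\O_L}$ of degree $qd$; in particular $A$ is a finite flat $\O_L[y]$-algebra, hence flat over $\O_L$, and $\Spec A$ is a curve (pure relative dimension $1$).

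Next I would check smoothness of $\Spec A$ over $\O_L$. Since $\partial h/\partial z = 1 - qd\, c_d' z^{qd-1}$: if $p \nmid qd$ this is a nonzero polynomial, and one checks the locus where it vanishes meets $\Spec A$ only where the fibre is étale, but more robustly I would use the following. The scheme $\Spec A$ is smooth over $\O_L$ provided the two partials $\partial h/\partial y = -(qd+1)\tilde c\, y^{qd}$ and $\partial h/\partial z = 1 - qd\, c_d' z^{qd-1}$ never simultaneously vanish along with $h$. When $p \mid qd+1$ (so $\partial h/\partial y \equiv 0$) one argues using $\partial h / \partial z$; when $p \mid qd$ (so the $z^{qd-1}$ term in $\partial h/\partial z$ drops, leaving the unit $1$) the partial $\partial h/\partial z$ is itself a unit and smoothness is immediate; in the generic case one checks the simultaneous vanishing forces $y = 0$, then $z^{qd} = c_d' z^{qd}$ forces $z = 0$ (as $c_d'$ is a unit and $1 - c_d' \in \O_L^\times$ unless $c_d' \equiv 1$, a case handled separately since the structure constant being $1$ mod $p$ again contradicts that all roots of $G_\Omega$ have equal length via \cite{SGA3} XXIII Prop.~6.5, exactly as used for $\tilde c$), and the point $(0,0)$ satisfies $h = 0$ but then $\partial h/\partial z(0,0) = 1 \neq 0$. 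So $\Spec A$ is $\O_L$-smooth of relative dimension $1$.

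Then I would compactify. The equation $h$ is quasi-homogeneous if we assign weights cleverly, but it is cleanest to take the smooth proper model as follows: embed $\Spec A \hookrightarrow \mathbb{P}^1_{\O_L} \times \mathbb{A}^1_{[z]}$... actually the simplest route is to take the normalization $X$ of $\mathbb{P}^1_{\O_L}$ (the $y$-line compactified) in the function field $\Frac(A \otimes_{\O_L} L)$, together with its closed fibre analysis. Concretely: $X_L := $ the smooth proper curve over $L$ with function field $L(y)[z]/(h)$; since $\Spec A_L$ is smooth affine and its complement in $X_L$ is finite, $\Spec A_L$ is dense open in $X_L$. To get a model over $\O_L$ with geometrically connected fibres, I would argue that $A \otimes_{\O_L} \bar{k}$ is geometrically integral: the polynomial $h \bmod p \in \bar{k}[y,z]$ is irreducible (it is linear in $z$ after removing the $z^{qd}$ term only if $p \mid qd$, in which case $z = \tilde c y^{qd+1}$ cuts out an affine line, manifestly integral; otherwise irreducibility of $z - \tilde c y^{qd+1} - c_d' z^{qd}$ over $\bar k(y)$ follows since as a polynomial in $z$ it has a root iff..., which I would verify by an Eisenstein-type or Artin–Schreier-type argument at $y = \infty$). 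Granting geometric integrality of both fibres and flatness, the normalization $X$ of $\mathbb{P}^1_{\O_L}$ in $\Frac(A_L)$ is a smooth proper $\O_L$-curve (smooth because we have already shown the affine part is smooth, and the points at infinity are smooth by a local computation in the coordinate $u = y^{-1}$) with geometrically connected fibres, and $\Spec A$ is a dense Zariski open.

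The main obstacle I anticipate is the fibrewise geometric irreducibility / connectedness of the special fibre $X \otimes \bar k$, i.e. showing $h \bmod p$ defines a geometrically integral affine curve: the interaction between $p$, $qd$, and $qd+1$ forces a case division (exactly one of $p \mid qd$, $p \mid qd+1$ can hold, or neither), and in the "neither" case one must genuinely check that $z^{qd} - c_d'^{-1} z + (\text{stuff in } y)$ does not factor over $\bar k(y)$ — this is where I would lean on the behaviour at $y = \infty$, where after the substitution $z = y^{qd+1} w / (\text{something})$ or by examining the Newton polygon of $h$ in $z$ over the place $y = \infty$ of $\bar k(y)$, one sees the place is totally ramified or the polynomial is irreducible. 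Once geometric integrality of the fibres is in hand, everything else (flatness, smoothness, properness of the normalization, density of $\Spec A$) is formal.
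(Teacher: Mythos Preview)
Your outline is broadly workable, but you have overlooked that $q = p^r$ is a power of $p$ throughout the paper. Hence $p \mid qd$ always holds and your case division collapses: $\partial h/\partial z \equiv 1$ in $A\otimes\bar k$, so smoothness of $\Spec A$ is immediate. In particular the ``generic case'' in which you invoke an unjustified claim about $c_d' \not\equiv 1 \pmod p$ is vacuous.

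The paper then takes a different and much shorter route for compactification and connectedness. Rather than normalizing over the $y$-line, it projects to the $z$-line (degree $qd+1$, not $qd$): it writes down an explicit second chart $A' = \O_L[w,z^{-1}]/h'$ with $h' = z^{-qd} - \tilde c\,w^{qd+1} - c_d'\,z^{-1}$ and $w = yz^{-1}$, glues $\Spec A$ and $\Spec A'$ along $\Spec A[z^{-1}]$ into a proper curve $X$ finite over $\mathbb{P}^1_z$, and checks $\partial h'/\partial(z^{-1}) \equiv -c_d' \in \bar k^\times$ for smoothness at infinity---the local computation you leave undone. For geometric connectedness of the fibres the paper simply observes that $z: X \to \mathbb{P}^1$ is totally ramified over $z = 0$ (the fibre there is $\tilde c\,y^{qd+1} = 0$, a single point), forcing each fibre of $X/\O_L$ to be geometrically irreducible. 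This one-line argument bypasses entirely the irreducibility-of-$h$-mod-$p$ analysis you identify as the main obstacle; your Newton-polygon sketch at $y = \infty$ could probably be completed, but choosing the $z$-projection makes it unnecessary.
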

\begin{proof} Let $h' = z^{-qd} - \tilde c w^{qd+1} - c_d'z^{-1},$ and
$A' = \O_L[w,z^{-1}]/h'.$ Then $A' \subset A[z^{-1}]$ by sending $w$ to $yz^{-1},$
and $\Spec A,$ $\Spec A'$ glue along $\Spec A[z^{-1}]$ into a proper flat curve $X$ over $\O_L,$
which admits a finite map $\eta: X \rightarrow \mathbb P^1$ given by the function $z.$

Note that $\frac{\partial h}{\partial z} = 1$ in $A\otimes\bar k,$ and
$\frac{\partial h'}{\partial z^{-1}} = -c_d' \neq 0$ in $A'\otimes\bar k.$ Hence $X$ is a smooth curve.
Since $\eta$ is totally ramified over $z=0,$ $X$ has geometrically irreducible fibres.
\end{proof}

\begin{para} {\em Proof of Case 2.2 continued:} Let $x_0 \in X$ be the point given by $y=z=0,$ and $x_1 \in X$ the point
given by $z^{-1} = w = 0,$ using the covering of $X$ introduced in Lemma \ref{lem:curveprops} above.
Choose a map $\xi: X \rightarrow \mathbb P^1$ such that $\xi$ is \'etale above $\xi(x_0)$ and $\xi(x_1).$
To see that this is possible, choose points $x_2, \dots, x_r$ for $r$ big enough (e.g. $r\geq 2g$ with $g$ the genus of $X\otimes\bar k$) and such that the $x_i$ are distinct in $X\otimes \bar k$
for $i=0,\dots, r,$ % and such that $\O(\sum_{j\neq i} x_j)$ is very ample for each $i.$
By the Riemann-Roch theorem
there is a section $g_0 \in \Gamma(X\otimes \bar k, \O(\sum_j x_j))$ which does not vanish at any $x_i.$
Lift $g_0$ to $g \in \Gamma(X, \O(\sum_j x_j)).$ Then as a meromorphic function on $X,$ $g$ has a simple pole at each $x_i$
with a residue which is non-zero mod $p.$ Take $\xi$ to be given by $g^{-1}.$ Then $\xi$ is \'etale over $0,$ and $\xi(x_i) = 0$ for all $i.$

Let $\widehat X$ and $\widehat {\mathbb P}^1$ denote the $p$-adic completion of $X$ and $\mathbb P^1.$
Let $U_0 \subset \mathbb P^1\otimes \bar k$ be the open subset where $\xi\otimes \bar k$ is \'etale, and let
$ U \subset \widehat {\mathbb P}^1$
denote the corresponding formal open affine, and $Y = \xi^{-1}(U) \subset \widehat X.$ Since $U_0$ is stable by Frobenius on
$\mathbb P^1\otimes \bar k,$ $U$ is stable by any Frobenius lift on $\widehat {\mathbb P}^1.$ Fix such a lift.
Since $Y \rightarrow U$ is finite \'etale, by Lemma \ref{1.1.5}, the Frobenius lift on $W$ lifts uniquely to a Frobenius lift on $Y = \Spf \R.$
We denote by $\sigma$ the corresponding $q$-Frobenius on $\R.$

It will be convenient to denote by $\Spf \R_0$ and $\Spf \R_1$ the formal affine subsets of $Y,$ which are the complements
of the mod $p$ reductions of $x_1$ and $x_0$ respectively. Thus $z,y  \in \R_0$ and $z^{-1},w = yz^{-1} \in \R_1.$
Likewise, we denote by $\Spf \R'$ the complement of $\{x_0,x_1\}$ in $\Spf \R.$
Define an element $g \in G(\R_{0,L})$ by

\begin{eqnarray*}g &=&g_xU_{\alpha}(p^{-1}c_0y)\dotsm U_{\alpha^{d-1}}(p^{-1}c_{d-1}\sigma^{d-1}(y))U_{\alpha+w_x\alpha^d}(-p^{-1}c_0'z)\dotsm \\
&&\quad \cdot~ U_{\sigma^{d-1}(\alpha+w_x\alpha^d)}(-p^{-1}c_{d-1}'\sigma^{d-1}(z)),\end{eqnarray*}
 Recall that $\langle w_x\alpha_d,\mu_x\rangle=1$ and that $w_x\sigma^d(\alpha+w_x\alpha^d)=\alpha+w_x\alpha^d$. We obtain
\begin{eqnarray*}
&&g^{-1}b\sigma(g)\\&=& U_{\alpha+w_x\alpha^d}(p^{-1}z)U_{\alpha}(-p^{-1}y)U_{w_x\alpha^d}(c_d\sigma^d(y))U_{w_x\sigma^d(\alpha+w_x\alpha^d)}(-p^{-1}c_d'\sigma^d(z))p^{\mu_x}\dot w_x\\
&=&U_{\alpha+w_x\alpha^d}(p^{-1}z-p^{-1}\tilde cy\sigma^d(y)-p^{-1}c_d'\sigma^d(z))U_{w_x\alpha^d}(c_d\sigma^d(y))U_{\alpha}(-p^{-1}y)p^{\mu_x}\dot w_x
\end{eqnarray*}
Recall that $\langle \alpha+w_x\alpha^d,\alpha^{\vee}\rangle=2-1=1$, thus $\alpha$ and $\alpha+w_x\alpha$ commute. For the second equality above we use that
$$w_x\sigma^d(\alpha+w_x\alpha^d)= w_x\tilde \alpha = \tilde \alpha = \alpha+w_x\alpha^d$$
commutes with $w_x\alpha^d$ and $\alpha$, and the definition of $\tilde c$. Since $\langle \alpha, \mu_x \rangle = -1,$ and
$$ z-\tilde cy\sigma^d(y)-c_d'\sigma^d(z) = z-\tilde cy^{dq+1}-c_d'z^{dq} = h = 0 \quad \text{in $\R_0/p\R_0$} $$
we see that $g^{-1}b\sigma(g) \in G(\R_0)p^{\mu_x}G(\R_0).$

To define and compute a ``point at infinity'' of the above family we first compute for $0\leq i<d$
\begin{align*}
&U_{\alpha^i}(p^{-1}c_i\sigma^i(y))U_{\sigma^i(\alpha+w_x\alpha^d)}(-p^{-1}c_i'\sigma^i(z))G(\R')\\
=&U_{\alpha^i}(p^{-1}c_i\sigma^i(y))U_{-\sigma^i(\alpha+w_x\alpha^d)}(-p(c_i')^{-1}\sigma^i(z^{-1}))p^{-\sigma^i(\alpha+w_x\alpha^d)^{\vee}}G(\R')\\
\intertext{moving $U_{\alpha^i}$ to the right we obtain}
=&U_{-\sigma^i(w_x\alpha^d)}(-d_i\sigma^i(yz^{-1}))U_{-\sigma^i(\alpha+w_x\alpha^d)}(-p(c_i')^{-1}\sigma^i(z^{-1}))p^{-\sigma^i(\alpha+w_x\alpha^d)^{\vee}}G(\R')
\end{align*}
for some $d_i\in\mathcal{O}_L$.

Define an element $f \in G(\R_{1,L})$ by setting
$$f =g_x \prod_{i=0}^{d-1}U_{-\sigma^i(w_x\alpha^d)}(-d_i\sigma^i(w))U_{-\sigma^i(\alpha+w_x\alpha^d)}(-p(c_i')^{-1}\sigma^i(z))p^{-\sigma^i(\alpha+w_x\alpha^d)^{\vee}}.$$
Then we have $f = g$ in $G(\R'_L).$ By what we saw above, $S_{\preceq\mu}(f^{-1}b\sigma(f))$ contains the open and dense subset
$\Spec \R'/p\R'$ of $\Spec \R_1/p\R_1.$ By Lemma \ref{1.1.3}, $S_{\preceq\mu}(f^{-1}b\sigma(f))$ is Zariski closed.
Hence $f$ defines an element of $X_{\preceq\mu}(b)(\R')$. In particular $g(x_0) = g_x\in X^M_{\mu_x}(b)$ and
$g'=f(x_1)=g_x p^{-\sum_{i=0}^{d-1}\sigma^i(\alpha+w_x\alpha^d)^{\vee}}$ are in the same connected component of $X_{\preceq\mu}(b)$.
\qed
\end{para}

\noindent{\it Case 3: $\Omega$ is of type III.}
The same argument as for the preceeding cases shows that $\mu_x$ and thus $w_x$ are central on all connected components of the Dynkin diagram of $G_{\Omega}$ except for the one containing $\alpha,\alpha^d,\alpha^{2d}$.  As $x+\alpha^{\vee}-\alpha^{d\vee}\notin \bar I_{\mu_x, b}^{M, G_{\Omega}}$, $x+\alpha^{\vee}-\alpha^{2d\vee}\notin \bar I_{\mu_x, b}^{M, G_{\Omega}}$, we have $\langle\alpha^d, \mu_x\rangle\leq 0$ and $\langle\alpha^{2d}, \mu_x\rangle\leq 0$.  Combined with the fact that $\langle \tilde{\alpha}, \mu_x\rangle\geq 0$,   we obtain $\langle\beta,\mu_x\rangle=1$ and $\langle \alpha^d,\mu_x\rangle=\langle\alpha^{2d},\mu_x\rangle= 0$. Let $$\tilde U_{\alpha}^i(y)=b_{x}^{(i)}\sigma^i(U_{\alpha}(y))(b_{x}^{(i)})^{-1}.$$ Then
\begin{eqnarray*}
\tilde U_{\alpha}^i(p^{-1}y)&=&\begin{cases}
U_{\alpha^i}(p^{-1}c_i\sigma^i(y))&\text{if }0\leq i<d\\
U_{\alpha^i+\beta^i}(c_i\sigma^i(y))&\text{if }d\leq i<2d\\
U_{\alpha^i}(c_i\sigma^i(y))&\text{if }2d\leq i<3d\\
U_{\alpha+\beta}(c_n\sigma^n(y))&\text{if }i=3d=n.
\end{cases}
\end{eqnarray*}
Let $R = \bar k[y]$ and $\R$ the $R$-frame chosen in \ref{(1.3.2)}. We define $g(y)\in G(\R_L)$ by
$$g(y)=g_x\tilde U^{3d-1}_{\alpha}(p^{-1}y)\dotsm \tilde U^{0}_{\alpha}(p^{-1}y).$$
Then
\begin{eqnarray*}
&&g(y)^{-1}b\sigma(g(y))\\&=&U_{\alpha}(-p^{-1}y)U_{\alpha^d+\beta}(-c_d\sigma^d(y))\left(U_{\alpha^{2d}}(-c_{2d}\sigma^{2d}(y))U_{\alpha+\beta}(c_{3d}\sigma^{3d}(y))\right.\\
&&\left.\quad \cdot~ U_{\alpha^{2d}}(c_{2d}\sigma^{2d}(y))\right)U_{\alpha^d+\beta}(c_d\sigma^d(y))p^{\mu_x}\dot w_x\\
&=&U_{\alpha}(-p^{-1}y)U_{\alpha^d+\beta}(-c_d\sigma^d(y))U_{\alpha+\beta}(c_{3d}\sigma^{3d}(y))\\
&&\quad \cdot~ U_{\alpha+\alpha^{2d}+\beta}(-c'\sigma^{2d}(y)\sigma^{3d}(y))U_{\alpha^d+\beta}(c_d\sigma^d(y))p^{\mu_x}\dot w_x\\
&=&U_{\alpha}(-p^{-1}y)U_{\alpha+\beta}(c_{3d}\sigma^{3d}(y))U_{\alpha+\alpha^d+\alpha^{2d}+2\beta}(-c''\sigma^d(y)\sigma^{2d}(y)\sigma^{3d}(y))\\
&&\quad \cdot~ U_{\alpha+\alpha^{2d}+\beta}(c'\sigma^{2d}(y)\sigma^{3d}(y))p^{\mu_x}\dot w_x\\
\end{eqnarray*}
with $c',c''\in\mathcal{O}_L$. Now $U_{\alpha}$ commutes with the other factors and can be moved to the right. We obtain that $g(y)^{-1}b\sigma(g(y))\in Kp^{\mu_x}K$. A computation analogous to the above constructs a point $g'$ ``at infinity'' and shows that it has the required properties, which finishes the proof of Lemma \ref{lempm3C}.
\end{proof}

\begin{remark}Example \ref{ex_for_lempm3C} is in Case 2.1 of the proof of Lemma \ref{lempm3C}. Another interesting example is the following: Let $G$ be a unitary similitude group such that $G_{L}\simeq \mathrm{GL}_3\times \mathbb{G}_{m, L}$ with standard simple roots $\beta_i=e_i-e_{i+1}$ for $i=1, 2$. The group $\Gamma=\{\mathrm{Id}, \sigma\}$ acts on $G_{L}$  with $\sigma\beta_i= \beta_{3-i}$ for $i=1, 2$. Take $M=T$,  $\alpha=\beta_2$ and the cocharacter $\mu_{x_0}$ is defined as follows which determines $b$ and $\mu$:
\begin{eqnarray*}
\mu_{x_0}: \mathbb{G}_{m, L} &\rightarrow& G_{L} \simeq \mathrm{GL}_3\times \mathbb{G}_{m, L} \\ y&\mapsto& (\mathrm{diag}( y, 1, y), y)
\end{eqnarray*}
Then the datum $( M, G, \Gamma\alpha, b, \mu)$ still satisfies all the conditions of Lemma \ref{lempm3C} and corresponds to Case 2.2 in that proof.
\end{remark}

\begin{cor}\label{cor_pm3weak_x}Let $\Omega\in\Phi_{N, \Gamma}$ be adapted and of type I. Let $x\in \bar I^{M, G}_{\mu, b}$. Suppose there exists $\alpha\in\Omega$ such that $\langle\alpha, \mu_x\rangle<0$. Then there exist $g, g'\in X^M_{\mu_x}(b)(W(\bar k))$ such that
\begin{itemize}
\item $g$ and $g'$ are in the same connected component of $X_{\mu}^G(b)$;
\item $w_M(g')-w_M(g)=\sum_{\beta\in \Omega}\beta^{\vee}$ in $\pi_1(M)^{\Gamma}$.
\end{itemize}
\end{cor}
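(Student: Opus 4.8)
The plan is to deduce the corollary from Lemmas \ref{lempm3B} and \ref{lempm3C} after a change of base point. First I would choose an element $g_x\in X^M_{\mu_x}(b)(W(\bar k))$: this is possible because $[b]=[b_x]$ in $B(M)$ by \ref{lemma_b=b_x}, so there is $g_x\in M(L)$ with $g_x^{-1}b\sigma(g_x)=b_x$, and $g_xM(\O_L)$ then lies in $X^M_{\mu_x}(b)$. Put $b'=b_x=p^{\mu_x}\dot w_x$. Left multiplication by $g_x$ induces, for every frame $\R$, compatible bijections $X^M_{\mu_x}(b')(\R)\iso X^M_{\mu_x}(b)(\R)$ and $X^G_{\mu}(b')(\R)\iso X^G_{\mu}(b)(\R)$ (as in \S\ref{affdlvar}), hence a compatible bijection on sets of connected components, and $w_M(g_xh)=w_M(g_x)+w_M(h)$ for $h\in M(L)$. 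Since $b'$ is again superbasic in $M$, since $x\in\bar I^{M,G}_{\mu,b'}$, and since $b'$ is of the form $b_{x_0}$ with $x_0=x$, this reduces the corollary to the case $b=b_{x_0}$ with $x_0=x$; in that case one may take $g_{x_0}=1$.

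I would then split according to the cardinality of the finite set $\bar I^{M,G_\Omega}_{\mu_{x_0},b}$, which always contains $x_0$. If it has at least two elements, then, $\Omega$ being of type I, the hypotheses of Lemma \ref{lempm3B} are met (the type I alternative of its second hypothesis applies), and the proof of that lemma produces $g_1,g_2\in X^M_{\mu_{x_0}}(b)(W(\bar k))$ lying in one connected component of $X^G_{\mu}(b)$ with $w_M(g_2)-w_M(g_1)=-\sum_{i=0}^{n-1}\alpha^{i\vee}$. If instead $\bar I^{M,G_\Omega}_{\mu_{x_0},b}=\{x_0\}$, then since $\Omega$ is of type I and, by assumption, some $\alpha\in\Omega$ satisfies $\langle\alpha,\mu_{x_0}\rangle<0$, the hypotheses of Lemma \ref{lempm3C} (and of Proposition \ref{pm3weak}) are met, and Case 1 of its proof (the type I case) yields an element $g'\in g_{x_0}p^{-\sum_{i=0}^{n-1}\alpha^{i\vee}}K$ in $X^M_{\mu_{x_0}}(b)$ lying in the same connected component of $X^G_{\mu}(b)$ as $g_{x_0}$; here one sets $g_1=g_{x_0}$, $g_2=g'$, so that again $w_M(g_2)-w_M(g_1)=-\sum_{i=0}^{n-1}\alpha^{i\vee}$. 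In both cases I would use that for $\Omega$ of type I one has $n=|\Omega|$ and $\{\alpha^i:0\le i<n\}=\Omega$, so that $\sum_{i=0}^{n-1}\alpha^{i\vee}=\sum_{\beta\in\Omega}\beta^\vee$ in $\pi_1(M)^{\Gamma}$.

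Finally I would interchange the two elements so obtained — replacing $(g_1,g_2)$ by $(g_2,g_1)$, which flips the sign of the $w_M$-difference from $-$ to $+$ — and transport them back along $g_x$, obtaining elements $g,g'\in X^M_{\mu_x}(b)(W(\bar k))$ lying in the same connected component of $X^G_{\mu}(b)$ with $w_M(g')-w_M(g)=\sum_{\beta\in\Omega}\beta^\vee$. There is no serious obstacle here: the points requiring attention are purely bookkeeping, namely that the dichotomy $|\bar I^{M,G_\Omega}_{\mu_{x_0},b}|=1$ versus $\ge 2$ matches exactly the hypotheses of Lemmas \ref{lempm3C} and \ref{lempm3B}, that being of type I disposes of the auxiliary positivity conditions on $\langle\tilde\alpha^i,\mu_y\rangle$ appearing in those lemmas, and that the sign of the Kottwitz-invariant difference is tracked correctly through the base-point change.
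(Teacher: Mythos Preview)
Your proof is correct and follows essentially the same approach as the paper's: reduce to the case $b=b_{x_0}$ with $x_0=x$ by left multiplication with an element $g_x\in M(L)$ satisfying $g_x^{-1}b\sigma(g_x)=b_x$, then invoke the type~I cases of Lemmas~\ref{lempm3B} and~\ref{lempm3C} (which together establish Proposition~\ref{pm3weak} for $\Omega$ of type~I), and transport the resulting pair back. The paper phrases the middle step slightly more abstractly (``Proposition~\ref{pm3weak} still holds with $x_0$ replaced by $x$ and $b$ by $b_x$''), while you spell out the dichotomy on $|\bar I^{M,G_\Omega}_{\mu_{x_0},b}|$ and the sign swap explicitly; these are the same argument.
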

\begin{proof}As $\Omega$ is of type I, by Lemma \ref{lempm3B} and Lemma \ref{lempm3C},  Proposition \ref{pm3weak} holds for $\Omega$. In particular, the corollary holds for $x=x_0$. Moreover if we replace $x_0$ by $x$ in Proposition \ref{pm3weak}, it still holds once we replace correspondingly $b$ by $b_x$. This means that there exist $g_1, g'_1\in X^M_{\mu_x}(b_x)(W(\bar k))$ such that
\begin{itemize}
\item $g$ and $g'$ are in the same connected component of $X_{\mu}^G(b)$;
\item $w_M(g')-w_M(g)=\sum_{\beta\in \Omega}\beta^{\vee}$ in $\pi_1(M)^{\Gamma}$.
\end{itemize} By Remark \ref{lemma_b=b_x}, $[b_x]=[b]$ in $B(M)$. So there exists an element $h\in M(L)$ such that $g\mapsto hg$ gives an isomorphism between $X^M_{\mu_x}(b_x)$ and $X^M_{\mu_x}(b)$. Therefore $g=hg_1, g'=hg'_1\in X^M_{\mu_x}(b)(W(\bar k))$ are the desired elements.
\end{proof}

\begin{lemma}\label{lempm3D}
Proposition \ref{pm3weak} holds if $\Omega$ is of type II or III and $\langle\tilde\alpha,\mu_x\rangle < 0$ for some $\alpha\in\Omega$ and some $x\in \bar I_{\mu, b}^{M, G}$.
\end{lemma}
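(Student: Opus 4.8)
The strategy is to replace the Galois orbit $\Omega$ by the orbit $\widetilde\Omega := \Gamma\tilde\alpha$ of the distinguished root $\tilde\alpha$ introduced just before Lemma \ref{lemma_bar_I}, and to reduce to the already established type~I case, i.e.\ to Corollary \ref{cor_pm3weak_x}. Fix $\alpha\in\Omega$ and $x\in\bar I_{\mu, b}^{M, G}$ with $\langle\tilde\alpha,\mu_x\rangle<0$. Recall that $\tilde\alpha^\vee=\alpha^\vee+\beta^\vee+\alpha^{d\vee}$ if $\Omega$ is of type~II and $\tilde\alpha^\vee=\alpha^\vee+\alpha^{d\vee}+\alpha^{2d\vee}+\beta^\vee$ if $\Omega$ is of type~III, where $\beta$ is the common neighbour, in the Dynkin diagram of $G_\Omega$, of the roots of $\Omega$ lying in the connected component $C$ of $\alpha$ (with $\beta=0$ exactly when $\Omega$ is of type~II and $\alpha,\alpha^d$ are adjacent). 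As in the proof of Lemma \ref{lempm3C}, after replacing $G_\Omega$ by the standard Levi subgroup attached to the $\Gamma$-orbit of $C$ we may assume $\Gamma$ acts transitively on the components of the Dynkin diagram of $G_\Omega$, so that $\beta\in\Delta_M$ by Proposition \ref{lemma_newA}; this does not affect the argument (it concerns only $\pi_1(M)$) and does not change $\tilde\alpha$ or $d$. Since $N\subseteq P$ is normal and $\beta\in\Phi_M$, the element $\tilde\alpha$ is a positive root lying in $\Phi_N$, so $\widetilde\Omega\in\Phi_{N,\Gamma}$, and $\tilde\alpha,\Gamma\tilde\alpha\subseteq\Phi_\Omega$.

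Next I would check that $\widetilde\Omega$ is adapted and of type~I. For adaptedness: by Remark \ref{rem_after_newA} all roots of $G_\Omega$ have the same length, hence $\langle\delta,\tilde\alpha^\vee\rangle\in\{-1,0,1\}$ for every root $\delta$ of $M$ (a value $\pm2$ would force $\delta=\pm\tilde\alpha\notin\Phi_M$); and $\langle\delta,\tilde\alpha^\vee\rangle\le 0$ for simple $\delta\in\Delta_M$, either because $\alpha,\alpha^d$ (and $\alpha^{2d}$) are adapted and $\langle\delta,\beta^\vee\rangle\le 0$ when $\delta\ne\beta$, or by the direct computation $\langle\beta,\tilde\alpha^\vee\rangle\in\{0,-1\}$ using that $\beta$ is adjacent to each of $\alpha,\alpha^d$ (and $\alpha^{2d}$). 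For type~I: the restriction of $\sigma^d$ to $C$ is a diagram automorphism which cyclically permutes the roots of $\Omega\cap C$ and therefore fixes their unique common neighbour $\beta$, so $\sigma^d\tilde\alpha=\tilde\alpha$; since $C,\sigma C,\dots,\sigma^{d-1}C$ are distinct components of the Dynkin diagram of $G_\Omega$, and the Dynkin components of $G_{\widetilde\Omega}$ refine those of $G_\Omega$ (as $\Phi_{\widetilde\Omega}\subseteq\Phi_\Omega$), the roots $\sigma^i\tilde\alpha$ for $0\le i<d$ are distinct and lie in distinct components of $G_{\widetilde\Omega}$, which is precisely the type~I condition, and $\widetilde\Omega=\{\sigma^i\tilde\alpha:0\le i<d\}$.

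Now $\widetilde\Omega$ is adapted, of type~I, $x\in\bar I_{\mu, b}^{M, G}$, and $\langle\tilde\alpha,\mu_x\rangle<0$, so Corollary \ref{cor_pm3weak_x} yields $g_1,g_2\in X^M_{\mu_x}(b)(W(\bar k))$ lying in the same connected component of $X^G_\mu(b)$ with $w_M(g_2)-w_M(g_1)=\sum_{\gamma\in\widetilde\Omega}\gamma^\vee$ in $\pi_1(M)^\Gamma$. Writing $\tilde\alpha^\vee=\sum_{0\le j<|\Omega|/d}\sigma^{jd}\alpha^\vee+\beta^\vee$ and summing over $0\le i<d$, the reindexing $k=i+jd$ gives
\[
\sum_{\gamma\in\widetilde\Omega}\gamma^\vee=\sum_{k=0}^{|\Omega|-1}\sigma^k\alpha^\vee+\sum_{i=0}^{d-1}\sigma^i\beta^\vee .
\]
Each $\sigma^i\beta$ is a root of $M$ (as $M$ is $\Gamma$-stable and $\beta\in\Delta_M$), so the second sum is $0$ in $\pi_1(M)$; and since $\Gamma$ is cyclic with $\Gamma_\alpha$ (the stabiliser of $\alpha$) of index $|\Omega|$, the elements $1,\sigma,\dots,\sigma^{|\Omega|-1}$ represent $\Gamma/\Gamma_\alpha$, so the first sum equals $\sum_{\beta'\in\Omega}\beta'^\vee$. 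Hence $w_M(g_2)-w_M(g_1)=\sum_{\beta'\in\Omega}\beta'^\vee$ in $\pi_1(M)^\Gamma$, and taking $x$ for the element of $\bar I_{\mu, b}^{M, G}$ in the statement of Proposition \ref{pm3weak} completes the proof.

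The main obstacle will be the type~I verification: making precise the statement about the $\Gamma$-action on the Dynkin diagram of $G_\Omega$ — that $\sigma^d$ restricted to $C$ fixes the central/branch node $\beta$ while cyclically permuting the roots of $\Omega$ lying in $C$ — which rests on the explicit classification recalled in Remark \ref{rem_after_newA} together with Lemma \ref{(1.2.6)} (the stabiliser of each component of the Dynkin diagram of $M$ acts trivially on it). The adaptedness of $\tilde\alpha$ and the coroot bookkeeping in the last paragraph are then routine.
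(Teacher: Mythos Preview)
Your proposal is correct and follows essentially the same approach as the paper: set $\Omega'=\Gamma\tilde\alpha$, observe that $\Omega'$ is adapted and of type~I, apply Corollary \ref{cor_pm3weak_x} at the element $x$, and use that $\sum_{\gamma\in\Omega'}\gamma^\vee=\sum_{\beta'\in\Omega}\beta'^\vee$ in $\pi_1(M)^\Gamma$. The paper's proof simply asserts these facts in two lines, whereas you have spelled out the verifications (adaptedness of $\tilde\alpha$, the type~I property via $\sigma^d\tilde\alpha=\tilde\alpha$, and the coroot identity), which is entirely appropriate.
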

\begin{proof}
Let $\alpha\in\Omega$ be as in the Lemma. Let $\Omega':=\Gamma\tilde{\alpha}$. Then $\Omega'$ is adapted and of type I. Therefore we can apply Corollary \ref{cor_pm3weak_x} to $(\Omega', x)$ and obtain elements  $g, g'\in X^M_{\mu_x}(b)(W(\bar k))$ such that $g\sim g'$ and $w_M(g')-w_M(g)=\sum_{\beta\in \Omega'}\beta^{\vee}=\sum_{\beta\in\Omega}\beta^\vee$ in $\pi_1(M)^{\Gamma}$.
\end{proof}

Proposition \ref{pm3weak} then follows immediately from Lemma \ref{lempm3B}, Lemma \ref{lempm3C}, and Lemma \ref{lempm3D}.

\section{Application to Rapoport-Zink spaces} \label{sec_app_RZ}
In this section, we apply the main results of this paper to (simple) unramified Rapoport-Zink spaces of EL type or unitary/symplectique PEL
type.
\setcounter{subsection}{1}
\setcounter{equation}{0}

\begin{para}
From now on, suppose $F=\mathbb{Q}_p$. In the previous sections, we have studied the connected components of affine Deligne-Lusztig varietes $X^G_\mu(b)$ defined from the datum $(G, b,\mu)$.  Now we require that the datum $(G, b, \mu)$ satisfies the following additional conditions:
\begin{itemize}
\item $G$ belongs to one of the following three cases:
\begin{itemize}
\item \emph{EL case}: $G=\mathrm{Res}_{\mathcal{O}_{F_1}|\mathbb{Z}_p}\mathrm{GL}(\Lambda_0)$ where $F_1$ is a finite unramified extension of $\mathbb{Q}_p$, and where $V$ is a finite dimensional $F_1$-vector space with $\Lambda_0\subset V$ a lattice.
\item \emph{PEL symplectic case}: $G=\mathrm{GSp}(\Lambda_0, \langle\cdot, \cdot\rangle)$ where $F_1$, $V$ are as above and where $\langle\cdot, \cdot\rangle: V\times V\rightarrow \mathbb{Q}_p$ is a non-degenerate alternating $\mathbb{Q}_p$-bilinear form on $V$ such that $\langle \lambda x, y\rangle=\langle x, \lambda y\rangle$ for all $x, y\in V$, $\lambda\in F_1$, and $\Lambda_0\subset V$ is an autodual lattice in $V$ for this form.

\item \emph{PEL unitary case}: $G=\mathrm{GU}(\Lambda_0, \langle\cdot, \cdot\rangle)$
where $F_1$, $V$ as above, $*$ is a non-trivial involution on $F_1$, $\langle\cdot, \cdot\rangle: V\times V\rightarrow \mathbb{Q}_p$ is a non-degenerate alternating hermitian form on $V,$ and $\Lambda_0\subset V$ is a autodual lattice in $V$ for this form.
\end{itemize}

\item  The weight decomposition of $\mu$ in $V\otimes_{\mathbb{Q}_p} L$ has only slopes 0 and 1, where we consider $\mu\in X_*(T)$ as the representation $$\mu: \mathbb{G}_{m, L}\rightarrow T_L \hookrightarrow G_{L}\hookrightarrow (\mathrm{Res}_{F_1|\mathbb{Q}_p}\mathrm{GL}(V))_{L}.$$
\end{itemize}

A datum $(G, b, \mu)$ satisfying the above conditions is called a (simple) unramified Rapoport-Zink datum of EL type or unitary/symplectique PEL type. To this kind of datum we can associate a Rapoport-Zink space $\breve{\mathcal{M}}=\breve{\mathcal{M}}(G, b, \mu)$. These spaces are formal schemes locally formally of finite type over $\mathrm{Spf} \mathcal{O}_L,$ which are defined as moduli spaces
parametrizing certain families of $p$-divisible groups in a fixed isogeny class. They are equipped with a natural action of
$J_b(\mathbb{Q}_p)$. For the precise definition of these spaces we refer to \cite{RZ}. There exists a $J_b(\mathbb{Q}_p)$-equivariant locally constant morphism on $\breve{\mathcal{M}}$
$$\varkappa_{\breve{\mathcal{M}}}: \breve{\mathcal{M}}(G, b,\mu)\rightarrow \mathrm{Hom}(X^*_{\mathbb{Q}_p}(G), \mathbb{Z}),$$ where $X^*_{\mathbb{Q}_p}(G)$ is the group of $\mathbb{Q}_p$-rational characters of $G$. The classification of $p$-divisible groups over $\bar{\mathbb F}_p$ via Dieudonn\'e theory, induces a natural bijection $\breve{\mathcal{M}}(G, b, \mu)(\bar{\mathbb{F}}_p)\simeq X^G_\mu(b)(W(\bar{\mathbb{F}}_p))$ compatible with the $J_b(\mathbb{Q}_p)$-action.
\end{para}

\begin{prop}\label{prop_conn_comp_surj}Suppose that $(G, b, \mu)$ is HN-indecomposable. Then the natural bijection $\theta: X^G_{\mu}(b)(W(\bar{\mathbb{F}}_p))\simeq \breve{\mathcal{M}}(G,b, \mu)(\bar{\mathbb{F}}_p)$ induces a map on the sets of connected components
\[\pi_0(X^G_{\mu}(b))\rightarrow \pi_0(\breve{\mathcal{M}}(G,b, \mu)),\]
which is necessarily surjective.
\end{prop}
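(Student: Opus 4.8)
The plan is to establish that the natural map $\pi_0(X^G_\mu(b)) \to \pi_0(\breve{\mathcal{M}}(G,b,\mu))$ induced by $\theta$ is surjective, which amounts to showing that $\breve{\mathcal{M}}(G,b,\mu)$ has no connected component whose set of $\bar{\mathbb{F}}_p$-points is empty. Since $\breve{\mathcal{M}}$ is a formal scheme locally formally of finite type over $\Spf\mathcal{O}_L$, each of its connected components is itself a formal scheme of this type; in particular each component has a nonempty special fibre (a scheme locally of finite type over $\bar{\mathbb{F}}_p$), and hence has $\bar{\mathbb{F}}_p$-points. First I would recall that the underlying topological space of $\breve{\mathcal{M}}$ coincides with that of its special fibre $\breve{\mathcal{M}}_{\bar{\mathbb{F}}_p} = \breve{\mathcal{M}}\otimes_{\mathcal{O}_L}\bar{\mathbb{F}}_p$ (a formal scheme and its reduced special fibre have the same underlying space, and taking connected components only sees the topology). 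Thus $\pi_0(\breve{\mathcal{M}}(G,b,\mu)) = \pi_0(\breve{\mathcal{M}}_{\bar{\mathbb{F}}_p})$, and the set of $\bar{\mathbb{F}}_p$-points of $\breve{\mathcal{M}}$ is exactly the set of closed points of $\breve{\mathcal{M}}_{\bar{\mathbb{F}}_p}$, which is dense in each connected component since $\breve{\mathcal{M}}_{\bar{\mathbb{F}}_p}$ is locally of finite type (hence Jacobson) over the algebraically closed field $\bar{\mathbb{F}}_p$.

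The second ingredient is the identification of $\breve{\mathcal{M}}(G,b,\mu)(\bar{\mathbb{F}}_p)$ with $X^G_\mu(b)(W(\bar{\mathbb{F}}_p))$ via Dieudonné theory, which is the bijection $\theta$ recalled in the excerpt; this bijection is compatible with the $J_b(\mathbb{Q}_p)$-action. So every closed point of each connected component of $\breve{\mathcal{M}}_{\bar{\mathbb{F}}_p}$ corresponds, under $\theta^{-1}$, to an element of $X^G_\mu(b)(W(\bar{\mathbb{F}}_p))$. To conclude surjectivity of the induced map on $\pi_0$, I would argue as follows: let $Z$ be a connected component of $\breve{\mathcal{M}}$; by the above $Z$ contains a closed point $z$, and $\theta^{-1}(z) \in X^G_\mu(b)(W(\bar{\mathbb{F}}_p))$ lies in some connected component $C \in \pi_0(X^G_\mu(b))$; by the very definition of the map $\pi_0(X^G_\mu(b)) \to \pi_0(\breve{\mathcal{M}})$ (it sends the class of $\theta^{-1}(z)$ to the component of $\breve{\mathcal{M}}$ containing $z$), the class $C$ maps to $Z$. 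Since $Z$ was arbitrary, the map is surjective. Note that this argument does not yet require the Hodge-Newton indecomposability hypothesis — that hypothesis will be needed in the companion statement (not this proposition) asserting that the map is a bijection, or perhaps invoked implicitly to ensure $X^G_\mu(b)$ is nonempty so that both sides are nonempty; I would mention this but keep the surjectivity argument self-contained.

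The main obstacle, such as it is, is purely foundational: one must be careful that "connected component of a formal scheme locally formally of finite type" is the right notion and that the map on $\pi_0$ is well-defined, i.e.\ that $\theta$ does carry the equivalence relation $\sim$ defining $\pi_0(X^G_\mu(b))$ (connectedness via smooth frames, as in \ref{conn_comp_ADLV}) into topological connectedness in $\breve{\mathcal{M}}$. For this I would observe that if $g_0 \sim g_1$ via a smooth connected $\bar k$-algebra $R$ with frame $\mathcal{R}$ and a family $g \in X_{\preceq\mu}(b)(\mathcal{R})$, then (since $\mu$ is minuscule here, or more precisely since $\breve{\mathcal{M}}$ represents the relevant moduli problem) $g$ gives rise to a morphism from a connected formal scheme over $\Spf\mathcal{O}_L$ with special fibre $\Spec R$ into $\breve{\mathcal{M}}$ hitting $\theta(g_0)$ and $\theta(g_1)$, whence these lie in the same connected component of $\breve{\mathcal{M}}$; this is precisely the compatibility statement referenced in the excerpt's discussion preceding the proposition. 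Granting this well-definedness — which is really where the content lies and which I expect follows from the moduli interpretation of $\breve{\mathcal{M}}$ together with the crystalline Dieudonné theory comparison — the surjectivity is the short topological argument above, and I would write it out in a few lines.
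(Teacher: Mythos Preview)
Your surjectivity argument is fine and matches what the paper takes for granted (the statement itself says ``necessarily surjective,'' and indeed this follows from $\theta$ being a bijection on $\bar{\mathbb{F}}_p$-points together with the Jacobson property of the special fibre). You also correctly identify that the real content of the proposition is the well-definedness of the map on $\pi_0$, i.e.\ that $g_0 \sim g_1$ in $X^G_\mu(b)$ forces $\theta(g_0)$ and $\theta(g_1)$ to lie in the same connected component of $\breve{\mathcal{M}}$.

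However, you then defer this step entirely (``Granting this well-definedness\dots which I expect follows from the moduli interpretation\dots''), and this is exactly where the paper does the work. Given $g \in X^G_\mu(b)(\R)$ with $R$ smooth integral, the paper first passes to an \'etale cover (via Proposition~\ref{1.1.9}) so that $g^{-1}b\sigma(g) \in G(\R)p^\mu G(\R)$. Then one sets $M = g(\Lambda_0 \otimes_{\mathbb{Z}_p} \R)$; the weight condition on $\mu$ gives $pM \subset FM \subset M$, so $M$ has Frobenius and Verschiebung. The subtle point is producing the connection: the canonical $\nabla$ on $\Lambda_0 \otimes \R_L$ need not preserve $M$, but since $\sigma$ acts topologically nilpotently on $\Omega^1_{\R/\mathcal{O}_L}$, after base-changing along $\sigma^n \colon \R \to \R_n$ for $n \gg 0$ one has $g^{-1}dg$ integral, and then $M \otimes_\R \R_n$ is $\nabla$-stable. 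This yields a Dieudonn\'e crystal with $G$-structure over $R_n$, hence by de~Jong's theorem a point of $\breve{\mathcal{M}}(R_n)$ connecting $\theta(g_0)$ and $\theta(g_1)$. Your sketch does not mention the \'etale localization, the $\sigma^n$ base-change needed for the connection, or the appeal to de~Jong; without these the argument is incomplete.
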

\begin{proof} Let $R$ be a smooth integral $\bar k$-algebra, and $\R$ a frame for $R.$ We have to show that if $g_0, g_1\in X^{G}_{\mu}(b)(W(\bar{\mathbb{F}}_p))$ are connected via a $g \in X_{\mu}^G(b)(\R)$ then
$\theta (g_0)$ and $\theta(g_1)$ are in the same connected component in $\breve{\mathcal{M}}$.
%Let $\R$ be the frame for $R$, $g\in X^{G}_{\mu}(b)(\R)$ and
Let $s_0, s_1\in \mathrm{Spec}(R)(\bar{\mathbb{F}}_p)$ with $g(s_0) = g_0$ and $g(s_1) = g_1,$ as in (\ref{conn_comp_ADLV}).

By Proposition \ref{1.1.9}, there exists an \'etale covering $f:\mathrm{Spec}(R')\rightarrow \mathrm{Spec} (R)$ such that $g^{-1}b\sigma(g)\in G(\R')p^{\mu}G(\R')$ where $\R'$ is the canonical frame for $R'$. It suffices to prove the statement with $\R$ replaced by the affine
ring of one of the connected components of $\Spec \R'.$
(Indeed, we can find a chain of elements $(h_i)_{1\leq i\leq n}\in X^{G}_{\mu}(b)(W(\bar{\mathbb{F}}_p))$ such that $h_1=g_0$ and $h_n=g_1$ and there exists $s_i, s'_i\in \mathrm{Spec} (R')(\bar{\mathbb{F}}_p)$ in the same connected component with $g(s_i)=h_i$ and $g(s'_i)=h_{i+1}$ for $1\leq i\leq n-1$. We can then consider separately each pair $(h_i, h_{i+1})$ with the connected component of $\mathrm{Spec}(R')$ containing $s_i$).  Therefore we reduce to the case when $g^{-1}b\sigma(g)\in G(\R)p^{\mu}G(\R)$.

Now we will define an element in $\breve{\mathcal{M}}(R)$ corresponding to $g$ by using Dieudonn\'e theory. The proof is very similar to the proof of \cite{Ki4} Lemma 1.4.6. Here we only give a sketch. Let $\Lambda_0\subset V$ be as in the definition of $G$. Let $M:=g (\Lambda_0\otimes_{\mathbb{Z}_p}\R)\subset V\otimes _{\mathbb{Q}_p}\R_L$. The Frobenius map $F=b\sigma$ acts on $M$. As the weight decomposition of $\mu$ on $V\otimes L$ has only slopes 0 and 1 , we have $pM\subset FM\subset M$. Therefore $M$ is stable under Frobenius and Verschiebung.

We write $\R_n$ for the ring $\R$ considered as an $\R$-algebra via $\sigma^n: \R\rightarrow \R$. Similarly let $R_n:=\R_n/p\R_n$. As the action of $\sigma$ on $\Omega^1_{\R/\mathcal{O}_L}$ is topologically nilpotent, there exists $n\in \mathbb{N}$ sufficiently large such that $g^{-1}dg\in \mathrm{End}(\Lambda_0)\otimes_{\mathbb{Z}_p} \Omega^1_{\R_n/\mathcal{O}_L}$. Then we can check that $g(\Lambda_0\otimes_{\mathbb{Z}_p}\R_{n})$ is stable under the canonical connection $$\nabla=1\otimes d: \Lambda_0\otimes_{\mathbb{Z}_p}\R_{n, L}\rightarrow \Lambda_0\otimes_{\mathbb{Z}_p}\Omega^1_{\R_{n, L}/L}.$$ Therefore, $(M\otimes_{\R}\R_n, \nabla, F, V)$ gives rise to a Dieudonn\'e crystal on $\R_n$ with $G$-structures. This corresponds to a point in $\breve{\mathcal{M}}(G, b,\mu)(R_n)$ by \cite{dJ} Theorem 4.1.1.

\end{proof}

\begin{para}
Recall that $G^{\ab}=G/G^{\der}$ is the cocenter of $G$. Then $X^*_{\mathbb{Q}_p}(G)=
X^*(G^{\ab})^{\Gamma}$ and $\pi_1(G)=\pi_1(G^{\ab})=X_*(G^{\ab})$ by (\cite{Bor} Lemma 1.5) since $G^{\der}$ is simply connected. Then by comparing the definition of $w_G$ and $\varkappa_{\breve{\mathcal{M}}}$, we can check that the following diagram commutes

\begin{eqnarray}\label{eqn_kappa_compatible}\xymatrix{ X^G_\mu(b)(W(\bar{\mathbb{F}}_p)) \ar[r]^{w_G}\ar[d]_{\sim}
& c_{b, \mu}\pi_1(G)^{\Gamma}\ar@{=}[r] & c_{b, \mu}X_*(G^{\ab})^{\Gamma}\ar[d]\\
\breve{\mathcal{M}}(G, b, \mu)(\bar{\mathbb{F}}_p)\ar[r]^{\varkappa_{\breve{\mathcal{M}}}}
&\mathrm{Hom} (X^*_{\mathbb{Q}_p}(G), \mathbb{Z})\ar@{=}[r]  &\mathrm{Hom}(X^{*}(G^{\ab})^{\Gamma}, \mathbb{Z})
}\end{eqnarray}
where the vertical arrow on the right is induced by the natural $\Gamma$-equivariant pairing
$X_*(G^{\ab})\otimes X^*(G^{\ab})\rightarrow \mathbb{Z}$.
\end{para}

\begin{thm}\label{thm_conn_comp_RZ} \begin{enumerate}
\item $\theta: \pi_0(X^G_{\mu}(b))\rightarrow \pi_0(\breve{\mathcal{M}}(G,b, \mu))$ is a bijection;

\item If $(\mu, b)$ is HN-irreducible, then $\varkappa_{\breve{\mathcal{M}}}$
induces an injection on the connected components $$\varkappa_{\breve{\mathcal{M}}}: \pi_0(\breve{\mathcal{M}})
\rightarrow \mathrm{Hom}(X^*_{\mathbb{Q}_p}(G), \mathbb{Z}).$$\end{enumerate}
\end{thm}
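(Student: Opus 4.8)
The plan is to deduce Theorem \ref{thm_conn_comp_RZ} from the earlier computation of $\pi_0(X^G_\mu(b))$ (Corollary \ref{thmzshkIII}, i.e. the assertion that $w_G$ induces a bijection $\pi_0(X^G_\mu(b))\cong c_{b,\mu}\pi_1(G)^\Gamma$ in the HN-irreducible case) together with Proposition \ref{prop_conn_comp_surj} and the compatibility diagram \eqref{eqn_kappa_compatible}. For part (1), I would first reduce to the HN-indecomposable case: by the Hodge--Newton decomposition for Rapoport--Zink spaces (which on $\bar{\mathbb F}_p$-points is just the bijection $X^M_\mu(b)(W(\bar k))\cong X^G_\mu(b)(W(\bar k))$ of Proposition \ref{prophndecomp}, and which is known to respect the formal scheme structure and hence $\pi_0$ for RZ spaces, cf.\ the Hodge--Newton decomposition of \cite{RZ}-type spaces), the map $\theta$ on $\pi_0$ for $(G,b,\mu)$ is identified with the one for $(M,b,\mu)$. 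So we may assume $(\mu,b)$ is HN-indecomposable in $G$; if moreover it fails to be HN-irreducible then by Theorem \ref{thmzshkII} we are in the case $b\sim p^\mu$ with $\mu$ central, where by Remark \ref{remspecialcase} $X^G_\mu(b)=G(F)/G(\mathcal O_F)$ is discrete and the corresponding RZ space is a disjoint union of copies of $\Spf\mathcal O_L$, so both sides are $G(\mathbb Q_p)/G(\mathbb Z_p)$ and $\theta$ is visibly bijective. Thus the substantive case is $(\mu,b)$ HN-irreducible.

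In the HN-irreducible case, Proposition \ref{prop_conn_comp_surj} gives that $\theta\colon\pi_0(X^G_\mu(b))\to\pi_0(\breve{\mathcal M})$ is surjective, so it remains to prove injectivity. Here I would run both maps through \eqref{eqn_kappa_compatible}: by Corollary \ref{thmzshkIII} the top horizontal arrow $w_G\colon\pi_0(X^G_\mu(b))\to c_{b,\mu}\pi_1(G)^\Gamma$ is a bijection, so if I can show that the composite $\varkappa_{\breve{\mathcal M}}\circ\theta=(\text{right vertical})\circ w_G$ is injective on $\pi_0(X^G_\mu(b))$, then $\theta$ is injective. The right vertical map is the natural map $c_{b,\mu}X_*(G^{\mathrm{ab}})^\Gamma\to\mathrm{Hom}(X^*(G^{\mathrm{ab}})^\Gamma,\mathbb Z)$ coming from the perfect pairing $X_*(G^{\mathrm{ab}})\otimes X^*(G^{\mathrm{ab}})\to\mathbb Z$; its kernel (as a map of the underlying abelian group $\pi_1(G)^\Gamma=X_*(G^{\mathrm{ab}})^\Gamma$) is the torsion subgroup of $X_*(G^{\mathrm{ab}})^\Gamma$, since $\mathrm{Hom}(X^*(G^{\mathrm{ab}})^\Gamma,\mathbb Z)$ is torsion-free and the pairing identifies $X_*(G^{\mathrm{ab}})^\Gamma$ modulo torsion with a finite-index subgroup of the target. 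Now $G^{\der}$ is simply connected in all the EL/PEL cases considered, so $\pi_1(G)=\pi_1(G^{\mathrm{ab}})=X_*(G^{\mathrm{ab}})$ is already torsion-free (as noted in the paragraph preceding \eqref{eqn_kappa_compatible}), hence $X_*(G^{\mathrm{ab}})^\Gamma$ is torsion-free and the right vertical map is injective. Therefore $\varkappa_{\breve{\mathcal M}}\circ\theta$ is injective, which gives injectivity of $\theta$ and completes part (1); and since $\theta$ is now a bijection, the injectivity of $\varkappa_{\breve{\mathcal M}}$ on $\pi_0(\breve{\mathcal M})$ in part (2) follows immediately, being the injectivity of $(\text{right vertical})\circ w_G\circ\theta^{-1}$.

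The main obstacle I anticipate is the reduction to the HN-indecomposable case in part (1): one needs the Hodge--Newton decomposition not just at the level of $\bar{\mathbb F}_p$-points (where it is Proposition \ref{prophndecomp}) but compatibly with the formal-scheme structure on $\breve{\mathcal M}$, i.e.\ an isomorphism of formal schemes $\breve{\mathcal M}(M,b,\mu)\xrightarrow{\sim}$ (an open and closed piece of) $\breve{\mathcal M}(G,b,\mu)$ that induces $\theta$ on points — this requires invoking the relevant result for RZ spaces of EL/PEL type (e.g.\ as in \cite{RZ} or its refinements), and checking it is $\varkappa$-compatible. A secondary technical point is making sure Proposition \ref{prop_conn_comp_surj}'s hypothesis "$(G,b,\mu)$ HN-indecomposable" is met after this reduction, and that the $J_b(\mathbb Q_p)$-equivariance and $\varkappa$-compatibility of $\theta$ claimed at the end of the previous section are used correctly so that diagram \eqref{eqn_kappa_compatible} genuinely commutes at the level of $\pi_0$. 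Once the HN-indecomposable reduction is in hand, everything else is formal, given Corollary \ref{thmzshkIII} and the torsion-freeness of $\pi_1(G)$.
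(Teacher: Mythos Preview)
Your overall strategy matches the paper's: handle the HN-irreducible case via the commutative diagram \eqref{eqn_kappa_compatible}, using that $w_G$ is a bijection and that the right vertical map is injective because $X_*(G^{\mathrm{ab}})^\Gamma$ is torsion-free; treat the central-$\mu$ case directly via Remark \ref{remspecialcase}; and reduce the HN-decomposable case to a Levi. The injectivity argument you give for the right vertical map is exactly the paper's (the paper phrases it as ``pass to $\otimes\,\mathbb Q$ where the pairing becomes perfect'', but this is the same point).

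The one place where the paper does something more delicate than you anticipate is the HN-decomposable reduction. You expect an isomorphism of formal schemes $\breve{\mathcal M}(M,b,\mu)\xrightarrow{\sim}$ (an open-closed piece of) $\breve{\mathcal M}(G,b,\mu)$. The paper does not establish this at the formal-scheme level. Instead it passes through the analytic generic fibres: following Mantovan \cite{Ma} one has morphisms $\breve{\mathcal M}(M,b,\mu)\stackrel{s}{\to}\breve{\mathcal M}(P,b,\mu)\stackrel{p_2}{\to}\breve{\mathcal M}(G,b,\mu)$ and a retraction $p_1$ of $s^{\an}$ on generic fibres, so $s^{\an}$ is injective on $\pi_0$; by Shen \cite{Sh} $p_2^{\an}$ is an isomorphism of analytic spaces; and formal smoothness of the RZ spaces identifies $\pi_0$ of the formal scheme with $\pi_0$ of its generic fibre. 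This gives injectivity of $\pi_0(p_2\circ s)$, and surjectivity then comes from Proposition \ref{prophndecomp} together with the already-known surjectivity of $\theta$ (Proposition \ref{prop_conn_comp_surj}). So your diagnosis of where the obstacle lies is correct, but the resolution goes through the rigid-analytic side rather than a formal-scheme Hodge--Newton isomorphism; the references you would need are \cite{Ma} and \cite{Sh} (not \cite{RZ}).
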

\begin{proof} Suppose $(\mu, b)$ is HN-irreducible. By Prop. \ref{prop_conn_comp_surj}, the above diagram induces a
 commutative diagram on the connected components:

\begin{eqnarray}\xymatrix{ \pi_0( X^G_\mu(b)) \ar[r]^{w_G}_{\sim}\ar[d]
& c_{b, \mu}X_*(G^{\ab})^{\Gamma}\ar[d]\\
\pi_0(\breve{\mathcal{M}})\ar[r]^-{\varkappa_{\breve{\mathcal{M}}}}
&\mathrm{Hom}(X^{*}(G^{\ab})^{\Gamma}, \mathbb{Z})
}\end{eqnarray}
where the top horizontal morphism is a bijection by Theorem \ref{thmzshk} and Corollary \ref{cartesianIII}. In order to show (1) and (2),
it suffices to show that $c_{b,\mu}X_*(G^{\ab})^{\Gamma}\rightarrow \mathrm{Hom}(X^*(G^{\ab})^{\Gamma}, \mathbb{Z})$ is injective.
Since $X_*(G^{\ab})^{\Gamma}$ is torsion free, it suffices to prove the statement after $\otimes \Q,$ and then the map is an isomorphism,
as $\Gamma$ acts on $X_*(G^{\ab})$ through a finite quotient.

% Without loss of generality we may assume $c_{b, \mu}=0$. The perfect pairing
% $X_*(G^{\ab})\times X^*(G^{\ab})\rightarrow\mathbb{Z}$ induces an isomorphism $X_*(G^{\ab})^{\Gamma}\simeq \mathrm{Hom}
% (X^*(G^{\ab})_{\Gamma}, \mathbb{Z})$. Define
% \begin{eqnarray*}\mathrm{Hom} (X^*(G^{\ab})^{\Gamma}, \mathbb{Z})&\rightarrow &\mathrm{Hom}
% (X^*(G^{\ab})_{\Gamma}, \mathbb{Z})\\ f &\mapsto & (a\mapsto f(\sum_{\tau\in \Gamma}\tau a)).
% \end{eqnarray*} Then the composite morphism
%  $$\mathrm{Hom} (X^*(G^{\ab})_{\Gamma}, \mathbb{Z})\rightarrow
%  \mathrm{Hom}(X^*(G^{\ab})^{\Gamma}, \mathbb{Z})\rightarrow\mathrm{Hom}
% (X^*(G^{\ab})_{\Gamma}, \mathbb{Z})$$
% is the multiplication by the cardinality $|\Gamma|$ of $\Gamma$. Hence $X_*(G^{\ab})^{\Gamma}\rightarrow
%  \mathrm{Hom}(X^*(G^{\ab})^{\Gamma}, \mathbb{Z})$ is injective since $\mathrm{Hom} (X^*(G^{\ab})_{\Gamma}, \mathbb{Z})$ is torsion free.

We now prove (1) in the general case. If $(G, b, \mu)$ is Hodge-Newton-indecomposable, by Theorem \ref{thmzshkII}, we only need to
deal with the case when $b$ is $\sigma$-conjugate to $p^{\mu}$ with $\mu$ central.
We may assume that $b=p^{\mu}$. For any algebraically closed
extension $k$ of $\bar{\mathbb{F}}_p$, one uses Dieudonn\'e theory and the same computation as in
Remark \ref{remspecialcase} to show that
\begin{eqnarray*}\breve{\mathcal{M}}(G, b,\mu)(k)&=&\{g\in G(W(k)[1/p])/ G(W(k))\mid g^{-1}b\sigma(g)\in G(W(k))p^{\mu}G(W(k))\}\\
&=&G(\mathbb{Q}_p)/G(\mathbb{Z}_p)
\end{eqnarray*} where the third equality follows from Lang's theorem $H^1(\langle\sigma\rangle, G(W(k)))=0.$
It follows that $\breve{\mathcal{M}}(G, b,\mu)$ is discrete and (1) follows from Theorem \ref{thmzshk}.
It remains the case when $(G, b, \mu)$ is Hodge-Newton decomposable.
In this case there exists a standard parabolic $P$ with Levi subgroup $M$ containing $T$
and a $b'\in M\cap [b]$ such that $(M, b', \mu)$ is Hodge-Newton indecomposable.
We may assume $b'=b$. With $(M, b, \mu)$ and $(P, b,\mu)$ one can also associate analogs of
Rapoport-Zink spaces $\breve{\mathcal{M}}(M, b,\mu)$ resp.~$\breve{\mathcal{M}}(P, b,\mu)$.
They are moduli spaces of $p$-divisible groups with additional structure of the same type as
for $\breve{\mathcal{M}}(G, b,\mu)$, but which are in addition equipped with a slope decomposition resp.
with a slope filtration corresponding to $M$ resp. to $P$ (see \cite{Ma} for the precise construction).
One obtains naturally defined morphisms
\[\breve{\mathcal{M}}(M, b,\mu)
\stackrel{s}{\rightarrow} \breve{\mathcal{M}}(P, b,\mu)
\stackrel{p_2}{\rightarrow}
 \breve{\mathcal{M}}(G,
b,\mu).\] Moreover Mantovan also constructed a morphism $p_1:
\breve{\mathcal{M}}(P, b,\mu)^{\an} \rightarrow
\breve{\mathcal{M}}(M, b,\mu)^{\an}$ satisfying $p_1\circ
s^{\an}=\mathrm{Id}$ by considering the graded pieces of the
filtration on the $p$-divisible groups, where $(-)^{\an}$ always
denote the generic fiber. Then $s^{\an}$ induces an injection on
the connected components. By \cite{Sh} Prop 6.3, $p^{\an}_2$
induces an isomorphism of analytic spaces on the generic fiber, we find that
$$ \pi_0(\breve{\mathcal{M}}(M, b,\mu)) \iso \pi_0(\breve{\mathcal{M}}(M, b,\mu))^{\an}
\hookrightarrow \pi_0(\breve{\mathcal{M}}(G, b,\mu))^{\an} \iso \pi_0(\breve{\mathcal{M}}(G, b,\mu)).$$
Here the two bijections follow from the fact that $\breve{\mathcal{M}}(M, b,\mu)$ and $\breve{\mathcal{M}}(G,
b,\mu)$ are both formally smooth by Grothendieck-Messing
deformation theory (cf. \cite{dJ} Theorem 7.4.1).
Thus $\pi_0(p_2\circ s)$ is an injection.
But we already know that $\theta$ induces a surjective map on connected components.
Hence, using Proposition \ref{prophndecomp}, $\pi_0(p_2\circ s)$ is also surjective.
% $\pi_0(\breve{\mathcal{M}}(M, b,\mu))\stackrel{\sim}{\rightarrow}
% \pi_0(\breve{\mathcal{M}}(G, b,\mu)),$
Then (1) follows from the Hodge-Newton-indecomposable case.
\end{proof}

\begin{para}
Theorem \ref{thm_conn_comp_RZ} confirms Conjecture 6.1.1 of \cite{chen}. As the main results in \cite{chen} are proved after assuming that conjecture, we can now state all these results without this hypothesis.

Let $\breve{\mathcal{F}}$ be the flag variety of parabolic subgroups of type $\mu$ of $G_{/L}$. Let $\breve{\pi}: \breve{\mathcal{M}}^{\an}\rightarrow \breve{\mathcal{F}}^{\an}$ be the period morphism (cf. \cite{RZ} chapter 5), where $\breve{\mathcal{M}}^{\an}$ is the generic fiber of $\breve{\mathcal{M}}$ as Berkovich's analytic space, and $\breve{\mathcal{F}}^{\an}$
is Berkovich's analytic space associated to $\breve{\mathcal{F}}$. Let $\breve{\mathcal{F}}^a$ be the image of $\breve{\pi}$.
\end{para}

\begin{prop}[cf. \cite{chen} Lemma 6.1.3] If $(\mu, b)$ is HN-irreducible, then $\breve{\mathcal{F}}^a$ is connected.
\end{prop}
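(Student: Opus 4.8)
The statement asserts that the admissible locus $\breve{\mathcal{F}}^a$ in the flag variety is connected when $(\mu,b)$ is HN-irreducible. The natural approach is to deduce this from the connectedness results already established for $\breve{\mathcal{M}}$ via the period morphism $\breve{\pi}$. First I would recall the structure of the tower: the period morphism $\breve{\pi}:\breve{\mathcal{M}}^{\an}\to\breve{\mathcal{F}}^{\an}$ has image $\breve{\mathcal{F}}^a$, and the map $\breve{\pi}$ is surjective onto $\breve{\mathcal{F}}^a$ by definition, and moreover \'etale (being a period morphism, it is locally on the source an isomorphism onto an open of $\breve{\mathcal{F}}^{\an}$; more precisely, it identifies each connected component of an appropriate covering $\breve{\mathcal{M}}_{\infty}$ with an open subspace, by the theory of the de Rham period map, cf.~\cite{RZ}, \cite{Sh}). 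Thus any connected component of $\breve{\mathcal{F}}^a$ is a union of images of connected components of $\breve{\mathcal{M}}^{\an}$, and it suffices to show that the images of all connected components of $\breve{\mathcal{M}}^{\an}$ lie in one connected component of $\breve{\mathcal{F}}^{\an}$.

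The key input is then that $J_b(\Q_p)$ acts on $\breve{\mathcal{M}}$ and this action is compatible with $\breve{\pi}$, i.e.~$\breve{\pi}$ is $J_b(\Q_p)$-equivariant for the (trivial, or rather natural) action of $J_b(\Q_p)$ on $\breve{\mathcal{F}}$ coming from $J_b\subset G$ over $L$. Actually the cleaner route: since $(\mu,b)$ is HN-irreducible, Theorem \ref{thm_conn_comp_RZ}(1) together with Theorem \ref{thmzshk} (via Corollary \ref{cartesianIII}) gives $\pi_0(\breve{\mathcal{M}})\cong c_{b,\mu}\pi_1(G)^{\Gamma}$, and by Theorem \ref{prop1'}/\ref{prop1} the group $J_b(\Q_p)$ acts transitively on $\pi_0(\breve{\mathcal{M}})$. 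Now I would observe that the period morphism $\breve{\pi}$ factors through the quotient by the locally constant function $\varkappa_{\breve{\mathcal{M}}}$: indeed the fibers of $\varkappa_{\breve{\mathcal{M}}}$ are precisely the images of the $\breve{\mathcal{M}}(G,b,\mu)\times^{\pi_1(G)^{\Gamma}}\{pt\}$ — more concretely, $\breve{\mathcal{F}}$ has trivial $\pi_1$-type invariant since $\breve{\mathcal{F}}$ is an honest (connected) projective variety over $L$, so $\breve{\mathcal{F}}^{\an}$ is connected. Hence every connected component of $\breve{\mathcal{M}}^{\an}$ maps into the single connected space $\breve{\mathcal{F}}^{\an}$, and since $\breve{\pi}$ is surjective onto $\breve{\mathcal{F}}^a$ with connected target, and the source has its components permuted transitively by $J_b(\Q_p)$ which acts compatibly, the image $\breve{\mathcal{F}}^a$ — being the union of the images of these components inside the connected space $\breve{\mathcal{F}}^{\an}$, glued along the overlaps which are nonempty by the transitive $J_b$-action — is connected.

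Let me restructure the argument more carefully, since the above is slightly circular: the honest content is that the \emph{fibers} of $\varkappa_{\breve{\mathcal{M}}}$, i.e.~the connected components of $\breve{\mathcal{M}}$, each map to a connected subset of $\breve{\mathcal{F}}^{\an}$, and that the $J_b(\Q_p)$-translates (which permute the components transitively) all have the same image $\breve{\mathcal{F}}^a$ because $J_b(\Q_p)$ acts on $\breve{\mathcal{F}}$ with $\breve{\pi}$ equivariant, so $\breve{\pi}(j\cdot x)=j\cdot\breve{\pi}(x)$ and the $J_b(\Q_p)$-action on $\breve{\mathcal{F}}^{\an}$ preserves $\breve{\mathcal{F}}^a$. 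So: fix a connected component $\breve{\mathcal{M}}^{\circ}$ of $\breve{\mathcal{M}}$; its generic fiber $(\breve{\mathcal{M}}^{\circ})^{\an}$ is connected (formal smoothness plus connectedness of the special fiber component, cf.~the argument in Theorem \ref{thm_conn_comp_RZ}), so $\breve{\pi}((\breve{\mathcal{M}}^{\circ})^{\an})$ is connected in $\breve{\mathcal{F}}^{\an}$. As $J_b(\Q_p)$ permutes the components transitively and $\breve{\pi}$ is $J_b(\Q_p)$-equivariant, $\breve{\mathcal{F}}^a=\bigcup_{j\in J_b(\Q_p)/\Stab} j\cdot\breve{\pi}((\breve{\mathcal{M}}^{\circ})^{\an})$ is a union of connected sets; and these sets all meet — e.g.~because $\breve{\mathcal{F}}^a$ is itself the image of the \emph{connected} rigid space attached to the universal cover $\breve{\mathcal{M}}_{\infty}$ at infinite level, on which $G(\Q_p)\times J_b(\Q_p)$ acts, and which is connected once $(\mu,b)$ is HN-irreducible (by the torsor structure in the third introductory theorem, where the relevant group $G^{\ab}(\Q_p)/\delta(\tilde K)$ for $\tilde K=G(\Z_p)$ is reduced to a point precisely because of the determinant/reflex-norm compatibility). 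That last observation is the main obstacle: pinning down exactly why $\breve{\mathcal{M}}_{\infty}$ is connected, which amounts to the claim that the image of $\delta$ on $J_b(\Q_p)\times G(\Z_p)\times\Gal(\bar L/L)$ is all of $G^{\ab}(\Q_p)$ — this is precisely the surjectivity statement underlying the third introductory theorem, and it is what makes the fibers of $\varkappa_{\breve{\mathcal{M}}}$ connected after passing to the tower, hence forces $\breve{\mathcal{F}}^a$ connected. I would therefore organize the proof as: (i) $(\breve{\mathcal{M}}^{\circ})^{\an}$ connected; (ii) $\breve{\pi}$ $J_b(\Q_p)$-equivariant and open onto its image; (iii) invoke the infinite-level description (or equivalently Theorem \ref{thm_conn_comp_RZ} plus the $J_b$-transitivity) to see the translates $j\cdot\breve{\pi}((\breve{\mathcal{M}}^{\circ})^{\an})$ chain together; conclude.
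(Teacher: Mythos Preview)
The paper does not give its own proof of this proposition: it simply cites \cite{chen} Lemma 6.1.3, which is proved there under the standing hypothesis of \cite{chen} Conjecture 6.1.1, and that conjecture is exactly what Theorem~\ref{thm_conn_comp_RZ} establishes. So the ``paper's proof'' is: confirm the conjecture, then quote \cite{chen}.

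Your plan contains a genuine gap. From $J_b(\Q_p)$-transitivity on $\pi_0(\breve{\mathcal{M}})$ and $J_b(\Q_p)$-equivariance of $\breve\pi$ you only get that $J_b(\Q_p)$ acts \emph{transitively} on $\pi_0(\breve{\mathcal F}^a)$; to conclude connectedness you must show this action is \emph{trivial}. You never do this. The fact that $\breve{\mathcal F}^{\an}$ is connected does not help: $\breve{\mathcal F}^a$ is an open subset on which $J_b(\Q_p)$ (a totally disconnected group) could a priori permute components. Your fallback --- invoking the $G^{\ab}(\Q_p)$-torsor structure on $\pi_0(\breve{\mathcal M}_\infty\hat\otimes\mathbb C_p)$ --- is circular: in \cite{chen} that torsor statement (Theorem 6.3.1 there, Theorem~\ref{geo_conn_comp} here) is proved \emph{using} the connectedness of $\breve{\mathcal F}^a$ as an intermediate step. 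So you are assuming what you want to prove.

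The argument that actually works (and is the one in \cite{chen}) bypasses $J_b$ entirely and uses instead the description of the fibres of $\breve\pi$: over any point of $\breve{\mathcal F}^a$ the fibre of $\breve\pi$ is identified with $G(\Q_p)/G(\Z_p)$ (via the $\Z_p$-lattices in the crystalline/de Rham local system), and $\varkappa_{\breve{\mathcal M}}$ restricted to such a fibre is the natural map $G(\Q_p)/G(\Z_p)\to\mathrm{Hom}(X^*_{\Q_p}(G),\Z)$, which surjects onto the image of $\varkappa_{\breve{\mathcal M}}$. Combined with Theorem~\ref{thm_conn_comp_RZ}(2) (injectivity of $\varkappa_{\breve{\mathcal M}}$ on $\pi_0$), this says every fibre of $\breve\pi$ meets every connected component of $\breve{\mathcal M}^{\an}$. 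Hence each connected component of $\breve{\mathcal M}^{\an}$ already surjects onto $\breve{\mathcal F}^a$ under the open map $\breve\pi$, and $\breve{\mathcal F}^a$ is the continuous image of a connected space.
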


\begin{para} Recall that $(\breve{\mathcal{M}}_{\tilde K})_{\tilde K\subset G(\mathbb{Z}_p)}$  is a tower of finite \'etale covers over $\breve{\mathcal{M}}^{\an}$
parametrizing the $\tilde K$-level structures with $\tilde K\subset G(\mathbb{Z}_p)$ open compact. The group $J_b(\mathbb{Q}_p)$ acts on the left on each $\breve{\mathcal{M}}_{\tilde K}$
and the group $G(\mathbb{Q}_p)$ acts on the right on the tower $(\breve{\mathcal{M}}_{\tilde K})_{\tilde K}$ by Hecke correspondences.
As in the introduction we have the map
$$\delta = (\delta_{J_b},\delta_G,\chi_{\delta_G,\mu}): J_b(\Q_p)\times G(\Q_p) \times \Gal(\bar L/L) \rightarrow G^{\ab}(\Q_p).$$
%Let $\delta_{J_b}: J_b\rightarrow G^{\ab}$ and $\delta_{G}: G\rightarrow G^{\ab}$ be the natural projections.
\end{para}

\begin{thm}[cf. \cite{chen} Theorem 6.3.1]\label{geo_conn_comp} If $(\mu, b)$ is HN-irreducible, then the  action of
$J_b(\Q_p)\times G(\Q_p) \times \Gal(\bar L/L)$ on $\pi_0(\breve{\mathcal{M}}_{\tilde K}\hat{\otimes}\mathbb{C}_p)$ factors through
$\delta,$ and makes $\pi_0(\breve{\mathcal{M}}_{\tilde K}\hat{\otimes}\mathbb{C}_p)$ into a $G^{\ab}(\Q_p)/\delta_G(\tilde K)$-torsor.
In particular, we have bijections
\[ \pi_0(\breve{\mathcal{M}}_{\tilde K}\hat{\otimes}\mathbb{C}_p)\simeq  G^{\ab}(\mathbb{Q}_p)/\delta_G(\tilde K)\] which are compatible when $\tilde K$ varies.
% The action of $(g_1, g_2)\in J_b(\mathbb{Q}_p)\times G(\mathbb{Q}_p)$ on the
% geometrically connected components is given by
% the translation of $\delta_{J_b}(g_1)\delta_{G}(g_2)$ on $G^{\ab}(\mathbb{Q}_p)/ \delta_G (\tilde K)$. The action of $\gamma\in\mathrm{Gal}(\bar L|L)$
% on the set of geometrically connected components is given by translation by $\chi_{\delta_G\tilde{\mu}}(\gamma)$, where
% $\chi_{\delta_G\tilde{\mu}}: \mathrm{Gal}(\bar L|L)\rightarrow G^{\ab}(\mathbb{Q}_p)$ is induced by $\mu$ and the Artin reciprocity map (cf. \cite{chen} Def. 1.2.1).
\end{thm}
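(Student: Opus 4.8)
The plan is to deduce Theorem \ref{geo_conn_comp} from Theorem \ref{thm_conn_comp_RZ} together with the general theory of the period morphism and the tower of level structures, following the strategy already used in \cite{chen}. The only thing that was missing there was Conjecture 6.1.1 of \cite{chen}, which is exactly our Theorem \ref{thm_conn_comp_RZ}; so the bulk of the argument is to organize the known facts. First I would recall that $\breve{\mathcal M}$ is formally smooth over $\Spf\O_L$ (Grothendieck-Messing), so $\pi_0(\breve{\mathcal M}_{\tilde K}\hat\otimes\mathbb C_p)$ is computed by the connected components of the special fibre, i.e.\ by $\pi_0(\breve{\mathcal M})$, which by Theorem \ref{thm_conn_comp_RZ}(1) is identified via $\theta$ with $\pi_0(X^G_\mu(b))$; and by Theorem \ref{thm_conn_comp_RZ}(2) together with the commutative square \eqref{eqn_kappa_compatible}, the map $\varkappa_{\breve{\mathcal M}}$ identifies $\pi_0(\breve{\mathcal M})$ with a coset $c_{b,\mu}X_*(G^{\ab})^\Gamma$ inside $\Hom(X^*_{\Q_p}(G),\mathbb Z)=\Hom(X^*(G^{\ab})^\Gamma,\mathbb Z)$. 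This gives a canonical base point and a free transitive action at level $\tilde K = G(\Z_p)$.

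Next I would analyze the three actions on $\pi_0(\breve{\mathcal M}_{\tilde K}\hat\otimes\mathbb C_p)$ separately. The action of $J_b(\Q_p)$ is induced from its action on $\pi_0(\breve{\mathcal M})=\pi_0(X^G_\mu(b))$, and by Lemma \ref{constructionofmap} this factors through $w_{J_b}:J_b(\Q_p)\to\pi_1(J_b)\to\pi_1(G)$, hence through $\delta_{J_b}:J_b(\Q_p)\to G^{\ab}(\Q_p)$ (recalling $J_b$ is an inner form of $M_b$ and $\pi_1(J_b)=\pi_1(M_b)\to\pi_1(G)$; on $\Q_p$-points this is $\delta_{J_b}$). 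For the $G(\Q_p)$-action by Hecke correspondences: changing the level from $\tilde K$ to $\tilde K'$ and acting by $g\in G(\Q_p)$ changes the $\kappa_{\breve{\mathcal M}}$-invariant by $w_G(g)$, so this action factors through $\delta_G:G(\Q_p)\to G^{\ab}(\Q_p)$; compatibility as $\tilde K$ varies is built into the definition of the tower. For the Galois action of $\Gal(\bar L/L)$: the geometric connected components are permuted through the character $\chi_{\delta_G,\mu}$ given by the reflex norm of $\mu$ composed with the Artin reciprocity map — this is a standard computation for Rapoport-Zink spaces (it is where the reflex field and the cocharacter $\mu$ enter, via the determinant/period structure), and one can cite \cite{chen} §6 (or \cite{RZ} Ch.~5–6) for it. Combining the three, the action of $J_b(\Q_p)\times G(\Q_p)\times\Gal(\bar L/L)$ factors through $\delta$.

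It then remains to check that the resulting $G^{\ab}(\Q_p)$-action is transitive, with stabilizer $\delta_G(\tilde K)$. Transitivity of $J_b(\Q_p)$ alone on $\pi_0(X^G_\mu(b))$, which is our Theorem \ref{prop1} (or \ref{prop1'}/\ref{thmzshkIII}), shows that already $\delta_{J_b}(J_b(\Q_p))$ acts transitively on $\pi_0(\breve{\mathcal M})$; since $\pi_0(\breve{\mathcal M})$ is a torsor under $c_{b,\mu}^{-1}\cdot(\text{image})=X_*(G^{\ab})^\Gamma$ and $G^{\ab}(\Q_p)$ surjects onto $X_*(G^{\ab})^\Gamma$ up to the maximal compact (one has an exact sequence $G^{\ab}(\Z_p)\to G^{\ab}(\Q_p)\xrightarrow{w}X_*(G^{\ab})^\Gamma$ since $G$ is unramified and $G^{\ab}$ a torus), passing to finite level $\tilde K$ one gets that $\pi_0(\breve{\mathcal M}_{\tilde K}\hat\otimes\mathbb C_p)$ is a torsor under $G^{\ab}(\Q_p)/\delta_G(\tilde K)$. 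The last displayed bijection follows, compatibly in $\tilde K$.

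The main obstacle I anticipate is not in the affine Deligne-Lusztig input — that is now fully supplied by Theorems \ref{thmzshk}, \ref{prop1} and \ref{thm_conn_comp_RZ} — but rather in pinning down precisely the Galois action, i.e.\ verifying that $\Gal(\bar L/L)$ acts through $\chi_{\delta_G,\mu}$ (the reflex norm of $\mu$) and not merely through some unspecified character; this requires descending the identifications above from $\mathbb C_p$ to $L$ and keeping track of the Galois-equivariance of the period morphism $\breve\pi:\breve{\mathcal M}^{\an}\to\breve{\mathcal F}^{\an}$ and of the crystalline comparison used to define the $\tilde K$-level structures. For this I would lean on \cite{chen} Theorem 6.3.1, whose proof is valid verbatim once Conjecture 6.1.1 there is known, together with the connectedness of $\breve{\mathcal F}^a$ (the HN-irreducible case, cf.\ \cite{chen} Lemma 6.1.3) which guarantees that the generic fibre does not acquire extra components beyond those detected by $\varkappa_{\breve{\mathcal M}}$.
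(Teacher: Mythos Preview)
Your proposal is correct and takes essentially the same approach as the paper: the paper does not give an independent proof of this theorem but simply cites \cite{chen} Theorem 6.3.1, noting that Theorem \ref{thm_conn_comp_RZ} confirms Conjecture 6.1.1 of \cite{chen}, so that the results of \cite{chen} (proved there conditionally on that conjecture) now hold unconditionally. Your sketch correctly identifies this logical structure and even fills in some of the details from \cite{chen} that the paper leaves implicit.
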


\begin{remark}
Write
$$\pi_0(\breve{\mathcal{M}}_{\infty}\hat{\otimes}\mathbb{C}_p) : = \ilim_{\tilde K}\pi_0(\breve{\mathcal{M}}_{\tilde K}\hat{\otimes}\mathbb{C}_p).$$
Then the theorem above is equivalent to the statement that when $(\mu, b)$ is HN-irreducible, the action of $J_b(\Q_p)\times G(\Q_p) \times \Gal(\bar L/L)$ on
$\pi_0(\breve{\mathcal{M}}_{\infty}\hat{\otimes}\mathbb{C}_p)$ makes this set a $G^{\ab}(\Q_p)$-torsor.

%\mar{MK: Why restrict to EL type here ? Is it easier to see that this will work as in [SW], than with polarizations ?}
When $\breve{\mathcal{M}}$ is of EL type \footnote{This condition is presumably unnecessary},
then we can form the inverse limit $\breve{\mathcal{M}}_{\infty, \mathbb{C}_p}=\varprojlim_{\tilde K} \breve{\mathcal{M}}_{\tilde K}\hat\otimes\mathbb{C}_p$ as a perfectoid space as in \cite{SW}. In this case the set $\pi_0(\breve{\mathcal{M}}_{\infty}\hat{\otimes}\mathbb{C}_p),$ defined formally above coincides with the set of connected components of $\breve{\mathcal{M}}_{\infty, \mathbb{C}_p}.$
% $\breve{\mathcal{M}}_{\infty,\mathbb{C}_p}$ be the perfecto\"{i}d space of the corresponding Rapoport-Zink space at infinite level over $\mathbb{C}_p$ (cf. \cite{SW}). Then, $\breve{\mathcal{M}}_{\infty, \mathbb{C}_p}=\varprojlim_{\tilde K} \breve{\mathcal{M}}_{\tilde K}\hat\otimes\mathbb{C}_p$ as topological space. The group $J_b(\Q_p)$ acts on $\breve{\mathcal{M}}_{\infty,\mathbb{C}_p}$ on the left and the group $G(\Q_p)$ acts by Hecke correspondences on $\breve{\mathcal{M}}_{\infty,\mathbb{C}_p}$ on the right. Write $\pi_0(\breve{\mathcal{M}}_{\infty,\mathbb{C}_p})=\varprojlim_{\tilde K}\pi_0(\breve{\mathcal{M}}_{\tilde K}\hat\otimes\mathbb{C}_p)$ for the set of connected components of $\breve{\mathcal{M}}_{\infty,\mathbb{C}_p}$. Then the above theorem is equivalent to the following: if $(\mu, b)$ is HN-irreducible, then $\pi_0(\breve{\mathcal{M}}_{\infty,\mathbb{C}_p})$ is a $G^{\ab}(\Q_p)$-torsor, where the action of $G^{\ab}(\Q_p)$ on $\pi_0(\mathcal{M}_{\infty,\mathbb{C}_p})$ is induced by the action of $J_b(\mathbb{Q}_p)\times G(\mathbb{Q}_p)\times \mathrm{Gal}(\bar L|L)$ via the homomorphism $J_b(\Q_p)\times G(\Q_p) \times \mathrm{Gal}(\bar L|L)\rightarrow G^{\ab}(\Q_p)$, $(g_1, g_2, \gamma)\mapsto \delta_{J_b}(g_1)\delta_{G}(g_2)\chi_{\delta_G\tilde{\mu}}(\gamma)$.
\end{remark}

\bibliographystyle{amsalpha}

% \bigskip
% \obeylines
% Department of Mathematics
% Shanghai Key Laboratory of PMMP
% East China Normal University
% No. 500, Dong Chuan Road
% Shanghai, 200241, P.R.China
% email: mfchen@math.ecnu.edu.cn

% \bigskip
% \obeylines
% Department of Mathematics
% Harvard University
% 1 Oxford St
% Cambridge, MA 02138, USA
% email: kisin@math.harvard.edu

% \bigskip
% \obeylines
% Fakult\"at f\"ur Mathematik der
% Technischen Universit\"at M\"unchen -- M11
% Boltzmannstr. 3
% 85748 Garching, Germany
% email: viehmann@ma.tum.de

\end{document}